\documentclass[leqno,11pt]{amsart}
\usepackage{etoolbox}
% % for unnumbered sections
% \usepackage{tocbibind}
% \usepackage{tocloft}

% Modifications to amsart ToC-related macros...
\makeatletter
\let\old@tocline\@tocline
\let\section@tocline\@tocline

% Insert a dotted ToC-line for \subsection and \subsubsection only
\newcommand{\subsection@dotsep}{4.5}
\newcommand{\subsubsection@dotsep}{4.5}
\patchcmd{\@tocline}
  {\hfil}
  {\nobreak
     \leaders\hbox{$\m@th
        \mkern \subsection@dotsep mu\hbox{.}\mkern \subsection@dotsep mu$}\hfill
     \nobreak}{}{}
\let\subsection@tocline\@tocline
\let\@tocline\old@tocline

\patchcmd{\@tocline}
  {\hfil}
  {\nobreak
     \leaders\hbox{$\m@th
        \mkern \subsubsection@dotsep mu\hbox{.}\mkern \subsubsection@dotsep mu$}\hfill
     \nobreak}{}{}
\let\subsubsection@tocline\@tocline
\let\@tocline\old@tocline

\let\old@l@subsection\l@subsection
\let\old@l@subsubsection\l@subsubsection

\def\@tocwriteb#1#2#3{%
  \begingroup
    \@xp\def\csname #2@tocline\endcsname##1##2##3##4##5##6{%
      \ifnum##1>\c@tocdepth
      \else \sbox\z@{##5\let\indentlabel\@tochangmeasure##6}\fi}%
    \csname l@#2\endcsname{#1{\csname#2name\endcsname}{\@secnumber}{}}%
  \endgroup
  \addcontentsline{toc}{#2}%
    {\protect#1{\csname#2name\endcsname}{\@secnumber}{#3}}}%

% Handle section-specific indentation and number width of ToC-related entries
\newlength{\@tocsectionindent}
\newlength{\@tocsubsectionindent}
\newlength{\@tocsubsubsectionindent}
\newlength{\@tocsectionnumwidth}
\newlength{\@tocsubsectionnumwidth}
\newlength{\@tocsubsubsectionnumwidth}
\newcommand{\settocsectionnumwidth}[1]{\setlength{\@tocsectionnumwidth}{#1}}
\newcommand{\settocsubsectionnumwidth}[1]{\setlength{\@tocsubsectionnumwidth}{#1}}
\newcommand{\settocsubsubsectionnumwidth}[1]{\setlength{\@tocsubsubsectionnumwidth}{#1}}
\newcommand{\settocsectionindent}[1]{\setlength{\@tocsectionindent}{#1}}
\newcommand{\settocsubsectionindent}[1]{\setlength{\@tocsubsectionindent}{#1}}
\newcommand{\settocsubsubsectionindent}[1]{\setlength{\@tocsubsubsectionindent}{#1}}

% Handle section-specific formatting and vertical skip of ToC-related entries
% \@tocline{<level>}{<vspace>}{<indent>}{<numberwidth>}{<extra>}{<text>}{<pagenum>}
\renewcommand{\l@section}{\section@tocline{1}{\@tocsectionvskip}{\@tocsectionindent}{\@tocsectionnumwidth}{\@tocsectionformat}}%
\renewcommand{\l@subsection}{\subsection@tocline{1}{\@tocsubsectionvskip}{\@tocsubsectionindent}{\@tocsubsectionnumwidth}{\@tocsubsectionformat}}%
\renewcommand{\l@subsubsection}{\subsubsection@tocline{1}{\@tocsubsubsectionvskip}{\@tocsubsubsectionindent}{\@tocsubsubsectionnumwidth}{\@tocsubsubsectionformat}}%
\newcommand{\@tocsectionformat}{}
\newcommand{\@tocsubsectionformat}{}
\newcommand{\@tocsubsubsectionformat}{}
\expandafter\def\csname toc@1format\endcsname{\@tocsectionformat}
\expandafter\def\csname toc@2format\endcsname{\@tocsubsectionformat}
\expandafter\def\csname toc@3format\endcsname{\@tocsubsubsectionformat}
\newcommand{\settocsectionformat}[1]{\renewcommand{\@tocsectionformat}{#1}}
\newcommand{\settocsubsectionformat}[1]{\renewcommand{\@tocsubsectionformat}{#1}}
\newcommand{\settocsubsubsectionformat}[1]{\renewcommand{\@tocsubsubsectionformat}{#1}}
\newlength{\@tocsectionvskip}
\newcommand{\settocsectionvskip}[1]{\setlength{\@tocsectionvskip}{#1}}
\newlength{\@tocsubsectionvskip}
\newcommand{\settocsubsectionvskip}[1]{\setlength{\@tocsubsectionvskip}{#1}}
\newlength{\@tocsubsubsectionvskip}
\newcommand{\settocsubsubsectionvskip}[1]{\setlength{\@tocsubsubsectionvskip}{#1}}

% Adjust section-specific ToC-related macros to have a fixed-width numbering framework
\patchcmd{\tocsection}{\indentlabel}{\makebox[\@tocsectionnumwidth][l]}{}{}
\patchcmd{\tocsubsection}{\indentlabel}{\makebox[\@tocsubsectionnumwidth][l]}{}{}
\patchcmd{\tocsubsubsection}{\indentlabel}{\makebox[\@tocsubsubsectionnumwidth][l]}{}{}

% Allow for section-specific page numbering format of ToC-related entries
\newcommand{\@sectypepnumformat}{}
\renewcommand{\contentsline}[1]{%
  \expandafter\let\expandafter\@sectypepnumformat\csname @toc#1pnumformat\endcsname%
  \csname l@#1\endcsname}
\newcommand{\@tocsectionpnumformat}{}
\newcommand{\@tocsubsectionpnumformat}{}
\newcommand{\@tocsubsubsectionpnumformat}{}
\newcommand{\setsectionpnumformat}[1]{\renewcommand{\@tocsectionpnumformat}{#1}}
\newcommand{\setsubsectionpnumformat}[1]{\renewcommand{\@tocsubsectionpnumformat}{#1}}
\newcommand{\setsubsubsectionpnumformat}[1]{\renewcommand{\@tocsubsubsectionpnumformat}{#1}}
\renewcommand{\@tocpagenum}[1]{%
  \hfill {\mdseries\@sectypepnumformat #1}}

% Small correction to Appendix, since it's still a \section which should be handled differently
\let\oldappendix\appendix
\renewcommand{\appendix}{%
  \leavevmode\oldappendix%
  \addtocontents{toc}{%
    \protect\settowidth{\protect\@tocsectionnumwidth}{\protect\@tocsectionformat\sectionname\space}%
    \protect\addtolength{\protect\@tocsectionnumwidth}{2em}}%
}
\makeatother

% #1 (default is as required)

% #2

% #3
\makeatletter
\settocsectionnumwidth{2em}
\settocsubsectionnumwidth{2.5em}
\settocsubsubsectionnumwidth{3em}
\settocsectionindent{1pc}%
\settocsubsectionindent{\dimexpr\@tocsectionindent+\@tocsectionnumwidth}%
\settocsubsubsectionindent{\dimexpr\@tocsubsectionindent+\@tocsubsectionnumwidth}%
\makeatother

% #4 & #5
\settocsectionvskip{2pt}
\settocsubsectionvskip{0pt}
\settocsubsubsectionvskip{0pt}

% #6 & #7
% See #3

% #8
% \renewcommand{\contentsnamefont}{\bfseries\Large}

% #9
\settocsectionformat{\bfseries}
\settocsubsectionformat{\mdseries}
\settocsubsubsectionformat{\mdseries}
\setsectionpnumformat{\bfseries}
\setsubsectionpnumformat{\mdseries}
\setsubsubsectionpnumformat{\mdseries}

% #10
% Insert the following command inside your text where you want the ToC to have a page break

% #11
\let\oldtableofcontents\tableofcontents
\renewcommand{\tableofcontents}{%
  \vspace*{-5\linespacing}% Default gap to top of CONTENTS is \linespacing.
  \oldtableofcontents}

\setcounter{tocdepth}{3}
%packages
\usepackage{amsmath}
\usepackage{amssymb}
\usepackage{amsthm}
\usepackage{mathtools}
\usepackage{thmtools}
%double-line in tabular via hhline{=|=|=}
\usepackage{hhline}
\usepackage{tikz-cd}
%enumerating
\usepackage{enumitem}
\setlist[enumerate]{label={\normalfont (\roman*)}, itemsep=1ex}

%input
\usepackage[utf8]{inputenc}
\usepackage[T1]{fontenc}
\usepackage{lmodern}
\usepackage[babel]{microtype}
\usepackage[english]{babel}
\usepackage{relsize}
\usepackage{calc}
\usepackage{multicol}

%labels einzeigen
% \usepackage{seqsplit}
% \usepackage{xstring}

% \usepackage[notcite,notref]{showkeys}
% \renewcommand*\showkeyslabelformat[1]{%
% \noexpandarg%
% \StrSubstitute{\(\{\)#1\(\}\)}{ }{\textvisiblespace}[\TEMP]%
% \parbox[t]{\marginparwidth}{\raggedright\normalfont\scriptsize\ttfamily\expandafter\seqsplit\expandafter{\TEMP}}}

%svg
\usepackage{svg}

%graphics
\usepackage{graphicx}
\usepackage{subcaption}

\counterwithin{figure}{section}

%pagesetup
\linespread{1.2}
\usepackage{geometry}
\geometry{includehead, margin=1truein, paperheight=11truein, paperwidth=8.5truein} %11 vs 8.5
\lineskiplimit=-4pt

%references
\usepackage{xcolor} 
\colorlet{darkishRed}{red!60!black}
\colorlet{darkishBlue}{blue!60!black}
\colorlet{darkishGreen}{green!60!black}
\colorlet{darkblue}{blue!70!black}
\colorlet{darkishViolet}{violet}
\usepackage[linktoc=all]{hyperref}
\hypersetup{
	colorlinks,
    linkcolor={darkishBlue},
	citecolor={darkishGreen},
	urlcolor={darkishBlue}
}
\usepackage[nameinlink, capitalise, noabbrev]{cleveref}
\crefformat{enumi}{#2#1#3}
\crefformat{equation}{#2(#1)#3}
\crefname{equation}{}{}
\crefname{mainresult}{Theorem}{Theorems}
\let\setminus=\smallsetminus

\newcommand{\se}{\subseteq}
\newcommand{\sm}{\setminus}
\renewcommand{\supset}{\supseteq}
\renewcommand{\leq}{\leqslant}
\renewcommand{\geq}{\geqslant}
\renewcommand{\ge}{\geq}
\renewcommand{\le}{\leq}

% theorem enviroments
% Ordinary theorems that are no main theorems that are numbered with respect to the section
\newtheorem{theorem}{Theorem}[section] 

\newtheorem{corollary}[theorem]{Corollary}
\newtheorem{lemma}[theorem]{Lemma}
\newtheorem{keylemma}[theorem]{Key Lemma}

\crefname{corlemma}{Correspondence}{Correspondences}

\crefname{liftlemma}{Lift}{Lifts}

\crefname{projlemma}{Projection}{Projections}
\newtheorem{setting}[theorem]{Setting}

\newtheorem{observation}[theorem]{Observation}
\newtheorem{conjecture}[theorem]{Conjecture}
\newtheorem{problem}[theorem]{Problem}

% Mainresults
\newtheorem{mainresult}{Theorem} 

\newtheorem{maincorollary}[mainresult]{Corollary}

% The follwowing is for Theorems that have a custom number, which is particularly of help if you restate Theorems (just take the ref of the theorem that you want to restate as parameter)  

% Not italic 
\theoremstyle{definition}
\newtheorem{example}[theorem]{Example}

\newtheorem{definition}[theorem]{Definition}

\newtheorem{challenge}{Challenge}
% Theorem styles without a number 
\theoremstyle{remark}

\newtheorem*{acknowledgment}{Acknowledgement}
\newtheorem*{claim*}{Claim}
\newtheorem{remark}[theorem]{Remark}

%claims numbered within proofs

\crefname{claim}{Claim}{Claims}
\crefname{enumi}{}{}
\AtEndEnvironment{proof}{\setcounter{claim}{0}}

\usepackage{etoolbox}

%colors for commenting

\newcommand{\COMMENT}[1]{{}}
\definecolor{cMaroon}{HTML}{93152a}
\newcommand{\defnMath}[1]{{\color{darkishRed}{#1}}}
\newcommand{\defn}[1]{{\color{darkishRed}{\emph{#1}}}}
\newcommand{\casen}[1]{{\color{darkishViolet}{\emph{#1}}}}

%letters
%for better greeks

%\let\theta=\vartheta
\let\rho=\varrho
\let\phi=\varphi
%shortcuts

%sets of numbers
\def\N{\mathbb N}

\def\Z{\mathbb Z}

% \fraka für \mathfrak{a}, \cC für \mathcal{C} etc
\makeatletter

\def\calCommandfactory#1{%
  \expandafter\def\csname c#1\endcsname{\mathcal{#1}}}
\def\frakCommandfactory#1{%
  \expandafter\def\csname frak#1\endcsname{\mathfrak{#1}}}
   
\newcounter{ctr}
\loop
  \stepcounter{ctr}
  \edef\X{\@Alph\c@ctr}
  \expandafter\calCommandfactory\X
  \expandafter\frakCommandfactory\X
  %\edef\Y{\@alph\c@ctr}
  %\expandafter\frakCommandfactory\Y
\ifnum\thectr<26
\repeat

%\renewcommand{\cC}{\mathscr{C}}
%\renewcommand{\cD}{\mathscr{D}}
%\renewcommand{\cK}{\mathscr{K}}
%\renewcommand{\cP}{\mathscr{P}}
% \renewcommand{\cR}{\mathscr{R}} -- R: I changed this to get curly R back.
%\renewcommand{\cF}{\mathscr{F}}

% Leader dots in toc
\makeatletter
\patchcmd{\@tocline}
  {\hfil}
  {\leaders\hbox{\,.\,}\hfil}
  {}{}
\makeatother

%commands

\newcounter{mylabelcounter}

\makeatletter
\newcommand{\labelText}[2]{%
#1\refstepcounter{mylabelcounter}%
\immediate\write\@auxout{%
  \string\newlabel{#2}{{1}{\thepage}{{\unexpanded{#1}}}{mylabelcounter.\number\value{mylabelcounter}}{}}%
}%
}

\newlist{defenum}{enumerate}{1}
\setlist[defenum]{label=(\upshape\thedefinition.\arabic*)}

\newcommand{\allref}[1]{\nameref{#1}~(\ref{#1})}
\newcommand{\pl}{${}^+$}

\newcommand{\trisp}{strict tri-separation}

\usepackage[pagewise]{lineno}

%\linenumbers

% The next lines deal with the bibliography
\usepackage{csquotes}
\usepackage[backend=biber, style=alphabetic, maxnames=99, maxalphanames=99, giveninits=true, sortcites=true]{biblatex}
\addbibresource{collective.bib}

\DeclareLabelalphaTemplate{
    \labelelement{
        \field[final]{shorthand}
        \field{label}
        \field[strwidth=1,strside=left]{labelname}
    }
    \labelelement{
        \field[strwidth=2,strside=right]{year}
    }
}

\title[A Tutte-type canonical decomposition of 3- and 4-connected graphs]{A Tutte-type canonical decomposition\\of 3- and 4-connected graphs}

%\author{Anonymised}
\author[Jan Kurkofka]{Jan Kurkofka${}^{\includegraphics[height=.7\baselineskip]{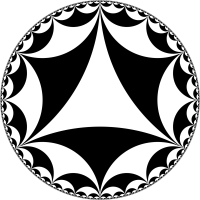}}$}
\author[Tim Planken]{Tim Planken$^\dagger$}

\thanks{$\includegraphics[height=.7\baselineskip]{Figures/MiniFarey.png}$ funded by the Deutsche Forschungsgemeinschaft (DFG, German Research Foundation) -- 566118291; 546892829}

\thanks{${}^\dagger$ funded by the Deutsche Forschungsgemeinschaft (DFG, German Research Foundation) -- 546892829}

\keywords{4-connected, decomposition, Tutte, canonical, double wheel, quasi-5-connected, 4-separation, 4-separator, tetra-separation, tetra-separator}
\subjclass[2020]{05C40, 05C75, 05C83, 05E18}

\begin{document}
\thispagestyle{empty}

\begin{abstract}
    We provide a unique decomposition of every 4-connected graph into
    parts that are either quasi-5-connected, cycles of triangle-torsos and 3-connected torsos on $\le 5$ vertices, generalised double-wheels, or thickened $K_{4,m}$'s.
    The decomposition can be described in terms of a tree-decomposition but with edges allowed in the adhesion-sets.
    Our construction is explicit, canonical, and exhibits a defining property of the Tutte-decomposition.
    
    As a corollary, we obtain a new Tutte-type canonical decomposition of 3-connected graphs into parts that are either quasi-4-connected, generalised wheels or thickened $K_{3,m}$'s.
    This decomposition is similar yet different from the tri-separation decomposition~\cite{Tridecomp}.
    
    As an application of the decomposition for 4-connectivity, we obtain a new theorem~\cite{TetraTrans} characterising all vertex-transitive finite connected graphs
    as essentially quasi-5-connected or on a short explicit list of graphs.
\end{abstract}
\topskip0pt
\vspace*{\fill}
\maketitle

\vspace*{\fill}

\thispagestyle{empty}

\newpage
\thispagestyle{empty}
\tableofcontents

\newpage
\setcounter{page}{1}
\section{Introduction}

\begin{problem}\label{prob:dec}
    Is there for every $k\in\N$ an explicit canonical way to decompose every $k$-connected graph along $k$-`separators'
    into smaller pieces that are $(k+1)$-connected or `basic'?
\end{problem}
Applications of solutions for small $k$ range from the two-paths problem~\cite{GMIX,GMXIII,GMXVI} and finding excluded minors, such as for Kuratowski's theorem~\cite{ThomassenKura}, to group theory \cites{StallingsNilpotent,SmallHittingSet}, connectivity-augmentation~\cite{conny_aug_soda}, and even the graph isomorphism problem~\cite{Isomorphism}.
SPQR-trees have been introduced to represent solutions for $k=2$ \cites{SPQRincremental,SPQRonline,SPQRHopcroftTarjan,SPQRBienstock} and have since become a standard tool in the study of planar graphs from an algorithmic perspective; 
see~\cites{SPQRupwarPlanarity,Cplanarity,PlanarityPolylog,AlphaPlanarityPrelim,MinimumDepth,ComplexityEmbedding,Orthogonal,OrthogonalCubic,TriangulationAlgo,goetze2024}.

\cref{prob:dec} has a trivial solution for $k=0$, and for $k=1$ a positive answer is provided by the block-cutvertex-decomposition.
Already for $k=2$, the solution is a classical theorem of Tutte~\cite{TutteCon} from 1966.
A non-canonical solution for $k=3$ was found by Grohe~\cite{grohe2016quasi} in 2016.
Still, it was not known how to extend Tutte's approach to $k=3$ until recently, when Carmesin and Kurkofka discovered a new notion of 3-separators which allowed them to find a Tutte-type canonical decomposition of 3-connected graphs~\cite{Tridecomp}.
However, examples show that the obvious extension of their solution to $k = 4$ fails.
In particular, it was not clear whether there even exists a notion of 4-separators that allows for such an extension.

In this paper, we introduce a new type of 4-separator and, as our main result, use it to prove a Tutte-type canonical decomposition theorem for 4-connectivity.
As a corollary, we obtain a new Tutte-type canonical decomposition result for 3-connectivity that provides an alternative perspective to \cite{Tridecomp}.
We also obtain a new theorem that characterises vertex-transitive quasi-4-connected finite graphs.\medskip

\noindent\textbf{Background.}
To solve \cref{prob:dec} for $k=0$ and $k=1$, one simply cuts along all $k$-separators of the graph.
But already for $k=2$, this approach fails.
Indeed, different 2-separators can \emph{cross} in that they cut through each other, as happens in a cycle.
Then if we cut at one of the 2-separators, we lose the other.
Tutte realised that the solution is to cut the graph at all 2-separators that are \emph{totally-nested} in that they are not crossed by any 2-separators, and indeed no conflicts arise when cutting in this way.
The resulting pieces of Tutte's decomposition are 3-connected, cycles or $K_2$'s.
All decompositions for $k\le 2$ can be described in terms of tree-decompositions.
For instance, Tutte's selection of 2-separators gives a recipe for a tree-decomposition with all adhesion-sets of size~$2$ and all whose torsos\footnote{A \emph{torso} of a tree-decomposition is obtained from a bag by making each of its adhesion-sets into a clique.} are 3-connected, cycles or $K_2$'s.

Tutte's approach, when extended verbatim, fails for $k\ge 3$.
There are at least two main challenges that must be overcome.

\begin{challenge}\label{challenge1}
There are $k$-connected graphs which just slightly fail to be $(k+1)$-connected, but all whose $k$-separators are crossed.
For example, if $G$ is $k$-connected and the neighbourhoods of its vertices are precisely the $k$-separators of $G$, then the $k$-separators around neighbouring vertices cross, but $G$ fails to be $(k+1)$-connected.
Such $G$ exist for all $k\ge 3$: take a circular saw as in \cref{fig:QKC}, for instance.
We believe that such graphs should not have to be decomposed any further at level~$k$.
\end{challenge}

\begin{figure}[ht]
    \centering
    \includegraphics[height=7\baselineskip]{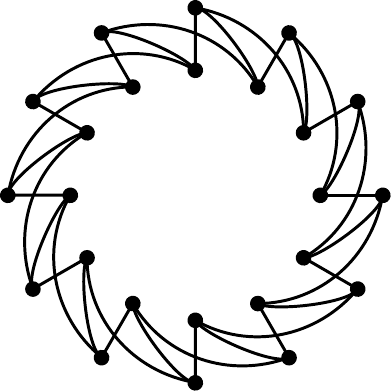}
    \caption{A \emph{circular saw} has vertex-set $\Z_n\times\Z_2$ with $n\gg k$ and every vertex $(v,0)$ sends edges to the $k$ vertices $(v,1),\ldots,(v+k-1,1)$. It is $k$-connected, and when $k\ge 3$ its neighbourhoods of vertices are precisely its $k$-separators (\cref{SawWorks}).}
    \label{fig:QKC}
\end{figure}

\begin{challenge}\label{challenge2}
We consider the $k$-connected graph $G$ depicted in \cref{fig:BigNecklace} if $k$ is odd and in \cref{fig:BigNecklaceEven} if $k$ is even.
Every $k$-separator of $G$ is formed from a red set, a non-adjacent blue set, and if $k$ is even also the apex-vertex in the centre.
Hence every $k$-separator of $G$ is crossed by another $k$-separator.
However, $G$ is far from $(k+1)$-connected, and in no way `basic': every $42k$-clique can be replaced with an arbitrarily large $(k+1)$-connected graph, so $G$ represents a whole zoo of examples.
\end{challenge}

\begin{figure}[ht]
\centering
\begin{minipage}[b]{.45\textwidth}%
    \centering
    \captionsetup{width=1\textwidth}
    \includegraphics[height=8\baselineskip]{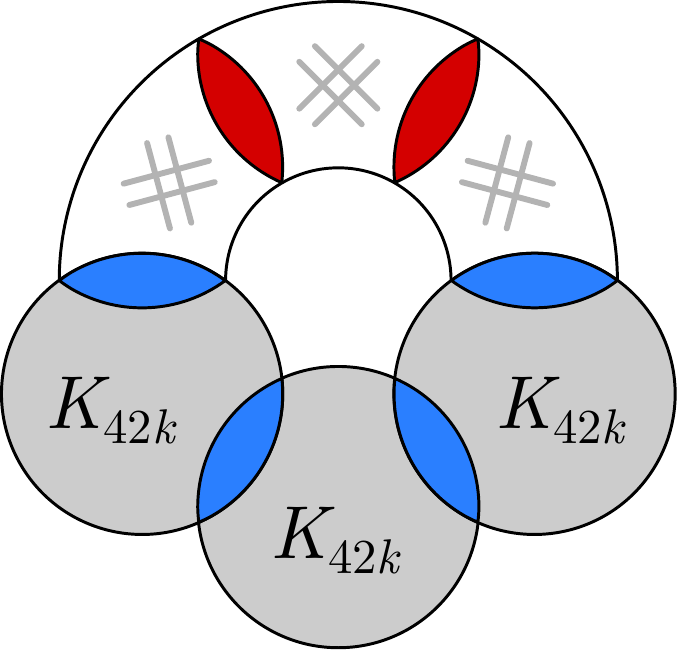}
    \captionof{figure}{$k$ is odd. Red: $\lfloor k/2\rfloor$-cliques. Blue: $\lceil k/2\rceil$-cliques. Grey indicates all possible edges.}
    \label{fig:BigNecklace}
\end{minipage}\hfill\begin{minipage}[b]{.45\textwidth}
    \centering
    \captionsetup{width=1\textwidth}
    \includegraphics[height=8\baselineskip]{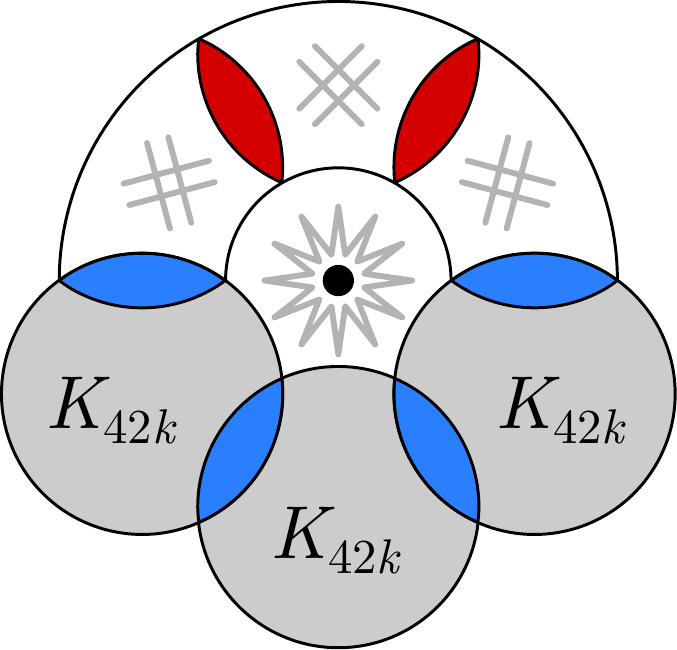}
    \captionof{figure}{$k$ is even. Red: $(\tfrac{k}{2}-1)$-cliques. Blue: $\tfrac{k}{2}$-cliques. Grey indicates all possible edges. The central vertex is an apex.}
    \label{fig:BigNecklaceEven}
\end{minipage}
\end{figure}

\noindent\textbf{Decomposing 3- and 4-connected graphs.}
We found a way to overcome the two challenges for $k=3$ and $k=4$ with a twofold approach, which we first explain for $k=4$.\smallskip

\textbf{Ad \cref{challenge1}.} We relax the notion of $5$-connected to \emph{quasi-5-connected}, an idea that we learned about from Grohe's work \cite{grohe2016quasi}.
A graph $G$ is \emph{quasi-5-connected} if it is 4-connected and every 4-separation $(A,B)$ of $G$ satisfies $|A\sm B|=1$ or $|B\sm A|=1$. 
We will not decompose quasi-5-connected graphs any further.\smallskip
    
\textbf{Ad \cref{challenge2}.}
We introduce the new notion of a \emph{tetra-separation}, which we use instead of 4-separators.
There are two key differences.
On the one hand, tetra-separations may use edges in addition to vertices to cut up the graph, an idea that we learned about from the work of Carmesin and Kurkofka \cite{Tridecomp} (we will discuss the relation later).
On the other hand, every vertex used by a tetra-separation to cut up the graph into two sides must have at least two neighbours in each side, not counting neighbours that are also used by the tetra-separation.
This degree-condition is new.

A \emph{mixed-separation} of a graph $G$ is a pair $(A,B)$ with $A\cup B=V(G)$ and $A\sm B\neq\emptyset\neq B\sm A$.
The \emph{separator} of $(A,B)$ is the set of all vertices in $A\cap B$ and all edges in $G$ with one end in $A\sm B$ and the other in $B\sm A$.
In a 4-connected graph, every mixed-separation has a separator of size~$\ge 4$ (\cref{kConMixed}).
A \emph{tetra-separation} of $G$ is a mixed-separation $(A,B)$ of $G$ whose separator has size four and which satisfies the following two conditions:\medskip

\noindent\begin{tabular}{@{}l@{}cl}
    \emph{degree-condition} & {}\,: &  every vertex in the separator has $\ge 2$ neighbours in both $A \sm B$ and $B \sm A$;\\
    \emph{matching-condition} & {}\,: & the edges in the separator form a matching.
\end{tabular}\medskip

Two tetra-separations $(A,B)$ and $(C,D)$ are \emph{nested} if, after possibly exchanging the name $A$ with $B$ or the name $C$ with $D$, we have $A\se C$ and $B\supseteq D$.
A tetra-separation of $G$ is \emph{totally-nested} if it is nested with every tetra-separation of $G$.
\cref{fig:NecklaceTotally} shows the totally-nested tetra-separations of an example-graph similar to the one shown in \cref{fig:BigNecklaceEven}.\smallskip

\begin{figure}[ht]
    \centering
    \includegraphics[height=8\baselineskip]{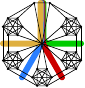}
    \caption{The totally-nested tetra-separations of a graph. Their separators are coloured.}
    \label{fig:NecklaceTotally}
\end{figure}

While the decompositions for $k$-connectivity for $k\le 2$ such as the Tutte-decomposition can be described as tree-decompositions, the decomposition of 4-connected graphs via tetra-separations requires a notion that sits in between the notions of tree-decomposition (which allows only vertices in adhesion-sets) and Wollan's notion of tree-cut decomposition (which allows only edges in adhesion-sets) \cite{WollanExcludedImmersion}.
To this end, the notion of \emph{mixed-tree-decomposition} has been introduced which allows both vertices and edges in adhesion-sets~\cite{Tridecomp}.
The set of totally-nested tetra-separations of a 4-connected graph $G$ canonically corresponds to a unique mixed-tree-decomposition of~$G$. See \cref{sec:MixedTdcs} for details.

In order to state our decomposition-result for 4-connected graphs, we need the following definitions to describe the `basic' parts of the decomposition -- those that are not quasi-5-connected.
Graph-decompositions generalise tree-decompositions in that the decomposition-tree may take the form of an arbitrary graph, such as a cycle, in which case we speak of a \emph{cycle-decomposition}~\cite{canonicalGraphDec}.
Let $X$ be a graph.
We call $X$ a \emph{generalised double-wheel} with \emph{centre} $\{u,v\}$ if $X$ is 4-connected and $u,v$ are distinct vertices of $X$ such that $X-u-v$ has a cycle-decomposition of adhesion 1 into $K_2$'s and triangles (\cref{fig:genDoubleWheel}).
We call $X$ a \emph{cycle of $X_1$-torsos, \dots , $X_k$-torsos, $Y_1$-bags, \dots, $Y_\ell$-bags} where the $X_i$ and $Y_j$ are graphs, if $X$ is 4-connected and has a cycle-decomposition with all adhesion-sets of size~2 such that for every bag either its torso is isomorphic to some~$X_i$ or the bag itself is isomorphic to some $Y_j$ (\cref{fig:CycleOfEverything}).
Recall that the only 3-connected graphs on $\le 5$ vertices are $K_4$, the $4$-wheel, $K_5$ minus an edge, and~$K_5$.
A \emph{thickened $K_{4,m}$} is obtained from a $K_{4,m}$ by turning the left side into a~$K_4$.
A \emph{sprinkled $K_{4,m}$} is obtained from a $K_{4,m}$ by adding any number of edges to the left side.

\begin{figure}[ht]
\centering
\begin{minipage}[b]{.45\textwidth}%
    \centering
    \captionsetup{width=1\textwidth}
    \includegraphics[height=7\baselineskip]{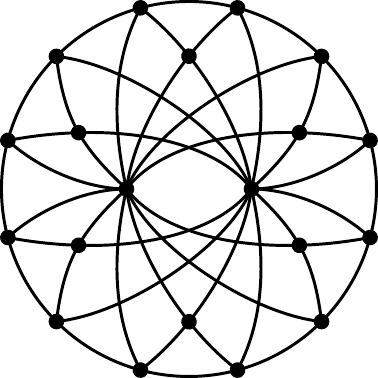}
    \captionof{figure}{A generalised double-wheel\\{\,}}
    \label{fig:genDoubleWheel}
\end{minipage}\hfill\begin{minipage}[b]{.5\textwidth}
    \centering
    \captionsetup{width=1\textwidth}
    \includegraphics[height=7\baselineskip]{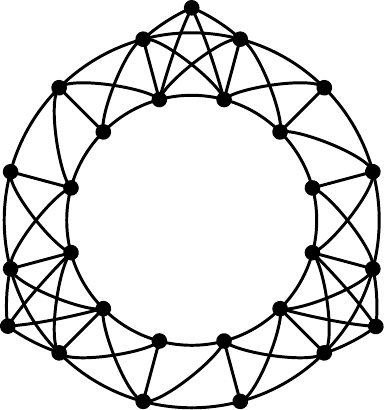}
    \captionof{figure}{A cycle of triangle-bags, $K_4$-bags and $K_5$-bags}
    \label{fig:CycleOfEverything}
\end{minipage}
\end{figure}

\begin{mainresult}\label{MainDecomp}
    Let $G$ be a 4-connected graph.
    Let $\cT(G)$ denote the mixed-tree-decomposition of $G$ that is uniquely determined by the set $N(G)$ of all totally-nested tetra-separations of~$G$.
    Every torso $\tau$ of $\cT(G)$ satisfies one of the following:
    \begin{enumerate}
        \item\label{MainDecompQuasi5con} $\tau$ is quasi-5-connected;
        \item\label{MainDecompBagel} $\tau$ is a cycle of triangle-torsos and 3-connected torsos on $\le 5$ vertices; 
        \item\label{MainDecompDoubleWheel} $\tau$ is a generalised double-wheel;
        \item\label{MainDecompK4m} $\tau$ is a thickened $K_{4,m}$ with $m\ge 0$, or the entire graph $G$ is a sprinkled $K_{4,m}$ with $m\ge 4$.
    \end{enumerate}
\end{mainresult}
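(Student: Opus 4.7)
The plan is to take a torso $\tau$ of $\cT(G)$, assume case~\cref{MainDecompQuasi5con} fails (so $\tau$ is not quasi-5-connected), and show that one of \cref{MainDecompBagel}--\cref{MainDecompK4m} holds. The strategy mirrors Tutte's original approach for 2-separations: torsos of the canonical decomposition are precisely the pieces in which every non-trivial separation is crossed, and such pieces can be classified structurally.

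First I would establish a \emph{lifting} statement saying that every tetra-separation of $\tau$ arises from a tetra-separation of $G$, obtained by absorbing the decomposition-branches hanging off each adhesion-set into whichever side the corresponding attachment lies in; this will use the correspondence between $N(G)$ and $\cT(G)$ from \cref{sec:MixedTdcs}, together with a routine check that the degree- and matching-conditions survive the lift (possibly after replacing a separator vertex by an incident edge when the vertex has only one neighbour on one side of the torso-separation). Because $\tau$ is a torso of the canonical mixed-tree-decomposition induced by $N(G)$, no such lifted tetra-separation can itself be totally-nested in $G$ — otherwise it would already split $\tau$ further — so every tetra-separation of $\tau$ must be crossed by some tetra-separation of~$G$. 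Pulling the crossing witness back down to $\tau$ then shows that every tetra-separation of~$\tau$ is crossed inside~$\tau$.

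The main work is then a structure theorem: a 4-connected graph all of whose tetra-separations are crossed, and which has at least one, must be a cycle of triangle-torsos and $3$-connected torsos on $\le 5$ vertices, a generalised double-wheel, or a (possibly sprinkled) thickened $K_{4,m}$. I would pick two crossing tetra-separations $(A,B)$ and $(C,D)$ of $\tau$ and analyse the four corners $A\cap C$, $A\cap D$, $B\cap C$, $B\cap D$; the size-$4$ bound on the separator, together with the degree-condition (which forces each separator-vertex to spend at least two neighbours on each side), should constrain each corner to be very small and each `cross-corner pair' of opposite corners to share almost the entire separator. Iterating across all crossings, the configurations should collapse into three patterns: a cyclic rotation-pattern in which rotating the separator around the graph produces all other separators, yielding case~\cref{MainDecompBagel}; a pair $\{u,v\}$ of vertices lying in every separator, yielding case~\cref{MainDecompDoubleWheel}; and a four-vertex clique-like core with only degree-$4$ pendants attached, yielding case~\cref{MainDecompK4m}. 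The dichotomy in~\cref{MainDecompK4m} between ``torso is a thickened $K_{4,m}$'' and ``$G$ itself is a sprinkled $K_{4,m}$'' needs separate handling: in the latter, extra edges on the $K_4$-side can obstruct every candidate tetra-separation from being totally-nested, so the whole of $G$ collapses to a single torso.

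The hard part will be this final structural case analysis under pairwise crossing. Tetra-separations allow both vertices and edges in the separator, so each corner's boundary is a mixture of vertex-overlap and incident matching-edges, and both the degree- and matching-conditions must be tracked carefully through every uncrossing. Ruling out hybrid configurations that combine, for instance, a partial cyclic structure with a common axis-vertex, or a ``half-wheel, half-clique'' pattern, will require delicate uncrossing arguments to show that \cref{MainDecompBagel}--\cref{MainDecompK4m} are genuinely exhaustive. Verifying the precise cycle-decomposition of $\tau - u - v$ into $K_2$'s and triangles in case~\cref{MainDecompDoubleWheel}, and the full $K_4$-side clique-structure in case~\cref{MainDecompK4m}, should then follow from iterating the matching- and degree-conditions around the rim or clique respectively.
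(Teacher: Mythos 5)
Your architecture is genuinely different from the paper's: you propose to (a) transfer the problem into the torso $\tau$, show every tetra-separation of $\tau$ is crossed \emph{inside $\tau$}, and then (b) invoke a structure theorem for 4-connected graphs all of whose tetra-separations are crossed. The paper does prove such an ``angry theorem'' (\cref{4angry}), but explicitly does \emph{not} use it to prove \cref{MainDecomp}; instead it works in $G$ throughout: a crossing pair $(A,B),(C,D)$ in $G$ with vertex-centre $Z$ satisfies $|Z|\in\{0,2,4\}$ (\cref{keylem:crossing}), each case yields an explicit auxiliary cycle-decomposition of $G$ (Tutte-bagel, block-bagel, or the components of $G-Z$), the ``good'' bags of that decomposition are shown to produce exactly the totally-nested tetra-separations forming the splitting star $\sigma$, and the torso is then computed directly (\cref{bagelTorso}, \cref{wheelTorso}, \cref{SprinkledK4mSummary}). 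The only torso-to-$G$ lift the paper needs is the easy one-directional \cref{big4sepInterlaces} used for the quasi-5-connected case.

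The gap in your route is step (a). Suppose a tetra-separation $(A,B)$ of $\tau$ lifts to a tetra-separation $(\hat A,\hat B)$ of $G$. Since $(\hat A,\hat B)$ is not totally-nested (else it would refine $\cT(G)$), it is crossed by some tetra-separation $(C,D)$ of $G$ --- but $(C,D)$ is itself generally \emph{not} totally-nested, hence not nested with the splitting star $\sigma$, and so it need not induce any mixed-separation of $\tau$ at all: it can cut through several adhesion-sets of $\cT(G)$ simultaneously. ``Pulling the crossing witness back down to $\tau$'' is therefore not well-defined, and establishing that $\tau$ is 4-angry would require a two-way correspondence between (crossings of) tetra-separations of $\tau$ and of $G$ that the paper deliberately avoids and that does not follow from routine checks --- note that torsos are formed by adding clique-edges on the sets $\lambda(A_i,B_i)$ and then \emph{contracting} separator edges, both of which change degrees and can create or destroy the degree- and matching-conditions. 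A secondary mismatch: the angry classification of a graph (\cref{4angry}) lists cycles containing 4-cycle-torsos and a parameter condition $\alpha(\cO)\ge 4$, which is not literally outcome \cref{MainDecompBagel} for a torso; reconciling the two again requires the contraction analysis of \cref{bagelTorso}. If you want to keep your architecture, the honest work is exactly this lifting/projection correspondence; otherwise I recommend following the paper's plan of constructing the splitting star inside $G$ from a single crossing pair.
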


\begin{remark}\label{MainDecompCanonical}
    Both $N(G)$ and $\cT(G)$ in \cref{MainDecomp} are canonical: if $\varphi\colon G\to G'$ is a graph-isomorphism, then $N(\varphi(G))=\varphi(N(G))$ and $\cT(\varphi(G))=\varphi(\cT(G))$, and the automorphism group of every $G$ induces a group-action on~$N(G)$ and on~$\cT(G)$.
\end{remark}

\begin{remark}\label{MainDecompInfinite}
    \cref{MainDecomp} includes infinite graphs $G$ with no additional restrictions whatsoever.
    More generally, all graphs in this paper are allowed to be infinite, unless stated otherwise.
\end{remark}

\cref{MainDecomp} implies a new Tutte-type canonical decomposition for 3-connectivity.
A \emph{\trisp } is defined like a tetra-separation, except that the separator is required to have size three instead of four.
A \trisp\ of a graph $G$ is \emph{totally-nested} if it is nested with every \trisp\ of~$G$.
A \emph{generalised wheel} with \emph{centre} $v$ is a 3-connected graph $X$ with a vertex $v$ such that $X-v$ has a cycle-decomposition with all adhesion-sets of size 1 and into $K_2$'s and triangles.

\begin{maincorollary}\label{3Decomp}
    Let $G$ be a 3-connected graph.
    Let $M(G)$ denote the set of all \trisp s of $G$ that are totally-nested (with regard to the \trisp s).
    Let $\cT(G)$ denote the mixed-tree-decomposition of $G$ that is uniquely determined by $M(G)$.
    Every torso $\tau$ of $\cT(G)$ satisfies one of the following:
    \begin{enumerate}
        \item\label{3DecompQuasi4con} $\tau$ is quasi-4-connected;
        \item\label{3DecompWheel} $\tau$ is a generalised wheel;
        \item\label{3DecompK3m} $\tau$ is a thickened $K_{3,m}$ with $m\ge 0$, or the entire graph $G$ is a sprinkled $K_{3,m}$ with $m\ge 4$.
    \end{enumerate}
\end{maincorollary}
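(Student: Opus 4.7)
The plan is to derive \cref{3Decomp} from \cref{MainDecomp} via an apex-trick. Let $G':=G+v^*$ be the graph obtained from $G$ by adjoining a new vertex $v^*$ adjacent to every vertex of $G$. Since $G$ is 3-connected, $G'$ is 4-connected: any 3-vertex cut of $G'$ either contains $v^*$, in which case its complement is $G$ minus two vertices, which is connected by 3-connectedness of $G$, or it avoids $v^*$, and then $v^*$ still joins everything.

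The key step is to set up a canonical bijection between the tetra-separations of $G'$ and the \trisp s of $G$, given by $(A,B)\leftrightarrow(A\sm\{v^*\},B\sm\{v^*\})$. The crucial point is that $v^*\in A\cap B$ for every tetra-separation $(A,B)$ of $G'$: otherwise, WLOG $v^*\in A\sm B$, and then all $|B\sm A|$ edges from $v^*$ into $B\sm A$ lie in the separator and share $v^*$, so the matching-condition forces $|B\sm A|\le 1$; the separator then still has $|A\cap B|=3$ vertices, each needing $\ge 2$ neighbours in the singleton $B\sm A$, contradicting the degree-condition. Granted this, the separator-size, degree, and matching conditions translate in both directions, and nestedness is preserved. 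Thus the bijection restricts to $N(G')\leftrightarrow M(G)$, and $\cT(G)$ is obtained from $\cT(G')$ by deleting $v^*$ from every bag and adhesion; in particular every torso of $\cT(G)$ has the form $\tau=\tau'-v^*$ for the corresponding torso $\tau'$ of $\cT(G')$, with $v^*$ universal in~$\tau'$.

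It remains to translate the four torso-outcomes of \cref{MainDecomp} for $\tau'$ into the three for $\tau$. If $\tau'$ is quasi-5-connected, then every 3-separation $(A,B)$ of $\tau$ lifts to the 4-separation $(A\cup\{v^*\},B\cup\{v^*\})$ of $\tau'$ with the same set-differences, so $\tau$ is quasi-4-connected. If $\tau'$ is a generalised double-wheel with centre $\{u,v\}$, then one argues that $v^*\in\{u,v\}$ (otherwise $v^*$ would be universal in the adhesion-$1$ cycle-decomposition of $\tau'-u-v$, which is excluded by the argument used below for case~(ii)); setting $v=v^*$, the graph $\tau$ is then a generalised wheel with centre $u$, since $\tau-u=\tau'-u-v^*$ carries the required cycle-decomposition into $K_2$'s and triangles. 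If $\tau'$ is a thickened (or sprinkled) $K_{4,m}$, then $v^*$ must sit on the $K_4$-side, as only those vertices achieve the maximal degree $|V(\tau')|-1$, and deleting $v^*$ yields the corresponding thickened (or sprinkled) $K_{3,m}$.

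The main obstacle is case~(ii) of \cref{MainDecomp}, the cycle-of-torsos case. The key claim here is that $v^*$, being universal in $\tau'$, must lie in every bag of the internal cycle-decomposition of $\tau'$, and hence in every size-$2$ adhesion. Deleting $v^*$ then yields a cycle-decomposition of $\tau'-v^*$ with adhesion~$1$; but any non-trivial such decomposition contains two distinct adhesion-vertices whose removal separates the decomposition-cycle into two nontrivial arcs, giving a 2-cut of $\tau'-v^*$ that contradicts its 3-connectedness (inherited from the 4-connectedness of $\tau'$). Hence case~(ii) cannot occur for $\tau'$, and only the three cases listed in the corollary remain. Canonicity of $M(G)$ and $\cT(G)$ finally transfers from \cref{MainDecompCanonical} via the canonical apex-construction and the canonical bijection above.
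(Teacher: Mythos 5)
Your overall route is the same as the paper's: adjoin an apex $v^*$, observe that every tetra-separation of $G'=G+v^*$ has $v^*$ in its separator, obtain a canonical bijection between \trisp s of $G$ and tetra-separations of $G'$ that restricts to the totally-nested ones, and read off $\tau=\tau'-v^*$. The essential difference is in how the two arguments finish. The paper does not apply \cref{MainDecomp} as a black box; it applies the technical strengthening (\cref{MainDecompStrengthening}), which records the vertex-centre $Z$ of the crossing tetra-separations that interlace the splitting star. Since $v^*$ lies in every tetra-separator of $G'$, it lies in the vertex-centre of any crossing, so $|\hat Z|\in\{2,4\}$ and the cycle-of-torsos outcome ($|\hat Z|=0$) is excluded *at the level of separations*, before one ever looks at the torso; moreover $\hat Z$ is exactly the centre of the double-wheel, resp.\ the left side of the $K_{4,m}$, so the position of the apex in those torsos is automatic rather than something to be recovered afterwards.

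Because you instead work from the unstrengthened statement, you must locate $v^*$ inside each torso type by hand, and this is where the gaps are. Most seriously, in the cycle-of-torsos case your key claim that a universal vertex of $\tau'$ "must lie in every bag" is not justified: the argument "every neighbour of $v^*$ shares a bag with $v^*$" only forces $v^*$ into bags that have interior vertices, and in a cycle of triangle-torsos and small $3$-connected torsos a bag may well equal the union of its two adhesion-sets and have no interior at all. Without $v^*$ in every bag you do not get the adhesion-$1$ cycle-decomposition of $\tau'-v^*$, and the contradiction with $3$-connectivity evaporates; the degenerate short-cycle cases also need separate treatment. Similar (more minor) issues occur in the other cases: the "maximal degree" argument placing $v^*$ on the $K_4$-side fails for $m\le 1$, and the double-wheel argument you delegate to "the argument used below for case (ii)" does not apply when $\tau'-u-v$ is a triangle, where a universal vertex of the rim does exist (though the conclusion still holds there because $K_4$ is a generalised wheel). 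All of these can be patched, but the clean fix is the one the paper uses: carry the vertex-centre information through the apex construction via the strengthened theorem, or equivalently argue directly that the two crossing tetra-separations of $G'$ both have $v^*$ in their separators and invoke the \allref{keylem:crossing}.
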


Both \cref{MainDecompCanonical,MainDecompInfinite} also apply to \cref{3Decomp}.\medskip

\noindent\textbf{Comparison to the tri-separation decomposition.}
Carmesin and Kurkofka introduced the first Tutte-type canonical decomposition for 3-connected graphs \cite{Tridecomp}.
They used tri-separations instead of \trisp s to obtain their decomposition.
The main difference between tri-separations on the one hand, and \trisp s and tetra-separations on the other hand, is how their degree-conditions handle neighbours in the separator: tri-separations count them, but \trisp s and tetra-separations ignore them.
This seemingly subtle difference has a surprisingly big impact: the verbatim extension of the tri-separation approach to 4-connectivity fails \cref{challenge2}.

For 3-connected graphs, the \trisp\ decomposition can be viewed as a refinement of the tri-separation decomposition, but the details are a bit technical; see \cref{sec:comparison}.\medskip

\noindent\textbf{Algorithmic aspects.}
It is straightforward to come up with a polynomial-time algorithm that produces the decompositions in~\cref{MainDecomp} and \cref{3Decomp} for all 4- and 3-connected finite graphs.

Korhonen~\cite{korhonen2024} recently showed that there is an algorithm that, given an $n$-vertex $m$-edge finite graph~$G$ and an integer~$k\ge 1$, returns a $k$-lean tree-decomposition of~$G$ in time $k^{\cO(k^2)}n+\cO(m)$.
We believe that every 5-lean tree-decomposition of every 4-connected finite graph~$G$ can be made into a tetra-separation decomposition of~$G$ as in \cref{MainDecomp} in linear time:

\begin{conjecture}
    There is an algorithm that, given a 4-connected $n$-vertex $m$-edge finite graph~$G$, returns the tetra-separation decomposition of~$G$ from \cref{MainDecomp} in linear time $\cO(n+m)$.
\end{conjecture}

A positive answer would also provide a linear-time algorithm for the decomposition of 3-connected finite graphs in \cref{3Decomp} by following the short proof of \cref{3Decomp}.

Total-nestedness makes parallel computing a feasible option for computing the tetra-separation decomposition even more efficiently.
Indeed, all the partial solutions found in parallel, which come in the form of sets of totally-nested tetra-separations, can always be combined without conflict.
\medskip

\noindent\textbf{Combining the decompositions for different~$k$.}
Every finite graph~$G$ can be decomposed canonically by combining the partial solutions to \cref{prob:dec} inductively, as follows.
\begin{enumerate}[label=\arabic*)]
    \item First, we trivially decompose $G$ into its connected components.
    \item Each connected component we further decompose with the block-cutvertex decomposition into bags that are 2-connected or~$K_2$'s.
    \item Each 2-connected bag we further decompose with the Tutte-decomposition into torsos that are 3-connected, cycles or~$K_2$'s.
    \item Each 3-connected torso we further decompose with \cref{3Decomp} into torsos that are quasi-4-connected, generalised wheels or thickened~$K_{3,m}$'s. Alternatively, we use~\cite{Tridecomp}.
\end{enumerate}
We have relaxed `4-connected' to `quasi-4-connected' to overcome \cref{challenge1}, and we now pay the price: it is not possible to directly apply \cref{MainDecomp} to the quasi-4-connected torsos.
Here, we have two options.
On the one hand, we could try to generalise \cref{MainDecomp} from 4-connected to quasi-4-connected graphs.
We have explored this option, but did not find a practical result; see \cref{fig:Quasi4Necklace} for an illustrative example of how the naive approach fails.
On the other hand, we could try to gently make the quasi-4-connected graphs into 4-connected ones to bring them into the scope of \cref{MainDecomp}.
We have opted for this solution, and to this end introduce the following canonical $Y$--$\Delta$ operation that deals with all degree-three vertices simultaneously.

\begin{figure}[ht]
    \centering
    \includegraphics[height=6\baselineskip]{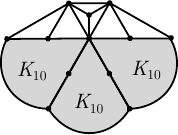}
    \caption{A quasi-4-connected version of \cref{challenge2}: all tetra-separations are crossed, yet the graph is far from $5$-connected and each $K_{10}$ can be replaced with an arbitrary 5-connected graph}
    \label{fig:Quasi4Necklace}
\end{figure}

Let $H$ be an arbitrary graph.
The following definition of $H^\Delta$ is supported by~\cref{fig:GDeltaDef}.
Let $U$ be the set of all vertices in $H$ of degree three.
We obtain $H^s$ from $H$ by subdividing every edge that joins two vertices in~$U$.
Then we obtain $H^{s\Delta}$ from $H^s$ by making each neighbourhood $N_{H^s}(u)$ around a vertex $u\in U$ into a triangle~$\Delta(u)$.
Finally, we let $H^\Delta:=H^{s\Delta}-U$.
If $H$ is quasi-4-connected and has $>6$ vertices, then $H^\Delta$ is 4-connected by \cref{GDelta4con}.
The operation $H\mapsto H^\Delta$ is canonical: every isomorphism $H_1\to H_2$ induces an isomorphism $H^\Delta_1\to H^\Delta_2$.

\begin{figure}[ht]
    \centering
    \includegraphics[width=\linewidth]{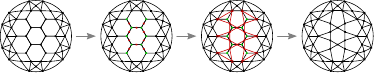}
    \caption{From $H$ (left) over $H^s$ and $H^{s\Delta}$ to $H^\Delta$ (right). 
    Central graphs: vertices in $U$ are green, neighbours of~$U$ are circled red, subdivision vertices are fully red, and the triangles $\Delta(u)$ are red.}
    \label{fig:GDeltaDef}
\end{figure}

With the canonical $Y$--$\Delta$ operation at hand, we conclude decomposing $G$ as follows:
\begin{enumerate}[resume,label=\arabic*)]
    \item Each quasi-4-connected torso $\tau$ with $>6$ vertices we make into a 4-connected graph~$\tau^\Delta$.\\ (Quasi-4-connected graphs on $\le 6$ vertices are treated as `basic' outcome.)
    \item Finally, we decompose each 4-connected $\tau^\Delta$ with \cref{MainDecomp}.
\end{enumerate}

The operation $\tau\mapsto \tau^\Delta$ harmonises well with the other decomposition steps.
Indeed, we have followed all steps to classify all vertex-transitive finite graphs of low connectivity~\cite{TetraTrans}.\medskip

\noindent\textbf{When canonicity and an explicit description matter.}
There are applications for partial solutions to \cref{prob:dec}, and in particular for \cref{MainDecomp} and \cite{Tridecomp}, that require canonicity and explicitness, often both.
Examples of these applications include:
\begin{enumerate}[leftmargin=*,label={\Large $\cdot$}]
    \item \underline{Vertex-transitive graphs.}
    Let $\cV$ denote the class of all vertex-transitive finite connected graphs.
    Can the graphs in~$\cV$ be classified?
    Droms, Servatius and Servatius~\cite{infiniteSPQR} used the Tutte-decomposition to classify all $G\in\cV$ of connectivity~$<3$: they are the cycles and the complete graphs on~$\le 2$ vertices.
    The graphs $G\in\cV$ of connectivity~3 were classified by Carmesin and Kurkofka using their tri-separation decomposition~\cite{Tridecomp}.
    We used \cref{MainDecomp} in combination with the canonical $Y$--$\Delta$ operation to classify all graphs in $\cV$ that are not essentially 5-connected~\cite{TetraTrans}.
    \item \underline{Connectivity augmentation.}
    Carmesin and Sridharan used the canonicity and explicitness of the tri-separation decomposition \cite{Tridecomp} to construct the first FPT-algorithm for connectivity augmentation from 0 to 4~\cite{conny_aug}.
    That is, they showed that there is an algorithm with runtime $C(\ell)\cdot n^{\cO(1)}$ and the following specifications:\\
    \begin{tabular}{@{}l@{}cl}
        Input & {}\,: & a graph $G$ on~$n$ vertices, a number $\ell\in\N$ and a set $F\se [V(G)]^2\sm E(G)$ of non-edges.\\
        Output & {}\,: & a set $X\se F$ of $\le\ell$ edges such that $G+X$ is 4-connected, if such an $X$ exists, and\\
        & & `no' otherwise.
    \end{tabular}
    We strongly believe that \cref{MainDecomp} makes it possible to extend their algorithm to 5-connectivity.
    \item \underline{Finding excluded minors.} A challenge at the interface of computer science and graph-minor theory is to determine the excluded minors for particular minor-closed graph-classes.
    Indeed, while the graph-minor theorem implies the existence of a cubic-time membership test for each minor-closed graph-class, explicit knowledge of the excluded minors is required to obtain an explicit description of that test~\cite{GMXIII}.
    Partial solutions to \cref{prob:dec} for $k\le 3$ can be used to determine the excluded minors for planar graphs and series-parallel graphs.
    We have conviction that solutions for larger~$k$ imply exact structure theorems for other graph-classes, such as the class of apex-graphs, and that these structure theorems can be used to determine the excluded minors for these classes and thereby provide explicit cubic-time membership tests.
    \item\underline{Graph isomorphism.} Canonical tree-decompositions offer angles of attack on finding efficient algorithms for the graph isomorphism problem on particular classes of graphs.
    Recently, an FPT-algorithm was found~\cite{Isomorphism} that solves the graph-isomorphism problem for $H$-minor-free graphs $G,G'$ in time $f(H)\cdot n^{\cO(1)}$.
    In that paper, the authors wish for a canonical tree-decomposition into bags that are highly cohesive or of bounded size~\cite[{}1.3]{Isomorphism}, and they mention long cycles as an illustrative example of an obstruction to the existence of such decompositions.
    Our results show that such canonical tree-decompositions exist for low-order connectivity if one trades `bounded size' for `basic' and an explicit description thereof.
    Hence we wonder if our results allow for even more efficient algorithms that solve the graph isomorphism problem for, say, graphs without quasi-5-connected minors.
    \item \underline{Cops and robber.}
    In the cops and robber game of Robertson and Seymour, 
    $k$ cops have a winning strategy for catching a robber in a graph iff that graph has a tree-decomposition of width~$<k$ \cite{GMcopsRobber}.
    Consider now a randomised variation of this game where the robber is invisible, the cops can visit vertices at most once, and both players use mixed strategies.\footnote{A mixed strategy is a random distribution over the set of all pure (i.e.\ deterministic) strategies for a player.}
    Now the maximum winning probability for $k$~cops depends on the minimum number of leaf-bags attained by any tree-decomposition of width~$<k$.
    Our results give such tree-decompositions for $k$-connected graphs when $k\le 4$~\cite{CopsInvisibleRobber}.
    \item \underline{Local separators and graph-decompositions.}
    A recent development in graph-minor theory is the study of local separators: vertex-sets that separate a ball of given radius around them, but not necessarily the entire graph.
    Local separators capture choke points in real-world infrastructure networks~\cite{Ponyhof}.
    Like nested separators correspond to tree-decompositions, nested local separators correspond to graph-decompositions (such as the cycle-decompositions we used in \cref{MainDecomp}).
    Recent results include a Tutte-type decomposition for local 2-separators \cite{Local2sep} with applications to data science~\cite{Sarah}, a theory of locally chordal graphs \cite{locallychordal}, and algorithmic constructions of canonical graph-decompositions that display the local bottlenecks in graphs \cites{canonicalGraphDec,Ponyhof}.
    Can \cref{MainDecomp} and \cref{3Decomp} be extended to local separators?
    \item \underline{Finite groups.}
    Stallings' theorem applies tree-decomposition methods to Cayley graphs to detect product structure in groups.
    However, Stallings' approach only works for infinite groups.
    Local separators have recently been used to prove a low-order Stallings-type theorem for finite nilpotent groups \cite{StallingsNilpotent}.
    The proof heavily relies on the Tutte-type decomposition for local 2-separators \cite{Local2sep} and exploits the combination of canonicity and total-nestedness.
    The next step towards fully generalising Stallings theorem to finite groups will be to use versions for local separators of \cref{MainDecomp} and of either \cref{3Decomp} or \cite{Tridecomp}.
\end{enumerate}\smallskip

\subsection{Overview of the proof}

To get started, let $G$ be a 4-connected graph.
Let $N$ denote its set of totally-nested tetra-separations, and let $\cT=(T,\cV)$ denote the mixed-tree-decomposition of $G$ that is uniquely determined by~$N$.
Let $\tau$ denote the torso of $\cT$ at an arbitrary node $t$ of $T$ (see \cref{sec:MixedTdcs} for the definition of torso in the context of mixed-tree-decompositions).
It will be straightforward to check that $\tau$ is 4-connected or a $K_4$.

In order to show that $\tau$ satisfies one of the outcomes of \cref{MainDecomp}, we will study how $\tau$ interacts with the tetra-separations of $G$.
Indeed, the outcome will be determined by this.
Let $(A,B)$ be a tetra-separation of~$G$, which may possibly be crossed.
We say that $(A,B)$ \emph{interlaces} $\tau$ if, roughly speaking, both $A\sm B$ and $B\sm A$ intersect~$\tau$.
Here we may think of $(A,B)$ as `cutting through'~$\tau$.
Whether $\tau$ is interlaced or not already yields a first outcome:
\begin{equation}\label{ProofOverviewQuasi5con}
    \textit{If $\tau$ is not interlaced by any tetra-separation of $G$, then $\tau$ is quasi-5-connected.}
\end{equation}
(See \cref{Quasi5conSummary}.)
A technical lemma from \cite{Tridecomp} does most of the work of proving \cref{ProofOverviewQuasi5con}.

By \cref{ProofOverviewQuasi5con}, we may assume from now on that some tetra-separation $(A,B)$ of $G$ interlaces~$\tau$.
Then $(A,B)$ is not in~$N$, so some tetra-separation $(C,D)$ crosses $(A,B)$.
The remaining outcomes will be determined by the way in which $(A,B)$ and $(C,D)$ cross.
More specifically, the outcome will be determined by the size of the vertex-centre~$Z$, which consists of all vertices in the intersection of the separators of $(A,B)$ and $(C,D)$.
We will show that
\begin{equation}\label{ProofOverviewCrossingLemma}
    |Z| \in\{0,2,4\}.
\end{equation}
(See \allref{keylem:crossing}.)
So there will be exactly three outcomes.
Here we focus only on the cycle-of-graphs outcome, which we get for $|Z|=0$.
This outcome is the hardest to prove, and we use all the new methods introduced in this paper in the proof of this outcome.
It is also different from all situations encountered in \cite{Tridecomp}.

Assume now that $|Z|=0$.
As our first step, we will use $(A,B)$ and $(C,D)$ to cut $G$ into four 2-connected pieces, arranged in the shape of a 4-cycle, and apply the Tutte-decomposition to each piece; see \cref{fig:TutteBagelIntro}.
Since $G$ is 4-connected, the Tutte-decompositions will be path-decompositions.
We will then merge the four Tutte-decompositions into a cycle-decomposition $\cO=(O,(G_o)_{o\in V(O)})$.
So all adhesion-sets of $\cO$ have size two.
We then use the 4-connectivity of $G$ to note that the torsos of $\cO$ are triangles, 4-cycles, or 3-connected.
Moreover, we will find some rules on how these types of torsos may be arranged on $O$.
For instance, a torso that is a 4-cycle must always have two 3-connected torsos as neighbours, and if two triangle-torsos are neighbours then their adhesion-set is spanned by an edge.

\begin{figure}[ht]
    \centering
    \includegraphics[height=9\baselineskip]{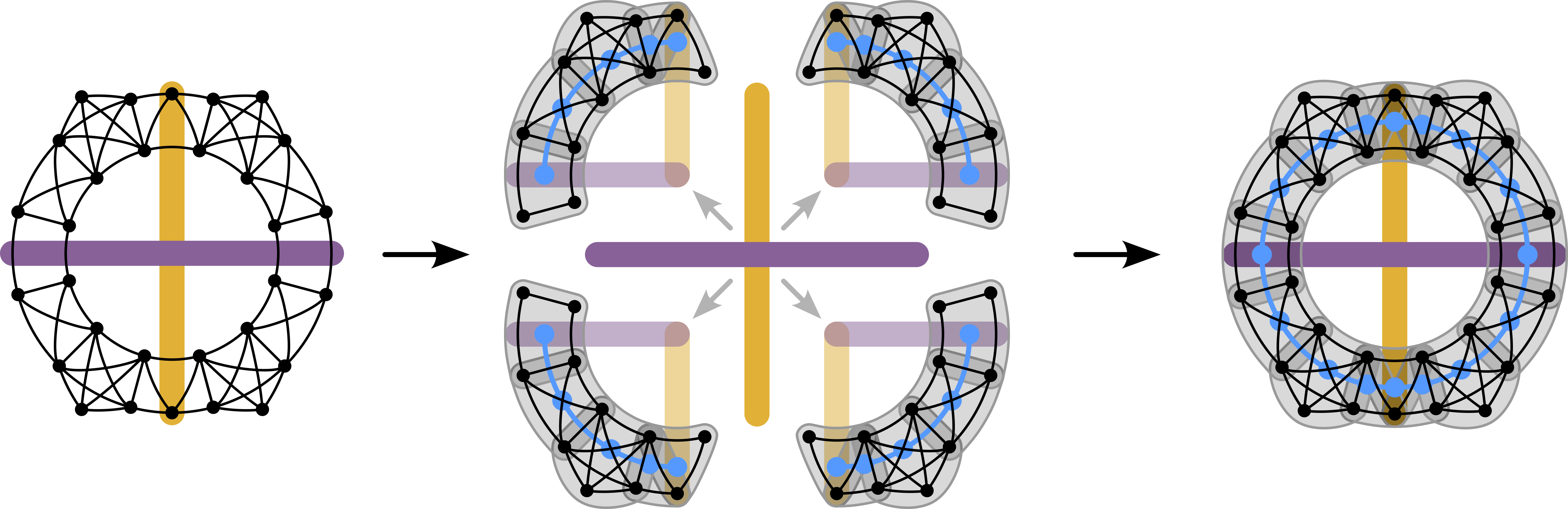}
    \caption{Constructing a cycle-decomposition from four Tutte-decompositions}
    \label{fig:TutteBagelIntro}
\end{figure}

Every node $o$ of $O$ defines a ($\le 4$)-separation of $G$: its sides are $V(G_o)$ and $\bigcup_{o'\in V(O-o)} V(G_{o'})$, so its separator is the union of the two adhesion-sets at $G_o$.
From this ($\le 4$)-separation, we then obtain a candidate $(U_o,W_o)$ for a totally-nested tetra-separation by performing slight alterations to the separator, replacing some vertices with edges in a systematic way.
The first key result will be a characterisation of those torsos $H_o$ that give rise to a totally-nested tetra-separation $(U_o,W_o)$ in terms of an efficiently testable property: being \emph{good}.
For example, only 3-connected torsos can be good, and having $\ge 6$ vertices in addition to being 3-connected suffices for being good.
\begin{equation}\label{ProofOverviewBagelGood}
    \textit{$(U_o,W_o)$ is a totally-nested tetra-separation of $G$ if and only if $H_o$ is good.}
\end{equation}
(See \cref{cor:bagel-totally-nested}).
In order to show the hard backward implication of \cref{ProofOverviewBagelGood}, we will introduce a 
\begin{equation}\label{ProofOverviewExternallyConnected}
    \textit{characterisation of total-nestedness in terms of connectivity-properties of separators.}
\end{equation}
(See \cref{keylem:nestedness-external-connectivity}.)
Examples of connectivity-properties of a tetra-separator $S(A,B)$ that are necessary for $(A,B)$ to be totally-nested include, roughly speaking, that
for every bipartition $(\pi_1,\pi_2)$ of $S(A,B)$ into two classes $\pi_i$ of size~2 there must be
\begin{itemize}[leftmargin=*]
    \item 5 independent $\pi_1$--$\pi_2$ paths in $G$ on the one hand, and
    \item 3 independent paths between the two elements of $\pi_1$ in $G-\pi_2$ on the other hand.
\end{itemize}
A major difference to tri-separations is that the connectivity-properties required of the separators are much more complex, as tetra-separations can cross in significantly more complicated ways than tri-separations; see \cref{fig:ComplexCrossing} for an example.

\begin{figure}[ht]
    \centering
    \includegraphics[height=8\baselineskip]{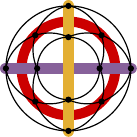}
    \caption{Three tetra-separations cross in a way that tri-separations cannot.}
    \label{fig:ComplexCrossing}
\end{figure}

With \cref{ProofOverviewExternallyConnected} at hand, the problem of proving the hard backward implication of \cref{ProofOverviewBagelGood} becomes a problem about finding many paths in many configurations, roughly speaking.
To this end, we develop a number of lemmata for finding such paths systematically, and then combine them to prove \cref{ProofOverviewBagelGood}.
Finding the paths takes a lot of work, especially when $O$ is short; see \cref{fig:ComplexPaths} for example.

\begin{figure}[ht]
    \centering
    \includegraphics[height=8\baselineskip]{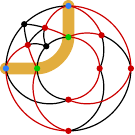}
    \caption{The bag $G_o$ is the top left quarter. Its candidate $(U_o,W_o)$ for a totally-nested tetra-separation is indicated in yellow. 
    The sets $\pi_1$ and $\pi_2$ consist of the blue and the green vertices, respectively. 
    As a small part of proving \cref{ProofOverviewBagelGood} via \cref{ProofOverviewExternallyConnected}, we have to find 5 independent $\pi_1$--$\pi_2$ paths (red).
    We find two such paths in $G_o$ using Menger as $G_o$ is good.
    To find the other three, we essentially go around $O$ in both directions starting from~$o$, constructing one path in the one direction and two in the other.
    The example shows that the construction must be highly efficient to avoid collision of the paths when the decomposition-cycle $O$ is short.}
    \label{fig:ComplexPaths}
\end{figure}

By \cref{ProofOverviewBagelGood}, each tetra-separation $(U_o,W_o)$ for a good torso $H_o$ corresponds to a directed edge $e_o$ of $T$.
The second key result about $\cO$ will be that
\begin{equation}\label{ProofOverviewBagelSplitting}
    \textit{The edges $e_o$ are precisely the directed edges of the form $(s,t)$ with $st\in E(T)$,}
\end{equation}
where $t$ is the node of $T$ with $\tau$ as torso.
Phrased differently, the tetra-separations $(U_o,W_o)$ from good torsos $H_o$ are precisely the elements of $N$ that `carve out'~$\tau$.
(See \cref{BagelSplittingStar}.)

Once we have proved \cref{ProofOverviewBagelSplitting}, we are almost done.
Indeed, it now follows that the torso $\tau$ can be obtained from $G$ by essentially replacing the good torsos of $\cO$ with $K_4$'s and contracting a few edges.
Then it will be fairly immediate to see that $\tau$ is a cycle of triangle-torsos and 3-connected torsos on $\le 5$ vertices.
(See \cref{bagelTorso}.)

Proving \cref{ProofOverviewBagelSplitting}, however, is a little tricky.
We will assume for a contradiction that \cref{ProofOverviewBagelSplitting} fails, so some totally-nested tetra-separation $(E,F)$ will `cut through' $\cO$.
Since $(E,F)$ is nested with both $(A,B)$ and $(C,D)$, we have $E\se A\cap C$, say; see \cref{fig:MovingLink}.
We then start adjusting both $(A,B)$ and $(C,D)$: by moving bags of $\cO$ from $A\sm B$ to $B\sm A$ and from $C\sm D$ to $D\sm C$, one bag at a time, while using the total-nestedness of $(E,F)$ to maintain the inclusion $E\se A\cap C$.
Eventually, there will be essentially only one bag $G_o$ left, and we will see that its torso $H_o$ is good.
We will then show that $(E,F)=(U_o,W_o)$ is the only possibility.
Since $(U_o,W_o)$ does not `cut through' $\cO$, this will yield a contradiction.\medskip

\begin{figure}[ht]
    \centering
    \includegraphics[height=8\baselineskip]{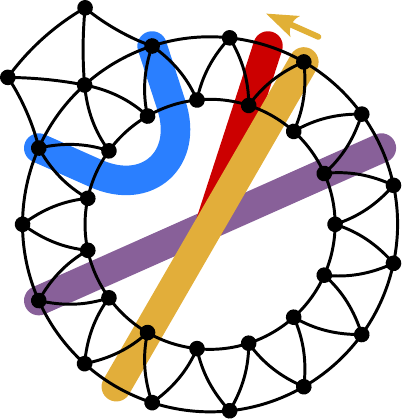}
    \caption{$(E,F)$ (blue) is nested with $(A,B)$ (yellow) and $(C,D)$ (purple). We use the structure of $\cO$ to move $(A,B)$ closer to $(E,F)$ by moving one bag to the other side. This replaces the top half of yellow with red.}
    \label{fig:MovingLink}
\end{figure}

\noindent\textbf{More related work.}
Greedy decompositions for 3-connectivity include \cite{grohe2016quasi} and \cite{4tangles}.
We would also like to mention the recent work of Esperet, Giocanti and Legrand-Duchesne \cite{Ugo}.
A~related question in graph-minor theory is which substructures of $k$-connected graphs can be deleted or contracted while preserving $k$-connectivity.
Work in this area includes that of Ando, Enomoto and Saito~\cite{ando1987contractible} and of Kriesell~\cite{KriesellNonEdges,KriesellContractibleSubgraph,KriesellAlmostAll,KriesellTriangleFree,kriesell2008number,KriesellVertexSuppression} on 3-connectivity, and \cites{generatingInternally4con,chain4con,Slater4con,Splitting4con,Martinov,MartionvTwo,Number4con,Quasi4conStructure} on 4-connectivity.
The typical subgraphs of 3-connected and 4-connected graphs have been determined by Oporowski, Oxley and Thomas~\cite{typical3con4con}.
Works that classify the structure of $k$-cuts in $k$-connected graphs include~\cite{StructureMinimumVertexCuts,ReinventingWheel}.

Mixed-separators (without additional conditions on their vertices) have been used before, see for example Mader's theorem~\cite{Mader,bibel} or~\cite{Beineke_Harary_1967,OnMixedCony,TwoPointFiveCony}.

The structure of the $k$-separations of $k$-connected matroids is a well-studied topic, see for example the work \cites{Structure2sepsMatroids,cunningham_edmonds_1980,Seymour2sepMatroid} on 2-connectivity, the work of Oxley, Semple and Whittle~\cite{Structure3sepsMatroids} on 3-connectivity, and the works \cites{Structure4sepsMatroids,4connectedMatroids} on 4-connectivity.

For directed graphs, the case $k=1$ of \cref{prob:dec} has recently been solved almost completely by Bowler, Gut, Hatzel, Kawarabayashi, Muzi and Reich~\cite{Flo}.

Our work complements the tangle-approach to decomposing graphs 
\cites{GroheTangles3,FiniteSplinters,RefiningToT,StructuralASS,AlbrechtsenRefiningToTinASS,AlbrechtsenRefiningTDCdisplayKblocks,AlbrechtsenOptimalToT,CarmesinToTshort,CDHH13CanonicalAlg,CDHH13CanonicalParts,confing,CG14:isolatingblocks,ProfilesNew,infinitesplinter,jacobs2023efficiently,entanglements,Kblocks} which has algorithmic applications such as \cites{elbracht2020tangles,ComputingWithTangles,korhonen2024}; see \cite{TangleBook} for an overview.
There is also a connection to twin-width, see for example the recent work of Heinrich and Raßmann~\cite{TwinWidth}.
\medskip

\noindent\textbf{Organisation of the paper.}
\cref{sec:essentials} introduces or recalls all essential terminology.
\cref{sec:ProofsStrengthenings} provides stronger but more technical versions of \cref{MainDecomp} and \cref{3Decomp}, and proves all while assuming a number of key lemmata that remain to be proved later in the paper.
\cref{sec:crossingAnalysis} provides a characterisation of how tetra-separations cross.
\cref{sec:shifts} introduces a systematic way for obtaining tetra-separations from mixed-4-separations.
\cref{sec:ExternalConnectivity} gives a characterisation of total-nestedness for tetra-separations in terms of connectivity-properties of their separators.
The next sections investigate the different outcomes of \cref{MainDecomp}:
\cref{sec:K4m} deals with the $K_{4,m}$;
\cref{sec:DoubleWheel} deals with the double-wheel;
\cref{sec:CycleOfGraphs} deals with the cycle-of-graphs; and
\cref{sec:Quasi5con} deals with quasi-5-connectivity.
This will conclude the proof of \cref{MainDecomp}.
\cref{sec:Angry} introduces an `angry theorem' for 4-connectivity: a characterisation of the 4-connected graphs all whose tetra-separations are crossed (hence the name).
\cref{sec:YDelta} shows that the canonical $Y$--$\Delta$ operation makes quasi-4-connected graphs into 4-connected ones.
\cref{sec:saw} proves that the circular saws from \cref{challenge1} really exhibit the properties we used.
\cref{sec:comparison} compares the \trisp -decomposition (\cref{3Decomp}) with the tri-separation decomposition \cite{Tridecomp}.

\section{Essential terminology}\label{sec:essentials}

This section introduces all terminology used by \cref{MainDecomp}.
For graph-theoretic terminology we follow \cite{bibel}.

\subsection{Tetra-separations}
A \defn{mixed-separation\pl} of a graph $G$ is a pair $(A,B)$ such that $A \cup B = V(G)$. 
The separator $\defnMath{S(A,B)}$ of $(A,B)$ is the disjoint union of the vertex set $A \cap B$ and the edge-set $E(A \sm B, B \sm A)$. 
The \defn{order} of $(A,B)$ is the size of $S(A,B)$. 
A mixed-separation\pl{} of order $k$ is also called \defn{mixed-$k$-separation\pl}.
A mixed-separation\pl{} is \defn{proper} if both $A \sm B$ and $B \sm A$ are nonempty. 
In this case, we refer to $(A,B)$ as a \defn{mixed-separation} without the plus-sign. A mixed-separation of order $k$ is also called \defn{mixed-$k$-separation}.
A mixed-separation with $k$ vertices in its separator but no edges is called a \defn{separation}, or \defn{$k$-separation}.

\begin{definition}
\label{dfn:tetra-separation}
    A \defn{tetra-separation} of a graph $G$ is a mixed-4-separation $(A,B)$ of~$G$ that satisfies the following two conditions:
    
    \noindent\begin{tabular}{@{}ll}
        \defn{degree-condition}: & \label{itm:tetra-separation-1} every vertex in $A \cap B$ has $\ge 2$ neighbours in both $A \sm B$ and $B \sm A$;\\
        \defn{matching-condition}: & \label{itm:tetra-separation-2} the edges in $S(A,B)$ form a matching.
    \end{tabular}
\end{definition}

\begin{definition}
    A \defn{\trisp } of a graph $G$ is a mixed-3-separation of $G$ that satisfies both the degree-condition and the matching-condition.
\end{definition}

Recall that a graph $G$ is \defn{quasi-$k$-connected} if it is $(k-1)$-connected and every $(k-1)$-separation $(A,B)$ of $G$ satisfies $|A\sm B|= 1$ or $|B\sm A|= 1$.

\begin{lemma}\label{quasi5conVtetra}
    Let $G$ be a 4-connected graph.
    \begin{enumerate}
        \item\label{quasi5conVtetra1} If $G$ has no tetra-separation, then $G$ is quasi-5-connected.
        \item\label{quasi5conVtetra2} If $G$ is quasi-5-connected and $|G|\ge 8$, then $G$ has no tetra-separation.
    \end{enumerate}
\end{lemma}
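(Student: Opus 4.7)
The plan handles the two parts separately, both relying on a preliminary observation: a 4-connected graph admits no proper mixed-$\ell$-separation with $\ell \leq 3$. I would verify this by the standard argument: given such a mixed-separation with vertex-part $Z$ and edge-part $E_0$, deleting $Z$ together with one endpoint per cross-edge chosen on one fixed side yields a vertex-cut of size $\leq \ell \leq 3$, unless that side already has $\leq |E_0|$ vertices; in the latter boundary case, a degree-count forces some vertex to have degree $\leq \ell \leq 3$, contradicting $\delta(G) \geq 4$.

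For part~\cref{quasi5conVtetra1}, I argue the contrapositive. Assume $G$ is 4-connected but not quasi-5-connected; then there is a $4$-separation $(A_0, B_0)$ with $|A_0 \sm B_0|, |B_0 \sm A_0| \geq 2$, which trivially satisfies the matching-condition. Let $\mathcal{P}$ be the family of mixed-$4$-separations of $G$ that satisfy the matching-condition and have both sides of size $\geq 2$, and choose $(A, B) \in \mathcal{P}$ minimizing $|A \cap B|$. I claim $(A, B)$ is a tetra-separation. If some $v \in A \cap B$ violates the degree-condition, then WLOG $v$ has at most one neighbor in $B \sm A$. If $v$ has no neighbor in $B \sm A$, shifting $v$ to the $A$-side drops the separator-order to $3$ without introducing new cross-edges, contradicting the preliminary. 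If $v$ has a unique neighbor $w \in B \sm A$ and $w$ is not an endpoint of any existing cross-edge, shifting $v$ adds only $vw$ to the separator, preserves the matching-condition, and produces an element of $\mathcal{P}$ of smaller $|A \cap B|$, contradicting minimality. Finally, if $w$ is already an endpoint of a cross-edge $uw$, I shift both $v$ and $w$ simultaneously, passing to $(A \cup \{w\}, B \sm \{v\})$: this removes $v$ from the vertex-part and drops $uw$ from the edge-part while inserting $w$ as a new separator-vertex, but adds no new cross-edges (since $v$'s only $(B\sm A)$-neighbor was $w$, now absorbed), yielding a proper mixed-$3$-separation that again contradicts the preliminary.

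For part~\cref{quasi5conVtetra2}, assume $G$ is quasi-5-connected, $|G| \geq 8$, and suppose for contradiction that $(A, B)$ is a tetra-separation with $e$ matching-edges $x_i y_i$ in its separator, where $x_i \in A \sm B$ and $y_i \in B \sm A$. For any $I \subseteq \{1, \ldots, e\}$, the pair
\[
    (A', B') \;=\; \bigl(A \cup \{y_i : i \in I\},\; B \cup \{x_i : i \notin I\}\bigr)
\]
is a pure $4$-separation of $G$: absorbing one endpoint of each cross-edge into the vertex-separator leaves no edges between the two sides, while the vertex-separator retains size $(4 - e) + |I| + (e - |I|) = 4$. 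The two side-sizes are $(|A \sm B| - e) + |I|$ and $|B \sm A| - |I|$. I search for $|I| \in \{0, \ldots, e\}$ making both at least $2$; the admissible range is $[\max(0,\, 2 - (|A \sm B| - e)),\ \min(e,\, |B \sm A| - 2)]$. The degree-condition (when $|A\cap B| \geq 1$) together with the matching-condition (when $|A\cap B| = 0$) force $|A \sm B|, |B \sm A| \geq \max(2, e)$, and the hypothesis $|G| \geq 8$ combined with $|G| = (4 - e) + |A \sm B| + |B \sm A|$ gives $|A \sm B| + |B \sm A| \geq e + 4$; a short case-analysis then verifies the admissible range is nonempty. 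The resulting pure $4$-separation has both sides of size $\geq 2$, contradicting quasi-5-connectivity.

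The main obstacle in~\cref{quasi5conVtetra1} is the matching-conflict subcase where shifting a single degree-violator $v$ would create two separator-edges sharing a vertex $w$; I resolve this by shifting both $v$ and $w$, which drops the separator-order to $3$ and invokes the preliminary. The main obstacle in~\cref{quasi5conVtetra2} is the combinatorial bookkeeping at the boundary of the admissible $|I|$-range; the hypothesis $|G| \geq 8$ provides exactly enough "extra" vertices outside the cross-edge endpoints to ensure both new sides have size $\geq 2$.
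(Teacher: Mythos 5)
Your proof is correct. For part~(ii) your argument is essentially the paper's: both use $|G|=(4-e)+|A\sm B|+|B\sm A|\ge 8$ together with $|A\sm B|,|B\sm A|\ge 2$ (the paper cites \cref{lem:trivial-tetra-separations} for this; you re-derive it) to distribute the $e$ matching-edges between the two sides so that a genuine $4$-separation with both strict sides of size $\ge 2$ results. For part~(i), however, you take a genuinely different route. The paper applies the right-left-shift to a $4$-separation with both strict sides of size $\ge 2$ and quotes \cref{leftrightTetra}, which characterises exactly when such a shift yields a tetra-separation; the side-size hypotheses are precisely that lemma's conditions. You instead re-derive the conclusion from scratch by minimising $|A\cap B|$ over mixed-$4$-separations satisfying the matching-condition with both strict sides of size $\ge 2$, and you show via a three-way case analysis on a degree-violating vertex $v$ that the minimiser must obey the degree-condition. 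Your cases are in effect a manual unrolling of the shift argument: cases (a) and (c) produce a proper mixed-$3$-separation and appeal to \cref{kConMixed} (your preliminary observation), while case (b) is a single left-shift step that strictly decreases $|A\cap B|$. What the paper's approach buys is brevity, since \cref{leftrightTetra} already encapsulates this analysis and is reused elsewhere; what yours buys is self-containment, at the cost of re-proving a special case of that lemma inline.
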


We will prove \cref{quasi5conVtetra} in \cref{sec:LeftRightShift}.

\begin{example}
    The lower-bound $|G|\ge 8$ in \cref{quasi5conVtetra}~\ref{quasi5conVtetra2} is tight: the graph depicted in \cref{fig:quasi5conVtetra} is quasi-5-connected, has $7$ vertices, but has a tetra-separation.
\end{example}

\begin{figure}[ht]
    \centering
    \includegraphics[height=5\baselineskip,angle=180,origin=c]{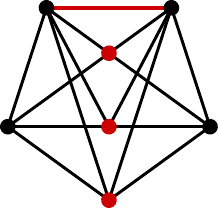}
    \caption{A small quasi-5-connected graph with a tetra-separation (red)}
    \label{fig:quasi5conVtetra}
\end{figure}

The mixed-separations\pl\ of a graph come with the usual partial ordering:
\[
    (A,B)\;\defnMath{\le}\; (C,D):\Leftrightarrow A\se C\text{ and }B\supseteq D.
\]
Two mixed-separations\pl{} $(A,B)$ and $(C,D)$ are \defn{nested} if, after possibly switching the name~$A$ with~$B$ or the name~$C$ with~$D$, we have $(A,B)\le (C,D)$. 
Otherwise, they \defn{cross}.
A set of mixed-separations\pl\ is \defn{nested} if all its elements are pairwise nested.
In the context of $(A,B)$ and $(C,D)$ crossing, the \defn{vertex-centre} is the vertex-set $A\cap B\cap C\cap D$.

A tetra-separation of a graph~$G$ is \defn{totally-nested} if it is nested with every tetra-separation of~$G$.

\subsection{Stars, mixed-tree-decompositions and torsos}\label{sec:MixedTdcs}
The following definition is supported by \cref{fig:MixedTDC}.
Let $G$ be a graph.
A pair $\cT=(T,(V_t)_{t\in T})$ of a tree $T$ and a family of vertex sets $V_t\se V(G)$ indexed by the nodes $t\in T$ is a \defn{mixed-tree-decomposition} of~$G$ if it satisfies the following two conditions:
\begin{enumerate}[label=\textnormal{(M\arabic*)}]
    \item $V(G)=\bigcup_{t\in T}V_t$;
    \item\label{M2} the subgraph of $T$ induced by $\{\,t\in T:v\in V_t\,\}$ is connected for every vertex $v\in G$.
\end{enumerate}
We refer to the vertex sets $V_t$ as \defn{bags}.
Mixed-tree-decompositions generalise both tree-decompositions and tree-cut decompositions.
Every oriented edge $(t_1,t_2)$ of $T$ \defn{induces} a mixed-separation\pl\ $(A_1,A_2)$ where $A_i=\bigcup_{t\in V(T_i)}V_t$ for the component $T_i$ of $T-t_1 t_2$ with $t_i\in T_i$.
This defines a map $\defnMath{\alpha_{\cT}(t_1,t_2)}:=(A_1,A_2)$ with domain the set $\defnMath{\vec E(T)}$ of oriented edges of~$T$.
The set of induced mixed-separations\pl\ of $\cT$ is nested.
It is \defn{symmetric}: for each element $(A_1,A_2)$ it also contains $(A_2,A_1)$.
We call $\cT$ \defn{proper} if all its induced mixed-separations\pl\ are proper.
The \defn{adhesion-set} of an edge $t_1 t_2$ of $T$ is the separator of $\alpha_{\cT}(t_1,t_2)$, which is well-defined.

\begin{figure}[ht]
    \centering
    \includegraphics[height=5\baselineskip]{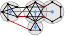}
    \caption{A mixed-tree-decomposition. The decomposition-tree is blue. Bags are grey. Edges and vertices in adhesion-sets are red.}
    \label{fig:MixedTDC}
\end{figure}

A \defn{star of mixed-separations\pl }, or \defn{star} for short, is a set $\sigma=\{\,(A_i,B_i):i\in I\,\}$ of mixed-separations\pl\ $(A_i,B_i)$ such that $(A_i,B_i)\le (B_j,A_j)$ for all distinct $i,j\in I$.
For every node $t$ of $T$, the set $\sigma_t:=\{\,\alpha_{\cT}(s,t):st\in E(T)\,\}$ is a star.
Now let $S$ be a set of mixed-separations\pl\ with $\sigma\se S$.
We call $\sigma$ a \defn{splitting star} of $S$ if $\sigma$ is a star and for every $(C,D)\in S$ there is $i\in I$ with either $(C,D)\le (A_i,B_i)$ or $(D,C)\le (A_i,B_i)$.
If $\cT$ is proper, then the splitting stars of the image of $\alpha_{\cT}$ are precisely the stars~$\sigma_t$.
Conversely, if $S$ is a nested symmetric set of mixed-separations of a finite graph $G$, then there is a unique (up to isomorphisms for decomposition-trees) mixed-tree-decomposition $\cT(S)$ of $G$ such that $\alpha_{\cT(S)}$ is a bijection $\vec E(T)\to S$.
The mixed-tree-decomposition $\cT(S)=(T,\cV)$ has an explicit description (see \cite[§3.1]{Tridecomp}): 
\begin{enumerate}
    \item the nodes of $T$ are precisely the splitting stars of $S$;
    \item two nodes $\sigma,\sigma'$ form an edge iff there is $(A,B)\in\sigma$ with $(B,A)\in\sigma'$;
    \item $V_\sigma:=\bigcap\,\{\,B:(A,B)\in\sigma\,\}$ 
\end{enumerate}
where $V_\emptyset:=V(G)$ if necessary.
Then $S$ can be recovered from $\cT(S)$: it equals the image of $\alpha_{\cT(S)}$.
In this sense, $S$ and $\cT(S)$ contain the same information, and we can switch at will between the two perspectives that they offer.

When $G$ is infinite and $S$ is the set of totally-nested tetra-separations of $G$, then $\cT(S)$ is a mixed-tree-decomposition of $G$ by standard arguments as in \cite[Lemma~3.3.10]{Tridecomp}.

A mixed-separation $(C,D)$ \defn{interlaces} a star $\sigma$ of mixed-separations if for every $(A,B)\in\sigma$ we have either $(A,B)<(C,D)$ or $(A,B)<(D,C)$.
The notion of interlacing yields a characterisation of splitting stars:
a star $\sigma$ included in a nested set $S$ of mixed-separations of a graph $G$ is a splitting star of $S$ if and only if no element of $S$ interlaces~$\sigma$ \cite[Lemma~2.2.3]{Tridecomp}.

Let $\cT=(T,\cV)$ be a mixed-tree-decomposition of a graph~$G$.
For a mixed-separation\pl\ $(A,B)$, we let the vertex-set $\defnMath{\lambda(A,B)}$ consist of all vertices in $S(A,B)$ and the endvertex in $A\sm B$ for each edge in $S(A,B)$.
The \defn{expanded-torso} of $t$, or of $V_t$, is obtained from the subgraph of $G$ induced by
\[
    V_t\cup\bigcup_{st\in E(T)}\lambda(\alpha_{\cT}(s,t))
\]
by making each of the vertex-sets $\lambda(\alpha_{\cT}(s,t))$ into a clique.
From this expanded-torso we obtain the \defn{compressed-torso}, or \defn{torso} for short, by contracting all edges contained in adhesion-sets of the edges $st\in E(T)$; see \cref{fig:MixedTorsos}.

More generally, let $\sigma=\{\,(A_i,B_i):i\in I\,\}$ be a star of mixed-separations\pl .
The \defn{expanded-torso} of $\sigma$ is obtained from the subgraph $V_\sigma\cup\bigcup_{i\in I}\lambda(A_i,B_i)$
by making each of the vertex-sets $\lambda(A_i,B_i)$ into a clique.
The \defn{compressed-torso}, or \defn{torso}, of $\sigma$ is obtained from this expanded torso by contracting all edges contained in any separators $S(A_i,B_i)$.
The two notions of torsos are compatible: the torso of a node $t$ of $T$ equals the torso of the star $\sigma_t$.

\begin{figure}[ht]
    \centering
    \includegraphics[height=7\baselineskip]{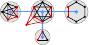}
    \caption{The torsos of the mixed-tree-decomposition depicted in \cref{fig:MixedTDC}.}
    \label{fig:MixedTorsos}
\end{figure}

\subsection{Graph-decompositions}
Let $G$ and $H$ be graphs.
Let $\cG = (G_h : h \in V(H))$ be a family of subgraphs $G_h \subseteq G$ indexed by the nodes of~$H$. 
The pair $(H, \cG)$ is an \defn{$H$-decomposition} or \defn{graph-decomposition} of $G$ if
\begin{enumerate}[label=\rm{(H\arabic*)}]
    \item\label{GraphDec1} $G = \bigcup_{h \in V(H)} G_h$, and
    \item\label{GraphDec2} for every $v \in V(G)$, the subgraph $H_v \subseteq H$ induced on $\{h \in V(H) : v \in V(G) \}$ is connected.
\end{enumerate}
We refer to the $G_h$ as its \defn{parts} and to $H$ as its \defn{decomposition graph}.
When $H$ is a cycle, we call $(H,\cG)$ a \defn{cycle-decomposition}

The \defn{adhesion-graphs} of $(H,\cG)$ are the intersections $G_h\cap G_{h'}$ for the edges $hh'\in E(H)$.
The \defn{adhesion-sets} of $(H,\cG)$ are the vertex-sets of its adhesion-graphs.
The \defn{torso} of a part $G_h$ is obtained from $G_h$ by making every adhesion-graph $G_h\cap G_{h'}$ with $hh'\in E(G)$ into a complete graph by adding all possible edges.
An edge of a torso with both ends contained in the same adhesion-set is a \defn{torso-edge}, even if it already exists in~$G$.
The \defn{interior} $\defnMath{\mathring{G}_h}$ of $G_h$ is the graph $G_h-\bigcup_{h'\in H-h}G_{h'}$.
The \defn{interior} of a torso is defined similarly.
The \defn{interior} vertices of $G_h$ are the vertices of the interior of $G_h$, and the \defn{interior} vertices of a torso are defined similarly.

Let $X$ be a graph.
We call $X$ a \defn{generalised wheel} with \defn{centre $v$} if $X$ is 3-connected with $v\in V(X)$ such that $X-v$ has a cycle-decomposition with all adhesion-sets of size 1 and into $K_2$'s and triangles.
We call $X$ a \defn{generalised double-wheel} with \defn{centre $\{u,v\}$} if $X$ is 4-connected and $u,v$ are distinct vertices of $X$ such that $X-u-v$ has a cycle-decomposition with all adhesion-sets of size 1 and into $K_2$'s and triangles.
We call $X$ a \defn{cycle of $X_1$-torsos, \dots, $X_k$-torsos} where the $X_i$ are graphs, if $X$ is 4-connected and has a cycle-decomposition with all adhesion-sets of size two such that every torso is isomorphic to some~$X_i$.
A \defn{thickened $K_{4,m}$} is obtained from a $K_{4,m}$ by turning the left side into a $K_4$.
A \defn{sprinkled $K_{4,m}$} is obtained from a $K_{4,m}$ by adding any number of edges to the left side.

\section{Proofs of \texorpdfstring{\cref{MainDecomp} and \cref{3Decomp}}{the decomposition theorems} and strengthenings thereof}
\label{sec:ProofsStrengthenings}

Instead of proving \cref{MainDecomp} and \cref{3Decomp} directly, we will first prove stronger but slightly more technical versions of them, \cref{MainDecompStrengthening} and \cref{3DecompStrengthening} below, and then immediately obtain \cref{MainDecomp} and \cref{3Decomp} from their strengthenings.
The proof of \cref{MainDecompStrengthening} uses a number of lemmas that have not yet been introduced.
Stating and proving these lemmas will be the main objectives of later sections.

\begin{theorem}[Technical strengthening of \cref{MainDecomp}]\label{MainDecompStrengthening}
    Let $G$ be a 4-connected graph.
    Let $N$ denote the set of all totally-nested tetra-separations of~$G$.
    Let $\sigma$ be a splitting star of $N$ with torso $\tau$.
    \begin{enumerate}
        \item\label{MainDecompStrengtheningQuasi5con} If $\sigma$ is not interlaced by any tetra-separation, then $\tau$ is quasi-5-connected.
    \end{enumerate}
    Assume now that $\sigma$ is interlaced by a tetra-separation $(A,B)$, which is crossed by another tetra-separation $(C,D)$ with vertex-centre~$Z$.
    Then $|Z|\in\{0,2,4\}$.
    \begin{enumerate}[resume]
        \item\label{MainDecompStrengtheningBagel} If $|Z|=0$, then $\tau$ is a cycle of triangle-torsos and 3-connected torsos on $\le 5$ vertices.
        \item\label{MainDecompStrengtheningDoubleWheel} If $|Z|=2$, then $\tau$ is a generalised double-wheel with centre $Z$.
        \item\label{MainDecompStrengtheningK4m} If $|Z|=4$, then $\tau$ is a thickened $K_{4,m}$ with $Z$ as the left side and $m\ge 0$, or the entire graph $G$ is a sprinkled $K_{4,m}$ with $Z$ as the left side and $m\ge 4$.
    \end{enumerate}
\end{theorem}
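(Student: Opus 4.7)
The strategy is to dispatch the four claimed outcomes in turn, following the roadmap in the introduction and invoking the dedicated key lemmata from later sections. Part~\ref{MainDecompStrengtheningQuasi5con} is the easiest: since $\sigma$ is a splitting star of $N$ not interlaced by any tetra-separation, \cref{Quasi5conSummary} (the formal version of \eqref{ProofOverviewQuasi5con}) yields that $\tau$ is quasi-5-connected; its proof pulls heavily on a technical nestedness lemma from \cite{Tridecomp}. In the remaining cases, the interlacing hypothesis forces $(A,B)\notin N$ by the interlacing-characterisation of splitting stars, so $(A,B)$ is crossed by some tetra-separation $(C,D)$. I would then apply the \allref{keylem:crossing} to obtain the trichotomy $|Z|\in\{0,2,4\}$; the underlying argument in \cref{sec:crossingAnalysis} is a careful case-analysis of how the four corners $A\cap C$, $A\cap D$, $B\cap C$, $B\cap D$ must share the vertex- and edge-parts of the two order-$4$ separators subject to the degree- and matching-conditions.

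The cases $|Z|=4$ (part~\ref{MainDecompStrengtheningK4m}) and $|Z|=2$ (part~\ref{MainDecompStrengtheningDoubleWheel}) can then be handled directly via \cref{sec:K4m} and \cref{sec:DoubleWheel} respectively: the rigidity of four, resp.\ two, vertices simultaneously lying on two crossing order-$4$ separators severely constrains how the remaining vertices can attach, and the thickened-$K_{4,m}$/sprinkled-$K_{4,m}$ structure with left side $Z$, resp.\ the generalised-double-wheel structure with centre $Z$, can be read off more or less directly from the four corners.

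The $|Z|=0$ case (part~\ref{MainDecompStrengtheningBagel}) is the main obstacle, and where essentially all the new machinery of this paper is used. The strategy, worked out in \cref{sec:CycleOfGraphs}, is to use $(A,B)$ and $(C,D)$ to cut $G$ into four $2$-connected quadrants arranged around a $4$-cycle, apply the Tutte-decomposition to each (each resulting in a path-decomposition since $G$ is $4$-connected), and glue the four path-decompositions into a cycle-decomposition $\cO=(O,(G_o)_{o\in V(O)})$ of adhesion two whose torsos are triangles, $4$-cycles, or $3$-connected. I would then invoke \cref{cor:bagel-totally-nested} to identify the \emph{good} torsos of $\cO$ as precisely those yielding a totally-nested tetra-separation $(U_o,W_o)$; the hard backward direction relies on the external-connectivity characterisation \cref{keylem:nestedness-external-connectivity} together with a suite of Menger-style path-routing lemmata from \cref{sec:ExternalConnectivity}. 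This is the real technical bottleneck, since on short decomposition-cycles the paths must be routed extremely efficiently to avoid collisions, as illustrated in \cref{fig:ComplexPaths}.

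A second delicate step is \cref{BagelSplittingStar}, which identifies the $(U_o,W_o)$'s from good torsos as exactly the oriented edges of $T$ incident to the node carrying $\tau$. I would prove it by contradiction: assume some totally-nested $(E,F)\in N$ cuts through $\cO$ without coming from a good torso; then, since $(E,F)$ is nested with both $(A,B)$ and $(C,D)$, one may iteratively \emph{move the link}, shifting one bag of $\cO$ at a time from one side to the other (\cref{fig:MovingLink}), until only essentially a single bag $G_o$ remains separating the two sides. A direct analysis then forces $H_o$ to be good and $(E,F)=(U_o,W_o)$, contradicting that $(E,F)$ cuts through $\cO$. With \cref{cor:bagel-totally-nested} and \cref{BagelSplittingStar} in hand, \cref{bagelTorso} reads off that $\tau$ is obtained from $G$ by replacing good torsos with $K_4$'s and contracting adhesion-edges, hence is a cycle of triangle-torsos and $3$-connected torsos on $\le 5$ vertices, completing the case and the theorem.
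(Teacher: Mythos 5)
Your plan matches the paper's proof essentially verbatim: part~\ref{MainDecompStrengtheningQuasi5con} via \cref{Quasi5conSummary}, the trichotomy $|Z|\in\{0,2,4\}$ via the \allref{keylem:crossing}, and parts~\ref{MainDecompStrengtheningBagel}--\ref{MainDecompStrengtheningK4m} via \cref{bagelTorso}, \cref{wheelTorso}, and \cref{SprinkledK4mSummary}, with the supporting machinery you describe for the $|Z|=0$ case (Tutte-bagels, \cref{cor:bagel-totally-nested}, \cref{BagelSplittingStar}). The one place you are over-optimistic is the $|Z|=2$ case: rather than being ``read off more or less directly from the four corners,'' \cref{sec:DoubleWheel} builds its own block-bagel cycle-decomposition, performs a good-bag/bad-bag analysis with external-connectivity checks, and runs a link-moving splitting-star argument roughly parallel to --- though less intricate than --- the $|Z|=0$ case.
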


\begin{proof}
    If no tetra-separation of $G$ interlaces $\sigma$, then $\tau$ is quasi-5-connected by \cref{Quasi5conSummary}.
    So we may assume that some tetra-separation $(A,B)$ of $G$ interlaces~$\sigma$.
    Then $(A,B)$ is crossed by some tetra-separation $(C,D)$, with $Z$ denoting the vertex-centre.
    By the \allref{keylem:crossing}, we have $|Z|\in\{0,2,4\}$.

    \cref{MainDecompStrengtheningBagel}.
    Assume $|Z|=0$.
    Then $\tau$ is a cycle of triangle-torsos and 3-connected torsos on $\le 5$ vertices by \cref{bagelTorso}.

    \cref{MainDecompStrengtheningDoubleWheel}.
    Assume $|Z|=2$.
    Then $\tau$ is a generalised double-wheel with centre $Z$ by \cref{wheelTorso}.

    \cref{MainDecompStrengtheningK4m}.
    Assume $|Z|=4$.
    If $\sigma$ is non-empty, then $\tau$ is a thickened $K_{4,m}$ with $Z$ as left side and $m\ge 0$ by \cref{SprinkledK4mSummary}.
    Otherwise the entire graph $G$ is a sprinkled $K_{4,m}$ with $Z$ as left side and $m\ge 4$, also by \cref{SprinkledK4mSummary}.
\end{proof}

\begin{proof}[Proof of \cref{MainDecomp}]
    Immediate from \cref{MainDecompStrengthening}.
\end{proof}

\begin{corollary}[Technical strengthening of \cref{3Decomp}]\label{3DecompStrengthening}
    Let $G$ be a 3-connected graph.
    Let $N$ be the set of all \trisp s of~$G$ that are totally-nested (with regard to the \trisp s of~$G$).
    Let $\sigma$ be a splitting star of $N$ with torso~$\tau$.
    \begin{enumerate}
        \item\label{3DecompStrengtheningQuasi4con} If $\sigma$ is not interlaced by any \trisp , then $\tau$ is quasi-4-connected.
    \end{enumerate}
    Assume now that $\sigma$ is interlaced by a \trisp\ $(A,B)$, which is crossed by another \trisp\ $(C,D)$ with vertex-centre~$Z$.
    \begin{enumerate}[resume]
        \item\label{3DecompStrengtheningWheel} If $Z$ consists of exactly one vertex~$v$, then $\tau$ is a generalised wheel with centre~$v$.
        \item\label{3DecompStrengtheningK3m} Otherwise $Z$ has size three.
        If $\sigma\neq\emptyset$, then $\tau$ is a thickened $K_{3,m}$ with $m\ge 0$, and otherwise $G$ is a sprinkled $K_{3,m}$ with $m\ge 4$.
    \end{enumerate}
\end{corollary}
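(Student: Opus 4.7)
The plan is to deduce \cref{3DecompStrengthening} from \cref{MainDecompStrengthening} by a coning trick. I would define $G' := G \ast v$, the graph obtained from $G$ by adding one new vertex $v$ joined to every vertex of~$G$. Since $G$ is 3-connected (and hence has $\ge 4$ vertices), $G'$ is 4-connected. The first step is to establish an order-preserving bijection between the \trisp s of $G$ and the tetra-separations of $G'$, given by $(A,B)\mapsto (A\cup\{v\},B\cup\{v\})$. For surjectivity, note that any tetra-separation $(A',B')$ of $G'$ must have $v\in A'\cap B'$: if instead $v\in A'\sm B'$, then every vertex of $B'\sm A'$ contributes an edge $vb$ to the separator, and the matching-condition forces $|B'\sm A'|=1$; but then the remaining three separator items are vertices in $A'\cap B'$ each requiring $\ge 2$ neighbours in $B'\sm A'$, which is impossible. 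Verifying that the degree- and matching-conditions translate correctly in both directions is routine (in particular one checks, using the degree- and matching-conditions, that every \trisp\ $(A,B)$ of $G$ satisfies $|A\sm B|,|B\sm A|\ge 2$, so that $v$'s own degree-condition in $G'$ is met).

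Next I would show that this bijection preserves the order~$\le$, total-nestedness, splitting stars, and interlacing, because adding $v$ to both sides of every mixed-separation is order-neutral. So $\sigma$ corresponds to a unique splitting star $\sigma'$ of the totally-nested tetra-separations of~$G'$. A direct computation gives $V_{\sigma'} = V_\sigma\cup\{v\}$ and $\lambda(A'_i,B'_i) = \lambda(A_i,B_i)\cup\{v\}$ for each $(A_i,B_i)\in\sigma$. Because no separator-edge involves~$v$, the edge-contractions defining the compressed torso do not touch~$v$, and one concludes that the torso $\tau'$ of $\sigma'$ in $G'$ equals $\tau\ast v$, i.e.\ $\tau$ with a new apex adjacent to every vertex. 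Equivalently, $\tau = \tau' - v$.

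Finally, I would apply \cref{MainDecompStrengthening} to $G'$ and~$\sigma'$. Any pair of crossing tetra-separations of $G'$ arising from crossing \trisp s of $G$ has vertex-centre $Z' = Z\cup\{v\}$, so $|Z'| = |Z|+1$; the case $|Z'|=0$ is excluded, and $|Z'|\in\{2,4\}$ corresponds to $|Z|\in\{1,3\}$ as in the corollary. Each outcome then translates: quasi-5-connectedness of $\tau'$ yields quasi-4-connectedness of $\tau'-v = \tau$, proving~\ref{3DecompStrengtheningQuasi4con}; a generalised double-wheel with centre~$Z'$ yields a generalised wheel with centre $Z = Z'\sm\{v\}$ (since $\tau-Z = \tau'-Z'$ inherits the cycle-decomposition of adhesion~1 into $K_2$'s and triangles), proving~\ref{3DecompStrengtheningWheel}; and a thickened (resp.\ sprinkled) $K_{4,m}$ with left side $Z'$ becomes, upon deleting the $Z'$-vertex~$v$, a thickened (resp.\ sprinkled) $K_{3,m}$ with left side~$Z$, with the $\sigma=\emptyset$ versus $\sigma\ne\emptyset$ dichotomy carried across the bijection, proving~\ref{3DecompStrengtheningK3m}.

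The only step requiring real care is the torso identity $\tau' = \tau\ast v$: one must verify that the successive clique-completions on the sets $\lambda(A'_i,B'_i)$ and the edge-contractions on the separators $S(A'_i,B'_i)$ all commute with removing~$v$. This works precisely because $v$ is always present in the adhesion-sets as a vertex and never as an endpoint of a separator-edge, so contractions never involve $v$, while every edge from $v$ that is demanded by clique-completion in $G'$ is already present in $G'$ by definition.
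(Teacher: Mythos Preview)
Your proposal is correct and follows essentially the same approach as the paper: add an apex vertex to obtain a 4-connected graph, set up the bijection $(A,B)\mapsto(A\cup\{v\},B\cup\{v\})$ between \trisp s and tetra-separations, verify that torsos satisfy $\tau'=\tau\ast v$ (the paper phrases this as $\tau=\hat\tau-\alpha$, noting that no edge incident to the apex is contracted), and then read off each case of \cref{MainDecompStrengthening}. Your surjectivity argument and the check that $|A\sm B|,|B\sm A|\ge 2$ are slightly more explicit than what the paper writes, but the content is the same.
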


\begin{proof}
    Let $\hat G$ be the graph obtained from $G$ by adding an apex-vertex~$\alpha$.
    Every tetra-separation of $\hat G$ contains $\alpha$ in its separator.
    Hence the map $\varphi$ that sends $(A,B)$ to $(A\cup\{\alpha\},B\cup\{\alpha\})$ is a bijection between the set of \trisp s of $G$ and the set of tetra-separations of~$\hat G$.
    Two \trisp s of $G$ are nested if and only if their images under $\varphi$ are nested.
    Hence $\varphi$ restricts to a bijection between the set $N$ of totally-nested \trisp s of $G$ and the set $\hat N$ of totally-nested tetra-separations of~$\hat G$.
    Let $\hat\sigma:=\varphi(\sigma)$, which is a splitting star of $\hat N$.
    Let $\hat\tau$ denote the torso of $\hat\sigma$.
    Since $\alpha$ lies in all separators of tetra-separations of $\hat G$, no edge incident to $\alpha$ is contracted in the definition of the torso~$\hat\tau$.
    Thus $\tau=\hat\tau-\alpha$.

    \cref{3DecompStrengtheningQuasi4con}. 
    Assume that $\sigma$ is not interlaced by any \trisp .
    Then $\hat\sigma$ is not interlaced by any tetra-separation, so $\hat\tau$ is quasi-5-connected by \cref{MainDecompStrengthening}~\cref{MainDecompStrengtheningQuasi5con}.
    If $\tau$ is not quasi-4-connected, this is witnessed by some separation $(X,Y)$ of $\tau$, and then $(X\cup\{\alpha\},Y\cup\{\alpha\})$ witnesses that $\hat\tau$ is not quasi-5-connected.
    Hence $\tau$ is quasi-4-connected.

    So we may assume next that $\sigma$ is interlaced by a \trisp\ $(A,B)$, which is crossed by another \trisp\ $(C,D)$ with $Z$ denoting the centre.
    Then $\varphi(A,B)$ interlaces $\sigma$ and is crossed by $\varphi(C,D)$ with vertex-centre $\hat Z:=Z\cup\{\alpha\}$.

    \cref{3DecompStrengtheningWheel}.
    Assume that $Z$ consists of exactly one vertex~$v$.
    Then $\hat\tau$ is a generalised double-wheel with centre $\{v,\alpha\}$ by \cref{MainDecompStrengthening}~\cref{MainDecompStrengtheningDoubleWheel}.
    Hence $\hat\tau$ is a generalised wheel with centre~$v$.

    \cref{3DecompStrengtheningK3m}. The vertex-centre $Z$ cannot have sizes 0 or 2, since then $\hat Z$ would have size 1 or 3, which is excluded in \cref{MainDecompStrengthening}.
    Thus $|Z|=3$, so $|\hat Z|=4$.
    Since $\alpha\in\hat Z$, both outcomes of \cref{MainDecompStrengthening}~\cref{MainDecompStrengtheningK4m} translate to the two outcomes of~\cref{3DecompStrengtheningK3m}.
\end{proof}

\begin{proof}[Proof of \cref{3Decomp}]
    Immediate from \cref{3DecompStrengthening}.
\end{proof}

\begin{example}
    The outcome in \cref{MainDecompStrengthening} is not in general unique.
    For example, consider the graph $G$ depicted in \cref{fig:TwoWayCrossingLaugen} on the left.
    Every tetra-separation of $G$ is crossed, hence $\sigma=\emptyset$ is a splitting star of the set $N(G)$ of all totally-nested tetra-separations of~$G$.
    On the left of \cref{fig:TwoWayCrossingLaugen}, we see two tetra-separations that interlace $\sigma$ and cross with empty vertex-centre.
    From their perspective, $G$ is a cycle of $K_4$-torsos and $4$-wheel torsos, see \cref{MainDecompStrengthening}~\cref{MainDecompStrengtheningBagel}.
    On the right of \cref{fig:TwoWayCrossingLaugen}, we see two tetra-separations that interlace $\sigma$ and cross with a vertex-centre of size~2.
    From their perspective, $G$ is a generalised wheel, see \cref{MainDecompStrengthening}~\cref{MainDecompStrengtheningDoubleWheel}.

    For another example, consider the graph $G$ depicted in \cref{fig:TwoWayCrossingK4m} on the left.
    Again, $\sigma=\emptyset$ is a splitting star of~$N(G)$.
    On the left of \cref{fig:TwoWayCrossingK4m}, we see two tetra-separations that interlace $\sigma$ and cross with empty vertex-centre.
    From their perspective, $G$ is a cycle of $K_4$-torsos, see \cref{MainDecompStrengthening}~\cref{MainDecompStrengtheningBagel}.
    On the right of \cref{fig:TwoWayCrossingK4m}, we see two tetra-separations that interlace $\sigma$ and cross with a vertex-centre of size~4.
    From their perspective, $G$ is a $K_{4,4}$, see \cref{MainDecompStrengthening}~\cref{MainDecompStrengtheningK4m}.
\end{example}

\begin{figure}[ht]
    \centering
    \includegraphics[height=9\baselineskip]{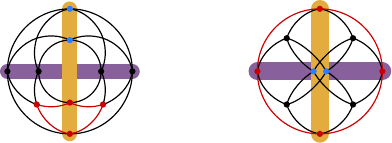}
    \caption{The drawings show the same graph, which can be seen with the help of the red cycle and the blue vertices. Two tetra-separations cross with empty vertex-centre (left) and with vertex-centre of size~2 (right).}
    \label{fig:TwoWayCrossingLaugen}
\end{figure}

\begin{figure}[ht]
    \centering
    \includegraphics[height=9\baselineskip]{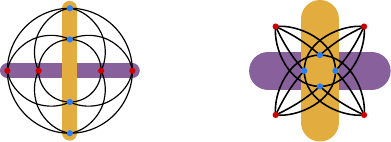}
    \caption{The drawings show the same graph. Two tetra-separations cross with empty vertex-centre (left) and with vertex-centre of size~4 (right).}
    \label{fig:TwoWayCrossingK4m}
\end{figure}

\section{How tetra-separations cross}\label{sec:crossingAnalysis}

In this section, we characterise the ways in which tetra-separations can cross.
The main result here is the \allref{keylem:crossing}.

We require the following terminology for corner diagrams from \cite[§1.3]{Tridecomp}, which is supported by \cref{fig:crossingDiagram}.
Let $(A,B)$ and $(C,D)$ be mixed-separations\pl{} of a graph~$G$.
All of the following definitions depend on $(A,B)$ and $(C,D)$ being clear from context.
The \defn{corner for $\{A,C\}$}, or the \defn{corner $AC$} for short, is the vertex $(A\sm B)\cap (C\sm D)$.
The corners for the other three combinations of one side from $(A,B)$ and one side from $(C,D)$ are defined analogously.
Two corners are \defn{adjacent} when their pair share a side, and \defn{opposite} otherwise.
An edge of $G$ is \defn{diagonal} if its endvertices lie in opposite corners.
The \defn{centre} consists of all vertices in the vertex-centre and all diagonal edges.

The \defn{link for~$C$}, or \defn{$C$-link} for short, consists of
\begin{itemize}
    \item all non-diagonal edges $f$ that have an end in the corner for a pair containing $C$ (these being the corners $AC$ or $BC$) and satisfy $f\in S(A,B)$, and
    \item all vertices in $(C\sm D)\cap S(A,B)$.
\end{itemize}
The links for the other three sides $A,B,D$ are defined analogously.
Each $L$-link is \defn{adjacent} to the two corners for the pairs containing $L$.
Two links are \defn{adjacent} if they are adjacent to the same corner, and \defn{opposite} otherwise.
An edge of $G$ is \defn{jumping} if its endvertices lie in opposite links.

Let $X \in \{A,B\}$ and $Y \in \{C,D\}$.
An edge $e$ of $G$ \defn{dangles from the $Y$-link through the $X$-link} if $e$ has an endvertex in the $Y$-link and $e$ is contained in the $X$-link.
We then call $e$ \defn{dangling} for short.

\begin{figure}[ht]
    \centering
    \includegraphics[height=8\baselineskip]{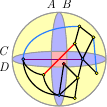}
    \caption{A crossing-diagram. Corners are yellow. Links are blue. The centre is red. The red edge is diagonal. The purple edge is jumping. The left blue edge dangles from the $A$-link through the $C$-link.}
    \label{fig:crossingDiagram}
\end{figure}

The following definition is supported by \cref{fig:PotterDef}.
The corner for $AC$ is \defn{potter} if it is empty and its adjacent links have size two and there exists a path $u_1u_2u_3u_4$ in $G$ such that $u_1u_2$ dangles from the vertex~$u_2$ in the $C$-link through the $A$-link, and $u_3u_4$ dangles from the vertex~$u_3$ in the $A$-link through the $C$-link.

\begin{figure}[ht]
    \centering
    \includegraphics[height=8\baselineskip]{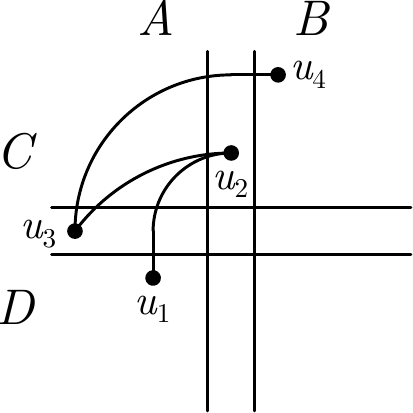}
    \caption{The corner $AC$ is potter.}
    \label{fig:PotterDef}
\end{figure}

In the remainder of this section, we prove the following:

\begin{keylemma}[Crossing Lemma]
\label{keylem:crossing}
    Let $(A,B)$ and $(C,D)$ be crossing tetra-separations in a 4-connected graph~$G$.
    Then exactly one of the following assertions holds.
    \begin{enumerate}
        \item\label{itm:crossing-0} All links are empty and the centre has size four.
        \item\label{itm:crossing-1} All links have size one and the centre has size two.
        \item\label{itm:crossing-2} All links have size two and the centre is empty.
    \end{enumerate}
    Moreover, there are no diagonal edges and no jumping edges. 
    If there is an edge that dangles from one link through another, then we are in case~\ref{itm:crossing-2} and the corner in between the two links is potter.
\end{keylemma}

A mixed-separation $(A,B)$ is an \defn{edge-cut} if $S(A,B)\se E(G)$. An edge-cut $(A,B)$ is \defn{atomic} if $|A|=1$ or $|B|=1$.
\begin{lemma}
\label{lem:trivial-tetra-separations}
    Let $(A,B)$ be a mixed-4-separation of a 4-connected graph~$G$ that satisfies the degree-condition. Then the following assertions are equivalent:
    \begin{enumerate}
        \item\label{itm:trivial-tetra-separations-1} $(A,B)$ is not a tetra-separation;
        \item\label{itm:trivial-tetra-separations-2} $(A,B)$ is an atomic cut in~$G$;
        \item\label{itm:trivial-tetra-separations-3} $|A \sm B| = 1$ or $|B \sm A| = 1$.
    \end{enumerate}
\end{lemma}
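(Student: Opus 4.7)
The implications (ii)$\Rightarrow$(i) and (ii)$\Rightarrow$(iii) are almost immediate. If $(A,B)$ is an atomic cut, say with $|A|=1$, then because $(A,B)$ is proper we have $A\sm B=A$ of size one, giving~(iii); moreover the four elements of $S(A,B)$ are all edges at this single vertex, so they cannot form a matching and the matching-condition fails, giving~(i). For (iii)$\Rightarrow$(ii), if $A\sm B=\{v\}$, then the degree-condition would force every vertex of $A\cap B$ to have at least two neighbours in the singleton $\{v\}$, which is impossible in a simple graph; hence $A\cap B=\emptyset$ and $A=\{v\}$, so $(A,B)$ is an atomic (edge-)cut.

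The heart of the lemma is (i)$\Rightarrow$(iii), which I plan to prove by a \emph{shift} argument. Suppose $(A,B)$ is not a tetra-separation; since the degree-condition holds, the matching-condition must fail, so two edges of $S(A,B)$ share an endpoint $v$. After possibly swapping the roles of $A$ and $B$, assume $v\in A\sm B$, and write $\pi_v\geq 2$ for the number of edges of $S(A,B)$ incident to $v$. Consider the auxiliary mixed-separation\pl{} obtained by moving $v$ across:
\[
    (A',B'):=(A,\,B\cup\{v\}).
\]
A direct computation yields $A'\cap B'=(A\cap B)\cup\{v\}$, $B'\sm A'=B\sm A$, $A'\sm B'=(A\sm B)\sm\{v\}$, and $E(A'\sm B',B'\sm A')=E((A\sm B)\sm\{v\},B\sm A)$, so the order of $(A',B')$ equals $(|A\cap B|+1)+(|E(A\sm B,B\sm A)|-\pi_v)=5-\pi_v\leq 3$.

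I would then invoke the following standard consequence of Menger's theorem: in a $4$-connected graph, every proper mixed-separation has order at least four. Indeed, for any $u\in A'\sm B'$ and $w\in B'\sm A'$, four internally vertex-disjoint $u$--$w$ paths exist, and each one, at the moment it leaves $A'$ for the last time, either passes through a vertex of $A'\cap B'$ or uses an edge of $E(A'\sm B',B'\sm A')$; these four witnesses are distinct elements of $S(A',B')$. Applied to $(A',B')$, this forces $(A',B')$ to be improper. Since $B'\sm A'=B\sm A$ is nonempty by properness of $(A,B)$, the only remaining option is $A'\sm B'=(A\sm B)\sm\{v\}=\emptyset$, i.e.\ $|A\sm B|=1$, which is~(iii).

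The main obstacle is the Menger inequality $|S(A,B)|\geq 4$ for proper mixed-separations of a $4$-connected graph; this is standard but not stated explicitly in the excerpt, so I would include the brief argument above. Once that is in hand, the shift reduces the order by exactly $\pi_v-1\geq 1$, and the rest is routine bookkeeping.
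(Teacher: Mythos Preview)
Your proof is correct and follows essentially the same route as the paper: for (i)$\Rightarrow$(iii) the paper also moves the shared endpoint $u$ across to form $(A,B\cup\{u\})$, observes that this has order $\le 3$, and concludes from $4$-connectivity that it cannot be proper, whence $A\sm B=\{u\}$. The Menger inequality you worry about is in fact stated and proved in the paper as \cref{kConMixed}, so no extra justification is needed.
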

\begin{proof}
    \ref{itm:trivial-tetra-separations-2} $\Rightarrow$ \ref{itm:trivial-tetra-separations-1} is trivial.
    
    \ref{itm:trivial-tetra-separations-1} $\Rightarrow$ \ref{itm:trivial-tetra-separations-3}. If $(A,B)$ is not a tetra-separation, then there exist two edges in $S(A,B)$ that share an endvertex~$u$, say $u \in A \sm B$. Since $G$ is 4-connected and since $(A,B \cup \{u\})$ is a mixed-separation\pl{} of order at most~3, $(A,B)$ cannot be proper. Hence, $A \sm (B \cup \{u\}) = \emptyset$ and $A \sm B = \{u\}$.

    \ref{itm:trivial-tetra-separations-3} $\Rightarrow$ \ref{itm:trivial-tetra-separations-2}. If say $|A \sm B| = 1$ then $A \cap B = \emptyset$ by the degree-condition. It follows that $(A,B)$ is an atomic cut in $G$.
\end{proof}

Let $(A,B)$ and $(C,D)$ be mixed-separations\pl{} of a graph~$G$.
The \defn{corner-separator $L(A, C)$} for the corner $AC$ is the union of the $A$-link, the $C$-link, the vertex-centre, and the diagonal edges that have an end in the corner $AC$.
The corner-separators for the other three corners are defined analogously.

\begin{lemma}\emph{\cite[Lemma 1.3.4]{Tridecomp}}
\label{lem:submodularity-crossing}
    For two mixed-separations $(A, B)$ and $(C, D)$ of $G$, we have
    \[|L(A, C)| + |L(B,D)| \le |S(A, B)| + |S(C, D)|~.\]
    Moreover, if we have equality, there are no jumping edges, and every diagonal edge has its endvertices in the corners for~$AC$ and~$BD$.
\end{lemma}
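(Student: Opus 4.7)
The plan is to prove the inequality pointwise: for each element $x \in V(G) \cup E(G)$, show that the number of sets among $\{L(A,C), L(B,D)\}$ containing $x$ is at most the number of sets among $\{S(A,B), S(C,D)\}$ containing $x$, and then sum over $x$. The pointwise inequality will turn out to be strict only for two kinds of elements, namely the jumping edges and the diagonal edges with endvertices in the corner-pair $(AD, BC)$; everything else will contribute equally to both sides, which is exactly what gives the ``moreover'' part.

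First I would run through the classification of elements in $S(A,B) \cup S(C,D)$. The vertex-centre $Z = A\cap B\cap C\cap D$ contributes $2$ to each side (it lies in both separators, as $A\cap B$ and as $C\cap D$, and in both corner-separators by definition). Each link-vertex, for instance an $A$-link vertex in $(A\setminus B)\cap(C\cap D)$, lies in exactly one of the separators and exactly one of the corner-separators, contributing $1$ to each side. Each edge lying inside a single link likewise contributes $1$ to each side: a $C$-link edge is a non-diagonal edge of $S(A,B)$ with an end in a corner containing $C$, and it has no end in a corner containing $D$ (both ends are on the $C$-side), so it lies only in $L(A,C)$. Finally, each diagonal edge of type $AC$-$BD$ has one endvertex in each of $AC, BD$, hence lies in $S(A,B) \cap S(C,D) \cap L(A,C) \cap L(B,D)$, contributing $2$ on each side. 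The step I expect to be the main obstacle is the bookkeeping for edges with one endvertex in a pure corner (say $AC$) and the other endvertex in $C\cap D$ rather than in a corner: such an edge is non-diagonal with an end in a corner containing $C$, so it lies in the $C$-link only, matching its single contribution to $S(A,B)$ and giving equality at this element.

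The remaining cases are the two excess cases. Each diagonal edge of type $AD$-$BC$ lies in both $S(A,B)$ and $S(C,D)$, but its endpoints miss both corners $AC$ and $BD$ and it is excluded from every link, so it lies in neither $L(A,C)$ nor $L(B,D)$: excess $2$. Each jumping edge lies in exactly one separator (an $A$-$B$ jumping edge lies in $S(A,B)$ because its two endpoints are on opposite $A/B$-sides, but not in $S(C,D)$ because both endpoints are in $C\cap D$), and its endpoints lie in link-vertex classes rather than pure corners while the edge itself is not in the opposite separator, so it sits in no link at all: excess $1$. Summing the pointwise inequality over all elements yields the claimed bound $|L(A,C)| + |L(B,D)| \le |S(A,B)| + |S(C,D)|$. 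Equality forces the pointwise inequality to be an equality at every element, which rules out both excess cases: no jumping edges exist, and every diagonal edge must be of type $AC$-$BD$, i.e.\ has its endvertices in the corners for $AC$ and $BD$, as required.
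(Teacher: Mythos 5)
The paper cites this result from \cite[Lemma~1.3.4]{Tridecomp} without reproducing the proof, so there is no argument here to compare against; I can only assess your reconstruction on its own. Your pointwise double-counting argument is correct and is the natural proof: every element contributing to one of the two corner-separators is a centre vertex, a link vertex, a non-diagonal link edge, or an $AC$--$BD$ diagonal, and in each of those cases your count of $1$ or $2$ contributions on each side checks out, while the only elements with a strict deficit on the left are jumping edges (deficit $1$: in exactly one separator, in no link because neither end lies in a corner, and not diagonal) and $AD$--$BC$ diagonals (deficit $2$: in both separators, in neither $L(A,C)$ nor $L(B,D)$). Two points worth making fully explicit if you write this up: an edge lying in both separators is automatically diagonal (its two ends are forced into opposite corners), so there is no further category of doubly counted edge; and a non-diagonal edge of the $C$-link has both ends in $C$, hence lies only in $S(A,B)$, which is the lemma behind your contribution-$1$ bookkeeping for link edges.
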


\begin{lemma}
\label{lem:crossing-opposite-corners}
    Let two tetra-separations $(A,B)$ and $(C,D)$ cross in a 4-connected graph~$G$.
    Then there exist two opposite corner-separators of size at least~4.
\end{lemma}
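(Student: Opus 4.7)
The plan is to combine \cref{lem:submodularity-crossing} with a corner-by-corner 4-connectivity bound. For each corner, say $AC$, consider the mixed-separation $(A\cap C,\, B\cup D)$: a direct check shows its sides are the corner $AC$ and $V(G)\sm (A\cap C)$, and its separator is contained in $L(A,C)$ --- the vertex-intersection consists of the vertex-centre together with the $A$- and $C$-link vertices, and the cutting edges are the diagonal edges from $AC$ together with the link edges incident to $AC$. Hence, whenever corner $AC$ is non-empty, the mixed-separation is proper and 4-connectivity forces $|L(A,C)|\geq 4$; the same applies at each of the four corners.

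If some pair of opposite corners has both corners non-empty, then two opposite corner-separators are $\geq 4$ and we are done. Otherwise each pair of opposite corners contains an empty corner, and after relabelling either (a) all four corners are empty, or (b) the two corners on a common side --- say $AC$ and $AD$ --- are both empty. In configuration~(a), $A\sm B, B\sm A \se C\cap D$ and $C\sm D, D\sm C \se A\cap B$, so combining the tetra-separation bounds $|A\sm B|, |B\sm A|, |C\sm D|, |D\sm C| \geq 2$ from \cref{lem:trivial-tetra-separations} with $|A\cap B|, |C\cap D| \leq 4$ forces each link to contain exactly two link-vertices. In particular $|L(A,C)|$ is at least the number of $A$-link vertices plus the number of $C$-link vertices, which is $\geq 4$; likewise for the other three corner-separators.

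In configuration~(b), $A\sm B \se C\cap D$ gives that the number of $A$-link vertices equals $|A\sm B|\geq 2$. Each $v\in A\sm B$ has $\geq 2$ neighbours in each of $C\sm D$ and $D\sm C$ by the degree-condition on $(C,D)$, and at most one neighbour in $B\sm A$ by the matching-condition on $(A,B)$. A dichotomy on whether some $v\in A\sm B$ lacks a matching partner in corner $BD$ yields either that such a $v$ has at least two $D$-link vertices among its neighbours, giving $|L(A,D)|\geq 4$ (with opposite corner $BC$ then non-empty), or that every $v$ is matched to $BD$, so no matching edges reach $BC$ and each $v$ has at least two $C$-link vertices among its neighbours, giving $|L(A,C)|\geq 4$ (with $BD$ non-empty by assumption). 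The main obstacle is the bookkeeping when further corners are empty; this is handled uniformly by the observation that, whenever $BC$ or $BD$ is empty, every $v\in A\sm B$ already has $\geq 2$ link-vertex neighbours on the corresponding side, so the dichotomy collapses into a direct bound.
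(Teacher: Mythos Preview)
Your argument is correct, and the core observation that a non-empty corner forces its corner-separator to have size $\geq 4$ (via the mixed-separation $(A\cap C, B\cup D)$) is exactly what the paper uses as well. From there, however, you take a noticeably different route. The paper argues by contradiction: if no opposite pair of corner-separators both have size $\geq 4$, then two \emph{adjacent} corner-separators, say $L(A,C)$ and $L(B,C)$, have size $\leq 3$; the corresponding corners are empty, so the $C$-link equals $C\sm D$ and consists only of vertices, and the degree-condition for $(A,B)$ applied to these $C$-link vertices (combined with the $\leq 1$ bound on the $A$- and $B$-links coming from $|L(A,C)|,|L(B,C)|\leq 3$) yields a contradiction. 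This handles all sub-cases uniformly in half a paragraph.

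Your approach instead fixes two adjacent empty corners $AC,AD$ up front and works forward, using the degree-condition on $(C,D)$ for the vertices of $A\sm B$ (the $A$-link) together with the matching-condition on $(A,B)$ to force one of $|L(A,C)|,|L(A,D)|\geq 4$, then pairing it with the opposite corner-separator via non-emptiness of $BC$ or $BD$. This is valid but buys you an extra layer of cases: your dichotomy in configuration~(b) implicitly assumes $BC$ and $BD$ are both non-empty (the parenthetical ``with opposite corner $BC$ then non-empty'' is not a consequence of option~1 on its own), and you then patch the remaining sub-cases with the final observation. All cases are indeed covered once one reads the observation as handling ``$BC$ or $BD$ empty'' \emph{before} running the dichotomy, but the write-up obscures this. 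Two minor points: you cite \cref{lem:submodularity-crossing} in your plan but never actually use it, and in your opening sentence ``its sides are the corner $AC$ and $V(G)\sm(A\cap C)$'' you mean the strict sides $X\sm Y$ and $Y\sm X$, not the sides $X,Y$ themselves.
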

\begin{proof}
    Otherwise, two adjacent corner-separators have size at most~3, say the corner-separator for~$AC$ and the corner-separator for~$BC$. By 4-connectivity, the corners for~$AC$ and~$BC$ are empty. Since $C \sm D \neq \emptyset$, the link~$L$ for~$C$ is nonempty. 
    The link $L$ only contains vertices since its adjacent corners are empty, so $L = (A \cap B) \sm D$. If $|L| \geq 2$ then the vertices in~$L$ have at most one neighbour in $A \sm B$ (and $B \sm A$), contradicting the degree-condition for $(A,B)$. Otherwise, if $|L| = 1$ then $(C,D)$ is not a tetra-separation by \cref{lem:trivial-tetra-separations}, a contradiction.
\end{proof}

The following lemma may be regarded as an analogue of \cite[Lemma 1.3.5]{Tridecomp}.

\begin{lemma}\label{lem:crossing-mixed-separations}
    Let $(A,B)$ and $(C,D)$ be two mixed-$k$-separations in a $k$-connected graph $G$ that cross so that two opposite corner-separators have size at least~$k$. Then either
    \begin{enumerate}
        \item\label{itm:crossing-mixed-separations-1} all links have the same size $\ell$ and the centre has size $k-2\ell$ for some $\ell \le \lfloor k/2 \rfloor$; or
        \item\label{itm:crossing-mixed-separations-2} two adjacent links have size~$i$, the other two links have size~$j$, and the centre has size $k-i-j$, where $1\le i \le \lfloor (k-1)/2 \rfloor$ and $i < j \leq k-i$.
    \end{enumerate}
\end{lemma}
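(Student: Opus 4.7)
My plan has three phases — a book-keeping decomposition of the two separators into links and centre, submodularity plus elementary algebra to pin down the link sizes, and a case analysis whose only non-trivial part is ruling out a degenerate configuration via $k$-connectivity.

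For the set-up, I would let $a,b,c,d$ be the sizes of the $A$-, $B$-, $C$-, $D$-links, write $z_v := |A \cap B \cap C \cap D|$, and decompose the diagonal-edge count as $z_e = z_{AC,BD} + z_{AD,BC}$; then $z := z_v + z_e$ is the size of the centre. Partitioning $S(A,B)$ into $C$-link, $D$-link and centre (and symmetrically $S(C,D)$) gives $c+d+z = a+b+z = k$, hence $a+b = c+d$. By direct inspection, $|L(A,C)| = a + c + z_v + z_{AC,BD}$ and $|L(B,D)| = b + d + z_v + z_{AC,BD}$. \cref{lem:submodularity-crossing} gives $|L(A,C)| + |L(B,D)| \le 2k$; combined with the hypothesis that both summands are at least $k$, equality is forced, and the moreover-clause yields $z_{AD,BC} = 0$. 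Subtracting the two link-sum equations yields $a+c = b+d$, which with $a+b = c+d$ forces $a = d$ and $b = c$.

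Relabelling $A \leftrightarrow B$ if needed, I would set $i := a = d$ and $j := b = c$ with $i \le j$. If $i = j$, conclusion~\ref{itm:crossing-mixed-separations-1} holds with $\ell = i$, and $\ell \le \lfloor k/2 \rfloor$ follows from $z = k - 2\ell \ge 0$. If $i < j$, conclusion~\ref{itm:crossing-mixed-separations-2} holds, with $j \le k-i$ from $i + j \le k$ and $i \le \lfloor (k-1)/2 \rfloor$ from $2i < i + j \le k$. What remains is $i \ge 1$.

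I expect this last bound to be the main obstacle. Suppose for a contradiction that $i = 0$, so $a = d = 0$. I would first argue that the corner for $AD$ must then be nonempty: otherwise $A \sm B \se C$ (since $A \sm B$ lies entirely in the corner for $AC$), and the absence of $A$-link and $D$-link vertices forced by $a = d = 0$ upgrades this to $A \se C$ together with $D \se B$, so that $(A,B) \le (C,D)$ — contradicting that the two mixed-separations cross. Given the corner for $AD$ nonempty, $(A \cap D,\, B \cup C)$ is a proper mixed-separation of~$G$ and so has separator of size at least~$k$ by $k$-connectivity. On the other hand, a short case check using $a = d = 0$ and $z_{AD,BC} = 0$ shows no edge of $G$ leaves the corner for $AD$ into $V(G) \sm (A \cap D)$: edges to the corner for $AC$ or to a $C$-link vertex would be $A$-link edges (ruled out by $a = 0$); edges to the corner for $BD$ or to a $B$-link vertex would be $D$-link edges (ruled out by $d = 0$); and edges to the corner for $BC$ would be diagonal $AD$--$BC$ edges (ruled out by $z_{AD,BC} = 0$). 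Therefore $S(A \cap D, B \cup C)$ consists precisely of the vertex-centre and has size $z_v \le z = k - j \le k - 1$, the desired contradiction. Thus $i \ge 1$ and the proof is complete.
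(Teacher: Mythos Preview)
Your proof is correct and follows essentially the same route as the paper's: both derive the four link equalities $a=d$, $b=c$ from submodularity plus the hypothesis, and both handle the degenerate case via $k$-connectivity applied to the $AD$-corner. The only organisational difference is that the paper treats $a=d=0$ as a standalone case forcing $x=k$ and hence $b=c=0$ (so outcome~(i) with $\ell=0$), whereas you reach the same conclusion by deriving a contradiction under the extra assumption $i<j$; the underlying argument --- corner $AD$ nonempty lest the separations be nested, then its corner-separator too small --- is identical. One cosmetic point: the relabelling that achieves $i\le j$ should swap both $(A,B)\leftrightarrow(B,A)$ and $(C,D)\leftrightarrow(D,C)$ simultaneously (to keep the distinguished opposite pair $\{AC,BD\}$ fixed), not just $A\leftrightarrow B$.
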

\begin{proof}
    By \cref{lem:submodularity-crossing}, two opposite corner-separators have size exactly~$k$, say the corner-separators for~$AC$ and~$BD$.
    Let $a, b, c, d$ denote the sizes of the links for $A, B, C, D$, respectively. Let $x$ denote the size of the centre. 
    By \cref{lem:submodularity-crossing}, every diagonal edge has its endvertices in the corners for~$AC$ and~$BD$; and there are no jumping edges. 
    We get $a + c + x = k$ and $b + d + x = k$ and $a + b + x = k$ and $c + d + x = k$. It follows that $b=c$ and $a=d$. Without loss of generality $a=d \leq b=c$.

    Since $a+c \leq k$ and $a \leq c$ we have $a=d \in \{0,1,\ldots,\lfloor k/2 \rfloor\}$. If $a=d=0$ then the corner for~$AD$ must be nonempty, and hence $x=k$ since $G$ is $k$-connected. Then $b=c=0$ and we get outcome~\ref{itm:crossing-mixed-separations-1}.

    Otherwise $1 \leq a=d \leq \lfloor k/2 \rfloor$. If $a=b=c=d=:\ell$ then we get $x=k-a-c = k-2 \ell$ and so get outcome~\ref{itm:crossing-mixed-separations-1}. Otherwise $i := a=d < b=c =: j$. Note that $i \leq \lfloor (k-1)/2 \rfloor$. With $x=k-a-c=k-i-j$, we get outcome~\ref{itm:crossing-mixed-separations-2}.
\end{proof}

\begin{lemma}
\label{lem:links-of-crossing-tetra-separations}
    Let $(A,B)$ and $(C,D)$ be two crossing tetra-separations in a 4-connected graph~$G$. Then all links have the same size~$\ell$ and the centre has size $4-2\ell$, for some $\ell \in \{0,1,2\}$.
\end{lemma}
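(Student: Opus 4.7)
The plan is to invoke \cref{lem:crossing-opposite-corners,lem:submodularity-crossing,lem:crossing-mixed-separations} to reduce the problem to ruling out case~\ref{itm:crossing-mixed-separations-2} of \cref{lem:crossing-mixed-separations}. By \cref{lem:crossing-opposite-corners}, after possibly swapping names I may assume $|L(A,C)|\geq 4$ and $|L(B,D)| \geq 4$, whence \cref{lem:submodularity-crossing} forces equality $|L(A,C)|+|L(B,D)| = |S(A,B)|+|S(C,D)| = 8$; in particular, there are no jumping edges and every diagonal edge lies between corners $AC$ and $BD$. Applying \cref{lem:crossing-mixed-separations} with $k=4$, either outcome~\ref{itm:crossing-mixed-separations-1} holds---which gives exactly the conclusion since $\ell\leq 2$---or outcome~\ref{itm:crossing-mixed-separations-2} holds, which by the symmetries inherent in the proof of that lemma I may take to mean $a = d = 1$ and $b = c \in \{2,3\}$ for the link sizes $a,b,c,d$. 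I will derive a contradiction from the latter using the degree- and matching-conditions of the two tetra-separations.

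First I would show that corner $AD$ is empty. Since all diagonal edges go between $AC$ and $BD$, I have $|L(A,D)| = a + d + |\text{vertex-centre}|$; as the centre has size $4 - a - b \leq 1$ and hence $|\text{vertex-centre}|\leq 1$, this gives $|L(A,D)| \leq 3 < 4$, so 4-connectivity forces corner~$AD$ to be empty.

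Next I would show that the $A$-link consists of a single vertex $v \in (A\sm B)\cap(C\cap D)$, and symmetrically that the $D$-link is a single vertex $w \in (A\cap B)\cap(D\sm C)$. Suppose instead the $A$-link were a single edge $e$. With corner $AD$ empty and no diagonals except between $AC$ and $BD$, $e$ must have one endvertex in corner $AC$ and the other at a vertex $w' \in (A\cap B)\cap(D\sm C)$, which is then a $D$-link vertex. Since there is no $A$-link vertex in this sub-case, $A\sm B$ coincides with corner $AC$, so every neighbour of $w'$ in $A\sm B$ is joined to $w'$ by an edge of $S(C,D)$. The matching-condition of $(C,D)$ permits at most one such edge, contradicting the degree-condition of $(A,B)$ applied to $w'$.

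Finally I would derive the contradiction. Since $v\in S(C,D)$, the degree-condition of $(C,D)$ requires $v$ to have at least two neighbours in $D\sm C$; but $D\sm C$ consists only of corner $BD$ and $\{w\}$ (corner $AD$ being empty), and the matching-condition of $(A,B)$ allows $v$ at most one $S(A,B)$-edge to corner $BD$. Hence $v$ is adjacent to $w$ and to exactly one vertex $y\in$ corner $BD$; the edge $vy$ is non-diagonal with an endvertex in corner $BD$, making it a $D$-link edge. Together with the $D$-link vertex $w$, this forces $|D\text{-link}|\geq 2$, contradicting $d=1$. I expect the main obstacle to be the case analysis in the third paragraph, where the tight interplay between the matching-condition of one tetra-separation and the degree-condition of the other must be exploited precisely to exclude the edge sub-cases of the $A$- and $D$-links.
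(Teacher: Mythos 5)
Your proposal is correct and follows essentially the same route as the paper: reduce via \cref{lem:crossing-opposite-corners,lem:submodularity-crossing,lem:crossing-mixed-separations} to the unbalanced case, show the corner between the two size-one links is empty by 4-connectivity, and then contradict the degree-condition using the structure of those links. The paper's endgame is shorter—it observes that one of the two size-one links must contain a vertex and that this vertex then has at most one neighbour on the relevant side—but your longer detour through showing both links are vertices and counting $v$'s neighbours in $D\sm C$ is a valid variant of the same idea.
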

\begin{proof}
    Suppose that not all links have size $\ell$, for some $\ell \in \{0,1,2\}$. 
    By \cref{lem:crossing-opposite-corners} and \cref{lem:crossing-mixed-separations}, we get that two adjacent links have size one, the other two links have size at least two and the centre has size at most one. 
    Without loss of generality, the corner~$K$ of~$AC$ is the corner whose adjacent links have size one. 
    The corner-separator of $K$ has size at most~3, so~$K$ is empty since~$G$ is 4-connected. 
    Then one of the links of size one contains a vertex~$u$, say $u \in (A \cap B) \sm D$. 
    But then $u$ has at most one neighbour in $A \sm B$, contradicting the degree-condition for~$(A,B)$.
    Hence all links have the same size~$\ell$ for some $\ell\in\{0,1,2\}$, and then the centre has size $4-2\ell$ by \cref{lem:crossing-opposite-corners} and \cref{lem:crossing-mixed-separations}.
\end{proof}

\begin{observation}
\label{obs:potter-adjacent-corner}
    Let $(A,B)$ and $(C,D)$ be crossing tetra-separations in a 4-connected graph~$G$.
    If a corner is potter, then its two adjacent corners are nonempty.
    In particular, no two adjacent corners are potter. \qed
\end{observation}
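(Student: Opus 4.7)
The plan is to simply unpack the definition of \emph{potter} and read off where the endvertices of the witnessing path $u_1u_2u_3u_4$ must live. Assume the corner for $AC$ is potter, so there is a path $u_1u_2u_3u_4$ in $G$ such that $u_1u_2$ dangles from the vertex $u_2$ in the $C$-link through the $A$-link, and $u_3u_4$ dangles from the vertex $u_3$ in the $A$-link through the $C$-link.

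First, I would focus on the edge $u_1u_2$. Since it lies in the $A$-link, it must be a non-diagonal edge of $S(C,D)$ with an endvertex in a corner adjacent to $A$, namely $AC$ or $AD$. The endvertex $u_2$ is a \emph{vertex} of the $C$-link, hence lies in $(C \sm D) \cap (A\cap B)$ and so is in no corner at all. Therefore the corner-end of $u_1u_2$ must be $u_1$. Since the corner $AC$ is empty by the potter hypothesis, $u_1$ must lie in the corner $AD$, which is therefore nonempty. The same argument, applied to $u_3u_4$ as an edge of the $C$-link with vertex-end $u_3$ in the $A$-link, forces $u_4$ to lie in the corner $BC$ (using again that $AC$ is empty), so $BC$ is nonempty as well. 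These are the two corners adjacent to $AC$, which gives the main statement.

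The ``in particular'' part is then immediate: any potter corner is empty by definition, so if two adjacent corners were both potter, then in particular each would be empty, contradicting what we just proved for the first one. The only subtlety worth double-checking is the interpretation of ``has an endvertex in the $Y$-link'' in the definition of dangling, namely that the relevant endvertex is a \emph{vertex} belonging to the $Y$-link (as opposed to an endpoint of an edge in the $Y$-link); this is what allows me to assert $u_2 \in (C\sm D)\cap(A\cap B)$ and $u_3\in(A\sm B)\cap(C\cap D)$ and hence that neither $u_2$ nor $u_3$ occupies a corner. Once this is pinned down, no further work is needed; there is no genuine obstacle, only a careful parsing of the nested definitions of links, corners and dangling edges.
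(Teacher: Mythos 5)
Your proof is correct and is exactly the definitional unpacking the paper intends (it states the observation with no written proof). Your parsing of the links, corners and dangling edges is right: $u_2$ and $u_3$ lie in $A\cap B$ resp.\ $C\cap D$ and hence in no corner, forcing $u_1$ into the corner $AD$ and $u_4$ into the corner $BC$, which gives both claims.
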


\begin{lemma}
\label{lem:crossing-edges}
    Let $(A,B)$ and $(C,D)$ be two crossing tetra-separations in a 4-connected graph~$G$. 
    \begin{enumerate}
        \item\label{itm:crossing-edges-1} There are no jumping edges and no diagonal edges.
        \item\label{itm:crossing-edges-2} Let $X,Y \in \{A,B,C,D\}$. 
        If some edge dangles from the $Y$-link through the $X$-link, then the corner for $XY$ is potter.
    \end{enumerate}
\end{lemma}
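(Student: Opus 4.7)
The plan is to treat the two parts of the lemma separately.

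For part~(i), I apply \cref{lem:submodularity-crossing} to both pairs of opposite corner-separators. By \cref{lem:links-of-crossing-tetra-separations} every corner-separator has size $\ell+\ell+(4-2\ell)=4$, so both applications are tight:
\[
|L(A,C)|+|L(B,D)| \;=\; 8 \;=\; |S(A,B)|+|S(C,D)| \;=\; |L(A,D)|+|L(B,C)|.
\]
The "moreover" clause then forbids all jumping edges and restricts each diagonal edge to a single specified pair of opposite corners---namely $\{AC,BD\}$ from the first application and $\{AD,BC\}$ from the second. Since these pairs are disjoint, no diagonal edge can exist.

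For part~(ii), by symmetry I assume $X=A$ and $Y=C$. Unpacking the link definitions, a dangling edge $e$ from the $C$-link through the $A$-link must take the form $e=vw$ with $v\in AD$ (a corner vertex) and $w\in(A\cap B)\cap(C\sm D)$ (a $C$-link vertex); in particular $e \in S(C,D)$. The $(A,B)$-degree-condition at $w$ then gives $w$ at least two neighbours in each of $A\sm B$ and $B\sm A$, while the matching-condition on $S(C,D)$ forbids any $S(C,D)$-edge at $v$ or $w$ other than $e$. My goal is to verify the three potter properties in sequence: (a)~both the $A$-link and the $C$-link have size two (i.e.\ $\ell=2$); (b)~corner $AC$ is empty; (c)~a path $u_1u_2u_3u_4$ as required exists, with $u_1=v$ and $u_2=w$.

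For (a), in case $\ell=1$ the $A$-link equals $\{e\}$ with no link-vertex, so $w$'s second $A\sm B$-neighbour must land in corner $AC$; combining with the symmetric $B$-side analysis and the matching-condition on $S(A,B)$ yields an additional $C$-link edge, exceeding the size-one of the $C$-link. For (b), a vertex $x\in AC$ would, via the $(A,B)$-degree-conditions at $w$ and at $x$, force another element into either the $A$-link or the $C$-link beyond the size-two budget from~(a). For (c), the matching-condition on $S(C,D)$ at $v$ and $w$ eliminates all alternative forms of the second $A$-link element, leaving only a vertex $u_3\in(A\sm B)\cap(C\cap D)$; by~(b), $w$'s remaining $A\sm B$-neighbour must be exactly $u_3$, yielding the edge $u_2u_3=wu_3$. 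The $(C,D)$-degree-condition at $u_3$ then supplies a neighbour $u_4\in BC$ (other placements being blocked by~(b) or the link-size constraint on the $C$-link), making $u_3u_4$ the required $C$-link edge. The main obstacle will be the combinatorial bookkeeping in claim~(c): the $(A,B)$- and $(C,D)$-degree-conditions and the matching-conditions on both $S(A,B)$ and $S(C,D)$ must be balanced simultaneously, with careful attention to whether the additional link elements appear as edges or as vertices.
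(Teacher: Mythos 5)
Part (i) of your proposal is correct and is exactly the intended reading of the paper's (terse) argument: equality in \cref{lem:submodularity-crossing} for both pairs of opposite corner-separators kills jumping edges and confines diagonal edges to two disjoint pairs of opposite corners, hence kills them too. Step (c) of part (ii) also matches the paper's construction of the path $u_1u_2u_3u_4$.

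The gap is in your steps (a) and (b). Both are justified by degree- and matching-condition bookkeeping, and that bookkeeping does not close; the missing ingredient is 4-connectivity applied to the corner-separator. Concretely: in (b) you invoke "the $(A,B)$-degree-condition at $x$" for a vertex $x$ in the corner $AC$, but $x$ lies in neither $A\cap B$ nor $C\cap D$, so no degree-condition applies to it, and an edge from $w$ to $x$ lies in neither $S(A,B)$ nor $S(C,D)$ (both ends are in $A$ and in $C$), so it contributes nothing to any link. Similarly, in your $\ell=1$ analysis for (a), the second $A\sm B$-neighbour of $w$ landing in the corner $AC$ produces no separator edge at all, and the "symmetric $B$-side analysis" is satisfied by $w$ having two neighbours in the corner $BC$ — no fifth link element is forced. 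The correct argument, as in the paper, is: if the corner $AC$ were nonempty, then $|L(A,C)|=4$ (by \cref{lem:links-of-crossing-tetra-separations} and 4-connectivity) and $L(A,C)\sm\{e\}$ would be a mixed-3-separator separating the corner $AC$ from $u_1=v$, since both ends of the dangling edge $e$ lie outside that corner; this contradicts \cref{kConMixed}. Once the corner is empty, the degree-condition at $w$ together with the matching-condition of $S(C,D)$ (which forbids a second $S(C,D)$-edge at $w$) forces the second $A\sm B$-neighbour of $w$ to be a vertex $u_3$ of the $A$-link, so the $A$-link has size $\ge 2$ and $\ell=2$ follows from \cref{lem:links-of-crossing-tetra-separations}. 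You should replace (a) and (b) by this connectivity argument; your (c) then goes through essentially as the paper's.
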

\begin{proof}
    \ref{itm:crossing-edges-1} By \cref{lem:links-of-crossing-tetra-separations}, all links have the same size $\ell$ and the centre has size $4-2\ell$. 
    By \cref{lem:submodularity-crossing}, there do not exist jumping edges nor diagonal edges.

    \ref{itm:crossing-edges-2} 
    Without loss of generality, $X=A$ and $Y=C$.
    Let $e=u_1u_2$ be a dangling edge such that $u_2$ lies in the $C$-link and $e$ lies in the $A$-link.
    We first show that the corner~$K$ for~$AC$ is empty. 
    Assume for contradiction that $K \neq \emptyset$. 
    By \cref{lem:links-of-crossing-tetra-separations} and 4-connectivity, $|L(A,C)| = 4$. 
    Then $L(A,C)-e$ is a mixed-3-separator separating the vertices in the corner for~$AC$ from~$u_1$.
    This contradicts 4-connectivity by \cref{kConMixed}.

    By the degree-condition of $(A,B)$ and since $K$ is empty, the vertex $u_2$ in the $C$-link has a neighbour~$u_3$ in the $A$-link. 
    By the degree-condition of $(C,D)$ and since $K$ is empty, the vertex~$u_3$ has a neighbour~$u_4$ in $(C \sm D) \cap B$. 
    It remains to show that~$u_4$ is in the corner for~$BC$. 
    Otherwise, $u_4$ is in the $C$-link.
    By the degree-condition of $(A,B)$ and since $K$ is empty, the vertex $u_4$ has a neighbour in $A\cap D$ besides $u_3$, and therefore the $A$-link contains a third vertex or edge besides $e$ and~$u_3$.
    This contradicts \cref{lem:links-of-crossing-tetra-separations}.
\end{proof}

\begin{proof}[Proof of the \allref{keylem:crossing}]
    We combine \cref{lem:links-of-crossing-tetra-separations} and \cref{lem:crossing-edges}.
\end{proof}

\section{Reductions: a tool for finding tetra-separations}\label{sec:shifts}

This section provides a `reduction' method for obtaining tetra-separations from genuine 4-separations, and from mixed-4-separations more generally.
There are `small' mixed-4-separations that fail to yield tetra-separations via the reduction method.
We characterise these in \cref{leftrightTetra}.

\subsection{Left-reductions and right-reductions}

\begin{definition}[Left-reduction, right-reduction]
    Let $(A,B)$ be a mixed-separation\pl\ in a graph. Let $L$ be the set of vertices in $A \cap B$ with at most one neighbour in $A \sm B$, and let $R$ be the set of vertices in $A \cap B$ with at most one neighbour in $B \sm A$.
    The \defn{left-reduction} of $(A,B)$ is $(A \sm L, B)$.
    The \defn{right-reduction} of $(A,B)$ is $(A,B \sm R)$.
\end{definition}

In the following, we prove lemmas for the left-reduction.
All results have analogues for right-reductions by symmetry.

\begin{observation}\label{obs:shift-maintains-proper-sides}
    The left-reduction $(A',B')$ of a mixed-separation\pl\ $(A,B)$ satisfies $A'\sm B'=A\sm B$ and $B'\sm A'\supseteq B\sm A$.\qed
\end{observation}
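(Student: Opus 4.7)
The plan is to observe that both claims reduce to a short set-theoretic computation using the fact that $L\se A\cap B$.

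First I would unpack the definition: by construction $A'=A\sm L$ and $B'=B$, and crucially $L\se A\cap B$, so in particular $L\se B$ and $L\se A$.

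For the first equality, I would write
\[
    A'\sm B' \;=\; (A\sm L)\sm B \;=\; (A\sm B)\sm L,
\]
and then use $L\se B$ to conclude $L\cap(A\sm B)=\emptyset$, so $(A\sm B)\sm L=A\sm B$.

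For the inclusion, I would compute
\[
    B'\sm A' \;=\; B\sm(A\sm L) \;=\; (B\sm A)\cup(B\cap L) \;=\; (B\sm A)\cup L,
\]
where the last equality uses $L\se B$. Since $(B\sm A)\cup L\supseteq B\sm A$, the desired inclusion follows. No step is hard; this is genuinely an observation, and the only thing to double-check is that $L$, defined as a subset of $A\cap B$, is disjoint from $A\sm B$ and contained in $B$ — which is immediate from the definition.
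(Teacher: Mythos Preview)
Your proof is correct. The paper marks this observation with a bare \qed\ and gives no argument, so your short set-theoretic verification is exactly the intended routine check; there is nothing to compare.
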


\begin{lemma}
\label{lem:left-shift}
    Let $(A,B)$ be a mixed-$k$-separation\pl\ of a graph~$G$ with left-reduction $(A',B')$.
    \begin{enumerate}
        \item\label{lem:left-shift1} $(A',B')$ is a mixed-$\ell$-separation\pl\ with $\ell\le k$. We have $\ell=k$ if every vertex in $S(A,B)$ has a neighbour in $A\sm B$.
        \item\label{lem:left-shift2} Every vertex in $S(A',B')$ has $\ge 2$ neighbours in $A'\sm B'$.
    \end{enumerate}
\end{lemma}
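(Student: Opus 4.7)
The plan is to unfold the definitions of the left-shift and compute $S(A',B')$ explicitly so that its cardinality can be compared with that of $S(A,B)$.

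First I would note that since $L\se A\cap B\se B$, the pair $(A',B')=(A\sm L,B)$ satisfies $A'\cup B'=V(G)$, so it is a mixed-separation\pl. Unfolding set operations also yields $A'\sm B'=A\sm B$, $B'\sm A'=(B\sm A)\cup L$, and $A'\cap B'=(A\cap B)\sm L$. From these,
\[
S(A',B')=\bigl((A\cap B)\sm L\bigr)\,\cup\,E(A\sm B,B\sm A)\,\cup\,E(A\sm B,L),
\]
which differs from $S(A,B)=(A\cap B)\cup E(A\sm B,B\sm A)$ only in that the $|L|$ vertices of $L$ have been removed while the edges of $E(A\sm B,L)$ have been added.

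For part~(i), the defining property of $L$ says each $v\in L$ has at most one neighbour in $A\sm B$, so $|E(A\sm B,L)|\le|L|$; hence $|S(A',B')|\le|S(A,B)|$, giving $\ell\le k$. For the equality clause, assuming that every vertex of $A\cap B$ has a neighbour in $A\sm B$ forces each $v\in L$ to have exactly one such neighbour, whence $|E(A\sm B,L)|=|L|$ and the two separators coincide in size. For part~(ii), the vertex part of $S(A',B')$ is $(A\cap B)\sm L$, and by the very definition of $L$ this is precisely the set of vertices in $A\cap B$ with at least two neighbours in $A\sm B=A'\sm B'$.

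No real obstacle is anticipated; the argument is bookkeeping from the definitions. The only point requiring care is the inclusion $L\se B$, which is what ensures that removing $L$ from $A$ leaves $A\sm B$ untouched and that the only new edges appearing across the cut are those in $E(A\sm B,L)$.
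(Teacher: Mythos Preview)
Your proof is correct and follows essentially the same approach as the paper: both compute how $S(A',B')$ is obtained from $S(A,B)$ by deleting the vertices of $L$ and inserting the (at most one per vertex) edges from $L$ into $A\sm B$, and both observe that the remaining vertices $(A\cap B)\sm L$ are precisely those with $\ge 2$ neighbours in $A\sm B=A'\sm B'$. Your version makes the set-theoretic bookkeeping more explicit, but the argument is the same.
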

\begin{proof}
    \cref{lem:left-shift1}.
    By definition, the left-reduction $(A',B')$ is a mixed-separation\pl{}. 
    The separator of $(A',B')$ is obtained from the separator of $(A,B)$ by replacing every vertex $v\in S(A,B)$ that has exactly one neighbour $u\in A\sm B$ with the edge $uv$ and by removing every vertex in $S(A,B)$ that has no neighbour in $A\sm B$.
    Hence the order of $(A',B')$ is at most that of $(A,B)$, with equality if every vertex in $S(A,B)$ has a neighbour in $A\sm B$.
    
    \cref{lem:left-shift2}.
    Assume for a contradiction that there is a vertex $v \in S(A',B')$ with at most one neighbour in $A' \sm B'$.
    Since $A'\sm B'=A\sm B$ by \cref{obs:shift-maintains-proper-sides}, the vertex $v$ also has at most one neighbour in $A \sm B$. 
    But then, by the definition of left-reduction, we have $v \notin A'$, contradicting $v \in S(A',B')$.
\end{proof}

\begin{lemma}\label{leftMatching}
    Let $(A,B)$ be a mixed-4-separation\pl\ in a 4-connected graph~$G$.
    Then the following assertions are equivalent:
    \begin{enumerate}
        \item\label{leftMatching1} no two edges in $S(A,B)$ share an endvertex in $A\sm B$;
        \item\label{leftMatching2} $|A\sm B|\ge 2$ or $|S(A,B)\cap E(G)|\le 1$.
    \end{enumerate} 
\end{lemma}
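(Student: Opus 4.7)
The plan is to prove the two implications separately. For (i) $\Rightarrow$ (ii), I would argue the contrapositive: assume $|A\sm B|\le 1$ and $|S(A,B)\cap E(G)|\ge 2$. Since every edge in $S(A,B)$ has one endvertex in $A\sm B$ and the other in $B\sm A$, we have $|A\sm B|\ge 1$, hence $A\sm B=\{u\}$ for some single vertex $u$; but then all of the $\ge 2$ edges of $S(A,B)$ share the endvertex $u$, contradicting (i). No connectivity hypothesis is needed for this direction.

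For (ii) $\Rightarrow$ (i) I would split into cases. If $|S(A,B)\cap E(G)|\le 1$ then (i) is vacuous, so I may assume $|A\sm B|\ge 2$. Suppose for a contradiction that two edges $e_1,e_2\in S(A,B)$ share an endvertex $u\in A\sm B$. The key move is a shift that absorbs $u$ into the separator: set $(A',B'):=(A,B\cup\{u\})$. Then $A'\sm B'=(A\sm B)\sm\{u\}$ is nonempty by the case assumption $|A\sm B|\ge 2$, while $B'\sm A'=B\sm A$ is nonempty because $S(A,B)$ contains edges (and each such edge has an endvertex in $B\sm A$). Hence $(A',B')$ is a proper mixed-separation\pl\ of $G$. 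Passing from $(A,B)$ to $(A',B')$ adds $u$ to the vertex part of the separator while deleting from the edge part all $d\ge 2$ edges of $S(A,B)$ incident to $u$, and leaves the remaining elements of $S(A,B)$ untouched; therefore
\[
    |S(A',B')| \;=\; |S(A,B)| + 1 - d \;\le\; 4+1-2 \;=\; 3.
\]
This produces a proper mixed-($\le 3$)-separation of the 4-connected graph $G$, contradicting \cref{kConMixed}.

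I do not anticipate a genuine obstacle here: the argument is a one-step shifting calculation in the spirit of (but simpler than) \cref{lem:left-shift}. The only subtlety to double-check is that $(A',B')$ really is proper, which is exactly what the case assumption $|A\sm B|\ge 2$ plus the existence of an edge in $S(A,B)$ guarantees.
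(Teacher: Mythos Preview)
Your proof is correct and follows essentially the same idea as the paper: for the nontrivial direction, absorb the shared endvertex into the separator so that the order drops below~4 and contradict 4-connectivity. Your execution is in fact a bit cleaner than the paper's: you pass directly to the mixed-separation $(A,B\cup\{u\})$ and invoke \cref{kConMixed}, whereas the paper constructs a pure vertex-separator $X$ (consisting of $a$, the vertices in $S(A,B)$, and the $B$-ends of the remaining separator-edges) and then needs a short case split to locate a vertex $\beta\in B\sm A$ outside~$X$. Working with the mixed-separation avoids that case analysis entirely.
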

\begin{proof}
    \cref{leftMatching1} $\Rightarrow$ \cref{leftMatching2} is immediate.

    \cref{leftMatching2} $\Rightarrow$ \cref{leftMatching1}.
    Assume that \cref{leftMatching1} fails, so there are two edges $ab_1,ab_2\in S(A,B)$ with $a\in A\sm B$ and both $b_i\in B\sm A$.
    Let $X$ be the vertex-set that consists of $a$ together with all vertices in $S(A,B)$ and all endvertices in $B$ of edges in $S(A,B)$ other than the $ab_i$.
    Since $|A\sm B|\ge 2$ by \cref{leftMatching2}, there is $\alpha\in A\sm B$ distinct from~$a$.
    If some $b_i$ is not in $X$, we let $\beta:=b_i$.
    Otherwise both $b_i$ are in $X$, each has exactly two neighbours in $A\sm B$, and $S(A,B)$ contains only edges.
    Since $b_1$ has degree four, it has a neighbour $\beta$ in $B\sm A$ that is distinct from~$b_2$.
    In either case, $X$ is a 3-separator that separates $\alpha$ from~$\beta$, contradicting that $G$ is 4-connected.
\end{proof}

\begin{lemma}\label{leftDegreeStrongerMatching}
    Let $(A,B)$ be a mixed-4-separation\pl\ in a 4-connected graph~$G$ such that every vertex in $S(A,B)$ has $\ge 2$ neighbours in $A\sm B$.
    Then the following assertions are equivalent:
    \begin{enumerate}
        \item\label{leftDegreeStrongerMatching1} no two edges in $S(A,B)$ share an end in $A\sm B$;
        \item\label{leftDegreeStrongerMatching2} $(A,B)$ is not an atomic edge-cut with $|A|=1$;
        \item\label{leftDegreeStrongerMatching3} $|A\sm B|\ge 2$.
    \end{enumerate} 
\end{lemma}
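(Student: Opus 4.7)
My plan is to prove the three implications (iii)$\Rightarrow$(ii)$\Rightarrow$(iii) trivially from counting and then to connect (i) to (iii) using the previous \cref{leftMatching} together with the degree hypothesis. The crucial preliminary observation is that the degree-condition rules out $A\sm B=\emptyset$: otherwise $S(A,B)$ would equal $A\cap B$, yet each vertex in $A\cap B$ is required to have at least two neighbours in the empty set $A\sm B$. So throughout the proof I may assume $|A\sm B|\ge 1$, and hence the failure of (iii) means exactly $|A\sm B|=1$.

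For (iii)$\Rightarrow$(ii), note $|A\sm B|\ge 2$ forces $|A|\ge 2$, so $(A,B)$ cannot be an atomic edge-cut with $|A|=1$. For (ii)$\Rightarrow$(iii), I argue the contrapositive: if $|A\sm B|=1$, say $A\sm B=\{a\}$, then for any hypothetical $v\in A\cap B$ the degree-condition would demand two neighbours of $v$ in $\{a\}$, which is impossible; therefore $A\cap B=\emptyset$, so $A=\{a\}$ and $S(A,B)=E(\{a\},B\sm A)\subseteq E(G)$, making $(A,B)$ an atomic edge-cut with $|A|=1$ and contradicting (ii).

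For (iii)$\Rightarrow$(i) I simply invoke \cref{leftMatching}: $|A\sm B|\ge 2$ is the first disjunct of its condition~(ii), which yields~(i). For (i)$\Rightarrow$(iii) I again argue contrapositively and reuse the argument above: if $|A\sm B|=1$, then by the degree-condition $A=\{a\}$, so every element of $S(A,B)$ is an edge with endpoint $a$ in $A\sm B$; since $|S(A,B)|=4$, any two of these four edges share the endpoint $a$, violating~(i).

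I do not anticipate any genuine obstacle here — the lemma is a clean bookkeeping consequence of the degree-condition and of \cref{leftMatching}. The only subtle point worth flagging in writing is the preliminary step establishing $A\sm B\ne\emptyset$ from the degree-condition, since without it the statement "$|A\sm B|\ge 2$" would have a second failure mode ($|A\sm B|=0$) that is not captured by the atomic-edge-cut alternative in (ii). Once that observation is in place the four implications follow in two or three lines each.
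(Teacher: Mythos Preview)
Your proof is correct and uses essentially the same ideas as the paper: the degree hypothesis forces $A\cap B=\emptyset$ whenever $|A\sm B|\le 1$, and \cref{leftMatching} handles the link to~(i). The only difference is organisational: the paper runs a single cycle (i)$\Rightarrow$(ii)$\Rightarrow$(iii)$\Rightarrow$(i), whereas you prove four pairwise implications and separate out the case $A\sm B=\emptyset$ as a preliminary step.
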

\begin{proof}
    \cref{leftDegreeStrongerMatching1} $\Rightarrow$ \cref{leftDegreeStrongerMatching2} is immediate.

    \cref{leftDegreeStrongerMatching2} $\Rightarrow$ \cref{leftDegreeStrongerMatching3}.
    Assume $|A\sm B|\le 1$.
    Since every vertex in $S(A,B)$ has $\ge 2$ neighbours in $A\sm B$, we get that $S(A,B)$ is an edge-cut.
    Since $S(A,B)$ is nonempty, $|A|=1$ follows.

    \cref{leftDegreeStrongerMatching3} $\Rightarrow$ \cref{leftDegreeStrongerMatching1}.
    This follows from \cref{leftMatching} \cref{leftMatching2} $\Rightarrow$ \cref{leftMatching1}.
\end{proof}

\begin{lemma}\label{leftMatchingMedium}
    Let $(A,B)$ be a mixed-4-separation\pl\ in a 4-connected graph $G$
    with $A\sm B\neq\emptyset$ and
    such that no two edges in $S(A,B)$ share an end in $B\sm A$.
    Let $(A',B')$ denote the left-reduction of $(A,B)$.
    Then the following assertions are equivalent:
    \begin{enumerate}
        \item\label{leftMatchingMedium1} $(A',B')$ satisfies the matching-condition;
        \item\label{leftMatchingMedium2} $|A\sm B|\ge 2$.
    \end{enumerate} 
\end{lemma}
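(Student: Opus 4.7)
The plan is to prove each direction separately; the main tool for the harder implication \ref{leftMatchingMedium2} $\Rightarrow$ \ref{leftMatchingMedium1} is to re-apply \cref{leftMatching} to the shifted separation $(A',B')$ itself, rather than to fight through a connectivity argument from scratch.

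For \ref{leftMatchingMedium2} $\Rightarrow$ \ref{leftMatchingMedium1}, assume $|A \sm B| \ge 2$. By \cref{obs:shift-maintains-proper-sides} we have $A' \sm B' = A \sm B \neq \emptyset$, and $B' \sm A' \supseteq B \sm A$. In the main case, $(A',B')$ is a proper mixed-separation of order at most $4$ by \cref{lem:left-shift} and at least $4$ by 4-connectivity, hence of order exactly $4$. I would then invoke \cref{leftMatching} on $(A',B')$ in the role of the original separation: since $|A' \sm B'| = |A \sm B| \ge 2$, no two edges of $S(A',B')$ share an endvertex in $A' \sm B'$. For endvertices in $B' \sm A' = (B \sm A) \cup L$ I would argue by hand: any edge of $S(A',B')$ incident to a vertex in $B \sm A$ already lies in $S(A,B)$, so the given hypothesis on $(A,B)$ forbids two such; and any $v \in L$ has at most one neighbour in $A \sm B$ by the definition of $L$, hence is incident to at most one edge of $S(A',B')$. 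Together these show that the edges of $S(A',B')$ form a matching.

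For the contrapositive of \ref{leftMatchingMedium1} $\Rightarrow$ \ref{leftMatchingMedium2}, suppose $|A \sm B| = 1$, say $A \sm B = \{u\}$. Then every vertex of $A \cap B$ has at most one neighbour in $A \sm B$, so $L = A \cap B$, which forces $A' = \{u\}$ and $B' = B$. The separator $S(A',B')$ therefore consists of all edges of $G$ incident to $u$, of which there are at least four by 4-connectivity; these all share the endpoint $u$, so the matching-condition fails.

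The only step deserving care is checking that $(A',B')$ really is a mixed-4-separation before invoking \cref{leftMatching} on it; in the degenerate subcase where $(A,B)$ itself is improper with $L = \emptyset$, the shift does nothing and $S(A',B') = A \cap B$ contains no edges, so the matching-condition holds trivially. Beyond this, the argument is a direct bookkeeping check using the explicit decomposition $S(A',B') = (A \cap B \sm L) \cup E(A \sm B, B \sm A) \cup E(A \sm B, L)$ together with the definition of $L$ and the given hypothesis on $(A,B)$.
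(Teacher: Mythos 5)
Your proof is correct and takes essentially the same route as the paper's: apply the earlier matching lemma to the shifted separation $(A',B')$ (with $|A'\sm B'|=|A\sm B|\ge 2$) to rule out shared ends in $A'\sm B'$, then argue by hand via the definition of $L$ and the given hypothesis that no two edges of $S(A',B')$ can share an end in $B'\sm A'$. The only cosmetic difference is that you invoke \cref{leftMatching} directly, whereas the paper routes through \cref{leftDegreeStrongerMatching} (whose relevant implication is itself derived from \cref{leftMatching}); both are valid.
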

\begin{proof}
    By \cref{lem:left-shift}, every vertex in $S(A',B')$ has $\ge 2$ neighbours in $A'\sm B'$.

    \cref{leftMatchingMedium1} $\Rightarrow$ \cref{leftMatching2}.
    Suppose $|A\sm B|\le 1$.
    So $|A\sm B|=1$ since $A\sm B\neq\emptyset$.
    Then $|A'\sm B'|=1$ since $A\sm B=A'\sm B'$.
    Applying \cref{leftDegreeStrongerMatching} to $(A',B')$ yields that $(A',B')$ is an atomic edge-cut.

    \cref{leftMatching2} $\Rightarrow$ \cref{leftMatching1}.
    From $A\sm B= A'\sm B'$ we get $|A'\sm B'|\ge 2$.
    So no two edges in $S(A',B')$ share an end in $A'\sm B'$ by \cref{leftDegreeStrongerMatching} applied to $(A',B')$.
    So suppose for a contradiction that there are two distinct edges $a_1 b,a_2 b$ in $S(A',B')$ with $a_i\in A'\sm B'$ and $b \in B' \sm A'$.
    Then $a_i\in A'\sm B'= A\sm B$ and $b\in B\cap A$.
    Hence $b$ has $\ge 2$ neighbours in $A\sm B$, contradicting that~$b$ was replaced with edges when taking the left-reduction.
\end{proof}

\begin{lemma}\label{Righty}
    Let $G$ be a 4-connected graph.
    Let $(A,B)$ be a mixed-4-separation of~$G$.
    Let $(C,D)$ be a 4-separation of $G$ with right-reduction $(C',D')$.
    Assume $A\se C$.
    Then $(A,B)\le (C',D')$.
\end{lemma}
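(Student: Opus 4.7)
The plan is to unwind the definition of the right-shift and then argue by contradiction. By definition, $C' = C$ and $D' = D \sm R$, where $R$ is the set of vertices in $C \cap D$ having at most one neighbour in $D \sm C$. Since $A \se C = C'$ holds by hypothesis, it only remains to prove $B \supseteq D'$.

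I would argue this by contradiction. Suppose that some vertex $v \in D' = D \sm R$ satisfies $v \notin B$. Since $V(G) = A \cup B$, we then have $v \in A \sm B$, and $A \se C$ gives $v \in C$; combined with $v \in D$, this places $v$ in $C \cap D$. Since $v \notin R$, the vertex $v$ has at least two distinct neighbours $u_1, u_2 \in D \sm C$. The inclusion $A \se C$ forces $u_1, u_2 \notin A$, so $u_1, u_2 \in B \sm A$. Consequently, the two distinct edges $vu_1$ and $vu_2$ both belong to $E(A \sm B, B \sm A) \se S(A,B)$ and share the common endpoint $v \in A \sm B$.

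The final step, and main obstacle, is to extract a contradiction from this configuration. By \cref{leftMatching}, two edges of $S(A,B)$ sharing an end in $A \sm B$ is possible only when $|A \sm B| = 1$, so $A \sm B = \{v\}$ — the atomic-cut configuration of \cref{lem:trivial-tetra-separations}. In that case, $S(A,B) = (A \cap B) \cup E(\{v\}, B \sm A)$ has size four, and the 4-connectivity of $G$ forces $\deg(v) = 4$ with every element of $A \cap B$ adjacent to $v$. Since $A \se C$, every neighbour of $v$ outside $C$ must already lie in $D \sm C$, so $u_1$ and $u_2$ exhaust a large portion of the edge-count in $S(A,B)$. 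Using the structure of $(C,D)$ as a genuine 4-separation (so that no edge runs between $C \sm D$ and $D \sm C$) together with the shift-type arguments of \cref{leftDegreeStrongerMatching,leftMatchingMedium}, I would construct a proper mixed-separation of $G$ of order at most three, contradicting the 4-connectivity of $G$ via \cref{kConMixed}. Making this final construction precise, and ruling out the degenerate corner cases, is the delicate part of the proof.
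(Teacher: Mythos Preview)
Your approach matches the paper's: both verify $A \subseteq C'$ trivially and then argue $D' \subseteq B$ by taking $v \in D' \sm B$, placing $v$ in $(A \sm B) \cap (C \cap D)$, and observing that $v$ has at least two neighbours in $D \sm C \subseteq B \sm A$, hence two $S(A,B)$-edges through~$v$. Your reduction to $A \sm B = \{v\}$ via \cref{leftMatching} is correct and in fact goes further than the paper, which at this point simply asserts that $v$ has \emph{exactly one} neighbour in $B \sm A$ and concludes $v \in R$, hence $v \notin D'$.

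The ``delicate part'' you leave open cannot be completed: the lemma as stated is false. In $G = K_{4,4}$ with parts $\{a_1,\ldots,a_4\}$ and $\{b_1,\ldots,b_4\}$, take $(A,B) := (\{a_1\},\, V(G) \sm \{a_1\})$ and $(C,D) := (\{a_1,\ldots,a_4,b_1,b_2\},\, \{a_1,\ldots,a_4,b_3,b_4\})$. Then $A \subseteq C$, each $a_i$ has two neighbours in $D \sm C = \{b_3,b_4\}$, so $R=\emptyset$ and $D' = D$; but $a_1 \in D' \sm B$. The paper's asserted step fails here too ($a_1$ has four neighbours in $B \sm A$, not one). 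The intended extra hypothesis is that the edges in $S(A,B)$ form a matching --- which holds whenever $(A,B)$ is a tetra-separation, as in every application of this lemma in the paper. Under that hypothesis any $v \in X$ genuinely has exactly one neighbour in $B \sm A \supseteq D \sm C$, so $v \in R$ and $v \notin D'$; the contradiction is then immediate, with no further case analysis needed.
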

\begin{proof}
    We obtain $A\se C'$ from $A\se C=C'$.
    Let $X$ consist of all endvertices in $A$ of edges in $S(A,B)$.
    Then $N(B\sm A)=(A\cap B)\cup X$.
    From $A\se C$ we obtain $D\sm C\se B\sm A$.
    Hence $D\se (B\sm A)\cup N(B\sm A)=B\cup X$.
    Recall that $D'\se D$.
    Let $v\in D'$.
    It suffices to show that $v\notin X$.
    If $v\in X$, then $v$ has exactly one neighbour in $B\sm A$.
    Since $B\sm A\supseteq D\sm C$, the definition of right-reduction yields $v\notin D'$, a contradiction.
    So $v\notin X$.
\end{proof}

\subsection{Left-right-reductions and right-left-reductions}\label{sec:LeftRightShift}

\begin{definition}
    Let $(A,B)$ be a mixed-separation\pl . 
    The \defn{left-right-reduction} of $(A,B)$ is obtained from $(A,B)$ by first taking the left-reduction and then the right-reduction.
    The \defn{right-left-reduction} of $(A,B)$ is obtained from $(A,B)$ by first taking the right-reduction and then the left-reduction.
\end{definition}

\begin{observation}
\label{obs:reduction-maintains-proper-sides}
    Let $(A',B')$ be the left-right-reduction of a mixed-4-separation\pl\ $(A,B)$ in a graph~$G$. 
    Then $A' \sm B' \supseteq A \sm B$ and $B' \sm A' \supseteq B \sm A$.
\end{observation}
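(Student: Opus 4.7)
The plan is to apply \cref{obs:shift-maintains-proper-sides} twice, once to the left-shift step and once to the right-shift step, and then chain the two inclusions.

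More explicitly, let $(A_1,B_1)$ denote the left-shift of $(A,B)$, so that by definition $(A',B')$ is the right-shift of $(A_1,B_1)$. Applying \cref{obs:shift-maintains-proper-sides} to $(A,B)$ with left-shift $(A_1,B_1)$ yields
\[
    A_1 \sm B_1 \;=\; A \sm B \quad\text{and}\quad B_1 \sm A_1 \;\supseteq\; B \sm A.
\]
The symmetric statement of \cref{obs:shift-maintains-proper-sides} for right-shifts (which holds by exchanging the roles of $A$ and $B$) gives, applied to $(A_1,B_1)$ with right-shift $(A',B')$,
\[
    B' \sm A' \;=\; B_1 \sm A_1 \quad\text{and}\quad A' \sm B' \;\supseteq\; A_1 \sm B_1.
\]

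Chaining these two lines produces $A' \sm B' \supseteq A_1 \sm B_1 = A \sm B$ and $B' \sm A' = B_1 \sm A_1 \supseteq B \sm A$, which is the claim. There is no real obstacle here: the only subtlety is that the equality $A_1 \sm B_1 = A \sm B$ from the first application is what lets the weaker inclusion from the second application still be strong enough to conclude $A' \sm B' \supseteq A \sm B$, and analogously on the other side. Hence the observation follows essentially by a direct invocation of \cref{obs:shift-maintains-proper-sides} and its mirror for right-shifts.
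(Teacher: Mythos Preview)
Your proof is correct and takes essentially the same approach as the paper: the paper's proof is the one-line instruction to ``perform the two shifts one after the other, applying \cref{obs:shift-maintains-proper-sides} after each shift,'' and you have carried this out in detail.
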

\begin{proof}
    We perform the two reductions one after the other, applying \cref{obs:shift-maintains-proper-sides} after each reduction.
\end{proof}

\begin{keylemma}\label{leftrightTetra}
    Let $(A,B)$ be a mixed-4-separation\pl\ in a 4-connected graph $G$.
    The left-right-reduction $(A'',B'')$ of $(A,B)$ is a tetra-separation if and only if the following three conditions are satisfied:
    \begin{enumerate}
        \item\label{leftrightTetra1} $|A\sm B|\ge 2$;
        \item\label{leftrightTetra2} the number of vertices in $S(A,B)$ with $\le 1$ neighbour in $A\sm B$ is $\ge 2- |B\sm A|$;
        \item\label{leftrightTetra3} $|B\sm A|\ge 2$ or $|S(A,B)\cap E(G)|\le 1$.
    \end{enumerate}
\end{keylemma} 
\begin{proof}
    Let $(A',B')$ denote the left-reduction of $(A,B)$.

    \casen{Backward implication.}
    Assume \cref{leftrightTetra1}--\cref{leftrightTetra3}.
    By \cref{leftrightTetra3} and \cref{leftMatching} (applied to $(B,A)$ instead of $(A,B)$), no two edges in $S(A,B)$ share an endvertex in $B\sm A$.
    By \cref{leftrightTetra1} and \cref{leftMatchingMedium}, $(A',B')$ satisfies the matching-condition.
    By \cref{lem:left-shift}, every vertex in $S(A',B')$ has $\ge 2$ neighbours in $A'\sm B'$.
    By \cref{leftrightTetra1} and since $G$ is 4-connected, every vertex in $S(A,B)$ has a neighbour in $A\sm B$.
    Hence $(A',B')$ has order four by \cref{lem:left-shift}.
    We further have $|A'\sm B'|=|A\sm B|\ge 2$ by \cref{obs:reduction-maintains-proper-sides} and we have $|B'\sm A'|\ge 2$ by~\cref{leftrightTetra2}.

    By \cref{leftMatchingMedium} (applied for the right-reduction), $(A'',B'')$ satisfies the matching-condition.
    By \cref{lem:left-shift} (applied for the right-reduction), every vertex in $S(A'',B'')$ has $\ge 2$ neighbours in $B''\sm A''$.
    Every vertex in $S(A'',B'')$ has $\ge 2$ neighbours in $A''\sm B''$ since $A''\sm B''=A'\sm B'$ by \cref{obs:reduction-maintains-proper-sides} (applied for the right-reduction).
    Hence $(A'',B'')$ satisfies the degree-condition.
    Since $B'\sm A'$ is nonempty and $G$ is 4-connected, every vertex in $S(A',B')$ has a neighbour in $B'\sm A'$, so $(A'',B'')$ has order four by \cref{lem:left-shift} (applied for the right-reduction).
    Finally, $A''\sm B''=A'\sm B'$ and $B''\sm A''\supseteq B'\sm A'$ by \cref{obs:reduction-maintains-proper-sides}, so $(A'',B'')$ is proper.

    \casen{Forward implication.}
    Suppose first that \cref{leftrightTetra3} fails.
    Then, by \cref{leftMatching} for $(B,A)$, there are two edges $a b_1, a b_2$ in $S(A,B)$ with $a\in A\sm B$ and $b_i\in B\sm A$.
    Then $a\in A''\sm B''$ and $b_i\in B''\sm A''$ by \cref{obs:reduction-maintains-proper-sides}.
    Hence $(A'',B'')$ fails the matching-condition.

    Suppose next that \cref{leftrightTetra3} holds, but \cref{leftrightTetra1} or \cref{leftrightTetra2} fails.
    By \cref{leftrightTetra3} and \cref{leftMatching} for $(B,A)$, no two edges in $S(A',B')$ share an endvertex in~$B'\sm A'$.
    If \cref{leftrightTetra1} fails, we have $|A\sm B|\le 1$, so \cref{leftMatchingMedium} gives that $(A',B')$ violates the matching-condition, and hence $(A'',B'')$ also violates the matching-condition.
    Next, assume that \cref{leftrightTetra1} holds, but \cref{leftrightTetra2} fails.
    As $|A\sm B|\ge 2$, \cref{leftMatchingMedium} gives that $(A',B')$ satisfies the matching-condition.
    The failure of \cref{leftrightTetra2} yields $|B'\sm A'|\le 1$.
    Hence applying \cref{leftMatchingMedium} to $(B',A')$ (instead of $(A,B)$) yields that $(A'',B'')$ violates the matching-condition.
\end{proof}

\begin{lemma}\label{qkcNoMixedKsep}
    Let $k\in\N$.
    Let $G$ be a quasi-$(k+1)$-connected graph with $\ge k+4$ vertices.
    Then $G$ has no mixed-$k$-separation $(A,B)$ with $|A\sm B|\ge 2$ and $|B\sm A|\ge 2$.
\end{lemma}
\begin{proof}
    Assume that $G$ has a mixed-$k$-separation $(A,B)$ with $|A\sm B|\ge 2$ and $|B\sm A|\ge 2$.
    Let $X:=A\sm B$ and $Y:=B\sm A$.
    Let $F$ denote the set of edges in $S(A,B)$.
    We have
    \[
        k+4\le |G| =|X|+|Y|+k-|F|
    \]
    and therefore
    \[
        |F| \le (\,|X|-2)+(\,|Y|-2).
    \]
    Both $X$ and $Y$ have size $\ge 2$ by assumption.
    Hence both terms $|X|-2$ and $|Y|-2$ are $\ge 0$.
    We may therefore bipartition $F$ into classes $F_X$ and $F_Y$ of $\le |X|-2$ and $\le |Y|-2$ edges, respectively.
    We obtain $X'$ from $X$ by adding for each edge in $F_X$ its endvertex in $A$ to $X$, and we obtain $Y'$ similarly.
    Then $(X',Y')$ is a $k$-separation of $G$.
    We have $|X'\sm Y'|\ge |X|-|F_X|\ge 2$, and similarly $|Y'\sm X'|\ge 2$.
    Thus $G$ is not quasi-$(k+1)$-connected.
\end{proof}

\begin{proof}[Proof of \cref{quasi5conVtetra}.]
    \cref{quasi5conVtetra1}.
    If $G$ is not quasi-5-connected, it has a 4-separation $(A,B)$ with $|A\sm B|\ge 2$ and $|B\sm A|\ge 2$.
    Hence the right-left-reduction of $(A,B)$ is a tetra-separation of $G$ by \cref{leftrightTetra}.

    \cref{quasi5conVtetra2}.
    Let $(A,B)$ be a tetra-separation of $G$.
    Then $|A\sm B|\ge 2$ and $|B\sm A|\ge 2$ by \cref{lem:trivial-tetra-separations}.
    Hence $G$ is not quasi-5-connected by \cref{qkcNoMixedKsep}.
\end{proof}

\section{Understanding nestedness through connectivity}\label{sec:ExternalConnectivity}

The purpose of this section is to characterise the property of total-nestedness for tetra-separations $(A,B)$ in terms of connectivity-properties of their separators~$S(A,B)$; see \cref{keylem:nestedness-external-connectivity}.
This translates the problem of proving total-nestedness of $(A,B)$ to a problem about constructing various families of paths that link up the vertices and edges in the separator $S(A,B)$ in different ways.

For an element $x\in V(G)\cup E(G)$ we let \defnMath{$\hat x$} denote the singleton $\{x\}$ if $x$ is a vertex, and the set of endvertices of $x$ if $x$ is an edge.
For a set $X\se V(G)\cup E(G)$ we write $\defnMath{\hat X}:=\bigcup_{x\in X}\hat x$.

A bipartition of a set is \defn{balanced} if its two classes have the same size.

\begin{lemma}\label{kConMixed}
    Let $G$ be a $k$-connected graph.
    Then $G$ has no mixed-$\ell$-separation for any $\ell<k$.
\end{lemma}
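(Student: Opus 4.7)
My plan is to argue by contradiction: assume $G$ admits a mixed-$\ell$-separation $(A,B)$ with $\ell < k$, and convert it into a genuine vertex separation of order at most $\ell$, contradicting the $k$-connectivity of~$G$. The idea is to eliminate every edge of the separator by absorbing one of its endpoints into the opposite side, while ensuring that both sides of the separation stay nonempty.

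Write $F := S(A,B) \cap E(G)$, so that $|F| + |A\cap B| = \ell$, and for each $e \in F$ denote its endpoints by $u_e \in A\sm B$ and $v_e \in B\sm A$. Given any partition $F = F_A \sqcup F_B$, I will consider the mixed-separation $(A', B')$ with $A' := A \cup \{v_e : e \in F_B\}$ and $B' := B \cup \{u_e : e \in F_A\}$. A routine check shows $A' \cup B' = V(G)$ and $A' \cap B' = (A\cap B) \cup \{u_e : e \in F_A\} \cup \{v_e : e \in F_B\}$, which has size at most $\ell$. Each former separator-edge $e$ is now absorbed (either $u_e \in B'$ or $v_e \in A'$), so no edges remain between $A'\sm B'$ and $B'\sm A'$. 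Hence $(A', B')$ is a mixed-separation\pl\ of order at most $\ell$ whose separator is purely vertex-based.

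The only nontrivial point is choosing $F_A, F_B$ so that $(A',B')$ stays proper, and this is the main obstacle. Since $A'\sm B' = (A\sm B)\sm \{u_e : e \in F_A\}$, this side is nonempty as soon as $|F_A| \le |A\sm B| - 1$; symmetrically $|F_B| \le |B\sm A| - 1$ preserves $B'\sm A'$. Adding the two bounds, a valid partition exists iff $|F| \le |A\sm B| + |B\sm A| - 2$, equivalently $|V(G)| \ge \ell + 2$. This is where $k$-connectivity enters: it forces $|V(G)| \ge k+1$, and since $\ell < k$ we do have $|V(G)| \ge \ell + 2$. The resulting $(A', B')$ is then a proper vertex separation of order $\le \ell < k$, contradicting $k$-connectivity and completing the proof.
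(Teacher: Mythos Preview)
Your proof is correct and follows essentially the same approach as the paper: convert the mixed separator to a pure vertex separator by replacing each separator-edge with one of its endpoints, and use the vertex count $|V(G)|\ge k+1$ (hence $\ge \ell+2$) to guarantee that both sides remain nonempty. The paper's execution is slightly more asymmetric---it singles out one edge $ab$, keeps the endpoint $a$, and keeps the $B$-endpoint of every other edge---whereas you treat all edges uniformly via the partition $F=F_A\sqcup F_B$; but the underlying idea and the crucial counting step are the same.
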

\begin{proof}
    Assume for a contradiction that $(A,B)$ is a mixed-$\ell$-separation of $G$ with $\ell<k$.
    Then $S(A,B)$ contains an edge $ab$ with $a\in A\sm B$ and $b\in B\sm A$.
    We claim that $A\sm (B\cup\{a\})$ or $B\sm (A\cup\{b\})$ is non-empty.
    Indeed, otherwise $V(G)=(A\cap B)\cup\{a,b\}$, so $|G|=\ell+1\le k$, contradicting that $k$-connected graphs such as $G$ have $>k$ vertices by definition.
    So suppose that there is a vertex $v\in A\sm (B\cup\{a\})$.
    Let $X$ be obtained from $S(A,B)$ by keeping all vertices, replacing $ab$ with $a$,  and replacing each other edge with its endvertex in $B\sm A$.
    Then $X$ is an $\ell$-separator of $G$ that separates $v$ from~$b$, contradicting that $G$ is $k$-connected.
\end{proof}

Let $X,Y$ be two vertex-sets in a graph~$G$.
A mixed-separation $(A,B)$ of $G$ \defn{strongly separates} $X$ and $Y$ if $X\se A\sm B$ and $Y\se B\sm A$.
Note that if $(A,B)$ strongly separates $X$ and $Y$, then $X$ and $Y$ are disjoint.
The following lemma is folklore; we include a proof for convenience.

\begin{lemma}[Menger for independent paths]\label{Menger}
Let $X,Y$ be two disjoint vertex-sets in a graph~$G$. Then the following two numbers are equal:
\begin{enumerate}
    \item\label{MengerPaths} the maximum number of independent $X$--$Y$ paths in~$G$;
    \item\label{MengerSep} the minimum order of a mixed-separation in $G$ that strongly separates $X$ and~$Y$.
\end{enumerate}
Moreover, if $G$ is $k$-connected, then \ref{MengerPaths}, \ref{MengerSep}${}\ge k$.
\end{lemma}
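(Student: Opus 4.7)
The inequality \eqref{MengerSep}${}\ge{}$\eqref{MengerPaths} is the easy direction. Given a mixed-separation $(A,B)$ of order $k$ strongly separating $X$ and $Y$, every $X$--$Y$ path in $G$ must meet $S(A,B)$: it either passes through a vertex of $A \cap B$, or it crosses an edge in the edge-part of $S(A,B)$. If two independent $X$--$Y$ paths met $S(A,B)$ at the same element $s$, then some endvertex of $s$ would be an internal vertex on both paths (using $X \se A \sm B$ and $Y \se B \sm A$, so $X \cup Y$ is disjoint from $S(A,B)$), contradicting independence. Hence any family of independent $X$--$Y$ paths has size at most~$|S(A,B)|$.

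For the reverse inequality I plan to reduce to the classical vertex-Menger theorem by a subdivision trick. Let $G^*$ be obtained from $G$ by subdividing each edge $e \in E(G)$ exactly once, inserting a new vertex $m_e$ of degree two. Each $X$--$Y$ path $P$ in $G$ lifts canonically to an $X$--$Y$ path $P^*$ in $G^*$ by inserting $m_e$ into each traversed edge~$e$, and every $X$--$Y$ path in $G^*$ projects to an $X$--$Y$ path in $G$ by deleting the subdivision vertices. Two paths in $G$ are independent if and only if their lifts in $G^*$ are internally disjoint. Moreover, there is an order-preserving correspondence between mixed-separations of $G$ strongly separating $X$ and $Y$ and vertex-separations of $G^*$ strongly separating $X$ and $Y$: given $(A,B)$, one forms $(A^*,B^*)$ by putting each $m_e$ on the same side as the endpoint of $e$ outside $A \cap B$ (and arbitrarily into $A^*$ if both endpoints of $e$ lie in $A \cap B$), which gives $A^* \cap B^* = (A \cap B) \cup \{\, m_e : e \in S(A,B) \cap E(G)\,\}$ of size exactly $|S(A,B)|$; conversely, any vertex-separator $S^*$ of $G^*$ decodes back to a mixed-separator of $G$ of the same order by reading each $m_e \in S^*$ as the edge~$e$. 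Applying classical Menger in $G^*$ to $X$ and $Y$ then delivers the desired independent paths in~$G$.

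For the ``moreover'' clause, suppose $G$ is $k$-connected. By \cref{kConMixed}, no mixed-separation of $G$ has order less than~$k$, so in particular the minimum in \eqref{MengerSep} is at least~$k$, and then so is \eqref{MengerPaths} by the equality just established.

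The main obstacle I expect is the careful bookkeeping in the separation correspondence: one must check that $(A^*, B^*)$ really is a vertex-separation of $G^*$ (no edge of $G^*$ joins $A^* \sm B^*$ to $B^* \sm A^*$), that it strongly separates $X$ and $Y$ there, and that the order is preserved in both directions. Once this is verified, the remainder of the argument follows immediately from the classical vertex-Menger theorem.
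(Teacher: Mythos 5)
The easy direction and the ``moreover'' clause are fine, and your subdivision construction does convert mixed-separations that strongly separate $X$ and $Y$ into vertex-separations of $G^*$ of the same order (and back). The gap is in the final step, ``applying classical Menger in $G^*$ to $X$ and $Y$''. The classical set version of Menger's theorem equates the maximum number of \emph{pairwise disjoint} $X$--$Y$ paths with the minimum size of an $X$--$Y$ separator that is \emph{allowed to contain vertices of $X\cup Y$}. What you need in $G^*$ is a hybrid version: \emph{internally disjoint} paths, which may share endpoints in $X\cup Y$, against separators that must \emph{avoid} $X\cup Y$ (this is what ``strongly separates'' forces). These are genuinely different pairs of quantities: already for $|X|=|Y|=1$ the maximum number of disjoint $X$--$Y$ paths is $1$ and the minimum touching-allowed separator has size $1$, while the number of independent paths and the minimum strongly-separating order can both be arbitrarily large. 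So classical Menger in $G^*$ equates the two ``small'' quantities, the easy direction bounds one ``large'' quantity by the other, and the chain of (in)equalities never closes. The subdivision trick only addresses edges in separators; it is orthogonal to this endpoint issue, and if you instead intend ``classical Menger'' to already mean the independent-paths/avoiding-separator version for sets, then that version \emph{is} the nontrivial content of the lemma and cannot simply be cited.

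The standard repair, and the one the paper uses, is as follows. First peel off the $X$--$Y$ edges: each such edge lies in every strongly separating separator and is a path in every maximum independent family, so both optima drop by one when it is deleted. With no $X$--$Y$ edges left, apply set-Menger not to $X,Y$ but to $N(X)$ and $N(Y)$ in $G-(X\cup Y)$: truncating each independent $X$--$Y$ path at its first and last internal vertices yields pairwise \emph{disjoint} $N(X)$--$N(Y)$ paths (the only vertices the original paths could share were their endpoints, which have been deleted), and conversely every $N(X)$--$N(Y)$ separator there is automatically disjoint from $X\cup Y$ and is the separator of a mixed-separation of $G$ strongly separating $X$ and $Y$. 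Inserting this reduction makes your argument correct; note that once you do, the subdivision of edges becomes unnecessary.
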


\begin{proof}
    As usual, \ref{MengerPaths} $\le$ \ref{MengerSep}, so it remains to show \ref{MengerPaths} $\ge$ \ref{MengerSep}.
    Since every $X$--$Y$ edge forms a path in every maximum set of independent $X$--$Y$ paths in $G$, we may assume without loss of generality that $G$ contains no $X$--$Y$ edges.
    Let $X':=N_G(X)$ and $Y':=N_G(Y)$.
    Note that $X'\cup Y'$ is disjoint from $X\cup Y$.
    Let $G':=G-(X\cup Y)$.

    Now let $\cP$ be a maximum-size set of independent $X$--$Y$ paths in~$G$.
    Then $\cP$ determines a set $\cP'$ of $|\cP|$ pairwise disjoint $X'$--$Y'$ paths in~$G'$.
    Hence $G'$ contains an $X'$--$Y'$ separator $S'$ of size $|\cP|$, by Menger's theorem applied in $G'$ to $X',Y'$ and $\cP'$.
    Then $S'$ is the separator of a mixed-separation of $G$ that strongly separates $X$ and~$Y$, and which has order $|S'|=|\cP|$.

    The `moreover'-part follows from \cref{kConMixed}.
\end{proof}

\subsection{Half-connected}

A mixed-separation $(A,B)$ of a graph $G$ is \defn{half-connected} if $G[A\sm B]$ or $G[B\sm A]$ is connected~\cite{Tridecomp}.

\begin{lemma}
\label{lem:nestedness-half-connected}
    Let $(A,B)$ be a tetra-separation of a 4-connected graph~$G$. Then the following assertions are equivalent:
    \begin{enumerate}
        \item\label{itm:nestedness-half-connected-1} $(A,B)$ is half-connected;
        \item\label{itm:nestedness-half-connected-2} no tetra-separation of $G$ crosses $(A,B)$ so that all links are empty.
    \end{enumerate}
\end{lemma}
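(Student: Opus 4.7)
For the forward direction \ref{itm:nestedness-half-connected-1}$\Rightarrow$\ref{itm:nestedness-half-connected-2} I would argue by contraposition using the \allref{keylem:crossing}. If a tetra-separation $(C,D)$ crosses $(A,B)$ with all links empty, the Crossing Lemma puts us in case~\ref{itm:crossing-0}: the vertex-centre~$Z$ has size four, and since also $|S(A,B)|=|S(C,D)|=4$ this forces $A\cap B=Z=C\cap D$ and neither separator contains any edges. The corners $AC$ and $AD$ then partition $A\sm B$ into two nonempty classes (all four corners of a crossing pair are nonempty), and any edge between them would lie in $S(C,D)$, which has no edges. Hence $G[A\sm B]$ is disconnected, and symmetrically $G[B\sm A]$, so $(A,B)$ is not half-connected.

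\textbf{Backward direction (contrapositive).} Assume both $G[A\sm B]$ and $G[B\sm A]$ are disconnected. The first step, and what I expect to be the main obstacle, is to deduce $|A\cap B|=4$, i.e.\ that $S(A,B)$ contains no edges. I would fix a partition $A\sm B=P\sqcup Q$ into nonempty classes with no edges between them (given by the disconnectedness of $G[A\sm B]$). Then the mixed-separation $(P\cup(A\cap B),\,V(G)\sm P)$ has separator of order $|A\cap B|+m_P$, where $m_P$ counts the edges of $S(A,B)$ with an end in~$P$ (no other edges leave~$P$ to the other side, because the only edges from $A\sm B$ to $B\sm A$ lie in $S(A,B)$). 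By~\cref{kConMixed}, this order is at least four, and symmetrically $|A\cap B|+m_Q\ge 4$. Adding these and combining with the identity $|A\cap B|+(m_P+m_Q)=4$ yields $|A\cap B|\ge 4$, hence $m_P+m_Q=0$ and $|A\cap B|=4$.

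\textbf{Constructing the type-\ref{itm:crossing-0} crossing.} With $S(A,B)$ edge-free and $|A\cap B|=4$, the components of $G[A\sm B]$ attach to $A\cap B$ very rigidly: for any component~$X$ of $G[A\sm B]$, the edges leaving~$X$ go neither to other components of $G[A\sm B]$ nor (through the now-edgeless separator) to $B\sm A$, so $N_G(X)\se A\cap B$; applying 4-connectivity to the separator $N_G(X)$ between $X$ and the nonempty set $B\sm A$ forces $N_G(X)=A\cap B$. Thus every vertex of $A\cap B$ has $\ge 1$ neighbour in every component of $G[A\sm B]$, and symmetrically in every component of $G[B\sm A]$. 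Taking $P_1$ to be one component of $G[A\sm B]$, $P_2$ the union of the rest, and choosing $Q_1,Q_2$ analogously for $G[B\sm A]$, I would set
\[
  C:=P_1\cup(A\cap B)\cup Q_1,\qquad D:=P_2\cup(A\cap B)\cup Q_2.
\]
Then $S(C,D)=A\cap B$ (no edges arise, as $P_i\cup Q_i$ contains no endvertex of any forbidden cross-pair), and each $s\in A\cap B$ has $\ge 1$ neighbour in each of $P_1,P_2,Q_1,Q_2$, so $\ge 2$ in each of $C\sm D$ and $D\sm C$. Hence $(C,D)$ is a tetra-separation. Its four corners $P_1,P_2,Q_1,Q_2$ are nonempty so it crosses $(A,B)$, and since both separators are purely vertex-sets equal to~$A\cap B$ all four links are empty, contradicting~\ref{itm:nestedness-half-connected-2}.
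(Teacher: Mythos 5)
Your proof is correct and takes essentially the same route as the paper's: the forward direction via case~\ref{itm:crossing-0} of the \allref{keylem:crossing}, and the backward direction by assembling a crossing tetra-separation from one component of each of $G[A\sm B]$ and $G[B\sm A]$ together with $A\cap B$. The only addition is your explicit counting argument via \cref{kConMixed} for why $S(A,B)$ is edge-free when both sides are disconnected — a step the paper asserts without justification — though note that your parenthetical ``all four corners of a crossing pair are nonempty'' is not true of crossings in general (potter corners are empty) and holds here only because all links are empty.
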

\begin{proof}
    $\neg$ \ref{itm:nestedness-half-connected-2}~$\Rightarrow$~$\neg$ \ref{itm:nestedness-half-connected-1}. 
    If a tetra-separation of $G$ crosses $(A,B)$ so that all links are empty, then all corners are nonempty. Hence, neither $G[A \sm B]$ nor $G[B \sm A]$ is connected.

    $\neg$ \ref{itm:nestedness-half-connected-1}~$\Rightarrow$~$\neg$ \ref{itm:nestedness-half-connected-2}. Assume that $(A,B)$ is not half-connected; that is, $G[A \sm B]$ and $G[B \sm A]$ are not connected.
    Then $S(A,B)$ consists of vertices only.
    Let $C_A$ be a component of $G[A \sm B]$ and let $C_B$ be a component of $G[B \sm A]$. 
    Let $C := V(C_A)\cup V(C_B) \cup S(A,B)$ and $D := V(G-C_A-C_B)$. 
    Then $(C,D)$ is a genuine 4-separation with $S(C,D)=S(A,B)$ that crosses $(A,B)$ with all links empty.
    To see that $(C,D)$ is a tetra-separation, note first that both $G[C \sm D]$ and $G[D \sm C]$ are disconnected. 
    Since $G$ is 4-connected, every vertex in $S(A,B)$ has a neighbour in every component of $G-S(A,B)$.
    So $(C,D)$ satisfies the degree-condition, and it trivially satisfies the matching-condition.
\end{proof}

\subsection{3-linked}

\begin{definition}[3-linked]
\label{dfn:3-linked}
Let $X=\{x_1,x_2,x_3,x_4\}$ be the separator of a tetra-separation $(A,B)$ of a graph~$G$.
We say that $\{x_1,x_2\}$ is \defn{3-linked around~$X$} if at least one of the following holds:
\begin{defenum}
    \item\label{itm:3-linked-1} $x_3$ or $x_4$ is an edge;
    \item\label{itm:3-linked-2} $x_1$ or $x_2$ is a vertex and there is an $\hat x_1$--$\hat x_2$ edge;
    \item\label{itm:3-linked-3} there are three independent $\hat x_1$--$\hat x_2$ paths in $G-x_3-x_4$.
\end{defenum}
Similarly, we define that $\{x_i,x_j\}$ is \defn{3-linked around~$X$} for other choices of distinct indices $i,j\in [4]$.
We say that $(A,B)$ is \defn{3-linked} if every pair of distinct elements in $S(A,B)$ is 3-linked around~$X$.
\end{definition}

\begin{lemma}
\label{lem:nestedness-3-linked}
    Let $X=\{x_1,x_2,x_3,x_4\}$ be the separator of a tetra-separation $(A,B)$ in a 4-connected graph~$G$. 
    The following assertions are equivalent:
    \begin{enumerate}
        \item\label{itm:nestedness-3-linked-1} $\{x_1,x_2\}$ is 3-linked around~$X$;
        \item\label{itm:nestedness-3-linked-2} no tetra-separation of $G$ crosses $(A,B)$ so that $x_1$ and $x_2$ are in opposite links while the centre has size~2.
    \end{enumerate}
\end{lemma}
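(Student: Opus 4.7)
The plan is to prove both directions by contrapositive. The forward direction $\ref{itm:nestedness-3-linked-1}\Rightarrow\ref{itm:nestedness-3-linked-2}$ will rest on the \allref{keylem:crossing}, while the backward direction will rely on Menger's theorem (\cref{Menger}), \cref{kConMixed}, and the shifting machinery of \cref{leftrightTetra}.

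For the forward direction, I would suppose for contradiction that a tetra-separation $(C,D)$ crosses $(A,B)$ with $x_1$ in, say, the $C$-link, $x_2$ in the opposite $D$-link, and centre of size~$2$. The \allref{keylem:crossing} then forces case~\ref{itm:crossing-1}: all four links have size~$1$ and the centre consists of two vertices (no diagonal edges exist by the Crossing Lemma). Since $x_1,x_2$ already occupy two links, the vertex-centre must equal $\{x_3,x_4\}$; in particular both are vertices and \ref{itm:3-linked-1} fails. Restricting $(C,D)$ to $G-x_3-x_4$ yields a mixed-$2$-separation strongly separating $\hat x_1$ from $\hat x_2$, whence \cref{Menger} rules out three independent $\hat x_1$--$\hat x_2$ paths and \ref{itm:3-linked-3} fails. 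To rule out \ref{itm:3-linked-2}, suppose WLOG that $x_1$ is a vertex and some $\hat x_1$--$\hat x_2$ edge $e$ exists: if $x_2$ is also a vertex then $e=x_1 x_2$ has endvertices in opposite links, so $e$ is jumping; if $x_2$ is an edge then $e$ joins $x_1\in C$-link to an endpoint of $x_2$ lying in a corner for~$D$, so $e$ is an edge in the $A$-link or $B$-link with an endvertex in the $C$-link, i.e., dangling. Both are forbidden by the \allref{keylem:crossing} (dangling would force case~\ref{itm:crossing-2}, with empty centre).

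For the backward direction, I would assume \ref{itm:3-linked-1}--\ref{itm:3-linked-3} all fail, so that $x_3, x_4$ are vertices, and by \cref{Menger} and \cref{kConMixed} applied to the $2$-connected graph $G-x_3-x_4$, there is a mixed-$2$-separation $(C',D')$ of $G-x_3-x_4$ strongly separating $\hat x_1$ from $\hat x_2$; I would take such $(C',D')$ with $|C'\sm D'|$ minimum. Setting $C:=C'\cup\{x_3,x_4\}$ and $D:=D'\cup\{x_3,x_4\}$ then yields a mixed-$4$-separation of~$G$ of order exactly~$4$ (by \cref{kConMixed}) with $\hat x_1\se C\sm D$, $\hat x_2\se D\sm C$, and $x_3,x_4\in A\cap B\cap C\cap D$. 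If $(C,D)$ is not already a tetra-separation, I would apply the left-right-shift of \cref{leftrightTetra}: a neighbour-count at $x_1$ (or at the endpoints of $x_1$ when $x_1$ is an edge) using the degree-condition of $(A,B)$ forces $|C\sm D|\ge 2$, and symmetrically $|D\sm C|\ge 2$, which together with a simple edge-count supplies the preconditions of \cref{leftrightTetra}. The resulting tetra-separation then crosses $(A,B)$ in the intended configuration: $x_3,x_4$ in the vertex-centre, $x_1$ in the $C$-link, $x_2$ in the $D$-link.

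The hard part will be verifying that the left-right-shift does not evict $x_3$ or $x_4$ from the separator, since these vertices are needed in the vertex-centre of the crossing. To handle this I would combine the degree-condition of $(A,B)$ at $x_3, x_4$ (providing $\ge 2$ neighbours in both $A\sm B$ and $B\sm A$), the failure of~\ref{itm:3-linked-2} (ruling out certain $\hat x_1$--$\hat x_2$ short-circuiting edges that could imbalance the distribution of these neighbours across $C\sm D$ and $D\sm C$), and the minimality of $|C'\sm D'|$ (controlling how the two elements of $S(C',D')$ interact with the two sides) to conclude that $x_3$ and $x_4$ each have $\ge 2$ neighbours on both sides of the final separation, and so are not shifted away.
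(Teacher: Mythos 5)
Your forward direction and the set-up of your backward direction (Menger in $G-x_3-x_4$, re-adding $x_3,x_4$, checking $|C\sm D|,|D\sm C|\ge 2$ via the degree-condition at $x_1,x_2$, then applying the left-right-shift of \cref{leftrightTetra}) coincide with the paper's proof. The problem is the step you yourself flag as the hard part: showing that $x_3$ and $x_4$ are not evicted from the separator by the shift. The tools you propose do not reach this conclusion. The degree-condition of $(A,B)$ at $x_3$ only controls how its neighbours distribute over $A\sm B$ and $B\sm A$; it says nothing about their distribution over $C\sm D$ versus $D\sm C$, and a priori all of $x_3$'s $\ge 4$ neighbours outside $A\cap B$ could lie on one side of $(C,D)$. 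The failure of \ref{itm:3-linked-2} constrains only $\hat x_1$--$\hat x_2$ edges and is irrelevant to $x_3$'s neighbourhood. Minimality of $|C\sm D|$ does not help either: if $x_3$ has $\le 1$ neighbour in $D\sm C$, there is no evident way to produce a smaller Menger separator from that. What elementary counting does give (via \cref{kConMixed}) is that $x_3$ has $\ge 1$ neighbour on each side — exactly enough for the shift to replace $x_3$ by an edge while keeping the order at four — but not the $\ge 2$ you need.

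The paper closes this gap by a global argument rather than a local count: it assumes $x_3\in C''\sm D''$ after the shift, notes that the order-preservation forces $x_3$ to send an edge $e\in S(C'',D'')$ to $D''\sm C''$, and then runs the \allref{keylem:crossing} on the crossing of $(A,B)$ with $(C'',D'')$ (which is already known to be a crossing because $\hat x_1\se C''\sm D''$ and $\hat x_2\se D''\sm C''$ by \cref{obs:reduction-maintains-proper-sides}). Since $e$ cannot be jumping, it dangles from the $C''$-link through the $A$-link (say), so the corner $AC''$ is potter; the potter condition then forces the second element $x_1$ of the $C''$-link to be an edge dangling from the $A$-link through the $C''$-link, i.e.\ with an endvertex in $S(C'',D'')$ — contradicting $\hat x_1\se C''\sm D''$. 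You need an argument of this kind (or some other use of the crossing structure of the shifted separation with $(A,B)$); the purely local neighbour-count you sketch will not close the case where $x_3$ has exactly one neighbour on one side of $(C,D)$.
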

\begin{proof}
    $\neg$ \ref{itm:nestedness-3-linked-2} $\Rightarrow$ $\neg$ \ref{itm:nestedness-3-linked-1}. 
    Assume that a tetra-separation $(C,D)$ crosses $(A,B)$ so that~$x_1$ and~$x_2$ are in opposite links while the centre has size~2. 
    By the \allref{keylem:crossing}, all links have size~1 and there are no jumping edges, no diagonal edges and no dangling edges.
    So,~$x_3$ and~$x_4$ are both vertices, which shows that~\ref{itm:3-linked-1} does not hold. 
    Since there are no dangling edges and no jumping edges, if there exists an $\hat x_1$--$\hat x_2$ edge then both~$x_1$ and~$x_2$ are edges. 
    This shows that~\ref{itm:3-linked-2} does not hold.
    Since there are no diagonal edges, no jumping edges and no dangling edges, every $\hat x_1$--$\hat x_2$ path that avoids~$x_3$ and~$x_4$ has an inner vertex or an edge in the link for~$A$ or the link for~$B$. Hence, there are at most two independent $\hat x_1$--$\hat x_2$ paths in $G-x_3-x_4$. This shows that~\ref{itm:3-linked-3} does not hold. So, $\{x_1,x_2\}$ is not 3-linked around~$X$.

    $\neg$ \ref{itm:nestedness-3-linked-1} $\Rightarrow$ $\neg$ \ref{itm:nestedness-3-linked-2}. 
    Assume that $\{x_1,x_2\}$ is not 3-linked around~$X$. 
    Then $x_3$ and $x_4$ are both vertices and there are at most two independent $\hat x_1$--$\hat x_2$ paths in $G-x_3-x_4$.
    By \allref{Menger}, there is a mixed-2-separation~$(C,D)$ of~$G-x_3-x_4$ that strongly separates~$\hat x_1$ from~$\hat x_2$, say with~$\hat x_1 \subseteq C \sm D$ and $\hat x_2 \subseteq D \sm C$.
    Let~$(C',D'):=(C \cup \{x_3,x_4\}, D \cup \{x_3,x_4\})$, which is a mixed-4-separation.
    Observe that~$(C',D')$ crosses~$(A,B)$ since the $C'$-link and the $D'$-link are non-empty.

    We claim that both~$C' \sm D'$ and~$D' \sm C'$ have size~$\geq 2$.
    Assume otherwise, say~$x_1$ is a vertex and it is the only vertex in~$C' \sm D'$.
    By the degree-condition for~$x_1 \in S(A,B)$, it has at least four neighbours in $V(G) \sm (A \cap B)$. This, however, contradicts that~$(C,D)$ has order two.

    Let~$(C'',D'')$ be the left-right-reduction of~$(C',D')$, which is a tetra-separation by \cref{leftrightTetra}.
    First, note that~$\hat x_1 \subseteq C' \sm D' \subseteq C'' \sm D''$ and $\hat x_2 \subseteq D' \sm C' \subseteq D'' \sm C''$ by \cref{obs:reduction-maintains-proper-sides}.
    So,~$x_1$ and~$x_2$ are in the $C''$-link and the $D''$-link, respectively.
    In particular, $(C'',D'')$ crosses $(A,B)$.

    We claim that $x_3,x_4 \in S(C'',D'')$.
    Assume not, say~$x_3 \in C'' \sm D''$. 
    Let $e\in S(C'',D'')$ be the edge that $x_3$ sends to $D''\sm C''$.
    By the \allref{keylem:crossing},~$e$ is not a jumping edge. 
    So there is an~$X \in \{A,B\}$ such that~$e$ dangles from the $C''$-link through the $X$-link, say $X=A$.
    By the \allref{keylem:crossing}, the corner~$AC''$ is potter. 
    In particular,~$x_1$ is an edge that dangles from the $A$-link through the $C''$-link. 
    Then, however, $\hat x_1 \nsubseteq C'' \sm D''$, a contradiction.
    Hence,~$(C'',D'')$ crosses~$(A,B)$ so that~$x_1$ and~$x_2$ are in opposite links while the centre has size~2.
\end{proof}

\subsection{Potter-linked}

Let $(A,B)$ be a mixed-separation of a graph~$G$.
Let $\{x_1,x_2\}\se S(A,B)$ consist of an edge and a vertex, say $x_1$ is the edge and $x_2$ is the vertex.
Write $x_1=ab$ with $a\in A\sm B$ and $b\in B\sm A$.
In the context of $(A,B)$, we say that $\{x_1,x_2\}$ is \defn{$A$-weird} if $x_2$ has exactly one neighbour in $A \sm \{a\}$.
If $(A,B)$ is a tetra-separation, this is equivalent to saying that $x_2$ has exactly two neighbours in $A$ and $a$ is one of them.

\begin{example}
\label{obs:potter-implies-weird}
    Let $(A,B)$ and $(C,D)$ be crossing tetra-separations in a graph~$G$.
    Assume that the corner for $AC$ is potter, witnessed by an edge $u_1 u_2$ that dangles from the vertex $u_2$ in the $C$-link through the $A$-link, and by an edge $u_3 u_4$ that dangles from the vertex $u_3$ in the $A$-link through the $C$-link; see \cref{fig:PotterDef}.
    Then $\{u_2,u_3u_4\}$ is $A$-weird and $\{u_3,u_1u_2\}$ is $C$-weird.
\end{example}

Let $X,Y\se V(G)\cup E(G)$.
An \defn{$X$--$Y$ edge} means an edge $xy$ with $x\in X$ and $y\in Y$.

\begin{definition}[0-potter-linked]
\label{dfn:0-potter-linked}
Let~$\{\pi_1,\pi_2\}$ be a balanced bipartition of the separator of a tetra-separation~$(A,B)$ in a graph~$G$.
We say that $\{\pi_1,\pi_2\}$ is \defn{0-potter-linked} with respect to~$(A,B)$ if at least one of the following assertions holds:
\begin{defenum}
    \item\label{itm:0-potter-linked-1} there is a $\pi_1$--$\hat \pi_2$ edge or a $\hat \pi_1$--$\pi_2$ edge;
    \item\label{itm:0-potter-linked-2} there are five independent $\hat \pi_1$--$\hat \pi_2$ paths in~$G$.
\end{defenum}
Note that this is symmetric in~$\pi_1$ and $\pi_2$.
A~tetra-separation $(A,B)$ is \defn{0-potter-linked} if $\{\pi_1,\pi_2\}$ is 0-potter-linked for every balanced bipartition $\{\pi_1,\pi_2\}$ of $S(A,B)$. 
\end{definition}

\begin{lemma}
\label{lem:nestedness-0-potter-linked}
    Let $(A,B)$ be a tetra-separation of a 4-connected graph~$G$.
    Let $\{\pi_1,\pi_2\}$ be a balanced bipartition of $S(A,B)$. 
    Then the following assertions are equivalent:
    \begin{enumerate}
        \item\label{itm:nestedness-0-potter-linked-1} $\{\pi_1,\pi_2\}$ is 0-potter-linked;
        \item\label{itm:nestedness-0-potter-linked-2} no tetra-separation of $G$ crosses $(A,B)$ so that $\pi_1$ and $\pi_2$ form opposite links and no corner is potter.
    \end{enumerate}
\end{lemma}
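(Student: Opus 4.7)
The plan is to handle both directions via the \allref{keylem:crossing}, observing that a crossing of two tetra-separations with $\pi_1, \pi_2$ as opposite (size-two) links and empty centre must fall into case~\ref{itm:crossing-2} of that lemma, which forbids diagonal and jumping edges, and which permits dangling edges only at cost of a potter corner.

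For the forward direction, suppose a tetra-separation $(C,D)$ crosses $(A,B)$ so that $\pi_1$ and $\pi_2$ are opposite links, say $\pi_1 = C$-link and $\pi_2 = D$-link, and no corner is potter. Then $\hat\pi_1 \se C \sm D$ and $\hat\pi_2 \se D \sm C$, so $(C,D)$ strongly separates $\hat\pi_1$ from $\hat\pi_2$ by a mixed-separation of order four; by \cref{Menger}, condition~\ref{itm:0-potter-linked-2} fails. For \ref{itm:0-potter-linked-1} to fail, take any edge $xy$ with $x$ a vertex-element of $\pi_1$ and $y \in \hat\pi_2$. Since $x \in A\cap B\cap(C\sm D)$ and $y \in D\sm C$, we have $xy \in S(C,D)$, and a case distinction on whether $y$ is a vertex-element of $\pi_2$ (making $xy$ jumping) or an endpoint of an edge-element of $\pi_2$ lying in the $AD$- or $BD$-corner (making $xy$ dangle from the $C$-link through the $A$-link or $B$-link) contradicts either the Crossing Lemma or the no-potter assumption. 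The case of a $\hat\pi_1$--$\pi_2$ edge is symmetric.

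For the backward direction, assume both~\ref{itm:0-potter-linked-1} and~\ref{itm:0-potter-linked-2} fail. By \cref{Menger} and \cref{kConMixed}, there is a mixed-$4$-separation $(C_0, D_0)$ of $G$ strongly separating $\hat\pi_1$ from $\hat\pi_2$. The matching-condition of $(A,B)$ forces $|\hat\pi_i| \ge 2$ for $i = 1,2$, so the hypotheses of the \allref{leftrightTetra} are satisfied, and the left-right-shift yields a tetra-separation $(C,D)$ with $\hat\pi_1 \se C\sm D$ and $\hat\pi_2 \se D\sm C$ by \cref{obs:reduction-maintains-proper-sides}. A short verification then shows that $(C,D)$ crosses $(A,B)$ with $\pi_1 = C$-link and $\pi_2 = D$-link: every vertex of $A\cap B$ lies in $\hat\pi_1 \cup \hat\pi_2$, making the vertex-centre empty, and every edge of $S(A,B)$ has both endpoints on the same side of $(C,D)$, so no such edge is diagonal; the Crossing Lemma thus places us in case~\ref{itm:crossing-2}.

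It remains to rule out potter corners. Suppose corner $AC$ is potter, witnessed by $u_1, u_2, u_3, u_4$ as in the definition; then the dangling edge $u_1u_2$ has $u_2 \in \pi_1$ and $u_1$ in the $AD$-corner. If $u_1 \in \hat\pi_2$, the edge $u_1u_2$ is a $\pi_1$--$\hat\pi_2$ edge, immediately contradicting the failure of~\ref{itm:0-potter-linked-1}. If $u_1 \notin \hat\pi_2$, the plan is to exploit the potter structure together with the degree-condition at $u_2$ and the emptiness of the $AC$-corner: these force extra adjacencies around $u_2$ that allow us to reroute the Menger system of $(C,D)$ through the dangling configuration to produce a fifth independent $\hat\pi_1$--$\hat\pi_2$ path, contradicting the failure of~\ref{itm:0-potter-linked-2}. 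The same argument applies to potter corners at $BC$, $AD$, and $BD$ by symmetry. The main obstacle will be implementing this rerouting cleanly while preserving internal vertex-disjointness of all five paths against the rigid structure imposed by the tight Menger attainment at $(C,D)$.
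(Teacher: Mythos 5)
Your forward direction and your construction of the crossing tetra-separation $(C,D)$ in the backward direction both match the paper's proof (Menger plus \cref{leftrightTetra} and \cref{obs:reduction-maintains-proper-sides}). The gap is exactly where you flag it: ruling out potter corners. Your proposed "rerouting of the Menger system to produce a fifth path" is not worked out, and it is not clear it can be made to work — the separation $(C,D)$ was obtained from a \emph{tight} Menger attainment, so there is no slack to produce a fifth independent $\hat\pi_1$--$\hat\pi_2$ path; any such path would contradict the existence of the order-4 strong separation you just constructed, so the rerouting strategy is aiming at something that cannot happen.

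No path argument is needed here. The paper closes this step in two lines using the containment $\hat\pi_1\subseteq C\sm D$ that you already have. If the corner $AC$ is potter, then by \cref{obs:potter-implies-weird} the $C$-link, which equals $\pi_1$, is $A$-weird; in particular $\pi_1$ contains an edge $e$ of $S(A,B)$ (the edge dangling from the $A$-link through the $C$-link). Its endvertex $a$ in $A\sm B$ satisfies $a\in\hat\pi_1\subseteq C\sm D$, so $a$ lies in $(A\sm B)\cap(C\sm D)$, i.e.\ in the corner $AC$ itself. But a potter corner is empty by definition — contradiction. (Equivalently: you examined the wrong dangling edge. The witness $u_3u_4$ dangling \emph{through} the $C$-link is an element of $\pi_1$ with $u_3\in(A\sm B)\cap C\cap D$, while $u_3\in\hat\pi_1\subseteq C\sm D$ forces $u_3\notin D$.) Replacing your final paragraph with this observation completes the proof.
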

\begin{proof}
    $\neg$ \ref{itm:nestedness-0-potter-linked-2} $\Rightarrow$ $\neg$ \ref{itm:nestedness-0-potter-linked-1}.
    Assume that a tetra-separation $(C,D)$ of $G$ crosses $(A,B)$ so that $\pi_1$ and $\pi_2$ form opposite links and no corner is potter.
    Hence by the \allref{keylem:crossing}, no edge is dangling, jumping, or diagonal.
    It follows that if there exists a $\hat \pi_1$--$\hat \pi_2$ edge~$uv$, then both~$u$ and~$v$ are endvertices of edges in~$S(A,B)$. This shows that~\ref{itm:0-potter-linked-1} fails.

    It remains to show that there are at most four $\hat\pi_1$--$\hat\pi_2$ paths in~$G$.
    As no edge is dangling, jumping, or diagonal, every $\hat\pi_1$--$\hat\pi_2$ path in $G$ must have an internal vertex or an edge in the $A$-link or in the $B$-link. 
    The sum of the sizes of the $A$-link and the $B$-link is at most~4 since the size of $S(C,D)$ is at most~4.
    Hence \ref{itm:0-potter-linked-2} fails.

    $\neg$ \ref{itm:nestedness-0-potter-linked-1} $\Rightarrow$ $\neg$ \ref{itm:nestedness-0-potter-linked-2}. 
    Assume that $\{\pi_1,\pi_2\}$ is not 0-potter-linked.
    Then there are at most four independent $\hat\pi_1$--$\hat\pi_2$ paths in $G$.
    By \allref{Menger}, there is a mixed-4-separation $(C,D)$ of $G$ that strongly separates $\hat\pi_1$ from $\hat\pi_2$, say with~$\hat \pi_1 \subseteq C \sm D$ and~$\hat \pi_2 \subseteq D \sm C$.
    In particular, $|C \sm D| \geq 2$ and $|D \sm C| \geq 2$.
    Observe that $(C,D)$ crosses $(A,B)$ since the $C$-link and the $D$-link are non-empty. 
    Let $(C',D')$ be the left-right-reduction of $(C,D)$, which is a tetra-separation by \cref{leftrightTetra}.
    
    We claim that~$(C',D')$ crosses~$(A,B)$ so that~$\pi_1$ and~$\pi_2$ form opposite links and no corner is potter.
    Since $\hat \pi_1 \subseteq C \sm D \subseteq C' \sm D'$ and $\hat \pi_2 \subseteq D \sm C \subseteq D' \sm C'$ by \cref{obs:reduction-maintains-proper-sides},~$\pi_1$ forms the $C'$-link while~$\pi_2$ forms the $D'$-link.
    In particular,~$(C',D')$ crosses~$(A,B)$.
    Assume for contradiction that some corner is potter, say the corner $AC'$. 
    Then~$\pi_1$ is $A$-weird and hence contains an edge~$e$.
    Let~$a$ be the endvertex of~$e$ in~$A \sm B$. So,~$a \in \hat \pi_1 \subseteq C' \sm D'$. Therefore,~$a$ is contained in the corner~$AC'$.
    In particular, the corner $AC'$ is not empty, and so it is not potter, a contradiction.
\end{proof}

The following definition is supported by \cref{fig:1potterLinkedDef}.

\begin{definition}[1-potter-linked]
\label{dfn:1-potter-linked}
    Let~$\{\pi_1,\pi_2\}$ be a balanced bipartition of the separator of a tetra-separation~$(A,B)$ of a graph~$G$.
    Let~$(X,Y)=(A,B)$ or~$(X,Y)=(B,A)$.
    We say that \defn{$\pi_1$ is 1-potter-linked to~$\pi_2$ around~$X$} with respect to $(A,B)$ if at least one of the following holds:
    \begin{defenum}
        \item\label{itm:1-potter-linked-1} $\pi_1$ is not $X$-weird;
        \item\label{itm:1-potter-linked-2} there is a $\pi_1$--$(\hat \pi_2 \cap Y)$ edge or a $(\hat \pi_1 \cap Y)$--$\pi_2$ edge;
        \item\label{itm:1-potter-linked-3} there are three independent $\hat \pi_1$--$\hat \pi_2$ paths in~$G[Y]$.
    \end{defenum}
    Note that this is \emph{not} symmetric in~$\pi_1,\pi_2$.
    We say that~$\{\pi_1,\pi_2\}$ is \defn{1-potter-linked} with respect to~$(A,B)$ if for both $X \in \{A,B\}$,~$\pi_1$ is 1-potter-linked to~$\pi_2$ around~$X$ and~$\pi_2$ is 1-potter-linked to~$\pi_1$ around~$X$ with respect to~$(A,B)$.
    A~tetra-separation $(A,B)$ is \defn{1-potter-linked} if $\{\pi_1,\pi_2\}$ is 1-potter-linked for every balanced bipartition $\{\pi_1,\pi_2\}$ of $S(A,B)$.
\end{definition}

\begin{figure}[ht]
    \centering
    \includegraphics[height=8\baselineskip]{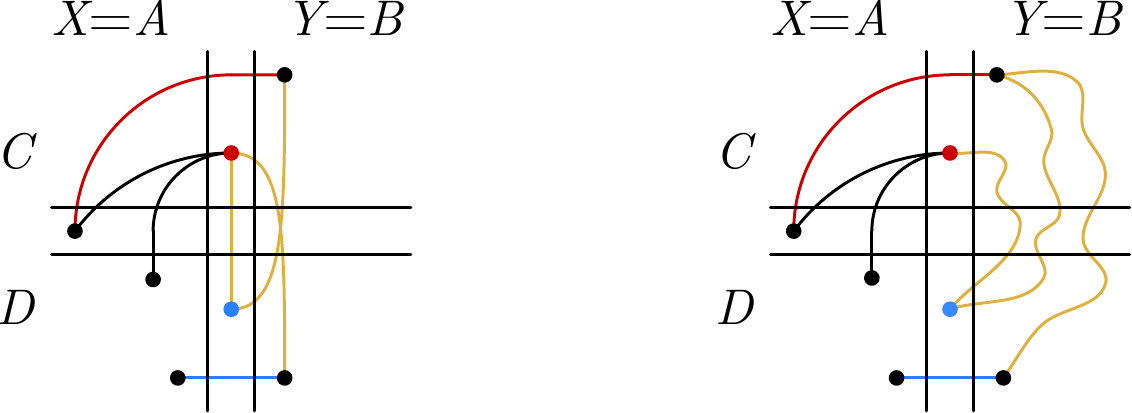}
    \caption{$\pi_1$ is red and $X$-weird. $\pi_2$ is blue.
    Both pictures include how a mixed-separation $(C,D)$ might exploit the weirdness of $\pi_1$ to cross $(A,B)$.
    Left: the yellow edges satisfy \ref{itm:1-potter-linked-2}. Right: the yellow paths satisfy \ref{itm:1-potter-linked-3}.}
    \label{fig:1potterLinkedDef}
\end{figure}

\begin{lemma}\label{lem:nestedness-1-potter-linked}
    Let $(A,B)$ be a tetra-separation of a 4-connected graph~$G$ and let $\{\pi_1,\pi_2\}$ be a balanced bipartition of $S(A,B)$. 
    Then the following assertions are equivalent:
    \begin{enumerate}
        \item\label{itm:nestedness-1-potter-linked-1} $\{\pi_1,\pi_2\}$ is 1-potter-linked with respect to $(A,B)$;
        \item\label{itm:nestedness-1-potter-linked-2} no tetra-separation of $G$ crosses $(A,B)$ so that $\pi_1$ and $\pi_2$ form opposite links and exactly one corner is potter.
    \end{enumerate}
\end{lemma}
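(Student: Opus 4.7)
The plan is to prove each direction separately, paralleling the argument of \cref{lem:nestedness-0-potter-linked}. The forward direction will follow from the rigid structure imposed by the \allref{keylem:crossing}. The backward direction will construct the desired crossing by applying \allref{Menger} inside $G[B]$ and combining it with the $A$-weird vertex-edge pair in $\pi_1$; the left-right-shift of \cref{leftrightTetra} will then upgrade the resulting mixed-$4$-separation to a tetra-separation.

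For the forward direction ($\neg$\ref{itm:nestedness-1-potter-linked-2} $\Rightarrow$ $\neg$\ref{itm:nestedness-1-potter-linked-1}), let $(C,D)$ be a tetra-separation crossing $(A,B)$ with $\pi_1$ and $\pi_2$ forming opposite links and exactly one potter corner. By the \allref{keylem:crossing} we are in case~\ref{itm:crossing-2}, so all four links have size~$2$ and the centre is empty. By symmetry the potter corner is~$AC$ and $\pi_1$ is the $C$-link; \cref{obs:potter-implies-weird} then gives that $\pi_1$ is $A$-weird, so condition \ref{itm:1-potter-linked-1} fails for the direction ``$\pi_1$ is 1-potter-linked to $\pi_2$ around~$A$''. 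Condition \ref{itm:1-potter-linked-2} fails because diagonal and jumping edges are ruled out by the \allref{keylem:crossing}, and any edge dangling between the $B$-link and the $D$-link would force a second potter corner. Condition \ref{itm:1-potter-linked-3} fails because every $\hat\pi_1$--$\hat\pi_2$ path in $G[B]$ must internally use one of the two elements of the $B$-link.

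For the backward direction ($\neg$\ref{itm:nestedness-1-potter-linked-1} $\Rightarrow$ $\neg$\ref{itm:nestedness-1-potter-linked-2}), by symmetry assume $\pi_1$ is not 1-potter-linked to $\pi_2$ around $A$. Write $\pi_1=\{x_2,ab\}$ with $x_2\in A\cap B$, $a\in A\sm B$, $b\in B\sm A$, and let $a'$ be the unique neighbour of $x_2$ in $A\sm\{a\}$; from the failure of \ref{itm:1-potter-linked-2} one checks $a'\in A\sm B$ and $a'\notin\hat\pi_2$. The failure of \ref{itm:1-potter-linked-3} combined with \allref{Menger} inside $G[B]$ yields a mixed-$2$-separation $(C_0,D_0)$ of $G[B]$ strongly separating $\hat\pi_1\cap B$ from $\hat\pi_2\cap B$. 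Assemble a mixed-$4$-separation $(C,D)$ of $G$ by extending $(C_0,D_0)$ with $\{x_2\}\cup(C_0\sm D_0)\se C\sm D$, placing $a\in D\sm C$, $a'\in C\cap D$, and the remainder of $A\sm B$ together with $(A\cap B)\sm\{x_2\}$ inside $D\sm C$; the separator of $(C,D)$ then consists of $(C_0,D_0)$'s two elements together with the vertex~$a'$ and the edge~$ax_2$. Apply the left-right-shift and invoke \cref{leftrightTetra} to obtain a tetra-separation $(C',D')$. By \cref{obs:reduction-maintains-proper-sides} the sets $\pi_1$ and $\pi_2$ remain in opposite links of $(C',D')$, and the path $a'\,x_2\,a\,b$ witnesses that corner $AC'$ is potter.

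The main obstacle will be to verify that \emph{exactly} one corner of $(C',D')$ is potter. Corners $BC'$ and $AD'$ are adjacent to $AC'$ and hence not potter by \cref{obs:potter-adjacent-corner}, so only corner $BD'$ is at issue. When $\pi_2$ is not $B$-weird this is automatic, since a potter corner requires weird structure in its adjacent links. When $\pi_2$ happens to be $B$-weird one exploits the freedom in Menger to select $(C_0,D_0)$ whose two separator elements do not coincide with the $B$-weird witness of $\pi_2$; if no such choice exists, then the symmetric construction instead produces a tetra-separation whose unique potter corner is $BD'$, which equally negates \ref{itm:nestedness-1-potter-linked-2}. Secondary routine checks are the three hypotheses of \cref{leftrightTetra} for $(C,D)$, which reduce to side-size and matching counts that follow from the degree-condition on $(A,B)$ and the 4-connectivity of~$G$.
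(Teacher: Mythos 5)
Your forward direction is fine and matches the paper's argument. The problem is in the backward construction of the crossing tetra-separation. You place $a\in D\sm C$ and keep $b\in C_0\sm D_0\se C\sm D$ (which is forced, since $b\in\hat\pi_1\cap B$ and Menger puts $\hat\pi_1\cap B$ into $C_0\sm D_0$). But then the edge $ab=e_1\in S(A,B)$ runs from $D\sm C$ to $C\sm D$ and is therefore a \emph{fifth} element of $S(C,D)$ that your separator list omits: your $(C,D)$ is a mixed-5-separation, so \cref{leftrightTetra} does not apply and no left-right-shift can repair the order. Equivalently, $ab$ would be a diagonal edge (ends in the opposite corners $AD$ and $BC$), which the \allref{keylem:crossing} forbids for crossing tetra-separations — a sign the configuration cannot exist. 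The paper's construction makes the opposite choice: it puts $a$ into the separator as a \emph{vertex} (i.e.\ $a\in C\cap D$, via $C_2:=C_1\cup\{a\}$ and $D_2:=(D_1\cup A)\sm\{x_2\}$), so that neither $ab$ nor $ax_2$ is a separator edge, and the unique separator edge on the $A$-side is $a'x_2$ with $a'\in D\sm C$. It is precisely this edge $x_2a'$, dangling from the $C$-link through the $A$-link, that makes the corner $AC$ potter. Your intended witness path $a'\,x_2\,a\,b$ does not work in your configuration either, since with $a'\in C\cap D$ the edge $a'x_2$ is not even an element of $S(C,D)$, let alone of the $A$-link.

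The endgame is also shakier than it needs to be. Your claim that when $\pi_2$ is $B$-weird one can re-select the Menger separation, with an unproved "symmetric construction" as fallback, is not needed and is not justified as stated. The paper's argument is direct and unconditional: if the corner $BD$ were potter it would in particular be empty, but potterness would force $\pi_2$ to be $B$-weird and hence to contain an edge $f$, whose endvertex in $B\sm A$ lies in $\hat\pi_2\cap B\se D_1\sm C_1\se D\sm C$ and hence in the corner $BD$ — so that corner is non-empty, a contradiction. (A small further slip: $a'\in A\sm B$ follows from the degree-condition on $x_2$, not from the failure of \ref{itm:1-potter-linked-2}.)
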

\begin{proof}
    $\neg$ \ref{itm:nestedness-1-potter-linked-2} $\Rightarrow$ $\neg$ \ref{itm:nestedness-1-potter-linked-1}. 
    Assume that a tetra-separation $(C,D)$ crosses $(A,B)$ so that $\pi_1$ and $\pi_2$ form opposite links, say $\pi_1$ is the $C$-link and~$\pi_2$ is the $D$-link, and so that exactly one corner is potter, say the $AC$-corner.
    We show that~$\pi_1$ is \emph{not} 1-potter-linked to~$\pi_2$ with respect to~$(A,B)$.
    For this, we show that for~$(X,Y)=(A,B)$ in \cref{dfn:1-potter-linked}, all of \ref{itm:1-potter-linked-1}--\ref{itm:1-potter-linked-3} fail.
    
    Observe that $\pi_1$ is $A$-weird by \cref{obs:potter-implies-weird}, so \ref{itm:1-potter-linked-1} fails.
    By assumption, the corner $AC$ is the only corner that is potter.
    By the \allref{keylem:crossing}, every link has size two while the centre is empty.
    Moreover, there do not exist jumping or diagonal edges, and the $B$-link contains no dangling edge.
    It follows that there is no $\pi_1$--$(\hat \pi_2 \cap B)$ edge and no $(\hat \pi_1 \cap B)$--$\pi_2$ edge, so \ref{itm:1-potter-linked-2} fails.
    Moreover, every $\hat\pi_1$--$\hat\pi_2$ path in $G[B]$ has an internal vertex or an edge in the $B$-link. 
    Thus, there are at most two independent $\hat\pi_1$--$\hat\pi_2$ paths in $G[B]$, so \ref{itm:1-potter-linked-3} fails.

    $\neg$ \ref{itm:nestedness-1-potter-linked-1} $\Rightarrow$ $\neg$ \ref{itm:nestedness-1-potter-linked-2}.
    Assume that $\{\pi_1,\pi_2\}$ is not 1-potter-linked with respect to $(A,B)$. 
    Without loss of generality,~$\pi_1$ is not 1-potter-linked to~$\pi_2$ and \ref{itm:1-potter-linked-1}--\ref{itm:1-potter-linked-3} fail for~$(X,Y)=(A,B)$.
    By $\neg$~\ref{itm:1-potter-linked-1}, $\pi_1$ is $A$-weird.
    Write $\pi_1=:\{v_1,e_1\}$ where $v_1$ is a vertex and $e_1=a_1 b_1$ is an edge with $a_1\in A\sm B$ and $b_1\in B\sm A$.
    Let $a'_1$ be the unique neighbour of $v_1$ in $A$ besides~$a_1$.
    Note that $a'_1$ lies in $A\sm B$ by the degree-condition for~$v_1$.

    By $\neg$~\ref{itm:1-potter-linked-3}, there are at most two independent $\hat\pi_1$--$\hat\pi_2$ paths in $G[B]$. 
    By \allref{Menger}, there is a mixed-separation $(C_1,D_1)$ of $G[B]$ of order $\le 2$ that strongly separates $\hat\pi_1$ from $\hat\pi_2$, say with $\hat\pi_1 \cap B \subseteq C_1 \sm D_1$ and $\hat\pi_2 \cap B \subseteq D_1\sm C_1$.
    In particular, $|C_1\sm D_1|\ge 2$ and $|D_1\sm C_1|\ge 2$.
    See \cref{fig:PotterMenger} for the next definition.
    We define $(C_2,D_2):=(C_1 \cup a_1, (D_1 \cup A) \sm \{v_1\})$. 
    We have $S(C_2,D_2) = S(C_1,D_1) \cup \{a'_1 v_1,a_1\}$ since $v_1$ does not have a neighbour in~$A$ besides~$a_1'$. 
    By 4-connectivity of~$G$ and \cref{kConMixed}, $(C_2,D_2)$ has order exactly~4.
    Let $(C,D)$ be the left-right-reduction of $(C_2,D_2)$.
    Since $C_1\sm D_1\se C_2\sm D_2$ we have $|C_2\sm D_2|\ge 2$, and similarly $|D_2\sm C_2|\ge 2$.
    Hence $(C,D)$ is a tetra-separation by \cref{leftrightTetra}.

    Consider the crossing-diagram for $(A,B)$ and $(C,D)$.
    We claim that the $C$-link includes~$\pi_1$.
    The $C$-link clearly contains~$v_1$.
    The vertex $a_1\in S(C_2,D_2)$ has two neighbours $v_1,b_1$ in $C_2\sm D_2$, so $a_1\in S(C,D)$.
    Hence $e_1\notin S(C,D)$.
    Since the end $b_1$ of $e_1$ lies in $C_1\sm D_1\se C\sm D$ by \cref{obs:reduction-maintains-proper-sides}, the edge $e_1$ lies in the $C$-link.
    Next, we claim that the $D$-link includes~$\pi_2$.
    Every vertex in $\pi_2$ clearly lies in the $D$-link.
    Now let $f$ be an edge in $\pi_2$.
    The endvertex of $f$ in $B\sm A$ lies in $\hat \pi_2\cap B\se D_1\sm C_1\se D\sm C$ by \cref{obs:reduction-maintains-proper-sides}.
    The endvertex of $f$ in $A\sm B$ must be distinct from $a_1$ by the matching-condition, and hence is contained in $D_2\sm C_2\se D\sm C$ by \cref{obs:reduction-maintains-proper-sides}.
    Therefore, both ends of $f$ lie in $D\sm C$ while $f\in S(A,B)$, so $f$ lies in the $D$-link.

    Since the $C$-link includes $\pi_1$ and the $D$-link includes $\pi_2$, the two tetra-separations $(A,B)$ and $(C,D)$ cross, and the inclusions are equalities by the \allref{keylem:crossing}.
    It remains to show that the $AC$-corner is the only corner that is potter.
    The edge~$v_1 a_1'$ dangles from the $C$-link through the $A$-link, so the corner~$AC$ is potter by the \allref{keylem:crossing}.
    By \cref{obs:potter-adjacent-corner}, the corners for~$AD$ and~$BC$ are not potter. 
    Assume for contradiction that the $BD$-corner is potter.
    Then~$\pi_2$ is $B$-weird and hence contains an edge~$f$.
    But then the endvertex of $f$ in $B\sm A$ is contained in $D\sm C$ as shown above.
    In particular, the corner for $BD$ is not empty, contradicting that $BD$ is potter.
\end{proof}

\begin{figure}[ht]
    \centering
    \includegraphics[height=8\baselineskip]{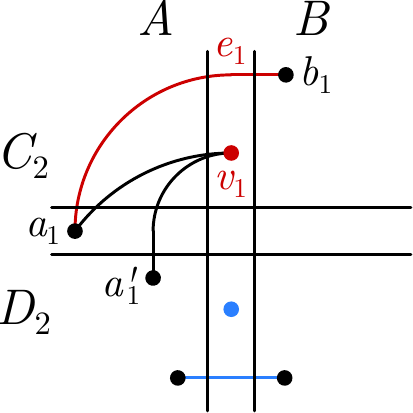}
    \caption{The situation in the second half of the proof of \cref{lem:nestedness-1-potter-linked}. $\pi_1$ is red and $\pi_2$ is blue.}
    \label{fig:PotterMenger}
\end{figure}

\begin{definition}[2-potter-linked]
\label{dfn:2-potter-linked}
    Let~$\{\pi_1,\pi_2\}$ be a balanced bipartition of the separator of a tetra-separation~$(A,B)$ of a graph~$G$.
    We say that $\{\pi_1,\pi_2\}$ is \defn{2-potter-linked} with respect to $(A,B)$ if for both~$(X,Y)=(A,B)$ and~$(X,Y)=(B,A)$, at least one of the following holds:
    \begin{defenum}
        \item\label{itm:2-potter-linked-1} $\pi_1$ is not $X$-weird;
        \item\label{itm:2-potter-linked-2} $\pi_2$ is not $Y$-weird.
    \end{defenum}
    A~tetra-separation $(A,B)$ is \defn{2-potter-linked} if $\{\pi_1,\pi_2\}$ is 2-potter-linked for every balanced bipartition $\{\pi_1,\pi_2\}$ of $S(A,B)$.
\end{definition}

\begin{observation}
\label{obs:weird-no-edges-in-separator}
    Let~$(A,B)$ be a tetra-separation of a graph~$G$.
    Let $\{\pi_1,\pi_2\}$ be a balanced bipartition of $S(A,B)$.
    Assume that~$\pi_1$ is $A$-weird.
    Then there is no $\pi_1$--$\pi_2$ edge.
\end{observation}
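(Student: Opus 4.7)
The plan is to unpack the definition of $A$-weird and then read off the claim from the degree-condition. First, since $\pi_1$ is $A$-weird, the definition forces $\pi_1$ to consist of exactly one vertex and one edge; write $\pi_1 = \{v_1, e_1\}$ with $e_1 = a_1 b_1$, where $a_1 \in A \sm B$ and $b_1 \in B \sm A$. Weirdness says that $v_1$ has exactly one neighbour $a'_1$ in $A \sm \{a_1\}$, so every neighbour of $v_1$ in $A$ lies in $\{a_1, a'_1\}$.

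The key step is to locate $a_1'$ precisely, and for this I will apply the degree-condition to $v_1 \in S(A,B)$: it forces $v_1$ to have at least two neighbours in $A \sm B$. Since $v_1$ has at most two neighbours in $A$ in total, both $a_1$ and $a'_1$ must lie in $A \sm B$, and $v_1 a_1$ must actually be an edge. In particular, $v_1$ has \emph{no} neighbour in $A \cap B$.

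To finish, I would suppose for contradiction that there is a $\pi_1$--$\pi_2$ edge $xy$ with $x \in \pi_1$ and $y \in \pi_2$ as vertices. The only vertex element of $\pi_1$ is $v_1$, so $x = v_1$; and any vertex element of $\pi_2$ lies in $S(A,B) \cap V(G) = A \cap B$, so $y \in A \cap B$. But then $y$ would be a neighbour of $v_1$ in $A \cap B$, contradicting the previous paragraph. I do not foresee any real obstacle — the only thing to be careful about is the notational convention that in a ``$\pi_1$--$\pi_2$ edge'' (without hats) the endpoints must be vertex elements of the respective sets, which is what makes the argument trivial; were hats involved on either side, the endpoint of $e_1$ in $A \sm B$ (namely $a_1$) could itself lie on such an edge and the conclusion would fail.
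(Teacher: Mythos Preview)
Your proof is correct and follows essentially the same approach as the paper's: both arguments pin down that the vertex element $v_1\in\pi_1$ can have at most two neighbours in $A$ and then use the degree-condition to force a contradiction with any vertex of $\pi_2\subseteq A\cap B$ being adjacent to $v_1$. The paper organises the argument slightly more directly (it assumes the edge $uv$ first and observes that $v$ would have to be the unique neighbour in $A\setminus\{a_1\}$, hence $v_1$ has at most one neighbour in $A\setminus B$), whereas you first locate $a'_1$ in $A\setminus B$ before introducing the hypothetical edge; but this is a cosmetic difference, not a substantive one.
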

\begin{proof}
    Assume that there is a $\pi_1$--$\pi_2$ edge~$uv$ with $u \in \pi_1$ and $v \in \pi_2$.
    Let~$u'$ be the endvertex in $A\sm B$ of the edge in~$\pi_1$.
    Then~$v$ is the only neighbour of~$u$ in~$A \sm \{u'\}$ by definition of $A$-weird.
    Hence,~$u \in S(A,B)$ has at most one neighbour in~$A \sm B$, contradicting the degree-condition.
\end{proof}

\begin{lemma}\label{lem:nestedness-2-potter-linked}
    Let $(A,B)$ be a tetra-separation of a 4-connected graph~$G$ and let $\{\pi_1,\pi_2\}$ be a balanced bipartition of $S(A,B)$. 
    Then the following assertions are equivalent:
    \begin{enumerate}
        \item\label{itm:nestedness-2-potter-linked-1} $\{\pi_1,\pi_2\}$ is 2-potter-linked with respect to $(A,B)$;
        \item\label{itm:nestedness-2-potter-linked-2} no tetra-separation of $G$ crosses $(A,B)$ so that $\pi_1$ and $\pi_2$ form opposite links and exactly two corners are potter.
    \end{enumerate}
\end{lemma}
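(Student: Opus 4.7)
The plan is to establish both implications by close analogy with the proofs of \cref{lem:nestedness-0-potter-linked} and \cref{lem:nestedness-1-potter-linked}, leveraging the \allref{keylem:crossing} and \cref{obs:potter-adjacent-corner}.

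For $\neg\ref{itm:nestedness-2-potter-linked-2} \Rightarrow \neg\ref{itm:nestedness-2-potter-linked-1}$, suppose a tetra-separation $(C,D)$ crosses $(A,B)$ with $\pi_1 = L_C$, $\pi_2 = L_D$ and exactly two potter corners. By \cref{obs:potter-adjacent-corner} the two potter corners must be opposite, say $AC$ and $BD$, and by the \allref{keylem:crossing} every link has size two while the centre is empty. The first step is to unpack the potter-ness of $AC$, extracting the witnessing path $u_1 u_2 u_3 u_4$ with $u_2 \in L_C$ a vertex and $u_3 u_4 \in L_C$ the edge whose $A$-endpoint $u_3$ is the vertex of $L_A$. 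Using that the $AC$-corner is empty and $|L_A|=2$, one shows that any further neighbour $v$ of $u_2$ in $A\sm B$ would have to lie in the $AD$-corner and thus force $u_2 v \in L_A$, hence $v=u_1$; combined with the degree-condition this identifies $u_1$ and $u_3$ as the only two $A$-neighbours of $u_2$, so $\pi_1$ is $A$-weird. A symmetric argument at $BD$ shows $\pi_2$ is $B$-weird, so 2-potter-linkedness fails for $(X,Y)=(A,B)$.

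For the converse, assume WLOG $\pi_1$ is $A$-weird and $\pi_2$ is $B$-weird. Write $\pi_1=\{v_1,\,a_1 b_1\}$ and $\pi_2=\{v_2,\,a_2 b_2\}$ with $a_i \in A\sm B$ and $b_i \in B\sm A$, and let $a_1' \in A\sm B$ and $b_2' \in B\sm A$ be the unique neighbours of $v_1$ in $A\sm\{a_1\}$ and of $v_2$ in $B\sm\{b_2\}$ respectively (memberships follow from the degree-condition). I propose the construction
\[
    C := (B \cup \{a_1\}) \sm \{v_2\}, \qquad D := (A \cup \{b_2\}) \sm \{v_1\}.
\]
A direct computation gives $C\cap D=\{a_1,b_2\}$, $C\sm D = ((B\sm A)\sm\{b_2\}) \cup \{v_1\}$, and $D\sm C = ((A\sm B)\sm\{a_1\}) \cup \{v_2\}$. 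The crucial verification is that the only edges of $G$ between $C\sm D$ and $D\sm C$ are $a_1'v_1$ and $v_2 b_2'$: this uses $A$-weirdness to rule out further $v_1$-edges into $D\sm C$, $B$-weirdness to rule out further $v_2$-edges into $C\sm D$, and $S(A,B)\cap E(G) = \{a_1 b_1,\,a_2 b_2\}$ to rule out all other $(A\sm B)$--$(B\sm A)$ edges. Hence $|S(C,D)|=4$, the matching-condition holds, and a short counting argument using 4-connectivity yields the degree-condition at $a_1$ and $b_2$, so $(C,D)$ is a tetra-separation.

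Finally, reading off the crossing-diagram of $(A,B)$ and $(C,D)$ gives $L_C=\pi_1$, $L_D=\pi_2$, $L_A=\{a_1,\,a_1'v_1\}$, $L_B=\{b_2,\,v_2 b_2'\}$, and empty centre. The corners $AC$ and $BD$ are empty, and the path $a_1' v_1 a_1 b_1$ together with its $BD$-analogue witness that they are potter; the corners $AD$ and $BC$ are non-empty (containing $a_1'$ and $b_1$ respectively) and thus cannot be potter. The main obstacle I anticipate is the careful bookkeeping in the converse direction, ensuring that no spurious edges appear across the new cut and that the links fall into exactly the predicted shapes. The key insight making this tractable is that the $A$-weirdness of $\pi_1$ precisely supplies the dangling edge $a_1'v_1$ needed to certify $AC$ as potter, and symmetrically for $\pi_2$ at $BD$.
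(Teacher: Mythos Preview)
Your proposal is correct and follows the same approach as the paper: the forward direction unpacks what the paper cites as \cref{obs:potter-implies-weird}, and your explicit construction $C := (B \cup \{a_1\}) \sm \{v_2\}$, $D := (A \cup \{b_2\}) \sm \{v_1\}$ in the reverse direction coincides with the paper's.
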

\begin{proof}
    $\neg$~\ref{itm:nestedness-2-potter-linked-2} $\Rightarrow$ $\neg$~\ref{itm:nestedness-2-potter-linked-1}. 
    Assume that a tetra-separation $(C,D)$ crosses $(A,B)$ so that $\pi_1$ and $\pi_2$ form opposite links, say $\pi_1$ is the $C$-link and~$\pi_2$ is the $D$-link, and so that exactly two corners are potter.
    By \cref{obs:potter-adjacent-corner}, no two adjacent corners are potter. 
    So we can assume without loss of generality that the $AC$-corner and the $BD$-corner are potter while the other two corners are not potter.
    Then~$\pi_1$ is $A$-weird and~$\pi_2$ is $B$-weird by \cref{obs:potter-implies-weird}. So,~\ref{itm:2-potter-linked-1} and~\ref{itm:2-potter-linked-2} fail for $(X,Y)=(A,B)$.

    $\neg$~\ref{itm:nestedness-2-potter-linked-1} $\Rightarrow$ $\neg$~\ref{itm:nestedness-2-potter-linked-2}.
    Assume that $\{\pi_1,\pi_2\}$ is not 2-potter-linked with respect to $(A,B)$.
    Say~\ref{itm:2-potter-linked-1} and~\ref{itm:2-potter-linked-2} fail for~$(X,Y)=(A,B)$; that is to say, $\pi_1$ is $A$-weird and~$\pi_2$ is $B$-weird.
    For $i \in \{1,2\}$, write $\pi_i=:\{v_i,e_i\}$ where~$v_i$ is a vertex and $e_i=a_i b_i$ is an edge with $a_i\in A\sm B$ and $b_i\in B\sm A$; see \cref{fig:PotterWiggle}.
    Let~$a'_1$ be the unique neighbour of~$v_1$ in~$A \sm \{a_1\}$, and let~$b_2'$ be the unique neighbour of~$v_2$ in~$B \sm \{b_1\}$.
    By the degree-condition, we have $a'_1\in A\sm B$ and $b'_1\in B\sm A$.

    Let $C:=B \sm \{v_2\}) \cup \{a_1\}$ and $D:=(A\sm \{v_1\})\cup \{b_2\}$.
    Since $\pi_1$ is $A$-weird, $\pi_2$ is $B$-weird, and $v_1 v_2\notin E(G)$ by \cref{obs:weird-no-edges-in-separator}, we have
    $S(C,D)=\{a'_1 v_1,a_1,b_2,v_2 b'_2\}$.
    Note that~$(C,D)$ crosses~$(A,B)$ since~$v_1$ is in the $C$-link and~$v_2$ is in the $D$-link.
    The vertex $a_1$ has two neighbours in $C \sm D$, namely~$v_1$ and~$b_1$.
    Since~$a_1$ has degree at least four and it is not adjacent to~$b_2$, the only other vertex in $S(C,D)$, it follows that~$a_1$ also has at least two neighbours in~$D \sm C$.
    Similarly,~$b_2$ also has two neighbours in both~$C \sm D$ and~$D \sm C$. 
    Hence $(C,D)$ satisfies the degree-condition.
    It satisfies the matching-condition since $v_1\neq v_2$ means that $a'_1 v_1$ and $v_2 b'_2$ do not share endvertices. 
    Hence,~$(C,D)$ is a tetra-separation that crosses $(A,B)$ with $\pi_1$ and $\pi_2$ as opposite links.
    The corners $AC$ and $BD$ are potter by construction.
    By \cref{obs:potter-adjacent-corner}, no other corner is potter.
\end{proof}

\begin{figure}[ht]
    \centering
    \includegraphics[height=8\baselineskip]{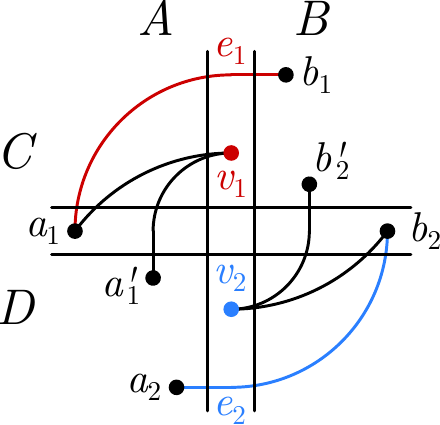}
    \caption{The situation in the second half of the proof of \cref{lem:nestedness-2-potter-linked}. $\pi_1$ is red and $\pi_2$ is blue.}
    \label{fig:PotterWiggle}
\end{figure}

\begin{corollary}
\label{cor:nestedness-potter-linked}
    Let $(A,B)$ be a tetra-separation of a 4-connected graph~$G$ and let $\{\pi_1,\pi_2\}$ be a balanced bipartition of $S(A,B)$. 
    Then the following assertions are equivalent:
    \begin{enumerate}
        \item\label{itm:nestedness-potter-linked-1} $\{\pi_1,\pi_2\}$ is $h$-potter-linked with respect to $(A,B)$ for every $h \in \{0,1,2\}$;
        \item\label{itm:nestedness-potter-linked-2} no tetra-separation of $G$ crosses $(A,B)$ so that $\pi_1$ and $\pi_2$ form opposite links.
    \end{enumerate}
\end{corollary}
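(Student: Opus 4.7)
The plan is to obtain this corollary as an immediate packaging of the three preceding lemmas (\cref{lem:nestedness-0-potter-linked}, \cref{lem:nestedness-1-potter-linked}, \cref{lem:nestedness-2-potter-linked}), by exploiting the fact that the Crossing Lemma forces the number of potter corners in any relevant crossing to lie in $\{0,1,2\}$.

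For the direction \ref{itm:nestedness-potter-linked-2}~$\Rightarrow$~\ref{itm:nestedness-potter-linked-1}, I would argue the contrapositive: if $\{\pi_1,\pi_2\}$ fails to be $h$-potter-linked for some $h \in \{0,1,2\}$, then \cref{lem:nestedness-0-potter-linked}, \cref{lem:nestedness-1-potter-linked}, or \cref{lem:nestedness-2-potter-linked} respectively produces a tetra-separation $(C,D)$ that crosses $(A,B)$ with $\pi_1$ and $\pi_2$ as opposite links (in fact, with exactly $h$ potter corners), contradicting~\ref{itm:nestedness-potter-linked-2}.

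For the other direction \ref{itm:nestedness-potter-linked-1}~$\Rightarrow$~\ref{itm:nestedness-potter-linked-2}, I would again proceed by contrapositive. Suppose a tetra-separation $(C,D)$ crosses $(A,B)$ so that $\pi_1$ and $\pi_2$ form opposite links. Since $|\pi_1|=|\pi_2|=2$, the \allref{keylem:crossing} forces all four links to have size~$2$ and the centre to be empty. The same lemma then says that every corner is either non-potter or potter, and by \cref{obs:potter-adjacent-corner} no two adjacent corners are potter. Hence the number of potter corners is $0$, $1$, or $2$ (with the two potter corners being opposite in the last case). Whichever value $h \in \{0,1,2\}$ this number takes, the corresponding lemma among \cref{lem:nestedness-0-potter-linked,lem:nestedness-1-potter-linked,lem:nestedness-2-potter-linked} produces the failure of $h$-potter-linkedness of $\{\pi_1,\pi_2\}$, contradicting~\ref{itm:nestedness-potter-linked-1}.

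There is essentially no obstacle here, since all the real work has already been done in the three lemmas: the only thing to verify is that the trichotomy of `number of potter corners' aligns cleanly with the three $h$-potter-linked conditions, and this alignment was set up on purpose in \cref{dfn:0-potter-linked,dfn:1-potter-linked,dfn:2-potter-linked}. The only mildly delicate point is checking that no crossing can have three or four potter corners — but this is immediate from \cref{obs:potter-adjacent-corner}, so the proof reduces to a short case-split citing the three lemmas.
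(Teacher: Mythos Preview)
Your proposal is correct and follows essentially the same approach as the paper: use \cref{obs:potter-adjacent-corner} to bound the number of potter corners by two, then apply \cref{lem:nestedness-0-potter-linked}, \cref{lem:nestedness-1-potter-linked}, and \cref{lem:nestedness-2-potter-linked} according to that number. The paper's proof is simply a terser statement of the same argument.
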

\begin{proof}
    By \cref{obs:potter-adjacent-corner}, every crossing pair of tetra-separations has at most two corners that are potter.
    So the corollary immediately follows from \cref{lem:nestedness-0-potter-linked}, \cref{lem:nestedness-1-potter-linked} and \cref{lem:nestedness-2-potter-linked}.
\end{proof}

\noindent\textbf{Summary.}

\begin{definition}
\label{dfn:externally-connected}
    A tetra-separation $(A,B)$ is \defn{externally 5-connected} if it is half-connected, 3-linked and $h$-potter-linked for every $h \in \{0,1,2\}$.
\end{definition}

\begin{keylemma}
\label{keylem:nestedness-external-connectivity}
    For every tetra-separation $(A,B)$ of a 4-connected graph~$G$, the following assertions are equivalent:
    \begin{enumerate}
        \item $(A,B)$ is totally-nested;
        \item $(A,B)$ is externally 5-connected.
    \end{enumerate}
\end{keylemma}

\begin{proof}
    We combine \cref{lem:nestedness-half-connected}, \cref{lem:nestedness-3-linked} and \cref{cor:nestedness-potter-linked} with the \allref{keylem:crossing}.
\end{proof}

\section{Thickened \texorpdfstring{$K_{4,m}$}{K4m}'s}
\label{sec:K4m}

This section deals with the thickened $K_{4,m}$ outcome of \cref{MainDecomp}; see \cref{keylem:K4m}.

A \defn{sprinkled $K_{4,m}$} is a graph obtained from a $K_{4,m}$ with a side $X$ of size four by possibly adding edges between vertices in~$X$.
We refer to $X$ as a \defn{sprinkled side}.

Let $Z\se V(G)$ and $K$ a component of $G-Z$.
When $Z$ and $K$ are clear from context, we write $(U'_K,W'_K):=(V(K)\cup Z,V(G-K))$.
Let $Z$ be a set of four vertices in a 4-connected graph $G$ such that $G-Z$ has $\ge 4$ components.
For each component $K$ of $G-Z$, let $(U_K,W_K)$ denote the left-right-reduction of $(U'_K,W'_K)$.

\begin{lemma}\label{K4mMonotoneShift}
    Let $Z$ be a set of four vertices in a 4-connected graph~$G$, such that $G-Z$ has $\ge 4$ components.
    Let $K$ be a component of $G-Z$.
    Then $(U_K,W_K)$ equals the left-reduction of $(U'_K,W'_K)$.
    In particular, $(U_K,W_K)\le (U'_K,W'_K)$ and $W_K=W'_K$.
\end{lemma}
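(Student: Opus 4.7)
The plan is to unfold the definition of the left-right-shift and argue that the right-shift step removes nothing, so the left-right-shift coincides with the left-shift. Writing $(A',B')$ for the left-shift of $(U'_K,W'_K)$, the set of vertices removed in the first step is $L = \{z \in Z : z \text{ has at most one neighbour in } V(K)\}$, giving $A' = (V(K) \cup Z) \setminus L$ and $B' = V(G - K)$. Taking the right-shift next, the set $R$ to be deleted from $B'$ consists of those $z \in A' \cap B' = Z \setminus L$ that have at most one neighbour in $B' \setminus A' = V(G - K - Z) \cup L$.

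The crux will be to show $R = \emptyset$. First I plan to observe that every $z \in Z$ has at least one neighbour in each component of $G - Z$: otherwise some component $K'$ would also be a component of $G - (Z \setminus \{z\})$, and since $G - Z$ has $\ge 4$ components there must be vertices outside $K'$ in $G - (Z \setminus \{z\})$ as well, so $Z \setminus \{z\}$ would be a 3-separator, contradicting the 4-connectivity of $G$. Now for any $z \in Z \setminus L$, at least three components of $G - Z$ differ from $K$ and each supplies a neighbour of $z$. Hence $z$ has $\ge 3$ neighbours in $V(G - K - Z) \subseteq B' \setminus A'$, so $z \notin R$.

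Having established $R = \emptyset$, the right-shift of $(A',B')$ equals $(A',B')$ itself, so $(U_K, W_K) = (A',B')$ is the left-shift of $(U'_K,W'_K)$, which is the main claim. The ``in particular'' statements then follow immediately: $U_K = A' \subseteq U'_K$ and $W_K = B' = W'_K$ give $(U_K, W_K) \le (U'_K, W'_K)$.

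I do not expect any real obstacle here; the only step that is not a direct unfolding of definitions is the mild consequence of 4-connectivity that each $z \in Z$ sends an edge into every component of $G - Z$, and this is straightforward since $|Z \setminus \{z\}| = 3$ and $G - Z$ already has several components to play off against each other.
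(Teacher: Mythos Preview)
Your proof is correct and follows essentially the same approach as the paper's: both argue that every vertex of $Z$ has enough neighbours in the components of $G-Z$ other than $K$ (by 4-connectivity), so the right-shift step removes nothing. Your version is in fact more explicit than the paper's, which simply asserts ``every vertex in $S(U'_K,W'_K)$ has $\ge 2$ neighbours in $W'_K\setminus U'_K$'' without spelling out the 4-connectivity argument you provide.
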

\begin{proof}
    Let $(A,B)$ denote the left-reduction of $(U'_K,W'_K)$.
    Since every vertex in $S(U'_K,W'_K)$ has $\ge 2$ neighbours in $W'_K\sm U'_K$, and since $W'_K\sm U'_K\se B\sm A$ by \cref{obs:shift-maintains-proper-sides}, it follows that every vertex in $S(A,B)$ has $\ge 2$ neighbours in $B\sm A$.
    Hence $(A,B)=(U_K,W_K)$ and $W'_K=B=W_K$.
\end{proof}

\begin{lemma}\label{K4mTotallyNestedChar}
    Let $Z$ be a set of four vertices in a 4-connected graph $G$ such that $G-Z$ has $\ge 4$ components.
    Let $K$ be a component of $G-Z$.
    Then the following assertions are equivalent:
    \begin{enumerate}
        \item\label{K4mTotallyNestedChar1} $K$ has $\ge 2$ vertices;
        \item\label{K4mTotallyNestedChar2} the left-right-reduction of $(U'_K,W'_K)$ is a totally-nested tetra-separation.
    \end{enumerate}
\end{lemma}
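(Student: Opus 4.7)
I plan to prove both directions of the biconditional.

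\emph{Backward direction.} If $|V(K)|=1$, then $|U'_K \sm W'_K|=|V(K)|=1$, so condition~\cref{leftrightTetra1} of \cref{leftrightTetra} fails and the left-right-shift is not a tetra-separation.

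\emph{Forward direction.} Assume $|V(K)|\ge 2$. To see that $(U_K,W_K)$ is a tetra-separation, I apply \cref{leftrightTetra} to $(U'_K,W'_K)$: condition~\cref{leftrightTetra1} is the hypothesis; since $G-Z$ has $\ge 4$ components we have $|W'_K \sm U'_K|\ge 3$, making condition~\cref{leftrightTetra2} vacuous and condition~\cref{leftrightTetra3} immediate because $S(U'_K,W'_K)=Z$ contains no edges. For total-nestedness, I invoke \cref{keylem:nestedness-external-connectivity} by verifying external 5-connectivity. Half-connectedness is immediate since $G[U_K\sm W_K]=K$ is connected. For 3-linkedness, any pair $\{x_1,x_2\}\se S(U_K,W_K)$ has associated $Z$-vertices $z_1,z_2$, and by 4-connectivity each $z\in Z$ has a neighbour in each of the $\ge 3$ other components of $G-Z$, yielding $\ge 3$ internally disjoint $z_1$-$z_2$ paths through distinct other components, all of which avoid the remaining separator-elements $x_3,x_4$.

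For the potter-linked conditions, fix a balanced bipartition $\{\pi_1,\pi_2\}$ of $S(U_K,W_K)$. The key degree-observation is that each $z\in Z$ has $\ge 3$ neighbours outside $V(K)$ (one per other component), so no $\pi_i$ can be $W_K$-weird: the sole vertex-element of such a $\pi_i$ would need exactly one neighbour in $W_K\sm\{z^*\}$, but the three other-component neighbours rule this out. This gives 2-potter-linked fully (via clause~\cref{itm:2-potter-linked-1}) and 1-potter-linked on the $W_K$-side via clause~\cref{itm:1-potter-linked-1}. The narrow case where $\pi_i$ is $U_K$-weird in 1-potter-linked is resolved via clause~\cref{itm:1-potter-linked-3}: three independent $\hat\pi_1$-$\hat\pi_2$ paths exist in $G[W_K]=G-V(K)$, routed through the $\ge 3$ other components exactly as in the 3-linkedness argument. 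Finally, for 0-potter-linked I need $\ge 5$ independent $\hat\pi_1$-$\hat\pi_2$ paths in $G$ (absent a direct edge giving clause~\cref{itm:0-potter-linked-1}). By \allref{Menger} it suffices to show that no mixed-4-separation $(X,Y)$ strongly separates $\hat\pi_1$ from $\hat\pi_2$; I classify each component of $G-Z$ by its relation to $X\sm Y$, $Y\sm X$, $X\cap Y$: each of the $\ge 3$ non-$K$ components contributes $\ge 1$ element to $S(X,Y)$, and $K$ (because $|V(K)|\ge 2$) contributes $\ge 2$, giving $|S(X,Y)|\ge 5$ and a contradiction.

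\emph{Main obstacle.} The hardest step is the ``$K$ contributes $\ge 2$'' claim in the 0-potter-linked count. The delicate sub-case is when $K$ is split non-trivially with just a single vertex $v^*\in X\cap Y$ and no cut-edges; here the minimum-degree-$\ge 4$ constraint forces every vertex of $V(K)\cap(X\sm Y)$ to have all its neighbours in $V(K)\cap(X\sm Y)\cup\{v^*,z_a,z_b\}$, making $\{v^*,z_a,z_b\}$ a 3-separator of $G$ and contradicting 4-connectivity. A parallel argument, considering the endpoint of the cut-edge in place of $v^*$, rules out the case where $K$ contributes only a single cut-edge (internal to $K$ or boundary-edge to $Z$).
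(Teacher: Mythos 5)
Your proof is correct and follows essentially the same route as the paper: reduce total-nestedness to external 5-connectivity via \cref{keylem:nestedness-external-connectivity} and verify each condition using the $\ge 3$ components of $G-Z$ other than $K$. The only divergence is the 0-potter-linked step, where you count separator elements of a hypothetical Menger-minimal separation component by component, whereas the paper directly constructs three independent paths outside $K$ and two inside $K$; both versions hinge on $|K|\ge 2$ in exactly the same place.
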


\begin{proof}
    $\neg$~\cref{K4mTotallyNestedChar1} $\Rightarrow$ $\neg$~\cref{K4mTotallyNestedChar2}.
    If $K$ has only one vertex, then $|U_K\sm W_K|=1$, so $(U'_K,W'_K)$ fails to be a tetra-separation by \cref{leftrightTetra}.

    \cref{K4mTotallyNestedChar1} $\Rightarrow$ \cref{K4mTotallyNestedChar2}.
    We abbreviate $(U,W):=(U_K,W_K)$ and $(U',W'):=(U'_K,W'_K)$.
    Since $K$ has $\ge 2$ vertices, $(U,W)$ is a tetra-separation by \cref{leftrightTetra}.
    To show that $(U,W)$ is totally-nested, by \cref{keylem:nestedness-external-connectivity} it suffices to show that $(U,W)$ is externally 5-connected.

    \casen{Half-connected:} 
    We have $G[U\sm W]=K$ by \cref{K4mMonotoneShift}.

    To check the remaining aspects of external 5-connectivity,
    for each $z\in Z$ we denote by $z^*$ the element of $S(U,W)$ that equal or incident to~$z$.
    By \cref{K4mMonotoneShift}, $z^*$ is either equal to $z$ or $z^*$ is an edge between $K$ and~$z$.
    Denote by $z^U$ the vertex in $\hat z\cap U$.
    We can use the three components of $G-Z$ besides~$K$ to find paths:
    \begin{enumerate}[label={($\dagger$)}]
        \item\label{lem:K4m-three-paths} There are three independent $z_1$--$z_2$ paths in $G$ with all internal vertices outside~$K$, for every two distinct vertices $z_1,z_2\in Z$.
    \end{enumerate}
    
    \casen{3-linked:} follows from \cref{lem:K4m-three-paths}.

    \casen{0-potter-linked:}
    Let $\{\pi_1,\pi_2\}$ be a balanced bipartition of~$S(U,W)$. 
    We find three independent $\hat\pi_1$--$\hat\pi_2$ paths through $W\sm U$ by~\cref{lem:K4m-three-paths}.
    We claim that there are two independent $\hat\pi_1$--$\hat\pi_2$ paths through~$K$.
    Let $\{\pi'_1,\pi'_2\}$ denote the bipartition of~$Z$ that corresponds to $\{\pi_1,\pi_2\}$.
    For both~$i$, let $\pi_i^K$ denote the set of neighbours of $\pi'_i$ in~$K$.
    Then $|\pi_i^K|\ge 2$, since otherwise $\pi_i^K\cup \pi'_{3-i}$ is a 3-separator of~$G$ as $|K|\ge 2$, contradicting 4-connectivity.
    So it suffices to find two disjoint $\pi_1^K$--$\pi_2^K$ paths in~$K$.
    If these are missing, then by Menger's theorem $\pi_1^K$ and $\pi_2^K$ are separated by a single vertex~$v$, and then $\pi'_1\cup\{v\}$ is a 3-separator of~$G$, contradicting 4-connectivity.

    \casen{1-potter-linked:} Let $\{\pi_1,\pi_2\}$ be a balanced bipartition of~$S(U,W)$. 
    Assume that~$\pi_1$ is weird, say.
    Then~$\pi_1$ is $U$-weird since each vertex in~$Z$ has $\ge 3$ neighbours in~$W \sm U$. 
    By \cref{lem:K4m-three-paths}, there exist three independent $\hat \pi_1$--$\hat \pi_2$ paths in~$G[W]$.
    Hence $\{\pi_1,\pi_2\}$ is 1-potter-linked with respect to~$(U,W)$.

    \casen{2-potter-linked:} Let $\{\pi_1,\pi_2\}$ be a balanced bipartition of~$S(U,W)$.
    No~$\pi_i$ is $W$-weird, since each vertex in~$Z$ has $\ge 3$ neighbours in~$W \sm U$. So,~\ref{itm:2-potter-linked-1} or \ref{itm:2-potter-linked-2} always holds.
\end{proof}

We write $\defnMath{\sigma(Z)}$ for the star $\{\,(U_K,W_K):K$ is a component of $G-Z$ with $|K|\ge 2\,\}$.

\begin{corollary}\label{ZgivesStarInN}
    Let $Z$ be a set of four vertices in a 4-connected graph $G$ such that $G-Z$ has $\ge 4$ components.
    Then $\sigma(Z)$ is a star of totally-nested tetra-separations.
\end{corollary}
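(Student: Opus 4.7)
The plan is to combine the two preceding lemmas. By \cref{K4mTotallyNestedChar}, each pair $(U_K,W_K)\in\sigma(Z)$ is a totally-nested tetra-separation, so only the star property remains: for distinct components $K_1,K_2$ of $G-Z$ with $|K_i|\ge 2$, I must show $(U_{K_1},W_{K_1})\le (W_{K_2},U_{K_2})$, i.e.\ $U_{K_1}\se W_{K_2}$ and $W_{K_1}\supseteq U_{K_2}$.

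First I would note that the unshifted pairs are obviously nested: since $K_1$ and $K_2$ are distinct components of $G-Z$, we have $V(K_1)\cap V(K_2)=\emptyset$, so $U'_{K_1}=V(K_1)\cup Z\se V(G-K_2)=W'_{K_2}$ and similarly $U'_{K_2}\se W'_{K_1}$.

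Next I would invoke \cref{K4mMonotoneShift}, which gives $U_K\se U'_K$ and $W_K=W'_K$ for every component $K$ with $|K|\ge 2$. Combining this with the inclusions from the previous paragraph:
\[
    U_{K_1}\se U'_{K_1}\se W'_{K_2}=W_{K_2},\qquad W_{K_1}=W'_{K_1}\supseteq U'_{K_2}\supseteq U_{K_2}.
\]
Hence $(U_{K_1},W_{K_1})\le (W_{K_2},U_{K_2})$, so $\sigma(Z)$ is a star, and every element is a totally-nested tetra-separation by \cref{K4mTotallyNestedChar}.

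There is no real obstacle here; the corollary is essentially a bookkeeping consequence of the two preceding lemmas, the key point being that the shifts only make the sides shrink on the $U$-side while leaving the $W$-side unchanged, so they cannot destroy the nestedness already present in the unshifted pairs $(U'_K,W'_K)$.
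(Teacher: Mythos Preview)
Your proof is correct and follows the same approach as the paper: invoke \cref{K4mTotallyNestedChar} for total-nestedness, observe the obvious nestedness of the unshifted pairs $(U'_K,W'_K)$, and use \cref{K4mMonotoneShift} to transfer this to the shifted pairs. The paper phrases the transfer as a chain of inequalities $(U_K,W_K)\le(U'_K,W'_K)\le(W'_L,U'_L)\le(W_L,U_L)$, while you unpack the inclusions directly using $W_K=W'_K$; the content is identical.
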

\begin{proof}
    By \cref{K4mTotallyNestedChar}, every element in~$\sigma(Z)$ is a totally-nested  tetra-separation. 
    Let~$K \neq L$ be components of $G-Z$, each of size~$\geq 2$.
    By \cref{K4mMonotoneShift}, $(U_K,W_K) \leq (U_K',W_K')$ and $(U_L,W_L) \leq (U_L',W_L')$. With $(U_K',W_K') \leq (W_L',U_L')$, it follows that $(U_K,W_K) \leq (W_L,U_L)$.
    So,~$\sigma(Z)$ is a star.
\end{proof}

\begin{lemma}\label{squeezingTetra}
    Let $(A,B)$ be a 4-separation in a 4-connected graph~$G$ such that $|A\sm B|\ge 2$ and every vertex in $S(A,B)$ has $\ge 2$ neighbours in $B\sm A$.
    Let $(A'',B'')$ be the left-right-reduction of $(A,B)$.
    Then no tetra-separation $(C,D)$ of $G$ satisfies $(A'',B'')<(C,D)\le (A,B)$.
\end{lemma}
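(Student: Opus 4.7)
The plan is to make the left-right-shift $(A'',B'')$ completely explicit and then show that any candidate $(C,D)$ strictly between $(A'',B'')$ and $(A,B)$ must violate the degree-condition of a tetra-separation.

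\smallskip
\noindent\emph{Step 1: pin down $(A'',B'')$.}
Let $L$ denote the set of vertices in $S(A,B)$ that have at most one neighbour in $A\sm B$. Since $G$ is 4-connected and $(A,B)$ is a 4-separation, every vertex in $S(A,B)$ must have at least one neighbour in $A\sm B$ (otherwise $S(A,B)$ minus that vertex would be a 3-separator). So each $v\in L$ has exactly one neighbour $u_v\in A\sm B$, and the left-shift $(A',B')$ of $(A,B)$ satisfies $A'=A\sm L$, $B'=B$, with $S(A',B')=(S(A,B)\sm L)\cup\{u_v v:v\in L\}$. Now apply the right-shift: by hypothesis every vertex of $S(A,B)\sm L\subseteq A'\cap B'$ has $\ge 2$ neighbours in $B\sm A\subseteq B'\sm A'$, so the right-shift removes nothing and $(A'',B'')=(A',B')$.

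\smallskip
\noindent\emph{Step 2: shape of a purported $(C,D)$.}
Suppose that $(C,D)$ is a tetra-separation with $(A'',B'')<(C,D)\le(A,B)$. From $B=B''\supseteq D\supseteq B$ we immediately deduce $D=B$. From $A''\subseteq C\subseteq A$ and $A\sm A''=L$ we deduce that $C=A''\cup L_2$ for some $L_2\subseteq L$. Strictness of $(A'',B'')<(C,D)$ forces $L_2\neq\emptyset$. In particular $C\sm D=A\sm B$, $D\sm C=(B\sm A)\cup(L\sm L_2)$, and $C\cap D=(S(A,B)\sm L)\cup L_2$, so $L_2$ sits in the vertex-part of the separator $S(C,D)$.

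\smallskip
\noindent\emph{Step 3: the contradiction.}
Pick any $v\in L_2$. By definition of $L$, the vertex $v$ has exactly one neighbour in $A\sm B$, and $C\sm D=A\sm B$, so $v$ has exactly one neighbour in $C\sm D$. This contradicts the degree-condition of the tetra-separation $(C,D)$, completing the proof.

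\smallskip
The only place where care is needed is the bookkeeping in Step 2, ruling out exotic intermediate separations. The two facts that make this clean are $B''=B$, which pins $D=B$, and $A\sm A''=L\subseteq A\cap B$, which forces every vertex that one could possibly add to $A''$ to land in $C\cap D$ rather than in $C\sm D$; together these send the argument straight into the degree-condition wall.
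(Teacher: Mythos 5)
Your proof is correct and follows essentially the same route as the paper's: identify that the right-shift step is vacuous so that $B''=B$, deduce $D=B$ and hence $C\sm D\subseteq A\sm B$, and observe that any vertex of $C\sm A''$ lies in $L$ and therefore violates the degree-condition of $(C,D)$. The extra bookkeeping in your Steps 1–2 is a harmless elaboration of what the paper cites from \cref{K4mMonotoneShift}.
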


\begin{proof}
    Suppose for a contradiction that there is a tetra-separation $(C,D)$ with $(A'',B'')<(C,D)\le (A,B)$.
    By \cref{leftrightTetra}, $(A'',B'')$ is a tetra-separation.
    By \cref{K4mMonotoneShift}, we have $B=B''$.
    So both inclusions $B''\supseteq D\supseteq B$ are actually equalities.
    We therefore have $A''\subsetneq C\se A$.
    Let $v\in C\sm A''$.
    Then $v$ is a vertex in $S(A,B)$ with at most one neighbour in $A\sm B$.
    But since $D=B$ we have $C\sm D\se A\sm B$, so $v$ also has at most one neighbour in $C\sm D$, contradicting that $(C,D)$ is a tetra-separation.
\end{proof}

\begin{lemma}
\label{lem:not-half-conntected-is-tetra}
    Let~$G$ be a 4-connected graph.
    Let~$(A,B)$ be a mixed-4-separation in~$G$ that is not half-connected.
    Then~$(A,B)$ is a tetra-separation.
\end{lemma}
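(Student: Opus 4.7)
The plan is to verify the matching-condition, the degree-condition, and properness, thereby checking the definition of tetra-separation. Properness is immediate from the hypothesis: since $(A,B)$ is not half-connected, both $G[A \sm B]$ and $G[B \sm A]$ are disconnected, and in particular both have size at least~$2$.

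For the matching-condition I would actually prove the stronger statement that $S(A,B)$ contains no edges at all. Let $C_1, \ldots, C_k$ be the components of $G[A \sm B]$, where $k \geq 2$, and write $F := S(A,B) \cap E(G)$ so that $|A \cap B| + |F| = 4$. For each $i$, the pair $(V(C_i) \cup (A \cap B), V(G) \sm V(C_i))$ is a proper mixed-separation whose separator consists of the vertices in $A \cap B$ together with the subset $F_i \subseteq F$ of edges incident to $V(C_i)$, because no edge leaves $V(C_i)$ into $(A \sm B) \sm V(C_i)$. By \cref{kConMixed}, the order $|A \cap B| + |F_i|$ is at least $4$. Summing over $i$ and using $\sum_i |F_i| = |F|$, I obtain $(k-1)|A \cap B| \geq 4(k-1)$, which forces $|A \cap B| = 4$ and $F = \emptyset$.

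For the degree-condition, suppose for a contradiction that some $v \in A \cap B$ has at most one neighbour in $A \sm B$; call this neighbour $u$ if it exists. Since $k \geq 2$, I can pick a component $C$ of $G[A \sm B]$ with $u \notin V(C)$. Then the proper mixed-separation $(V(C) \cup ((A \cap B) \sm \{v\}),\, V(G) \sm V(C))$ has separator consisting only of the three vertices $(A \cap B) \sm \{v\}$: no edge leaves $V(C)$ into $(A \sm B) \sm V(C)$ because $C$ is a component, no edge from $V(C)$ to $v$ exists because $v$'s only possible neighbour in $A \sm B$ is $u \notin V(C)$, and no edge from $V(C)$ to $B \sm A$ exists because $F = \emptyset$. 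This mixed-$3$-separation contradicts \cref{kConMixed}. By symmetry (now using that $G[B \sm A]$ is disconnected), every vertex in $A \cap B$ also has at least two neighbours in $B \sm A$. The technical heart of the argument is the first counting step establishing $F = \emptyset$; once that is in hand, the degree-condition reduces to a single one-vertex displacement that drops the separator below order~$4$.
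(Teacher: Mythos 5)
Your proof is correct. It follows the same basic strategy as the paper — both conditions are verified by exhibiting a proper mixed-separation of order $<4$ and contradicting \cref{kConMixed} — but your route to the matching-condition is genuinely different and somewhat stronger. The paper simply notes $|A\sm B|,|B\sm A|\ge 2$ and invokes \cref{leftMatching} twice, whereas your summation of $|A\cap B|+|F_i|\ge 4$ over the $k\ge 2$ components of $G[A\sm B]$ yields the stronger conclusion that $S(A,B)$ contains no edges at all. That stronger fact is true and is in fact asserted (without separate justification) in the proof of \cref{lem:nestedness-half-connected}, so your argument supplies a clean proof of it. For the degree-condition the two proofs coincide in substance: the paper states that each $v\in A\cap B$ has a neighbour in every component of $G-S(A,B)$ (of which there are $\ge 2$ on each side), and your explicit displacement of $v$ out of the separator of $\bigl(V(C)\cup(A\cap B),\,V(G)\sm V(C)\bigr)$ is exactly the verification of that claim. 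The only trade-off is length: the paper's proof is shorter by outsourcing the matching-condition to \cref{leftMatching}, while yours is self-contained and delivers the purely-vertex separator as a bonus.
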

\begin{proof}
    By assumption, both $G[A\sm B]$ and $G[B\sm A]$ have two components.
    In particular, $|A \sm B|\ge 2$ and $|B \sm A| \geq 2$.
    Then by \cref{leftMatching},~$(A,B)$ satisfies the matching-condition.
    Let~$v$ be a vertex in~$S(A,B)$.
    By 4-connectivity of~$G$ and by \cref{kConMixed}, $v$~has a neighbour in every component of $G-S(A,B)$.
    In particular, $v$~satisfies the degree-condition.
    Hence,~$(A,B)$ is a tetra-separation.
\end{proof}

\begin{lemma}\label{K4mInterlacingVsComponent}
    Let~$Z$ be a set of four vertices in a 4-connected graph~$G$ such that $G-Z$ has~$\ge 4$ components.
    Let~$(E,F)$ be a totally-nested tetra-separation of~$G$.
    Then there exists a component~$K$ of $G-Z$ with $|K| \geq 2$ such that
    $(E,F)\le (U'_K,W'_K)$ or $(F,E)\le (U'_K,W'_K)$.
\end{lemma}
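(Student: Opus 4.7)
The plan is to argue by contradiction, starting from \cref{ZgivesStarInN}: the star $\sigma(Z)$ consists of totally-nested tetra-separations, so $(E,F)$ is nested with each $(U_K,W_K)\in\sigma(Z)$. Unpacking this nestedness for a fixed large component $K$ gives four cases. In two of them---$(E,F)\le (U_K,W_K)$ or $(W_K,U_K)\le (E,F)$---the conclusion follows at once: since $W_K=W'_K$ by \cref{K4mMonotoneShift} and $U_K\subseteq U'_K$, they upgrade to $(E,F)\le (U'_K,W'_K)$ or $(F,E)\le (U'_K,W'_K)$ respectively. The remaining two, strictly subordinate, force $V(K)\subseteq E\setminus F$ or $V(K)\subseteq F\setminus E$.

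Assuming the conclusion fails, every large $K$ is strictly subordinate, and I would sort them into $\cE:=\{K:V(K)\subseteq E\setminus F\}$ and $\cF:=\{K:V(K)\subseteq F\setminus E\}$ and partition $Z=Z_E\sqcup Z_F\sqcup Z_S$ according to $E\setminus F$, $F\setminus E$, and $E\cap F$. The key combinatorial engine combines the matching-condition for $(E,F)$ with the fact that 4-connectivity of $G$ forces every $z\in Z$ to have a neighbour in every component of $G-Z$: every $z\in Z_F$ contributes an $S(E,F)$-edge to each $\cE$-component and each singleton in $E\setminus F$, so matching bounds $|\cE|+n_E\le 1$ with $n_E$ the number of singletons in $E\setminus F$; symmetric bounds come from $Z_E$, and the degree-condition applied to any singleton lying in $E\cap F$ forces $|Z_E|,|Z_F|\ge 2$.

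The main case-split is on whether $Z\subseteq E\cap F$ or not. If $Z\subseteq E\cap F$ then $S(E,F)=Z$ has no edges, and half-connectedness of the totally-nested $(E,F)$ collapses one of $E\setminus F,F\setminus E$ to $V(K)$ for a unique large $K$ (since without $Z$-vertices on that side there is no glue for singletons, and a singleton-only side is excluded by $|E\setminus F|\ge 2$ via \cref{lem:trivial-tetra-separations}); inspection then gives $(E,F)\le (U'_K,W'_K)$ or $(F,E)\le (U'_K,W'_K)$, contradicting the assumption. Otherwise WLOG $Z_F\ne\emptyset$: if further $Z_E=\emptyset$, the inequality $|\cE|+n_E\le 1$ together with $|E\setminus F|\ge 2$ forces $|\cE|=1$ and $n_E=0$, and then $(E,F)\le (U'_K,W'_K)$ for the unique $K\in\cE$ holds verbatim, again contradicting the assumption. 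The remaining sub-configurations, with $Z_E$ and $Z_F$ both nonempty, are eliminated by playing off the $|S(E,F)|=4$ budget against matching and degree constraints and the $\ge 4$-components hypothesis; the stubborn configuration $|\cE|=|\cF|=0$ with $|Z_E|=|Z_F|=2$ and four singletons in $E\cap F$ survives the pure counting but is ruled out by the 0-potter-linkedness of the totally-nested $(E,F)$, since in such a graph every $\hat\pi_1$--$\hat\pi_2$ cut has size only $|Z|=4<5$.

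The main obstacle will be the bookkeeping in these last sub-configurations: each matching, degree, or properness inequality is loose on its own, and contradiction only emerges from stacking several simultaneously, with the subtlest subcase being the one just described, which is dispatched using 0-potter-linkedness rather than pure counting.
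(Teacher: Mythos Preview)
Your argument is correct, but it takes a substantially longer detour than the paper's proof. The paper avoids the entire case analysis by attacking the problem from the other end: instead of threading $(E,F)$ through nestedness with the individual $(U_K,W_K)\in\sigma(Z)$, it argues directly that $E\setminus Z$ or $F\setminus Z$ lies in a single component of $G-Z$. If not, both $E$ and $F$ meet at least two components, and one can bipartition the set of all components into $\cK_1,\cK_2$ (each of size $\ge 2$, each meeting both $E$ and $F$); the resulting $4$-separation $(A_1,A_2)$ with $A_i:=Z\cup\bigcup_{K\in\cK_i}V(K)$ is a tetra-separation by \cref{lem:not-half-conntected-is-tetra} and visibly crosses $(E,F)$, contradicting total-nestedness. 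Once $E\subseteq V(K)\cup Z$ for some $K$, the inequality $(E,F)\le(U'_K,W'_K)$ follows in one line from \cref{Righty}, and $|K|\ge 2$ drops out of $|E\setminus F|\ge 2$.

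What each approach buys: the paper's argument is short, uniform, and never needs to look inside $S(E,F)$; in particular it requires only half-connectedness implicitly (via the crossing construction) and never touches potter-linkedness. Your approach, by contrast, is self-contained in the sense that it only uses nestedness with separations already certified as totally-nested by \cref{ZgivesStarInN}, but the price is the matching/degree bookkeeping and, in the stubborn configuration, an appeal to $0$-potter-linkedness---a rather heavy piece of the external $5$-connectivity machinery---where the paper would simply point to the crossing tetra-separation $(\{v_1,v_2\}\cup Z,\{v_3,v_4\}\cup Z)$ directly. If you want to streamline your write-up, the single idea to borrow is the bipartition-of-components trick, which replaces all of your case split after ``assuming the conclusion fails'' with three lines.
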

\begin{proof}
    We denote by $\cK$ the set of components of $G-Z$.
    First, we show that there is a component~$K \in \cK$ such that either $E \sm Z \subseteq V(K)$ or $F \sm Z \subseteq V(K)$.
    Assume otherwise.
    Then we can bipartition~$\cK$ into two sets~$\cK_1$ and~$\cK_2$ such that for both $i \in \{1,2\}$, $|\cK_i| \geq 2$ and $\bigcup_{K \in \cK_i} K$ intersects both~$E$ and~$F$.
    We define $A_i := Z \cup \bigcup_{K \in \cK_i} V(K)$.
    Then~$(A_1,A_2)$ is a tetra-separation by \cref{lem:not-half-conntected-is-tetra}.
    Now $E \nsubseteq A_i$ and $F \nsubseteq A_i$ for both $i \in \{1,2\}$ implies that $(A_1,A_2)$ crosses~$(E,F)$, contradicting that~$(E,F)$ is totally-nested.

    Hence we have~$E \subseteq V(K) \cup Z$, say, for a component~$K \in \cK$.
    Since every vertex in $Z$ has $\ge 2$ neighbours outside $V(K)\cup Z$, it follows that $(U'_K,W'_K)$ equals its right-reduction.
    Hence $(E,F)\le (U'_K,W'_K)$ by \cref{Righty}.
    Since $(E,F)$ is a tetra-separation, we have $|E\sm F|\ge 2$ by \cref{leftDegreeStrongerMatching}.
    Therefore, $|K|\ge 2$.
\end{proof}

\begin{keylemma}\label{SprinkledK4mSummary}
    Let $G$ be a 4-connected graph, and let $N$ denote the set of totally-nested tetra-separations of~$G$.
    Assume that two tetra-separations $(A,B)$ and $(C,D)$ of $G$ cross so that the vertex-centre $Z$ has size four.
    Let $m$ denote the number of trivial components of $G-Z$.
    Then:
    \begin{enumerate}
        \item\label{SprinkledK4mSummary1} $\sigma(Z)$ is a splitting star of $N$.
        \item\label{SprinkledK4mSummary2} $\sigma(Z)$ is interlaced by both $(A,B)$ and $(C,D)$.
        \item\label{SprinkledK4mSummary4} If $\sigma(Z)\neq\emptyset$, then $G-Z$ has a component with $\ge 2$ vertices, and the torso of $\sigma(Z)$ is a thickened $K_{4,m}$ where $Z$ is the left side.
        \item\label{SprinkledK4mSummary5} If $\sigma(Z)=\emptyset$, then every component of $G-Z$ is trivial, and $G=\tau$ is a sprinkled $K_{4,m}$ with $Z$ as left side and $m\ge 4$.
    \end{enumerate}
\end{keylemma}
\begin{proof}
    \cref{SprinkledK4mSummary1}.
    By \cref{ZgivesStarInN}, $\sigma(Z)$ is a star in~$N$.
    To show that $\sigma(Z)$ is splitting, assume for a contradiction that it is interlaced by a totally-nested tetra-separation $(E,F)$.
    By \cref{K4mInterlacingVsComponent}, there is a component $K$ of $G-Z$ with $|K| \geq 2$ and $(E,F)\le (U'_K,W'_K)$, say.
    Since $K$ has at least two vertices, we have $(U_K,W_K)\in\sigma(Z)$.
    Note that $(U_K,W_K)\le (U'_K,W'_K)$ by \cref{K4mMonotoneShift}, which implies $(U_K,W_K)\not < (F,E)$.
    By \cref{squeezingTetra}, we cannot have $(U_K,W_K)<(E,F)$.
    Since $(U_K,W_K)\in N$, we therefore have $(E,F)\le (U_K,W_K)$ or $(F,E)\le (U_K,W_K)$, contradicting that $(E,F)$ interlaces~$\sigma(Z)$.

    \cref{SprinkledK4mSummary2}. We denote by $\sigma'(Z)$ the star $\{\,(U'_K,W'_K):K$ is a component of $G-Z$ with $|K|\geq 2\,\}$. Both $(A,B)$ and $(C,D)$ interlace $\sigma'(Z)$, hence they also interlace $\sigma(Z)$ by \cref{K4mMonotoneShift}.

    \cref{SprinkledK4mSummary4} and \cref{SprinkledK4mSummary5} are straightforward.
\end{proof}

A graph $G$ is \defn{4-angry} if it is 4-connected and every tetra-separation of $G$ is crossed by a tetra-separation.

\begin{keylemma}
\label{keylem:K4m}
    Let $G$ be a 4-angry graph. Then the following assertions are equivalent:
    \begin{enumerate}
        \item\label{itm:keylem-K4m-1} two tetra-separations of~$G$ cross with all links empty;
        \item\label{itm:keylem-K4m-2} $G$ is a sprinkled $K_{4,m}$ with $m \geq 4$.
    \end{enumerate}
\end{keylemma}

\begin{proof}
    \ref{itm:keylem-K4m-2} $\Rightarrow$ \ref{itm:keylem-K4m-1}.
    Let $X$ be a sprinkled side of~$G$.
    Let $\{P_1,\ldots,P_4\}$ be a partition of the other side $V(G-X)$.
    Let $(A,B)$ be the mixed-4-separation $(P_1\cup P_2\cup X,X\cup P_3\cup P_4)$ and let $(C,D)$ be the mixed-4-separation $(P_1\cup P_3\cup X,X\cup P_2\cup P_4)$.
    Then $(A,B)$ and $(C,D)$ are tetra-separations with separators equal to~$X$, and they cross.

    \ref{itm:keylem-K4m-1} $\Rightarrow$ \ref{itm:keylem-K4m-2} follows from \cref{SprinkledK4mSummary} \cref{SprinkledK4mSummary4} and \cref{SprinkledK4mSummary5}.
\end{proof}

\section{Generalised double-wheels}\label{sec:DoubleWheel}

To deal with the generalised double-wheel outcome of \cref{MainDecomp}, we first introduce a particular cycle-decomposition in \cref{sec:WheelBlockBagel} for analysing the wheel-structure obtained from two tetra-separations that cross with vertex-centre of size two.
\cref{sec:WheelNpathsLemmata} introduces machinery that exploits the structure of this cycle-decomposition to find independent paths.
\cref{sec:WheelTotallyNested} employs this machinery to show that all non-trivial bags of the cycle-decomposition give rise to totally-nested tetra-separations (\cref{cor:double-wheel-totally-nested}).
\cref{sec:WheelSplitting} shows that these totally-nested tetra-separations form a splitting star as in \cref{MainDecomp} and deduces that its torso is a generalised double-wheel (\cref{wheelTorso}).
Finally, \cref{sec:WheelAngry} characterises the special case where additionally all tetra-separations of the graph are crossed (\cref{keylem:double-wheel-angry}).

\subsection{Block-bagel}\label{sec:WheelBlockBagel}

Let $G$ be a 2-connected graph.
A \defn{Block-bagel} of $G$ is a cycle-decomposition $\cO = (O,\cG)$ of~$G$ with $\cG=(G_t:t \in V(O))$ such that
\begin{enumerate}
    \item every bag $G_t$ of $\cO$ is either 2-connected or a $K_2$, and
    \item all adhesion sets have size one.
\end{enumerate}
The vertex in the adhesion-set of an edge $e$ of $O$ is the \defn{adhesion-vertex} of~$e$.
When $\cO$ as introduced as $(O,\cG)$, we tacitly assume that $\cG=(G_t:t\in V(O))$.

\begin{observation}
\label{obs:block-bagel-disjoint-adhesion}
    Let~$G$ be a 2-connected graph with a block-bagel $\cO=(O,\cG)$.
    Then every two edges of $O$ have distinct adhesion-vertices.
\end{observation}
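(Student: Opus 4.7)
The plan is to argue by contradiction: suppose two distinct edges $e_1, e_2 \in E(O)$ share an adhesion-vertex $v$, and derive that $v$ must be a cutvertex of $G$, contradicting 2-connectivity. Since $e_1$ and $e_2$ are distinct edges of the cycle $O$ and $v$ lies in both bags at each endpoint, $v$ belongs to at least three bags of $\cO$. By \ref{GraphDec2}, the subgraph $H_v$ of $O$ induced by the nodes $t$ with $v\in V(G_t)$ is connected, so $H_v$ is a sub-path of $O$ (or all of $O$) with $|V(H_v)|\ge 3$.

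I would then split into two cases depending on whether $H_v$ exhausts $O$. If $H_v=O$, then the adhesion-set of every edge of $O$ contains $v$ and hence, having size one, equals $\{v\}$. Consequently, by \ref{GraphDec2} again, every vertex other than $v$ lies in exactly one bag, so $G-v$ is the disjoint union $\bigsqcup_{t\in V(O)}(G_t-v)$. Since $O$ has at least three nodes and each bag has at least two vertices, this union has at least three nonempty components, so $v$ is a cutvertex of $G$.

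If $H_v$ is a proper sub-path of $O$, pick an internal node $s$ of $H_v$; this exists because $|V(H_v)|\ge 3$. Both edges of $O$ incident to $s$ lie inside $H_v$, so both their (size-one) adhesion-sets equal $\{v\}$. Any vertex $u\in V(G_s)\setminus\{v\}$ therefore cannot belong to either neighbouring bag of $G_s$ in $O$ and so, by \ref{GraphDec2}, lies in no bag other than $G_s$. Since $G_s$ is 2-connected or a $K_2$, $V(G_s)\setminus\{v\}$ is nonempty; and since $H_v\neq O$ there is some bag $G_{t}$ with $t\notin V(H_v)$, which supplies a vertex outside $G_s$ distinct from $v$. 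Hence $v$ separates $V(G_s)\setminus\{v\}$ from the rest of $G$, again contradicting 2-connectivity.

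Both cases lead to contradictions, so $e_1=e_2$. The argument is essentially bookkeeping about how bags of a block-bagel overlap; the only point that requires a brief check is that an internal node of $H_v$ exists in the second case, which is guaranteed by $|V(H_v)|\ge 3$. I do not anticipate any substantial obstacle.
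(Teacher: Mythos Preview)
Your proposal is correct and takes essentially the same approach as the paper: both argue that if two edges of $O$ share an adhesion-vertex $v$, then there is a node $s$ whose two incident edges both have adhesion-vertex $v$, so $v$ becomes a cutvertex separating $V(G_s)\setminus\{v\}$ from the rest of $G$. The paper compresses this into a single line by jumping straight to adjacent edges $rs,st$, whereas you spell out the reduction via $H_v$ and treat the case $H_v=O$ separately (which is not strictly necessary, as your second case already covers it), but the underlying idea is the same.
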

\begin{proof}
    Otherwise, two edges~$rs$ and~$st$ have the same adhesion-vertex~$v$. But then $v$ is a cutvertex of~$G$ that separates $V(G_s - v)$ from $V(G-G_s)$.
\end{proof}

Assume now that $G$ is a 4-connected graph with two tetra-separations $(A,B)$ and $(C,D)$ that cross with all links of size one.
Then the centre $Z'$ consists of two vertices by the \allref{keylem:crossing}.
We will construct a block-bagel of $G-Z'$.
The \defn{$AC$-shrimp} is obtained from $G[A\cap C]-Z'$ by adding any edges in the $A$-link or the $C$-link (including the endvertices of these edges).
Note that the $AC$-shrimp is connected because $G-Z'$ is 2-connected.
The \defn{$AC$-block-decomposition} means the block-decomposition of the $AC$-shrimp.
Note that the $AC$-block-decomposition is a path-decomposition because $G$ is 4-connected.
Its decomposition-path is called the \defn{$AC$-block-path}.
We obtain similarly definitions for the other three corners.

\begin{lemma}\label{wheelLinksVsCorners}
    Let $G$ be a 4-connected graph with two tetra-separations $(A,B)$ and $(C,D)$ that cross with all links of size one.
    Let $K$ be one of the four corners, and consider an adjacent $L$-link~$\Lambda$.
    \begin{enumerate}
        \item\label{wheelLinksVsCorners1} If $\Lambda$ consists of a vertex, then this vertex is contained in a unique bag $V_t$ of the $K$-block-decomposition, and $G[V_t]$ is 2-connected.
        \item\label{wheelLinksVsCorners2} If $\Lambda$ consists of an edge $e=uv$, then there is a unique bag $V_t$ of the $K$-block-decomposition with $V_t=\{u,v\}$ and $G[V_t]=K_2$. Moreover, $u$ and $v$ lie in the two corners adjacent to the $L$-link.
    \end{enumerate}
\end{lemma}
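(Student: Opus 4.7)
The plan is to apply the \cref{keylem:crossing} to reduce to its case~\ref{itm:crossing-1}, classify the $G$-neighbours of the link content using the forbidden edge-types, and exploit the 4-connectivity of $G$ to pin down the block structure at~$\Lambda$. This places us in the setting where all links have size one, the centre $Z'$ has size two, and there are no diagonal, jumping or dangling edges. By symmetry between $(A,B)$ and $(C,D)$ I may assume $L = C$, and I write $v$ for the link-vertex in case~\ref{wheelLinksVsCorners1} and for the endpoint of $e = uv$ in $B \sm A$ in case~\ref{wheelLinksVsCorners2}.

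Using the forbidden-edge list together with the matching- and degree-conditions, I will first establish the key local picture. In case~\ref{wheelLinksVsCorners1}, the vertex $v$ has no $G$-neighbours in $D \sm C$ (any such neighbour would give a jumping or dangling edge), and hence at least two $G$-neighbours in the $AC$-corner by the degree-condition for $(A,B)$. In case~\ref{wheelLinksVsCorners2}, the non-diagonality of $e$ places $u \in AC$-corner and $v \in BC$-corner, establishing the moreover-part, and the same edge-classification shows that any shrimp-edge at $v$ other than $e$ would produce another edge in the $C$-link, which is impossible since the $C$-link equals $\{e\}$.

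For case~\ref{wheelLinksVsCorners1}, I will rule out that $v$ is a cut vertex of the $K$-shrimp. The same forbidden-edge classification applied to $AC$-corner vertices and to the $A$-link content shows that the only $G$-edges leaving $A \cap C \sm Z'$ in $G - Z' - v$ are those going through the $A$-link content (a single vertex, or a single edge with its $AD$-endpoint added to the shrimp). If $v$ were a cut vertex of the shrimp with components $Y_1, Y_2$ of the shrimp minus $v$, then the connected $A$-link content would lie in one of them, say $Y_1$; every vertex of $Y_2$ would then have all of its remaining $G$-neighbours inside $Z' \cup \{v\}$. Since $|Z' \cup \{v\}| = 3$ and $G$ is 4-connected, $G - Z' - v$ is connected, so $Y_2$ would have to be all of $G - Z' - v$, contradicting $Y_1 \neq \emptyset$. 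Hence $v$ lies in a unique block of the shrimp, and its shrimp-degree being at least two makes this block 2-connected; the bag $V_t$ at $v$ has this block as vertex set, and $G[V_t]$ inherits 2-connectivity.

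Case~\ref{wheelLinksVsCorners2} then follows from the local picture established above: $v$ has shrimp-degree one with unique neighbour $u$, so $uv$ is a bridge of the shrimp and its block is $K_2$ on $\{u, v\}$, giving $V_t = \{u, v\}$ and $G[V_t] = K_2$. The main obstacle will be the neighbour-classification step: I will need to enumerate every candidate edge-type at $v$ and at an arbitrary $AC$-corner vertex, and rule each one out using the appropriate forbidden-edge condition, since any overlooked edge would either create an unforeseen exit from the shrimp (breaking the cut-vertex argument in case~\ref{wheelLinksVsCorners1}) or give $v$ an extra shrimp-neighbour in case~\ref{wheelLinksVsCorners2}.
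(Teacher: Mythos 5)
Your proof is correct and takes essentially the same route as the paper's (which compresses it to three lines): uniqueness of the bag from 4-connectivity via the cutvertex argument, 2-connectivity of the block from the degree-condition, and the moreover-part from the absence of dangling/diagonal edges. One cosmetic slip: the two neighbours of the $C$-link vertex guaranteed in $A\sm B$ need not both lie in the $AC$-corner (one could be the $A$-link vertex), but both still lie in the shrimp, which is all your shrimp-degree argument actually needs.
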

\begin{proof}
    \cref{wheelLinksVsCorners1}. The bag $V_t$ is unique since $G$ is 4-connected. It follows from the degree-condition that $G[V_t]$ must be 2-connected.

    \cref{wheelLinksVsCorners2}. The first part is immediate. For the `moreover'-part we use that $e$ is not dangling by the \allref{keylem:crossing}.
\end{proof}

\begin{corollary}\label{wheelCornersNonempty}
    Let $G$ be a 4-connected graph with two tetra-separations $(A,B)$ and $(C,D)$ that cross with all links of size one.
    Then all four corners are non-empty.\qed
\end{corollary}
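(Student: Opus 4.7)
The plan is to argue by contradiction. Suppose some corner, without loss of generality $AC$, is empty. First I would invoke the \allref{keylem:crossing} to record that we are in its case~\ref{itm:crossing-1}: every link has size exactly one, the vertex-centre has size two, and there are no diagonal, jumping or dangling edges in the crossing-diagram of $(A,B)$ and $(C,D)$.

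Next I would observe that neither the $A$-link nor the $C$-link can consist of an edge. Indeed, by \cref{wheelLinksVsCorners}~\ref{wheelLinksVsCorners2} an $A$-link edge would have one endvertex in one of its two adjacent corners $AC$ and $AD$ on each side; in particular it would meet $AC$, which is impossible since $AC$ is empty. The same reasoning rules out a $C$-link edge. Hence the $A$-link is a single vertex $a\in (A\sm B)\cap C\cap D$, and the $C$-link is a single vertex $c\in A\cap B\cap(C\sm D)$.

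The key step, which does the real work, is to apply the degree-condition of the tetra-separation $(A,B)$ to the vertex $c\in A\cap B$: it must have at least two neighbours in $A\sm B$. The vertices of $A\sm B$ decompose as the corner $AC$, the corner $AD$, and the $A$-link vertex $a$; since corner $AC$ is empty, this reduces to (corner $AD$)$\cup\{a\}$. I would then show that $c$ has no neighbour in corner $AD$ at all: such a neighbour $v\in AD$ would yield an edge $cv$ of $S(C,D)$ with endvertex $v$ in the corner $AD$ (a corner containing $A$), and $cv$ would be non-diagonal because $c$ lies in a link rather than a corner; by definition $cv$ would then belong to the $A$-link, contradicting $A\text{-link}=\{a\}$. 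Hence $c$ has at most one neighbour in $A\sm B$, contradicting the degree-condition.

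The main obstacle is identifying the right pairing between the empty corner and the degree-condition it violates; once one sees that emptiness of $AC$ starves the $C$-link vertex $c$ of neighbours on the $A$-side (because every potential neighbour in corner $AD$ would create a forbidden extra $A$-link edge), the contradiction is immediate. By the evident symmetry between the four corners, the same argument applies to each, so all four corners are non-empty.
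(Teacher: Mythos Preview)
Your argument is correct. The paper treats the corollary as immediate from \cref{wheelLinksVsCorners}: if an adjacent link is an edge, part~\ref{wheelLinksVsCorners2} places an endpoint in corner $AC$; if both adjacent links are vertices, part~\ref{wheelLinksVsCorners1} says each lies in a 2-connected bag of the $AC$-block-decomposition, which is impossible when the shrimp has only the two link-vertices. Your proof uses part~\ref{wheelLinksVsCorners2} for the edge case just as the paper does, but for the vertex case you bypass the block-decomposition entirely and apply the degree-condition directly to $c\in S(A,B)$. This is exactly the content hidden inside the one-line proof of \cref{wheelLinksVsCorners}~\ref{wheelLinksVsCorners1} (``It follows from the degree-condition that $G[V_t]$ must be 2-connected''), so the underlying mechanism is the same; you have simply unpacked it rather than quoting the packaged form.

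One small remark: your observation that an edge $cv$ with $v$ in corner $AD$ would lie in the $A$-link is correct, but you could equivalently note that such an edge \emph{dangles} from the $C$-link through the $A$-link, which the \allref{keylem:crossing} forbids directly in the size-one-links case. Either route yields the contradiction.
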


The bag $V_t$ and node $t$ in \cref{wheelLinksVsCorners} are said to \defn{represent} the $L$-link in the $K$-block-decomposition and in the $K$-block-path.

For each corner $K$ let $\cT^K=(T^K,\cV^K)$ denote the $K$-block-decomposition.
We obtain $O$ from the disjoint union $\bigsqcup_K T^K$, where $K$ ranges over the four corners, by doing the following for each link $L$:
for both corners $K$ and $K'$ adjacent to $L$ we consider the nodes $t$ and $t'$ that represent the $L$-link in $T^K$ and $T^{K'}$, respectively, and
\begin{itemize}
    \item we join the nodes $t$ and $t'$ by an edge $\defnMath{o(L)}$ in $O$ if their bags $G[V_t]$ and $G[V_{t'}]$ are 2-connected;
    \item we otherwise identify the nodes $t$ and $t'$ to a single node $\defnMath{o(L)}$ of~$O$.
\end{itemize}
We say that $o(L)$ \defn{represents} the $L$-link in~$O$.
For each node $t$ of $O$ that is not of the form $o(L)$, we define $G_t:=G[V^K_t]$ where $K$ is the unique corner with $t\in T^K$.
For each node $t$ of $O$ with $t=o(L)$ for some link~$L$,
we let $G_t$ be the $K_2$-subgraph of $G$ from \cref{wheelLinksVsCorners}~\cref{wheelLinksVsCorners2} that represents the $L$-link in the $K$-block-decompositions for the two corners $K$ adjacent to~$L$.
The pair $(O,(G_t)_{t\in V(O)})$ is the \defn{block-bagel induced by $(A,B)$ and $(C,D)$.}
It really is a block-bagel of $G-Z'$.

The following setting is supported by \cref{fig:BlockBagelBag}.

\begin{setting}[Block-bagel setting]
\label{set:block-bagel}
    Let $(A,B)$ and~$(C,D)$ be two tetra-separations of a 4-connected graph~$G$ that cross with all links of size one.
    Let~$Z'$ be the centre, which consists of two vertices $z'_1,z'_2$.
    Let~$\cO = (O, (G_t)_{t \in V(O)})$ be the Block-bagel induced by~$(A,B)$ and~$(C,D)$.
    Write~$O = t(0),t(1),\ldots ,t(m),t(0)$ where indices are treated as elements of $\Z_{m+1}$.
    For every node $t \in V(O)$, we let~$U_t' := V(G_t)\cup Z'$ and $W_t' := Z' \cup \bigcup_{s \in V(O -t)} V(G_s)$, and we let $(U_t,W_t)$ be the right-left-reduction of~$(U_t',W_t')$.
\end{setting}

Note that $(U_t',W_t')$ is a mixed-4-separation\pl{}.

\begin{figure}[ht]
    \centering
    \includegraphics[height=9\baselineskip]{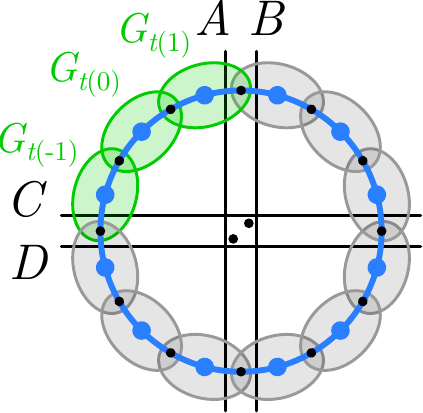}\hfill%
    \includegraphics[height=9\baselineskip]{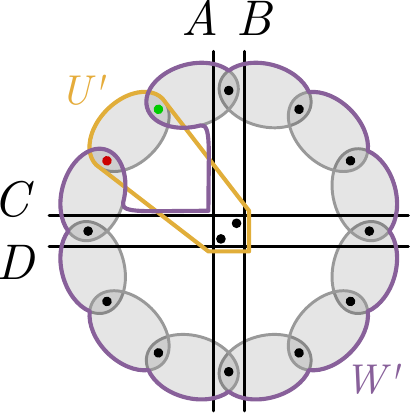}\hfill%
    \includegraphics[height=9\baselineskip]{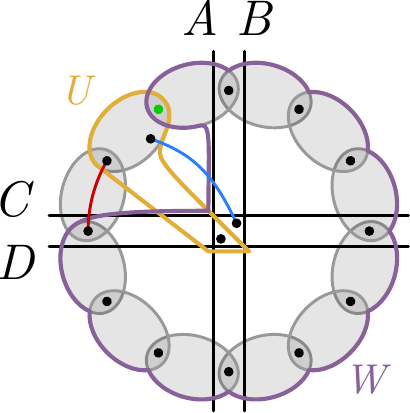}
    \caption{The \allref{set:block-bagel}. The vertices $z'_1$ and $z'_2$ are in the centre.
    Left: the arrangement of the bags.
    Center: The 4-separation\pl\ $(U',W')$. 
    The vertex $x'$ is red, the vertex $y'$ is green. 
    Right: The right-left-reduction $(U,W)$ of $(U',W')$. 
    Here $x$ is the red edge and $y$ is the green vertex.
    One of the $z_i$ is a vertex, the other is the blue edge.}
    \label{fig:BlockBagelBag}
\end{figure}

\subsection{N-Paths lemmata}\label{sec:WheelNpathsLemmata}

\begin{definition}[Helpful]
    Let~$\cO=(O,\cG)$ be a block-bagel of a graph~$G$.
    An element $x \in V(O) \cup E(O)$ is \defn{helpful} if
    \begin{itemize}
        \item $x$ is an edge with $K_2$-bags at both ends, or
        \item $x$ is a node with a 2-connected bag.
    \end{itemize}
    We define $\defnMath{h(\cO)}$ to be the number of helpful elements of the decomposition-cycle~$O$.

    For every helpful $x$ there is a unique corner $K$ such that $x$ stems from the $K$-block-path.
    We then say that $x$ \defn{belongs} to the corner~$K$.
\end{definition}

\begin{lemma}[Helpful Lemma]
\label{lem:helpful-lemma}
    Assume the \allref{set:block-bagel}.
    Then for every corner there is a helpful element that belongs to that corner. 
    In particular,~$h(\cO) \geq 4$. 
\end{lemma}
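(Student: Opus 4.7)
The plan is to fix an arbitrary corner $K$ and produce a helpful element of $\cO$ belonging to $K$. Since, by the definition of \emph{belongs}, every helpful element stems from a unique $K$-block-path, the four corners will then yield four pairwise distinct helpful elements, and hence $h(\cO)\ge 4$.

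First I would establish that the $K$-shrimp has at least three vertices. By \cref{wheelCornersNonempty} the corner $K$ itself contributes at least one vertex. Each of the two links adjacent to $K$ contributes to the shrimp either a link-vertex, which lies in one of the separators $S(A,B)$ or $S(C,D)$ and hence outside $K$, or (if the link is an edge) both of its endvertices, of which at least one lies in the corner on the opposite side of the link by \cref{wheelLinksVsCorners}, and hence again outside $K$. A routine case analysis over the four possibilities --- vertex or edge for each of the two adjacent links --- shows that in every case the two adjacent links together contribute two further distinct vertices outside $K$: the candidates from different links come from different strata of the crossing diagram (vertex-links versus edges going into disjoint corners) and are therefore always distinct. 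Thus the shrimp has at least three vertices.

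Next consider the $K$-block-path $T^K$, whose bags are the blocks of the $K$-shrimp and are each 2-connected or a $K_2$. If some bag of $T^K$ is 2-connected, the corresponding node of $O$ is a helpful element belonging to $K$. Otherwise every bag of $T^K$ is a $K_2$; since a single $K_2$-bag would give a shrimp on only two vertices, $T^K$ must have at least two bags, and then any edge of $T^K$ between two $K_2$-bags survives in $O$ as an edge with $K_2$-bags at both ends, because the construction of $O$ modifies only the end-nodes of each $T^K$ and leaves all internal edges intact. Such an edge is then helpful and belongs to $K$. The step most likely to need care is the vertex-count in the previous paragraph, especially the extremal configuration where both adjacent links are edges sharing their $K$-endvertex; even then the two opposite endvertices of these two link-edges lie in two distinct neighbouring corners, so the shrimp still acquires three distinct vertices.
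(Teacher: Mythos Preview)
Your argument is correct and follows essentially the same two-case split as the paper: either the $K$-block-decomposition has a 2-connected bag (done), or all its bags are $K_2$'s and one exhibits a helpful edge of $T^K$. The paper's version is terser: instead of counting that the shrimp has at least three vertices, it observes that when all bags are $K_2$'s the shrimp is a path, takes a vertex $v$ in the corner $K$ (via \cref{wheelCornersNonempty}), and notes that $v$ is an adhesion-vertex of a helpful edge---implicitly using that the two endpoints of the shrimp-path lie in the neighbouring corners, not in $K$. Your vertex-count unpacks exactly this.

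One remark on wording: your justification that the edge ``survives in $O$ \ldots\ because the construction of $O$ modifies only the end-nodes of each $T^K$ and leaves all internal edges intact'' is slightly off in the extremal case $|T^K|=2$, where the unique edge is not internal. The correct point is simpler: the construction of $O$ only \emph{identifies} end-nodes (never deletes edges), and the bag assigned to an identified node $o(L)$ is still a $K_2$ by \cref{wheelLinksVsCorners}\,\ref{wheelLinksVsCorners2}. Hence every edge of $T^K$ survives in $O$ with $K_2$-bags at both ends.
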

\begin{proof}
    Let $K$ be an arbitrary corner.
    If the $K$-block-decomposition has a 2-connected bag, then we are done. 
    Otherwise, the $K$-shrimp is a path.
    The corner $K$ contains a vertex $v$ by \cref{wheelCornersNonempty}.
    Then~$v$ is the adhesion-vertex of a helpful edge that belongs to the corner~$K$.
\end{proof}

\begin{lemma}
\label{lem:path-from-helpful-edge}
    Assume the~\allref{set:block-bagel}.
    The adhesion-vertex of a helpful edge of $O$ sends edges to both vertices $z'_1,z'_2$ in the centre.
\end{lemma}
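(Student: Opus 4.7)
The plan is to exploit the fact that a helpful edge forces its adhesion-vertex to be essentially ``isolated'' in $G-Z'$, so its four guaranteed neighbours (by 4-connectivity) have nowhere else to go but $Z'$.

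Concretely, let $v$ be the adhesion-vertex of a helpful edge $e=t_1t_2$ of $O$. By definition of helpful, both bags $G_{t_1}$ and $G_{t_2}$ are $K_2$-bags, so each $G_{t_i}$ is the single edge $vu_i$ for some neighbour $u_i$ of $v$ in $G-Z'$. The first key step is to argue that $v$ appears in no bag of $\cO$ other than $G_{t_1}$ and $G_{t_2}$: the set of nodes of $O$ whose bags contain $v$ is connected in the cycle $O$ by property \ref{GraphDec2}, hence a sub-path; if this sub-path contained a third node $s$, then $s$ would be adjacent in $O$ to $t_1$ or $t_2$ and the corresponding edge would again have $v$ in its (size-$1$) adhesion-set, contradicting \cref{obs:block-bagel-disjoint-adhesion}.

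Next I would use property \ref{GraphDec1} applied to the block-bagel of $G-Z'$: every edge of $G-Z'$ incident to $v$ lies in some bag $G_t$ with $v\in V(G_t)$, and by the previous step that bag is $G_{t_1}$ or $G_{t_2}$. Since these two bags only contribute the edges $vu_1$ and $vu_2$, it follows that $v$ has at most two neighbours in $G-Z'$.

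Finally, 4-connectivity of $G$ gives $\deg_G(v)\geq 4$, so $v$ must have at least $4-2=2$ neighbours in $Z'$. Since $|Z'|=|\{z'_1,z'_2\}|=2$, this forces $v$ to be adjacent to both $z'_1$ and $z'_2$, as required. The only real obstacle is justifying that $v$ sits in exactly two bags, and this is handled cleanly by \cref{obs:block-bagel-disjoint-adhesion} together with the cycle-connectivity property \ref{GraphDec2}; the rest is a one-line degree count.
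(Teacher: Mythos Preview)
Your proof is correct and follows exactly the same idea as the paper's one-line proof (``Otherwise the adhesion-vertex has degree $\le 3$, contradicting 4-connectivity''); you have simply spelled out why the adhesion-vertex has at most two neighbours in $G-Z'$, which the paper leaves implicit. One minor wording issue: when you say ``a third node $s$ would be adjacent in $O$ to $t_1$ or $t_2$'', this is literally true only for \emph{some} third node in the sub-path, not an arbitrary one, but the argument is unaffected.
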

\begin{proof}
    Otherwise the adhesion-vertex has degree $\le 3$, contradicting 4-connectivity.
\end{proof}

The following two lemmata are supported by \cref{fig:path-from-helpful-vertex}.

\begin{lemma}
    \label{lem:path-from-helpful-vertex}
    Assume the~\allref{set:block-bagel}.
    Assume that $t$ is a helpful node of~$O$.
    Then each vertex $z'_i$ in the centre has a neighbour $w_i$ in the interior of the bag~$G_t$.
    Moreover, if $|G_t|\ge 4$, then we can find the $w_i$ so that $w_1\neq w_2$.
\end{lemma}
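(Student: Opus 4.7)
The plan is to derive both statements by exhibiting an explicit 3-separator of $G$ whenever the required neighbours fail to exist, thereby contradicting 4-connectivity.

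Since $t$ is helpful, the bag $G_t$ is 2-connected, so $|V(G_t)|\ge 3$. I would let $a_1,a_2$ denote the adhesion-vertices of the two edges of $O$ incident to $t$, which are distinct by \cref{obs:block-bagel-disjoint-adhesion}, and write $I_t:=V(G_t)\sm\{a_1,a_2\}$ for the set of interior vertices of the bag, so that $|I_t|\ge 1$. The key structural observation I would extract from the graph-decomposition axioms of the block-bagel $\cO$ of $G-Z'$ is that every $G$-neighbour of $I_t$ outside $V(G_t)$ lies in $Z'=\{z'_1,z'_2\}$; equivalently, $\{a_1,a_2\}$ separates $I_t$ from $V(G-Z')\sm V(G_t)$ in $G-Z'$, because any edge from $I_t$ to a vertex outside $V(G_t)$ would have to be covered by a bag $G_s$ with $s\neq t$, forcing the $I_t$-endpoint into the adhesion-set $V(G_t)\cap V(G_s)\se\{a_1,a_2\}$.

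For the main claim, I would argue by contradiction: if $z'_1$ had no neighbour in $I_t$, then by the structural observation $\{a_1,a_2,z'_2\}$ would separate the non-empty set $I_t$ from $\{z'_1\}$ in $G$, yielding a 3-separator and contradicting 4-connectivity.

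For the `moreover' part, assume $|V(G_t)|\ge 4$, so $|I_t|\ge 2$. If no pair of distinct vertices $w_1,w_2\in I_t$ with $z'_iw_i\in E(G)$ existed, then combined with the main claim (both sets $N(z'_i)\cap I_t$ are non-empty) this would force $N(z'_1)\cap I_t=N(z'_2)\cap I_t=\{w\}$ for a single common vertex $w$. But then $\{a_1,a_2,w\}$ would separate the non-empty set $I_t\sm\{w\}$ from $\{z'_1,z'_2\}$ in $G$, again producing a 3-separator that contradicts 4-connectivity. The only detail requiring care is verifying that these candidate separators genuinely separate the claimed sides; this follows immediately from the structural observation above, so I do not expect a serious obstacle.
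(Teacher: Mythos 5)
Your proof is correct and follows essentially the same route as the paper: in both cases the two adhesion-vertices of $G_t$ together with either the other centre-vertex (for the main claim) or the would-be unique common neighbour $w$ (for the `moreover' part) form a 3-separator, contradicting 4-connectivity. Your write-up just makes explicit the separation property of the adhesion-vertices that the paper leaves implicit.
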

\begin{proof}   
    Since $t$ is helpful, its bag $G_t$ is 2-connected.
    Hence $G_t$ has non-empty interior.
    Each $z'_i$ in the centre has a neighbour $w_i$ in this interior, since otherwise $z'_{3-i}$ together with the two adhesion-vertices of $G_t$ forms a 3-separator of $G$, contradicting 4-connectivity.

    Assume now that $G_{t_1}$ contains at least four vertices.
    If $w_1=w_2$ is the only possibility, then $w_1$ together with the two adhesion-vertices of $G_t$ forms a 3-separator, contradicting 4-connectivity.
\end{proof}

\begin{figure}[ht]
    \centering
    \includegraphics[height=8\baselineskip]{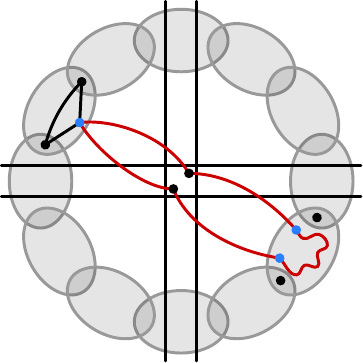}
    \caption{The neighbours $w_i$ found by \cref{lem:path-from-helpful-vertex} are blue.
    The paths found by \cref{lem:bag-path} are red.}
    \label{fig:path-from-helpful-vertex}
\end{figure}

\begin{lemma}[Bag-Path Lemma]
\label{lem:bag-path}
    Assume the~\allref{set:block-bagel}.
    Let $t$ be a helpful node of~$O$.
    Then there is a $z'_1$--$z'_2$ path in $G-z'_1 z'_2$ with interior in $\mathring{G}_t$.
\end{lemma}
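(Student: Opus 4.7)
The plan is to build the required path by fixing a neighbour $w_1$ of $z'_1$ in $\mathring{G}_t$, travelling inside the component of $\mathring{G}_t$ containing $w_1$ until reaching a neighbour of $z'_2$, and then exiting to $z'_2$. Existence of $w_1$ will follow immediately from \cref{lem:path-from-helpful-vertex}; the serious content is to see that the component of $w_1$ is guaranteed to contain \emph{some} neighbour of $z'_2$. I first fix the two adhesion-vertices $a_1, a_2$ of $G_t$, which are distinct by \cref{obs:block-bagel-disjoint-adhesion}, so that $\mathring{G}_t = G_t - a_1 - a_2$, and let $C$ denote the component of $\mathring{G}_t$ containing $w_1$.

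The main obstacle is the claim that $V(C)$ meets the neighbourhood of $z'_2$. I will prove this by contradiction. Since $\cO$ is a cycle-decomposition of $G - Z'$ and every interior vertex of $G_t$ lies in no other bag, any $v \in V(C)$ has all its $G$-neighbours inside $V(G_t) \cup Z'$; because $C$ is a component of $\mathring{G}_t$, these neighbours are further confined to $V(C) \cup \{a_1, a_2, z'_1, z'_2\}$. If no vertex of $V(C)$ were adjacent to $z'_2$, then $\{a_1, a_2, z'_1\}$ would be a $3$-separator of $G$ separating $V(C)$ from $z'_2$, contradicting the 4-connectivity of $G$.

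Once the claim is in hand, I pick $w_2 \in V(C)$ with $w_2 z'_2 \in E(G)$ and any $w_1$--$w_2$ path $P$ in $C$, permitting the degenerate case $w_1 = w_2$ (which transparently absorbs the boundary case $|G_t| = 3$, where $\mathring{G}_t$ consists of a single vertex adjacent to both centre vertices). Then $z'_1 w_1 P w_2 z'_2$ is a $z'_1$--$z'_2$ path whose interior lies in $V(\mathring{G}_t)$; since no edge of this path has both ends in $Z'$, it automatically avoids the edge $z'_1 z'_2$.
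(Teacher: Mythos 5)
Your proof is correct and is essentially the paper's own argument written out in full: the paper's one-line proof is exactly that the two adhesion-vertices of $G_t$ together with one $z'_i$ would otherwise form a 3-separator, contradicting 4-connectivity. Your expansion (fixing the component $C$ of $\mathring{G}_t$ containing a neighbour of $z'_1$, showing $C$ must also meet $N(z'_2)$, and noting the resulting path has non-empty interior and hence avoids the edge $z'_1 z'_2$) fills in the details faithfully.
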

\begin{proof}
    Otherwise the two adhesion-vertices of $G_t$ together with one of the $z'_i$ forms a 3-separator of~$G$, contradicting 4-connectivity.
\end{proof}

\begin{lemma}[One-Path Lemma]
\label{lem:double-wheel-one-path}
    Assume the~\allref{set:block-bagel}.
    Let~$P \subseteq O$ be a path of length~$\ge 2$ with end-nodes~$r$ and~$s$.
    Let $u \in V(G_r)$ and $v \in V(G_s)$.
    Then there is a $u$--$v$ path in $\bigcup_{t \in V(P)} G_{t}$ that internally avoids the adhesion-vertices of the edges not in~$P$.
\end{lemma}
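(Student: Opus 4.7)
The plan is to walk along $P$ bag by bag, producing the $u$--$v$ path as a concatenation of short paths $P_0, P_1, \dots, P_k$ with $P_i \subseteq G_{t_i}$. Writing $P = t_0 t_1 \cdots t_k$ with $t_0 = r$, $t_k = s$ and $k \ge 2$, and letting $a_i$ denote the adhesion-vertex of $t_i t_{i+1}$ for $0 \le i < k$, the plan is for $P_0$ to run from $u$ to $a_0$, for each $P_i$ with $0 < i < k$ to run from $a_{i-1}$ to $a_i$, and for $P_k$ to run from $a_{k-1}$ to $v$. By \cref{obs:block-bagel-disjoint-adhesion} the vertices $a_0, \dots, a_{k-1}$ are pairwise distinct, so this concatenation will yield a walk (hence a path) from $u$ to $v$ inside $\bigcup_{t \in V(P)} G_t$.

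Let $b_r$ and $b_s$ denote the adhesion-vertices of the (at most one each) edges of $O$ incident to $r$, respectively~$s$, that do not belong to~$P$. A preliminary observation, combining condition~(H2) of a graph-decomposition with \cref{obs:block-bagel-disjoint-adhesion}, will show that these are the only adhesion-vertices of edges outside~$P$ that can appear anywhere in $\bigcup_{t \in V(P)} V(G_t)$: such a vertex would otherwise be the adhesion-vertex of two distinct edges of~$O$, which is forbidden. Moreover, $b_r$ appears only in $V(G_{t_0})$ among the bags of~$P$, and $b_s$ only in $V(G_{t_k})$. Hence for each interior node $t_i$ with $0 < i < k$, I can choose $P_i$ as \emph{any} $a_{i-1}$--$a_i$ path in $G_{t_i}$; such a path exists since $G_{t_i}$ is connected (being either 2-connected or a~$K_2$).

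The main obstacle will be to construct $P_0$ and $P_k$ while internally avoiding $b_r$ and $b_s$; by symmetry I focus on $P_0$. If $G_{t_0}$ is a~$K_2$, then $V(G_{t_0}) = \{a_0, b_r\}$, so $u \in \{a_0, b_r\}$ and $P_0$ is either the trivial path or the single edge $b_r a_0$. If $G_{t_0}$ is 2-connected, then $G_{t_0} - b_r$ is still connected and contains~$a_0$: when $u \neq b_r$ I take any $u$--$a_0$ path inside $G_{t_0} - b_r$, and when $u = b_r$ I pick a neighbour~$x$ of $u$ in $G_{t_0}$, take an $x$--$a_0$ path in $G_{t_0} - b_r$ (or the trivial path if $x = a_0$), and prepend the edge $ux$. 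In either case $P_0$ internally avoids $b_r$, and the final concatenated walk internally avoids both $b_r$ and~$b_s$: its internal vertices are either some $a_i$ (which differs from $b_r, b_s$ by \cref{obs:block-bagel-disjoint-adhesion}) or interior vertices of some $P_i$, which by the preliminary observation cannot equal $b_r$ or~$b_s$.
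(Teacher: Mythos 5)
Your proof is correct and follows essentially the same route as the paper's: concatenate one subpath per bag along $P$, using 2-connectivity of the end-bags to avoid the adhesion-vertices of the two edges of $O$ leaving $P$ at $r$ and $s$ (the paper takes one of two independent $u$--$a_0$ paths where you use connectedness of $G_{t_0}-b_r$, an equivalent use of 2-connectivity). Your preliminary observation that no other external adhesion-vertex can appear in $\bigcup_{t\in V(P)}G_t$ is a point the paper leaves implicit, but it is the same argument.
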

\begin{proof}
    Let~$u_i$ denote the adhesion-vertex of $t(i)\, t(i+1)$.
    We assume without loss of generality that $P=t(1)\, t(2) \ldots t(k)$. 
    For each~$i \in \{1,2,\ldots,k\}$, we construct a path~$P_i$ in~$G_{t(i)}$, as follows.
    For $i=1$ we claim that $P_i$ can be chosen to be a $u$--$u_1$ path with interior in~$G_{t(1)} - u_0$. 
    This is clearly possible if the bag $G_{t(1)}$ is a~$K_2$.
    Otherwise~$G_{t(1)}$ is 2-connected.
    Then we find two independent $u$--$u_1$ paths in~$G_{t(1)}$, and at least one of them internally avoids~$u_0$.
    Similarly, if~$i=k$ then we let~$P_i$ be a $u_{k-1}$--$v$ path with interior in~$G_{t(k)}-u_k$.
    Otherwise, if $2 \leq i \leq k$, then we let~$P_i$ be a $u_{i-1}$--$u_i$ path in~$G_{t(i)}$.
    Then the concatenation $P:=P_1 P_2 \ldots P_k$ is a desired $u$--$v$ path.
\end{proof}

\begin{lemma}[Two-Fan Lemma]
\label{lem:double-wheel-two-fan}
    Assume the~\allref{set:block-bagel}.
    Let~$u$ and~$v$ be the two adhesion-vertices in a bag~$G_t$.
    Assume that either~$t$ is helpful or~$u$ is the adhesion-vertex of a helpful edge.
    Then there exist two independent paths~$P_1, P_2$ with all internal vertices in $\mathring{G}_{t}$ such that~$P_1$ is a $u$--$z_1'$ path and~$P_2$ is a $u$--$v$ path.
\end{lemma}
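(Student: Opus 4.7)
My plan is to treat the two hypotheses in the disjunction separately; they are mutually exclusive, since a helpful node has a 2-connected bag while a helpful edge is bounded by $K_2$-bags.

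In the main case, where $t$ is helpful, $G_t$ is 2-connected. I would invoke \cref{lem:path-from-helpful-vertex} to fix a neighbour $w_1 \in V(\mathring{G}_t)$ of $z_1'$ and then reduce to a two-path question in an auxiliary graph $H'$ obtained from $G_t$ by adjoining $z_1'$ via the single edge $z_1' w_1$ and a further new apex $x$ joined to both $v$ and $z_1'$. A pair of internally disjoint $u$--$x$ paths in $H'$ is exactly what I need: deleting $x$ yields a $u$--$z_1'$ path and a $u$--$v$ path sharing only $u$, and internal disjointness forces the interior of each to avoid $\{u,v,z_1'\}$ and hence to lie in $V(\mathring{G}_t)$. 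By Menger's theorem, those two paths exist as soon as $H'-y$ contains a $u$--$x$ path for every $y\in V(H')\setminus\{u,x\}$. This is a short case check powered by the fact that $G_t-y$ is connected for any such single vertex $y$ (which follows from 2-connectedness of $G_t$): if $y \in \{v, w_1\}$, route through $G_t-y$ to the vertex in $\{v,w_1\}\setminus\{y\}$ and then exit to $x$ via $z_1'$ or directly; if $y = z_1'$, use any $u$--$v$ path in $G_t$ followed by the edge $vx$; and for any other $y \in V(G_t)\setminus\{u,v,w_1\}$, both routes are available. The only mild obstacle is ensuring the setup of $H'$ leaves $x$ with exactly two entry points ($v$ and $z_1'$), which is built into the construction.

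In the secondary case, $u$ is the adhesion-vertex of some helpful edge $f$ of $O$. By \cref{obs:block-bagel-disjoint-adhesion} $f$ is the unique edge of $O$ with adhesion-vertex $u$, and since $u$ is itself an adhesion-vertex of $G_t$, $f$ must be incident to $t$. As $f$ is helpful, $G_t$ is then forced to be one of the $K_2$-bags at the endpoints of $f$, so $V(G_t)=\{u,v\}$ with $uv \in E(G)$. Combining this with $uz_1' \in E(G)$ from \cref{lem:path-from-helpful-edge}, the two edges $P_1 := uz_1'$ and $P_2 := uv$ share only the vertex $u$ and have empty interiors, which lie vacuously in $\mathring{G}_t$; this concludes the proof.
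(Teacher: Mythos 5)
Your proof is correct and follows essentially the same route as the paper: the helpful-edge case is handled identically via \cref{lem:path-from-helpful-edge} and the $K_2$-bag, and in the helpful-node case you use \cref{lem:path-from-helpful-vertex} plus 2-connectivity of $G_t$ to produce a fan of two paths from $u$ to $\{v,w_1\}$. The only difference is cosmetic: the paper invokes the fan version of Menger in $G_t$ directly, whereas you unpack it by adjoining an apex and checking the single-vertex cuts by hand.
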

\begin{proof}
    If~$u$ is the adhesion-vertex of a helpful edge, then~$u z_1' \in E(G)$ by \cref{lem:path-from-helpful-edge}, and~$u v \in E(G)$ since~$G_t$ is a~$K_2$. Then we let~$P_1 := u z_1'$ and~$P_2 := u v$.
    Otherwise,~$t$ is helpful. 
    By \cref{lem:path-from-helpful-vertex}, $z'_1$ has a neighbour~$w$ in $\mathring{G}_{t}$.
    Since~$G_{t}$ is 2-connected, it contains two independent paths~$Q_1,Q_2$ such that~$Q_1$ is a $u$--$w$ path and~$Q_2$ is a $u$--$v$ path. 
    Then we obtain $P_1$ from $Q_1$ by appending~$z'_1$, and we let $P_2 := Q_2$.
\end{proof}

The following lemma is supported by \cref{fig:block-bagel-2-paths}.

\begin{lemma}[Two-Paths Lemma]
\label{lem:double-wheel-two-paths}
    Assume the~\allref{set:block-bagel}.
    Let~$P$ be an $r$--$s$ path in~$O$. 
    Assume that either the first edge $e$ or the second node of~$P$ is helpful.
    Assume that~$P$ contains at least two helpful elements distinct from~$r$.
    Let~$u$ be the adhesion-vertex of the edge~$e$.
    Then there exist two independent $u$--$z_1'$ paths in $G$ with all interior vertices in~$\bigcup_{t \in V(\mathring P)} G_t\cup \mathring G_s$.
\end{lemma}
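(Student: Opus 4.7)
The plan is to build two independent $u$--$z_1'$ paths $P_1$ and $P_2$ by using each of the two guaranteed helpful elements of $P$ as an independent ``bridge'' to $z_1'$: one near $r$ (by the first hypothesis) and one farther along $P$ (by the second hypothesis). Write $P = v_0\, e_1\, v_1\, \ldots\, e_k\, v_k$ with $v_0 = r$ and $v_k = s$, and let $u_i$ denote the adhesion-vertex of $e_i$, so in particular $u = u_1$. Since any helpful edge has $K_2$-bags at both end-nodes, no edge of $P$ and its adjacent nodes can simultaneously be helpful; combined with the second hypothesis this forces $k \geq 2$, so $v_1 \in V(\mathring P)$.

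First I handle the end of the paths near $u$. If $e_1$ is helpful, then by \cref{lem:path-from-helpful-edge} the edge $uz_1'$ lies in $G$, and since $G_{v_1} = K_2$ the edge $uu_2$ also lies in $G$; set $P_1 := uz_1'$ and $Q_1 := uu_2$. Otherwise $v_1$ is helpful by the first hypothesis, and the \allref{lem:double-wheel-two-fan} applied to $G_{v_1}$ with adhesion-vertices $u$ and $u_2$ yields two independent paths with interior in $\mathring G_{v_1}$: a $u$--$z_1'$ path, taken as $P_1$, and a $u$--$u_2$ path, taken as $Q_1$.

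By the second hypothesis and the paragraph above, I may choose a second helpful element $y \in \{e_j, v_j : 2 \leq j \leq k\}$. For each $i \in \{2,\ldots,j-1\}$, the bag $G_{v_i}$ is 2-connected or $K_2$ with distinct adhesion-vertices $u_i, u_{i+1}$ (by \cref{obs:block-bagel-disjoint-adhesion}), so it contains a $u_i$--$u_{i+1}$ path $R_i$. Concatenate these to get a $u_2$--$u_j$ path $R \subseteq \bigcup_{i=2}^{j-1}G_{v_i}$ (trivial when $j = 2$). If $y = e_j$, set $S := u_jz_1'$, which is an edge by \cref{lem:path-from-helpful-edge}. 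If $y = v_j$, apply the \allref{lem:double-wheel-two-fan} at $G_{v_j}$ with $u_j$ as the distinguished adhesion to obtain a $u_j$--$z_1'$ path $S$ with interior in $\mathring G_{v_j}$. Finally, let $P_2 := Q_1 R S$.

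Independence of $P_1$ and $P_2$ will follow because the interior of $P_1$ is empty or lies in $\mathring G_{v_1}$ (in which case it is disjoint from $Q_1$'s interior by the Two-Fan Lemma), while the parts $R$ and $S$ of $P_2$ avoid $G_{v_1}$ except possibly at the shared endpoint $u_2$ of $Q_1$ and $R$. The interior of $P_2$ lies in $\mathring G_{v_1} \cup \bigcup_{i=2}^{j-1}G_{v_i}$ when $y = e_j$, and additionally in $\mathring G_{v_j}$ when $y = v_j$; in every case this sits inside $\bigcup_{t \in V(\mathring P)} G_t \cup \mathring G_s$. The main bookkeeping obstacle is the extremal case $y = v_k$ (where $S$ runs through $\mathring G_s$ rather than entirely through interior bags), and this is exactly why the conclusion permits interior vertices in $\mathring G_s$ alongside the $G_t$'s for $t \in V(\mathring P)$.
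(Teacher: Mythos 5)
Your proof is correct and essentially matches the paper's: both anchor $P_1$ and $Q_1$ at the first helpful element via the \allref{lem:double-wheel-two-fan} (or via \cref{lem:path-from-helpful-edge} when the helpful element is the edge $e_1$), then extend $Q_1$ to a second helpful element and from there step to $z_1'$. The only difference in execution is that the paper locates that second helpful element within distance three of $r$ -- its intermediate claims ``then $k=2$'' and ``so $k=3$'' implicitly presuppose that $P$ is the minimal such path, as it is in the application in \cref{lem:double-wheel-applied-three-paths}, although the surrounding argument goes through for any $k$ -- whereas you thread the connecting path $R$ through all intermediate bags to reach whichever second helpful element the hypothesis supplies. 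This handles an arbitrary $P$ a little more transparently, but it is the same construction in the same spirit.
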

\begin{proof}
    Without loss of generality,~$P=t(0)\, t(1) \ldots t(k)$ with $r=t(0)$ and $s=t(k)$.
    By assumption, either the edge $e=t(0)\,t(1)$ or the second node~$t(1)$ is helpful.
    Let~$u_i$ denote the adhesion-vertex of $t(i)\, t(i+1)$. In particular, $u=u_0$.
    By the \allref{lem:double-wheel-two-fan}, there are two independent paths~$P_1,Q_1$ with interior in~$\mathring{G}_{t(1)}$ such that~$P_1$ is a $u_0$--$z_1'$ path and~$Q_1$ is a $u_0$--$u_1$ path.

    If~$t(1) t(2)$ or~$t(2)$ is helpful, then~$k=2$. 
    In this case, we find a $u_1$--$z_1'$ path~$Q_2$ with interior in~$\mathring{G}_{t(2)}$ by the \allref{lem:double-wheel-two-fan}. 
    Then the two paths $P_1$ and~$u_0 Q_1 u_1 Q_2 z_1'$ are as desired.
    Otherwise,~$t(1)$ is helpful,~$t(2)$ has a $K_2$-bag and at least one of~$t(2) t(3)$ or~$t(3)$ is helpful. 
    So~$k=3$ in this case. 
    Then~$u_1 u_2 \in E(G)$ since~$t(2)$ has a $K_2$-bag. 
    We find a $u_2$--$z_1'$ path~$Q_2$ with interior in~$\mathring{G}_{t(3)}$ by the \allref{lem:double-wheel-two-fan}. 
    Then the paths~$P_1$ and~$u_0 Q_1 u_1 u_2 Q_2 z_1'$ are as desired.
\end{proof}

\begin{figure}[ht]
    \centering
    \includegraphics[height=7\baselineskip]{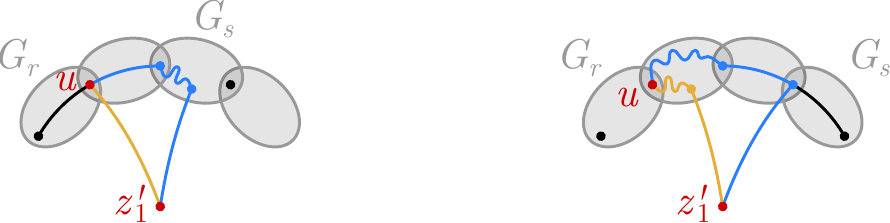}
    \caption{The paths constructed in the \allref{lem:double-wheel-two-paths}. Left: the first edge on~$P$ is helpful. Right: the second node on $P$ is helpful.}
    \label{fig:block-bagel-2-paths}
\end{figure}

\subsection{Totally-nested tetra-separations from good bags}\label{sec:WheelTotallyNested}

\begin{definition}[Good bag, bad bag]
    Assume the \allref{set:block-bagel}.
    A bag $G_t$ of $\cO$ is \defn{good} if $G_t$ is 2-connected and either
    \begin{itemize}
        \item $|G_t|\ge 4$, or
        \item $G_t$ is a triangle and a neighbouring bag is a~$K_2$.
    \end{itemize}
    Otherwise, $G_t$ is \defn{bad}.
    We also call $t$ \defn{good} or \defn{bad}, respectively.
\end{definition}

Note that $G_t$ is bad if and only if either $G_t$ is a $K_2$ or $G_t$ is a triangle and both neighbouring bags are 2-connected.

The following setting is supported by \cref{fig:bag-setting}.

\begin{setting}[Block-bagel bag setting]
\label{set:bag-setting}
    Assume the \allref{set:block-bagel}.
    Let~$U' := U_{t(0)}'$ and~$W' := W_{t(0)}'$.
    Let $x'$ be the adhesion-vertex of~$t(-1)\,t(0)$ and let $y'$ be the adhesion-vertex of~$t(0)\, t(1)$.
    Observe that $S(U',W')=\{x',y',z_1',z_2'\}$.
    Let $(U,W)$ be the right-left-reduction of~$(U',W')$.
    Let $x,y,z_1,z_2\in S(U,W)$ correspond to $x',y',z'_1,z'_2$, respectively.\footnote{So $x$ is incident or equal to $x'$, and so on.}
    Let $x^W$ denote the unique vertex in $\hat x\cap W$, and define $y^W$ and $z_i^W$ similarly.
\end{setting}

\begin{figure}[ht]
    \centering
    \includegraphics[height=12\baselineskip]{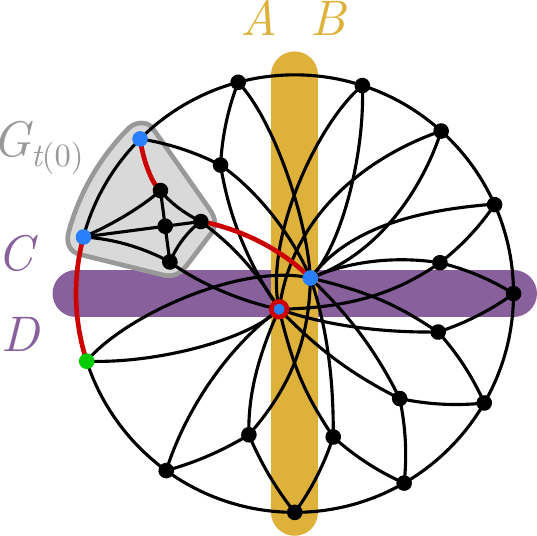}
    \caption{The \allref{set:block-bagel}. $S(U',W')$ is blue, $S(U,W)$ is red. We have $y^W=y'$ and $z^W_i=z'_i$, so these three vertices are blue. The vertex $x^W$ is green.}
    \label{fig:bag-setting}
\end{figure}

\begin{observation}
\label{obs:double-wheel-out-reduction}
    Assume the~\allref{set:bag-setting}. 
    If $G_{t(1)}$ is a~$K_2$, then~$y$ is the edge in~$G_{t(1)}$.
    Symmetrically, if~$G_{t(-1)}$ is a~$K_2$, then~$x$ is the edge in~$G_{t(-1)}$.\qed
\end{observation}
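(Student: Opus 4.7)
My plan is to prove the assertion about $y$; the one about $x$ follows by the obvious symmetry swapping $t(1) \leftrightarrow t(-1)$, $y \leftrightarrow x$ and $y' \leftrightarrow x'$. The idea is to trace $y'$ through the right-shift and then the left-shift that produce $(U,W)$ from $(U',W')$, and show that it becomes the edge of the $K_2$-bag $G_{t(1)}$.

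First I would localise $y'$ inside the block-bagel $\cO$. Write $V(G_{t(1)}) = \{y', v\}$ with $v \neq y'$. Since the adhesion-set of the edge $t(1)\, t(2)$ of $O$ has size one and is contained in $V(G_{t(1)})$, and since $y'$ is already the adhesion-vertex of $t(0)\, t(1)$, \cref{obs:block-bagel-disjoint-adhesion} forces $v$ to be the adhesion-vertex of $t(1)\, t(2)$. In particular $v \notin V(G_{t(0)})$ and $v \notin Z'$, so $v \in W' \setminus U'$. The set of nodes of $O$ whose bag contains $y'$ is connected by property~\ref{GraphDec2}, contains $t(0)$ and $t(1)$, and is disjoint from $\{t(-1), t(2)\}$ (again by \cref{obs:block-bagel-disjoint-adhesion}, as $y'$ is the adhesion-vertex of neither $t(-1)\, t(0)$ nor $t(1)\, t(2)$), so it equals $\{t(0), t(1)\}$. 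Hence every neighbour of $y'$ in $G - Z'$ lies in $V(G_{t(0)}) \cup \{v\}$, and since $V(G_{t(0)}) \cup Z' = U'$, the only neighbour of $y'$ in $W' \setminus U'$ is $v$.

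Next I would apply the shift definitions directly. The right-shift $(U_1, W_1) := (U', W' \setminus R)$ removes from $W'$ exactly those vertices of $U' \cap W'$ that have at most one neighbour in $W' \setminus U'$, so by the previous paragraph $y' \in R$. Thus $y'$ moves into $U_1 \setminus W_1$ while $v$ remains in $W_1 \setminus U_1$, placing the edge $y' v$ into $S(U_1, W_1)$. The subsequent left-shift only deletes vertices of $U_1 \cap W_1$ from $U_1$, so it touches neither $y'$ (no longer in $W_1$) nor $v$ (never in $U_1$). Consequently $y' v$ survives in $S(U, W)$, and under the canonical correspondence recorded in the \allref{set:bag-setting} this edge is identified with $y$.

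The only subtle step is the localisation of $y'$: the two appeals to \cref{obs:block-bagel-disjoint-adhesion} are what prevent $y'$ from secretly appearing in more distant bags of $\cO$ and thereby acquiring additional neighbours in $W' \setminus U'$. Once that locality is in hand, the remainder is a mechanical unpacking of the definitions of right-shift and left-shift.
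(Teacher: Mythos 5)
Your proof is correct and simply spells out the details the paper treats as immediate: localising $y'$ to the bags at $t(0)$ and $t(1)$ via \cref{obs:block-bagel-disjoint-adhesion} and condition \ref{M2}, concluding that $v$ is the unique neighbour of $y'$ in $W'\setminus U'$, and then tracing the right-shift (which replaces $y'$ by the edge $y'v$) and the left-shift (which leaves $y'v$ intact). This is the intended argument.
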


\begin{lemma}
\label{cor:double-wheel-reduced-edges}
    Assume the~\allref{set:bag-setting}. 
    Then each vertex $z'_i$ in the centre has $\ge 3$ neighbours outside $Z'$ contained in $W'\sm U'$, and in particular in $W\sm U$.
    In particular, $z_i^W = z_i'$.
\end{lemma}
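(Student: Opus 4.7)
My plan is to combine 4-connectivity with the degree-conditions of the crossing tetra-separations $(A,B)$ and $(C,D)$ and the structure of the block-bagel to count neighbours of $z'_i$.

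The first step is to show that $z'_i$ has at least one neighbour in each of the four corners of the crossing. Fix a corner~$K$; it is non-empty by \cref{wheelCornersNonempty}. Any vertex-separator of~$G$ between $\{z'_i\}$ and~$K$ must contain $z'_{3-i}$ together with one boundary representative of each of the two links adjacent to~$K$. Since all four links have size~$1$ by the \allref{keylem:crossing}, each such link contributes at most one vertex to this candidate separator, so its total size is at most~$3$. By 4-connectivity this cannot separate $z'_i$ from $K$, and hence $z'_i$ has a direct neighbour in $K$.

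The second step is to locate $V(G_{t(0)})$ within the crossing diagram. By the block-bagel construction, $G_{t(0)}$ is either a block of a single $K$-shrimp or the $K_2$-bag $o(L)$ of a link-edge. In the generic situation, $V(G_{t(0)})$ is contained in one corner $K$, possibly together with a shared link-vertex that serves as one of the adhesion vertices $x'$ or $y'$. Consequently, the three remaining corners are disjoint from $V(G_{t(0)})\cup Z'$, so the corner-neighbours of $z'_i$ found above in those three corners are three distinct elements of $W'\setminus U'$. This is the required $\geq 3$ neighbours of $z'_i$ in $W'\setminus U'$. The inclusion $W'\setminus U'\subseteq W\setminus U$ then follows from \cref{obs:reduction-maintains-proper-sides} applied to the right-shift and the subsequent left-shift, yielding the in-particular statement for $W\setminus U$.

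Finally, for $z_i^W=z_i'$: since $z'_i$ has $\geq 2$ neighbours in $W'\setminus U'$, the right-shift does not remove $z'_i$ from $W$; and since $z'_i$ has $\geq 2$ neighbours in $A\setminus B$ by the degree-condition of $(A,B)$ (and $A\setminus B$ lies in $U\setminus W$ after the right-shift), the left-shift does not remove $z'_i$ from $U$. Thus $z'_i$ remains a vertex of $U\cap W=S(U,W)$, so $z_i=z_i'$ as a vertex and $z_i^W=z_i'$. The main obstacle will be the degenerate case $t(0)=o(L)$, where $V(G_{t(0)})$ is a $K_2$ spanning two adjacent corners; here the corner-counting above yields only two neighbours in $W'\setminus U'$ directly, and one must supplement it by invoking the degree-condition of $(C,D)$ (or use the interior structure of the neighbouring good bags together with \allref{obs:double-wheel-out-reduction}) to extract a third neighbour.
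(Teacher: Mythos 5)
Your generic-case argument is sound, and it takes a somewhat more direct route than the paper: the paper derives the four candidate neighbours from the \allref{lem:helpful-lemma} together with \cref{lem:path-from-helpful-edge,lem:path-from-helpful-vertex} (one helpful element per corner, each supplying a neighbour of $z'_i$), whereas you extract a neighbour of $z'_i$ in each corner directly from a corner-separator argument; both versions then discard the candidates that land in $V(G_{t(0)})\cup Z'$. Two remarks on your final paragraph. First, the parenthetical ``$A\sm B$ lies in $U\sm W$ after the right-shift'' is false: $A\sm B$ contains two entire corners, most of which lies in $W\sm U$. Second, the left-shift is irrelevant for the conclusion $z_i^W=z_i'$: by definition $z_i^W$ is the vertex of $\hat z_i$ in $W$, so $z_i^W=z_i'$ holds as soon as $z'_i$ survives the right-shift, i.e.\ has $\ge 2$ neighbours in $W'\sm U'$ --- which your corner argument does give even when one or two candidates are lost.

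The degenerate case you flag is a genuine gap, and no supplementary argument of the kind you sketch can close it, because the bound $\ge 3$ actually fails there. Take $G=K_{2,2,2}$ with antipodal (non-adjacent) pairs $\{u,v'\}$, $\{v,u'\}$, $\{z'_1,z'_2\}$, and let $(A,B)$, $(C,D)$ be the tetra-separations with $A\sm B=\{u,u'\}$, $B\sm A=\{v,v'\}$, $C\sm D=\{u,v\}$, $D\sm C=\{u',v'\}$ and $A\cap B=C\cap D=\{z'_1,z'_2\}$. These cross with all four links of size one (each link is a single edge), the induced block-bagel is a $4$-cycle of $K_2$-bags, and for $t(0)=o(C)$, i.e.\ $G_{t(0)}=G[\{u,v\}]$, one has $W'\sm U'=\{u',v'\}$, so $z'_1$ has exactly two neighbours there. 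Thus when $G_{t(0)}$ is the $K_2$-bag representing a link --- equivalently, by \cref{wheelLinksVsCorners}, when $V(G_{t(0)})$ meets two corners --- only ``$\ge 2$'' survives, which suffices for $z_i^W=z_i'$ but not for the stated ``$\ge 3$''. So rather than trying to ``extract a third neighbour'' in that case, you should either restrict the $\ge 3$ claim to the situation where at most one corner meets $V(G_{t(0)})$ (e.g.\ $G_{t(0)}$ 2-connected, which is what the later applications of the $\ge 3$ bound assume), or weaken the conclusion to $\ge 2$ in the degenerate case.
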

\begin{proof}
    We combine the \allref{lem:helpful-lemma} with \cref{lem:path-from-helpful-edge} and \cref{lem:path-from-helpful-vertex}, and we use \cref{obs:reduction-maintains-proper-sides} for $W' \sm U' \subseteq W \sm U$.
\end{proof}

\begin{lemma}
\label{lem:double-wheel-UW-is-tetra}
    Assume the~\allref{set:bag-setting}.
    Then the following assertions are equivalent:
    \begin{enumerate}
        \item\label{lem:double-wheel-UW-is-tetra1} $G_{t(0)}$ is good, or all three bags $G_{t(-1)},G_{t(0)},G_{t(1)}$ are $K_2$'s;
        \item\label{lem:double-wheel-UW-is-tetra2} $(U,W)$ is a tetra-separation.
    \end{enumerate}
\end{lemma}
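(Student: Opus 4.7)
The plan is to apply \cref{leftrightTetra} in its symmetric form (obtained by swapping the two sides) so that it characterises when the right-left-shift $(U,W)$ of $(U',W')$ is a tetra-separation. Concretely, $(U,W)$ is a tetra-separation if and only if the following three conditions hold:
\begin{enumerate}
    \item $|W' \sm U'| \geq 2$;
    \item the number of vertices in $S(U',W')$ with $\leq 1$ neighbour in $W' \sm U'$ is $\geq 2 - |U' \sm W'|$;
    \item $|U' \sm W'| \geq 2$ or $|S(U',W') \cap E(G)| \leq 1$.
\end{enumerate}
Since $S(U',W') = \{x',y',z_1',z_2'\}$ contains no edges, (iii) is automatic. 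For (i), a short case analysis shows that the \allref{lem:helpful-lemma} forces $|V(O)| \geq 4$: in any cycle of length three, the number of 2-connected nodes plus the number of edges flanked by two $K_2$-bags is at most three, regardless of how $K_2$'s and 2-connected bags are arranged. Then \cref{obs:block-bagel-disjoint-adhesion} gives $|W' \sm U'| \geq 2$ by a direct count along the path $O - t(0)$.

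The heart of the proof is condition (ii). Since $U' \sm W' = V(\mathring{G}_{t(0)})$, its size is $0$, $1$, or $\geq 2$ according as $G_{t(0)}$ is a $K_2$, a triangle, or a 2-connected graph with $|G_{t(0)}|\geq 4$. By \cref{cor:double-wheel-reduced-edges}, each $z_i'$ has at least three neighbours in $W' \sm U'$, so $z_1',z_2'$ never contribute to the count in (ii); only $x'$ and $y'$ can. The crucial structural observation is that $x'$ contributes if and only if $G_{t(-1)}$ is a $K_2$. Indeed, if $G_{t(-1)}$ is a $K_2$, its other vertex is the adhesion-vertex of $t(-2)\,t(-1)$ and hence different from $y'$ by \cref{obs:block-bagel-disjoint-adhesion}, so it lies in $W' \sm U'$ and gives $x'$ exactly one neighbour there; if instead $G_{t(-1)}$ is 2-connected then $x'$ has at least two neighbours in $V(G_{t(-1)}) \sm \{x'\}$, and none of them is $y'$ since $y' \notin V(G_{t(-1)})$ (otherwise $y' \in V(G_{t(-1)}) \cap V(G_{t(0)}) = \{x'\}$), so they all lie in $W' \sm U'$. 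Symmetrically, $y'$ contributes iff $G_{t(1)}$ is a $K_2$.

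Combining the above, $(U,W)$ is a tetra-separation in exactly three scenarios: (a) $G_{t(0)}$ is 2-connected with at least four vertices, so (ii) is vacuous; (b) $G_{t(0)}$ is a triangle and at least one of $G_{t(-1)}, G_{t(1)}$ is a $K_2$, so (ii) acquires its single required contributor; (c) $G_{t(0)}$ is a $K_2$ and both $G_{t(-1)}, G_{t(1)}$ are $K_2$'s, so (ii) acquires both required contributors. Cases (a) and (b) are precisely the two clauses in the definition of \emph{good}, while case (c) is the ``all three are $K_2$'s'' alternative in the statement. The main obstacle is not any single difficult step but the bookkeeping needed to pin down when $x'$ and $y'$ contribute to (ii); this hinges on the uniqueness of adhesion-vertices in a block-bagel (\cref{obs:block-bagel-disjoint-adhesion}), which ensures in particular that $y' \notin V(G_{t(-1)})$ and $x' \notin V(G_{t(1)})$.
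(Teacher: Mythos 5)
Your proof is correct and takes essentially the same route as the paper: both reduce the statement to the three conditions of \cref{leftrightTetra} and then observe that only $x'$ and $y'$ can have $\le 1$ neighbour in $W'\sm U'$ (via \cref{cor:double-wheel-reduced-edges}), with $x'$ (resp.\ $y'$) doing so exactly when $G_{t(-1)}$ (resp.\ $G_{t(1)}$) is a $K_2$. The only divergence is cosmetic: you establish $|W'\sm U'|\ge 2$ by deducing $|O|\ge 4$ from the \allref{lem:helpful-lemma} and counting adhesion-vertices, whereas the paper reads it off from the non-emptiness of the corners (\cref{wheelCornersNonempty}); both are fine.
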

\begin{proof}
    \cref{lem:double-wheel-UW-is-tetra1} $\Rightarrow$ \cref{lem:double-wheel-UW-is-tetra2}.
    Suppose first that $G_{t(0)}$ is good.
    By \cref{wheelCornersNonempty}, each of the four corners contains a vertex.
    Since $G_{t(0)}$ is 2-connected, it is included in $A\cap C$, say, so every corner except $AC$ has its vertex contained in $W'\sm U'$.
    Hence $|W'\sm U'|\ge 3$.
    The 4-separation $(U',W')$ trivially satisfies the matching-condition.
    If the bag~$G_{t(0)}$ is not a triangle, then it has $\ge 4$ vertices, so $|U'\sm W'|\ge 2$.
    In this case, $(U,W)$ is a tetra-separation by \cref{leftrightTetra}.
    Otherwise, $G_{t(0)}$ is a triangle.
    Then some neighbouring bag, say $G_{t(1)}$, is a $K_2$.
    Hence $|U' \sm W'| = 1$ and the vertex~$y'$ has exactly one neighbour in~$W' \sm U'$. 
    Again by \cref{leftrightTetra}, $(U,W)$ is a tetra-separation.

    Suppose now that all three bags $G_{t(-1)},G_{t(0)},G_{t(1)}$ are $K_2$'s.
    Then $G_{t(0)}$ meets at most two corners, so the vertices in the other two corners lie in $W'\sm U'$, giving $|U'\sm W'|\ge 2$.
    Both $x'$ and $y'$ have exactly one neighbour in $W'\sm U'$ since both $G_{t(-1)}$ and $G_{t(1)}$ are $K_2$'s.
    Hence $(U,W)$ is a tetra-separation by \cref{leftrightTetra}.

    $\neg$ \cref{lem:double-wheel-UW-is-tetra1} $\Rightarrow$ $\neg$ \cref{lem:double-wheel-UW-is-tetra2}.
    As \cref{lem:double-wheel-UW-is-tetra1} fails, $G_{t(0)}$ either is a $K_2$ neighboured by at least one 2-connected bag, or $G_{t(0)}$ is a triangle neighboured by 2-connected bags.
    Suppose first that $G_{t(0)}$ is a $K_2$ neighboured by a 2-connected bag, say $G_{t(1)}$.
    Then $y'$ has two neighbours in $G_{t(1)}$, which are also contained in $W'\sm U'$.
    The two vertices $z'_i$ in the centre have $\ge 3$ neighbours in $W'\sm U'$ by \cref{cor:double-wheel-reduced-edges}.
    So only $x'$ might have $\le 1$ neighbours in $W'\sm U'$.
    But $|U'\sm W'|=0$, so $(U,W)$ is no tetra-separation by \cref{leftrightTetra}.

    Suppose now that $G_{t(0)}$ is a triangle with both neighbouring bags 2-connected.
    Then, as above, all vertices in $S(U',W')$ have $\ge 2$ neighbours in $W'\sm U'$.
    But $|U'\sm W'|=1$, so $(U,W)$ is no tetra-separation by \cref{leftrightTetra}.
\end{proof}

\begin{observation}
    \label{obs:double-wheel-helpful-is-next}
    Assume the~\allref{set:bag-setting}. 
    Let~$t(i)\, t(i+1)$ be the edge of~$O$ that has $y^W$ as adhesion-vertex.
    Then either the edge $t(i)\, t(i+1)$ or the node $t(i+1)$ is helpful.\qed
\end{observation}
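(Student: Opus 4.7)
The plan is to split into two cases according to whether $G_{t(1)}$ is 2-connected or a $K_2$, and in each case to trace where the adhesion-vertex $y'$ ends up under the right-left-shift.

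The first step is to pin down the neighbours of $y'$ in $W'\sm U'$. By \ref{M2} applied to the block-bagel $\cO$ (which decomposes $G-Z'$) and by \cref{obs:block-bagel-disjoint-adhesion}, the only bags of $\cO$ containing $y'$ are $G_{t(0)}$ and $G_{t(1)}$, so the neighbours of $y'$ in $W'\sm U'$ are precisely its neighbours in $V(G_{t(1)})\sm\{y'\}$. Hence $y'$ has $\ge 2$ such neighbours when $G_{t(1)}$ is 2-connected and exactly one such neighbour when $G_{t(1)}$ is a $K_2$.

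If $G_{t(1)}$ is 2-connected, then $y'$ is not moved by the right-shift, and since the subsequent left-shift only removes vertices from the $U$-side, $y'$ remains in $W$ throughout. Hence $y^W = y'$, which is the adhesion-vertex of $t(0)\,t(1)$, so $(i,i+1)=(0,1)$ and $t(i+1)=t(1)$ is helpful because its bag is 2-connected. If instead $G_{t(1)}$ is a $K_2$ with vertex-set $\{y',u_1\}$, then $y$ is the edge $y'u_1$ by \cref{obs:double-wheel-out-reduction}. A short bookkeeping calculation shows that $y'\in U\sm W$ (it is pushed out of $W$ by the right-shift and is not removed from $U$ by the left-shift) while $u_1\in W\sm U$, so $y^W=u_1$. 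Since $V(G_{t(1)})=\{y',u_1\}$ and the two adhesion-vertices at $t(1)$ are distinct by \cref{obs:block-bagel-disjoint-adhesion}, the vertex $u_1$ must be the adhesion-vertex of $t(1)\,t(2)$, giving $(i,i+1)=(1,2)$. Since $G_{t(1)}$ is a $K_2$, either $G_{t(2)}$ is also a $K_2$, making the edge $t(1)\,t(2)$ helpful, or $G_{t(2)}$ is 2-connected, making the node $t(2)=t(i+1)$ helpful.

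I do not expect any substantial obstacle; the argument is essentially bookkeeping about the shift operation, guided by the block-bagel structure.
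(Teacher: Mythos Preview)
Your argument is correct. The paper states this observation with a bare \qed\ and gives no proof, so there is nothing to compare against; you have simply supplied the case-split and shift bookkeeping that the authors deemed routine.
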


\begin{lemma}
    \label{lem:three-y1-y2-paths}
    Assume the~\allref{set:bag-setting}. 
    Then there exist three independent $z_1'$--$z_2'$ paths in~$G[W]-\{x',y'\}-z'_1 z'_2$.
\end{lemma}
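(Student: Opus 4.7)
I would produce the three paths by exploiting the helpful elements of $\cO$ that lie outside the bag $G_{t(0)}$. First note that since $W\supseteq W'$ and the only vertices of $V(G_{t(0)})$ shared with the other bags of $\cO$ are $x'$ and $y'$, every $z_1'$--$z_2'$ path in $G-z_1'z_2'$ whose interior avoids $V(G_{t(0)})$ is automatically a path in $G[W]-\{x',y'\}-z_1'z_2'$. So it suffices to find three independent such paths in $G-V(G_{t(0)})-z_1'z_2'$.

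Call a helpful element of $\cO$ \emph{forbidden} if it is the node $t(0)$ or one of the edges $t(-1)t(0), t(0)t(1)$; equivalently, a helpful edge~$H$ is forbidden iff its adhesion-vertex is in $\{x',y'\}$. Each non-forbidden helpful element provides a path of the desired kind: a helpful node $t\ne t(0)$ via the \allref{lem:bag-path}, whose path has interior in $\mathring G_t$ (disjoint from $V(G_{t(0)})$); and a helpful edge with adhesion-vertex $u\notin\{x',y'\}$ via the length-two path $z_1'\, u\, z_2'$ whose two edges exist by \cref{lem:path-from-helpful-edge}. Two such paths built from distinct non-forbidden helpful elements are automatically internally-disjoint: different bags of $\cO$ have disjoint interiors, different edges of $O$ have distinct adhesion-vertices by \cref{obs:block-bagel-disjoint-adhesion}, and an adhesion-vertex of $\cO$ is never interior to any bag.

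It remains to produce three non-forbidden helpful elements, and this is the main obstacle. The \allref{lem:helpful-lemma} supplies one helpful element per corner, so four in total, pairwise distinct because each belongs to a unique corner. If $G_{t(0)}$ is 2-connected, then the two edges of $O$ incident to $t(0)$ have a 2-connected endpoint and so cannot be helpful; only $t(0)$ itself can be forbidden, and the helpful elements of the other three corners provide the required trio. Otherwise $G_{t(0)}$ is a $K_2$: then $t(0)$ arises as an identification node at an edge-link between two adjacent corners $K_0$ and $K_0'$, and the forbidden edges $t(-1)t(0), t(0)t(1)$ lie one in each of the block-paths of $K_0$ and $K_0'$, possibly coinciding with the unique helpful elements the Helpful Lemma furnishes there. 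Here I would exploit the structure imposed on the $K_0$-block-decomposition by the other link adjacent to $K_0$ via \cref{wheelLinksVsCorners}: a vertex link forces a 2-connected bag (a helpful node), while an edge link adds a further $K_2$-bag contributing a helpful edge whose adhesion-vertex differs from~$x'$. The symmetric analysis for $K_0'$ then yields four non-forbidden helpful elements in total, any three of which produce the desired three independent $z_1'$--$z_2'$ paths.
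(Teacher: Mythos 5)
Your 2-connected branch is correct and tracks the paper's count: if $G_{t(0)}$ is 2-connected, the edges $t(-1)t(0)$ and $t(0)t(1)$ cannot be helpful (one endpoint has a 2-connected bag), so of the $\ge 4$ helpful elements at most one, namely $t(0)$, can yield a forbidden path, leaving the required $\ge 3$. The paper reaches the same conclusion with the single assertion ``at most one of them meets the bag $G_{t(0)}$'', without your explicit case split. (One small slip: you wrote $W\supseteq W'$, but the right-left-shift gives $W\subseteq W'$; the reduction is still valid because $W'\setminus W\subseteq\{x',y'\}\subseteq V(G_{t(0)})$.)

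Your $K_2$ branch has a genuine gap. The claim that an edge-link adjacent to $K_0$ ``adds a further $K_2$-bag contributing a helpful edge whose adhesion-vertex differs from~$x'$'' is false: when the $K_0$-block-path has only two nodes, both $K_2$'s representing links, its unique helpful edge is $t(-1)t(0)$ itself, with adhesion-vertex $x'$. Concretely, take $G$ the octahedron $K_{2,2,2}$ with colour classes $\{z_1',z_2'\}$, $\{v_1,v_3\}$, $\{v_2,v_4\}$, and crossing tetra-separations with $A\setminus B=\{v_2,v_3\}$, $B\setminus A=\{v_1,v_4\}$, $C\setminus D=\{v_3,v_4\}$, $D\setminus C=\{v_1,v_2\}$; all links have size one and $Z'=\{z_1',z_2'\}$. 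Every corner-shrimp is a two-edge path, every bag of $\cO$ is a $K_2$, and $O$ is the $4$-cycle $o(D)\,o(A)\,o(C)\,o(B)$. With $t(0)=o(A)$ (so $x'=v_2$, $y'=v_3$), the corners $AD$ and $AC$ each contribute exactly one helpful element, and these are precisely the forbidden edges $t(-1)t(0)$ and $t(0)t(1)$. Only two non-forbidden helpful elements remain, and indeed $W=\{z_1',z_2',v_1,v_4\}$, so $G[W]-z_1'z_2'$ has only two independent $z_1'$--$z_2'$ paths: the lemma as stated is actually false here. (The paper's ``at most one meets the bag'' is likewise incorrect when $G_{t(0)}$ is a $K_2$ flanked by $K_2$-bags.) The lemma tacitly requires $G_{t(0)}$ to be 2-connected, which holds everywhere the paper applies it, so your $K_2$ branch has no hope of being completed --- but your case split correctly exposes the subtlety that the paper's one-sentence count glosses over.
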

\begin{proof}
    For each helpful $h\in V(O)\cup E(O)$ we construct a $z'_1$--$z'_2$ path~$P_h$, as follows.
    If $h$ is an edge, then its adhesion-vertex send edges to both $z'_i$ by \cref{lem:path-from-helpful-edge}, and we let $P_h$ be the path defined by these two edges; see \cref{fig:three-y1-y2-paths}.
    If $h$ is a helpful node, then we use the \allref{lem:bag-path} to let $P_h$ by a $z'_1$--$z'_2$ path avoiding the edge $z'_1 z'_2$ (if it exists) and with interior in $\mathring{G}_h$.

    All paths $P_h$ are independent.
    By the \allref{lem:helpful-lemma}, there are at least four of them.
    At most one of them meets the bag $G_{t(0)}$.
    By \cref{cor:double-wheel-reduced-edges}, both vertices $z'_i$ in the centre are contained in~$W$.
    Hence at least three of the paths $P_h$ are independent $z'_1$--$z'_2$ paths in $G[W]-\{x',y'\}-z'_1 z'_2$.
\end{proof}

\begin{figure}[ht]
    \centering
    \includegraphics[height=8\baselineskip]{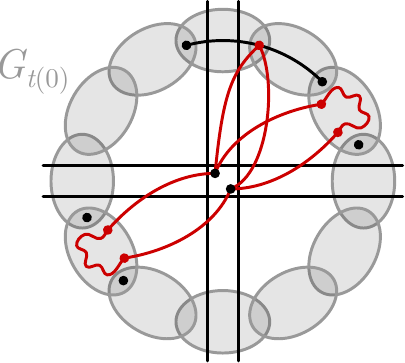}
    \caption{The red paths are constructed in \cref{lem:three-y1-y2-paths}.}
    \label{fig:three-y1-y2-paths}
\end{figure}

The following lemma is supported by \cref{fig:double-wheel-three-paths}.
\begin{lemma}
\label{lem:double-wheel-applied-three-paths}
    Assume the~\allref{set:bag-setting} with $G_{t(0)}$ 2-connected. 
    Then there are three independent $\{x^W,y^W\}$--$z_1'$ paths $P_1,P_2,P_3$ in~$G[W]$, such that:
    \begin{enumerate}
        \item both $P_1$ and $P_2$ are $y^W$--$z_1'$ paths, and
        \item $P_3$ is an $x^W$--$z_1'$ path.
    \end{enumerate}
\end{lemma}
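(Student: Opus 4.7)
My plan is to construct $P_1, P_2$ by applying the \allref{lem:double-wheel-two-paths} on a short sub-path of $O$ going in the direction of $y^W$, and to construct $P_3$ by going in the opposite direction towards $x^W$, using either the \allref{lem:double-wheel-two-fan} or a direct edge supplied by \cref{lem:path-from-helpful-edge}. The preliminary ingredient is that $z_1^W = z_1' \in W$ by \cref{cor:double-wheel-reduced-edges}, so $z_1'$ itself is the correct target vertex for all three paths.

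For $P_1, P_2$, let $e_y = t(i)\,t(i+1)$ be the unique edge of $O$ with adhesion vertex $y^W$ (existence and uniqueness guaranteed by \cref{obs:block-bagel-disjoint-adhesion}); by \cref{obs:double-wheel-helpful-is-next} either $e_y$ or $t(i+1)$ is helpful. We choose an $r$--$s$ sub-path $R$ of $O$ starting at $r := t(i)$, traversing $O$ in the direction of $t(i+1)$, and containing two helpful elements distinct from $r$. The gap-analysis underlying the proof of the \allref{lem:double-wheel-two-paths} shows that $R$ can be taken with at most three edges, and since the \allref{lem:helpful-lemma} gives $h(\cO) \ge 4$, which forces $|V(O)| \ge 4$, such an $R$ does not pass through $t(0)$. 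Applying the \allref{lem:double-wheel-two-paths} to $R$ yields $P_1, P_2$ with interior in $\bigcup_{t \in V(\mathring R)} G_t \cup \mathring G_s$, a set lying on the $y$-side of $O$ and contained in $W$, so $P_1, P_2 \subseteq G[W]$.

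For $P_3$, we do a short case split on $G_{t(-1)}$ and $G_{t(-2)}$. If $G_{t(-1)}$ is 2-connected, then $x^W = x'$ is an adhesion vertex of the helpful bag $G_{t(-1)}$, and the \allref{lem:double-wheel-two-fan} supplies $P_3$ with interior in $\mathring G_{t(-1)}$. Otherwise $G_{t(-1)}$ is a $K_2$, and by \cref{obs:double-wheel-out-reduction} $x^W$ is the adhesion vertex of $t(-1)\,t(-2)$: if $G_{t(-2)}$ is also a $K_2$, then $t(-1)\,t(-2)$ is a helpful edge and \cref{lem:path-from-helpful-edge} yields the edge $P_3 := x^W z_1'$; if $G_{t(-2)}$ is 2-connected, then another application of the \allref{lem:double-wheel-two-fan} inside $G_{t(-2)}$ gives $P_3$.

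The main obstacle we anticipate is independence: we must confirm that the bags used by $P_1, P_2$ (on the $y$-side, at most three bags away from $t(0)$) are disjoint from the single bag used by $P_3$ (on the $x$-side). This amounts to a small case check, using $h(\cO) \ge 4$ and the structural classification it forces on the neighbourhood of $t(0)$, to rule out wrap-around in short cycles; once verified, $P_1, P_2, P_3$ share only the common endvertex~$z_1'$.
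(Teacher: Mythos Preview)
Your overall plan---apply the \allref{lem:double-wheel-two-paths} on the $y$-side and the \allref{lem:double-wheel-two-fan} on the $x$-side---is exactly the paper's approach. The difficulty is the book-keeping that prevents wrap-around, and here your argument has a genuine gap.

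You justify that the end $s$ of $R$ is distinct from $t(0)$ by saying that $R$ has at most three edges while $h(\cO)\ge 4$ forces $|V(O)|\ge 4$. The second inference is fine, but the conclusion does not follow: when $G_{t(1)}$ is a $K_2$ you have $r=t(1)$, and a three-edge $R$ then lands on $t(4)$, which equals $t(0)$ when $|V(O)|=4$. In every such configuration one can check that in fact $h(\cO)<4$, so the bad case does not occur---but this is not what you argued. The paper's fix is clean and avoids any bound on the length of $R$: since $G_{t(0)}$ is 2-connected, the node $t(0)$ is helpful while its two incident edges are not, so $O-t(0)$ contains at least three helpful elements. The minimal $R$ containing two helpful elements distinct from $r$ therefore stops strictly before wrapping back to $t(0)$.

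The same counting resolves independence uniformly, without your deferred ``small case check''. The first helpful element encountered when leaving $t(0)$ in the $x$-direction is exactly the edge $e$ (or the node $t(i-1)$) that your construction of $P_3$ uses. Because $R$ contains only two of the $\ge 3$ helpful elements of $O-t(0)$, this first $x$-side helpful element is not on $R$; hence the bag supporting $P_3$ is disjoint from the bags supporting $P_1,P_2$ (and if $e$ is helpful, $G_{t(i-1)}$ is a $K_2$ with empty interior, so $P_3=x^W z_1'$ is automatically safe even if $s=t(i-1)$). Replacing your $|V(O)|\ge 4$ step with this direct helpful-element count closes both gaps at once.
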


\begin{proof}
    Since $t(0)$ is helpful, \allref{lem:helpful-lemma} implies that
    \begin{equation}\label{lem:double-wheel-applied-three-paths-eq}
        \text{$\ge 3$ elements of $O-t(0)$ are helpful.}
    \end{equation}

    Let $f=t(j)\,t(j+1)$ be the edge on $O$ that has $y^W$ as adhesion-vertex.
    Then $f$ or $t(j+1)$ is helpful by \cref{obs:double-wheel-helpful-is-next}.
    If $t(j)$ is helpful, then $j=0$, and $j=1$ otherwise.
    Let $P$ be the path in $O$ that starts with $t(j),t(j+1)$ and has two helpful elements distinct from $t(j)$ and is minimal with these properties.
    The path $P$ is well-defined since $O-t(0)$ has $\ge 3$ helpful elements by \cref{lem:double-wheel-applied-three-paths-eq}.
    Moreover, the end $s$ of~$P$ is distinct from~$t(0)$.
    By the \allref{lem:double-wheel-two-paths}, we find two independent $y^W$--$z_1'$ paths~$Q_1, Q_2$ with all interior vertices in $\bigcup_{t \in V(\mathring P)} G_{t} \cup \mathring G_{s}$.
    By construction, the paths $Q_1$ and $Q_2$ are contained in~$G[W]$.

    Now let $e=t(i)\,t(i-1)$ be the edge on $O$ that has $x^W$ as adhesion-vertex.
    Then $e$ or $t(i-1)$ is helpful by \cref{obs:double-wheel-helpful-is-next}.
    And if $t(i)$ is helpful, then $i=0$.
    By the \allref{lem:double-wheel-one-path}, we find an $x^W$--$z_1'$ path~$Q_3$ with interior in~$\mathring{G}_{t(i-1)}$.
    Once more, $Q_3\se G[W]$ by construction.

    It remains to show that the three paths $Q_i$ are independent.
    We already know that $Q_1$ and $Q_2$ are independent, so it remains to show that $Q_3$ is independent from $Q_1$ and~$Q_2$.
    If the edge $e$ is helpful, then it is the first helpful element on $O$ following $t(0)$ in the direction of~$e$.
    In particular, $e$ does not lie on~$P$ by~\cref{lem:double-wheel-applied-three-paths-eq}.
    Since the bag $G_{t(i-1)}$ is a~$K_2$, it has empty interior.
    So $Q_3=x^W z'_1$ avoids $\mathring G_s$, even if $s=t(i-1)$, so $Q_3$ is independent from $Q_1$ and $Q_2$.
    Otherwise $t(i-1)$ is helpful.
    Then $t(i-1)$ is the first helpful element on $O$ following $t(0)$ in the direction of~$e$.
    In particular, both edges on $O$ incident to $t(i-1)$ are unhelpful.
    So $t(i-1)$ does not lie on $P$ by \cref{lem:double-wheel-applied-three-paths-eq}.
    Hence $Q_3$ is independent from $Q_1$ and~$Q_2$.
\end{proof}

\begin{figure}[ht]
    \centering
    \includegraphics[height=10\baselineskip]{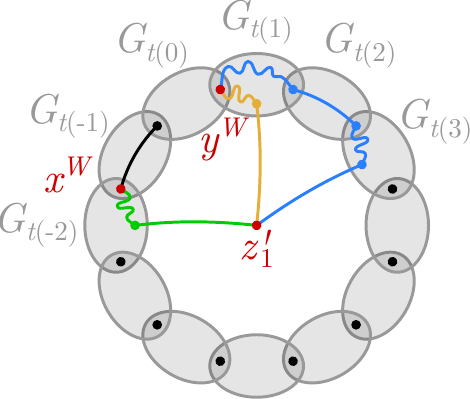}
    \caption{The paths constructed in \cref{lem:double-wheel-applied-three-paths}. $P_1$ is yellow, $P_2$ is blue, $P_3$ is green.}
    \label{fig:double-wheel-three-paths}
\end{figure}

\begin{lemma}
\label{lem:double-wheel-path-shortening}
    Let~$G$ be a graph, let~$u \in U \subseteq V(G)$ and $v \in V \subseteq V(G)$.
    Every $u$--$v$ path in~$G$ contains a $U$--$V$ path.\qed
\end{lemma}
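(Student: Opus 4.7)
The plan is to extract the desired $U$--$V$ path by taking a suitable subpath of the given $u$--$v$ path. The statement is essentially a routine observation about paths meeting two vertex sets, so the proof should be short and self-contained; there is no real obstacle.

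More concretely, let $P$ be a $u$--$v$ path in $G$ and orient it from $u$ to $v$. Since $u \in U$, the set of vertices of $P$ in $U$ is non-empty; let $u'$ be the last vertex of $P$ lying in $U$ along this orientation. Every vertex of $P$ strictly after $u'$ avoids $U$. Since $v \in V$ and $v$ comes weakly after $u'$ on $P$, the set of vertices of $P$ at or after $u'$ that lie in $V$ is non-empty, so we may let $v'$ be the first such vertex. Note $u' \neq v'$ (otherwise $P$ would have a $U \cap V$ vertex, but then $u'=v'$ itself is a trivial $U$--$V$ ``path''; if we insist on paths of length $\geq 1$, we can simply ensure $v' \neq u'$ by taking $v'$ to be the first vertex of $P$ strictly after $u'$ that lies in $V$, which exists because $v \in V$ lies strictly after $u'$ unless $u'=v$, in which case $u=u'=v$ is itself in $U \cap V$ and the length-$0$ path suffices).

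Then the subpath $P'$ of $P$ from $u'$ to $v'$ is a $U$--$V$ path contained in $P$: its initial vertex $u'$ lies in $U$, its terminal vertex $v'$ lies in $V$, and by the choice of $u'$ and $v'$ no interior vertex of $P'$ lies in $U \cup V$. This completes the argument.
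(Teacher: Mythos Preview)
Your argument is correct and is the standard one; the paper gives no proof at all (the statement is marked with \qed\ as a triviality). Your parenthetical worry about $u'=v'$ is unnecessary: under the usual convention a single vertex in $U\cap V$ is a valid (trivial) $U$--$V$ path, so you can simply let $v'$ be the first vertex of $P$ in $V$ at or after $u'$ and be done.
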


In the following, we characterise when $(U,W)$ is a totally-nested tetra-separation.\medskip

\begin{center}
    \textbf{Half-connected}
\end{center}

\begin{lemma}
    \label{lem:double-wheel-half-connected}
    Assume the~\allref{set:bag-setting}. Then $(U,W)$ is half-connected.
\end{lemma}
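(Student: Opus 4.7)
The plan is to show that $G[W\setminus U]$ is connected, which yields that $(U,W)$ is half-connected.

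First, I would describe the set $W\setminus U$ explicitly. By (the symmetric analogue of) \cref{obs:reduction-maintains-proper-sides}, the right-left-shift satisfies $W\setminus U\supseteq W'\setminus U' = \bigcup_{s\in V(O-t(0))} V(G_s)\setminus\{x',y'\}$. Any additional vertex of $W\setminus U$ must lie in $U'\cap W'\subseteq\{x',y',z'_1,z'_2\}$, having been removed from $U'$ by the left-shift (or $W'$ by the right-shift) in a way that leaves it on the $W$-side only.

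Next, I would establish that $G[W'\setminus U']$ is connected. Writing $O-t(0)=t(1)\,t(2)\,\ldots\,t(m)$ with $t(m)=t(-1)$, the adhesion-vertex $u_i\in V(G_{t(i)})\cap V(G_{t(i+1)})$ is distinct from both $x'$ and $y'$ for every $1\le i<m$ by \cref{obs:block-bagel-disjoint-adhesion}. For each $i$, the graph on $V(G_{t(i)})\setminus\{x',y'\}$ is connected: inner bags ($1<i<m$) contain neither $x'$ nor $y'$, while the boundary bag $G_{t(1)}$ loses only $y'$ and $G_{t(m)}$ loses only $x'$, and each bag is either a $K_2$ (reducing to a single vertex) or 2-connected (remaining connected after one vertex deletion). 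Stitching along the shared adhesion-vertices $u_i\in W'\setminus U'$ yields connectedness of $G[W'\setminus U']$.

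Finally, I would verify that every extra vertex of $W\setminus U$ has a neighbour in $W'\setminus U'$. For $z_i'$, \cref{cor:double-wheel-reduced-edges} supplies at least three neighbours outside $Z'$ in $W'\setminus U'$. For $x'$, I would use $x'\in V(G_{t(-1)})$: if $G_{t(-1)}$ is a $K_2$, its second vertex is the adhesion-vertex of $t(-2)\,t(-1)$, which is distinct from $y'$ by \cref{obs:block-bagel-disjoint-adhesion} and hence lies in $W'\setminus U'$; if $G_{t(-1)}$ is 2-connected, then $x'$ has at least two neighbours in $G_{t(-1)}$, at least one distinct from $y'$, and this neighbour lies in $W'\setminus U'$. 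The case $v=y'$ is symmetric. Hence $G[W\setminus U]$ is connected.

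The main obstacle is the bookkeeping around the right-left-shift to pin down which vertices from $\{x',y',z'_1,z'_2\}$ end up in $W\setminus U$, combined with the edge cases of short decomposition-cycles $O$ and small bags (where $x'$ and $y'$ might coexist in the same neighbouring bag). The connectivity argument itself is routine given the block-bagel structure.
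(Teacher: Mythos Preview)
Your proof is correct and follows the same overall approach as the paper: both arguments show that $G[W\setminus U]$ is connected. The paper anchors at a vertex in the corner $BD$ and invokes the \allref{lem:double-wheel-one-path} (together with \cref{lem:three-y1-y2-paths} for the centre vertices), whereas you argue connectedness of $G[W'\setminus U']$ directly bag-by-bag and then attach the extra vertices from $U'\cap W'$; these are minor differences in execution rather than substance. One small remark: your concern that $x'$ and $y'$ might coexist in a neighbouring bag is unfounded once you note (via the \allref{lem:helpful-lemma}) that $h(\cO)\ge 4$ forces $|O|\ge 4$, so $t(1)\ne t(-1)$ and the edge cases you flag do not arise.
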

\begin{proof}
    We show that~$G[W \sm U]$ is connected.
    Without loss of generality, assume that the interior of~$G_{t(0)}$ is contained in the corner~$AC$. 
    Let~$v$ be a vertex in the corner~$BD$, which exists by \cref{wheelCornersNonempty}.
    Let $t(i)$ be a node of $O$ with $v\in G_{t(i)}$.

    We claim that every vertex $w$ in $W$ is connected to~$v$ in~$G[W \sm U]$.
    Since~$G_{t(i)} \subseteq G[W \sm U]$, every vertex in~$G_{t(i)}$ is connected to~$v$ in~$G[W \sm U]$.
    If $w$ does not lie in the centre~$Z'$, then we are done by the \allref{lem:double-wheel-one-path}.
    Otherwise $w=z'_1$, say.
    Then $w$ has a neighbour in $W\sm Z'$ by \cref{lem:three-y1-y2-paths}, which we already know is linked to $v$ in $G[W\sm U]$.     
    Hence $(U,W)$ is half-connected.
\end{proof}

\begin{center}
    \textbf{3-linked}
\end{center}

\begin{lemma}
\label{lem:double-wheel-3-linked}
    Assume the~\allref{set:bag-setting} with $G_{t(0)}$ 2-connected. 
    Then $(U,W)$ is 3-linked.
\end{lemma}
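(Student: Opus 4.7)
The plan is to verify that each of the six pairs of distinct elements of $S(U,W)=\{x,y,z_1,z_2\}$ is 3-linked around $S(U,W)$, as in \cref{dfn:3-linked}. By \cref{cor:double-wheel-reduced-edges}, both $z_1=z_1'$ and $z_2=z_2'$ are vertices, so option \cref{itm:3-linked-1} is available whenever the ``other two'' elements of a pair contain $x$ or $y$ as an edge. This reduces the work to three cases: the pair $\{z_1,z_2\}$, the four pairs containing exactly one of $z_1,z_2$ (handled together by symmetry), and the pair $\{x,y\}$.

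For $\{z_1,z_2\}$, the three independent $z_1'$--$z_2'$ paths furnished by \cref{lem:three-y1-y2-paths} lie in $G[W]-\{x',y'\}$, hence avoid $x$ and $y$ in all cases (vertex-versions of $x,y$ equal $x',y'$, and edge-versions leave $W$ and cannot be used by paths contained in $G[W]$); this verifies \cref{itm:3-linked-3}.

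For the four pairs containing exactly one of $z_1,z_2$, the left--right symmetry of the \allref{set:bag-setting} reduces the analysis to $\{y,z_1\}$. If $x$ is an edge, \cref{itm:3-linked-1} settles it, so assume $x=x'=x^W\in V(G_{t(0)})$. Invoke \cref{lem:double-wheel-applied-three-paths} to obtain two independent $y^W$--$z_1'$ paths $P_1,P_2 \subseteq G[W]$; since they are independent from the lemma's third path $P_3$ starting at $x^W$, they avoid $x=x^W$, and since their internal vertices lie in bags of $\cO$ (which are disjoint from $Z'$), they avoid $z_2=z_2'$. For a third $\hat y$--$\hat z_1$ path in $G-x-z_2$, use \cref{lem:path-from-helpful-vertex} to pick a neighbour $w$ of $z_1'$ in $\mathring G_{t(0)}$, then use the 2-connectedness of $G_{t(0)}$ to select a $y'$--$w$ path avoiding $x'$ (of the two internally disjoint $y'$--$w$ paths in $G_{t(0)}$, at most one passes through $x'$; the triangle-case is trivial), and append the edge $wz_1'$. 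The interior of this path lies in $\mathring G_{t(0)}\subseteq U\setminus W$, making it internally disjoint from $P_1,P_2\subseteq G[W]$, and its endpoints $y',z_1'$ lie in $\hat y$ and $\hat z_1$ respectively; this gives \cref{itm:3-linked-3}.

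For $\{x,y\}$, we produce three independent $\hat x$--$\hat y$ paths in $G-z_1-z_2$. The 2-connectedness of $G_{t(0)}$ supplies two internally disjoint $x'$--$y'$ paths inside $G_{t(0)}\subseteq G-Z'$, which extend to $\hat x$--$\hat y$ paths (possibly prepended or appended by the edge $x$ or $y$ when these edges live in $G_{t(-1)}$ or $G_{t(1)}$). For the third path, one unfolds the right-left-shift to confirm that $x^W\in V(G_{t(-1)})$ and $y^W\in V(G_{t(1)})$ in every case, and then applies \cref{lem:double-wheel-one-path} to the subpath $P=t(1)t(2)\cdots t(-1)$ of $O$ (of length $\ge 2$ since $|O|\ge 4$) with endpoints $y^W$ and $x^W$; the resulting path lies in $\bigcup_{t\ne t(0)}G_t\subseteq G-Z'$ and is internally disjoint from the $G_{t(0)}$-paths. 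The main obstacle throughout is the bookkeeping for whether each of $x,y$ is a vertex or an edge, and in the edge-case whether that edge lives in $G_{t(0)}$ or the adjacent bag, since this determines the precise location of $x^W,y^W$ and how the constructed paths attach to the separator elements; verifying independence from the two $G_{t(0)}$-paths when the endpoints overlap requires systematically tracking all of these sub-cases.
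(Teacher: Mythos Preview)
Your proof is correct and follows essentially the same approach as the paper: the same three-way case split, with \cref{lem:three-y1-y2-paths} for $\{z_1,z_2\}$, two paths in $G_{t(0)}$ plus the \allref{lem:double-wheel-one-path} for $\{x,y\}$, and \cref{lem:double-wheel-applied-three-paths} for the mixed pairs. The only cosmetic differences are that the paper treats $\{x,z_1\}$ rather than $\{y,z_1\}$ (invoking the $x\leftrightarrow y$-symmetric version of \cref{lem:double-wheel-applied-three-paths} to get two $x^W$--$z_1'$ paths) and obtains the inside path via the \allref{lem:double-wheel-two-fan} instead of your direct construction from \cref{lem:path-from-helpful-vertex} and $2$-connectedness; also, the paper uses \cref{lem:double-wheel-path-shortening} to pass from $x'$--$y'$ paths to $\hat x$--$\hat y$ paths, whereas your ``extend by the edge'' remark is unnecessary since $x'\in\hat x$ and $y'\in\hat y$ already.
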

\begin{proof}
    Let $\pi$ consist of two elements of $S(U,W)$.
    We show that $\pi$ is 3-linked around $S(U,W)$.

    \casen{Case $\pi=\{x,y\}$.}
    We may assume that both $z_1,z_2$ are vertices, by \ref{itm:3-linked-1}.
    Since the bag $G_{t(0)}$ is 2-connected, it contains two independent $x'$--$y'$ paths.
    By \cref{lem:double-wheel-path-shortening}, these paths contain two independent $\hat x$--$\hat y$ paths which are also included in the bag~$G_{t(0)}$.
    We find a third $\hat x$--$\hat y$ path $G$ that avoids both the bag $G_{t(0)}$ and the centre, by the \allref{lem:double-wheel-one-path}.
    So we are done by \ref{itm:3-linked-3}.

    \casen{Case $\pi=\{z_1,z_2\}$.}
    We find three independent $z'_1$--$z'_2$ paths in $G-x-y$ by \cref{lem:three-y1-y2-paths}.
    These contain three independent $\hat z_1$--$\hat z_2$ paths by \cref{lem:double-wheel-path-shortening}.
    So we are done by \ref{itm:3-linked-3}.

    \casen{Case $\pi=\{x,z_1\}$.}
    We find an $x'$--$z'_1$ path with interior in $\mathring{G}_{t(0)}$, by the \allref{lem:double-wheel-two-fan}.
    This path contains a $\hat x$--$\hat z_1$ path~$P_1$, by \cref{lem:double-wheel-path-shortening}.
    Next, we find two independent $x^W$--$z'_1$ paths $P_2$ and $P_3$ in $G$ that internally avoid $G_{t(0)}$, by \cref{lem:double-wheel-applied-three-paths}.
    Both paths $P_2$ and $P_3$ are $\hat x$--$\hat z_1$ paths by \cref{cor:double-wheel-reduced-edges}.
    Hence the $P_i$ witness \ref{itm:3-linked-3}.

    All remaining cases are symmetric to the case $\pi=\{x,z_1\}$.
\end{proof}

\begin{center}
    \textbf{0-potter-linked}
\end{center}

\begin{lemma}
\label{lem:double-wheel-0-potter-linked}
    Assume the~\allref{set:bag-setting} with $G_{t(0)}$ good.
    Then $(U,W)$ is 0-potter-linked.
\end{lemma}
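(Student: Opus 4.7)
The plan is to verify \cref{dfn:0-potter-linked} for every balanced bipartition $\{\pi_1,\pi_2\}$ of $S(U,W)=\{x,y,z_1,z_2\}$. By \cref{cor:double-wheel-reduced-edges}, $z_i^W=z_i'$ for both $i$, so $z_i'\in\hat z_i$ in all cases; and by the symmetries $z_1\leftrightarrow z_2$ and $x\leftrightarrow y$ intrinsic to the \allref{set:bag-setting}, it will suffice to handle two representative bipartitions: $\pi_1=\{x,y\}$, $\pi_2=\{z_1,z_2\}$, and $\pi_1=\{x,z_1\}$, $\pi_2=\{y,z_2\}$. In each case I will aim at condition \ref{itm:0-potter-linked-2} by producing five internally disjoint $\hat\pi_1$--$\hat\pi_2$ paths, split into a family living inside the good bag $G_{t(0)}$ and a family living in $G[W]$; these two families are automatically mutually internally disjoint because the bag-family has interior in $\mathring G_{t(0)}\subseteq V(G)\sm W$ while the $G[W]$-family avoids $\mathring G_{t(0)}$ by definition.

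For the first bipartition I would assemble the five paths as follows. The $G[W]$-family consists of three $\{x^W,y^W\}$--$z_1'$ paths supplied directly by \cref{lem:double-wheel-applied-three-paths}. The bag-family consists of two paths obtained by exploiting that $G_{t(0)}$ is 2-connected and each $z_i'$ has a neighbour in $\mathring G_{t(0)}$ by \cref{lem:path-from-helpful-vertex}: in the sub-case $|G_{t(0)}|\ge 4$ we pick distinct interior neighbours $w_1,w_2$ of $z_1',z_2'$ and apply Menger inside $G_{t(0)}$ to obtain two internally disjoint $\{x',y'\}$--$\{w_1,w_2\}$ paths, which extend by the edges $w_i z_i'$ to terminate in $\{z_1',z_2'\}\subseteq\hat\pi_2$. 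This gives the required five paths.

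For $\pi_1=\{x,z_1\}$, $\pi_2=\{y,z_2\}$ I would analogously decompose the five paths into two bag-interior paths and three $G[W]$-paths. The bag-interior paths come from 2-connectivity of $G_{t(0)}$ and distinct interior neighbours $w_1,w_2$: one $x'$--$y'$ path and one $z_1'$--$z_2'$ path (routed via $w_1$ and $w_2$), chosen internally disjoint by a Menger-style argument inside $G_{t(0)}$. For the $G[W]$-family I would walk around the two arcs of $O-t(0)$: apply the \allref{lem:double-wheel-two-paths} on one arc to get a path between $\{x^W,z_1'\}$ and $\{y^W,z_2'\}$ that uses a helpful element on that arc (combined with the edges $z_i'$ sends into helpful edges by \cref{lem:path-from-helpful-edge} and the interior neighbours from \cref{lem:path-from-helpful-vertex}); do the same on the other arc; and obtain a third path using \cref{lem:three-y1-y2-paths} combined with the \allref{lem:double-wheel-one-path} to connect $x^W$ and $y^W$ directly through $G[W]$.

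The main obstacle will be the triangle-plus-$K_2$-neighbour sub-case of ``good'', where $|\mathring G_{t(0)}|=1$ and so we cannot pick two distinct interior neighbours $w_1\neq w_2$. In that sub-case \cref{obs:double-wheel-out-reduction} forces the $K_2$-neighbour, say $G_{t(1)}$, to contribute $y$ as an \emph{edge}, so $\hat y$ contains the extra vertex in $V(G_{t(1)})\sm\{y^W\}$; this extra vertex, together with the edges from the $K_2$-bag and the helpful element immediately on the far side of $G_{t(1)}$ (which exists by the \allref{lem:helpful-lemma}), will substitute for the second bag-interior path that the triangle cannot provide. Once both bipartitions and both sub-cases of ``good'' are handled, \cref{dfn:0-potter-linked} is verified and the lemma follows.
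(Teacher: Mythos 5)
Your overall strategy (check every balanced bipartition, aim for five paths split between the bag and $G[W]$) is the right shape, and your treatment of the first bipartition when $|G_{t(0)}|\ge 4$ coincides with the paper's. But there are two genuine gaps.

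First, the triangle subcase of $\pi_1=\{x,y\}$, $\pi_2=\{z_1,z_2\}$. Your proposed fifth path leaves $y'$ through the $K_2$-bag $G_{t(1)}$ and then uses ``the helpful element immediately on the far side of $G_{t(1)}$''. But the three paths you import from \cref{lem:double-wheel-applied-three-paths} are built precisely by starting at the edge of $O$ whose adhesion-vertex is $y^W$ and consuming the first helpful elements in that direction (two of the three paths are $y^W$--$z_1'$ paths routed exactly there), so your fifth path collides with them. The paper does not attempt five paths here at all: it verifies condition \ref{itm:0-potter-linked-1} instead, showing that $y'$ sends an edge to some $z_i'$ and---this is the delicate part you omit entirely---that this $z_i'$ survives the right-left-shift as a \emph{vertex} $z_i\in\pi_2$, so that $y'z_i$ is a $\hat\pi_1$--$\pi_2$ edge. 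Without that verification the edge only lands in $\hat\pi_2$, which does not satisfy \ref{itm:0-potter-linked-1}. A five-path argument can be salvaged here, but only by using the direct edges from the separator vertices to the $z_i'$, i.e.\ by proving the same facts the paper proves for condition (1); your routing does not do this.

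Second, the bipartition $\pi_1=\{x,z_1\}$, $\pi_2=\{y,z_2\}$. You place one $x'$--$y'$ path and one $z_1'$--$z_2'$ path inside $G_{t(0)}$ and ask that they be ``chosen internally disjoint by a Menger-style argument''. That is a 2-disjoint-paths (linkage) problem with prescribed terminal pairing, which 2-connectivity of $G_{t(0)}$ does not solve; and the $z_1'$--$z_2'$ path through $\mathring G_{t(0)}$ needs two distinct interior neighbours $w_1\ne w_2$, which fail to exist when $G_{t(0)}$ is a triangle. The paper's allocation avoids both problems: it takes \emph{both} $x'$--$y'$ paths inside $G_{t(0)}$ (guaranteed by 2-connectivity, even for a triangle) and \emph{all three} $z_1'$--$z_2'$ paths outside, in $G[W]-\{x',y'\}$, directly from \cref{lem:three-y1-y2-paths}, then shortens with \cref{lem:double-wheel-path-shortening}. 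You should restructure this case accordingly.
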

\begin{proof}
    Let $\{\pi_1,\pi_2\}$ be a balanced bipartition of $S(U,W)$.

    \casen{Case $\pi_1 = \{x,y\}$ and $\pi_2=\{z_1,z_2\}$}.
    Suppose first that~$G_{t(0)}$ is a triangle with a neighbouring bag that is a~$K_2$; say $G_{t(1)}$ is a $K_2$.
    Let $v$ denote the vertex of the triangle $G_{t(0)}$ besides $x'$ and~$y'$.
    Since $G$ is 4-connected, $v$ sends edges to both vertices $z'_i$ in the centre.
    Similarly, the adhesion-vertex $y'$ in between the triangle-bag $G_{t(0)}$ and the $K_2$-bag $G_{t(1)}$ sends at least one edge to some vertex in the centre, say to~$z'_1$.
    
    We claim that $z'_1=z_1$.
    On the one hand, $z'_1$ has $\ge 3$ neighbours in $W'\sm U'$ by \cref{cor:double-wheel-reduced-edges}, so $z'_1$ remains in the separator when taking the right-reduction $(U'',W'')$ of $(U',W')$.
    However, $y'$ becomes the edge $y$ of the $K_2$-bag $G_{t(1)}$ in the separator $S(U'',W'')$ by \cref{obs:double-wheel-out-reduction}.
    Hence $z'_1$ has two neighbours $y'$ and $v$ in $U''\sm W''$.
    Thus $z'_1$ also remains in the separator when taking the left-reduction $(U,W)$ of $(U'',W'')$.
    In particular, $z'_1=z_1$.

    So $z_1$ is a vertex.
    Recall that $z_1$ sends an edge to $y'\in\hat\pi_1$.
    Hence we have~\ref{itm:0-potter-linked-1}.
    
    Suppose otherwise that $G_{t(0)}$ is 2-connected and has at least four vertices.
    We will find five independent $\hat \pi_1$--$\hat \pi_2$ paths in~$G$ for \ref{itm:0-potter-linked-2}.
    There exist two disjoint $\{z'_1,z'_2\}$--$\mathring G_{t(0)}$ paths $P_1,P_2$ by \cref{lem:path-from-helpful-vertex}.
    Since $G_{t(0)}$ is 2-connected, we can extend the paths $P_1,P_2$ to two disjoint $\{z'_1,z'_2\}$--$\{x',y'\}$ paths with interior in $\mathring G_{t(0)}$.
    We find three independent $\{x_1^W,x_2^W\}$--$\{z_1',z_2'\}$ paths in $G[W]$ by \cref{lem:double-wheel-applied-three-paths}.
    The five paths are independent, and they contain five $\hat\pi_1$--$\hat\pi_2$ paths by \cref{lem:double-wheel-path-shortening}.

    \casen{Case $\pi_1=\{x,z_1\}$ and $\pi_2=\{y,z_2\}$}.
    Here it suffices to find five independent $\hat\pi_1$--$\hat\pi_2$ paths in $G$ for \ref{itm:0-potter-linked-2}.
    There are two independent $x'$--$y'$ paths in $G_{t(0)}$ by 2-connectivity.
    We find three independent $z'_1$--$z'_2$ paths in $G[W]-\{x',y'\}$ by \cref{lem:three-y1-y2-paths}.
    The latter three paths are internally disjoint from the former two.
    By \cref{lem:double-wheel-path-shortening}, the five paths contain five independent $\hat\pi_1$--$\hat\pi_2$ paths.

    All remaining cases are symmetric to one of the two cases above.
\end{proof}

\begin{center}
    \textbf{1-potter-linked and 2-potter-linked}
\end{center}

\begin{lemma}
\label{lem:double-wheel-weird}
    Assume the~\allref{set:bag-setting}. 
    Then no pair of two elements of $S(U,W)$ is $W$-weird.
\end{lemma}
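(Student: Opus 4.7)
The plan is to unpack the $W$-weirdness condition into a neighbour count. A pair $\pi=\{v,e\}$ with $v$ a vertex and $e = ab$ an edge (say $a \in W \sm U$, $b \in U \sm W$) is $W$-weird iff $v$ has exactly two neighbours in $W$, one being $a$. Since the degree-condition gives $v$ at least two neighbours in $W \sm U$, in each case it suffices either to exhibit a third neighbour of $v$ in $W$ or to show $a \notin N_G(v)$. Pairs with $v \in \{z_1,z_2\}$ are immediate from \cref{cor:double-wheel-reduced-edges}, which supplies at least three neighbours in $W \sm U$. By the $x \leftrightarrow y$ symmetry of \allref{set:bag-setting}, the remaining case is $v = y$ a vertex and $e = x$ an edge, where the edge $x$ arises from shifting $x'$ either via the right-shift (requiring $G_{t(-1)}$ to be a $K_2$, so $a = u$ is its other vertex) or via the left-shift (requiring $G_{t(-1)}$ to be 2-connected, so $a = x'$).

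In the left-shift case, the defining condition of the left-shift ($x'$ has at most one neighbour in $U'' \sm W''$) combined with 2-connectivity of the good bag $G_{t(0)}$ (from \cref{lem:double-wheel-UW-is-tetra}) forces $x'y' \in E(G)$. After shifting, $x' \in W \sm U$ is one neighbour of $y'$ in $W$, and 2-connectivity of $G_{t(1)}$ (which holds because $y$ is a vertex) supplies two more via $G_{t(1)} - y' \subseteq W \sm U$; so $y'$ has at least three neighbours in $W$, excluding $W$-weirdness.

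In the right-shift case I aim to show $u \notin N_G(y')$ via the block-bagel structure: the edge $y'u$, if it existed, would lie in $G - Z'$ (since $y', u \notin Z'$) and hence in some bag $G_s$ of $\cO$, but by \cref{obs:block-bagel-disjoint-adhesion} each vertex lies in at most two bags, giving $y' \in V(G_{t(0)}) \cap V(G_{t(1)})$ and $u \in V(G_{t(-1)}) \cap V(G_{t(-2)})$, which forces $|V(O)| \le 3$. The main obstacle is excluding these short-cycle configurations. For $|V(O)| = 2$, $G_{t(-1)} = G_{t(1)}$ is a $K_2$, so $y$ would also be an edge---contradicting our assumption. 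For $|V(O)| = 3$ the configuration is pinned down ($G_{t(-1)}$ a $K_2$, $G_{t(0)}$ good, $G_{t(1)}$ 2-connected), whereupon only $t(0)$ and $t(1)$ are helpful; this gives $h(\cO) = 2$, contradicting the conclusion $h(\cO) \ge 4$ of the \allref{lem:helpful-lemma}.
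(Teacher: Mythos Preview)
Your case analysis is incomplete. After disposing of pairs with $v\in\{z_1,z_2\}$, you claim that by $x\leftrightarrow y$ symmetry the only remaining case is $v=y$, $e=x$. But this overlooks the pairs $\{y,z_1\}$ and $\{y,z_2\}$ (and their $x$-analogues) in which the vertex is $y$ and the \emph{edge} is some $z_i$. \cref{cor:double-wheel-reduced-edges} only tells you that when $z_i$ is the vertex it has many neighbours in $W\sm U$; it says nothing when $z_i$ is the edge and $y$ (or $x$) is the vertex. This is precisely the paper's first case $\pi=\{x,z_1\}$: one argues that $z_1$ must be the edge, so $x$ is the vertex, and then $W$-weirdness would force $G_{t(-1)}$ to be a $K_2$---contradicting \cref{obs:double-wheel-out-reduction}, which would make $x$ an edge. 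The fix is short, but as written your proof has a genuine gap.

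A smaller issue: you justify 2-connectivity of $G_{t(0)}$ by calling it ``good'' via \cref{lem:double-wheel-UW-is-tetra}, but that lemma needs $(U,W)$ to be a tetra-separation, which is not assumed in the bag setting. The fact you actually need ($G_{t(0)}$ 2-connected) follows instead from $y$ being a vertex in $S(U,W)$: this forces $y'$ to survive both shifts, hence to have $\ge 2$ neighbours in $U''\sm W''\subseteq V(G_{t(0)})$.

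Even for the case you do treat, the paper's argument is much shorter: if $x$ is the vertex and $y$ the edge, then $G_{t(-1)}$ is 2-connected by \cref{obs:double-wheel-out-reduction}, giving $x=x'$ two neighbours in $G_{t(-1)}$; the \allref{lem:helpful-lemma} then guarantees these neighbours are distinct from the endvertices of $y$ (since $h(\cO)\ge 4$ forces $|O|\ge 4$), and they lie in $W$. Your split into right-shift versus left-shift origin of the edge $x$, with separate arguments for short cycles, works but is more laborious.
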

\begin{proof}
    Let $\pi$ consist of two elements of $S(U,W)$.
    Assume for a contradiction that $\pi$ is $W$-weird.

    \casen{Assume $\pi=\{x,z_1\}$.}
    The vertex $z'_1$ in the centre has $\ge 3$ neighbours in $W'\sm U'$ by \cref{cor:double-wheel-reduced-edges}.
    Hence $z^W_1=z'_1$.
    Since the $\ge 3$ neighbours also lie in $W\sm U$ by \cref{obs:reduction-maintains-proper-sides}, we further get that $z_1$ must be the edge in~$\pi$.
    Since $z_1$ is an edge, $x$ is a vertex which has exactly one neighbour in $G[W]-z_1^W$.
    Hence $G_{t(-1)}$ is a~$K_2$.
    But then $x$ is the edge of $G_{t(-1)}$ by \cref{obs:double-wheel-out-reduction}, a contradiction.

    \casen{Assume $\pi=\{x,y\}$.}
    Say $x$ is a vertex and $y$ is an edge.
    Then $x=x'$, so $G_{t(-1)}$ is 2-connected by \cref{obs:double-wheel-out-reduction}.
    Hence $x$ has two neighbours $u,v$ in $G_{t(-1)}$.
    By the \allref{lem:helpful-lemma}, neither $u$ nor $v$ is an endvertex of the edge~$y$.
    Hence $\{x,y\}$ is not $W$-weird, a contradiction.

    All other cases are covered by symmetry.
\end{proof}

\begin{lemma}
    \label{lem:double-wheel-potter-linked}
    Assume the~\allref{set:bag-setting} with $G_{t(0)}$ 2-connected. Then $(U,W)$ is 1-potter-linked and 2-potter-linked.
\end{lemma}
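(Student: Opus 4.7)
The plan is to reduce both claims to the already-established \cref{lem:double-wheel-weird} together with the two path-building lemmata \cref{lem:three-y1-y2-paths} and \cref{lem:double-wheel-applied-three-paths}, and then to perform a single case split on the shape of the balanced bipartition of $S(U,W)$.

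The 2-potter-linked statement will be a formal consequence of \cref{lem:double-wheel-weird}: for any balanced bipartition $\{\pi_1,\pi_2\}$ of $S(U,W)$, checking the definition for $(X,Y)=(U,W)$ amounts to observing that $\pi_2$ is not $W$-weird, which gives \ref{itm:2-potter-linked-2}; the case $(X,Y)=(W,U)$ is handled symmetrically via $\pi_1$ not being $W$-weird.

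For the 1-potter-linked claim, I will fix a balanced bipartition $\{\pi_1,\pi_2\}$ of $S(U,W)$, an index $i\in\{1,2\}$, and a choice $(X,Y)\in\{(U,W),(W,U)\}$. When $X=W$, \cref{lem:double-wheel-weird} immediately yields \ref{itm:1-potter-linked-1}, so only the case $X=U$ requires real work, and there only when $\pi_i$ is $U$-weird (otherwise \ref{itm:1-potter-linked-1} again applies). In this remaining case the target is to produce three independent $\hat\pi_i$--$\hat\pi_{3-i}$ paths inside $G[W]$, witnessing \ref{itm:1-potter-linked-3}. The balanced bipartitions of $S(U,W)=\{x,y,z_1,z_2\}$ come in two shapes: either $\{\{x,y\},\{z_1,z_2\}\}$, or one of the two bipartitions that split $\{z_1,z_2\}$ across the classes.

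In the first shape, \cref{lem:double-wheel-applied-three-paths}---which is applicable because $G_{t(0)}$ is 2-connected---will supply three independent $\{x^W,y^W\}$--$z_1'$ paths in $G[W]$. Since $z_1^W=z_1'$ by \cref{cor:double-wheel-reduced-edges}, these constitute three independent $\hat\pi_i$--$\hat\pi_{3-i}$ paths in $G[W]$, independently of which class plays the role of $\pi_i$. In the second shape, \cref{lem:three-y1-y2-paths} will directly yield three independent $z_1'$--$z_2'$ paths in $G[W]-\{x',y'\}\subseteq G[W]$; using $z_j^W=z_j'$ once more, these are the desired three independent $\hat\pi_i$--$\hat\pi_{3-i}$ paths. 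No serious obstacle remains: the delicate content has been isolated in \cref{lem:double-wheel-weird}, \cref{lem:three-y1-y2-paths}, and \cref{lem:double-wheel-applied-three-paths}, and the only book-keeping will be ensuring that the path endpoints supplied by those lemmata land in $\hat\pi_i\cap W$ and $\hat\pi_{3-i}\cap W$, which follows from $z_i^W=z_i'$ together with the paths already living in $G[W]$.
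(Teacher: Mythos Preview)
Your proposal is correct and follows essentially the same route as the paper: invoke \cref{lem:double-wheel-weird} to dispatch 2-potter-linkedness and the $X=W$ case of 1-potter-linkedness, then for $X=U$ split on the shape of the bipartition and use \cref{lem:double-wheel-applied-three-paths} for $\{\{x,y\},\{z_1,z_2\}\}$ and \cref{lem:three-y1-y2-paths} when $z_1,z_2$ are separated. The only thing you leave implicit is that the paths produced by those lemmata are $\{x^W,y^W\}$--$z_1'$ and $z_1'$--$z_2'$ paths rather than $\hat\pi_1$--$\hat\pi_2$ paths in the strict sense; the paper handles this by citing \cref{lem:double-wheel-path-shortening}, and you should too (independence survives shortening since the subpaths inherit disjoint interiors).
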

\begin{proof}
    Let $\{\pi_1,\pi_2\}$ be a balanced bipartition of $S(U,W)$. 
    Neither~$\pi_i$ is $W$-weird by \cref{lem:double-wheel-weird}.
    In particular, $\{\pi_1,\pi_2\}$ is 2-potter-linked.
    
    To show that $\{\pi_1,\pi_2\}$ is 1-potter-linked, it suffices to find three independent $\hat \pi_1$--$\hat \pi_2$ paths in~$G[W]$.
    If one of the $\pi_i$ is of the form $\{x,y\}$, then we find three such paths by \cref{lem:double-wheel-applied-three-paths} and \cref{lem:double-wheel-path-shortening}.
    Otherwise $z_1\in\pi_1$ and $z_2\in\pi_2$, say.
    Then we find three desired paths by \cref{lem:three-y1-y2-paths} and \cref{lem:double-wheel-path-shortening}.
\end{proof}

\begin{center}
    \textbf{Summary}
\end{center}

\begin{keylemma}
    \label{cor:double-wheel-totally-nested}
    Assume the~\allref{set:block-bagel}.
    Let $t$ be a node of~$O$.
    If the bag $G_t$ is good, then $(U_t,W_t)$ is a totally-nested tetra-separation.
\end{keylemma}
\begin{proof}
    We may assume the \allref{set:bag-setting} with~$t=t(0)$.
    Then $(U_t,W_t)$ is a tetra-separation by \cref{lem:double-wheel-UW-is-tetra}.
    It suffices to show that $(U_t,W_t)$ is externally 5-connected, by \cref{keylem:nestedness-external-connectivity}.
    This follows from \cref{lem:double-wheel-half-connected} (half-connected), \cref{lem:double-wheel-3-linked} (3-linked), \cref{lem:double-wheel-0-potter-linked} (0-potter-linked) and \cref{lem:double-wheel-potter-linked} (1-potter-linked and 2-potter-linked).
\end{proof}

\subsection{Splitting stars and torsos}\label{sec:WheelSplitting}

\begin{definition}[Tetra-star $\sigma(\cO)$]
    Assume the \allref{set:block-bagel}.
    The \defn{tetra-star} of~$\cO$ is the set of all $(U_t,W_t)$ for good bags~$G_t$ of~$\cO$.
    We denote the tetra-star of~$\cO$ by $\defnMath{\sigma(\cO)}$.
\end{definition}

\begin{lemma}
\label{lem:double-wheel-star}
    Assume the \allref{set:block-bagel}.
    Then $\sigma(\cO)$ is a star of totally-nested tetra-separations.
\end{lemma}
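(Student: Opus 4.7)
The plan is to decompose the statement into two parts: every element of $\sigma(\cO)$ is a totally-nested tetra-separation, and any two are nested. The first part is immediate from \cref{cor:double-wheel-totally-nested}, so the real work is to show that for any two distinct good nodes $s,t$ of $O$ one has $(U_s,W_s) \le (W_t,U_t)$, equivalently $U_s \subseteq W_t$ (the reverse inclusion then follows by swapping $s$ and $t$).

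I would begin with the easy un-shifted containment: since $U'_s = V(G_s) \cup Z' \subseteq Z' \cup \bigcup_{r \neq t} V(G_r) = W'_t$, we get $(U'_s,W'_s) \le (W'_t,U'_t)$ directly. The next step is to push this nestedness through the right-left-shift. Writing $W_t = W'_t \setminus R_t$, where $R_t$ is the set of vertices in $S(U'_t,W'_t) = \{x'_t,y'_t,z'_1,z'_2\}$ with at most one neighbour in $W'_t \setminus U'_t$, \cref{cor:double-wheel-reduced-edges} forces $z'_1,z'_2 \notin R_t$, so $R_t \subseteq \{x'_t,y'_t\} \subseteq V(G_t)$. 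Hence it suffices to check that $U_s \cap R_t = \emptyset$, and since $U_s \subseteq V(G_s) \cup Z'$ with $Z' \cap V(G_t) = \emptyset$, the only case to rule out is a vertex $v \in V(G_s) \cap V(G_t)$.

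By \cref{obs:block-bagel-disjoint-adhesion}, such a $v$ can exist only when $s$ and $t$ are adjacent in $O$ and $v$ is their unique shared adhesion vertex. This is where the goodness of $G_s$ enters: a good bag is 2-connected, so $v$ has at least two neighbours in $V(G_s) \setminus \{v\}$, and these neighbours lie outside $V(G_t) \cup Z'$ because $V(G_s) \cap V(G_t) = \{v\}$ and $V(G_s) \cap Z' = \emptyset$. Thus $v$ has $\ge 2$ neighbours in $W'_t \setminus U'_t$, which shows $v \notin R_t$, as required.

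The main obstacle is exactly this adjacent-bags case: without the 2-connectivity guaranteed by goodness, the shared adhesion vertex could fall into $R_t$ and spoil the nestedness on the shifted side even though the un-shifted separations were cleanly nested. Beyond handling this one delicate point, the argument is routine bookkeeping of bag intersections in the block-bagel.
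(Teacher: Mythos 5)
Your proof is correct and takes essentially the same route as the paper's: reduce the nestedness $(U_s,W_s)\le(W_t,U_t)$ to showing $U_s\subseteq W_t$, observe via \cref{cor:double-wheel-reduced-edges} that the right-shift can remove only $x'_t,y'_t$ from $W'_t$, and dispose of the one delicate case — a shared adhesion vertex of two adjacent good bags — using the 2-connectivity that goodness guarantees. Your bookkeeping of the centre $Z'$ is in fact slightly more precise than the paper's terse assertion $U_s\subseteq V(G_s)$, which is correct only when read as $U_s\subseteq V(G_s)\cup Z'$ together with the (implicit) fact $Z'\subseteq W_t$ coming from \cref{cor:double-wheel-reduced-edges}.
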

\begin{proof}
    By \cref{cor:double-wheel-totally-nested},~$\sigma(\cO)$ is a set of totally-nested tetra-separations.
    It remains to show that for all good nodes $s\neq t$, we have $(U_s,W_s) \leq (W_t,U_t)$.
    We have $U_s \subseteq V(G_s)$ and $U_t \subseteq V(G_t)$.
    Moreover, by \cref{obs:double-wheel-out-reduction}, for all nodes~$r \in V(O-s)$ that have a 2-connected bag, we have $V(G_r) \subseteq W_s$.
    Therefore, $U_t \subseteq V(G_t) \subseteq W_s$.
    Similarly, $U_s \subseteq V(G_s) \subseteq W_t$.
    It follows that~$(U_s,W_s) \leq (W_t,U_t)$.
\end{proof}

The following two lemmata are supported by \cref{fig:double-wheel-moving-links}.

\begin{lemma}[Link-Moving Lemma for $K_2$-bags]
\label{lem:double-wheel-moving-link-1}
    Assume the \allref{set:block-bagel}.
    Let~$P$ and~$Q$ be the two paths that are the components of~$O - o(C) - o(D)$.
    Assume $o(A) \in P$ and $o(B) \in Q$.
    Let~$c$ denote the end of~$P$ that is adjacent or incident to~$o(C)$ in~$O$.
    Define~$d$ similarly.
    Let~$s$ be a node in the same component of $P-o(A)$ as~$c$ such that $G_s=K_2$.
    Let $P' := dP\mathring{s}$ and $Q':= \mathring{s} P c \cup Q$.
    \begin{enumerate}
        \item\label{itm:double-wheel-moving-link-1-1} $A' := \bigcup_{t \in V(P')} V(G_t)\cup Z'$ and $B':= \bigcup_{t \in V(Q')} V(G_t)\cup Z'$ are the two sides of a tetra-separation of~$G$.
        \item\label{itm:double-wheel-moving-link-1-2} $S(A',B') = Z' \cup (D\text{-link}) \cup \{e\}$, where~$e$ is the edge in~$G_s$.
        \item\label{itm:double-wheel-moving-link-1-3} $(A',B')$ separates some two vertices in $B \cup D$.
        \item\label{itm:double-wheel-moving-link-1-4} $(A',B')$ and~$(C,D)$ cross with all links of size one, and the block-bagel induced by~$(A',B')$ and~$(C,D)$ is equal to~$\cO$.
    \end{enumerate}
\end{lemma}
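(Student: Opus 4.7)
The lemma asserts that moving the $K_2$-bag $G_s$ (with edge $e=uv$) from the $P$-side to the $Q$-side of the block-bagel yields a new tetra-separation $(A',B')$ whose separator replaces the old $C$-link vertex $w_C$ with the edge $e$, and whose crossing with $(C,D)$ induces the same block-bagel $\cO$.

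First I would compute the separator, obtaining (ii). By \cref{obs:block-bagel-disjoint-adhesion}, $v$ is the adhesion-vertex of $s\,s_1$ alone, so $v$ lies in the $P'$-bag $G_{s_1}$ but in no $Q'$-bag; symmetrically $u$ lies in no $P'$-bag. The only vertex lying both in a $P'$-bag and a $Q'$-bag is the adhesion-vertex $w_D$ of the edge $o(D)\in E(O)$. Since every $G$-edge not incident to $Z'$ lies in a bag of $\cO$ by (H1), the only edge between $A'\sm B'$ and $B'\sm A'$ is $e=uv$. Thus $A'\cap B'=Z'\cup\{w_D\}$ and $S(A',B')=Z'\cup\{w_D,e\}$, of size~$4$.

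Next I would verify (i). Coverage $A'\cup B'=V(G)$ holds because $V(G_s)=\{u,v\}$ is covered via $G_{s_1}\se\bigcup_{t\in V(P')}V(G_t)$ and $G_{s_2}\se\bigcup_{t\in V(Q')}V(G_t)$. Properness follows since corners $AD\se A'\sm B'$ and $BD\se B'\sm A'$ are non-empty by \cref{wheelCornersNonempty}. The matching-condition is trivial as $S(A',B')$ contains only the edge $e$. For the degree-condition at $w_D$: every $G$-edge at $w_D$ not incident to $Z'$ lies in $G_{d_1}\cup G_{d_2}$; the vertices of $G_{d_1}$ besides $w_D$ and $w_A\in A\cap B$ all lie in $AD$-corner $\se A'\sm B'$, so the $\ge 2$ neighbours of $w_D$ in $A\sm B$ (guaranteed by the degree-condition of $(A,B)$) all lie in $A'\sm B'$; symmetrically on the other side via $G_{d_2}$. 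For the degree-condition at $z'_1$ (and $z'_2$ by symmetry): by the \allref{lem:helpful-lemma}, each of the four corners contains a helpful element, contributing via \cref{lem:path-from-helpful-edge} or \cref{lem:path-from-helpful-vertex} a neighbour of $z'_1$. Corner $AD$'s helpful element lies in $P'$, giving one neighbour in $A'\sm B'$. A second comes from the vicinity of $s$: if $G_{s_1}$ is 2-connected then $s_1$ is a helpful node giving a neighbour in $\mathring{G}_{s_1}\se A'\sm B'$, while if $G_{s_1}$ is a $K_2$ then $s\,s_1$ is a helpful edge whose adhesion-vertex $v\in A'\sm B'$ neighbours $z'_1$. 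These two neighbours lie in distinct bags of $P'$ and so are distinct; hence $z'_1$ has $\ge 2$ neighbours in $A'\sm B'$, and an analogous argument using corners $BC,BD$ handles the $B'\sm A'$ side.

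Claim (iii) is immediate: any two vertices in corners $AD$ and $BD$ lie in $D\se B\cup D$ and are separated by $(A',B')$. For (iv), the new centre equals $A'\cap B'\cap C\cap D=(Z'\cup\{w_D\})\cap C\cap D=Z'$ (since $w_D\in D\sm C$), of size~$2$; therefore by the \allref{keylem:crossing} the crossing $(A',B')\times (C,D)$ has all four links of size one. A direct identification shows the new $A$-, $B$-, $D$-links are $\{w_A\},\{w_B\},\{w_D\}$ as before, while the new $C$-link is $\{e\}$. For the block-bagel equality, the new $A'D$- and $B'D$-shrimps coincide with the old $AD$- and $BD$-shrimps; the new $A'C$-shrimp consists of the $d$-side blocks of $s$ in the old $AC$-shrimp, extended by the $K_2$-block $G_s$ via the cut-vertex $v$; and the new $B'C$-shrimp consists of the $c$-side blocks of $s$ in the old $AC$-shrimp, joined via the cut-vertex $w_C$ to the old $BC$-shrimp, with $G_s$ prepended via $u$. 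Stitching these four new block-decompositions via the new link-representatives reproduces the same cycle $O$ and the same bag-family $(G_t)_{t\in V(O)}$. The main obstacle is the degree-condition for $z'_1,z'_2$, which requires coupling the Helpful Lemma with the local $K_2$-structure near $s$ to produce two distinct neighbours on each side.
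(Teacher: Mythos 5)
Your proof follows the same general scheme as the paper's (compute the separator, verify matching- and degree-conditions using the Helpful Lemma and the structure near $s$, then observe the crossing is unchanged), but it contains a genuine gap: you have tacitly assumed throughout that the $D$-link consists of a single vertex $w_D$, so that $o(D)$ is an edge of $O$. In the \nameref{set:block-bagel}, the $D$-link has size one but by \cref{wheelLinksVsCorners} it may equally well consist of a single \emph{edge}, in which case $o(D)$ is a node of $O$ with a $K_2$-bag and there is no adhesion-vertex $w_D$ at all. In that case $S(A',B')$ contains two edges — the edge $e$ of $G_s$ and the $D$-link edge — so your assertion that ``the matching-condition is trivial as $S(A',B')$ contains only the edge $e$'' is false, and one must actually argue that $e$ and the $D$-link edge share no endvertex. (The paper does this by invoking the \nameref{keylem:crossing} to rule out dangling edges, which places the endvertices of the $D$-link edge in the $AD$- and $BD$-corners, while both ends of $e$ lie in the $AC$-corner by \cref{wheelLinksVsCorners}~\ref{wheelLinksVsCorners2}.) The computation of the separator in part~(ii) and the centre in part~(iv) also need to be redone in this case, though the required changes there are cosmetic.

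A smaller point: your verification of the degree-condition at $w_D$ relies on the neighbourhood of $w_D$ in $A\sm B$ being confined to a single bag $G_{d_1}$, which is not quite what is needed. The cleaner argument — which is what the paper does — is that every neighbour $u$ of $w_D$ in $A\sm B$ lies in the $AD$-corner or the $A$-link (because no edge dangles from the $D$-link), and neither of these sets loses any vertex to $B'\sm A'$ when $G_s$ is moved, since by \cref{wheelLinksVsCorners}~\ref{wheelLinksVsCorners2} the endvertices of $e$ avoid the $A$-link. Your argument for the degree-condition at $z'_1, z'_2$ via the Helpful Lemma is essentially the paper's and is fine.
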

\begin{proof}
    The pair $(A',B')$ is a mixed-separation by construction.
    Moreover,~\ref{itm:double-wheel-moving-link-1-2} also holds by construction. We show~\ref{itm:double-wheel-moving-link-1-1} next. Let~$d$ be the element in the $D$-link.

    \casen{Matching-condition:} We have to show that, if~$d$ in an edge, then it does not share an endvertex with~$e$.
    The endvertices of~$d$ lie in the corners for~$AD$ and~$BD$ since there do not exist dangling edges in the crossing-diagram of~$(A,B)$ and~$(C,D)$ by the \allref{keylem:crossing}. 
    Both endvertices of~$e$ lie in the corner for~$AC$ by \cref{wheelLinksVsCorners}~\ref{wheelLinksVsCorners1}.
    Hence $d$ and $e$ share no ends.

    \casen{Degree-condition:} 
    All vertices in~$S(A',B')$ are also contained in~$S(A,B)$.
    Since $B \sm A \subseteq B' \sm A'$, every vertex in~$S(A',B')$ has at least two neighbours in~$B' \sm A'$.
    
    If~$d$ is a vertex, then we claim that every neighbour~$u$ of~$d$ in~$A \sm B$ also lies in~$A' \sm B'$.
    Indeed, since $ud$ is not a dangling edge,~$u$ is contained in the corner~$AD$ or the $A$-link with respect to~$(A,B),(C,D)$. 
    By \cref{wheelLinksVsCorners}~\ref{wheelLinksVsCorners1}, no endvertex of~$e$ lies in the $A$-link with respect to $(A,B),(C,D)$.
    So, we have $u \in A' \sm B'$.

    It remains to show that both vertices~$z_1'$ and~$z_2'$ in the centre have at least two neighbours in~$A' \sm B'$.
    Let~$s'$ be the neighbour of~$s$ on~$P'$.
    Observe that either~$s'$ or~$ss'$ is helpful and belongs to the corner~$AC$.
    Moreover, by the \allref{lem:helpful-lemma} there is a helpful element that belongs to the $AD$-corner.
    Hence, there are at least two helpful elements that belong to~$(ss') \cup T_{AD}$, where $T_{AD}$ is the $AD$-block-path.
    By \cref{lem:path-from-helpful-edge,lem:path-from-helpful-vertex}, we are done.

    \medskip

    We also have~\ref{itm:double-wheel-moving-link-1-3} since there is a vertex in $A \cap (D \sm C) \subseteq A' \sm B'$ that is separated from every vertex in $B \sm A \subseteq B' \sm A'$ by $(A',B')$.
    Next, we show~\ref{itm:double-wheel-moving-link-1-4}.
    Clearly,~$Z'$ is the centre of the crossing-diagram of~$(A',B')$ with~$(C,D)$. 
    So, $(A',B')$ and $(C,D)$ cross with all links of size one.
    It is straightforward to check that the block-bagel induced by~$(A',B')$ and~$(C,D)$ is equal to~$\cO$.
\end{proof}

Let $P$ be a path.
Let $u$ be a vertex on~$P$, and let $e$ be an edge on~$P$.
Then $uP\mathring e$ denotes the subpath of $P$ that starts in~$u$ and ends in the first end of~$e$ it encounters.
In particular, $e$ is not an edge of $uP\mathring e$.

\begin{lemma}[Link-Moving Lemma for adhesion-vertices]
\label{lem:double-wheel-moving-link-2}
    Assume the \allref{set:block-bagel}.
    Let~$P$ and~$Q$ be the two paths that are the components of~$O - o(C) - o(D)$.
    Assume $o(A) \in P$ and $o(B) \in Q$.
    Let~$c$ denote the end of~$P$ that is adjacent or incident to~$o(C)$ in~$O$.
    Define~$d$ similarly.
    Let~$e$ be an edge of~$O$ with 2-connected bags at both ends.
    Assume that~$e$ stems from the $AC$-block-path.
    Let $P':=dP\mathring{e}$ and $Q':=\mathring e P c \cup Q$.
    Then:
    \begin{enumerate}
        \item\label{itm:double-wheel-moving-link-2-1} $A' := \bigcup_{t \in V(P')} V(G_t)\cup Z'$ and $B':= \bigcup_{t \in V(Q')} V(G_t)\cup Z'$ are the two sides of a tetra-separation of~$G$.
        \item\label{itm:double-wheel-moving-link-2-2} $S(A',B') = Z' \cup (D\text{-link}) \cup \{v\}$, where~$v$ is the adhesion-vertex of the edge~$e$.
        \item\label{itm:double-wheel-moving-link-2-3} $(A',B')$ separates some two vertices in $B \cup D$.
        \item\label{itm:double-wheel-moving-link-2-4} $(A',B')$ and~$(C,D)$ cross with all links of size one, and the block-bagel induced by~$(A',B')$ and~$(C,D)$ is equal to~$\cO$.
    \end{enumerate}
\end{lemma}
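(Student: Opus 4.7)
My plan is to adapt the proof of \cref{lem:double-wheel-moving-link-1} nearly verbatim, with the structural change that the new separator element at the cut is the adhesion-vertex $v$ of $e$ rather than the edge of a $K_2$-bag. Since $e$ stems from the $AC$-block-path and has 2-connected bags at both ends, the vertex $v$ lies in the $AC$-corner (by \cref{wheelLinksVsCorners}~\ref{wheelLinksVsCorners1}, a link-vertex would lie in a unique bag, contradicting that $v$ lies in both end-bags of $e$), and $v$ is shared between the two end-bags of $e$, one of which lies in $P'$ and the other in $Q'$. Statement~\ref{itm:double-wheel-moving-link-2-2} then follows by direct inspection: the separator $S(A',B')$ consists of $Z'$, the $D$-link element, and $v$, which are pairwise distinct because $Z' \se C \cap D$, the $D$-link lies in $D \sm C$, and $v$ lies in $C \sm D$.

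For statement~\ref{itm:double-wheel-moving-link-2-1}, I first check the matching-condition, where the only potential clash is between $v$ and the $D$-link element if the latter is an edge; by the \allref{keylem:crossing} that edge has no dangling endvertices, so its endpoints lie in the $AD$- and $BD$-corners, disjoint from the $AC$-corner containing $v$. For the degree-condition at $v$, I use that both end-bags of $e$ are 2-connected and therefore supply $\ge 2$ neighbours of $v$ inside each bag, giving $\ge 2$ neighbours on each side. For $z_1'$ and $z_2'$, I produce two distinct helpful elements inside $P'$: one in the $AD$-block-path supplied by the \allref{lem:helpful-lemma}, and the 2-connected bag-node at $e_d$, which is itself helpful and lies in $P'$ by construction. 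Applying \cref{lem:path-from-helpful-edge,lem:path-from-helpful-vertex} to both then yields two distinct neighbours of each $z_i'$ inside $A' \sm B'$. The symmetric argument in $Q'$ uses the 2-connected bag-node at $e_c$ together with a helpful element in the $BC$- or $BD$-block-path. The $D$-link element's neighbours in $A \sm B$ lie in the $AD$-corner or the $A$-link, both of which are contained in $P' \se A'$ because $o(A) \in P'$ (as $e$ belongs to the $AC$-block-path, so $P'$ passes through $o(A)$ on its way from $d$ to $e_d$); its neighbours in $B \sm A$ survive automatically since $B \sm A \se B' \sm A'$.

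Statement~\ref{itm:double-wheel-moving-link-2-3} is immediate: the $AD$-corner sits in $A' \sm B'$ and the $BD$-corner sits in $B' \sm A'$, both are non-empty by \cref{wheelCornersNonempty}, and both lie in $B \cup D$. For statement~\ref{itm:double-wheel-moving-link-2-4}, the vertex-centre of the crossing diagram of $(A',B')$ with $(C,D)$ equals $Z'$, hence has size two, so the \allref{keylem:crossing} forces all four links of the new diagram to have size one; inspection then shows that the new $A'$-, $B'$- and $D$-links coincide with the old $A$-, $B$- and $D$-links, while the new $C$-link is $\{v\}$. No bag of $\cO$ has changed -- only some bags on the $c$-side of $e$ get re-labelled from the $AC$-corner to the $B'C$-corner -- so the block-bagel induced by $(A',B')$ and $(C,D)$ is again $\cO$.

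The main obstacle I anticipate is the degree-condition for $z_1'$ and $z_2'$ on the $P'$-side, which requires two distinct helpful elements there even when the $AD$-block-path is very short; the crucial observation is that the 2-connected end-bag of $e$ at $e_d$ is always helpful and always belongs to $P'$, providing a guaranteed second helpful element in addition to the one that the \allref{lem:helpful-lemma} supplies for the $AD$-corner.
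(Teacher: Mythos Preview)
Your proof is correct and follows essentially the same approach as the paper, which simply says the argument is analogous to \cref{lem:double-wheel-moving-link-1} with two differences: the matching-condition is trivially fulfilled, and the degree-condition for $v$ follows from its membership in two 2-connected bags. One minor remark: your treatment of the matching-condition is more elaborate than necessary---since the new separator element $v$ is a vertex rather than an edge, the separator $S(A',B')$ contains at most one edge (the $D$-link element, if it is an edge), and a single edge is trivially a matching; the ``clash'' you analyse between $v$ and that edge is not a matching-condition issue at all.
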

\begin{proof}
    The proof is analogous to the proof of the \allref{lem:double-wheel-moving-link-1}, except that the matching-condition is trivially fulfilled, and that the vertex~$v$ satisfies the degree condition since it is contained in two 2-connected bags $G_s,G_{s'}$ with $V(G_s - v) \subseteq A' \sm B'$ and $V(G_{s'} - v) \subseteq B' \sm A'$.
\end{proof}

\begin{figure}[ht]
    \centering
    \includegraphics[height=12\baselineskip]{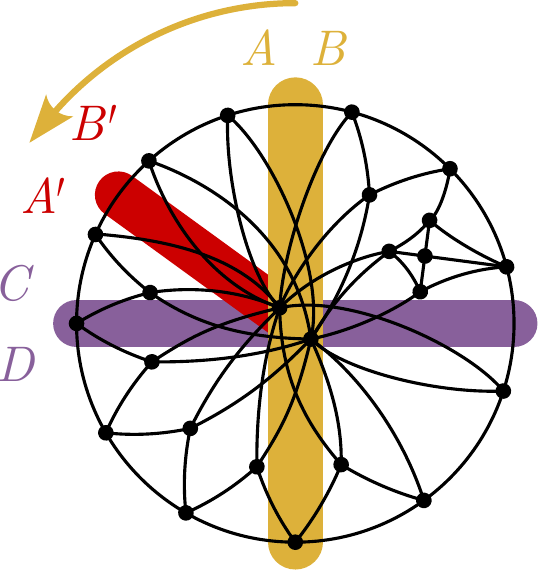}\hfill%
    \includegraphics[height=12\baselineskip]{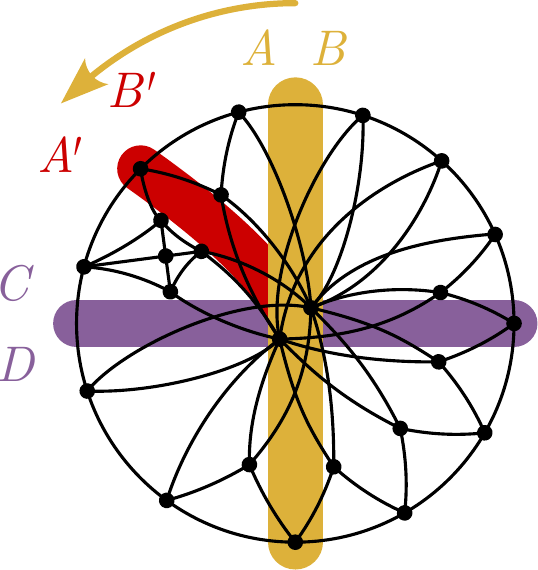}
    \caption{The tetra-separation $(A',B')$ constructed in the \allref{lem:double-wheel-moving-link-1} (left) and in the \allref{lem:double-wheel-moving-link-2} (right).}
    \label{fig:double-wheel-moving-links}
\end{figure}

\begin{lemma}\label{wheelBlockPathInteriorVertex}
    Assume the \allref{set:block-bagel}.
    Let~$T$ denote the $AC$-block-path.
    Let~$T':=T-o(A)-o(C)$.
    Then either~$T'$ is non-empty or $|A \cap C| \leq 3$.
\end{lemma}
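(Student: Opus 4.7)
The hypothesis $T' = \emptyset$ is equivalent to $V(T) \subseteq \{o(A), o(C)\}$, so $T$ has at most two nodes; the plan is to case-analyse on $|V(T)|$ and on the types of the two links. By \cref{wheelLinksVsCorners}, a vertex-link is represented in $T$ by a $2$-connected bag (with at least $3$ vertices), while an edge-link is represented by a $K_2$ whose two vertices lie in the two corners adjacent to that link: for the $A$-link these are the $AC$-corner and the $AD$-corner, and for the $C$-link they are the $AC$-corner and the $BC$-corner.

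The clean case giving the conclusion is $|V(T)| = 2$ with both links being edges. Then the two $K_2$-bags $V_{o(A)} = \{u_{AC}, v_{AD}\}$ and $V_{o(C)} = \{u_{AC}', w_{BC}\}$ must share a cut-vertex, and the only way this is possible is for their $AC$-corner endpoints to coincide (since $v_{AD}$ and $w_{BC}$ lie in the distinct corners $AD$ and $BC$, respectively). This forces $u_{AC} = u_{AC}'$, making the $AC$-shrimp consist of exactly three vertices with precisely one of them ($u_{AC}$) lying in $A \cap C$. Together with the two centre vertices we obtain $|A \cap C| = 3$.

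All other configurations must be shown not to occur. If $|V(T)| = 1$, the single bag represents both links, which forces a type-compatibility on them: a short enumeration of bag types ($2$-connected vs.\ $K_2$) and the corner-endpoint positions of the potential $K_2$ bags rules out every combination except the sub-case where both links are vertices and $V_t$ is a $2$-connected bag containing both $u_A$ and $u_C$. If instead $|V(T)| = 2$ but some link is a vertex, then the corresponding $2$-connected link-bag has at least three vertices and so must contain an extra $AC$-corner vertex besides the cut-vertex shared with the other bag. The main obstacle, and the final step, is to rule out these ``vertex-link'' sub-cases by a structural argument: using the degree-conditions that $u_A \in S(C,D)$ has $\ge 2$ neighbours in $C \setminus D$ (which lie in the shrimp) and $u_C \in S(A,B)$ has $\ge 2$ neighbours in $A \setminus B$, together with the $4$-connectivity of $G$ and the cycle-structure of $\cO$, one forces an additional block to appear in $T$, contradicting $|V(T)| \leq 2$. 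This closes the case distinction and proves the lemma.
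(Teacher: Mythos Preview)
Your clean case (both links are edges) is handled correctly and coincides with the paper's computation. The gap lies in how you dispose of the vertex-link sub-cases.

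You state the correct equivalence $T' = \emptyset \Leftrightarrow V(T) \subseteq \{o(A), o(C)\}$, but then fail to use the crucial fact built into the block-bagel construction: $o(L)$ is a \emph{node} of $O$ precisely when the $L$-link is an edge (the two representing $K_2$-bags get identified), and $o(L)$ is an \emph{edge} of $O$ when the $L$-link is a vertex (the two $2$-connected representing bags are joined by a new edge). Hence if, say, the $A$-link is a vertex, then $o(A) \notin V(O)$, so $\{o(A), o(C)\} \cap V(O)$ has at most one element. The representing node $t_A \in V(T)$ has a $2$-connected bag by \cref{wheelLinksVsCorners}~\ref{wheelLinksVsCorners1}, hence equals neither $o(A)$ (an edge) nor $o(C)$ (either an edge or a $K_2$-node), so $t_A \in V(T')$ and $T' \neq \emptyset$. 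This is exactly the paper's one-line argument.

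Your proposed ``structural argument'' via degree-conditions to force an extra block is only a sketch, and it cannot succeed as stated. In the sub-case you single out as the main obstacle --- $|V(T)| = 1$ with both links vertices and a single $2$-connected bag --- nothing in the degree-conditions prevents both link vertices and all their shrimp-neighbours from sitting in that one block; no extra block is forced. That sub-case is instead ruled out directly by the definition of $o(L)$: both $o(A)$ and $o(C)$ are edges there, so your own inclusion reads $V(T) \subseteq \emptyset$, which is absurd.
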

\begin{proof}
    If the $A$-link or the $C$-link contains a vertex, then this vertex is contained in a 2-connected bag $G_t$ with $t\in V(T')$, by \cref{wheelLinksVsCorners}.
    Otherwise both the $A$-link and the $C$-link consist of edges.
    Then $G_{o(A)}$ and $G_{o(C)}$ are $K_2$-bags.
    Each of the two bags $G_{o(A)},G_{o(C)}$ has exactly one endvertex $v_A,v_C$ in the corner for~$AC$ by \cref{wheelLinksVsCorners}, respectively.
    If~$T'$ is empty, then $v_A=v_C$, and $A\cap C=\{v_A\}\cup Z'$ has size three.
\end{proof}

\begin{lemma}
\label{lem:double-wheel-E-in-bag}
    Assume the \allref{set:block-bagel}.
    Let $(E,F)$ be a tetra-separation of $G$ that interlaces the tetra-star $\sigma(\cO)$. 
    Then~$(E,F)$ is crossed by another tetra-separation of~$G$, or there is a 2-connected bag~$G_t$ of~$\cO$ such that~$E \subseteq U'_t$ or~$F \subseteq U'_t$.
\end{lemma}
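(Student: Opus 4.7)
The plan is to prove the contrapositive: assume that $(E,F)$ is not crossed by any tetra-separation of~$G$, and deduce that one of $E$ or $F$ is contained in $V(G_t) \cup Z' = U'_t$ for some 2-connected bag $G_t$ of~$\cO$.

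First I would extract structural information on the cycle~$O$ from the interlacing hypothesis. Since $\sigma(\cO)$ is a star (\cref{lem:double-wheel-star}) whose elements are arranged cyclically along~$O$, the good nodes split into two arcs $V_E = \{t \text{ good} : U_t \subsetneq E\}$ and $V_F = \{t \text{ good} : U_t \subsetneq F\}$. By the behaviour of the shift, the interior of every bag $G_t$ with $t \in V_E$ lies in $E \sm F$, and symmetrically for $V_F$. Assume for the moment that both arcs are non-empty --- the degenerate cases will be handled analogously. Between the two arcs lie two transition segments $S_1, S_2 \subseteq O$.

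Next I would attempt to produce a tetra-separation crossing $(E,F)$. For each transition segment I would seek a \emph{valid locus} at which one of the Link-Moving Lemmata (\cref{lem:double-wheel-moving-link-1,lem:double-wheel-moving-link-2}) applies --- either a helpful $K_2$-bag, or a helpful edge flanked by two 2-connected bags. If both $S_1$ and $S_2$ admit such a locus, I would chain two applications of these lemmata to produce a tetra-separation $(A'', B'')$ of~$G$ whose $A$- and $B$-links sit inside $S_1$ and $S_2$ respectively. By construction, every bag of $V_E$ lies entirely in $A''$ and every bag of $V_F$ entirely in $B''$, whence $A''$ meets both $E \sm F$ and $F \sm E$ (and similarly for $B''$), so $(A'', B'')$ crosses $(E,F)$ --- contradicting our standing assumption.

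The main obstacle --- and the case yielding the containment conclusion --- is when some transition segment~$S_i$ admits no valid Link-Moving locus. The structure of~$\cO$ then forces $S_i$ to be absorbed into a single 2-connected bag~$G_t$: the transition from the $E$-side to the $F$-side happens strictly inside $\mathring G_t$, while every bag of $\cO$ distinct from $G_t$ lies entirely on one side of~$(E,F)$. The side of $(E,F)$ opposite to the $V_E$-arc is then trapped inside $V(G_t) \cup Z' = U'_t$, giving the desired conclusion. Ruling out $G_t$ being a $K_2$-bag in this scenario relies on the degree-condition of $(E,F)$, since a single edge cannot accommodate a transition between two non-trivial sides. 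The degenerate cases where $V_E$ or $V_F$ is empty follow the same principle: the missing arc forces one side of $(E,F)$ to collapse into a single 2-connected bag.
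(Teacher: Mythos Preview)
Your approach has a genuine gap: you never use that $(E,F)$, being totally-nested, must in particular be nested with the defining tetra-separations $(A,B)$ and $(C,D)$ of the block-bagel. This single observation gives $E\subseteq A\cap C$ (after relabelling), confining $E$ to one corner of the crossing diagram before any combinatorics on~$O$ begins. The paper's proof starts from here and then runs a minimality argument: choose tetra-separations $(A',B')$, $(C',D')$ with $(E,F)\le (A',B')$, $(E,F)\le (C',D')$, inducing the same block-bagel~$\cO$, and with $|A'\cap C'|$ minimum. The Link-Moving Lemmata are then invoked not to construct something that crosses $(E,F)$, but to shrink the $A'C'$-corner further whenever it contains more than one bag, contradicting minimality. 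What remains is a single 2-connected bag~$G_t$ with $E\subseteq U'_t$.

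By contrast, your plan tries to localise $(E,F)$ using only the good nodes in $\sigma(\cO)$. Several steps are unsupported. First, the claim that the good nodes split into two contiguous arcs $V_E$ and $V_F$ is not argued; nothing rules out interleaving, and the degenerate cases (no good nodes, or all on one side) are precisely where the content lies but are dismissed in one sentence. Second, the ``chaining'' of two Link-Moving applications to manufacture a tetra-separation whose two links sit at prescribed positions $S_1$ and $S_2$ is not what those lemmata provide: each application moves \emph{one} link while keeping the other fixed at the $D$-link of the original $(C,D)$, and the resulting separation is asserted to cross $(C,D)$, not $(E,F)$. Bridging from there to a separation that crosses $(E,F)$ needs exactly the positional information about $E$ that you have not yet established. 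Third, the assertion that absence of a valid locus in a transition segment forces that segment into a single 2-connected bag is stated without proof.
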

\begin{proof}
    Assume that~$(E,F)$ is totally-nested. Then~$(E,F) \leq (A,B)$ and~$(E,F) \leq (C,D)$, say. 
    We choose two tetra-separations~$(A',B')$ and~$(C',D')$ with $|A' \cap C'|$ minimum such that
    \begin{enumerate}
        \item\label{itm:double-wheel-min-1} $(A',B')$ and~$(C',D')$ cross with all links of size one; and
        \item\label{itm:double-wheel-min-2} the block-bagel induced by~$(A',B')$ and~$(C',D')$ is equal to~$\cO$; and
        \item\label{itm:double-wheel-min-3} $(E,F) \leq (A',B')$ and $(E,F) \leq (C',D')$.
    \end{enumerate}
    Note that the existence of~$(A',B')$ and~$(C',D')$ follows from~$(A,B)$ and~$(C,D)$ exhibiting properties~\ref{itm:double-wheel-min-1}--\ref{itm:double-wheel-min-3}.
    Let $T$ denote the $A'C'$-block-path, and let~$T':=T-o(A')-o(C')$.
    By \cref{wheelBlockPathInteriorVertex}, $T'$ is non-empty.
    Let~$\cO' = \cO$ be the block-bagel induced by~$(A',B')$ and~$(C',D')$. 

    We claim that every bag $G_t$ with $t\in T'$ is 2-connected.
    Assume otherwise that $G_t=K_2$ for some $t\in T'$, and let $e$ denote the edge of~$G_t$.
    In the context of $(A',B')$ and $(C',D')$, we apply the \allref{lem:double-wheel-moving-link-1} to~$t$ and obtain a tetra-separation $(A'',B'')$ such that
    \begin{itemize}
        \item $S(A'',B'')=\{e\}\cup Z'\cup\text{($D'$-link)}$, and
        \item $(A'',B'')$ separates two vertices of~$B' \cup D'$. 
    \end{itemize}
    Since $B'\cup D'\se F$, the tetra-separation $(A'',B'')$ separates two vertices in~$F$.
    If $(A'',B'')$ also separates two vertices in~$E$, then it crosses~$(E,F)$, a contradiction.
    Hence we have either $(E,F) \leq (A'',B'')$ or $(E,F) \leq (B'',A'')$. 
    In the former case, we have that~$(A'',B'')$ and~$(C',D')$ satisfy all of~\ref{itm:double-wheel-min-1}-\ref{itm:double-wheel-min-3}, but have~$|A'' \cap C'| < |A' \cap C'|$, contradicting the minimality of $|A' \cap C'|$. 
    In the latter case, we apply the \allref{lem:double-wheel-moving-link-1} to~$t$ and obtain a tetra-separation $(C'',D'')$ such that
    \begin{itemize}
        \item $S(C'',D'')=\{e\}\cup Z'\cup\text{($B'$-link)}$, and
        \item $(C'',D'')$ separates two vertices of~$B' \cup D'$. 
    \end{itemize}
    Moreover, we have
    \[
        E\se B''\cap A'\cap C'\se C'' \quad\text{and}\quad F\supseteq A''\cup D'\cup B'\supseteq D''.
    \]
    Hence $(E,F)\le (C'',D'')$.
    Therefore, we have that~$(A',B')$ and~$(C'',D'')$ satisfy all of~\ref{itm:double-wheel-min-1}-\ref{itm:double-wheel-min-3}, but have~$|A' \cap C''| < |A' \cap C'|$, contradicting the minimality of $|A' \cap C'|$.

    We claim that $T'$ has no edge.
    Assume otherwise that $tt'$ is an edge of~$T'$.
    Then both bags $G_t,G_{t'}$ at its ends are 2-connected by the above claim.
    In the context of $(A',B')$ and $(C',D')$, we apply the \allref{lem:double-wheel-moving-link-2} to the edge~$tt'$ and obtain a tetra-separation $(A'',B'')$ with the properties stated in the lemma.
    Then we derive a contradiction in the same way as in the previous claim.

    By the above, every bag $G_t$ with $t\in T'$ is 2-connected, and $T'$ has no edge.
    Since $T'$ is non-empty, it has a unique node~$t$, and $G_t$ is 2-connected.
    Then~$A' \cap C' = V(G_t) \cup Z'$. 
    By \ref{itm:double-wheel-min-3}, $E\se U'_t$. 
\end{proof}

\begin{lemma}
\label{lem:double-wheel-interlaced-crossed}
    Assume the \allref{set:block-bagel}.
    Every tetra-separation of~$G$ that interlaces the tetra-star $\sigma(\cO)$ is crossed by another tetra-separation of~$G$.
\end{lemma}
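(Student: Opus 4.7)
The plan is to argue by contradiction: suppose $(E,F)$ is a totally-nested tetra-separation that interlaces $\sigma(\cO)$. By \cref{lem:double-wheel-E-in-bag}, there is a $2$-connected bag $G_t$ of $\cO$ with $E\se U'_t$ or $F\se U'_t$; without loss of generality $E\se U'_t$. I would split into two cases depending on whether $G_t$ is good or bad, and derive a contradiction in each.

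In the good case, $(U_t,W_t)\in\sigma(\cO)$ by \cref{cor:double-wheel-totally-nested}. I would first observe that $(U'_t,W'_t)$ is a genuine $4$-separation of $G$: every edge of $G$ either lies inside a single bag of $\cO$ or is incident to a vertex of $Z'$, so no edge crosses from $U'_t\sm W'_t$ to $W'_t\sm U'_t$. Applying \cref{Righty} then yields $(E,F)\le (U'_t,W''_t)$, where $(U'_t,W''_t)$ is the right-shift of $(U'_t,W'_t)$; in particular $W''_t=W_t$ and $U_t=U'_t\sm L$ for some $L\se U'_t\cap W_t$. I would then upgrade this to $(E,F)\le (U_t,W_t)$ by a left-shift analogue of \cref{Righty}: any $x\in L\cap E$ would lie in $E\cap F$, so the tetra-separation degree-condition would force $x$ to have at least two neighbours in $E\sm F\se U'_t\sm W_t$; but the definition of $L$ says that $x$ has at most one neighbour in $U'_t\sm W_t$, a contradiction. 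With $(E,F)\le (U_t,W_t)\in\sigma(\cO)$ in hand, interlacing forces $(U_t,W_t)<(E,F)$ (which contradicts $(E,F)\le (U_t,W_t)$, as equality is not strict) or $(U_t,W_t)<(F,E)$ (which combined with $E\se U_t$ gives $E\se F$, contradicting that $(E,F)$ is proper).

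In the bad case, the $2$-connected bag $G_t$ must be a triangle $\{x',y',v\}$ whose two neighbouring bags are both $2$-connected. I would rule out the existence of any tetra-separation $(E,F)$ with $E\se U'_t$. Each vertex $z'_i\in Z'$ has at least three neighbours in $W'_t\sm U'_t$ by \cref{cor:double-wheel-reduced-edges}, all of which lie in $F\sm E$ since $E\se U'_t$; so $z'_i\in E\sm F$ would put three edges incident to $z'_i$ into $S(E,F)$, violating the matching-condition. The adhesion-vertices $x'$ and $y'$ each have at least two neighbours in their respective neighbouring $2$-connected bag, all of which lie in $F\sm E$ for the same reason, so neither can lie in $E\sm F$. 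Hence $E\sm F\se\{v\}$, contradicting $|E\sm F|\ge 2$ (from \cref{lem:trivial-tetra-separations}).

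The main obstacle is the good case: $E\se U'_t$ together with total-nestedness does not immediately sandwich $(E,F)$ against $(U_t,W_t)$, because the two shifts move vertices in opposite directions. The right-shift step is handled cleanly by \cref{Righty}, but the left-shift step needs the tetra-separation degree-condition of $(E,F)$ to block $E$ from intersecting $L$. Once $(E,F)\le (U_t,W_t)$ is established, the strict-interlacing contradiction is immediate.
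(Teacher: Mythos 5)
Your proof is correct and follows essentially the same route as the paper's: reduce via \cref{lem:double-wheel-E-in-bag} to a 2-connected bag with $E\se U'_t$, apply \cref{Righty}, and then split on whether $G_t$ is good or bad. The only cosmetic differences are that in the good case you first establish $(E,F)\le(U_t,W_t)$ (using the degree-condition of $(E,F)$ to block $E$ from meeting the left-shifted vertices) and then contradict interlacing, whereas the paper invokes interlacing first and derives the same degree-condition contradiction; and in the bad case you argue directly with the matching-condition of $(E,F)$ rather than observing that $U''_t\sm W''_t$ is a single vertex — both yield $|E\sm F|\le 1$.
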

\begin{proof}
    Assume for contradiction that~$\sigma(\cO)$ is interlaced by a totally-nested tetra-separation~$(E,F)$ of~$G$. 
    By \cref{lem:double-wheel-E-in-bag}, there is a 2-connected bag~$G_t$ of~$\cO$ with $E \subseteq U_t'$, say. 
    By \cref{Righty}, we have~$(E,F) \leq (U_t'',W_t'')$ where $(U''_t,W''_t)$ is the right-reduction of $(U'_t,W'_t)$.

    Assume first that~$G_t$ is good.
    Then~$(U_t,W_t) \in \sigma(\cO)$ by definition of~$\sigma(\cO)$. 
    Since~$(E,F)$ interlaces~$\sigma(\cO)$, we have $(U_t,W_t) < (E,F)$ or $(U_t,W_t) < (F,E)$. 
    We cannot have~$(U_t,W_t) < (F,E)$ since otherwise $\emptyset \neq E \sm F \subseteq (W_t \sm U_t) \cap (U_t'' \sm W_t'')$ contradicts that $U''_t\sm W''_t \se U_t \sm W_t$ by \cref{obs:shift-maintains-proper-sides}.
    So~$(U_t,W_t) < (E,F) \leq (U_t'',W_t'')$. 
    Since $(U_t,W_t)$ is the left-reduction of $(U''_t,W''_t)$, some vertex in $S(E,F)$ must have $\le 1$ neighbour in $E\sm F$, contradicting the degree-condition. 
    
    Otherwise,~$G_t$ is bad.
    Since $G_t$ is 2-connected, it is a triangle, and since $G_t$ is bad, both neighbouring bags in $\cO$ are 2-connected.
    Hence both adhesion-vertices of $G_t$ are contained in the separator of $(U''_t,W''_t)$.
    Therefore, the unique interior vertex of the triangle-bag $G_t$ is the only vertex of $U''_t\sm W''_t$.
    Since $(E,F) \leq (U_t'',W_t'')$, we have~$E \sm F \subseteq U_t'' \sm W_t''$. 
    Hence $|E \sm F| \leq 1$, which by \cref{lem:trivial-tetra-separations} contradicts that~$(E,F)$ is a tetra-separation.
\end{proof}

\begin{corollary}\label{wheelGoodChar}
    Assume the \allref{set:block-bagel}.
    Let $t$ be a node of~$O$.
    Then the following assertions are equivalent:
    \begin{enumerate}
        \item\label{wheelGoodChar1} the bag $G_t$ is good;
        \item\label{wheelGoodChar2} $(U_t,W_t)$ is a totally-nested tetra-separation of~$G$.
    \end{enumerate}
\end{corollary}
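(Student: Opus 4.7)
\medskip
\noindent\textbf{Proof plan.}
The forward direction (1) $\Rightarrow$ (2) is already the content of \cref{cor:double-wheel-totally-nested}, which I will simply cite. For the backward direction I plan to argue the contrapositive: assuming $G_t$ is bad, I will show that $(U_t,W_t)$ is either not a tetra-separation or is a tetra-separation that is not totally-nested. Without loss of generality I will assume the \allref{set:bag-setting} with $t=t(0)$. Recall that $G_t$ is bad iff $G_t$ is a $K_2$ or a triangle whose two neighbouring bags are both 2-connected. In every such case \emph{except} when $G_t=K_2$ with both $G_{t(\pm 1)}$ also $K_2$'s, both alternatives of \cref{lem:double-wheel-UW-is-tetra}~\cref{lem:double-wheel-UW-is-tetra1} fail, so that lemma immediately yields that $(U_t,W_t)$ is not a tetra-separation.

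So the work lies in the remaining case: $G_{t(-1)}, G_{t(0)}, G_{t(1)}$ are all $K_2$'s, and $(U_t,W_t)$ is a tetra-separation by \cref{lem:double-wheel-UW-is-tetra}. I must produce a tetra-separation that crosses it. Let $a,b$ denote the two vertices of $G_t$; by 4-connectivity each is adjacent to both $z_1',z_2'$, and a short right-left-shift computation yields $U_t=\{a,b,z_1',z_2'\}$ and $W_t=V(G)\sm\{a,b\}$. I will split on whether $G_{t(2)}$ is a $K_2$. If yes, then $G_{t(0)}, G_{t(1)}, G_{t(2)}$ are all $K_2$'s, so $(U_{t(1)},W_{t(1)})$ is also a tetra-separation by \cref{lem:double-wheel-UW-is-tetra}, with the analogous sides $U_{t(1)}=\{b,v,z_1',z_2'\}$ and $W_{t(1)}=V(G)\sm\{b,v\}$ where $v\neq a$. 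Each of the four possible nesting inequalities between $(U_t,W_t)$ and $(U_{t(1)},W_{t(1)})$ then fails by direct inclusion-checking (since $a\in U_t\sm U_{t(1)}$, $b\in U_t\cap U_{t(1)}$, and $W_t,W_{t(1)}$ each contain vertices outside the other $U$), so the two tetra-separations cross. Otherwise $G_{t(2)}$ is 2-connected; since $G_{t(1)}=K_2$ is a neighbour, $G_{t(2)}$ is good, whence $\sigma(\cO)\neq\emptyset$. I will then invoke \cref{lem:double-wheel-interlaced-crossed} after verifying that $(U_t,W_t)$ interlaces $\sigma(\cO)$.

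I expect the main obstacle to be this interlacing verification. The key observation will be that every good bag $G_s$ is non-adjacent to $t$ in $O$, because both $G_{t(\pm 1)}$ are $K_2$ and hence not good. This forces $V(G_s)\cap\{a,b\}=\emptyset$, and a routine shift-chasing via \cref{obs:shift-maintains-proper-sides} and \cref{cor:double-wheel-reduced-edges} then gives $a,b\in W_s\sm U_s$ and $z_1',z_2'\in W_s$. These yield the strict inclusions $U_s\subsetneq W_t$ and $W_s\supsetneq U_t$, i.e., $(U_s,W_s)<(W_t,U_t)$ for every $(U_s,W_s)\in\sigma(\cO)$, which is precisely the required interlacing condition.
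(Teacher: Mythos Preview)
Your proof is correct and close in spirit to the paper's. The paper handles the ``three $K_2$'s'' case more uniformly: rather than splitting on whether $G_{t(2)}$ is a $K_2$, it argues directly that $(U_t,W_t)$ interlaces $\sigma(\cO)$ (using that $a,b\in U_t\sm W_t$ while $a,b\notin U_s$ for every good $s$) and then invokes \cref{lem:double-wheel-interlaced-crossed} once. This works even when $\sigma(\cO)=\emptyset$, since interlacing the empty star is vacuous---so your Sub-case~A, with its explicit crossing pair $(U_{t(1)},W_{t(1)})$, is subsumed by the uniform argument. One small point to tighten in your Sub-case~B: the facts $a,b\in W_s\sm U_s$ and $z_1',z_2'\in W_s$ only give the non-strict relation $(U_s,W_s)\le(W_t,U_t)$; for strictness you must exclude equality, which would force $U_s=V(G)\sm\{a,b\}\subseteq V(G_s)\cup Z'$ and hence $|O|=4$, and this is ruled out by a Helpful-Lemma count (three $K_2$-bags plus one 2-connected bag yield only three helpful elements, contradicting \cref{lem:helpful-lemma}).
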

\begin{proof}
    \cref{wheelGoodChar1} $\Rightarrow$ \cref{wheelGoodChar2}: this is \cref{cor:double-wheel-totally-nested}.

    $\neg$ \cref{wheelGoodChar1} $\Rightarrow$ $\neg$ \cref{wheelGoodChar2}.
    Assume that $G_t$ is bad.
    Then $(U_t,W_t)$ fails to be a tetra-separation, unless $G_t$ and both neighbouring bags are $K_2$'s, by \cref{cor:double-wheel-totally-nested}.
    In this case, both vertices of $G_t$ lie in $U_t\sm W_t$, so $(U_t,W_t)$ is distinct from all tetra-separations in the tetra-star~$\sigma(\cO)$.
    Therefore, $(U_t,W_t)$ interlaces $\sigma(\cO)$, and so $(U_t,W_t)$ is crossed by another tetra-separation, by \cref{lem:double-wheel-interlaced-crossed}.
\end{proof}

\begin{lemma}\label{dotTorso4con}
    Let $G$ be a 4-connected graph.
    Let $\sigma$ be a star of mixed-4-separations of~$G$.
    Then the torso of $\sigma$ is 4-connected or a~$K_4$.
\end{lemma}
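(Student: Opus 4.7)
The approach is to argue by contradiction and lift a ``bad'' mixed-separation of $\tau$ back to a mixed-separation of $G$ of small order. Suppose $\tau$ is neither $4$-connected nor a $K_4$. If $|\tau|\le 4$, being distinct from $K_4$ means $\tau$ has a non-edge, yielding a proper mixed-separation of order $\le 2$; if $|\tau|\ge 5$, failure of $4$-connectivity gives a proper mixed-separation of order $\le 3$ directly. In either case fix a proper mixed-separation $(\bar X,\bar Y)$ of $\tau$ with $\bar S:=S(\bar X,\bar Y)$ of size $\le 3$. The target is a proper mixed-separation $(X,Y)$ of $G$ with $|S(X,Y)|\le 3$, contradicting $4$-connectivity of $G$.

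Write $\sigma=\{(A_i,B_i):i\in I\}$ and let $\bar\lambda_i$ denote the image of $\lambda(A_i,B_i)$ in $\tau$; by construction each $\bar\lambda_i$ is a clique on exactly $4$ vertices. Set $a_i,b_i,c_i$ for the sizes of $\bar\lambda_i\cap(\bar X\sm\bar Y)$, $\bar\lambda_i\cap(\bar Y\sm\bar X)$, $\bar\lambda_i\cap\bar X\cap\bar Y$ respectively. The contribution of $\bar\lambda_i$ to $\bar S$ (the $c_i$ vertices in $\bar X\cap\bar Y$ plus the $a_ib_i$ clique-edges crossing the separation) is $a_ib_i+c_i$, and since $a_i+b_i+c_i=4$ and $a_ib_i+c_i\le|\bar S|\le 3$, substitution gives $(a_i-1)(b_i-1)\le 0$, so $\min(a_i,b_i)\le 1$ for every $i$. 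A swap-argument---moving the unique singleton vertex from $\bar\lambda_i\cap(\bar X\sm\bar Y)$ to the $\bar Y$-side, say, when $a_i=1$---reduces to the clean case $\min(a_i,b_i)=0$ for every $i$, so each $\bar\lambda_i$ lies entirely on one side of $(\bar X,\bar Y)$ modulo $\bar S$. Let $I_X$ (resp.\ $I_Y$) index those $i$ with $\bar\lambda_i\sm\bar S\subseteq\bar X\sm\bar Y$ (resp.\ $\bar Y\sm\bar X$).

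The star property of $\sigma$ yields the disjoint decomposition $V(G)=V_\sigma\sqcup\bigsqcup_i(A_i\sm B_i)$. Define
\[
    X:=(\bar X\cap V_\sigma)\cup\bigcup_{i\in I_X}(A_i\sm B_i),\qquad Y:=(\bar Y\cap V_\sigma)\cup\bigcup_{i\in I_Y}(A_i\sm B_i).
\]
Then $V(G)=X\cup Y$, and a case-check on edges crossing $(X,Y)$ shows $|S(X,Y)|\le|\bar S|\le 3$: every vertex of $X\cap Y$ comes from a vertex in $\bar X\cap\bar Y$; every $G$-edge of $S(X,Y)$ whose two endpoints map to distinct $\tau$-vertices corresponds to a genuine edge of $\tau$ in $\bar S$; and every remaining $G$-edge of $S(X,Y)$ has both endpoints identified to the same vertex of some $\bar\lambda_i$ via a contracted adhesion-edge of $S(A_i,B_i)$, which then corresponds to a single element of $\bar S$. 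Properness of $(X,Y)$ is inherited from that of $(\bar X,\bar Y)$ since the quotient map $V(G)\to V(\tau)$ is surjective. This contradicts the $4$-connectivity of~$G$.

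The main obstacle is the swap-argument in the second paragraph. When some $\bar\lambda_{i_0}$ satisfies $\min(a_{i_0},b_{i_0})=1$, the equation $a_{i_0}b_{i_0}+c_{i_0}=3$ is forced, pinning $\bar S$ entirely inside $\bar\lambda_{i_0}$; one must then show that the unique singleton vertex in $\bar\lambda_{i_0}\cap(\bar X\sm\bar Y)$ can be moved to the $\bar Y$-side without increasing $|\bar S|$ or destroying properness. This demands a careful analysis of the singleton's neighbours in $\tau$, exploiting both the clique structure of $\bar\lambda_{i_0}$ and the fact that every other $\bar\lambda_j$ is forced to lie on one side since $\bar S\subseteq\bar\lambda_{i_0}$.
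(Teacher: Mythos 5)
The paper's proof is a one-line citation to Lemmas~2.6.3 and~2.6.4 of~\cite{Tridecomp}, so you are attempting a self-contained argument from scratch; the broad plan (take a proper mixed-separation of $\tau$ of order $\le 3$, show each attachment clique falls on one side, lift to a mixed-separation of $G$ of order $\le 3$, contradict \cref{kConMixed}) is the right kind of argument and is likely close in spirit to the cited lemmas. However, the sketch has a gap that you yourself flag and do not close: the swap argument. If $\min(a_{i_0},b_{i_0})=1$ you conclude $|\bar S|=3$ with $\bar S$ entirely inside $\bar\lambda_{i_0}$, and propose to move the unique vertex $\bar z\in\bar\lambda_{i_0}\cap(\bar X\sm\bar Y)$ across. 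But moving $\bar z$ into $\bar Y$ trades the $b_{i_0}$ clique-edges from $\bar z$ to $\bar\lambda_{i_0}\cap(\bar Y\sm\bar X)$ for the edges from $\bar z$ to its \emph{other} neighbours in $\bar X\sm\bar Y$. The vertex $\bar z$ may lie in several cliques $\bar\lambda_j$ and may have further $\tau$-edges into $\bar X\sm\bar Y$, so there is no a priori reason the new separator is still of size $\le 3$, nor that the new separation is proper. This is exactly where the work is, and the proposal does not do it; asserting that it ``demands a careful analysis'' is a restatement of the gap, not a proof.

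A second, unaddressed assumption is that each image $\bar\lambda_i$ is a clique on \emph{exactly} four vertices of $\tau$. The torso is formed by contracting all edges contained in any $S(A_i,B_i)$. Two elements of $\lambda(A_i,B_i)$ can be identified by this contraction when two edges of $S(A_i,B_i)$ share an endvertex on the far side (and the lemma as stated does not assume the matching-condition, which is what would rule that out), or two distinct $\lambda$'s can be glued through a common far endvertex. If $|\bar\lambda_i|<4$ then your equation $a_i+b_i+c_i=4$ fails and the arithmetic leading to $\min(a_i,b_i)\le 1$ no longer gives anything; the same issue makes the quotient map $q\colon V_\sigma\cup\bigcup_i\lambda(A_i,B_i)\to V(\tau)$ potentially non-injective, which quietly invalidates the final ``case-check'' that $|S(X,Y)|\le|\bar S|$, since the fibre of a single $\bar S$-vertex can contribute more than one vertex to $X\cap Y$. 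Establishing that the contraction is well-behaved in this sense is almost certainly the content of one of the two cited lemmas, and it needs to be proved, not read off ``by construction.''
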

\begin{proof}
    We combine Lemmas 2.6.3 and 2.6.4 from~\cite{Tridecomp}.
\end{proof}

\begin{keylemma}\label{wheelTorso}
    Let $G$ be a 4-connected graph, and let $N$ denote the set of totally-nested tetra-separations of~$G$.
    Let $\sigma$ be a splitting star of~$N$ that is interlaced by two tetra-separations $(A,B)$ and $(C,D)$ of $G$ that cross with vertex-centre $Z$ of size two.
    Then the torso $\tau$ of $\sigma$ is a generalised double-wheel with centre~$Z$.
\end{keylemma}

\begin{proof}
    By the \allref{keylem:crossing}, the centre~$Z$ equals the vertex-centre and all links have size one.
    Let~$\cO$ be the Block-bagel induced by~$(A,B)$ and~$(C,D)$, which has centre~$Z$.
    By \cref{lem:double-wheel-star}, the tetra-star~$\sigma(\cO)$ is a star of totally-nested tetra-separations of~$G$.
    By \cref{lem:double-wheel-interlaced-crossed}, the tetra-star~$\sigma(\cO)$ is a splitting star of~$N$.
    Since both~$(A,B)$ and~$(C,D)$ interlace both~$\sigma$ and~$\sigma(\cO)$, we obtain $\sigma=\sigma(\cO)$.
    Hence it remains to compute the torso~$\tau$ of~$\sigma(\cO)$.
    The torso~$\tau$ is 4-connected by \cref{dotTorso4con}.
    We can obtain~$\tau$ from~$\cO$ by replacing each good bag~$G_t$ by a~$K_4$ on the vertex set $Z\cup\{u,v\}$ where $u$ and $v$ are the adhesion-vertices of~$G_t$, leaving only $K_2$-bags and triangle-bags, and contracting some edges in $K_2$-bags.
    Hence $\tau$ is a generalised wheel with centre~$Z$.
\end{proof}

\subsection{Angry block-bagels}\label{sec:WheelAngry}

Let $X$ be a graph.
We call $X$ a \defn{double-wheel} with \defn{centre $\{u,v\}$} if $X$ is 4-connected and $u,v$ are distinct vertices of $X$ such that $X-u-v$ is a cycle.
So $u$ and $v$ send edges to all vertices of $X-u-v$, and the edge $uv$ may or may not exist.
The \defn{rim-length} of $X$ then means the length of the cycle $X-u-v$.
We call $X$ a \defn{double-wheel of triangles} with \defn{centre $\{u,v\}$} if $X$ is 4-connected and $u,v$ are distinct vertices of $X$ such that $X-u-v$ has a cycle-decomposition $(O,\cG)$ with all adhesion-sets of size 1 and into triangles.
The \defn{rim-length} of $X$ means the length of the cycle~$O$.

\begin{lemma}
\label{lem:double-wheel-all-bad}
    Let~$G$ be a 4-connected graph with a block-bagel $\cO=(O,\cG)$ with centre~$Z$ such that all bags of $\cO$ are bad.
    Then either~$G$ is a double-wheel with centre~$Z$ or~$G$ is a double-wheel of triangles with centre~$Z$.
    In either case, the rim-length equals $h(\cO)$.
\end{lemma}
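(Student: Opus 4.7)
The plan is to argue that the badness assumption forces all bags of $\cO$ to be of one common type (either all $K_2$'s or all triangles) and then read off the structure of $G$ in each case.

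First I would observe that, by definition, every bad bag is either a $K_2$ or a triangle with both neighbours 2-connected. In particular, no bag can have four or more vertices, since a 2-connected bag with $|G_t|\ge 4$ is automatically good. Thus the only 2-connected bags that can appear in $\cO$ are triangles. Moreover, every triangle-bag would be good if any of its neighbouring bags were a $K_2$, so in our situation the two neighbours of every triangle-bag must themselves be triangles. Equivalently, in $O$, $K_2$-bags can only be adjacent to $K_2$-bags and triangle-bags only to triangle-bags; since $O$ is a connected cycle, exactly one of these two types occurs.

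In the $K_2$-case, gluing $K_2$-bags along their single adhesion-vertices around $O$ exhibits $G-Z$ as a cycle $C$ of length $|V(O)|$. Every edge of $G-Z$ belongs to some bag (by definition of a cycle-decomposition), so $C$ is chord-free and every rim-vertex has exactly two neighbours in $G-Z$. By 4-connectivity of $G$ and because $|Z|=2$, each rim-vertex must be adjacent to both vertices of $Z$, so $G$ is a double-wheel with centre $Z$ and rim-length $|V(O)|$. The helpful elements of $\cO$ are precisely its edges (no bag is 2-connected, and every edge has $K_2$-bags at both ends), hence $h(\cO)=|E(O)|=|V(O)|$, which matches the rim-length.

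In the triangle-case, the cycle-decomposition exhibits $G-Z$ as a cycle of triangles with all adhesion-sets of size one; for each $t\in V(O)$, let $w_t$ denote the interior vertex of the triangle-bag $G_t$. Because the only edges of $G-Z$ are those in the triangles, $w_t$ has exactly two neighbours in $G-Z$, so 4-connectivity again forces $w_t$ to be adjacent to both vertices of $Z$. This makes $G$ a double-wheel of triangles with centre $Z$ and rim-length $|V(O)|$. Here every node of $O$ is helpful (its bag is 2-connected) while no edge is (no bag is a $K_2$), so $h(\cO)=|V(O)|$, again equal to the rim-length. The main obstacle is largely bookkeeping: the only genuine content is verifying that 4-connectivity rules out an interior rim-vertex being non-adjacent to some vertex of $Z$, which in turn relies on the cycle-decomposition accounting for every edge of $G-Z$ so that no unexpected chord contributes to a vertex's degree.
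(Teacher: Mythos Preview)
Your proof is correct and follows essentially the same approach as the paper: classify bad bags as $K_2$'s or triangles-with-2-connected-neighbours, deduce that all bags share one type since $O$ is connected, and then read off the structure and compute $h(\cO)$ in each case. Your version is more detailed than the paper's (you explicitly verify chord-freeness and the adjacencies to $Z$, which the paper leaves implicit), but the argument is the same.
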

\begin{proof}
    Since all bags of $\cO$ are bad, all bags are either $K_2$'s or triangles.
    Moreover, if there is a bag that is a triangle, then its neighbouring bags are triangles, too.
    It follows that if some bag is a triangle, then all bags are triangles.
    In this case,~$G$ is a double-wheel of triangles with centre~$Z$.
    Then $h(\cO)$ equals the number of triangle-bags of~$\cO$, and hence equals the rim-length.
    Otherwise, every bag is a~$K_2$.
    In this case,~$G$ is a double-wheel with centre~$Z$. 
    Then $h(\cO)$ equals the number $K_2$-bags of~$\cO$, and hence equals the rim-length.
\end{proof}

\begin{lemma}
\label{lem:double-wheel-crossing-tetra}
    Let~$G$ be a double-wheel with rim-length~$\geq 4$ and centre~$Z$. 
    Then~$G$ has two tetra-separations that cross with all links of size one and centre equal to~$Z$.
\end{lemma}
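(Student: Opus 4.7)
The plan is to produce the two tetra-separations by a direct construction, cutting the rim cycle $G - u - v$ at four edges that appear in alternating cyclic order. Write $n$ for the rim-length and label the rim as $w_0 w_1 \cdots w_{n-1} w_0$, with subscripts taken cyclically. I would define $(A,B)$ by $A := \{u,v,w_1,w_2\}$ and $B := \{u,v\} \cup \{w_0, w_3, w_4, \ldots, w_{n-1}\}$, and $(C,D)$ by $C := \{u,v\} \cup \{w_2, w_3, \ldots, w_{n-1}\}$ and $D := \{u,v,w_0,w_1\}$. Inspection shows that $S(A,B) = \{u,v, w_0 w_1, w_2 w_3\}$ and $S(C,D) = \{u,v, w_1 w_2, w_{n-1} w_0\}$.

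The first step is to check that $(A,B)$ and $(C,D)$ are tetra-separations. The matching-condition is immediate since the two cut edges in each separator are disjoint rim-edges (using $n \geq 4$ so that $w_0 w_1, w_2 w_3$ and $w_1 w_2, w_{n-1} w_0$ are pairs of non-incident edges). The degree-condition holds because $u$ and $v$ are adjacent to every rim vertex and each of the four proper sides $A\sm B$, $B\sm A$, $C\sm D$, $D\sm C$ contains at least two rim vertices.

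The second step is to verify the crossing with the prescribed structure. The four corners evaluate to $AC = \{w_2\}$, $AD = \{w_1\}$, $BC = \{w_3, \ldots, w_{n-1}\}$, and $BD = \{w_0\}$, all nonempty, so $(A,B)$ and $(C,D)$ cross. The vertex-centre is $A\cap B\cap C\cap D = \{u,v\} = Z$, of size two. A direct check against the definitions from \cref{sec:crossingAnalysis} then gives that the $A$-link is $\{w_1 w_2\}$, the $B$-link is $\{w_{n-1} w_0\}$, the $C$-link is $\{w_2 w_3\}$, and the $D$-link is $\{w_0 w_1\}$, so each of the four links has size exactly one.

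No substantial obstacle arises; the proof is a direct construction followed by bookkeeping against the definitions of tetra-separation and of the links of a crossing-diagram. The only point worth confirming is the boundary case $n = 4$, where three of the four corners become singletons: the degree-condition is still met because each of the four proper sides retains size exactly two, which is all that the condition needs.
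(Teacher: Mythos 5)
Your construction is correct and is essentially the paper's own proof: both arguments cut the rim cycle at four interleaved consecutive edges, assigning alternate edges to the two separators $\{w_0w_1,w_2w_3\}\cup Z$ and $\{w_1w_2,w_{n-1}w_0\}\cup Z$, the paper merely stating this in one line while you carry out the bookkeeping. The only cosmetic omission is the remark that the rim $G-u-v$ is an induced cycle, so there are no diagonal edges and the centre indeed equals the vertex-centre~$Z$.
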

\begin{proof}
    Let $O$ denote the cycle $G-Z$.
    Let $e_1,f_1,e_2,f_2$ be the first four edges on~$O$.
    Let $(A_i,B_i)$ be the tetra-separation of $G$ with separator $\{e_i,f_i\}\cup Z$.
    Then $(A_1,B_1)$ crosses $(A_2,B_2)$ with the four edges as links and with centre equal to~$Z$.
\end{proof}

\begin{lemma}
\label{lem:double-wheel-of-triangles-crossing-tetra}
    Let~$G$ be a double-wheel of triangles with rim-length~$\geq 4$ and centre~$Z$. Then~$G$ has two tetra-separations that cross with all links of size one and centre equal to~$Z$.
\end{lemma}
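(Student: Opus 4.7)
The plan is to construct the two tetra-separations explicitly, by analogy with the proof of \cref{lem:double-wheel-crossing-tetra}, but with adhesion-vertices of the triangle cycle-decomposition playing the role that rim-edges play in the simpler double-wheel case. Let $Z = \{u,v\}$, let $(O,(T_i)_{i \in \Z_n})$ be the cycle-decomposition of $G-u-v$ with triangle-bags $T_i$ and adhesion $1$, where $n \ge 4$ is the rim-length. Write $V(T_i) = \{v_{i-1}, v_i, w_i\}$ where $v_i$ is the adhesion-vertex shared by $T_i$ and $T_{i+1}$ and $w_i$ is the unique interior vertex of~$T_i$. Note that since each triangle bag intersects $O$ along adhesion-sets of size~$1$, the only edges of $G-u-v$ are the three edges inside each triangle.

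The first step is to use 4-connectivity to force a minimum adjacency pattern between $Z$ and the rim: every interior vertex $w_i$ has exactly two neighbours $v_{i-1}, v_i$ in $G-u-v$, so its degree in $G$ is at most $4$, and 4-connectivity forces equality, so $w_i$ is adjacent to both $u$ and $v$. Next, for $i \in \{1,2\}$ I define the mixed-4-separation $(A_i, B_i)$ by setting $S(A_i, B_i) := \{u,v,v_i,v_{i+2}\}$ (a set of four vertices) and letting $A_i \setminus B_i$ consist of the vertices of the short arc $T_{i+1} \cup T_{i+2}$ with the separator removed, and $B_i \setminus A_i$ consist of the vertices of the long arc $T_{i+3} \cup \ldots \cup T_i$ with the separator removed. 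Since distinct triangles share at most one vertex and each triangle is entirely contained in one side of the cut, there are no edges between $A_i \setminus B_i$ and $B_i \setminus A_i$; thus the separator has size exactly~$4$ and consists entirely of vertices, so the matching-condition is vacuous.

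To check the degree-condition, observe that each adhesion-vertex $v_j \in S(A_i, B_i)$ lies in exactly two triangles which sit on opposite sides of the cut, giving $v_j$ two neighbours on each side. For $u$ (and symmetrically $v$), because $n \ge 4$ each arc contains at least two triangles and hence at least two interior vertices $w_k$; by the first step, $u$ is adjacent to all of them. Hence $(A_i,B_i)$ satisfies the degree-condition and is a tetra-separation.

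Finally I compute the crossing diagram of $(A_1,B_1)$ and $(A_2,B_2)$. The four corners are non-empty, containing the interior vertices $w_3, w_2, w_4, w_1$ respectively (together with additional vertices in the ``long'' corner when $n \ge 5$), so the two tetra-separations cross. The vertex-centre is $\{u,v,v_1,v_3\} \cap \{u,v,v_2,v_4\} = \{u,v\} = Z$; no diagonal edges exist because distinct interior vertices $w_i, w_j$ are non-adjacent in $G-u-v$ and $u,v$ are not in any corner. The four links are precisely the singletons $\{v_2\}, \{v_4\}, \{v_3\}, \{v_1\}$, so all links have size one as required. The main obstacle in the plan is avoiding overclaims about the edges between $Z$ and the rim (none are stipulated by the definition of ``double-wheel of triangles''); the key point that unlocks everything is that 4-connectivity, applied to the interior vertices of degree at most~$4$, forces each $w_i$ to be adjacent to both $u$ and $v$, which is the only fact about~$Z$ used in the rest of the argument.
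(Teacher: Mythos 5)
Your proof is correct and takes essentially the same route as the paper, which simply lifts two crossing $2$-separations of the decomposition-cycle $O$ to $G$ by adding $Z$; your separators $\{u,v,v_i,v_{i+2}\}$ are exactly these lifts, written out explicitly. The only difference is that you verify the degree-condition, the non-empty corners, and the absence of diagonal edges in detail, whereas the paper leaves these routine checks implicit.
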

\begin{proof}
    Since~$G$ is a double-wheel of triangles with rim-length~$\geq 4$, the graph $G-Z$ has a cycle-decomposition $\cO=(O,(G_t)_{t \in O})$ with $|O| \geq 4$ such that all bags are triangles.
    Let $(A,B)$ and $(C,D)$ be crossing 2-separations of~$O$.
    Let $\hat A:=Z\cup\bigcup_{t\in A}V(G_t)$ and define $\hat B,\hat C,\hat D$ similarly.
    Then $(\hat A,\hat B)$ and $(\hat C,\hat D)$ are tetra-separations of $G$ that cross as desired.
\end{proof}

\begin{keylemma}
\label{keylem:double-wheel-angry}
    Let~$G$ be a 4-angry graph. Then the following assertions are equivalent:
    \begin{enumerate}
        \item\label{itm:double-wheel-angry-1} $G$ has two tetra-separations that cross with all links of size one;
        \item\label{itm:double-wheel-angry-2} $G$ has a block-bagel~$\cO$ with $h(\cO) \geq 4$ all whose bags are bad;
        \item\label{itm:double-wheel-angry-3} $G$ is a double-wheel with rim-length~$\geq 4$ or a double-wheel of triangles with rim-length~$\geq 4$.
    \end{enumerate}
\end{keylemma}
\begin{proof}
    \ref{itm:double-wheel-angry-1} $\Rightarrow$ \ref{itm:double-wheel-angry-2}. 
    Assume that~$G$ has two tetra-separations~$(A,B)$ and~$(C,D)$ that cross with all links of size one.
    Let~$Z$ be the centre of~$(A,B)$ and~$(C,D)$, which is equal to the vertex-centre and has size two, by the \allref{keylem:crossing}.
    Let~$\cO$ be the block-bagel induced by~$(A,B)$ and~$(C,D)$.
    Then we have the \allref{set:block-bagel}.
    By the \allref{lem:helpful-lemma}, we have~$h(\cO) \geq 4$.
    If $\cO$ has a good bag, then $G$ has a totally-nested tetra-separation by \cref{cor:double-wheel-totally-nested}, contradicting that~$G$ is 4-angry.
    Hence, every bag of~$\cO$ is bad.
    
    \ref{itm:double-wheel-angry-2} $\Rightarrow$ \ref{itm:double-wheel-angry-3}. Follows from \cref{lem:double-wheel-all-bad}.

    \ref{itm:double-wheel-angry-3} $\Rightarrow$ \ref{itm:double-wheel-angry-1}. Follows from \cref{lem:double-wheel-crossing-tetra} and \cref{lem:double-wheel-of-triangles-crossing-tetra}.
\end{proof}

\section{Cycles of 2-connected torsos}\label{sec:CycleOfGraphs}

One of the outcomes of \cref{MainDecomp} is a cycle of triangle-torsos and 3-connected torsos on $\le 5$ vertices.
To investigate how this outcome arises, we assume that we are given two tetra-separations $(A,B)$ and $(C,D)$ of a 4-connected graph $G$ that cross with empty vertex-centre.
In order to construct a cycle-decomposition for analysing the cyclic structure that is determined by $(A,B)$ and $(C,D)$, we first use the corner-diagram to divide $G$ into four smaller pieces (\cref{sec:BagelBananas}).
We then apply the Tutte-decomposition to all four pieces and merge them to obtain a cycle-decomposition of $G$ (\cref{sec:BagelTutteBanana}).
In \cref{sec:BagelSettings}, we summarise assumptions shared by a fair share of the lemmas to come.
\cref{sec:BagelNpaths} provides machinery that exploits the structure of the cycle-decomposition to find independent paths.
We use this machinery in \cref{sec:BagelTotallyNested} to characterise which bags of the cycle-decomposition give rise to totally-nested tetra-separations (\cref{cor:bagel-totally-nested}).
\cref{sec:BagelSplitting} shows that these totally-nested tetra-separations form a splitting star as in \cref{MainDecomp}, and determines the structure of its torso (\cref{bagelTorso}).
Finally, \cref{sec:BagelAngry} characterises the special case where additionally all tetra-separations of the graph are crossed.

\subsection{Bananas}\label{sec:BagelBananas}

Let $G$ be a 4-connected graph.
Assume that two tetra-separations $(A,B)$ and $(C,D)$ of $G$ cross with all links of size two.
Recall that the centre is empty by the \allref{keylem:crossing}.
We obtain the graph $\defnMath{G^*}$ from $G$ by adding edges, as follows.
For each $L$-link $\{x,y\}$ and each adjacent corner~$K=LS$ (where $L,S$ are sides), we join the unique vertices in $\hat x\cap S$ and $\hat y\cap S$ by an edge, which we call the \defn{$K$-edge for the $L$-link}; see \cref{fig:GStarDef}.
We will only use $G^*$ when $(A,B)$ and $(C,D)$ are clear from context.
The \defn{expanded corner for $AC$} is obtained from the corner $(A\sm B)\cap (C\sm D)$ for $AC$ by adding the endvertices of all edges contained in the two adjacent links.
The \defn{banana for $AC$}, or \defn{$AC$-banana}, is the subgraph of $G^*$ induced by the expanded corner for~$AC$.
The expanded corners and bananas are defined similarly for the other three corners.

\begin{figure}[ht]
    \centering
    \includegraphics[height=10\baselineskip]{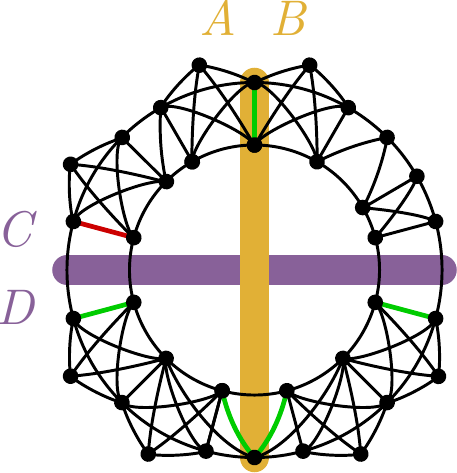}
    \caption{The edges added to $G$ to obtain $G^*$ are coloured red and green. The red edge is the $AC$-edge for the $A$-link.}
    \label{fig:GStarDef}
\end{figure}

\begin{lemma}
\label{lem:size-expanded-corner}
    Let $G$ be a 4-connected graph.
    Let $(A,B)$ and $(C,D)$ be tetra-separations of $G$ that cross with all links of size two.
    Let $K$ be a corner.
    Then the expanded corner for~$K$ has size~$\ge 4$.
\end{lemma}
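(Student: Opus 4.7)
My plan is to show $|E| \geq 4$ where $E$ is the expanded corner for $K$; by symmetry we may take $K = AC$. The key preliminary fact is that by the \allref{keylem:crossing}, case (iii), the corner-separator $L(A,C) = A\text{-link} \cup C\text{-link}$ has exactly four elements (two per link), since the vertex-centre is empty and there are no diagonal edges.

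I would construct an injective map $\phi \colon L(A,C) \hookrightarrow V(E)$. For each edge-element of $L(A,C)$, both its endvertices lie in $E$ by definition of the expanded corner, so $\phi$ assigns either endvertex; for each vertex-element, $\phi$ assigns the vertex itself, which lies in $E$ as an endvertex in the expanded-corner framework. Injectivity is then established using (1)~the tetra-separation matching-condition, which ensures that distinct edges within $S(A,B)$ (or within $S(C,D)$) have disjoint endvertices, so the edge-contributions within a single link are distinct; and (2)~the structural observation that $A$-link contributions lie in $(A\sm B) \cup \big((A\cap B) \cap (C\sm D)\big)$ while $C$-link contributions lie in $(C\sm D) \cup \big((A\cap B) \cap (D\sm C)\big)$, so any overlap between the two link-contributions must fall inside the corner $AC$ itself, where the matching-condition again controls it.

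The main obstacle will be bookkeeping the potter-corner cases and the dangling-edge structure from the \allref{keylem:crossing}, where an $A$-link vertex could in principle coincide with an endvertex of a $C$-link edge (and symmetrically). However, by \cref{obs:potter-adjacent-corner} at most two corners are potter and these two are opposite; this bounds the number of such coincidences and ensures that $|\phi(L(A,C))| \geq 4$ in every configuration, which yields the claim.
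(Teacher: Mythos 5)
Your high-level strategy is the same as the paper's: the two links adjacent to $K$ together have four elements (correct, since by the \allref{keylem:crossing} the centre is empty and there are no diagonal edges), and one injects these four elements into the vertex set of the expanded corner. The gap is in the injection. Letting $\phi$ assign ``either endvertex'' to an edge-element does not work, and neither of your two safeguards closes the hole. The claim that any overlap between $A$-link contributions and $C$-link contributions must fall inside the corner $AC$ is false: in the potter configuration for $AC$ (\cref{fig:PotterDef}), the $A$-link contains an edge $u_1u_2$ whose endvertex $u_2$ is precisely the vertex-element of the $C$-link, so the coincidence occurs at a link-vertex while the corner itself is empty. If $\phi$ sends the edge $u_1u_2$ to $u_2$ and the vertex $u_2$ to itself (and symmetrically $u_3u_4$ to $u_3$), its image has two elements, even though the expanded corner has exactly four vertices here --- the bound is tight in the potter case, so no slack-counting of coincidences can absorb the loss. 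Moreover \cref{obs:potter-adjacent-corner} does not rescue this: it forbids two \emph{adjacent} potter corners, but the offending potter corner is $K=AC$ itself, which the observation does not exclude. Finally, the matching-condition constrains only edges within a single separator $S(A,B)$ or $S(C,D)$; an $A$-link edge lies in $S(C,D)$ and a $C$-link edge lies in $S(A,B)$, so the matching-condition says nothing about those two sharing an endvertex and cannot ``control'' cross-link overlaps.

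The repair is to fix the choice of endvertex, which is exactly what the paper does: map each edge of the $C$-link (an edge of $S(A,B)$) to its endvertex in $B\sm A$, each edge of the $A$-link (an edge of $S(C,D)$) to its endvertex in $D\sm C$, and each vertex to itself. Then images of $C$-link elements lie in $C$ (for edge-images one uses that there are no diagonal edges), images of $A$-link edges lie in $D\sm C$, vertex-images lie in $A\cap B$ resp.\ $C\cap D$ while edge-images lie outside these sets, and within each link the relevant matching-condition separates the edge-images. With that choice specified, your argument becomes the paper's proof.
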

\begin{proof}
    Assume $K=AC$ without loss of generality.
    Let~$L$ be the union of the $A$-link and the $C$-link, and note that $|L| = 4$.
    For each element~$x$ in~$L$, we define~$v_x:=x$ if~$x$ is a vertex, and $v_x:=u$ if~$x$ is an edge and~$u$ is the endvertex of~$x$ in $(B \sm A) \cup (D \sm C)$.
    By the matching-condition for~$(A,B)$ and~$(C,D)$, $x \mapsto v_x$ is an injection that maps elements in~$L$ to vertices of the expanded corner for~$AC$.
    So, the expanded corner for~$AC$ has size~$\ge 4$.
\end{proof}

\begin{lemma}\label{bananaEndNumberChar}
    Let $G$ be a 4-connected graph.
    Let $(A,B)$ and $(C,D)$ be tetra-separations of $G$ that cross with all links of size two.
    Let $K$ be a corner.
    Then the following assertions are equivalent:
    \begin{enumerate}
        \item\label{bananaEndNumberChar1} the $K$-edges for the links adjacent to $K$ are distinct;
        \item\label{bananaEndNumberChar2} the corner $K$ is not potter.
    \end{enumerate}
\end{lemma}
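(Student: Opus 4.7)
My plan is to fix $K = AC$, write $\{\alpha_1, \alpha_2\}$ for the $A$-link and $\{\gamma_1, \gamma_2\}$ for the $C$-link, and set $a_i := \hat \alpha_i \cap C$ and $c_j := \hat \gamma_j \cap A$, so that the two $K$-edges have endpoint sets $\{a_1, a_2\}$ and $\{c_1, c_2\}$. The first step is to unfold the link definitions into a trichotomy: each $a_i$ (and each $c_j$) lies in exactly one of three pairwise disjoint sets --- the $AC$-corner, the $A$-link vertex set $(A \sm B) \cap (C \cap D)$, or the $C$-link vertex set $(A \cap B) \cap (C \sm D)$. Moreover $a_i$ is an $A$-link vertex iff $\alpha_i$ is itself a vertex; $a_i$ is a $C$-link vertex iff $\alpha_i$ is an edge with one end in the $AD$-corner and the other at a $C$-link vertex; the symmetric characterisations hold for~$c_j$.

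For (ii)~$\Rightarrow$~(i), plugging the potter witness path $u_1 u_2 u_3 u_4$ into the link definitions gives $A$-link~$= \{u_1 u_2, u_3\}$ and $C$-link~$= \{u_3 u_4, u_2\}$, so $\{a_1, a_2\} = \{c_1, c_2\} = \{u_2, u_3\}$ and both $K$-edges equal $u_2 u_3$. For the contrapositive of (i)~$\Rightarrow$~(ii) I would assume the two $K$-edges coincide, with common endpoint set $\{u, v\}$. If at least one of $u, v$ lies in a link vertex set --- say $u$ is an $A$-link vertex --- then $u = c_j$ for some $j$, the trichotomy forces $\gamma_j$ to be a $C$-link edge with endpoint $u$, and then $\gamma_j$ dangles from the $A$-link through the $C$-link. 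By the dangling-edge clause of the \allref{keylem:crossing} the corner $K$ is therefore potter; the three symmetric subcases (the other $u$/$v$ in either link) go the same way.

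The main obstacle is the remaining case, in which $u$ and $v$ both lie in the $AC$-corner. There both links consist of two edges whose $AC$-side endpoints are exactly $u$ and $v$; the absence of dangling edges forces the $A$-link and $C$-link vertex sets to be empty, and the centre is empty by case~(iii) of the \allref{keylem:crossing}. I would then go through each of the nine possible regions of $V(G)$ in which a neighbour of a further $AC$-corner vertex $w \notin \{u, v\}$ could lie, and show that every such neighbour must again lie in the $AC$-corner: an exterior neighbour would either create a diagonal edge (excluded by the \allref{keylem:crossing}), force $w$ to be the endpoint of a further $A$-link or $C$-link edge (violating the matching-condition for $S(A,B)$ or $S(C,D)$, whose $AC$-side is already occupied by $\{u, v\}$), or land in an empty region. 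Thus if the $AC$-corner properly contains $\{u, v\}$, removing $\{u, v\}$ separates the remaining corner vertices from~$G$, contradicting 4-connectivity; and if the $AC$-corner equals $\{u, v\}$, then $u$ has at most three neighbours in~$G$ (namely $v$, one $A$-link-edge end, and one $C$-link-edge end), again contradicting 4-connectivity. Hence this case is impossible and the proof concludes.
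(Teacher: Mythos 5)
Your proposal is correct and takes essentially the same route as the paper: the potter witness path forces both $K$-edges to equal $u_2u_3$; a shared endpoint lying in a link produces a dangling edge and hence a potter corner via the \allref{keylem:crossing}; and the case of both shared endpoints in the corner is killed by 4-connectivity (the corner collapses to $\{u,v\}$ and then $u$ has degree $\le 3$). Only two cosmetic points: your two direction labels are swapped (what you call ``(ii)~$\Rightarrow$~(i)'' is the contrapositive of (i)~$\Rightarrow$~(ii), and vice versa), and in the final case the reason $w$ cannot meet a further link edge is that each link has size exactly two with both elements already accounted for, rather than the matching-condition.
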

\begin{proof}
    $\neg$ \cref{bananaEndNumberChar1} $\Rightarrow$ $\neg$ \cref{bananaEndNumberChar2}.
    Say $K$ is the corner for $AC$.
    Let $\beta$ denote the $AC$-banana.
    Let $e(A)$ denote the $AC$-edge for the $A$-link, and let $e(C)$ denote the $AC$-edge for the $C$-link.
    Assume that $e(A)=e(C)=:e$.
    We will show that $AC$ is potter.
    If both endvertices of $e$ lie in the corner for $AC$, then each link adjacent to $AC$ consists of a matching of size two with the endvertices of $e$ as one side of the matching; see \cref{fig:bananaEndNumberChar}.
    As $G$ is 4-connected, the corner for $AC$ cannot contain any other vertices besides the endvertices of~$e$.
    But then both endvertices of $e$ have degree $\le 3$ in~$G$, contradicting that $G$ is 4-connected.
    So we may assume that $e$ has an endvertex $v$ that lies in the $A$-link, say.
    Then the $C$-link contains an edge $f$ that ends in~$v$.
    Hence $f$ dangles from the $A$-link through the $C$-link, which implies that the corner for $AC$ is potter by the \allref{keylem:crossing}.

    $\neg$ \cref{bananaEndNumberChar2} $\Rightarrow$ $\neg$ \cref{bananaEndNumberChar1} holds by definition.
\end{proof}

\begin{figure}[ht]
    \centering
    \includegraphics[height=8\baselineskip]{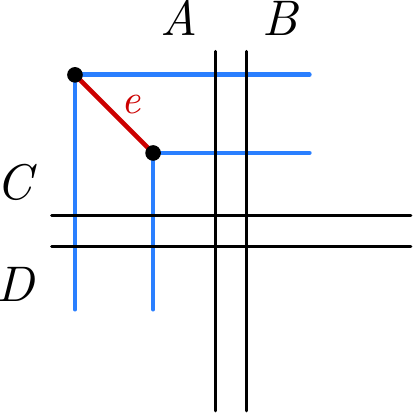}
    \caption{The situation in the proof of \cref{bananaEndNumberChar}}
    \label{fig:bananaEndNumberChar}
\end{figure}

\begin{lemma}\label{banana2con}
    Let $G$ be a 4-connected graph.
    Let $(A,B)$ and $(C,D)$ be tetra-separations of $G$ that cross with all links of size two.
    The bananas of the four corners are 2-connected.
\end{lemma}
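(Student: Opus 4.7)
The plan is to prove the statement for the $AC$-banana $\beta$; the other three corners are symmetric. \cref{lem:size-expanded-corner} already gives $|V(\beta)|\ge 4$, so it suffices to rule out any vertex-cut of $\beta$ of size at most one.

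Suppose for contradiction that there exists $T\subseteq V(\beta)$ with $|T|\le 1$ and a partition $V(\beta)\setminus T = X\sqcup Y$ with $X,Y$ both non-empty and no $\beta$-edges between them. The plan is to upgrade $(X,T,Y)$ to a mixed separator of $G$ of size at most $3$ that separates a vertex of $X$ from a vertex of $Y$, contradicting the 4-connectivity of $G$.

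The candidate is $S := T\cup\Sigma_A\cup\Sigma_C$ where $\Sigma_A, \Sigma_C$ are the $A$-link and $C$-link (each of mixed size $2$ by the \allref{keylem:crossing}), so $|S|\le 5$. The argument proceeds in two steps. First, I verify that $S$ is a mixed separator between $X$ and $Y$ in $G$: using $V(\beta) = AC\cup\hat{\Sigma_A}\cup\hat{\Sigma_C}$ together with the absence of diagonal and jumping edges from the \allref{keylem:crossing}, every edge of $G$ whose one endvertex lies in $V(\beta)$ and whose other endvertex lies outside $V(\beta)$ is either incident to a link-vertex of $\Sigma_A\cup\Sigma_C$ or equal to a link-edge of $\Sigma_A\cup\Sigma_C$. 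Hence any $X$–$Y$ path in $G$ either stays in $V(\beta)$ (and must therefore use $T$), or leaves $V(\beta)$ through an element of $\Sigma_A\cup\Sigma_C$. Second, I reduce $|S|$ to at most $3$ by a combination of absorptions: (i) each vertex in $T$ absorbs at most two link-elements incident to it, one per link, with the bound coming from the matching-condition; (ii) each link's two elements are joined inside $\beta$ by the $G^*$-added $K$-edges, so any partial split of a link between $X$ and $Y$ would force a $\beta$-edge between $X$ and $Y$, contradicting the separation and allowing the whole link to be absorbed; (iii) in the base case $T=\emptyset$, the two added $K$-edges sitting inside the corner $AC$ (one for each link) directly contradict the no-$\beta$-edge assumption unless all four link-elements lie on one side of the partition, which forces $X$ or $Y$ to be empty.

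The main obstacle I anticipate is the first step, specifically controlling potential $V(\beta)\to V(G)\setminus V(\beta)$ paths that traverse regular, non-link edges — for example, an edge from an endvertex in $E_A\cap AD$ to a vertex of $V_D$ that happens to lie outside $V(\beta)$. These escapes will be ruled out by a careful case analysis of where the sets $\hat{\Sigma_A}$ and $\hat{\Sigma_C}$ land in the corner diagram, using again the \allref{keylem:crossing} to constrain the possible link-configurations, and noting that any such escape must eventually re-enter $V(\beta)$ through an element already captured by $\Sigma_A\cup\Sigma_C$.
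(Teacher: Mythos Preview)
Your separator $S = T\cup\Sigma_A\cup\Sigma_C$ does not separate $X$ from $Y$ in $G$, and the obstacle you anticipate is fatal rather than fixable by the case analysis you sketch. Concretely, suppose the $A$-link consists of two edges $e_i=c_id_i$ with $c_i$ in the corner $AC$ and $d_i$ in the corner $AD$, and the $C$-link consists of two edges $f_j=p_jq_j$ with $p_j\in A\setminus B$ and $q_j$ in the corner $BC$. All of $d_1,d_2,q_1,q_2$ lie in $V(\beta)$, and the $G^*$-edges $c_1c_2,\,d_1d_2$ and $p_1p_2,\,q_1q_2$ (the $AC$- and $AD$-edges for the $A$-link, and the $AC$- and $BC$-edges for the $C$-link) force $\hat\Sigma_A$ into one side of $(X,Y)$ and $\hat\Sigma_C$ into the other, say $\hat\Sigma_A\subseteq X$ and $\hat\Sigma_C\subseteq Y$ (modulo $T$). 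Now a $d_1$--$q_1$ path in $G$ that runs through the corners $AD$, $BD$, $BC$ crosses only the $D$-link and the $B$-link; it avoids $T\cup\Sigma_A\cup\Sigma_C$ entirely while joining $X$ to $Y$. Your claim that any escape ``must eventually re-enter $V(\beta)$ through an element already captured by $\Sigma_A\cup\Sigma_C$'' is false here: re-entry happens at $q_1$, and although $q_1$ is an endpoint of $f_1\in\Sigma_C$, the path does not use the edge $f_1$, so the mixed element $f_1$ does not block it. Consequently none of your absorptions (i)--(iii) can bring the order down to $3$; the basic separator is simply wrong.

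The paper's proof avoids this by using the \emph{opposite} link rather than the two adjacent ones. After observing that $e(A)$ and $e(C)$ lie on different sides of $(X,Y)$ (say $e(A)\in\beta[X]$, $e(C)\in\beta[Y]$), one sets $\hat X:=X\cup D$ and $\hat Y:=Y\cup(B\cap C)$. Extending $X$ by all of $D$ absorbs the escape points $d_1,d_2$; extending $Y$ by $B\cap C$ absorbs $q_1,q_2$; and the only new boundary this creates is the $B$-link. One then gets $S(\hat X,\hat Y)=S(X,Y)\cup(B\text{-link})$ of size at most $3$ in $G^*$, contradicting 4-connectivity. The missing idea in your proposal is precisely this: push one side out to swallow an entire half of the crossing diagram, so that only a single (opposite) link has to be paid for.
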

\begin{proof}
    Consider the corner for $AC$, say, and let $\beta$ denote its banana.
    If $AC$ is potter, then $\beta$ is a union of two triangles sharing an edge, and hence $\beta$ is 2-connected.
    So we may assume that $AC$ is not potter.
    Then $\beta$ has $\ge 3$ vertices by \cref{lem:size-expanded-corner}.
    Suppose for a contradiction that $\beta$ has a $(\le 1)$-separation $(X,Y)$.
    Without loss of generality, the $AC$-edge $e(A)$ for the $A$-link lies in $\beta[X]$.
    Then the $AC$-edge $e(C)$ for the $C$-link lies in $\beta[Y]$, since $G$ is 4-connected; see \cref{fig:banana2con}.
    Let $\hat X:=X\cup D$ and $\hat Y:=Y\cup (B\cap C)$.
    Then $S(\hat X,\hat Y)=S(X,Y)\cup (B\text{-link})$.
    So $(\hat X,\hat Y)$ is a mixed-3-separation of~$G^*$.
    Hence $(\hat X,\hat Y)$ witnesses that $G^*$ is not 4-connected by \cref{kConMixed}, contradicting that $G^*$ is spanned by the 4-connected subgraph~$G$.
\end{proof}

\begin{figure}[ht]
    \centering
    \includegraphics[height=8\baselineskip]{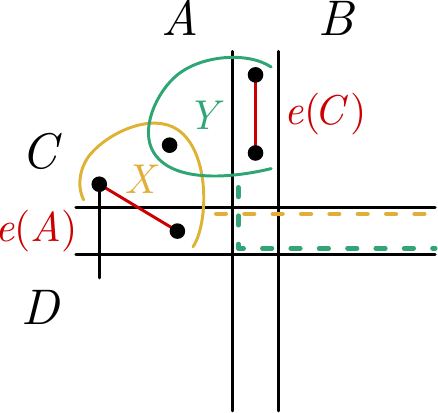}
    \caption{%
    The construction of $(\hat X,\hat Y)$ in the proof of \cref{banana2con}.
    The extensions of $X$ to $\hat X$ and of $Y$ to $\hat Y$ are indicated by the dashed lines.
    }%
    \label{fig:banana2con}
\end{figure}

\subsection{Tutte-bananas and Tutte-bagels}\label{sec:BagelTutteBanana}
A \defn{Tutte-bagel} of $G$ is a cycle-decomposition $(O,\cG)$ of $G$ with $\cG=(G_t:t\in V(O))$ that satisfies all of the following conditions:
\begin{enumerate}
    \item every torso is 3-connected, a 4-cycle or a triangle;
    \item all adhesion sets have size two;
    \item whenever two triangle-torsos are neighbours, the adhesion-graph in between them is a~$K_2$.
\end{enumerate}

\begin{lemma}\label{TutteBagelObservations}
    Let $G$ be a 4-connected graph, and let $(O,\cG)$ be a Tutte-bagel of~$G$.
    \begin{enumerate}
        \item If a torso is 3-connected or a 4-cycle, then its adhesion sets are disjoint.
        \item If a torso is a triangle, then its adhesion sets are distinct but share exactly one vertex.
        \item If a torso is a 4-cycle, then the two neighbouring torsos must be 3-connected.\qed
    \end{enumerate}
\end{lemma}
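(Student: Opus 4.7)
Throughout, the central tool is the basic cycle-decomposition property that for every vertex $v \in V(G)$, the set $T_v := \{h \in V(O) : v \in V(G_h)\}$ induces a connected, hence arc-like, subgraph of the cycle $O$. Consequently every edge $vy \in G$ lies in some bag $G_h$ with $h \in T_v \cap T_y$, and a vertex belongs to a distant bag only if it lies in every adhesion set along the relevant $O$-arc. Combined with 4-connectivity of $G$ (in particular $|G| \ge 5$ and no $(\le 3)$-separator), this tightly restricts admissible edges of $G$ and powers all three parts of the lemma.

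For parts~(i) and~(ii), suppose the two adhesion sets of $H_t$ share a common vertex. For~(i), write $A_1 = \{u,v\}$ and $A_2 = \{v,w\}$: then $\{u,v,w\}$ is a 3-separator of $G$ whose two sides are the interior of $V(G_t)$ and $V(G) \sm V(G_t)$, so by 4-connectivity one side must be empty. If the first is empty then $|V(G_t)| \le 3$ forces $H_t$ to be a triangle, contradicting that $H_t$ is 3-connected (which needs $\ge 4$ vertices) or a 4-cycle. If the second is empty then $V(G) = V(G_t)$ and every other bag $G_s$ has $V(G_s) \se \{u,v,w\}$; the only admissible Tutte-bagel torso-type with $\le 3$ vertices is a triangle, so $V(G_s) = \{u,v,w\}$ for each $O$-neighbour $s$ of~$t$, and then two such neighbouring bags share all three vertices, violating the size-$2$ adhesion condition. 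For~(ii), if the two adhesion sets of a triangle-torso coincide as $\{u,v\}$, then $\{u,v\}$ is a 2-separator of $G$ with side $\{w\}$; 4-connectivity then forces $V(G) = V(G_t)$, contradicting $|G| \ge 5$. Hence the adhesion sets are distinct, and two distinct 2-subsets of the 3-set $V(G_t)$ automatically share exactly one vertex.

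For part~(iii), let $H_t$ be the 4-cycle on $V(G_t) = \{a,b,c,d\}$ with cyclic order $a,b,c,d$. By~(i) its adhesion sets $A_1,A_2$ are disjoint and exhaust $V(G_t)$; since each becomes a clique in $H_t$ but $H_t$ has no chords, they must be a pair of opposite edges, so without loss of generality $A_1 = \{a,b\}$ and $A_2 = \{c,d\}$. Assume for contradiction that the neighbour $s_1$ across $A_1$ has $H_{s_1}$ not 3-connected, so $H_{s_1}$ is a triangle or a 4-cycle. I will pick $z \in A_1$ with arc $T_z = \{t,s_1\}$: if $H_{s_1}$ is a 4-cycle then by~(i) its other adhesion set $A_1'$ is disjoint from $A_1$ and either choice works; if $H_{s_1}$ is a triangle then by~(ii) $A_1 \cap A_1'$ is a single vertex, and I let $z$ be the other vertex of $A_1$. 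In either case, every $G$-neighbour of $z$ lies in $(V(G_t) \cup V(G_{s_1})) \sm \{z\}$; for each candidate neighbour $y$, the arc analysis pins the hosting bag down to $h = t$ when $y \in V(G_t) \sm V(G_{s_1})$, and to $h = s_1$ when $y \in V(G_{s_1}) \sm V(G_t)$. Applying this to the ``diagonal'' candidates (the vertex opposite $z$ in the 4-cycle $H_t$, and, in the 4-cycle sub-case, the vertex opposite $z$ in $H_{s_1}$) forces edges that are explicitly absent from these 4-cycles, a contradiction. This leaves $z$ with at most three admissible neighbours, contradicting the 4-connectivity degree bound. The same argument applied symmetrically at $s_2$ yields that $H_{s_2}$ is also 3-connected.

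The main technical obstacle is the bookkeeping in part~(iii): for each candidate neighbour of $z$ I must identify in which of the two torsos the forced edge would lie and then invoke the explicit edge-sets of $H_t$ (a 4-cycle missing the chords $ac, bd$) and of the triangle or 4-cycle $H_{s_1}$ to rule out precisely the diagonal candidates. The ``triangle with shared vertex'' sub-case is the most delicate because it requires an appeal to symmetry when choosing $z$, but the underlying arc-principle $T_z = \{t,s_1\}$ traps $z$ in a small neighbourhood and lets 4-connectivity deliver the contradiction.
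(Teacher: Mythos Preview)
Your proof is correct. The paper states this lemma without proof (the trailing \qed signals it is left as a routine observation), so there is nothing to compare against; your argument is the natural one and would be what the authors have in mind.

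One small simplification for part~(i): the sub-case $V(G) = V(G_t)$ is in fact vacuous. Since every torso-type has at least three vertices, the neighbouring bag $G_{s_1}$ satisfies $|V(G_{s_1})| \ge 3$, while $|V(G_{s_1}) \cap V(G_t)| = |A_1| = 2$ by the definition of adhesion-set; hence $V(G_{s_1}) \setminus V(G_t) \ne \emptyset$ and so $V(G) \ne V(G_t)$. This lets you drop the slightly awkward ``two such neighbouring bags share all three vertices'' step (which, as written, tacitly assumes $|O| = 3$, though it extends easily to $|O| \ge 4$ since \emph{every} bag $G_s$ with $s \ne t$ would have $V(G_s) = \{u,v,w\}$).
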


Let $G$ be a graph.
A 2-separation $(A,B)$ of $G$ is \defn{totally-nested} if it is nested with every 2-separation of~$G$.
This is equivalent to saying that $(A,B)$ is half-connected and at least one of $G[A]$ and $G[B]$ is 2-connected \cite[Corollary~3.3.3]{Tridecomp}.
Assume now that $G$ is 2-connected and $N$ denotes its set of totally-nested 2-separations.
Then the tree-decomposition $\cT=\cT(N)$ defined by $N$ is the \defn{Tutte-decomposition} of $G$.
Every torso of $\cT$ is 3-connected, a cycle or a $K_2$.
Tutte introduced his decomposition in \cite{TutteCon}.
A modern proof is included in the appendix of \cite{Tridecomp}.
The Tutte-decomposition $\cT=(T,\cV)$ has the following two properties:
\begin{enumerate}
    \item Every adhesion-set has size two.
    \item Every adhesion-set between two neighbouring cycle-torsos is spanned by an edge in~$G$.
    \item Every $K_2$-torso neighbours at least three torsos, none of which are $K_2$'s.
\end{enumerate}
These properties give an alternative characterisation of the Tutte-decomposition.

\begin{definition}[Tutte-bananas]
    Let $G$ be a 4-connected graph.
    Let $(A,B)$ and $(C,D)$ be tetra-separations of $G$ that cross with all links of size two.
    Let $K$ be one of the four corners.
    Recall that the $K$-banana is 2-connected by \cref{banana2con}.
    The \defn{$K$-Tutte-banana} is the Tutte-decomposition of the $K$-banana.
    The \defn{$K$-Tutte-path} is the decomposition-tree of the $K$-Tutte-banana.
\end{definition}

\begin{lemma}\label{TutteBananaIsBanana}
    Tutte-bananas are path-decompositions all whose torsos are either 3-connected graphs, 4-cycles or triangles.
\end{lemma}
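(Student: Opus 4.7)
The banana $\beta$ is 2-connected by \cref{banana2con}, so its Tutte-decomposition $\cT_\beta$ is well-defined and by Tutte's theorem its torsos fall into three types: 3-connected graphs, cycles of length $\ge 3$, or $K_2$'s. I plan to strengthen this list by showing that (a) the decomposition-tree is a path, (b) every cycle-torso has length 3 or 4, and (c) no torso is a $K_2$. If the corner $K$ is potter, then (as noted during the proof of \cref{banana2con}) $\beta\cong K_4^{-}$ and the Tutte-decomposition is a path of two triangle-torsos, so (a)--(c) hold trivially; I would focus on the non-potter case from here on.

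The central tool is a \emph{port-counting argument}. Call a vertex of $\beta$ a \emph{port} if it has a $G$-neighbour outside $V(\beta)$. By the \allref{keylem:crossing} (no dangling, diagonal, or jumping edges outside a potter corner), the only candidate ports are the endvertices of the link-edges in the two corners \emph{other than} $K$ adjacent to the two links of $K$. By the matching-condition on $(A,B)$ and $(C,D)$, each such link contributes at most two such endvertices, for at most four ports in total. Moreover, every side of a 2-separation of $\beta$ must contain at least one port, because otherwise the 2-separator of $\beta$ would also separate that port-free side from the rest of $G$ (no $G$-edges leave it), contradicting \cref{kConMixed} applied with $k=4$.

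This immediately proves (b): around any cycle-torso of length $k$ there are $k$ cells, each forming the port-containing side of a distinct 2-separation of $\beta$, and so $k\le 4$. For (a) I would proceed by contradiction: a tree-node of degree $\ge 3$ produces $\ge 3$ cells, each with $\ge 1$ port, so by pigeonhole some cell contains exactly one port~$w$. A careful count using the crossing-diagram bounds the number of $\beta$-neighbours of $w$ by three (at most two via link-edges into $K$, plus at most one intra-corner neighbour to the other port on the same link); combined with $d_G(w)\ge 4$ and the degree-condition applied to the separator-vertices of $(A,B),(C,D)$ sharing that link, this would force a mixed-$(\le 3)$-separator of $G$, again contradicting \cref{kConMixed}. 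Finally (c) follows from (a), because $K_2$-torsos in a Tutte-decomposition sit at nodes of degree $\ge 3$ in the decomposition-tree.

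The main obstacle is the delicate counting used to convert ``exactly one port in a cell'' into a genuine mixed-$(\le 3)$-separator of $G$: one must track which link $w$ stems from, how many external edges leave $w$ toward the remaining corners (bounded via the absence of dangling, jumping, and diagonal edges in the crossing-diagram), and then invoke the degree- and matching-conditions on the tetra-separations to close the gap between the $\ge 1$ external edge forced by $d_G(w)\ge 4$ and the $\le 1$ external edge needed for the desired contradiction.
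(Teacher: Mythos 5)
Your overall strategy -- bound the attachment of the banana $\beta$ to the rest of $G$ by a small set of ``ports'' and then argue that every side of every 2-separation of $\beta$ must meet a port -- is the natural way to flesh out what the paper dismisses in one sentence (``this follows from $G$ being 4-connected''), and your reduction of (c) to (a) and your treatment of the potter case are fine. However, your identification of the ports is wrong, and this is a genuine gap rather than a cosmetic one. You claim the only candidate ports are the endvertices of link-\emph{edges} lying in the two corners adjacent to $K$ other than $K$ itself. This omits the link \emph{vertices}: a vertex $x$ of the $A$-link lies in $(A\sm B)\cap C\cap D$, belongs to $V(\beta)$ (cf.\ \cref{linkRepresentedByEqualTorsos}), and by the degree-condition for $(C,D)$ has at least two neighbours in $D\sm C$, of which at most one can lie in $V(\beta)$; so $x$ is a port. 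In the generic case where both links consist of two vertices and no edges -- precisely the situation of \cref{TutteBananaLinksRepresentedDifferently} with long Tutte-paths of 3-connected torsos -- your characterisation yields \emph{zero} candidate ports, and your own principle ``every side of a 2-separation of $\beta$ contains a port'' would then force $\beta$ to be 3-connected, which is false in general. The correct statement is that there is exactly one port per link element (the link vertex itself, or the endvertex of the link edge in the adjacent corner other than $K$), giving the bound of four; with that correction the rest of your framework survives.

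Two further points where the argument is not yet closed. First, for (b) your ``$k$ cells'' are not pairwise disjoint port-containing sets (consecutive cells share the parts hanging off a common virtual edge), so the pigeonhole as stated only yields a weaker bound; to get $k\le 4$ you must combine port-counting with degree-counting: a non-port vertex of a cycle-torso has all $\ge 4$ of its $G$-neighbours inside $\beta$ but at most two inside the bag, forcing substantial hanging parts, each of which must contain its own ports (and, by the same 4-connectivity argument you use for \cref{kConMixed}, a hanging part with only one port is a single vertex, which then has too small a degree). Second, you explicitly flag the conversion of ``a cell with exactly one port'' into a mixed-$(\le 3)$-separator as the main obstacle for (a); that step is indeed where the work lies, and since your analysis of the neighbourhood of $w$ is carried out only for the (incomplete) port set consisting of edge-endvertices, it does not yet cover the case where $w$ is a link vertex. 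So the proposal is a reasonable skeleton of the intended routine verification, but as written it neither matches a proof the paper actually supplies nor closes its own announced gaps.
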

\begin{proof}
    This follows from $G$ being 4-connected.
\end{proof}

\begin{lemma}\label{linkRepresentedByEqualTorsos}
    Let $G$ be a 4-connected graph.
    Let $(A,B)$ and $(C,D)$ be tetra-separations of $G$ that cross with all links of size two.
    Let $K$ be one of the four corners, and consider an adjacent $L$-link $\Lambda$.
    Let $(T,\cV)$ be the $K$-Tutte-banana.
    \begin{enumerate}
        \item\label{linkRepresentedByEqualTorsos1} If $\Lambda$ consists of two vertices, then there is a unique node $t$ of $T$ with $\Lambda\se V_t$.
        Moreover, the torso of $V_t$ is 3-connected, and $t$ is an end-node of the path~$T$.
        \item\label{linkRepresentedByEqualTorsos2} If $\Lambda$ contains an edge, then there is unique node $t$ of $T$ with $\hat \Lambda=V_t$.
        Moreover, the torso of $V_t$ is a 4-cycle or a triangle, and $t$ is an end-node of the path~$T$.
    \end{enumerate}
\end{lemma}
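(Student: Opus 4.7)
I would prove the lemma by computing the induced subgraph $\beta[\hat\Lambda]$ of the $K$-banana via the added $G^*$-edges, and then exhibiting an explicit 2-separator of $\beta$ that isolates $\hat\Lambda$ or its ``far side'' from the rest of $\beta$. Without loss of generality take $K=AC$ and $L=A$, so $\Lambda$ is the $A$-link; the other cases are symmetric. By \cref{banana2con} the banana $\beta$ is 2-connected, and by \cref{TutteBananaIsBanana} its Tutte-decomposition $(T,\cV)$ is a path-decomposition whose torsos are 3-connected, 4-cycles, or triangles.

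First I would inspect $\beta[\hat\Lambda]$ case by case. When $\Lambda=\{u,v\}$ consists of two vertices of the $A$-link (so $u,v\in C\cap D$), both the $AC$-edge and the $AD$-edge for the $A$-link collapse to the single edge $uv$, making $\beta[\hat\Lambda]$ a single edge. When $\Lambda=\{e_1,v_1\}$ consists of one edge $e_1=x_1y_1$ (with $x_1$ in the $AC$-corner, $y_1$ in the $AD$-corner) and one vertex $v_1$, the added edges $x_1v_1$ and $y_1v_1$ together with $e_1$ form a triangle on $\hat\Lambda$. When $\Lambda=\{e_1,e_2\}$ with $e_i=x_iy_i$, the added edges $x_1x_2$ and $y_1y_2$ together with $e_1,e_2$ form a 4-cycle on $\hat\Lambda$. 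In the last two cases, the matching-condition and the absence of diagonal edges from the \allref{keylem:crossing} rule out any further edges inside $\beta[\hat\Lambda]$ (in particular the would-be chords $x_1y_2$ and $x_2y_1$ would have to lie in the $A$-link, which already consists solely of $e_1$ and $e_2$).

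The pivotal structural observation is that any $y_i\in\hat\Lambda$ lying in the $AD$-corner has all its $\beta$-neighbours inside $\hat\Lambda$: a $G$-edge from $y_i$ to another vertex of the expanded corner for $AC$ would have to be either an $A$-link edge (already accounted for by $e_i$) or a diagonal edge (forbidden by the \allref{keylem:crossing}), and the only added $G^*$-edge incident to $y_i$ inside $\beta$ is the $AD$-edge joining $y_1$ to $y_2$ or $v_1$. In case~\cref{linkRepresentedByEqualTorsos2} this yields the end-bag directly: the ``$AC$-side'' of $\hat\Lambda$ — namely $\{x_1,x_2\}$ when $\Lambda$ consists of two edges and $\{x_1,v_1\}$ when $\Lambda$ consists of one edge and one vertex — forms a 2-separator of $\beta$ whose other side (containing the $y_i$'s) lies entirely in $\hat\Lambda$. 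This 2-separation is totally-nested since the separator pair is already joined by an edge in $\beta[\hat\Lambda]$, so it corresponds to the end-edge of $T$ leaving $V_t=\hat\Lambda$, and the torso of $V_t$ is precisely the triangle or 4-cycle identified above. Uniqueness of~$t$ is immediate because each $y_i$ lies in no other bag.

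Case~\cref{linkRepresentedByEqualTorsos1} is more delicate since $\hat\Lambda=\{u,v\}$ has no $AD$-corner element. Here I would first show that $\{u,v\}$ is not a totally-nested 2-separation of~$\beta$, so $\{u,v\}$ is not an adhesion set of the Tutte-decomposition and therefore lies in a unique bag~$V_t$: by the degree-condition applied to the tetra-separation $(C,D)$ at the vertices $u,v\in S(C,D)$, each of $u,v$ has at least two $\beta$-neighbours in $C\sm D$, which together with the edge $uv$ keep $\beta-\{u,v\}$ sufficiently connected on its $C$-side to preclude a totally-nested 2-separation with separator $\{u,v\}$. A 2-separator argument analogous to case~\cref{linkRepresentedByEqualTorsos2}, now using two ``outermost'' common neighbours of $\{u,v\}$ in the $AC$-corner, then shows $V_t$ is an end-bag of~$T$. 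A short additional check rules out a triangle torso (if $V_t=\{u,v,w\}$ were a triangle, the only candidate adhesion pair involving $u$ or $v$ would be $\{u,v\}$, contradicting the uniqueness just established), so the torso has $\ge 4$ vertices and is 3-connected. The main obstacle will be the rigorous handling of case~\cref{linkRepresentedByEqualTorsos1}, where the banana-neighbourhood of $\{u,v\}$ depends on whether the corner $AC$ is potter and on what the $C$-link contains; I would address this by splitting on whether $AC$ is potter and using the \allref{keylem:crossing} to constrain $C$-link edges incident to $u,v$ in each subcase.
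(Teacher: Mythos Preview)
Your handling of part~\cref{linkRepresentedByEqualTorsos2} is essentially the paper's argument, fleshed out: the paper observes more tersely that the vertex of $\hat\Lambda$ in the $AD$-corner has degree~2 in~$\beta$, forcing $\hat\Lambda$ to be a leaf-bag whose torso is the triangle or 4-cycle you computed.

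Part~\cref{linkRepresentedByEqualTorsos1}, however, has real gaps. Your degree-condition argument does not show that $\{u,v\}$ fails to be an adhesion set: knowing that each of $u,v$ has $\ge 2$ $\beta$-neighbours in $C\setminus D$ says nothing about whether $\{u,v\}$ separates~$\beta$. The phrase ``two outermost common neighbours of $\{u,v\}$'' is undefined and need not exist --- $u$ and $v$ need have no common neighbours in the corner at all --- so the end-bag step does not go through. Your triangle-elimination is also wrong: if $V_t=\{u,v,w\}$ were a triangle end-bag, its adhesion set could perfectly well be $\{u,w\}$ or $\{v,w\}$, not only $\{u,v\}$; and you never rule out a 4-cycle torso. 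Finally, your closing worry about the potter case is misplaced: a corner adjacent to an all-vertex link can never be potter, since potter requires a dangling \emph{edge} in that link.

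The paper's route for~\cref{linkRepresentedByEqualTorsos1} sidesteps all of this by leaning directly on 4-connectivity of~$G$ rather than on any explicit 2-separator inside~$\beta$. Pick a bag $V_s$ containing the endpoints of the $AC$-edge for the $C$-link. If the bag $V_t\ni\{u,v\}$ were not an end of the path~$T$, some bag $V_r$ would lie strictly beyond $V_t$ on the side away from~$s$; any vertex of $V_r\setminus V_t$ is then cut off from the $A$-link ($\{u,v\}\subseteq V_t$) and from the $C$-link (in $V_s$) by a size-2 adhesion set, hence cut off from all of~$G$ --- contradiction. The same reasoning shows one of $u,v$ avoids the sole adhesion set of~$V_t$, giving uniqueness. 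Now your degree-condition fact does the job you wanted, but at the \emph{torso} step: that interior vertex has $\ge 2$ $\beta$-neighbours in $C\setminus D$ plus the $G^*$-edge to the other link vertex, so degree~$\ge 3$ in~$\beta$, ruling out any cycle torso.
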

\begin{proof}
    Say $K$ is the $AC$-corner and $\Lambda$ is the $A$-link.

    \cref{linkRepresentedByEqualTorsos1}. 
    Let $u,v$ be the two vertices in $\Lambda$.
    Recall that $uv$ is an edge in $G^*$.
    So there is at least one bag $V_t\in\cV$ that contains both $u$ and~$v$.
    Let $V_s\in\cV$ contain the two ends of the $AC$-edge for the $C$-link, and choose $V_s$ to minimise the distance from $s$ to $t$ in~$T$.
    Recall that $T$ is a path by \cref{TutteBananaIsBanana}.
    If $t$ is not an end of $T$, there is $V_r\in\cV$ with $t\in \mathring{r}Ts$ and $V_r\sm V_t$ non-empty, so every vertex in $V_r\sm V_t$ is separated from a vertex outside the $AC$-banana by the adhesion-set of $rt$ which has size two, contradicting 4-connectivity.
    Hence $t$ is an end of~$T$.
    Similarly, either $u$ or $v$ avoids the adhesion-set of $V_t$, say $u$.
    If the torso of $V_t$ is a cycle, then $u\in S(C,D)$ violates the degree-condition.
    Thus the torso is 3-connected.

    \cref{linkRepresentedByEqualTorsos2}.
    Since $\Lambda$ contains an edge~$e$, the subgraph $H$ of $G^*$ induced by $\hat \Lambda$ is either a triangle or a 4-cycle, depending on whether the other element of $\Lambda$ besides $e$ is a vertex $v$ or an edge $f$, respectively.
    If $H$ is a triangle, then $v\in D\sm C$ has degree two in the $AC$-banana.
    Hence $H$ is a leaf-bag of $(T,\cV)$, and no other bag of $(T,\cV)$ includes~$\Lambda$.
\end{proof}

The node $t$ of $T$ as in \cref{linkRepresentedByEqualTorsos} is said to \defn{represent} the $L$-link in $T$, and in the $K$-Tutte-banana.

\begin{lemma}\label{TutteBananaLinksRepresentedDifferently}
    Let $G$ be a 4-connected graph.
    Let $(A,B)$ and $(C,D)$ be tetra-separations of $G$ that cross with all links of size two.
    Let $K$ be one of the four corners, and let $(T,\cV)$ be its Tutte-banana.
    The two links adjacent to $K$ are represented by distinct end-nodes of $T$, unless $T$ has only one node.
    In the latter case, both links consist of vertices only, and the unique torso of $(T,\cV)$ is 3-connected.
\end{lemma}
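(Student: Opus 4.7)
The plan is to argue by contradiction: assume that both the $A$-link $\Lambda_A$ and the $C$-link $\Lambda_C$ adjacent to $K=AC$ are represented by the same end-node $t$ of $T$. By \cref{linkRepresentedByEqualTorsos}, only two cases are consistent, since \ref{linkRepresentedByEqualTorsos1} forces a $3$-connected torso at $t$ while \ref{linkRepresentedByEqualTorsos2} forces a triangle or $4$-cycle torso: either (A) both links are vertex-only, with $\hat\Lambda_A\cup\hat\Lambda_C\subseteq V_t$ and $3$-connected torso; or (D) both links contain an edge and $V_t=\hat\Lambda_A=\hat\Lambda_C$. The first step is to rule out case (D) unconditionally. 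Using the link definitions and the \allref{keylem:crossing}, every endvertex of a $C$-link edge sits in $C\sm D$, specifically in corner $AC$ or $BC$; every endvertex of an $A$-link edge sits in $A\sm B$ or is a vertex element of the $C$-link or $D$-link. Since the $A$-link vertex elements lie in $A\sm B\cap C\cap D$ and the $C$-link vertex elements lie in $A\cap B\cap C\sm D$, the intersection $\hat\Lambda_A\cap\hat\Lambda_C$ can only contain vertices of corner $AC$. The matching-condition then forces the $C$-link edges to have distinct $BC$-endvertices, which cannot fit into $\hat\Lambda_A$, a contradiction.

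So we are in case (A), and the task reduces to showing that $T$ has only one node. Assume $T$ has $\ge 2$ nodes; let $t'$ be the unique neighbour of $t$ in $T$, and let $\{x,y\}$ be the adhesion-set of $tt'$. Pick any $v\in V_{t'}\setminus V_t$, which exists because the torso at $t'$ has $\ge 3$ vertices. Also pick a vertex $w$ of $G$ outside the expanded corner for $AC$; this exists because $|A\sm B|,|B\sm A|\ge 2$ together with the partition of $V(G)$ into the four corners and the four link vertex sets forces some region other than the expanded corner for $AC$ to contain a vertex. The key claim is that $\{x,y\}$ separates $v$ from $w$ in $G$, contradicting $4$-connectivity. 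Inside the banana, any path crossing between the Tutte-sides of $tt'$ must pass through $\{x,y\}$ by construction. For paths that leave the banana in $G$, the main subtask is to show that the interface between the banana and $V(G)\setminus V(\text{banana})$ is contained in $V_t$: in case (A) the links $\Lambda_A,\Lambda_C$ contain no edges, and the \allref{keylem:crossing} rules out diagonal and jumping edges, so a direct enumeration shows that every $G$-edge from a vertex of corner $AC$ to a vertex outside the banana would have to be an $A$-link, $C$-link, diagonal, or jumping edge, none of which exist; the remaining banana-vertices are exactly the vertex elements of $\Lambda_A$ and $\Lambda_C$, which lie in $V_t$. Every exit from the banana thus happens through a vertex in $V_t$, and the within-banana portion of the path from $v$ to this exit vertex must cross $\{x,y\}$.

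When $T$ consists of a single node, the ruling-out of case (D) leaves only case (A), which delivers the vertex-only links and the $3$-connected unique torso claimed by the ``unless'' clause. The main obstacle will be the interface analysis in case (A), where one must carefully enumerate the types of $G$-edges leaving corner $AC$ or the vertex elements of $\Lambda_A$ and $\Lambda_C$, using the link definitions together with the absence of diagonal, jumping, and (in case (A)) link edges guaranteed by the \allref{keylem:crossing}.
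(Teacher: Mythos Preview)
Your proof is correct, and your case (A) is essentially the paper's core argument: once both links are represented by the same end-node, any edge of $T$ yields an adhesion-set that is a $2$-separator not just of the banana but of $G$, contradicting $4$-connectivity. Your interface analysis spells out carefully what the paper leaves implicit in its one-line justification.

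The organization differs from the paper, however. The paper does \emph{not} case-split upfront. It runs the $2$-separator argument uniformly --- it works regardless of the torso type at $t$ --- to conclude $|T|=1$, and only \emph{then} argues that both links must be vertex-only: if the $A$-link contained an edge, the $AC$-edge for the $A$-link would be distinct from the $AC$-edge for the $C$-link (by \cref{bananaEndNumberChar}, using that $|T|=1$ forces $AC$ not potter), and its endpoints would span a totally-nested $2$-separation of the banana, contradicting $|T|=1$. Your route instead disposes of case~(D) by a direct structural contradiction from $\hat\Lambda_A=\hat\Lambda_C$, which is valid but unnecessary once one notices the $2$-separator argument already covers that case.

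A minor imprecision in your case~(D): the claim ``every endvertex of a $C$-link edge sits in $C\setminus D$'' can fail when such an edge dangles from the $B$-link (making corner $BC$ potter). The contradiction survives, though, and more simply than you state it: any $C$-link edge lies in $S(A,B)$ and hence has an endvertex in $B\setminus A$, which lies in $\hat\Lambda_C$; but $\hat\Lambda_A\subseteq A$ (vertex-elements of the $A$-link lie in $A\setminus B$, and edge-elements of the $A$-link, being non-diagonal edges of $S(C,D)$, have both endvertices in $A$), so $\hat\Lambda_A\neq\hat\Lambda_C$. No matching-condition or corner-location analysis is needed.
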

\begin{proof}
    Without loss of generality, $K$ is the corner $AC$.
    Both the $A$-link and the $C$-link are represented by end-nodes $a$ and $c$ of $T$, respectively, by \cref{linkRepresentedByEqualTorsos}.
    Assume now that $a=c$.
    Then $T$ has no edge, since otherwise the adhesion-set of this edge would form a 2-separator of $G$, contradicting 4-connectivity.
    Hence $|T|=1$, and in particular the corner $AC$ is not potter.

    We claim that both the $A$-link and the $C$-link consist of vertices only.
    Indeed, if the $A$-link contains an edge, then the $AC$-edge $e$ for the $A$-link is distinct from the $AC$-edge for the $C$-link by \cref{bananaEndNumberChar}, so the ends of $e$ together form a 2-separator of the $AC$-banana.
    This 2-separator gives rise to a 2-separation of the $AC$-banana that cuts off the triangle-torso or 4-cycle-torso at $a$ by \cref{linkRepresentedByEqualTorsos}, and this 2-separation is totally-nested as its separator is spanned by the edge $e$.
    Hence the 2-separation corresponds to an edge of $T$, a contradiction.
\end{proof}

\begin{definition}[Tutte-bagel induced by crossing tetra-separations]
    Let $G$ be a 4-connected graph.
    Let $(A,B)$ and $(C,D)$ be tetra-separations of $G$ that cross with all links of size two.
    For each corner $K$ let $\cT^K=(T^K,\cV^K)$ denote the $K$-Tutte-banana.
    Recall that $T^K$ is a path by \cref{TutteBananaIsBanana}.
    
    We obtain $O$ from the disjoint union $\bigsqcup_K T^K$, where $K$ ranges over the four corners, by doing the following for each link~$L$:
    for both corners $K$ and $K'$ adjacent to $L$ we consider the nodes $t$ and $t'$ that represent the $L$-link in $T^K$ and $T^{K'}$, respectively, and
    \begin{itemize}
        \item we join the nodes $t$ and $t'$ by an edge $\defnMath{o(L)}$ in $O$ if the torsos associated with these nodes in their respective Tutte-bananas are 3-connected;
        \item we otherwise identify the nodes $t$ and $t'$ to a single node $\defnMath{o(L)}$ of~$O$.
    \end{itemize}
    We say that $o(L)$ \defn{represents} the $L$-link in~$O$.
    For each node $t$ of $O$ that is not of the form $o(L)$, we define $G_t:=G[V^K_t]$ where $K$ is the unique corner with $t\in T^K$.
    For each node of $O$ of the form $o(L)$ for some link~$L$, we consider the two nodes $t\in T^K$ and $t'\in T^{K'}$ that were identified to obtain~$o(L)$, and we let $G_{o(L)}:=G[V^K_t]$ which is independent of $K$ by \cref{linkRepresentedByEqualTorsos}.
    We refer to the pair $(O,(G_t:t\in V(O))$ as the \defn{Tutte-bagel induced by $(A,B)$ and $(C,D)$}.
    It remains to show that this really is a Tutte-bagel.
\end{definition}

\begin{lemma}
\label{lem:really-is-Tutte-bagel}
    Let $G$ be a 4-connected graph.
    Let $(A,B)$ and $(C,D)$ be tetra-separations of $G$ that cross with all links of size two.
    The Tutte-bagel induced by $(A,B)$ and $(C,D)$ really is a Tutte-bagel.
\end{lemma}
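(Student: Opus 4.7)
The plan is to verify all four defining properties of a Tutte-bagel for $(O,\cG)$: that it is a cycle-decomposition of~$G$, that every adhesion set has size two, that every torso is 3-connected, a 4-cycle or a triangle, and that any two neighbouring triangle-torsos share an adhesion-graph equal to~$K_2$.

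First I will check that $(O,\cG)$ is a graph-decomposition of~$G$. For coverage, the four expanded corners cover $V(G)$, and every edge of~$G$ has both endvertices in a common banana because the \allref{keylem:crossing} rules out jumping and diagonal edges; inside each banana the corresponding Tutte-banana already covers everything by Tutte's theorem. For the connectedness axiom, a vertex interior to a single corner only occurs in the bags of one Tutte-banana (and is connected there by Tutte), while a vertex supporting some link $L$ appears in the two adjacent Tutte-bananas, but the two representing end-nodes either become the single identified node $o(L)$ or are joined by the edge $o(L)$ in~$O$, so the occurrence-subgraph stays connected across glueings. Tracing the four Tutte-paths glued at their representing ends in the cyclic order $AC,AD,BD,BC$ shows that $O$ is a cycle.

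Second, for the sizes of adhesion-sets: inner edges of each Tutte-path $T^K$ have adhesion of size two by the standard property of the Tutte-decomposition (smaller adhesions in a banana would, together with the two adjacent links, yield a separator of order~$\leq 3$ in~$G$, contradicting 4-connectivity), and each $o(L)$-edge has adhesion equal to $\hat L$, of size two. The torso-type condition then follows from \cref{TutteBananaIsBanana}: each torso of $(O,\cG)$ at a node interior to some $T^K$ equals the corresponding Tutte-banana torso, because the $G^\ast$-edges (the $K$-edges) present in the banana but absent from~$G$ reappear when the Tutte-bagel adhesion sets are completed into cliques; and at an identified node $o(L)$, both adjacent bananas contribute the same 4-cycle or triangle torso on $\hat L$ by \cref{linkRepresentedByEqualTorsos}~\cref{linkRepresentedByEqualTorsos2}, which agrees with the Tutte-bagel torso after adhesion-completion.

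Finally, the triangle-neighbour condition splits into two cases. Within a single Tutte-path $T^K$, two adjacent triangle-torsos automatically share an adhesion spanned by an edge, by the standard Tutte-decomposition property that any two neighbouring cycle-torsos have their common 2-separator spanned by a genuine edge of the graph. The only remaining case is a triangle-neighbour pair involving an $o(L)$-node; but by construction an $o(L)$-edge is only inserted when both representing torsos are 3-connected, so any triangle-torso sitting at an $o(L)$-position must arise from an identification, and then the pair reduces to the within-path case inside one of the two bananas. The main technical obstacle will be matching adhesion sets and torsos cleanly across the $o(L)$-glueings so that the equality of $(O,\cG)$-torsos with Tutte-banana torsos, and the propagation of the triangle-neighbour condition through identifications, really go through rigorously.
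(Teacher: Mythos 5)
Your proposal is correct and follows essentially the same route as the paper: coverage via the \allref{keylem:crossing} (no jumping or diagonal edges), connectedness of occurrence-subgraphs by casing on how many links a vertex supports, cyclicity of $O$ from the fact that the two links at each corner are represented by distinct end-nodes (\cref{TutteBananaLinksRepresentedDifferently}), torso types from \cref{TutteBananaIsBanana} and \cref{linkRepresentedByEqualTorsos}, and the triangle-adjacency condition inherited from the Tutte-bananas since $o(L)$-edges are only inserted between 3-connected torsos. The only cosmetic difference is that the paper's connectedness check explicitly separates the case of a vertex lying in two adjacent links, which your two-case split would need to absorb.
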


\begin{proof}
Let $\cO=(O,(G_t : t\in V(O))$ denote the Tutte-bagel induced by $(A,B)$ and $(C,D)$.
First, we show that $\cO$ is a cycle-decomposition.
That $O$ is a cycle follows from \cref{TutteBananaLinksRepresentedDifferently}.

\cref{GraphDec1}.
Recall that there are no jumping edges and no diagonal edges by the \allref{keylem:crossing}.
Every vertex and edge of $G$ is included in some expanded corner, and hence is included in the bag of some Tutte-banana.
Hence every vertex and edge of $G$ appears in some~$G_t$.

\cref{GraphDec2}.
Let $v\in G$ be a vertex and consider $O_v:=O[\,\{t\in V(O):v\in G_t\}\,]$.

If $v$ avoids $\hat\Lambda$ for every link $\Lambda$, then $v$ lies in a unique banana, and so $O_v$ is a subpath of the decomposition-path of the corresponding Tutte-banana.
If $v$ lies in $\hat\Lambda$ for precisely one $L$-link $\Lambda$, then $v$ lies in the bananas for the two corners adjacent to~$L$, and so $O_v$ is a path.
Otherwise $v$ lies in $\hat\Lambda_1\cap\hat\Lambda_2$ for two adjacent $L_i$-links~$\Lambda_i$.
Then $v$ lies in the corner $K$ in between the two links.
Then $v$ lies in all bags of the $K$-Tutte-banana, and in the unique bags at the nodes representing the $L_i$-links in the Tutte-bananas of the corners adjacent to~$K$.

Let $H_t$ be a torso of~$\cO$.
If $t$ represents some link, then $H_t$ is a 4-cycle or a triangle.
Otherwise $H_t$ stems from a Tutte-banana. Then $H_t$ is 3-connected, a 4-cycle or a triangle, by \cref{TutteBananaIsBanana}.

Since every adhesion-set between two neighbouring triangle-torsos of a Tutte-banana is spanned by an edge in~$G$, the same is true for~$\cO$.
\end{proof}

\begin{lemma}
    \label{TutteBagelAdhesionSetNote}
    Let $G$ be a 4-connected graph.
    Let $(A,B)$ and $(C,D)$ be tetra-separations of $G$ that cross with all links of size two.
    Let $\cO$ be the Tutte-bagel induced by $(A,B)$ and $(C,D)$.
    Every $K$-edge for an $L$-link spans a unique adhesion-set of~$\cO$.\qed
\end{lemma}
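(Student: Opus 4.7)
The plan is, for each $L$-link and adjacent corner $K$, to locate the unique edge of $O$ whose adhesion-set equals the two-vertex set spanned by the $K$-edge for $L$. Write $L = \{x,y\}$, $K = LS$, $\alpha := \hat x \cap S$, $\beta := \hat y \cap S$; then the $K$-edge for $L$ is $\alpha\beta$ and the goal is to produce an adhesion-set of $\cO$ equal to $\{\alpha,\beta\}$. Let $t \in T^K$ and $t' \in T^{K'}$ be the nodes representing $L$ in the two adjacent Tutte-bananas provided by \cref{linkRepresentedByEqualTorsos}, where $K'$ is the other corner adjacent to $L$; in both cases $\{\alpha,\beta\} \subseteq V_t \cap V_{t'}$. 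The natural candidate for the adhesion-set in $\cO$ will be either the edge $o(L)$ of $O$ or a Tutte-edge of $T^K$, depending on whether $L$ is a pair of vertices or contains an edge.

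If $L$ consists of two vertices, then by \cref{linkRepresentedByEqualTorsos}\cref{linkRepresentedByEqualTorsos1} both representing torsos are $3$-connected, so by the construction $o(L)$ is a new edge of $O$ joining $t$ and $t'$. Its adhesion-set in $\cO$ equals $V_t \cap V_{t'}$, which contains $L = \{\alpha,\beta\}$; and since $\cO$ is a Tutte-bagel by \cref{lem:really-is-Tutte-bagel}, all adhesion-sets have size two, forcing equality with $\{\alpha,\beta\}$.

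If instead $L$ contains an edge, then by \cref{linkRepresentedByEqualTorsos}\cref{linkRepresentedByEqualTorsos2} we have $V_t = \hat L$ and the torso at $V_t$ is a triangle or $4$-cycle, so $t$ is identified with $t'$ to form the single node $o(L)$ of $O$. By \cref{TutteBananaLinksRepresentedDifferently}, $T^K$ has at least two nodes, so $t$ has a Tutte-neighbour $t^*$ in $T^K$ and the edge $tt^*$ of $O$ is the natural candidate, with adhesion-set $V_t \cap V_{t^*} \subseteq \hat L$. The key step will be to show that every vertex $\gamma \in \hat L \sm \{\alpha,\beta\}$ --- i.e.\ every endpoint of an $L$-edge lying on the side of $(C,D)$ opposite to $S$ --- does not lie in $V_{t^*}$. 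The plan is to check, case by case, that any $G^*$-neighbour of $\gamma$ inside the expanded corner for $K$ either already lies in $\hat L$ or would, via the absence of jumping, diagonal and dangling edges from the \allref{keylem:crossing} together with the matching-condition, force an additional element of $L$, contradicting $|L|=2$. Consequently $\gamma$ has all its $K$-banana-neighbours inside $V_t$ and so appears only in the single bag $V_t$ of the canonical Tutte-decomposition of the $K$-banana; this gives $V_t \cap V_{t^*} \subseteq \{\alpha,\beta\}$, and equality then follows from the adhesion-sets having size two. This exhaustive local analysis of $\gamma$'s neighbours in the second case will be the main obstacle, whereas uniqueness of the identified edge of $O$ follows immediately from the construction of $\cO$.
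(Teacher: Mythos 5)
Your argument is correct and is precisely the explicit unpacking of the Tutte-bagel construction that the paper's \qed tacitly invokes: in the vertex–vertex case the adhesion-set is $V_t\cap V_{t'}$ for the new edge $o(L)$, and in the edge case it is the adhesion-set of $o(L)\,t^*$ inside $T^K$. The deferred case analysis of $\gamma$'s neighbours is sound (the diagonal-edge exclusion from the \allref{keylem:crossing} plus the matching-condition do the work), and it can be tightened by noting that $\bigl(\hat L,\,V(\text{$K$-banana})\smallsetminus(\hat L\smallsetminus\{\alpha,\beta\})\bigr)$ is a totally-nested $2$-separation of the $K$-banana with separator $\{\alpha,\beta\}$, since it is half-connected and $G^*[\hat L]$ is a triangle or $4$-cycle, hence $2$-connected.
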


\subsection{Settings}\label{sec:BagelSettings}

The following settings are supported by \cref{fig:BagelBag}.

\begin{setting}[Tutte-bagel setting]\label{setting:TutteBagel}
    Let $G$ be a 4-connected graph.
    Let $(A,B)$ and $(C,D)$ be tetra-separations of $G$ that cross with all links of size two.
    Let $\cO=(O,(G_t:t\in V(O)))$ be the Tutte-bagel induced by $(A,B)$ and $(C,D)$, with torsos being denoted by~$H_t$.
    Write $O=:t(0),t(1),\ldots,t(m),t(0)$ with indices in $\Z_{m+1}$.
    Assume that the representatives of the links appear in this enumeration in the order $o(C),o(B),o(D),o(A)$.
    For each node $t$ of $O$ let $U'_t:=V(G_t)$ and $W'_t:=\bigcup\,\{\,V(G_s):s\in V(O-t)\,\}$.
    Let $(U_t,W_t)$ denote the right-left-reduction of $(U'_t,W'_t)$.
\end{setting}

\begin{setting}[Tutte-bagel torso setting]\label{BagelBag}
    Assume the \allref{setting:TutteBagel}.
    Assume that the torso $H_{t(0)}$ of $\cO$ is 3-connected; so $H_{t(0)}$ stems from the $AC$-banana.    
    Let $\{x'_1,x'_2\}$ be the adhesion set of $t(-1) t(0)$ and let $\{y'_1,y'_2\}$ be the adhesion set of $t(0) t(1)$.
    Let $(U',W'):=(U'_{t(0)},W'_{t(0)})$ and $(U,W):=(U_{t(0)},W_{t(0)})$.
    Note that $S(U',W')=\{x'_1,x'_2,y'_1,y'_2\}$.
    Let $x_i\in S(U,W)$ be equal or incident to~$x'_i$, and let $x^W_i$ be the unique vertex in $\hat x_i \cap W$.
    Define $y_i$ and $y^W_i$ similarly.
\end{setting}

\begin{figure}[ht]
    \centering
    \includegraphics[height=12\baselineskip]{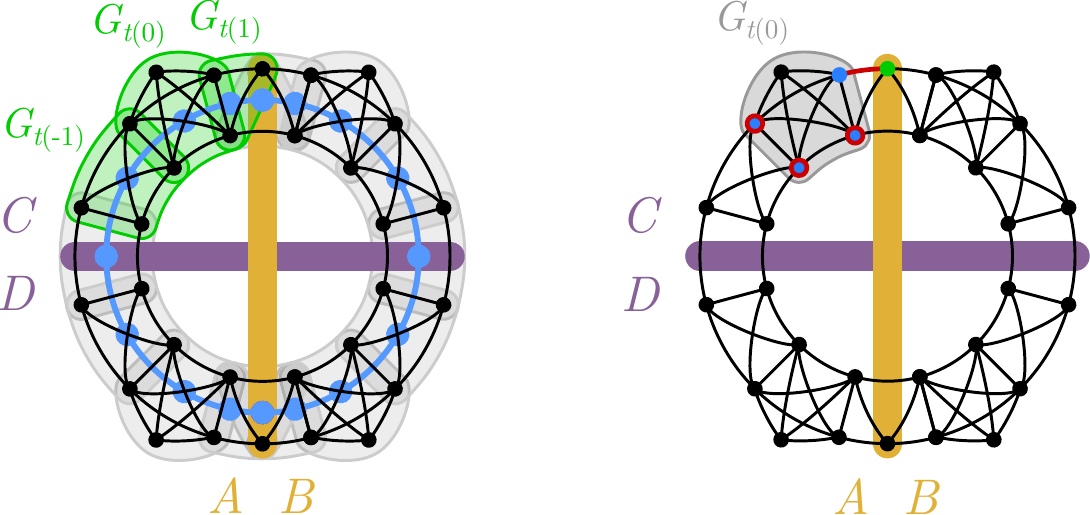}
    \caption{Left: The \allref{setting:TutteBagel}.
    Right: the \allref{BagelBag}.
    The separator $S(U',W')$ is blue, while $S(U,W)$ is red.
    We have $x^W_i=x'_i$ for both $i=1,2$ and $y^W_1=y'_1$ (say), so these three vertices are red.
    The vertex $y^W_2$ is green.}
    \label{fig:BagelBag}
\end{figure}

Let $\cO$ be a Tutte-bagel of a 4-connected graph $G$.
Let $H_t$ be a torso of $\cO$.
Assume first that $H_t$ is a 4-cycle.
Then $H_t$ has exactly two edges whose endvertices lie in distinct adhesion-sets of $H_t$.
We call these edges the \defn{free edges} of $H_t$.
Assume now that $H_t$ is a triangle.
Then the adhesion-sets of $H_t$ share exactly one vertex $v$.
We call $v$ the \defn{tip} of the triangle $H_t$.
There is a unique edge of $H_t$ whose endvertices lie in distinct adhedion-sets of $H_t$.
We call this edge the \defn{free edge} of the triangle~$H_t$.

\begin{lemma}\label{BagelBagAdhesionSet}
    Assume the \allref{BagelBag}.
    The vertex-set $\{y^W_1,y^W_2\}$ is the adhesion-set at $t(0)\, t(1)$ if the torso $H_{t(1)}$ is 3-connected, and it is the adhesion-set at $t(1)\, t(2)$ otherwise.
    In the latter case, the torso at $t(2)$ is not a 4-cycle.
\end{lemma}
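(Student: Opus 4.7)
The plan is to do a case analysis on the shape of $H_{t(1)}$, which by the definition of a Tutte-bagel is either 3-connected, a 4-cycle, or a triangle. For each shape I will analyze the right-shift of $(U',W')$ by counting, for each $y'_i\in\{y'_1,y'_2\}$, its real neighbors in $W'\setminus U' = V(G)\setminus V(G_{t(0)})$: a vertex with $\geq 2$ such neighbors survives the right-shift as a vertex, while one with exactly one such neighbor $w$ is replaced by the edge $y'_iw$ in the separator of $(U'',W'')$. The subsequent left-shift only removes vertices from the $U$-side, so $y^W_i$ will already be determined by the right-shift.

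If $H_{t(1)}$ is 3-connected, then its two adhesion sets are disjoint by \cref{TutteBagelObservations}, so each $y'_i$ has torso-degree $\geq 3$ in $H_{t(1)}$, of which at most one torso-edge (namely $y'_1y'_2$) is a possibly non-real adhesion edge. The remaining $\geq 2$ torso-neighbors lie in $V(G_{t(1)})\setminus\{y'_1,y'_2\}\subseteq W'\setminus U'$ and are joined to $y'_i$ by real edges of $G$. Hence $y'_i$ will survive the right-shift, giving $y^W_i=y'_i$, so $\{y^W_1,y^W_2\}=\{y'_1,y'_2\}$ is the adhesion set at $t(0)\,t(1)$. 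If $H_{t(1)}$ is a 4-cycle with other adhesion $\{u,v\}$, the cyclic order forces $y'_1$ and $y'_2$ to be adjacent on the cycle with free edges $y'_1u$ and $y'_2v$ real in $G$, while the diagonal pairs $\{y'_1,v\}$ and $\{y'_2,u\}$ are non-edges of $G$ since they lie in no bag of $\cO$. So each $y'_i$ has a unique real neighbor in $W'\setminus U'$, is replaced by the corresponding free edge, and $y^W_i\in\{u,v\}$; hence $\{y^W_1,y^W_2\}=\{u,v\}$ is the adhesion at $t(1)\,t(2)$.

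The triangle case will be handled similarly. Without loss of generality the tip is $y'_1$, so the adhesion at $t(1)\,t(2)$ is $\{y'_1,z\}$ and the free edge $y'_2z$ is real. The non-tip $y'_2$ behaves exactly as in the 4-cycle case and is shifted to $z$, yielding $y^W_2=z$. For the tip $y'_1$, which also lies in $G_{t(2)}$, I will verify that $y'_1$ has $\geq 2$ real neighbors in $W'\setminus U'$: if $H_{t(2)}$ is 3-connected, the torso-degree argument from the first case applies in $G_{t(2)}$; if $H_{t(2)}$ is a triangle, the Tutte-bagel axiom forcing adjacent triangle-torsos to share a real $K_2$ adhesion, together with a short induction along any chain of consecutive triangle-torsos tipped at $y'_1$ (which must terminate since $O$ is finite), will provide the required two real neighbors. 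Hence $y^W_1=y'_1$ and $\{y^W_1,y^W_2\}=\{y'_1,z\}$. The final claim that $H_{t(2)}$ is not a 4-cycle is immediate from \cref{TutteBagelObservations}, since a 4-cycle torso requires both neighbors to be 3-connected, yet $H_{t(1)}$ is a 4-cycle or a triangle. The hard part will be the chain argument for the tip in the triangle case; every other subcase reduces to a direct neighbor-count.
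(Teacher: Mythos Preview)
Your proposal is correct and follows essentially the same case analysis as the paper's proof: both split on whether $H_{t(1)}$ is 3-connected, a 4-cycle, or a triangle, and in each case count the real neighbours of $y'_i$ in $W'\setminus U'$ to determine the effect of the right-shift.

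The one point of divergence is the triangle subcase with $H_{t(2)}$ also a triangle. You propose an induction along a chain of consecutive triangle-torsos all tipped at $y'_1$; the paper instead directly asserts that the tip $y'_1$ of $H_{t(1)}$ lies on the free edge of $H_{t(2)}$, which together with the real adhesion-edge $y'_1 z$ gives the two required neighbours immediately. The paper's assertion implicitly uses that $y'_1$ cannot also be the tip of $H_{t(2)}$, which is exactly \cref{trianglePath}~\ref{trianglePath2} (distinct triangle-torsos have distinct tips, by 4-connectivity). That lemma is stated just after this one but its proof is independent, so you may simply cite it and drop the induction: the chain you envisage has length at most one. Your induction would also work, but it is handling a configuration that cannot occur.

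One small wording issue: in the 4-cycle case you say the diagonals ``lie in no bag of $\cO$''. Both endpoints lie in $G_{t(1)}$, so if the diagonal were a real edge it would lie in the induced bag $G_{t(1)}$ and hence in the torso $H_{t(1)}$; the point is that $H_{t(1)}$, being a 4-cycle, has no diagonal. The conclusion is the same.
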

\begin{proof}
    The former case is straightforward, so it remains to consider the case where $H_{t(1)}$ is not 3-connected.
    If the torso $H_{t(1)}$ is a 4-cycle, then the $y_i$ are the free edges of $H_{t(1)}$, and so $\{y^W_1,y^W_2\}$ is the adhesion-set at $t(1) t(2)$.
    Hence we may assume that the torso $H_{t(1)}$ is a triangle.
    The next torso $H_{t(2)}$ cannot be 4-cycle by \cref{TutteBagelObservations}, so it is 3-connected or a triangle.
    
    Assume first that $H_{t(2)}$ is 3-connected.
    Since $H_{t(2)}$ shares a vertex with $H_{t(1)}$, say $y'_1$, the vertex-set of $H_{t(2)}$ must be included in~$C$.
    Hence the $\ge 2$ neighbours of $y'_1$ in the bag $G_{t(2)}$ lie in~$W'\sm U'$.
    Thus $y^W_1=y'_1$.
    Since $H_{t(1)}$ is a triangle, $y_2$ is its free edge, so $\{y^W_1,y^W_2\}$ is the adhesion-set at~$t(1)\, t(2)$.
    
    Otherwise it is a triangle, so the adhesion-graph at $t(1)\, t(2)$ is a $K_2$-subgraph of~$G$.
    Then the $y'_i$ incident to the free edge of the triangle $H_{t(1)}$ corresponds to an edge~$y_i$.
    The $y'_j$ that is the tip of $H_{t(1)}$ has $y^W_i$ as a neighbour and is incident to the free edge of $H_{t(2)}$, so $y'_j=y_j$.
    Hence $\{y^W_1,y^W_2\}$ is the adhesion-set at $t(1)\, t(2)$.
\end{proof}

Recall that $O_v=O[\,\{t\in V(O):v\in G_t\}\,]$ for a vertex $v\in V(G)$ in the context of a Tutte-bagel~$\cO$ of~$G$.

\begin{lemma}\label{trianglePath}
    Let $G$ be a 4-connected graph, and let $\cO=(O,\cG)$ be a Tutte-bagel of $G$.
    Then both of the following assertions hold:
    \begin{enumerate}
        \item\label{trianglePath2} Every two triangle-torsos of $\cO$ have distinct tips.
        \item\label{trianglePath1} Let $P \subseteq O_v$ be a path with ends~$r \neq t$ such that $H_r,H_t$ are triangles. Then $P=rt$ or $P=rst$ where $H_s$ is a triangle. 
    \end{enumerate} 
\end{lemma}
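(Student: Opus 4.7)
The plan is to prove (i) first, then derive (ii) from it very cheaply.

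For (i), I would argue by contradiction: suppose two triangle-torsos $H_r$ and $H_t$ of $\cO$, with $r \neq t$, share the same tip $v$. Since the tip of a triangle-torso sits in both of its adhesion-sets, $v$ lies in $G_{r-1} \cap G_r \cap G_{r+1}$, so $r$ is an internal node of $O_v$; symmetrically, so is $t$. Because $O_v$ is a connected subgraph of the cycle $O$, it is an arc containing both $r$ and $t$. For any node $s$ strictly between $r$ and $t$ on this arc, both neighbours of $s$ in $O$ lie on the arc, hence in $O_v$, so $s$ too is internal in $O_v$, i.e.\ $v$ sits in both adhesion-sets of $H_s$. By \cref{TutteBagelObservations}, $H_s$ is neither 3-connected nor a 4-cycle, so it must be a triangle with tip $v$. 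Choosing $s$ adjacent to $r$, I may therefore reduce to the case $t = r+1$.

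Now let $\{v,u\}$ be the adhesion-set at $rt$, and write $V(H_r)=\{v,a,u\}$, $V(H_t)=\{v,u,b\}$, where $\{v,a\}$ and $\{v,b\}$ are the other adhesion-sets of $H_r$ and $H_t$. The key observation is that $u$ appears only in the bags $G_r$ and $G_t$: if $u \in G_{r-1}$ then $u$ would lie in the adhesion-set $\{v,a\}$, forcing $u \in \{v,a\}$, which is false; similarly $u \notin G_{r+2}$. Consequently every $G$-edge incident to $u$ lies in $G_r \cup G_t$. The edges $ua$ and $ub$ of the triangles are \emph{not} torso-edges (they lie in no adhesion-set), so $ua, ub \in E(G)$. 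Moreover, since $H_r$ and $H_t$ are two adjacent triangle-torsos, the Tutte-bagel axiom forces their adhesion-graph to be a $K_2$, giving $uv \in E(G)$. A quick check rules out $a = b$: otherwise $a$ would be in $G_{r-1} \cap G_r \cap G_{r+1} \cap G_{r+2}$ and hence in the adhesion-set $\{v,u\}$ at $rt$, contradiction. Therefore $u$ has exactly three neighbours $v,a,b$ in $G$, contradicting 4-connectivity.

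For (ii), write $P = r\, s_1 \cdots s_k\, t$ with $k \ge 0$. For any internal node $s_i$ of $P$, both neighbours of $s_i$ in $O$ lie on $P \subseteq O_v$, so $s_i$ is internal in $O_v$, and as above $H_{s_i}$ is a triangle with tip $v$. By (i) such a triangle is unique, so at most one $s_i$ can exist; thus $k \le 1$, yielding either $P = rt$ or $P = r\,s_1\,t$ with $H_{s_1}$ a triangle.

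I expect the main obstacle to be the bookkeeping in the adjacent case of (i): carefully verifying that $u$ lives in exactly two bags, that its three candidate neighbours $v,a,b$ are genuine edges of $G$ (rather than merely torso-edges, which matters for the edges inside the adhesion-graph between $H_r$ and $H_t$), and that $a \ne b$. Once these three routine-but-delicate facts are nailed down, the degree count $\deg_G(u)=3$ closes the argument immediately.
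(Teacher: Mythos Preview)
Your proof is correct, but takes a different route from the paper's in part~(i). The paper dispatches~(i) in one stroke: if $H_r$ and $H_t$ are triangles with common tip $v$ and free edges $e,e'$, then $\{v,e,e'\}$ is a mixed-3-separator of $G$, contradicting 4-connectivity via \cref{kConMixed}. No reduction to the adjacent case, no degree count on $u$, no check that $a\neq b$ --- the free edges do all the work globally. Your approach is perfectly valid but trades one clean separator for several local bookkeeping steps.

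For~(ii) the situation is reversed: your deduction from~(i) is cleaner than the paper's. You observe that every internal node of $P$ must be a triangle with tip $v$, and then~(i) says at most one such triangle exists, so $P$ has at most one internal node. The paper instead argues that if $|P|\ge 4$ then, since the four triangles at $r,s_1,s_k,t$ have pairwise distinct tips by~(i), the adhesion-sets at the first and last edges of $P$ are disjoint, contradicting that $v$ lies in both. Your use of~(i) is more direct.

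One small wording point: when you say ``$O_v$ is an arc'', note that $O_v$ could equal all of $O$; your argument still goes through (pick either arc from $r$ to $t$), but it's worth a half-sentence.
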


\begin{proof}
    \cref{trianglePath2}. 
    Suppose for a contradiction that two triangle-torsos of $\cO$ have the same tip.
    Say the torsos are~$H_r$ and~$H_t$, and~$v$ is their shared tip.
    Let~$e$ be the free edge of~$H_r$, and let~$e'$ be the free edge of~$H_t$.
    Then $\{v,e,e'\}$ is a mixed-3-separator, contradicting 4-connectivity by \cref{kConMixed}.
    
    \cref{trianglePath1}.
    First, assume that there is a node $p$ on $P$ other than $r,t$ with $v\in G_p$ such that the torso $H_p$ is a 4-cycle or 3-connected.
    That the adhesion-sets at $G_p$ are disjoint then contradicts that $v$ lies in all three bags $G_r,G_p,G_t$.
    Otherwise, all torsos of all nodes on~$P$ are triangles.
    Let~$e$ be the edge of~$P$ incident with~$r$, and let~$e'$ be the edge of~$P$ incident with~$t$.
    By~\ref{trianglePath2}, if $|P| \geq 4$ then the adhesion sets of~$e$ and~$e'$ are disjoint, which contradicts that~$v$ lies in all bags of nodes in~$P$ by $P\se O_v$.
    Otherwise $|P| \leq 3$ and we get the statement.
\end{proof}

\subsection{N-Paths lemmata}\label{sec:BagelNpaths}

\begin{lemma}\label{twoDisjointPathsOutsideBagel}
    Let $G$ be a 4-connected graph with a Tutte-bagel $\cO=(O,\cG)$ and decomposition-cycle $O=t(0),\ldots t(m),t(0)$.
    Let $0\le i< j\le m$.
    Let $\pi_\ell$ denote the adhesion-set of $t(\ell)t(\ell+1)$ for all relevant indices~$\ell$.
    There exist two disjoint $\pi_i$--$\pi_j$ paths in $G_{t(i+1)}\cup\cdots\cup G_{t(j)}$.
\end{lemma}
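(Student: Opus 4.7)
The approach is to argue by contradiction using \cref{Menger}. Set $H := G_{t(i+1)}\cup\cdots\cup G_{t(j)}$, which is connected since consecutive bags share the adhesion-set between them. If $H$ does not contain two disjoint $\pi_i$--$\pi_j$ paths, then by \cref{Menger} there is a single vertex $v \in V(H)$ that meets every $\pi_i$--$\pi_j$ path in $H$. Let $H_i$ be the union of components of $H - v$ that meet $\pi_i \setminus \{v\}$, and define $H_j$ analogously; these are disjoint by the choice of $v$.

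The candidate small separator in $G$ will be $T := \{v\} \cup \pi_i$, of size at most $3$. Because every edge of $G$ lies in some bag of $\cO$, the running-intersection property H2 shows that no edge of $G$ joins $V(H) \setminus (\pi_i \cup \pi_j)$ with $V(G) \setminus V(H)$. Therefore every vertex $u \in V(H_i) \setminus \pi_i$ has all of its $G$-neighbours inside $V(H)$; and within $V(H)$ those neighbours are confined to $V(H_i) \cup \{v\}$, since $v$ is a cut vertex of $H$ between $H_i$ and the rest. Hence $T$ separates $V(H_i) \setminus \pi_i$ from $\pi_j \setminus \{v\}$ in $G$. Since $|\pi_j \setminus \{v\}| \geq 1$, provided $V(H_i) \setminus \pi_i \neq \emptyset$ we obtain a separator of $G$ of size at most $3$, contradicting 4-connectivity via \cref{kConMixed}. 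The symmetric argument using $T' := \{v\} \cup \pi_j$ handles the case $V(H_i) \subseteq \pi_i$ but $V(H_j) \not\subseteq \pi_j$.

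The main obstacle is the remaining case where both $V(H_i) \subseteq \pi_i$ and $V(H_j) \subseteq \pi_j$, i.e., where both adhesion-sets are ``isolated'' in $H - v$. I plan to rule this out using the 2-connectivity of the torso $H_{t(i+1)}$, which is either $3$-connected, a $4$-cycle, or a triangle. The assumption constrains every $a \in \pi_i \setminus \{v\}$ to have its $H$-neighbours inside $\{v\} \cup (\pi_i \setminus \{v, a\})$. The only torso-added edges of $H_{t(i+1)}$ that touch $a$ lie inside $\pi_i$, together with one extra edge inside $\pi_{i+1}$ in the triangle subcase where $a$ is the tip. A careful degree-count in $H_{t(i+1)}$, weighing $a$'s required torso-degree (at least $2$) against its bag-degree and the possible torso-added edges, forces $v \in V(G_{t(i+1)})$ to be a bag-neighbour of both vertices of $\pi_i \setminus \{v\}$. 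The running-intersection property then propagates $v$ into every bag of $H$, and the two disjoint $\pi_i$--$\pi_{i+1}$ paths guaranteed inside $H_{t(i+1)}$ by its 2-connectivity and \cref{Menger} are then forced to meet either at $v$ or within $\pi_i$, producing the desired contradiction. The careful bookkeeping of which torso-edges of $H_{t(i+1)}$ are genuinely edges of $G$ is the heart of the argument.
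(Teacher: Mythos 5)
Your proposal is not a proof: the case you yourself identify as ``the heart of the argument'' (both $V(H_i)\subseteq\pi_i$ and $V(H_j)\subseteq\pi_j$) is only a plan, and the plan contains at least one unjustified step. You assert that once $v$ is a bag-neighbour of both vertices of $\pi_i\sm\{v\}$ in $G_{t(i+1)}$, ``the running-intersection property then propagates $v$ into every bag of $H$'' --- but \ref{GraphDec2} only says the set of bags containing $v$ is connected in $O$; it gives no mechanism for forcing $v$ into further bags, so the claimed contradiction does not materialise. There are also smaller gaps in the setup: \cref{Menger} as stated in the paper concerns \emph{independent} paths and \emph{mixed}-separations between \emph{disjoint} sets, so (a) the order-one obstruction it yields may be a single edge rather than a single vertex $v$, in which case your component analysis of $H-v$ does not apply, and (b) when $j=i+1$ and $H_{t(i+1)}$ is a triangle, $\pi_i$ and $\pi_j$ share the tip and are not disjoint, so the lemma does not apply as invoked (though the shared vertex is itself one of the two required paths). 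Each of these is patchable, but the central case is not done.

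For comparison, the paper proves the statement directly, with no contradiction and no separator-hunting: each torso $H_{t(\ell)}$ is 2-connected, so it contains two disjoint paths between its two adhesion-sets; such paths meet each adhesion-set in exactly one vertex, hence use no torso-edges and therefore live in the bag $G_{t(\ell)}$ and in $G$; concatenating these pairs across the consecutive bags $G_{t(i+1)},\dots,G_{t(j)}$ (they match up because each pair covers both vertices of each intermediate adhesion-set) gives the two disjoint $\pi_i$--$\pi_j$ paths. If you want to salvage your approach you would need to complete the degree-count in the final case rigorously, but the direct construction is both shorter and avoids all of the Menger technicalities above.
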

\begin{proof}
    For each torso $H_{t(\ell)}$ we use that it is 2-connected to find two disjoint paths $P_\ell,Q_\ell$ between its two adhesion-sets.
    These paths use no torso-edges, so they are subgraphs of their bags and of~$G$.
    Hence combining the paths in the bags $G_{t(i+1)},\ldots, G_{t(j)}$ yields a pair of desired $\pi_i$--$\pi_j$ paths.
\end{proof}

\begin{lemma}\label{3conAdhesionNoPathK4}
    Let $\cO$ be a Tutte-bagel of a 4-connected graph~$G$.
    Let $H_t$ be a 3-connected torso of $\cO$ with adhesion sets $\pi$ and $\pi'$.
    The two vertices in $\pi$ are linked by a path in $G_t-\pi'$, unless $H_t=K_4$.
\end{lemma}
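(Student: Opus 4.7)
The plan is to show by contradiction that failure of the lemma would force a 3-separator in $G$, contradicting 4-connectivity of $G$. Write $\pi=\{a_1,a_2\}$ and $\pi'=\{b_1,b_2\}$; these are disjoint by \cref{TutteBagelObservations}. If the edge $a_1a_2$ lies in $E(G_t)$, it directly links $a_1$ and $a_2$ in $G_t-\pi'$, so from now on I assume $a_1a_2\notin E(G_t)$; then $a_1a_2$ is a genuine torso-edge, and $G_t-\pi'$ equals $H_t-\pi'$ with the torso-edge $a_1a_2$ also deleted.

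Since $H_t$ is 3-connected, the 2-set $\pi'$ does not separate $H_t$, so some $a_1$--$a_2$ path exists in $H_t-\pi'$. If such a path is not the single-edge path $a_1a_2$, then it avoids the torso-edge $a_1a_2$ and hence lies in $G_t-\pi'$, and we are done. Otherwise $\pi'$ separates $a_1$ from $a_2$ in $H_t$ with the edge $a_1a_2$ removed, so I fix a separation $(X,Y)$ of that graph with $X\cap Y=\pi'$, $a_1\in X^\circ:=X\setminus\pi'$, $a_2\in Y^\circ:=Y\setminus\pi'$. If $|X^\circ|=|Y^\circ|=1$, then $|H_t|=4$ and 3-connectivity forces $H_t=K_4$, the permitted exception; otherwise by symmetry $|Y^\circ|\ge 2$, and I pick a vertex $v\in Y^\circ\setminus\{a_2\}$. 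Since $v\notin\pi\cup\pi'$, $v$ is an interior vertex of $G_t$.

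The finishing step — and the main obstacle — is to prove that $S:=\{a_2,b_1,b_2\}$ is a 3-separator of $G$. Every vertex $u\in Y^\circ\setminus\{a_2\}$ lies outside $\pi\cup\pi'$ and is therefore interior to $G_t$, giving $N_G(u)\subseteq V(G_t)$. Moreover, $u\ne a_1,a_2$ means no torso-edge touches $u$, so $N_{G_t}(u)=N_{H_t}(u)$; and because $u\in Y^\circ$ the separation forces $N_{H_t}(u)\subseteq Y\setminus\{u\}$. Combining these gives $N_G(u)\subseteq Y\setminus\{u\}$. After deleting $S$, the remaining $G$-neighbours of $u$ lie in $Y^\circ\setminus\{a_2\}$; iterating, the connected component of $v$ in $G-S$ stays inside $Y^\circ\setminus\{a_2\}$ and is therefore disjoint from $a_1\in X^\circ\setminus S$. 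Hence $S$ separates $v$ from $a_1$ in $G$, contradicting 4-connectivity of $G$.
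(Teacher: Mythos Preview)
Your proof is correct and follows the same core idea as the paper's: if the conclusion fails, then $\pi'$ together with one vertex of $\pi$ forms a $3$-separator of $G$, contradicting $4$-connectivity. The paper's version is more direct---it works with $G_t-\pi'$ immediately and observes that if $x_1,x_2\in\pi$ lie in distinct components $C_1,C_2$ of $G_t-\pi'$, then $4$-connectivity forces $V(C_i)=\{x_i\}$ and hence $|G_t|=4$---whereas you detour through $H_t$ minus the torso-edge $a_1a_2$ before arriving at the same separator $\{a_2,b_1,b_2\}$; but the substance is identical.

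One phrasing to tighten: ``If such a path is not the single-edge path $a_1a_2$\dots\ Otherwise $\pi'$ separates\dots'' should read ``If some such path avoids the edge $a_1a_2$\dots\ Otherwise every $a_1$--$a_2$ path in $H_t-\pi'$ is that edge, so $\pi'$ separates\dots''---as written, the ``otherwise'' clause does not strictly follow from a single path happening to be $a_1a_2$.
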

\begin{proof}
    Assume that the two vertices $x_1,x_2$ in $\pi$ lie in distinct components $C_i$ of $G_t-\pi'$.
    Then $V(C_i)=\{x_i\}$ and $G_t-\pi'=C_1\cup C_2$, as $G$ is $4$-connected.
    Hence $G_t$ has only 4 vertices.
    Since $H_t$ is 3-connected, it is a $K_4$.
\end{proof}

\begin{lemma}\label{BagelLinkForm}
    Assume the \allref{setting:TutteBagel}.
    Let $K$ be a corner.
    If the $K$-Tutte-banana has no 3-connected torsos, then all its torsos are triangles, all its adhesion-graphs are $K_2$-subgraphs of~$G$, and the two links adjacent to $K$ are represented by triangle-torsos in the Tutte-banana.
\end{lemma}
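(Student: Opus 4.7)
The plan is to prove the three assertions in order, starting from \cref{TutteBananaIsBanana}, which tells us every torso of the $K$-Tutte-banana is 3-connected, a 4-cycle, or a triangle. Since we assume no 3-connected torso exists, each torso is a 4-cycle or a triangle.

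First, I would rule out 4-cycle torsos. Suppose for a contradiction that some torso $H_t$ is a 4-cycle. By \cref{TutteBananaLinksRepresentedDifferently}, the $K$-Tutte-banana has at least two nodes (else its unique torso would be 3-connected), so $H_t$ has a $K$-Tutte-banana neighbour $H_s$. The key tool is the Tutte-bagel $\cO$ induced by $(A,B)$ and $(C,D)$, which by \cref{lem:really-is-Tutte-bagel} is a genuine Tutte-bagel of $G$; and by \cref{TutteBagelObservations}~(iii), both neighbours of a 4-cycle torso in $\cO$ must be 3-connected. So the Tutte-bagel torso at $s$ is 3-connected. I would then derive a contradiction by comparing the Tutte-bagel torso at $s$ with its $K$-Tutte-banana torso. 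For $s$ internal in the $K$-Tutte-banana path, the two torsos coincide (same bag, same adhesion-sets), so the Tutte-bagel torso at $s$ is a 4-cycle or triangle, contradicting 3-connectedness. For $s$ at the other end of the $K$-Tutte-banana (possible only when the banana has exactly two nodes), the Tutte-bagel torso at $s$ is obtained from the $K$-Tutte-banana torso by adding at most one further virtual edge coming from the adhesion to an adjacent banana; since the allowed Tutte-bagel torso types are 3-connected, 4-cycle, or triangle, this extra edge must already be present, so the torso remains a 4-cycle or triangle, again not 3-connected.

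Next, for the adhesion-graph claim, once all $K$-Tutte-banana torsos are triangles, every internal edge of the $K$-Tutte-banana path is an internal edge of $\cO$ between two triangle-torsos. Defining property~(3) of Tutte-bagels then gives that each such adhesion-graph is a $K_2$-subgraph of $G$. Finally, the third assertion is immediate from \cref{TutteBananaLinksRepresentedDifferently}: the two links adjacent to $K$ are represented by the two end-nodes of the $K$-Tutte-banana path, and these end-nodes carry triangle-torsos by the first step.

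The main obstacle will be the end-of-banana case analysis in the first part, where the extra virtual edge in $\cO$ could in principle turn a 4-cycle into a 4-cycle-with-chord, which is not an allowed Tutte-bagel torso type. This pinch is resolved cleanly by \cref{lem:really-is-Tutte-bagel}: the induced $\cO$ really is a Tutte-bagel, so its torsos are already of the three allowed types, and the virtual adhesion-edge must therefore coincide with an existing cycle-edge rather than form a chord. I will want to verify the resulting argument is not circular, and in particular that invoking \cref{TutteBagelObservations}~(iii) is legitimate before the present lemma has been used elsewhere in the paper.
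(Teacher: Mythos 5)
Your proof is correct and takes essentially the same route as the paper, which also rules out 4-cycle torsos via \cref{TutteBagelObservations}, reads the $K_2$ adhesion-graphs off the defining property of Tutte-bagels, and gets the link representation from \cref{linkRepresentedByEqualTorsos} and \cref{TutteBananaLinksRepresentedDifferently}; your explicit handling of the single virtual edge by which a banana-torso may differ from the corresponding bagel-torso is a detail the paper suppresses. One small slip: your parenthetical claim that $s$ can be an end-node only when the banana has exactly two nodes is false (the banana-neighbour of an internal 4-cycle node can be an end-node of a longer banana), but your end-node case analysis handles that situation verbatim, so nothing breaks.
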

\begin{proof}
    Since the Tutte-banana $\cT$ for $K$ has no 3-connected torsos, it also has no 4-cycles as torsos by \cref{TutteBagelObservations}.
    Hence all torsos of $\cT$ are triangles; in particular, both links adjacent to $K$ are represented by triangle-torsos of~$\cT$.
    Again by \cref{TutteBagelObservations}, it follows that all adhesion-graphs of $\cT$ are $K_2$-subgraphs of~$G$.
\end{proof}

\begin{lemma}[One-Path Lemma]\label{OnePathBagel}
    Assume the \allref{setting:TutteBagel}.
    Let $a_1 a_2$ be the $AD$-edge for the $A$-link.
    Then there is an $a_1$--$a_2$ path $P$ in $G[A\cap D]$ that uses at most one vertex in the $D$-link.
    Moreover, if $AD$ is not potter, then for each vertex $d$ in the $D$-link there is such a path $P$ that avoids~$d$.
    And if both $a_1,a_2$ lie in the $A$-link, then $P$ does not use the edge $a_1 a_2$ (if it exists).
\end{lemma}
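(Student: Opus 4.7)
The strategy is to combine the 2-connectivity of the $AD$-banana $\beta$ (\cref{banana2con}) with the structural information from \cref{linkRepresentedByEqualTorsos,TutteBananaLinksRepresentedDifferently,bananaEndNumberChar} about how the $A$-link and $D$-link are represented in the $AD$-Tutte-banana. By \cref{linkRepresentedByEqualTorsos}, $\{a_1,a_2\}$ lies in the bag $V_a$ at an end-node of the $AD$-Tutte-path, and $\{d_1,d_2\}$ lies in $V_{a'}$ at the other end. If the Tutte-path has a single node, \cref{TutteBananaLinksRepresentedDifferently} forces both links to consist of two vertices and the single torso $\beta$ to be 3-connected; in that case $\{a_1,a_2\}\cap\{d_1,d_2\}=\emptyset$, $AD$ is not potter by \cref{bananaEndNumberChar}, and for any prescribed $d\in\{d_1,d_2\}$ the graph $\beta-d-a_1a_2$ (obtained from a 3-connected graph by removing a single vertex and a single edge) is still connected, which yields the desired path and simultaneously handles the ``moreover'' clause.

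In the multi-node case I would use the 4-connectivity of $G$ to show that each of $a_1,a_2$ has at least two neighbours inside $A\cap D$: at most two of the four-plus neighbours of $a_i$ can leave $A\cap D$ (one along the $A$-link edge whose $AD$-endpoint is $a_i$, and at most one along a $D$-link edge incident to $a_i$). Menger's theorem applied in $G$ then supplies four internally disjoint $a_1$--$a_2$ paths, and blocking the bounded number of ``leaving routes'' forces at least one to stay entirely in $G[A\cap D]$. An internal-disjointness-and-counting argument over the at most two vertex-elements $d_1,d_2$ of the $D$-link then yields one path that uses at most one of them as interior vertex. For the ``moreover'' clause, \cref{bananaEndNumberChar} together with non-potterness of $AD$ forces $\{a_1,a_2\}$ and $\{d_1,d_2\}$ to overlap in at most one element, giving enough slack to avoid any specified $d$. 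The final clause is automatic, since the constructed path has length $\ge 2$ and so does not reduce to the single edge $a_1a_2$.

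The hard part will be the bookkeeping required to convert the $\beta$-level and $G$-level connectivity into a concrete path inside $G[A\cap D]$. The $AD$-banana $\beta$ contains vertices outside $A\cap D$, namely the $AC$-endpoints of $A$-link edges and the $BD$-endpoints of $D$-link edges, and the paths must be rerouted around these. I expect this to require a detailed case analysis on whether each of the $A$-link and $D$-link consists of two vertices, one vertex plus one edge, or two edges, with the tightest cases being the ones where both links contain edges so that the end-bags $V_a$ and $V_{a'}$ are 4-cycle or triangle torsos sitting right next to the path's endpoints.
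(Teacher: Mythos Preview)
Your single-node case is fine, but the multi-node argument has a genuine gap. The claim ``at most two of the four-plus neighbours of $a_i$ can leave $A\cap D$'' presupposes that the $A$-link element yielding $a_i$ is an edge. When it is a \emph{vertex}, that vertex is $a_i$ itself, sitting in $S(C,D)$, and by the degree-condition it has at least two neighbours in $C\setminus D$ --- all outside $A\cap D$. So your neighbour count can fail, and with it the Menger/blocking argument: four internally disjoint $a_1$--$a_2$ paths in $G$ need not leave even one path confined to $G[A\cap D]$, because $a_1,a_2$ may themselves be exit points. The final clause is also not automatic: nothing in your counting prevents the selected path from being the single edge $a_1a_2$.

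The paper avoids these issues by working inside the $AD$-Tutte-banana rather than applying Menger globally in $G$. It first disposes of the potter case (then $a_1a_2\in E(G)$ and this edge is the path). In the non-potter case it splits on whether the Tutte-banana has a 3-connected torso $H_t$. If so, two disjoint $a_i$--$H_t$ paths are found via \cref{twoDisjointPathsOutsideBagel} (these use no torso-edges, hence lie in $G$), and their ends are linked inside $G_t$ avoiding the prescribed $d$ using \cref{3conAdhesionNoPathK4} (or directly when $H_t=K_4$). If no torso is 3-connected, \cref{BagelLinkForm} forces all torsos to be triangles with $K_2$-adhesions; then $a_1a_2\in E(G)$, exactly one of $a_1,a_2$ lies in the $A$-link, and one checks $a_2\notin D$-link (else $AD$ would be potter), so the edge $a_1a_2$ is again the desired path. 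This case split also handles the final clause cleanly: when both $a_i$ lie in the $A$-link the $A$-link has two vertices, so its representing torso is 3-connected, and the construction through $H_t$ never uses the edge $a_1a_2$.
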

\begin{proof}
    If the corner $AD$ is potter, then $a_1 a_2$ is an edge in $G$, and the path $a_1 a_2$ is as desired.
    So we may assume that the corner $AD$ is not potter.
    Let $\cT$ denote the $AD$-Tutte-banana.
    We distinguish two cases.

    Suppose first that $\cT$ has a 3-connected torso $H_t$.
    We find two disjoint $a_i$--$H_t$ paths $P_i$ for $i=1,2$ in the $AD$-banana using \cref{twoDisjointPathsOutsideBagel}.
    We even get that both $P_i$ are subgraphs of~$G$.
    Note that the ends of the $P_i$ in $H_t$ together form an adhesion-set.
    If $H_t$ is a $K_4$, then we link the ends of the $P_i$ in the bag $G_t$ by a path that avoids any prescribed vertex $d$ in the $D$-link and that uses no torso-edges.
    Otherwise $H_t$ is not a $K_4$, so we may use \cref{3conAdhesionNoPathK4} to link the ends of the $P_i$ in the bag $G_t$ with a path that avoids all vertices in the $D$-link and that uses no torso-edges.

    Suppose second that $\cT$ has no 3-connected torsos.
    Then all torsos of $\cT$ are triangles, both the $A$-link and the $D$-link are represented by triangle-torsos, and $a_1 a_2\in E(G)$ by \cref{BagelLinkForm}.
    In particular, exactly one of $a_1,a_2$ lies in the $A$-link, say it is~$a_1$.
    Now $a_2$ cannot lie in the $D$-link, since otherwise some edge dangles from the $D$-link through the $A$-link which by the \allref{keylem:crossing} implies that the corner $AD$ is potter, a contradiction.
    Hence the path $a_1 a_2$ is as desired.
\end{proof}

\begin{lemma}[Two-Paths Lemma]\label{TwoPathsBagel}
    Assume the \allref{BagelBag}.
    Let $\ell\ge 1$ be maximum such that $t(\ell)$ stems from the BD-Tutte-banana.
    There exist two independent $y^W_1$--$y^W_2$ paths $P_1,P_2$ in $G_{t(1)}\cup\cdots\cup G_{t(\ell)}$ not using the edge $y'_1 y'_2$ (if it exists) such that:
    \begin{enumerate}
        \item $P_1\cup P_2$ uses at most one vertex of the $D$-link.
        \item If the corner $BD$ is not potter, then for every vertex $d$ in the $D$-link we can choose the two paths so that $d\notin P_1\cup P_2$.
    \end{enumerate}
\end{lemma}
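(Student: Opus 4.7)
The plan is to find the two paths locally, using a case analysis on the torsos $H_{t(1)}$ and $H_{t(2)}$. By \cref{BagelBagAdhesionSet} the pair $\{y^W_1, y^W_2\}$ is either the adhesion-set at $t(0)\,t(1)$ (with $H_{t(1)}$ 3-connected) or the adhesion-set at $t(1)\,t(2)$ (with $H_{t(2)}$ not a 4-cycle). In all cases the two paths will live in $G_{t(1)}$, $G_{t(2)}$, or at most one further bag, so $\ell$ just plays the role of guaranteeing enough room.

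When the relevant bag containing $\{y^W_1, y^W_2\}$ as an adhesion-set has a 3-connected torso---that is, $H_{t(1)}$ is 3-connected, or $H_{t(1)}$ is a 4-cycle (forcing $H_{t(2)}$ to be 3-connected by \cref{TutteBagelObservations}), or $H_{t(1)}$ is a triangle with $H_{t(2)}$ 3-connected---I apply Menger's theorem inside that 3-connected bag $G_{t(k)}$ for $k\in\{1,2\}$ to obtain three internally-disjoint $y^W_1$--$y^W_2$ paths. At most one of these three coincides with the forbidden edge $y'_1 y'_2$, which can only occur when $k=1$ (since otherwise $y'_2 \notin V(G_{t(2)})$ by the disjoint-adhesion property of 4-cycle and 3-connected torsos), so the remaining two paths serve as $P_1, P_2$.

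In the remaining case, $H_{t(1)}$ and $H_{t(2)}$ are both triangles. Then \cref{BagelBagAdhesionSet} ensures that the adhesion-graph at $t(1)\,t(2)$ is a $K_2$, so $y^W_1 y^W_2 \in E(G)$, and I take $P_1$ to be this edge. For $P_2$ I use the third vertex $v$ of $H_{t(2)}$: if $H_{t(3)}$ is also a triangle, then condition~3 of Tutte-bagels makes the adhesion-edge at $t(2)\,t(3)$ a $G$-edge too, so $P_2 = y^W_1 v y^W_2$ lies in $G_{t(2)}$; otherwise the tip-adhesion-edge may be only a torso-edge, and I replace it by an internally-disjoint detour inside the 2-connected bag $G_{t(3)}$ using Menger once more, iterating down the triangle-chain if several consecutive triangles appear.

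For the $D$-link conditions, the key observation is that $D$-link vertices occur only in $G_{t(\ell)}$ (as part of the adhesion-set with $t(\ell+1)$) and in bags beyond $t(\ell)$; since my paths live in $G_{t(1)}\cup G_{t(2)}\cup G_{t(3)}$, they meet the $D$-link only when $\ell$ is very small. In those short-chain cases I discard the Menger path using any specified $D$-link vertex $d$ (using the third internally-disjoint path as spare) provided $BD$ is not potter, and accept a single $D$-link vertex into $P_1\cup P_2$ otherwise, which part~1 permits. The main obstacle I anticipate is the bookkeeping in the triangle-chain subcase with small $\ell$: tracking which vertex of each triangle is the tip, and verifying that the detour through $G_{t(3)}$ simultaneously stays in the allowed bag-range, uses only $G$-edges, and respects the $D$-link constraints.
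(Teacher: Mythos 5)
Your proposal has a genuine gap that goes to the heart of why this lemma is nontrivial. You apply Menger's theorem to get three independent $y^W_1$--$y^W_2$ paths, but you apply it (implicitly) to the \emph{torso} $H_{t(i)}$, which is 3-connected, not to the \emph{bag} $G_{t(i)}$, which need not be. Paths found in the torso may use torso-edges that do not exist in $G$. You only guard against using the torso-edge $y'_1 y'_2$, but you do not guard against the \emph{other} torso-edge $z_1 z_2$ spanning the far adhesion-set of $H_{t(i)}$ (the one shared with $t(i+1)$). If a Menger path uses $z_1 z_2$ and this edge is missing from $G$, that path is not a $G$-path, and your construction breaks. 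This is the central obstacle the paper addresses: when $z_1 z_2$ is used, one must replace it by a detour through the rest of the cycle-decomposition.

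This is also why your claim that the paths ``live in $G_{t(1)}\cup G_{t(2)}\cup G_{t(3)}$'' is wrong, and why the lemma's statement explicitly allows paths in $G_{t(1)}\cup\cdots\cup G_{t(\ell)}$ with $\ell$ defined to reach the far end of the $BD$-Tutte-banana. The detour replacing $z_1 z_2$ must go from $\{z_1, z_2\}$ through the $BC$-banana to the $B$-link (via \cref{twoDisjointPathsOutsideBagel}) and then from $u_1$ to $u_2$ across the $BD$-banana (via the \allref{OnePathBagel}), potentially traversing arbitrarily many bags. The $D$-link control in conditions (i)--(ii) is then not a ``bookkeeping in the triangle-chain subcase'' as you suggest, but is delivered precisely by the guarantees of the \allref{OnePathBagel} applied to that detour. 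A similar detour construction is needed even in your both-triangles case, since after $P_1 = y^W_1 y^W_2$ you still need a $G$-path $P_2$ that avoids it, and the paper again builds $P_2$ as a long detour $R_1\cup R_2\cup R_3$, not locally.
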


\begin{proof}
    By \cref{BagelBagAdhesionSet}, there is $i\in \{1,2\}$ such that $\{y^W_1,y^W_2\}$ is the adhesion-set at $t(i-1) t(i)$.
    Moreover, the torso $H_{t(i)}$ is not a 4-cycle.
    We consider two cases, supported by \cref{fig:BagelBagTwoPaths}.

    Suppose first that the torso at $t(i)$ is 3-connected.
    Then we find two $y^W_1$--$y^W_2$ paths $Q_1,Q_2$ in the torso at $t(i)$ that avoid the torso-edge $y^W_1 y^W_2$.
    Note that $G_{t(i)}\se G[C]$.
    If neither $Q_j$ uses the other torso-edge $z_1 z_2$, then we are done.
    So we may assume that $Q_1$, say, uses $z_1 z_2$.
    Let $e=u_1 u_2$ be the $BD$-edge for the $B$-link.
    Let $k\ge i$ be maximum such that $t(k)$ stems from the $BC$-Tutte-banana.
    By \cref{twoDisjointPathsOutsideBagel}, we find two disjoint $\{z_1,z_2\}$--$\{u_1,u_2\}$ paths $R_1,R_2$ in $G_{t(i+1)}\cup\cdots\cup G_{t(k)}$, taking the trivial paths if $k=i$.
    We apply the \allref{OnePathBagel} (with $B$ in place of $A$) to the $BD$-edge $u_1 u_2$ for the $B$-link to find a $u_1$--$u_2$ path $R_3$ in $G[B\cap D]$ that uses at most one vertex in the $D$-link.
    Moreover, if $BD$ is not potter, then we find $R_3$ so that it avoids any prescribed vertex $d$ in the $D$-link.
    Then $P_1:=(Q_1- z_1 z_2)\cup R_1 \cup R_2 \cup R_3$ and $P_2:=Q_2$ are as desired.

    Suppose otherwise that the torso at $t(i)$ is a triangle.
    Then $i=2$ and the torso at $t(1)$ is not 3-connected, by \cref{BagelBagAdhesionSet}.
    Since the torso at $t(2)$ is a triangle, the torso at $t(1)$ cannot be a 4-cycle by \cref{TutteBagelObservations}, so it must be a triangle as well.
    Then the adhesion-graph of $t(1) t(2)$ is a $K_2$-subgraph of~$G$ with vertices~$y^W_1,y^W_2$.
    Then we let $P_2:=Q_2:=y^W_1 y^W_2$ and construct $P_1:=R_1\cup R_2\cup R_3$ as in the first case with the \allref{OnePathBagel}.
\end{proof}

\begin{figure}[ht]
    \centering
    \includegraphics[height=10\baselineskip]{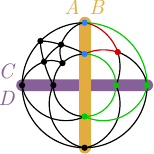}
    \caption{Two paths found by \cref{TwoPathsBagel}.
    The vertices $y^W_i$ are blue. The path $P_2=Q_2$ is red. The path $P_1$ is green. The green vertices in the $B$-link are $u_1,u_2$.}
    \label{fig:BagelBagTwoPaths}
\end{figure}

\begin{lemma}[Three-Paths Lemma]\label{ThreePathsBagelLemma}
    Assume the \allref{BagelBag}.
    There exist three independent $\{x^W_1,y^W_1\}$--$\{x^W_2,y^W_2\}$ paths $P_1,P_2$ and $P_3$ in $G[W] - x'_1 x'_2 - y'_1 y'_2$ such that two $P_i$ link $y^W_1$ to $y^W_2$ and one $P_i$ links $x^W_1$ to $x^W_2$.
\end{lemma}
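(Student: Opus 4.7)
The plan is to construct the three paths by applying the \allref{TwoPathsBagel} in both cyclic directions of~$O$ and then selecting one path from the backward application.

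First, I would apply the \allref{TwoPathsBagel} directly to obtain two independent $y^W_1$--$y^W_2$ paths $P_1, P_2$ in $G_{t(1)}\cup\cdots\cup G_{t(\ell)}$ avoiding $y'_1 y'_2$, where $\ell$ is maximum with $t(\ell)$ in the $BD$-Tutte-banana.

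Second, I would apply the \allref{TwoPathsBagel} to the cyclic orientation of~$O$ reversed, which interchanges the adhesion sets $\{x'_1, x'_2\}$ and $\{y'_1, y'_2\}$ of the \allref{BagelBag} and turns the order of link-representatives into $o(A), o(D), o(B), o(C)$. This yields two independent $x^W_1$--$x^W_2$ paths $Q_1, Q_2$ in the backward bags $G_{t(-1)}\cup\cdots\cup G_{t(-\ell')}$ avoiding $x'_1 x'_2$, with a terminal detour in $G[A\cap D]$ via the $AD$-edge for the $A$-link in place of $G[B\cap D]$ via the $BD$-edge for the $B$-link.

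Third, I would set $P_3$ to be one of $Q_1, Q_2$ and verify internal disjointness from $P_1\cup P_2$. The bag-ranges used by the forward and backward applications are disjoint except possibly inside the $BD$-Tutte-banana, and the terminal detours $G[B\cap D]$ and $G[A\cap D]$ share only the vertex-part of the $D$-link, namely $A\cap B\cap D$. By the \allref{TwoPathsBagel}, each pair uses at most one vertex of the $D$-link; by \cref{obs:potter-adjacent-corner}, the adjacent corners $BD$ and $AD$ cannot both be potter, so the potter-clause of the \allref{TwoPathsBagel} lets us reroute one of the pairs to avoid the single conflict vertex of the other. Finally, since $Q_1, Q_2$ are internally disjoint, at most one of them can share internal vertices with $P_1\cup P_2$ inside the shared $BD$-banana bags; picking the other yields $P_3$.

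I expect the main obstacle to be handling short cycles, where the forward and backward applications reach the same bags of the $BD$-Tutte-banana. There I would rely on the 2-connectedness of each torso to route $P_3$ around $P_1\cup P_2$, combined with the flexibility afforded by the potter-clause of the \allref{TwoPathsBagel} to make the two applications choose compatible vertices in any shared $D$-link.
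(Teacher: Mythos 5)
Your second step has a genuine gap: the reversed application of the \allref{TwoPathsBagel} does \emph{not} detour through $G[A\cap D]$. Reversing the cyclic orientation of $O$ and relabelling so as to restore the order $o(C),o(B),o(D),o(A)$ demanded by \cref{setting:TutteBagel} forces the swap $(A,B) \leftrightarrow (C,D)$ (i.e.\ new-$B = D$ and new-$D = B$), and the ``new-$BD$''-banana is then precisely the original $BD$-banana. Consequently the backward paths $Q_1,Q_2$ also detour through $G[B\cap D]$, via the $BD$-edge for the $D$-link, so $P_1\cup P_2$ and $Q_1,Q_2$ can both cross the interior of the same banana. The controls provided by the \allref{TwoPathsBagel}---``at most one vertex of the $D$-link'' for the forward paths and ``at most one vertex of the $B$-link'' for the backward ones---constrain only opposite ends of the $BD$-banana and say nothing about its interior; moreover, the potter-clause in \emph{both} applications concerns the same corner $BD$, so \cref{obs:potter-adjacent-corner} gives you no useful dichotomy between $AD$ and $BD$. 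Your fallback, ``since $Q_1,Q_2$ are internally disjoint, at most one of them can share internal vertices with $P_1\cup P_2$,'' is also invalid: disjointness of $Q_1,Q_2$ from each other does not prevent $Q_1$ from meeting $P_1$ and $Q_2$ from meeting $P_2$ simultaneously.

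The paper instead takes $P_3$ from the \allref{OnePathBagel}, not from a second run of the \allref{TwoPathsBagel}. That single path genuinely goes around the other way, through the $AD$-banana $G[A\cap D]$, so $P_3 \subseteq G[A]$ while $P_1\cup P_2 \subseteq G[C]\cup G[B\cap D]$; the only possible collision is a single vertex in the $D$-link, and \cref{obs:potter-adjacent-corner} then does exactly what you wanted: either $AD$ is non-potter and the moreover-clause of the \allref{OnePathBagel} reroutes $P_3$ around the $D$-link vertex of $P_1\cup P_2$, or $BD$ is non-potter and the moreover-clause of the \allref{TwoPathsBagel} reroutes $P_1,P_2$ around the $D$-link vertex of $P_3$.
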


\begin{proof}
    Recall that not both corners $AD,BD$ are potter by \cref{obs:potter-adjacent-corner}.

    Suppose first that the corner $AD$ is not potter.
    We apply the \allref{TwoPathsBagel} to find two independent $y^W_1$--$y^W_2$ paths $P_1,P_2$ in $G[W] - y'_1 y'_2$ such that $P_1\cup P_2$ uses at most one vertex in the $D$-link.
    Then we apply the \allref{OnePathBagel} to find an $x^W_1$--$x^W_2$ path $P_3\se G[W] - x'_1 x'_2$ that avoids $P_1\cup P_2$.

    Suppose otherwise that the corner $BD$ is not potter.
    We proceed as in the first case, except that we find $P_3$ first and then find $P_1,P_2$ afterwards.
\end{proof}

\subsection{Totally-nested tetra-separations from good bags}\label{sec:BagelTotallyNested}

\begin{definition}[Good bag, bad bag]
    Let $\cO$ be a Tutte-bagel of a 4-connected graph~$G$.
    A torso $H_t$ of $\cO$ is \defn{good} if it is 3-connected and it satisfies at least one of the following conditions:
    \begin{enumerate}
        \item $|H_t|\ge 6$;
        \item some neighbouring torso is a 4-cycle;
        \item both neighbouring torsos are triangles; or
        \item $|H_t|=5$ and some neighbouring torso is a triangle.
    \end{enumerate} 
    If $H_t$ fails to be good, it is \defn{bad}.
\end{definition}

\begin{lemma}\label{GoodBagTetra}
    Assume the \allref{BagelBag}. The following assertions are equivalent:
    \begin{enumerate}
        \item\label{GoodBagTetra1} $(U,W)$ is a tetra-separation;
        \item\label{GoodBagTetra2} $H_{t(0)}$ is good.
    \end{enumerate}
\end{lemma}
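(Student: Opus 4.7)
The plan is to apply \cref{leftrightTetra} to the mixed-separation $(W',U')$, observing that $(U,W)$, the right-left-shift of $(U',W')$, is precisely the left-right-shift of $(W',U')$. By \cref{leftrightTetra}, $(U,W)$ is a tetra-separation iff the three conditions
(I) $|W'\sm U'|\ge 2$,
(II) the number of vertices in $S(U',W')$ with $\le 1$ neighbour in $W'\sm U'$ is $\ge 2-|U'\sm W'|$, and
(III) $|U'\sm W'|\ge 2$ or $|S(U',W')\cap E(G)|\le 1$,
all hold. Since $H_{t(0)}$ is 3-connected, its two adhesion-sets are disjoint by \cref{TutteBagelObservations}, so $S(U',W')=\{x_1',x_2',y_1',y_2'\}$ consists of four vertices and no edges. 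This makes (III) automatic, and (I) follows from 4-connectivity and $|O|\ge 4$. Thus everything hinges on (II).

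Call a vertex $z\in S(U',W')$ \emph{weak} if it has $\le 1$ $G$-neighbour in $W'\sm U'$. Noting $|U'\sm W'|=|H_{t(0)}|-4$, condition (II) reads
\[
    \#\{\text{weak vertices}\}\;\ge\;6-|H_{t(0)}|.
\]
The key is a trichotomy for the pair $\{x_1',x_2'\}$ according to the type of the neighbouring torso $H_{t(-1)}$ (and symmetrically for $\{y_1',y_2'\}$ via $H_{t(1)}$):
\emph{(a)} if $H_{t(-1)}$ is 3-connected, neither $x_i'$ is weak, because 3-connectivity of the torso provides each $x_i'$ with $\ge 2$ non-adhesion torso-edges to $V(H_{t(-1)})\sm\{x_1',x_2'\}\subseteq W'\sm U'$, and non-adhesion torso-edges are $G$-edges;
\emph{(b)} if $H_{t(-1)}$ is a 4-cycle, both $x_1'$ and $x_2'$ are weak, because the ``exactly two free edges'' convention of the paper forces the adhesion pair $\{x_1',x_2'\}$ to be adjacent in the cycle, so each $x_i'$ has exactly one $G$-neighbour in $W'\sm U'$ (the opposite endpoint of the free edge incident to it); the diagonals are absent from $G$ because the torso is a $C_4$, and $x_i'$ lies in no further bag;
\emph{(c)} if $H_{t(-1)}$ is a triangle, exactly one of $x_1',x_2'$ is weak, namely the non-tip (its only neighbour outside $V(G_{t(0)})$ is the third vertex via the unique non-adhesion edge of the triangle); the tip, by contrast, has $\ge 2$ neighbours in $W'\sm U'$ obtained from $G_{t(-2)}$: if $H_{t(-2)}$ is 3-connected then 3-connectivity does it, while if $H_{t(-2)}$ is a triangle then the Tutte-bagel rule forces the adhesion edge to lie in $G$ and the non-adhesion edge of $H_{t(-2)}$ contributes a second neighbour (a 4-cycle cannot neighbour a triangle by \cref{TutteBagelObservations}).

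With the trichotomy in hand, condition (II) is decided by enumerating the cases $|H_{t(0)}|\in\{4,5,\ge 6\}$ against the types of the two neighbouring torsos. When $|H_{t(0)}|\ge 6$ no weak vertices are needed and (II) is automatic, matching goodness~(i). When $|H_{t(0)}|=5$, one weak vertex suffices, so (II) holds iff at least one neighbour is a triangle or a 4-cycle, matching (ii)/(iii)/(iv). When $|H_{t(0)}|=4$, two weak vertices are required, which by the trichotomy means either some neighbour is a 4-cycle (contributing $2$) or both neighbours are triangles (each contributing $1$), matching (ii)/(iii); otherwise at most one weak vertex can be produced. In every case, (II) holds iff $H_{t(0)}$ is good. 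The main obstacle is the tip-of-triangle sub-case in~(c): it is what forces the $K_2$-adhesion rule in the Tutte-bagel definition to be invoked, and it is the only step where one must ``look two bags away'' from $t(0)$.
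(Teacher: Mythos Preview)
Your approach is correct and essentially the same as the paper's: both reduce to \cref{leftrightTetra} (applied to $(W',U')$, whose left-right-shift is $(W,U)$), observe that conditions~(I) and~(III) hold automatically, and then analyse condition~(II) by counting vertices in $S(U',W')$ with at most one neighbour in $W'\sm U'$ according to the types of the neighbouring torsos $H_{t(\pm 1)}$. Your trichotomy~(a)/(b)/(c) makes explicit the case analysis that the paper compresses into two sentences. One minor quibble: your justification of~(I) via ``$|O|\ge 4$'' is not obviously valid (the Tutte-bagel cycle can be shorter), but~(I) holds anyway because the $BD$-expanded corner is disjoint from $V(G_{t(0)})\subseteq\text{($AC$-expanded corner)}$ and has size $\ge 4$ by \cref{lem:size-expanded-corner}.
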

\begin{proof}
    Note that $|W'\sm U'|\ge 2$.
    
    \cref{GoodBagTetra2} $\Rightarrow$ \cref{GoodBagTetra1}.
    Assume that $H_{t(0)}$ is good.
    If $|H_{t(0)}|\ge 6$, then $U'\sm W'$ has size~$\ge 2$, so $(U,W)$ is a tetra-separation by \cref{leftrightTetra}.
    If $|H_{t(0)}|= 5$ or $|H_{t(0)}|=4$, then $H_{t(0)}$ being good ensures via the neighbouring torsos that at least one or at least two vertices in $S(U',W')$ have exactly one neighbour in $W'\sm U'$, respectively, so $(U,W)$ is a tetra-separation by \cref{leftrightTetra}.

    \cref{GoodBagTetra1} $\Rightarrow$ \cref{GoodBagTetra2}.
    Assume that $(U,W)$ fails to be a tetra-separation.
    Then by \cref{leftrightTetra}, the number of vertices in $S(U',W')$ with $\le 1$ neighbour in $W'\sm U'$ fails to be $\ge 2-|U'\sm W'|$.
    Hence all sufficient conditions for $H_{t(0)}$ being good fail.
\end{proof}

\begin{center}
    \textbf{Half-connected}
\end{center}

\begin{lemma}\label{BagelHalfconnected}
    Assume the \allref{BagelBag} with $(U,W)$ a tetra-separation.
    Then $(U,W)$ is half-connected.
\end{lemma}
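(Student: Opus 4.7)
The plan is to verify half-connectedness via \cref{lem:nestedness-half-connected}: we show no tetra-separation of $G$ crosses $(U,W)$ with all links empty. By the \allref{keylem:crossing}, such a crossing would require the centre to have size four and forbid diagonal edges, forcing $S(U,W)$ to consist entirely of four vertices. Hence if the right-left-shift contributed an edge to $S(U,W)$, we are done. Otherwise $(U,W) = (U',W')$ and $S(U,W) = \{x'_1,x'_2,y'_1,y'_2\}$; since the candidate crossing has no diagonal edges, it would partition $W' \setminus U' = V(G) \setminus V(G_{t(0)})$ into two non-empty sides with no edge of $G$ between them. So it suffices to prove that $G - V(G_{t(0)})$ is connected.

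The key structural fact is that for every interior edge $t(i)t(i+1)$ of the path $O - t(0)$ (with $1 \le i \le m-1$), at least one vertex of the adhesion set $\pi_i$ lies outside $V(G_{t(0)})$. Indeed, if both vertices of $\pi_i$ lay in $V(G_{t(0)})$, then by property~\ref{M2} each such vertex would lie in every bag along an arc of $O$ containing $t(0)$, $t(i)$ and $t(i+1)$. Since $H_{t(0)}$ is 3-connected, its two adhesion sets $\{x'_1,x'_2\}$ and $\{y'_1,y'_2\}$ are disjoint by \cref{TutteBagelObservations}, so each shared vertex belongs to exactly one of these two sets and hence to every adhesion set along the corresponding arc. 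A case analysis on whether the two vertices of $\pi_i$ travel via the same arc or via opposite arcs then forces the intermediate torsos along some arc either to have equal adhesion sets (impossible by \cref{TutteBagelObservations}) or to form a sequence of triangles sharing the same tip, contradicting \cref{trianglePath}~\cref{trianglePath2}; the corner case $m=2$ is ruled out because it would force $W' \setminus U' = \emptyset$, contradicting properness of $(U',W')$.

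Given this fact, any two vertices of $G - V(G_{t(0)})$ can be chained through consecutive bags $G_{t(k)}$ along the path $O - t(0)$ using the shared adhesion vertices in $W' \setminus U'$. Inside each intermediate bag $G_{t(k)}$ we have $|V(G_{t(0)}) \cap V(G_{t(k)})| \le 2$ by the same arc-analysis argument, and we use the 2-connectivity of $H_{t(k)}$ together with the 4-connectivity of $G$ to route between two adhesion vertices of $W' \setminus U'$ while avoiding these at most two removed vertices. The main obstacle will be handling triangle- and 4-cycle-torsos where $|V(G_{t(0)}) \cap V(G_{t(k)})| = 2$; here the structural constraints coming from \cref{trianglePath} pin the configuration down so rigidly that the required routing can always be found directly in $G - V(G_{t(0)})$.
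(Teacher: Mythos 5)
Your first paragraph is correct and sets up the problem cleanly: reducing via \cref{lem:nestedness-half-connected} to showing that $G-V(G_{t(0)})=G[W'\sm U']$ is connected, after disposing of the easy case where $S(U,W)$ contains an edge. This is a valid (if slightly indirect) way to reach the same target as the paper, which simply observes that if $S(U,W)$ contains an edge then $G-S(U,W)$ has exactly two components by 4-connectivity, and otherwise $(U,W)=(U',W')$ so one needs $G[W\sm U]$ connected.

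From there, however, your argument genuinely diverges from the paper's and has real gaps. You propose a bag-by-bag chaining argument: first a structural claim that every interior adhesion-set $\pi_i$ of $O-t(0)$ meets $W'\sm U'$, then a routing step inside each bag $G_{t(k)}$ avoiding the (at most two, you claim) vertices of $V(G_{t(0)})\cap V(G_{t(k)})$. Both steps are only sketched. The structural claim's case analysis (``travel via the same arc'' vs. ``via opposite arcs'') is plausible but not carried out; the appeal to \cref{trianglePath} needs care since part~\cref{trianglePath2} talks about distinct tips while the length restriction you need is really part~\cref{trianglePath1}. More seriously, the routing step is not established: a 4-cycle torso minus two non-adjacent vertices is disconnected, a triangle bag minus two vertices is a single vertex, and you explicitly flag this as ``the main obstacle'' and then assert without argument that ``the required routing can always be found directly.'' The paper avoids all of this by a single Menger step: fix the adhesion-set $\pi$ spanned by the $BD$-edge for the $D$-link, note that $\pi\cup\{x'_1,x'_2\}$ (or $\pi\cup\{y'_1,y'_2\}$) separates any $v\in (W\sm U)\sm\pi$ from $G_{t(0)}$, and the four-fan from $v$ to this set gives two paths to $\pi$ that automatically stay in $G[W\sm U]$. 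Unless you can close the routing gap in small torsos, you should adopt the Menger argument instead.
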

\begin{proof}
    If $S(U,W)$ contains an edge, then $G-S(U,W)$ has exactly two components by 4-connectivity, so $(U,W)$ is half-connected.
    Otherwise $(U,W)=(U',W')$.
    We will show that $G[W\sm U]$ is connected.
    Let $\pi$ denote the adhesion-set of~$\cO$ that is spanned by the $BD$-edge for the $D$-link by \cref{TutteBagelAdhesionSetNote}.

    We claim that every vertex $v\in W\sm U$ not in $\pi$ sends a fan of two paths to $\pi$ in $G[W\sm U]$.
    Indeed, $\pi$ together with $\{x'_1,x'_2\}$, say, separates $v$ from $G_{t(0)}$.
    Since $G$ is 4-connected, there is a fan of four paths from $v$ to $\{x'_1,x'_2\}\cup\pi$ by \cref{Menger}.
    The two paths ending in $\pi$ are as desired.

    So it remains to show that there is at least one vertex in $W\sm U$ besides the two vertices in~$\pi$.
    The vertex $x_1$ has at least two neighbours in $W\sm U$, at least one of which must be in $A\sm B$.
    The neighbour in $A\sm B$ avoids $\pi\se B$.
\end{proof}

\begin{center}
    \textbf{3-linked}
\end{center}

\begin{lemma}\label{CrossPathInBag}
    Let $\cO$ be a Tutte-bagel of a 4-connected graph~$G$.
    Let $H_t$ be a 3-connected torso of $\cO$ with adhesion sets $\{a_1,a_2\}$ and $\{b_1,b_2\}$.
    Then there is an $a_1$--$b_1$ path in $G_t-\{a_2,b_2\}$.
\end{lemma}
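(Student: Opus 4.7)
\medskip
\noindent\textbf{Proof plan.} The plan is to reduce the statement to the elementary fact that a 3-connected graph remains connected after deleting any two of its vertices, and then verify that the path thus obtained lies in $G_t$ rather than merely in the torso $H_t$.

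First I would invoke \cref{TutteBagelObservations} to observe that, since $H_t$ is 3-connected, its two adhesion-sets $\{a_1,a_2\}$ and $\{b_1,b_2\}$ are disjoint. In particular, $a_1$ and $b_1$ are genuine vertices of $H_t-\{a_2,b_2\}$, and $|H_t|\geq 4$. Next, since $H_t$ is 3-connected, the graph $H_t-\{a_2,b_2\}$ is still connected (removing fewer than 3 vertices from a 3-connected graph cannot disconnect it). Hence there is an $a_1$--$b_1$ path $P$ in $H_t-\{a_2,b_2\}$.

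It remains to argue that $P$ is actually a subgraph of $G_t$, not just of $H_t$. Because $t$ has exactly two neighbours on the decomposition-cycle $O$, the torso $H_t$ is obtained from the bag $G_t$ by making the two adhesion-sets $\{a_1,a_2\}$ and $\{b_1,b_2\}$ into cliques. The only torso-edges that could be added but not already present in $G$ are therefore $a_1a_2$ and $b_1b_2$. Both of these edges are incident to $a_2$ or $b_2$, so $P$, which avoids $\{a_2,b_2\}$, uses none of them. Consequently $P$ is a subgraph of $G_t-\{a_2,b_2\}$, as required.

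The argument is short and no step is really an obstacle; the only thing one has to be careful about is using \cref{TutteBagelObservations} to ensure disjointness of the two adhesion-sets, so that $a_1$ and $b_1$ survive the deletion of $\{a_2,b_2\}$, and to confirm that a 3-connected torso at $t$ has exactly the two adhesion-sets described (so no stray torso-edges need to be worried about).
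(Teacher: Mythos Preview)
Your proof is correct and takes a genuinely different, more elementary route than the paper. You work directly in the torso: since $H_t$ is 3-connected, deleting the two vertices $a_2,b_2$ leaves a connected graph, and any $a_1$--$b_1$ path therein automatically avoids the only possible torso-edges $a_1a_2$ and $b_1b_2$, so it lies in $G_t$. The paper instead argues by contradiction using the 4-connectivity of the ambient graph~$G$: if $a_1$ and $b_1$ lay in distinct components of $G_t-\{a_2,b_2\}$, those components would have to be singletons (otherwise a 3-separator of $G$ arises), forcing $H_t=K_4$ and hence $a_1b_1\in E(G_t)$. Your argument is cleaner and uses only the 3-connectedness of $H_t$, never appealing to the global 4-connectivity of $G$; the paper's argument, while slightly more roundabout here, mirrors the pattern of the neighbouring \cref{3conAdhesionNoPathK4} and \cref{bagelTwoPaths}, where the $K_4$-exception genuinely matters.
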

\begin{proof}
    Assume for a contradiction that the two vertices $a_1,b_1$ lie in distinct components $C,D$ of $G_t-\{a_2,b_2\}$.
    Then $V(C)=\{a_1\}$ and $V(D)=\{b_1\}$, as $G$ is $4$-connected.
    Hence $G_t$ has only 4 vertices.
    Since $H_t$ is 3-connected, it is a $K_4$.
    But then $a_1 b_1$ is an edge in $G_t$, a contradiction.
\end{proof}

\begin{lemma}\label{PathShortening}
    Assume the \allref{BagelBag}.
    Let $P'$ be a path in $G_{t(0)}$ that joins two vertices $v'_1,v'_2\in S(U',W')$ and avoids the other two vertices in $S(U',W')$.
    Let $v_i\in S(U,W)$ be equal or incident to~$v'_i$.
    Then $P'$ contains a $\hat v_1$--$\hat v_2$ path~$P$ that avoids the two vertices or edges in $S(U,W)$ besides $v_1$ and~$v_2$.
\end{lemma}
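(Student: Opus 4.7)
The plan is to take $P$ to be a suitably trimmed subpath of $P'$. First, I would observe that $P'$ automatically avoids the two elements of $S(U,W)$ besides $v_1$ and $v_2$; call them $v_3$ and $v_4$. Each $v_j$ is either the vertex $v'_j$ itself or an edge of $G$ incident to $v'_j$. Since $v'_3, v'_4 \notin V(P')$ by hypothesis, in the first case $v_j$ is not a vertex of $P'$, and in the second case $v_j$ is not an edge of $P'$ (as $P'$ cannot traverse an edge without visiting one of its endvertices).

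Next, I would obtain $P$ via a standard path-shortening. Write the vertex sequence of $P'$ as $p_0, p_1, \ldots, p_k$ with $p_0 = v'_1$ and $p_k = v'_2$; set $i^\ast := \max\{\,i : p_i \in \hat v_1\,\}$ and $j^\ast := \min\{\,j > i^\ast : p_j \in \hat v_2\,\}$; and let $P := p_{i^\ast} p_{i^\ast + 1} \ldots p_{j^\ast}$. By construction $P$ is a subpath of $P'$ whose endpoints lie in $\hat v_1$ and $\hat v_2$ respectively, with interior disjoint from $\hat v_1 \cup \hat v_2$. Since $P \subseteq P'$, the first step ensures that $P$ also avoids $v_3$ and $v_4$.

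The only real point to verify is that $j^\ast$ is well-defined and strictly greater than $i^\ast$, which reduces to showing $\hat v_1 \cap \hat v_2 = \emptyset$. This follows from $(U,W)$ being a tetra-separation: any edge in $S(U,W)$ runs between $U \setminus W$ and $W \setminus U$, so no vertex of $U \cap W$ can be an endvertex of such an edge; and the matching-condition rules out two distinct edges in $S(U,W)$ sharing an endvertex. Hence distinct elements of $S(U,W)$ have disjoint sets $\hat{\,\cdot\,}$. The main obstacle is essentially this bookkeeping around the vertex-versus-edge case distinction for the $v_i$; no connectivity or path-finding work beyond $P'$ itself is required.
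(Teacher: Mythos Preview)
Your approach is essentially the same as the paper's: both arguments rest on the observation that $v'_i \in \hat v_i$ in every case, so $P'$ is already a path between $\hat v_1$ and $\hat v_2$ and needs only to be shortened, together with the easy check that $P'$ avoids $v_3,v_4$.

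There is one small gap. Your justification of $\hat v_1 \cap \hat v_2 = \emptyset$ appeals to $(U,W)$ being a tetra-separation, but the \allref{BagelBag} only assumes that $H_{t(0)}$ is 3-connected; by \cref{GoodBagTetra}, $(U,W)$ need not be a tetra-separation in this setting. Fortunately, this step is unnecessary: \cref{lem:double-wheel-path-shortening} (every $u$--$v$ path contains a $U$--$V$ path) requires no disjointness of $U$ and $V$, and if $\hat v_1$ and $\hat v_2$ happen to share a vertex, the trivial one-vertex path already works. So you can simply drop your third paragraph and invoke that lemma directly, and your proof goes through.
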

\begin{proof}
    If $v_i$ is an edge, then all neighbours of $v'_i$ in $G_{t(0)}$ lie in $S(U',W')$ except the one that is joined to $v'_i$ via the edge~$v_i$, so $P'$ uses~$v_i$.
    Hence $P'$ contains the desired subpath.
\end{proof}

\begin{lemma}\label{bagelHowFar}
    Assume the \allref{BagelBag} with $(U,W)$ a tetra-separation.
    Let $1\le k\le m$ be maximum with $y^W_1\in G_{t(k)}$.
    Assume that $y_2$ is a vertex.
    Then exactly one of the following two assertions holds:
    \begin{enumerate}
        \item\label{bagelHowFar1} The torso $H_{t(k)}$ is 3-connnected, there are $\le 2$ torsos in between $H_{t(0)}$ and $H_{t(k)}$, and they are triangles.
        \item\label{bagelHowFar2} $H_{t(k)}$ is a triangle, the two edges in $H_{t(k)}$ incident to $y^W_1$ exists in~$G$ and $y^W_2\notin H_{t(k)}$.
        Moreover, there are exactly two triangle-torsos between $H_{t(0)}$ and $H_{t(k)}$.
    \end{enumerate}
\end{lemma}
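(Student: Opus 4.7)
The plan is a case analysis on the type of the torso $H_{t(1)}$, which by \cref{TutteBagelObservations} must be 3-connected, a 4-cycle, or a triangle. I would first rule out the 4-cycle possibility: the proof of \cref{BagelBagAdhesionSet} shows that when $H_{t(1)}$ is a 4-cycle, both $y_1$ and $y_2$ arise as its free edges, contradicting the hypothesis that $y_2$ is a vertex.

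Suppose next that $H_{t(1)}$ is 3-connected. Then \cref{BagelBagAdhesionSet} gives $\{y^W_1,y^W_2\}$ as the adhesion-set at $t(0)t(1)$, and the disjointness of the two adhesion-sets of a 3-connected torso (\cref{TutteBagelObservations}) forces $y^W_1\notin V(G_{t(2)})$. Hence $k=1$ with no torso strictly between $H_{t(0)}$ and $H_{t(k)}$, realising outcome \cref{bagelHowFar1}. Otherwise $H_{t(1)}$ is a triangle, \cref{BagelBagAdhesionSet} gives $\{y^W_1,y^W_2\}$ as the adhesion-set at $t(1)t(2)$, and inspection of its proof together with $y_2$ being a vertex forces $y^W_2=y'_2$ to be the tip of $H_{t(1)}$. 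I would then look at $H_{t(2)}$. The case of a 4-cycle is ruled out by \cref{TutteBagelObservations} since its neighbour $H_{t(1)}$ is not 3-connected. If $H_{t(2)}$ is 3-connected, then the disjointness argument again yields $y^W_1\notin V(G_{t(3)})$; so $k=2$ with exactly one (triangle) torso in between, again outcome \cref{bagelHowFar1}.

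The remaining subcase is $H_{t(2)}$ a triangle, and this is where the main work sits. By \cref{trianglePath2} the tip of $H_{t(2)}$ differs from $y'_2$, and since it lies in both adhesion-sets of $H_{t(2)}$ (including $\{y'_2,y^W_1\}$), it must equal $y^W_1$. So $y^W_1\in V(G_{t(3)})$, and I would repeat the case analysis one step further for $H_{t(3)}$: the 4-cycle case is again excluded; if $H_{t(3)}$ is 3-connected we obtain $k=3$ with two triangles in between, outcome \cref{bagelHowFar1}; and if $H_{t(3)}$ is a triangle, then \cref{trianglePath2} forces its tip away from $y^W_1$, so $y^W_1\notin V(G_{t(4)})$ and $k=3$, pointing at outcome \cref{bagelHowFar2}. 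The hard part is verifying the claims of outcome \cref{bagelHowFar2} in this last branch. The edge of $H_{t(3)}$ from $y^W_1$ to the tip lies in the adhesion-graph between two triangle torsos, which is a $K_2$ by the Tutte-bagel definition and hence lies in $E(G)$. The free edge $y^W_1 v$ of $H_{t(3)}$ must also lie in $E(G)$: the torso construction adds edges only within a common adhesion-set, but $y^W_1$ and $v$ lie in distinct adhesion-sets of $H_{t(3)}$, so this edge cannot be a torso-edge and is forced to be in the bag. Finally, $y^W_2=y'_2$ is the tip only of $H_{t(1)}$ (using \cref{trianglePath2} once more), so $y'_2\notin V(G_{t(3)})$; and the two outcomes are mutually exclusive since they differ on whether $H_{t(k)}$ is 3-connected or a triangle.
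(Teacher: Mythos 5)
Your proposal is correct and follows essentially the same route as the paper's proof: the same unrolled case analysis on the types of $H_{t(1)}$, $H_{t(2)}$, $H_{t(3)}$, driven by \cref{BagelBagAdhesionSet}, \cref{TutteBagelObservations} and \cref{trianglePath}. You supply slightly more detail than the paper at a couple of points (e.g.\ why the free edge of $H_{t(3)}$ lies in $G$ rather than being a torso-edge), but the argument is the same.
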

\begin{proof}
    If $H_{t(1)}$ is 3-connected, we have~\cref{bagelHowFar1}.
    Otherwise $H_{t(1)}$ is a 4-cycle or a triangle.
    Since $y_2$ is a vertex by assumption, $H_{t(1)}$ is a triangle and $y_2=y^W_2$ is its tip, by \cref{BagelBagAdhesionSet}.
    Hence $y^W_1$ is incident to the free edge of $H_{t(1)}$.
    If $H_{t(2)}$ is 3-connected, then we have~\cref{bagelHowFar1}.
    Otherwise $H_{t(2)}$ is a triangle, by \cref{TutteBagelObservations}.
    By \cref{trianglePath}, $y^W_1$ is the tip of $H_{t(2)}$.
    Hence $y^W_2$ is not contained in the adhesion-set between $H_{t(2)}$ and $H_{t(3)}$, and in particular $y^W_2$ is not contained in $H_{t(3)}$.
    If $H_{t(3)}$ is 3-connected, we have~\cref{bagelHowFar1}.
    Otherwise $H_{t(3)}$ also is a triangle.
    By \cref{trianglePath}, $y^W_1$ is incident to the free edge of $H_{t(3)}$.
    By \cref{TutteBagelObservations}, the adhesion-graph between the triangle-torsos $H_{t(2)}$ and $H_{t(3)}$ is a $K_2$-subgraph of~$G$.
    Hence the two edges of $H_{t(3)}$ incident to $y^W_1$ exist in~$G$.
    Thus we have~\cref{bagelHowFar2}.
\end{proof}

\begin{lemma}\label{bagelTwoPaths}
    Let $\cO$ be a Tutte-bagel of a 4-connected graph~$G$.
    Let $H_t$ be a 3-connected torso of $\cO$ with adhesion-sets $\{a_1,a_2\}$ and $\{b_1,b_2\}$.
    Then either
    \begin{enumerate}
        \item\label{bagelTwoPaths1} there exist two independent $a_1$--$b_1$ paths in the bag $G_t$, or
        \item\label{bagelTwoPaths2} the torso $H_t$ is a 4-wheel with rim $a_1 a_2 b_1 b_2$ and both edges $a_1 a_2, b_1 b_2$ are missing from~$G$.
    \end{enumerate}
\end{lemma}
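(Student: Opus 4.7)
\noindent\emph{Proof plan.} I prove the contrapositive: if $G_t$ contains no two independent $a_1$--$b_1$ paths, then $H_t$ is the 4-wheel with rim $a_1 a_2 b_1 b_2$ such that $a_1 a_2, b_1 b_2 \notin E(G)$. Since $H_t$ is 3-connected (hence 3-edge-connected) and $G_t$ is obtained from $H_t$ by deleting at most the two torso-edges $a_1 a_2$ and $b_1 b_2$, the bag $G_t$ is connected. By Menger's theorem there is a vertex $v \in V(G_t) \setminus \{a_1, b_1\}$ whose removal separates $a_1$ from $b_1$ in $G_t$; let $C_a$ and $C_b$ be the components of $G_t - v$ containing $a_1$ and $b_1$.

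First I would rule out $v \in \{a_2, b_2\}$ using 3-connectivity of $H_t$: if $v = a_2$, then $H_t - a_2$ must admit two independent $a_1$--$b_1$ paths, but the torso-edge $a_1 a_2$ is removed, leaving at most one bridge (the edge $b_1 b_2$, if it is a torso-edge) between the components of $G_t - a_2$, which is incompatible. Hence $v$ is interior to $G_t$. A similar analysis of $H_t - v$ (which must be 2-connected, hence have two independent $a_1$--$b_1$ paths) shows that $a_2 \in C_b$, $b_2 \in C_a$, and both $a_1 a_2$ and $b_1 b_2$ are torso-edges absent from $E(G)$ (otherwise an endpoint would land in the wrong component of $G_t - v$); moreover $G_t - v$ has exactly the two components $C_a, C_b$, since any further component would remain isolated in $H_t - v$ (the two torso-edges connect only $C_a$ and $C_b$).

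The main step uses 4-connectivity of $G$ to force $C_a = \{a_1, b_2\}$ and $C_b = \{a_2, b_1\}$. Pick any $u \in V(G) \setminus V(G_t)$ (which exists because the decomposition-cycle $O$ has at least three bags). Assuming for contradiction a vertex $w \in C_a \setminus \{a_1, b_2\}$, note that $w$ is interior to $G_t$. By 4-connectivity of $G$ there are four internally disjoint $w$--$u$ paths; each leaves $V(G_t)$ through the 4-vertex adhesion $\{a_1, a_2, b_1, b_2\}$, and by disjointness these use four distinct adhesion-vertices. Truncating each path at its first adhesion-vertex yields four internally disjoint paths in $G_t$ from $w$ to $\{a_1, a_2, b_1, b_2\}$ with all four adhesion-vertices as endpoints. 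However, the 3-vertex set $\{v, a_1, b_2\}$ separates $w$ from $\{a_2, b_1\}$ in $G_t$ (since $v$ is the only $C_a$--$C_b$ link and $w \in C_a \setminus \{a_1, b_2\}$), so by Menger the number of such paths is at most three -- contradiction. Hence $C_a = \{a_1, b_2\}$, and by symmetry $C_b = \{a_2, b_1\}$, so $V(G_t) = \{v, a_1, a_2, b_1, b_2\}$.

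Finally, I would determine the edges of $H_t$. 4-connectivity of $G$ forces the interior vertex $v$ to have degree $\geq 4$ in $G_t$, so $v$ is adjacent in $G_t$ to each of $a_1, a_2, b_1, b_2$. Connectedness of $C_a = \{a_1, b_2\}$ in $G_t - v$ forces $a_1 b_2 \in E(G_t)$, and similarly $a_2 b_1 \in E(G_t)$. No further edges exist in $G_t$: the pairs $a_1 b_1$ and $a_2 b_2$ lie in different components of $G_t - v$, while $a_1 a_2$ and $b_1 b_2$ have already been shown to be torso-edges absent from $E(G)$. Hence $H_t = G_t + a_1 a_2 + b_1 b_2$ is precisely the 4-wheel with centre $v$ and rim $a_1 a_2 b_1 b_2$ with $a_1 a_2, b_1 b_2 \notin E(G)$, as required. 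The main obstacle is the 4-connectivity step, which must efficiently combine the global 4-separator $\{a_1, a_2, b_1, b_2\}$ of $G$ with the local cut $\{v\}$ in $G_t$ to form the 3-separator $\{v, a_1, b_2\}$ that blocks the fourth required path.
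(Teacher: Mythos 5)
Your proof is correct and follows essentially the same route as the paper's: locate the cutvertex $v$ of $G_t$ separating $a_1$ from $b_1$, show it is interior with $b_2$ on the $a_1$-side and $a_2$ on the $b_1$-side and both torso-edges absent from $G$, then use 4-connectivity of $G$ to shrink the two sides to $\{a_1,b_2\}$ and $\{a_2,b_1\}$ and read off the 4-wheel. Your fan-to-$u$ argument is just a more explicit rendering of the paper's one-line appeal to 4-connectivity, and the small implicit step you share with the paper (that a cutvertex exists, i.e.\ that $a_1b_1\notin E(G_t)$ when (i) fails) is equally present in the original.
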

\begin{proof}
    Assume that \cref{bagelTwoPaths1} fails.
    Then there exists a cutvertex $v$ of $G_t$ that separates $a_1$ from $b_1$.
    Since $H_t$ is 3-connected and $G_t$ misses at most two of the torso-edges, $v$ is distinct from $a_2$ and~$b_2$.
    Adding the edges $a_1 a_2$ and $b_1 b_2$ makes $v$ into the separator of a mixed-3-separation $(X,Y)$ of $H_t$ with $a_1\in X\sm Y$ and $b_1\in Y\sm X$.
    Since $S(X,Y)$ contains edges, $H_t -S(X,Y)$ has exactly two components.
    These components include the vertex-sets $\{a_1,v,b_2\}$ and $\{a_2,v,b_1\}$, respectively.
    Since $G$ is 4-connected, these inclusions are equalities, and $v$ has degree four in~$H_t$.
    As $H_t$ is 3-connected, $H_t$ is a 4-wheel with rim $a_1 a_2 b_1 b_2$.
    Since \cref{bagelTwoPaths1} fails by assumption, both torso-edges $a_1 a_2$ and $b_1 b_2$ must be missing from~$G$.
\end{proof}

\begin{lemma}\label{outsidePaths}
    Assume the \allref{BagelBag} with $(U,W)$ a tetra-separation.
    Assume that both $x_2$ and $y_2$ are vertices.
    There are two independent $x^W_1$--$y^W_1$ paths in $G[W]-x_2-y_2$.
\end{lemma}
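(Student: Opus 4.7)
My plan is to construct two independent $y^W_1$--$x^W_1$ paths in $G[W]-x_2-y_2$ by traversing the cycle $O-t(0)$ and exploiting the 2-connectivity of every torso $H_{t(i)}$ with $1\le i\le m$. Since both $x_2=x'_2$ and $y_2=y'_2$ are vertices by hypothesis, I first apply \cref{bagelHowFar} on both sides of $t(0)$ to pin down the exact location of $y^W_1$ and $x^W_1$ in $\cO$: each of them lies within at most a few bags of $t(0)$, and any intermediate torsos are triangles of a very controlled shape whose free edges are edges of $G$ (since adhesion-graphs between neighbouring triangle-torsos are $K_2$-subgraphs of $G$ by \cref{TutteBagelObservations}).

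Next, I apply \cref{twoDisjointPathsOutsideBagel} to obtain two disjoint paths $Q_1,Q_2$ in $G[W]$ connecting the adhesion sets $\{y^W_1,y^W_2\}$ and $\{x^W_1,x^W_2\}$ through the bags $G_{t(1)},\dots,G_{t(m)}$. One of the two possible pairings of endpoints immediately gives a $y^W_1$--$x^W_1$ path $P_1$ that avoids $y^W_2$ and $x^W_2$; in the other pairing I re-route inside the end-bags containing $y^W_1$ and $x^W_1$, using \cref{bagelTwoPaths}, \cref{3conAdhesionNoPathK4} and \cref{CrossPathInBag} in the 3-connected case and the triangle/$K_2$-adhesion structure above in the remaining cases, to pair the endpoints correctly. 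This yields the first desired path~$P_1$.

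The second path $P_2$ will be obtained by modifying $Q_2$ in the two end-bags. At the $y$-end, I replace the terminal subpath of $Q_2$ that ends at $y^W_2$ by a path inside the end-bag $G_{t(k)}$ from the incoming adhesion vertex to $y^W_1$; this replacement must avoid $y^W_2$ and the interior of $P_1$, which I ensure using the 3-connectivity of~$H_{t(k)}$ (or, when $H_{t(k)}$ is a triangle, using that its free edge belongs to $G$). A symmetric modification on the $x$-end produces a $y^W_1$--$x^W_1$ path $P_2$ that is internally disjoint from $P_1$ and avoids $x_2$ and $y_2$ entirely.

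The main obstacle will be the degenerate configurations where an end-torso is a $K_4$ or a 4-wheel with both torso-edges on the adhesion sets missing (the exception of \cref{bagelTwoPaths}), combined with the triangle-chain cases of \cref{bagelHowFar}. In these situations the rerouting cannot be done inside a single end-bag; instead, I push one step further along $O$, using the structure of the neighbouring bag and the fact that the adhesion-graph in between is either realised by an edge of $G$ (for neighbouring triangles) or provides additional slack through its own 2-connectivity. Bookkeeping the pieces so that the two assembled paths remain internally disjoint while avoiding both $x_2$ and $y_2$ will be the most delicate part of the argument.
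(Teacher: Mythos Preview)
Your plan implicitly assumes that the adhesion sets $\{y^W_1,y^W_2\}$ and $\{x^W_1,x^W_2\}$ are separated along the cycle $O-t(0)$, so that \cref{twoDisjointPathsOutsideBagel} yields two disjoint paths between them through the intervening bags. This is the paper's Case $i<j$ (where $i$ is the largest index with $y^W_1\in G_{t(i)}$ and $j$ the smallest with $x^W_1\in G_{t(j)}$). But there is a second case, $i\ge j$, which you do not anticipate: here $y^W_1$ and $x^W_1$ can lie in the same bag or in overlapping bags, and your application of \cref{twoDisjointPathsOutsideBagel} does not even get off the ground. The paper shows that in this case the corners $BC$ and $AD$ must both be potter, forcing $y^W_1$ into the $B$-link and $x^W_1$ into the $D$-link; it then analyses the $BD$-Tutte-path directly (which turns out to have exactly three nodes) and constructs the two paths by hand inside the middle bag, using either \cref{bagelTwoPaths} or explicit triangle edges. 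None of the tools you list (\cref{bagelHowFar}, \cref{bagelTwoPaths}, \cref{3conAdhesionNoPathK4}, \cref{CrossPathInBag}) detect or handle this situation, and ``pushing one step further along $O$'' does not help because the obstruction is global, not local to an end-bag.

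Even in the case $i<j$ your re-routing strategy is shakier than necessary. You start with paths $Q_1,Q_2$ that may use the forbidden vertices $y^W_2=y_2$ and $x^W_2=x_2$, and then try to redirect their ends to $y^W_1$ and $x^W_1$ while staying internally disjoint from each other. Guaranteeing this simultaneously at both ends is delicate, especially in the ``bad'' pairing where $Q_1$ runs $y^W_1$--$x^W_2$ and $Q_2$ runs $y^W_2$--$x^W_1$. The paper avoids this entirely by building the paths the other way round: it first constructs a fan of two paths from $y^W_1$ into the next adhesion set $\pi_i$ that avoids $y_2$ (using \cref{bagelHowFar} to locate a suitable bag and then either 3-connectivity or two triangle edges), does the same from $x^W_1$ into $\pi_{j-1}$ avoiding $x_2$, and only then applies \cref{twoDisjointPathsOutsideBagel} to bridge $\pi_i$ and $\pi_{j-1}$. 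This way the forbidden vertices are excluded from the outset and no re-routing is needed.
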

\begin{proof}
    Let $1\le i\le m$ be maximum with $y^W_1\in G_{t(i)}$.
    Let $1\le j\le m$ be minimum with $x^W_1\in G_{t(j)}$.
    Let $\pi_\ell$ denote the adhesion-set at $t(\ell)t(\ell+1)$ for all~$\ell$.
    We consider two cases.

    \casen{Case $i<j$.}
    This case is supported by \cref{fig:outsidePaths}.
    If $H_{t(i)}$ is 3-connected, we let $F_y$ by a $y^W_1$--$\pi_i$ fan of two paths in $G_{t(i)}$ avoiding the vertex in $\pi_{i-1}-y^W_1$.
    Otherwise, by \cref{bagelHowFar}, $H_{t(i)}$ is a triangle avoiding $y^W_2$ such that both edges incident to $y^W_1$ in $H_{t(i)}$ exist in~$G$,
    and we let $F_y$ denote the $y^W_1$--$\pi_i$ fan of two paths defined by these two edges.
    In both cases, $F_y$ avoids~$y_2$.
    Similarly, we obtain an $x^W_1$--$\pi_{j-1}$ fan $F_x\se G_{t(j)}$ avoiding $x_2$.
    By \cref{twoDisjointPathsOutsideBagel}, we find two disjoint $\pi_i$--$\pi_{j-1}$ paths in $G_{t(i+1)}\cup\cdots\cup G_{t(j-1)}$.
    Call these paths $P$ and~$Q$.
    Then $F_x\cup P\cup Q\cup F_y$ is a union of two independent $x^W_1$--$y^W_1$ paths in $G[W]-x_2-y_2$.

    \begin{figure}[ht]
        \centering
        \includegraphics[height=6\baselineskip]{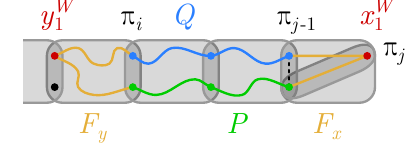}
        \caption{Case $i<j$ in the proof of \cref{outsidePaths}. 
        Here, $H_{t(i)}$ is 3-connected and $H_{t(j)}$ is a triangle. The unique vertex in $\pi_{i-1}-y^W_1$ is black.}
        \label{fig:outsidePaths}
    \end{figure}

    \casen{Case $i\ge j$.}
    This case is supported by \cref{fig:outsidePathsOther}.
    We claim that the corner $BD$ is not potter, and assume for a contradiction that it is.
    Let $\pi$ denote the unique adhesion-set of the $BD$-Tutte-banana.
    Since $BD$ is potter, neither corner $BC$ nor $AD$ is potter by \cref{obs:potter-adjacent-corner}.
    Hence if $y_1$ is an edge, it neither dangles from the $C$-link through the $B$-link nor from the $B$-link through the $C$-link, by the \allref{keylem:crossing}.
    So $y^W_1\in C\sm D$, and hence $y^W_1\notin\pi$.
    Similarly, $x^W_1\notin\pi$.
    But this contradicts $i\ge j$, so the corner $BD$ is not potter.

    We claim that $y^W_1$ lies in the $B$-link and $x^W_1$ lies in the $D$-link.
    Assume for a contradiction that $y^W_1$ avoids the $B$-link, say.
    Then $y^W_1\in C\sm D$.
    Let $\pi$ denote the adhesion-set defined by the $BD$-edge for the $B$-link.
    Since $BD$ is not potter, no edge dangles from the $D$-link through the $B$-link.
    Hence $\pi\se (B\sm A)\cap D$.
    Now $y^W_1\notin\pi$ since $y^W_1\notin D$, and $x^W_1\notin\pi$ since $x^W_1\notin B\sm A$, contradicting $i\ge j$.
    So $y^W_1$ lies in the $B$-link and $x^W_1$ lies in the $D$-link.

    By \allref{keylem:crossing}, the corners $BC$ and $AD$ are potter.
    Let $T$ denote the $BD$-Tutte-path.
    Then $o(B)$ and $o(D)$ are the ends of~$T$.
    Note that $T$ has $\ge 3$ nodes.

    Assume first that $o(B)$ or $o(D)$ has a neighbour in $T$ with a triangle-torso in~$\cO$.
    Say this is a neighbour $t$ of $o(B)$.
    Since $y^W_1$ is the tip of the triangle $H_{o(B)}$, the vertex $y^W_1$ is an end of the free edge $f$ of $H_t$ by \cref{trianglePath}.
    Let $u$ denote the other end of~$f$.
    Let $v$ denote the tip of $H_t$.
    Note that $\{u,v\}$ is an adhesion-set, so $x^W_1\in\{u,v\}$.
    Hence $x^W_1$ lies in the two triangle-torsos from the potter corner $AD$ and in the triangle-torso $H_t$.
    Thus $o(D)$ and $t$ are neighbours in~$T$ by \cref{trianglePath}.
    In particular, $T$ has exactly three nodes.
    Since $v$ lies in the two triangle-torsos $H_t$ and $H_{o(B)}$ while $x^W_1$ lies in the two triangle-torsos for the potter corner $AD$, we cannot have $v=x^W_1$ by \cref{trianglePath}.
    Hence $x^W_1=u$.
    By \cref{TutteBagelObservations}, the two adhesion-sets of $H_t$ are spanned by edges in~$G$.
    Hence $x^W_1 y^W_1$ and $x^W_1 v y^W_1$ are desired paths.

    \begin{figure}[ht]
        \centering
        \includegraphics[height=9\baselineskip]{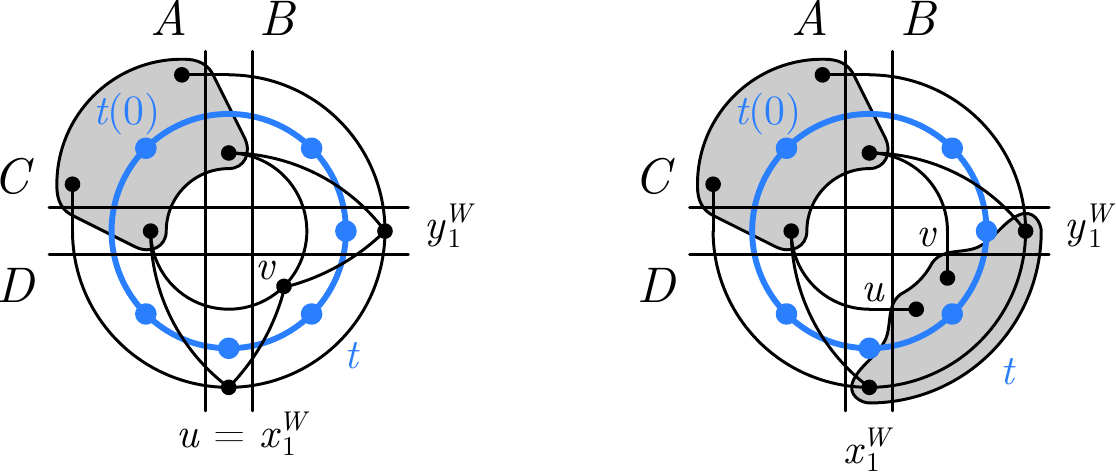}
        \caption{Case $i\ge j$ in the proof of \cref{outsidePaths}. Left: $H_t$ is a triangle.\\ Right: $H_t$ is 3-connected.}
        \label{fig:outsidePathsOther}
    \end{figure}

    Assume otherwise that neither $o(B)$ nor $o(D)$ has a neighbour in $T$ with a triangle-torso.
    Then the torsos neighbouring $o(B)$ and $o(D)$ via $T$ are 3-connected.
    Since 3-connected torsos have disjoint adhesion-sets and $i\ge j$, we have $|T|=3$.
    Let $t$ denote the internal node of~$T$.
    Let $\{x^W_1,u\}$ denote the adhesion-set of the edge $o(D)t$, and let $\{y^W_1,v\}$ denote the adhesion-set of the edge $o(B)t$.
    We find two independent $y^W_1$--$x^W_1$ paths in $G_t$, and are done, unless $H_t$ is a 4-wheel with rim $x^W_1 u y^W_1 v$ and $x^W_1 u,y^W_1 v\notin E(G)$, by \cref{bagelTwoPaths}.
    But in the latter case, $u$ and $v$ have degree three in $G$, contradicting 4-connectivity.
\end{proof}

\begin{lemma}\label{Bagel3linkedAntisym}
    Assume the \allref{BagelBag} with $(U,W)$ a tetra-separation.
    Then $\{x_i,y_j\}$ is 3-linked around $S(U,W)$ for all $i,j\in [2]$.
\end{lemma}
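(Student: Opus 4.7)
The plan is to reduce to a single case by symmetry and then to combine one path through the bag $G_{t(0)}$ with two paths through the rest of the graph $G[W]$, supplied by earlier lemmata.

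First, since the labels $1,2$ on $x_i'$ and $y_i'$ are assigned symmetrically within each adhesion-set of $\cO$, it suffices to prove that $\{x_1,y_1\}$ is 3-linked around $S(U,W)$. If either $x_2$ or $y_2$ is an edge, we are done immediately by \cref{dfn:3-linked}~\ref{itm:3-linked-1}. So from now on assume that both $x_2$ and $y_2$ are vertices; in particular $x_2=x_2'$ and $y_2=y_2'$. Our goal is to exhibit three independent $\hat{x}_1$--$\hat{y}_1$ paths in $G-x_2-y_2$, yielding \cref{dfn:3-linked}~\ref{itm:3-linked-3}.

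Next, since $H_{t(0)}$ is 3-connected with adhesion-sets $\{x_1',x_2'\}$ and $\{y_1',y_2'\}$, \cref{CrossPathInBag} gives an $x_1'$--$y_1'$ path in $G_{t(0)}-\{x_2',y_2'\}$. Feeding this into \cref{PathShortening} with $(v_1,v_2)=(x_1,y_1)$ yields a $\hat{x}_1$--$\hat{y}_1$ path $P_0$ that avoids the other two elements $x_2,y_2$ of $S(U,W)$. By construction, the internal vertices of $P_0$ lie in $U'\setminus W'$. Separately, since both $x_2$ and $y_2$ are vertices, \cref{outsidePaths} produces two independent $x_1^W$--$y_1^W$ paths $P_1,P_2$ in $G[W]-x_2-y_2$; as $x_1^W\in\hat{x}_1$ and $y_1^W\in\hat{y}_1$, these are $\hat{x}_1$--$\hat{y}_1$ paths in $G-x_2-y_2$.

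Finally, it remains to verify that $P_0,P_1,P_2$ are pairwise internally disjoint. The interior of $P_0$ lies in $U'\setminus W'\subseteq V(G)\setminus W$, whereas $P_1,P_2\subseteq G[W]$, so the interiors of $P_0$ and $P_1\cup P_2$ are disjoint. The paths $P_1,P_2$ are independent by their construction. If an endpoint in $\hat{x}_1$ or $\hat{y}_1$ is shared between $P_0$ and $P_j$ for $j\in\{1,2\}$, that causes no conflict, since shared endpoints are permitted. If the two endpoints differ (only possible when $x_1$ or $y_1$ is an edge, with $P_0$ ending at the $U$-side and $P_j$ at the $W$-side), they are also not visited internally by the other path because the $U$-side endpoint lies in $U\setminus W$ (hence outside $G[W]$) and the $W$-side endpoint lies in $W\setminus U'$ (hence outside $G[U']\supseteq P_0$). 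Thus $P_0,P_1,P_2$ are three independent $\hat{x}_1$--$\hat{y}_1$ paths in $G-x_2-y_2$, establishing the claim.

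The main obstacle is producing the two outer paths $P_1,P_2$ cleanly, since they must route around the cycle of the Tutte-bagel and avoid two specified vertices simultaneously; this is precisely the content of \cref{outsidePaths}, so once that lemma is in hand the present proof is essentially an assembly step.
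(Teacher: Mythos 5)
Your proof is correct and follows essentially the same route as the paper's: reduce to $\{x_1,y_1\}$ by symmetry, dispose of the case where $x_2$ or $y_2$ is an edge via \ref{itm:3-linked-1}, and otherwise combine the path from \cref{CrossPathInBag} and \cref{PathShortening} inside $G_{t(0)}$ with the two paths from \cref{outsidePaths} in $G[W]-x_2-y_2$. The only difference is that you spell out the independence check, which the paper leaves implicit.
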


\begin{proof}
    Say $i=j=1$.
    If $x_2$ or $y_2$ is an edge, then we are done by \ref{itm:3-linked-1}, so assume that both are vertices.
    By \cref{CrossPathInBag}, there is an $x'_1$--$y'_1$ path $P'$ in $G_{t(0)}-\{x'_2,y'_2\}$.
    Then $P'$ contains a $\hat x_1$--$\hat y_1$ path~$P$ that avoids the vertices~$x_2$ and~$y_2$, by \cref{PathShortening}.
    We find two independent $x^W_1$--$y^W_1$ paths in $G[W]-x_2-y_2$ using \cref{outsidePaths}.
    Hence $\{x_1,y_1\}$ is 3-linked around $S(U,W)$ by \ref{itm:3-linked-3}.
\end{proof}

\begin{lemma}\label{Bagel3linkedSym}
    Assume the \allref{BagelBag} with $(U,W)$ a tetra-separation.
    Then $\{x_1,x_2\}$ is 3-linked around $S(U,W)$.
    Similarly, $\{y_1,y_2\}$ is 3-linked around $S(U,W)$.
\end{lemma}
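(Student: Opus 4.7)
My plan is to exploit the mirror symmetry of the \allref{BagelBag} under the relabelling $t(i) \mapsto t(-i)$: this symmetry exchanges the roles of $\{x_1, x_2\}$ and $\{y_1, y_2\}$, so once I show $\{x_1, x_2\}$ is 3-linked around $S(U,W)$, the analogous statement for $\{y_1, y_2\}$ follows without further work.

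If $y_1$ or $y_2$ is an edge, condition \ref{itm:3-linked-1} of \cref{dfn:3-linked} applies immediately. So assume that both $y_1, y_2$ are vertices, in which case $y_i = y'_i = y^W_i$, and I will verify condition \ref{itm:3-linked-3} by producing three internally disjoint $\hat x_1$--$\hat x_2$ paths in $G - y_1 - y_2$: one path through the bag $G_{t(0)}$ and two paths through $G[W]$. The inside path is constructed using the 3-connectedness of $H_{t(0)}$: \cref{3conAdhesionNoPathK4} (applied with $\pi = \{x'_1, x'_2\}$ and $\pi' = \{y'_1, y'_2\}$) yields an $x'_1$--$x'_2$ path in $G_{t(0)} - y'_1 - y'_2$ whenever $H_{t(0)} \neq K_4$, and \cref{PathShortening} trims this path to a $\hat x_1$--$\hat x_2$ path $P_1$ disjoint from the remaining two elements of $S(U,W)$. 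For the two outside paths, I apply \cref{TwoPathsBagel} to the mirrored bagel: this gives two independent $x^W_1$--$x^W_2$ paths $P_2, P_3$ contained in $G_{t(-1)} \cup \cdots \cup G_{t(-\ell)}$ for a suitable $\ell \ge 1$, not using the edge $x'_1 x'_2$. Since $H_{t(0)}$ is 3-connected, its two adhesion-sets $\{x'_1, x'_2\}$ and $\{y'_1, y'_2\}$ are disjoint by \cref{TutteBagelObservations}, and property~\ref{M2} of the mixed-tree-decomposition then forces $y_1, y_2 \notin V(G_{t(-k)})$ for every $k \ge 1$; consequently $P_2$ and $P_3$ automatically avoid $y_1, y_2$, so $P_1, P_2, P_3$ together form the required three independent paths.

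The main obstacle is the case $H_{t(0)} = K_4$. In this case, 3-connectedness and disjointness of the two adhesion-sets together with $|H_{t(0)}|=4$ force $V(G_{t(0)}) = \{x_1, x_2, y_1, y_2\}$, so $G_{t(0)} - y_1 - y_2$ reduces to $\{x_1, x_2\}$ plus at most the edge $x_1 x_2$. By \cref{GoodBagTetra}, goodness of $H_{t(0)}$ then forces either both neighbouring torsos $H_{t(\pm 1)}$ to be triangles or at least one of them to be a 4-cycle. When $x_1 x_2 \in E(G)$, condition \ref{itm:3-linked-2} of \cref{dfn:3-linked} already applies; otherwise I plan to construct $P_1$ by routing through a neighbouring bag, exploiting the non-torso edges that must lie in $G$ by \cref{TutteBagelObservations} (the free edge of a triangle-bag must lie in $G$, and in a 4-cycle-bag the two non-adhesion edges of the cycle must lie in $G$). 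A short case check confirms that these edges, combined with 4-connectivity of $G$, suffice to supply $P_1$ while leaving $P_2, P_3$ untouched.
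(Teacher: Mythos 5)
Your overall strategy matches the paper's: one path inside $G_{t(0)}$ via \cref{3conAdhesionNoPathK4} and \cref{PathShortening}, plus two paths outside. Two of your steps diverge in ways that matter, however, and one of them is a genuine gap.

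First, a minor imprecision. The paper obtains the two outside $x^W_1$--$x^W_2$ paths by applying the \allref{ThreePathsBagelLemma} with the roles of $x$ and $y$ swapped; because those two paths come bundled with, and independent from, a third $y^W_1$--$y^W_2$ path, they automatically avoid $y^W_1=y_1$ and $y^W_2=y_2$. You instead invoke \cref{TwoPathsBagel} directly and then argue disjointness from $y_1,y_2$ separately, asserting that (H2) forces $y_1,y_2 \notin V(G_{t(-k)})$ ``for every $k \ge 1$.'' Read literally this is false: the indices live in $\Z_{m+1}$, so $t(-m)=t(1)$ and $y_j \in G_{t(1)}$. What you actually need is that the specific range $\{t(-1),\dots,t(-\ell')\}$ used by \cref{TwoPathsBagel} does not wrap far enough to hit $t(1)$; that is true, but it rests on $H_{t(1)}$ being 3-connected (which follows from both $y_i$ being vertices via \cref{BagelBagAdhesionSet}) so that $O_{y_j}=\{t(0),t(1)\}$, plus an observation that $\ell' < m$. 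This can be repaired, but as written the justification is wrong.

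Second, and more seriously, your handling of the $K_4$ case is a gap. The paper does not \emph{construct} paths in this case; it derives a contradiction. Concretely: since both $y_1,y_2$ are vertices, $H_{t(1)}$ is 3-connected by \cref{BagelBagAdhesionSet}; since $H_{t(0)}$ is good with only four vertices, $H_{t(-1)}$ must then be a 4-cycle; hence $x_1$ and $x_2$ are the two free edges of that 4-cycle (so both are \emph{edges}, not vertices); a degree count at $x_1'$ and $x_2'$ using 4-connectivity then forces $x_1'x_2' \in E(G)$, which is an $x_1'$--$x_2'$ path inside $G_{t(0)}-\{y_1',y_2'\}$, contradicting the assumed absence of such a path. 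Your fallback ``when $x_1x_2 \in E(G)$, condition \ref{itm:3-linked-2} applies'' does not work here: in this case $x_1$ and $x_2$ are edges, so the expression ``$x_1 x_2$'' is not meaningful and \ref{itm:3-linked-2} requires $x_1$ or $x_2$ to be a \emph{vertex}. Your remaining plan (``route through a neighbouring bag,'' with details deferred to ``a short case check'') is not an argument, and in any event it does not engage with the forced structure ($H_{t(-1)}$ a 4-cycle, $x_1,x_2$ both edges) that makes the paper's contradiction work. You should rework this sub-case as a proof by contradiction rather than trying to build paths.
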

\begin{proof}
    If $y_1$ or $y_2$ is an edge, we are done, so assume that both are vertices.
    There exist two independent $x^W_1$--$x^W_2$ paths in $G[W]-\{y_1,y_2\}$ by the \allref{ThreePathsBagelLemma}.
    So it remains to find an $x'_1$--$x'_2$ path in $G':=G_{t_0}-\{y'_1,y'_2\}$ (as any such path contains a suitable $\hat x_1$--$\hat x_2$ path by \cref{PathShortening}).
    Assume for a contradiction that such a path is missing.
    Then $H_{t(0)}$ is a $K_4$ by \cref{3conAdhesionNoPathK4}.
    Since both $y_1$ and $y_2$ are vertices, the torso $H_{t(1)}$ is 3-connected by \cref{BagelBagAdhesionSet}.
    Since $H_{t(0)}$ is good, it follows that $H_{t(-1)}$ is a 4-cycle.
    So $x_1$ and $x_2$ are the two free edges on this 4-cycle.
    By 4-connectivity of $G$, both $x'_1$ and $x'_2$ have degree at least four.
    Since $H_{t(-1)}$ is a 4-cycle and $H_{t(0)}$ is a $K_4$, it follows that the edge $x'_1 x'_2$ must exist in~$G$, contradicting that there is no $x'_1$--$x'_2$ path in $G':=G_{t_0}-\{y'_1,y'_2\}$.
\end{proof}

\begin{center}
    \textbf{Potter-linked}
\end{center}

\begin{lemma}\label{Bagel0potter}
    Assume the \allref{BagelBag} with $(U,W)$ a tetra-separation.
    Then $(U,W)$ is 0-potter-linked.
\end{lemma}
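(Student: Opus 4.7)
The plan is to verify the 0-potter-linked condition for each of the three balanced bipartitions $\{\pi_1, \pi_2\}$ of $S(U,W) = \{x_1, x_2, y_1, y_2\}$: the ``symmetric'' split $\{\{x_1, x_2\}, \{y_1, y_2\}\}$, and the two ``diagonal'' splits $\{\{x_1, y_1\}, \{x_2, y_2\}\}$ and $\{\{x_1, y_2\}, \{x_2, y_1\}\}$. For each bipartition, I would aim to either exhibit a $\pi_1$--$\hat\pi_2$ or $\hat\pi_1$--$\pi_2$ edge in $G$ (satisfying \ref{itm:0-potter-linked-1}), or construct five independent $\hat\pi_1$--$\hat\pi_2$ paths in $G$ (satisfying \ref{itm:0-potter-linked-2}).

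For the two diagonal splits, by symmetry it suffices to treat $\pi_1 = \{x_1, y_1\}$ and $\pi_2 = \{x_2, y_2\}$. I would first produce two independent inside $\hat\pi_1$--$\hat\pi_2$ paths in $G_{t(0)}$: since $H_{t(0)}$ is 3-connected with disjoint adhesion-sets $\{x_1', x_2'\}$ and $\{y_1', y_2'\}$ by \cref{TutteBagelObservations}, applying \cref{CrossPathInBag} to the pairings $(x_1', y_1')$ and $(x_2', y_2')$ yields two internally disjoint $\{x_1', y_1'\}$--$\{x_2', y_2'\}$ paths which do not use the torso-edges $x_1' x_2'$ and $y_1' y_2'$; after invoking \cref{PathShortening} these become $\hat\pi_1$--$\hat\pi_2$ paths in $G$. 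Three further independent $\{x_1^W, y_1^W\}$--$\{x_2^W, y_2^W\}$ paths in $G[W]$ are given by the \allref{ThreePathsBagelLemma}, and these are $\hat\pi_1$--$\hat\pi_2$ paths. The five paths can only meet at the endpoints $x_i^W, y_j^W$, each used at most once across the two families, so they are internally disjoint.

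For the symmetric split $\pi_1 = \{x_1, x_2\}$, $\pi_2 = \{y_1, y_2\}$, I would again obtain two inside $\hat\pi_1$--$\hat\pi_2$ paths from 3-connectivity of $H_{t(0)}$ and \cref{PathShortening}. The outside contribution is more delicate because the \allref{ThreePathsBagelLemma} does not directly yield $\hat\pi_1$--$\hat\pi_2$ paths in this split. Assuming $x_2$ and $y_2$ are both vertices, \cref{outsidePaths} provides two independent $x_1^W$--$y_1^W$ paths in $G[W]-x_2-y_2$, and by the analogous application with the indices swapped, two independent $x_2^W$--$y_2^W$ paths in $G[W]-x_1-y_1$; combining an appropriate three of these with the inside paths yields five independent $\hat\pi_1$--$\hat\pi_2$ paths. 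If instead some $x_i$ or $y_j$ is an edge, then the enlargement of $\hat\pi_i$ gives extra candidate endpoints, and moreover the edge itself often directly witnesses a $\pi_1$--$\hat\pi_2$ or $\hat\pi_1$--$\pi_2$ edge satisfying \ref{itm:0-potter-linked-1}.

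The main obstacle is the symmetric case when $H_{t(0)}$ is small (e.g.\ a $K_4$ or a $K_5$-minus-an-edge) so that inside paths struggle to avoid torso-edges $x_1' x_2'$ or $y_1' y_2'$ that are missing from $G$, while simultaneously one or more of the $x_i, y_j$ are edges complicating the application of \cref{outsidePaths}. The good-bag hypothesis is crucial here: a 4-cycle or triangle neighbour of $H_{t(0)}$ forces some element of $S(U,W)$ to be an edge, which can be exploited either to directly furnish a $\pi_1$--$\hat\pi_2$ edge or to provide extra routing flexibility through \cref{PathShortening} and \cref{bagelHowFar}; when $|H_{t(0)}| \ge 6$, the interior of $G_{t(0)}$ has enough vertices to reroute inside paths around any missing torso-edge.
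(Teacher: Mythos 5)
Your proposal gets the overall shape right (three balanced bipartitions, five independent $\hat\pi_1$--$\hat\pi_2$ paths each, combining inside-the-bag paths with outside-the-bag paths), but it misallocates the difficulties and, more seriously, builds the wrong paths inside the bag in the diagonal case.

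\emph{Diagonal split.} You apply \cref{CrossPathInBag} to the pairings $(x_1',y_1')$ and $(x_2',y_2')$ and claim this yields two internally disjoint $\{x_1',y_1'\}$--$\{x_2',y_2'\}$ paths. This is wrong on two counts. First, the paths produced are an $x_1'$--$y_1'$ path and an $x_2'$--$y_2'$ path: with $\pi_1=\{x_1,y_1\}$ both endpoints of the first lie in $\hat\pi_1$, and both endpoints of the second lie in $\hat\pi_2$, so neither is a $\hat\pi_1$--$\hat\pi_2$ path and neither contributes to 0-potter-linking at all. Second, even if you had chosen the diagonal pairings (say $(x_1',y_2')$ and $(x_2',y_1')$), two separate invocations of \cref{CrossPathInBag} do not guarantee that the two paths are internally disjoint; in the 4-wheel torso with certain rim orderings both paths are forced through the hub. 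This is precisely the failure mode that \cref{bagelTwoPaths} isolates, and the paper then needs the good-bag hypothesis to rule it out (a good torso on four vertices cannot be a 4-wheel with both torso-edges missing, as that would give adhesion-vertices degree~3). You never invoke the goodness of $H_{t(0)}$ in the diagonal case, which is exactly where the paper needs it.

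\emph{Symmetric split.} Here your concern about small torsos is misplaced: the inside of the bag is unproblematic because three independent $\{x_1',x_2'\}$--$\{y_1',y_2'\}$ paths exist by Menger from 3-connectivity of the torso, and these automatically avoid both torso-edges (a path from one adhesion-set to the other never needs an edge with both ends in the same adhesion-set). So the paper gets three inside paths essentially for free, and pairs them with two outside paths from \cref{twoDisjointPathsOutsideBagel} and \cref{BagelBagAdhesionSet}. Your proposal instead uses only two inside paths and tries to produce three outside paths by stitching together two applications of \cref{outsidePaths} with the indices swapped; this requires case analysis on whether $x_2,y_2$ are vertices, and you give no argument that the two outside families are independent of one another. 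The split of labour (2 outside + 3 inside for the symmetric case, 3 outside + 2 inside for the diagonal case) is the cleaner route and is what the paper does.

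In short: your identification of the three bipartitions and the inside/outside strategy is the right frame, but you have the hard case backwards, the diagonal-split inside paths are of the wrong type and their independence is unproved, and the good-bag hypothesis needs to appear in the diagonal case rather than the symmetric one.
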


\begin{proof}
    Let $\{\pi_1,\pi_2\}$ be a balanced bipartition of $S(U,W)$.
    It suffices to find five independent $\hat\pi_1$--$\hat\pi_2$ paths in~$G$.

    Suppose first that $\pi_1=\{x_1,x_2\}$ and $\pi_2=\{y_1,y_2\}$.
    We find two independent $\hat\pi_1$--$\hat\pi_2$ paths $P_1,P_2$ in $G[W]$ using \cref{twoDisjointPathsOutsideBagel,BagelBagAdhesionSet}.
    The torso $H_{t(0)}$ is 3-connected by assumption, and hence it contains three independent $\{x'_1,x'_2\}$--$\{y'_1,y'_2\}$ paths by \allref{Menger}.
    None of these paths use torso-edges, so all three paths are subgraphs of the bag~$G_{t(0)}$.
    The three $\{x'_1,x'_2\}$--$\{y'_1,y'_2\}$ paths can be shortened to three independent $\hat\pi_1$--$\hat\pi_2$ paths $Q_1,Q_2,Q_3$ by \cref{PathShortening}.
    Hence $\pi_1$ is 0-potter-linked to~$\pi_2$.

    Suppose otherwise that $\pi_1=\{x_1,y_1\}$ and $\pi_2=\{x_2,y_2\}$, say.
    We find three independent $\hat\pi_1$--$\hat\pi_2$ paths $P_1,P_2,P_3$ in $G[W] - x'_1 x'_2 - y'_1 y'_2$ using the \allref{ThreePathsBagelLemma}.
    By \cref{bagelTwoPaths}, we find two independent $\{x'_1,y'_1\}$--$\{x'_2,y'_2\}$ paths in the bag $G_{t(0)}$, which shorten to two independent $\hat\pi_1$--$\hat\pi_2$ paths $P_4,P_5$ in $G_{t(0)}$ by \cref{PathShortening}, unless $H_{t(0)}$ is a 4-wheel and both torso-edges $x'_1 x'_2,y'_1 y'_2$ are missing in~$G$.
    But the latter is impossible: since $H_{t(0)}$ is good, some neighbouring torso would be a triangle or a 4-cycle, but then a vertex in the adhesion-set would have degree three in~$G$, contradicting 4-connectivity.
\end{proof}

Assume the \allref{BagelBag}.
We call $x_i$ an \defn{inward edge} if $x_i$ is an edge and $x^W_i=x'_i$; that is, if $x_i$ is an edge with both endvertices in $G_{t(0)}$.
We call $x_i$ an \defn{outward edge} if $x_i$ is an edge but not an inward edge; that is, if the endvertex $x^W_i$ of $x_i$ lies outside of $G_{t(0)}$.

\begin{lemma}\label{bagelInwardEdge}
    Assume the \allref{BagelBag} with $(U,W)$ a tetra-separation.
    Assume that $x_1$ is an inward edge.
    Then all of the following hold:
    \begin{enumerate}
        \item\label{bagelInwardEdge1} Some $y'_i$ is a neighbour of $x'_1$ and $y^W_i=y'_i$.
        \item\label{bagelInwardEdge2} $\{x_1,x_2\}$ is not $W$-weird.
    \end{enumerate}
\end{lemma}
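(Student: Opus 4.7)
The plan is to unpack the right-left-shift: $(U'',W'')=(U',W'\sm R)$ where $R\se\{x'_1,x'_2,y'_1,y'_2\}$ contains the adhesion-vertices with at most one external neighbour, and $(U,W)=(U''\sm L,W'')$ where $L$ contains those vertices of $V(S(U'',W''))$ with at most one neighbour in $U''\sm W''=\mathring G_{t(0)}\cup R$. The assumption that $x_1$ is an inward edge translates to $x'_1\in L$ having exactly one neighbour in $\mathring G_{t(0)}\cup R$; writing $i$ and $r$ for the numbers of interior-neighbours and $R$-neighbours of $x'_1$ respectively, this forces $i+r=1$ and $x'_1\notin R$.

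For (i), I will combine this with the 3-connectivity of $H_{t(0)}$. Since the torso always contains the edge $x'_1 x'_2$, the $H_{t(0)}$-degree of $x'_1$ equals $i+1+\beta_1+\beta_2$, where $\beta_j\in\{0,1\}$ records whether $x'_1 y'_j\in E(G)$, and 3-connectivity gives $i+\beta_1+\beta_2\ge 2$. Subtracting $|N_G(x'_1)\cap R\cap\{y'_1,y'_2\}|\le r$ leaves at least $(2-i)-(1-i)=1$ index $j$ with $y'_j\in N_G(x'_1)\sm R$. Such a $y'_j$ sits in $V(S(U'',W''))$, so it survives as a $W$-side element, and because $(U,W)$ is a tetra-separation of order four all four of $x_1,x_2,y_1,y_2$ are defined: either $y'_j\notin L$ and $y_j=y'_j$ as a vertex, or $y'_j\in L$ with a unique neighbour $u\in U''\sm W''$ and $y_j=y'_j u$ as an edge. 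In either subcase $y^W_j=y'_j$.

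For (ii), if $x_2$ is an edge then $\{x_1,x_2\}$ fails the edge-vertex format required for $W$-weirdness. Otherwise $x_2=x'_2\in V(S(U,W))$, so $x'_2\notin R$, and therefore $x'_2$ has $\ge 2$ external $G$-neighbours. These all lie in $W\sm U\subseteq W\sm\{x'_1\}$ since $x'_1\in V(G_{t(0)})$ is not external, giving $x'_2$ at least two neighbours in $W\sm\{x'_1\}$ and ruling out $W$-weirdness. The main obstacle is the tight degree-count in (i): one has to balance the 3-connectivity bound $i+\beta_1+\beta_2\ge 2$ against the shift-constraint $i+r=1$ to extract a $y'_j\in N_G(x'_1)\sm R$, and then verify that this $y'_j$ really corresponds to a well-defined $y_j\in S(U,W)$ with $y^W_j=y'_j$, invoking that $|S(U,W)|=4$ to exclude the orphan case.
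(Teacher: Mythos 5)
Your proof is correct and follows essentially the same route as the paper's: both parts rest on the minimum-degree bound coming from the 3-connectivity of $H_{t(0)}$ combined with unpacking the right-left-shift (for (i), that $x'_1$ has exactly one neighbour in $U\sm W$; for (ii), that a surviving vertex $x'_2$ must have $\ge 2$ neighbours outside the bag). The only cosmetic difference is the last step of (i): the paper deduces $y^W_i=y'_i$ from the matching-condition of $(U,W)$ (an outward edge $y_i$ would make $x'_1y'_i$ a second separator edge at $x'_1$), whereas your count $i+r=1$ directly yields a neighbour $y'_j\notin R$ — both are valid.
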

\begin{proof}
    \cref{bagelInwardEdge1}.
    Since $H_{t(0)}$ is 3-connected, the vertex $x'_1$ has $\ge 3$ neighbours in $H_{t(0)}$.
    Let $u$ be a neighbour of $x'_1$ in $H_{t(0)}$ distinct from $x'_2$ and the end of the edge $x_1$ besides~$x'_1$.
    All interior vertices of $H_{t(0)}$ are contained in $U\sm W$, so $u\in\{y'_1,y'_2\}$.
    Say $u=y'_1$.
    Assume for a contradiction that $y^W_1\notin H_{t(0)}$.
    Then $x'_1 y'_1$ is a $(W\sm U)$--$(U\sm W)$ edge, so $(U,W)$ violates the matching-condition, a contradiction.

    \cref{bagelInwardEdge2}.
    Assume for a contradiction that $\{x_1,x_2\}$ is $W$-weird.
    Then $x_2=x'_2$ since $x_1$ is an edge by assumption.
    Moreover, $x'_2$ has exactly one neighbour outside $H_{t(0)}$ by weirdness.
    But then $x_2$ should be an edge in $S(U,W)$ by the definition of right-left-reduction of $(U',W')$, contradicting that $x_2$ is a vertex.
\end{proof}

\begin{lemma}\label{bagelPotterExtra}
    Assume the \allref{BagelBag} with $(U,W)$ a tetra-separation.
    Let $\pi_1:=\{x_1,x_2\}$ and $\pi_2:=\{y_1,y_2\}$.
    Assume that $x_1$ is an outward edge.
    Assume that~$\pi_1$ is $U$-weird.
    Then $x_2=x'_2$ and there is an $x_2$--$\{y'_1,y'_2\}$ edge~$x_2 y_i'$ in~$G$ with~$y_i'=y_i^W$.
\end{lemma}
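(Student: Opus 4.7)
The plan is to set up the bookkeeping for where each element of $\{x'_1,x'_2,y'_1,y'_2\}$ lands after the right-left-shift, and then to use 3-connectivity of $H_{t(0)}$ against the $U$-weird hypothesis to force the required edge.

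First I would unpack what the hypotheses give us. Since $\pi_1=\{x_1,x_2\}$ is $U$-weird and $x_1$ is an edge, $x_2$ must be a vertex; because $x_2$ is equal or incident to $x'_2$, this forces $x_2=x'_2$. Writing $x_1=x'_1 x^W_1$ with $x'_1\in U\setminus W$ and $x^W_1\in W\setminus U$, the $U$-weird condition (together with $(U,W)$ being a tetra-separation) tells us that $x'_2$ has exactly two neighbours in $U$ and one of them is $x'_1$. In particular, $x'_1 x'_2\in E(G)$ is a \emph{real} edge of $G$.

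Next I would pin down the subset of $\{x'_1,x'_2,y'_1,y'_2\}$ that lies outside $U$. Because $x_1$ is outward we have $x'_1\in R'$, and because $x_2$ is a vertex of $S(U,W)$ we have $x'_2\in U\cap W$, so neither $x'_1$ nor $x'_2$ lies in $L''$. Hence $U=V(G_{t(0)})\setminus L''$ with $L''\subseteq\{y'_1,y'_2\}$, which gives $V(G_{t(0)})\setminus U\subseteq\{y'_1,y'_2\}$.

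Now the counting argument. Since $H_{t(0)}$ is 3-connected, $x'_2$ has degree $\ge 3$ in $H_{t(0)}$; the only torso-edge of $H_{t(0)}$ possibly incident to $x'_2$ is $x'_1 x'_2$, which we have just shown is a real edge of $G$. So all $\ge 3$ neighbours of $x'_2$ in $H_{t(0)}$ are real neighbours of $x'_2$ in $G$ within $V(G_{t(0)})$. By $U$-weird, exactly two of these neighbours lie in $U$, so at least one real neighbour of $x'_2$ lies in $V(G_{t(0)})\setminus U\subseteq\{y'_1,y'_2\}$. Pick such a neighbour $y'_i$. Since $y'_i\in L''$, the left-shift pushes $y'_i$ into $W\setminus U$, meaning $y_i$ is an inward edge with $y^W_i=y'_i$. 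Thus $x_2 y'_i=x'_2 y'_i$ is the desired edge of $G$ with $y'_i=y^W_i$.

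The only potentially delicate step is the shift-bookkeeping in the second paragraph, i.e.\ justifying that $V(G_{t(0)})\setminus U\subseteq\{y'_1,y'_2\}$; everything after that is a clean degree count fed by the 3-connectivity of~$H_{t(0)}$.
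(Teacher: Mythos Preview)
Your proof is correct and follows essentially the same approach as the paper: use 3-connectivity of $H_{t(0)}$ to get at least three neighbours of $x'_2$ in $V(G_{t(0)})$, use $U$-weirdness to force one of them outside $U$, and identify that neighbour as some $y'_i$ with $y'_i=y^W_i$. The paper's version is slightly terser --- it counts two neighbours ``besides $x'_1$'' (sidestepping the need to verify $x'_1x'_2\in E(G)$) and invokes ``all interior vertices of $H_{t(0)}$ lie in $U\setminus W$'' in place of your explicit shift bookkeeping --- but the arguments are the same.
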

\begin{proof}
    Since $\pi_1$ is $U$-weird and $x_1$ is an outward edge, we get that $x_2=x'_2$, that $x_2 x'_1$ is an edge, and that $x_2$ has at most one neighbour in $U\sm\{x'_1\}$.
    However, $H_{t(0)}$ is 3-connected, so $x_2$ has some two neighbours $u,v$ in $H_{t(0)}$ besides $x'_1$.
    Hence one of these neighbours, say $v$, is not in $U$.
    Since all interior vertices of $H_{t(0)}$ lie in $U\sm W$, the vertex $v$ must be one of the $y'_i$, say $v=y'_1$.
    Since $v\notin U$, we get that $y_1$ is an inward edge.
    In particular, $y^W_1=y'_1$.
\end{proof}

\begin{lemma}\label{bagelPotterExtra2}
    Assume the \allref{BagelBag} with $(U,W)$ a tetra-separation.
    Let $\pi_1:=\{x_1,x_2\}$ and $\pi_2:=\{y_1,y_2\}$.
    Assume that~$\pi_1$ is $W$-weird and~$x_1$ is an outward edge.
    Then no~$y_i$ is an inward edge.
\end{lemma}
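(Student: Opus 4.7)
I will argue by contradiction, supposing that some $y_i$ is an inward edge. By the symmetry of the hypotheses in $y_1$ and $y_2$, I may take $i=1$. The strategy is to bound the degree of $y'_1$ in the torso $H_{t(0)}$, which is 3-connected by assumption, and derive a contradiction.

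First I will establish two non-adjacency statements between $y'_1$ and the other separator-related vertices of $G_{t(0)}$. Since $y'_1 \in L$, by definition of the left-shift it has at most one neighbour in $(U'\sm W') \cup R$; this single neighbour must be the interior vertex $v$ with $y_1 = y'_1 v$. Because $x'_1 \in R$ and the matching-condition applied to the edges $x_1$ and $y_1$ rules out $v = x'_1$, we obtain $y'_1 \not\sim x'_1$. To show $y'_1 \not\sim x'_2$ I will pin down the location of $x^W_1$: since $x'_2$ is not right-shifted, it has $\ge 2$ neighbours in $W' \sm U'$, and if $x^W_1$ lay outside $W' \sm U'$ then all these neighbours would be contained in $W\sm\{x^W_1\}$, contradicting the $W$-weirdness of $\pi_1$. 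With $x^W_1 \in W'\sm U'$ in hand, $W$-weirdness then forces the unique neighbour of $x_2$ in $W\sm\{x^W_1\}$ to lie in $W'\sm U'$ as well, so this neighbour cannot be the adhesion-vertex $y'_1$.

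Next I will bound the number of $G$-neighbours of $y'_1$ in the interior $U'\sm W'$ of $G_{t(0)}$ by one. Again by $y'_1 \in L$ it has at most one neighbour in $(U'\sm W') \cup R$, and with $R \subseteq \{x'_1, y'_2\}$ (because $x_2 = x'_2$ is a vertex) and $y'_1 \not\sim x'_1$ already in hand, only the case $y_2$ outward remains: then $y'_2 \in R \subseteq U\sm W$, and any edge $y'_1 y'_2$ would lie in $S(U,W)$ and share the endvertex $y'_1$ with $y_1$, violating the matching-condition.

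Putting these together, the $H_{t(0)}$-neighbours of $y'_1$ reduce to $y'_2$ (via the torso-edge $y'_1 y'_2$) together with at most one interior vertex of $G_{t(0)}$, giving $\deg_{H_{t(0)}}(y'_1) \le 2$ and contradicting 3-connectivity of $H_{t(0)}$. I expect the main obstacle to be the location argument for $x^W_1$: it is what allows $W$-weirdness to confine the unique non-$x^W_1$ neighbour of $x_2$ in $W$ to the exterior $W'\sm U'$ and thereby exclude the adhesion-vertex $y'_1$ from being that neighbour.
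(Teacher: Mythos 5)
Your proof is correct and is essentially the paper's argument: the paper first invokes \cref{bagelInwardEdge}, whose proof is exactly your torso-degree count for the inward-shifted adhesion-vertex (3-connectivity of $H_{t(0)}$ plus the left-shift bound of one interior neighbour forces a $G$-neighbour among $\{x'_1,x'_2\}$), and then rules out $x'_1$ via the matching-condition and $x'_2$ via $W$-weirdness combined with the right-shift condition on $x'_2$, just as you do. The only differences are cosmetic: you derive $x^W_1\in W'\setminus U'$ from weirdness although it is immediate from the definition of an outward edge, and you phrase the exclusion of $x'_2$ contrapositively.
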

\begin{proof}
    Assume otherwise, say~$y_1$ is an inward edge.
    Then by \cref{bagelInwardEdge} we have $y^W_1=y'_1$ and some $x'_j$ is a neighbour of $y^W_1$.
    Then $j\neq 1$ since otherwise $x'_1 y^W_1$ is a $(U\sm W)$--$(W\sm U)$ edge, contradicting the matching-condition.
    So $j=2$.
    Then~$y_1' = y_1^W$ is the only neighbour of~$x_2$ in~$W \sm \{x_1^W\}$.
    It follows that~$x_1^W$ is the only neighbour of~$x_2'$ in~$W' \sm U'$.
    This contradicts that~$x_2$ is a vertex by definition of right-left-reduction.
\end{proof}

\begin{lemma}\label{bagel1potterAdhVsAdh}
    Assume the \allref{BagelBag} with $(U,W)$ a tetra-separation.
    Let $\pi_1:=\{x_1,x_2\}$ and $\pi_2:=\{y_1,y_2\}$.
    Then $\{\pi_1,\pi_2\}$ is 1-potter-linked.
\end{lemma}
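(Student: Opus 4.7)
The claim is that $\{\pi_1,\pi_2\}$ is 1-potter-linked with respect to $(U,W)$, which by \cref{dfn:1-potter-linked} amounts to four one-directional assertions. The \allref{BagelBag} admits a reflection symmetry (reversing the cyclic order $t(0),t(1),\ldots,t(m)$ of $O$) that swaps the roles of $\pi_1$ and $\pi_2$ while leaving $(U,W)$ and the bag setting otherwise intact, so it is enough to verify two assertions: that $\pi_1$ is 1-potter-linked to $\pi_2$ around $W$, and that $\pi_1$ is 1-potter-linked to $\pi_2$ around $U$.

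For the around-$W$ assertion, assume $\pi_1$ is $W$-weird, so exactly one element of $\pi_1$ is an edge; say $x_1$ is the edge and $x_2$ the vertex. By \cref{bagelInwardEdge}\cref{bagelInwardEdge2}, $x_1$ cannot be inward, so $x_1$ is outward, and then \cref{bagelPotterExtra2} forces every $y_j$ to be either a vertex or an outward edge. Consequently no adhesion vertex of $G_{t(0)}$ is removed by the shifts, so $U = V(G_{t(0)})$ and hence $G[U] = G_{t(0)}$. Moreover $\hat\pi_1 \cap U = \{x_1',x_2\}$ and $\hat\pi_2 \cap U = \{y_1',y_2'\}$ are precisely the two adhesion-sets of the 3-connected torso $H_{t(0)}$, so Menger in $H_{t(0)}$ yields three independent $\{x_1',x_2\}$--$\{y_1',y_2'\}$ paths; these may be chosen to avoid the torso-edges (which lie within the two source/target sets) and therefore lie in $G_{t(0)} = G[U]$, witnessing condition~(iii).

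For the around-$U$ assertion, assume $\pi_1$ is $U$-weird, with $x_1$ the edge and $x_2$ the vertex. If $x_1$ is outward, then \cref{bagelPotterExtra} supplies $x_2 = x_2'$ and an edge $x_2y_i'$ in $G$ with $y_i^W = y_i'$; this is a $(\pi_1 \cap V(G))$--$(\hat\pi_2 \cap W)$ edge, so condition~(ii) holds. If $x_1$ is inward, then \cref{bagelInwardEdge}\cref{bagelInwardEdge1} supplies a real edge $x_1'y_i'$ in $G$ with $y_i^W = y_i'$: when $y_i$ is a vertex this edge is a $(\hat\pi_1 \cap W)$--$(\pi_2 \cap V(G))$ edge and (ii) holds again. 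The main obstacle is the sub-case in which both $x_1$ and $y_i$ are inward edges; here we establish (iii).

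In that sub-case, the real edge $e := x_1'y_i'$ has both endpoints in $W\setminus U$ and is thus a length-$1$ $\hat\pi_1$--$\hat\pi_2$ path in $G[W]$. For two further such paths, apply \cref{twoDisjointPathsOutsideBagel} with $(i,j)=(0,m)$ to obtain vertex-disjoint $\{y_1',y_2'\}$--$\{x_1',x_2'\}$ paths $P_1, P_2$ inside $G_{t(1)} \cup \cdots \cup G_{t(m)}$. If no $y_k$ is outward, both paths already live in $G[W]$; otherwise the $y_j$ with $j = 3-i$ is outward, hence $y_j' \notin W$, but its unique external neighbour $y_j^W$ lies in $G_{t(1)} \cap W$ and is forced to be the second vertex on the path starting at $y_j'$, so deleting $y_j'$ from that path yields a $y_j^W$--$x_\beta'$ path in $G[W]$. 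Combining $e$ with $P_1$ and the (possibly truncated) $P_2$ gives three paths linking $\hat\pi_1 \cap W = \{x_1',x_2\}$ to $\hat\pi_2 \cap W$; they are pairwise internally vertex-disjoint ($e$ having no internal vertex), establishing~(iii) and completing the proof.
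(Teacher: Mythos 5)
Your case analysis tracks the paper's own proof closely: the around-$W$ argument (ruling out an inward $x_1$ via \cref{bagelInwardEdge}, invoking \cref{bagelPotterExtra2}, and then applying Menger in the 3-connected torso $H_{t(0)}$ to get three paths in $G[U]$) and the around-$U$ outward case via \cref{bagelPotterExtra} are both correct, and your observation that condition (\ref{itm:1-potter-linked-2}) already disposes of the inward sub-case when $y_i$ is a vertex is a valid shortcut the paper does not take. The gap is in your final sub-case. You apply \cref{twoDisjointPathsOutsideBagel} to the adhesion-sets $\{y_1',y_2'\}$ and $\{x_1',x_2'\}$ and assert that, on the path starting at the outward vertex $y_j'$, the second vertex ``is forced to be'' $y_j^W$, so that deleting $y_j'$ truncates it into a path of $G[W]$. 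That is not forced. The second vertex is a neighbour of $y_j'$ in $W'$; since internal vertices avoid both adhesion-sets, it is either the unique neighbour $y_j^W$ of $y_j'$ in $W'\sm U'$ \emph{or} the path is a single edge ending in $\{x_1',x_2'\}$. The edge $y_j'x_1'$ is indeed excluded (it would be a fifth element of $S(U,W)$ or violate the matching-condition), but $y_j'x_2'$ is not: when $O$ is short, $y_j'$ and $x_2'$ can share a bag $G_t$ with $t\neq t(0)$ (for instance $m=3$ with $H_{t(1)},H_{t(3)}$ triangles with tips $y_j'$ and $x_2'$ and $H_{t(2)}$ 3-connected containing the edge $y_j'x_2'$), and \cref{twoDisjointPathsOutsideBagel} only asserts the existence of \emph{some} pair of disjoint paths, so it may hand you $P_1=y_j'x_2'$. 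Deleting $y_j'$ then leaves the trivial path $x_2'$, which is not a $\hat\pi_1$--$\hat\pi_2$ path; worse, the companion path $P_2$ is then free to pass through $y_j^W\in\hat\pi_2$ internally, so you do not obtain three independent $\hat\pi_1$--$\hat\pi_2$ paths in $G[W]$.

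The repair is the route the paper takes. If some $y_k$ is outward then $\{y_1^W,y_2^W\}\neq\{y_1',y_2'\}$, so by \cref{BagelBagAdhesionSet} the torso $H_{t(1)}$ is not 3-connected and $\{y_1^W,y_2^W\}$ is itself an adhesion-set of $\cO$, namely at $t(1)\,t(2)$ (and likewise for $\{x_1^W,x_2^W\}$ on the other side). Applying \cref{twoDisjointPathsOutsideBagel} between these two adhesion-sets yields two disjoint paths that already begin and end at the $W$-vertices and avoid $y_j'$ and the interior of $G_{t(0)}$ entirely, so no truncation is needed and the three paths combine with the edge $x_1'y_i'$ as you intend.
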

\begin{proof}
    By symmetry, it suffices to show that $\pi_1$ is 1-potter-linked to $\pi_2$.
    If $\pi_1$ is not weird for any side of $(U,W)$, then $\pi_1$ is 1-potter-linked to $\pi_2$ by \ref{itm:1-potter-linked-1}.
    So we may assume that $\pi_1$ is weird for some side; in particular, $x_1$ is an edge and $x_2=x'_2$ is a vertex, say.

    \casen{Case: $x_1$ is an inward edge.}
    By \cref{bagelInwardEdge}, there is a neighbour $y'_i$ of $x'_1$ with $y'_i=y^W_i$, and $\pi_1$ is not $W$-weird.
    So~$\pi_1$ is 1-potter-linked to~$\pi_2$ around~$W$ by \ref{itm:1-potter-linked-1}.
    Since we assumed earlier that $\pi_1$ is weird for some side of $(U,W)$, but showed that $\pi_1$ is not $W$-weird, it follows that $\pi_1$ is $U$-weird.
    By \cref{twoDisjointPathsOutsideBagel}, there exist two disjoint $\{x^W_1,x^W_2\}$--$\{y^W_1,y^W_2\}$ paths in $\bigcup_{t\in O-t(0)}G_t$.
    Since $x_1$ is an inward edge and $y'_i=y^W_i$, the edge $x'_1 y'_i$ can also be written as $x^W_1 y^W_i$.
    This edge together with the two paths forms a set of three independent $\hat\pi_1$--$\hat\pi_2$ paths in $G[W]$, so $\pi_1$ is 1-potter-linked to $\pi_2$ around~$U$ by \ref{itm:1-potter-linked-3}.

    \casen{Case: $x_1$ is an outward edge.}
    If $\pi_1$ is $U$-weird, then we find an $x_2$--$\{y'_1,y'_2\}$ edge $e=x_2 y_i'$ with~$y_i'=y_i^W$ by \cref{bagelPotterExtra}.
    Since $x_2\in\pi_1$ and $y_i'\in\hat\pi_2 \cap W$, the edge $e$ is a $\pi_1$--$(\hat\pi_2 \cap W)$ edge, which witnesses that $\pi_1$ is 1-potter-linked to $\pi_2$ around~$U$ by~\ref{itm:1-potter-linked-2}.
    So it remains to show that $\pi_1$ is 1-potter-linked to~$\pi_2$ around~$W$.
    For this, we assume that~$\pi_1$ is $W$-weird by~\ref{itm:1-potter-linked-1}.
    By \cref{bagelPotterExtra2}, no~$y_i$ is an inward edge.
    Since $H_{t(0)}$ is 3-connected, we find three independent $\{x'_1,x'_2\}$--$\{y'_1,y'_2\}$ paths in $H_{t(0)}$ by \allref{Menger}.
    All three paths are subgraphs of $G[U]$ since no $y_i$ is an inward edge.
    Moreover, they are three $\hat\pi_1$--$\hat\pi_2$ paths in $G[U]$.
    This means $\pi_1$ is 1-potter-linked to $\pi_2$ around~$W$ by \ref{itm:1-potter-linked-3}.
\end{proof}

\begin{lemma}\label{bagel2potterAdhVsAdh}
    Assume the \allref{BagelBag} with $(U,W)$ a tetra-separation.
    Let $\pi_1:=\{x_1,x_2\}$ and $\pi_2:=\{y_1,y_2\}$.
    Then $\{\pi_1,\pi_2\}$ is 2-potter-linked.
\end{lemma}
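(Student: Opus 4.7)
The plan is to argue by contradiction. Suppose $\{\pi_1,\pi_2\}$ fails to be 2-potter-linked with respect to $(U,W)$. Reversing the orientation of the decomposition-cycle $O$ swaps $x\leftrightarrow y$ and hence $\pi_1\leftrightarrow\pi_2$, so we may reduce to the case where $\pi_1$ is $U$-weird and $\pi_2$ is $W$-weird. Up to relabelling, let $x_1,y_1$ be the edges and $x_2=x_2',\ y_2=y_2'$ the vertices of $\pi_1$ and~$\pi_2$.

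The first observation is that $y_1$ must be an outward edge: if it were inward, then \cref{bagelInwardEdge}\ref{bagelInwardEdge2}, applied to $\pi_2$ via the $x\leftrightarrow y$ symmetry, would contradict the assumption that $\pi_2$ is $W$-weird. In particular $b_1:=y_1^W$ lies outside the bag~$G_{t(0)}$.

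The heart of the argument will be a counting contradiction for the $W$-neighbours of $y_2$. Since $y_2=y_2'$ remains a vertex after the right-left-shift, the definition of the right-shift forces $y_2'$ to have at least two neighbours outside $V(G_{t(0)})$, and all such neighbours lie in $W\setminus U$. On the other hand, since $(U,W)$ is a tetra-separation, the $W$-weirdness of $\pi_2$ is equivalent to $y_2$ having exactly two neighbours in $W$, one of which is $b_1$. Combining these two statements forces both $W$-neighbours of $y_2$ to lie outside the bag. So it will suffice to produce a single $W$-neighbour of $y_2$ inside $V(G_{t(0)})$, to complete the contradiction.

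I plan to do this by a case split on $x_1$, routing everything through the previously established lemmas rather than redoing the shift-bookkeeping. If $x_1$ is outward, \cref{bagelPotterExtra} applied to the $U$-weird pair $\pi_1$ yields an edge $x_2 y_i'$ in $G$ with $y_i'=y_i^W$; outwardness of $y_1$ forces $i=2$, and then $x_2=x_2'\in V(G_{t(0)})\cap W$ is the desired inside-the-bag $W$-neighbour of~$y_2$. If $x_1$ is inward, \cref{bagelInwardEdge}\ref{bagelInwardEdge1} provides some $y_i'$ adjacent in $G$ to $x_1'$ with $y_i'=y_i^W$; again $i=2$ is forced, and this time $x_1'\in W\setminus U$ plays the role of the inside-the-bag $W$-neighbour of~$y_2$. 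The main obstacle I expect is not proof-difficulty but bookkeeping, namely keeping track of which of the $x_i',y_j'$ end up in the sets $L,R$ produced by the shifts; the plan sidesteps this by channelling all shift-information through the lemmas \cref{bagelInwardEdge} and \cref{bagelPotterExtra}.
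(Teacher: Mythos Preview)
Your proof is correct. Both your argument and the paper's reduce to the case where $\pi_1$ is $U$-weird and $\pi_2$ is $W$-weird, and both use \cref{bagelInwardEdge}~\ref{bagelInwardEdge2} (with $x\leftrightarrow y$) to see that $y_1$ must be an outward edge. From there the routes diverge. The paper invokes \cref{bagelPotterExtra2} in its $x\leftrightarrow y$-symmetric form to conclude that $x_1$ is also outward, then applies \cref{bagelPotterExtra} to obtain a $\pi_1$--$\pi_2$ edge $x_2y_2$, which contradicts \cref{obs:weird-no-edges-in-separator}. You instead set up a count of the $W$-neighbours of $y_2$---exactly two by weirdness, and both forced outside the bag by the right-shift---and then produce a $W$-neighbour inside the bag via a case split on $x_1$, using \cref{bagelInwardEdge}~\ref{bagelInwardEdge1} directly when $x_1$ is inward. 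Your route trades the auxiliary lemma \cref{bagelPotterExtra2} and the observation \cref{obs:weird-no-edges-in-separator} for a uniform counting argument; the underlying mechanisms coincide, and in particular your outward case recovers exactly the paper's edge $x_2y_2$.
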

\begin{proof}
    By symmetry and by \cref{obs:weird-no-edges-in-separator}, it suffices to show the following: if $\pi_1$ is $U$-weird and $\pi_2$ is $W$-weird, then there is a $\pi_1$--$\pi_2$ edge in $G$.
    So we assume that $\pi_1$ is $U$-weird and $\pi_2$ is $W$-weird.
    Then no $y_i$ is an inward edge by \cref{bagelInwardEdge}.
    Without loss of generality,~$y_1$ is an outward edge.
    Then by \cref{bagelPotterExtra2}, no~$x_i$ is an inward edge.
    So we can assume that~$x_1$ is an outward edge.
    Then by \cref{bagelPotterExtra}, we find an $x_2$--$\{y'_1,y'_2\}$ edge~$x_2y_i'$ with~$y_i'=y_i^W$.
    Note that $y_i$ is a vertex, so~$y_i'=y_i \in \pi_2$. 
    Then~$x_2y_i'$ is the desired edge.
\end{proof}

\begin{lemma}\label{UWedgeEndsInA}
    Assume the \allref{BagelBag} with $(U,W)$ a tetra-separation.
    If $x_1$ is an edge, then $x^W_1\in A$.
\end{lemma}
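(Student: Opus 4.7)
The plan is to argue by case analysis on where $x'_1$ sits relative to $(U,W)$ and, in the non-trivial case, on where the edge $x_1$ sits in the crossing-diagram of $(A,B)$ and $(C,D)$. The first step is to observe that $H_{t(0)}$ being 3-connected combined with \cref{linkRepresentedByEqualTorsos} (which says the link-representatives $o(A), o(C) \in T^{AC}$ have 4-cycle or triangle torsos) forces $t(0)$ to be an interior node of $T^{AC}$. Interior bags of $T^{AC}$ are contained in $A \cap C$, so $V(G_{t(0)}) \subseteq A \cap C \subseteq A$; in particular $x'_1 \in A$. If $x'_1 \in W$, then $x^W_1 = x'_1 \in A$ and we are done. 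Otherwise $x'_1 \in U \sm W$, and by the definition of the right-left-shift, $x^W_1$ is the unique $G$-neighbour of $x'_1$ outside $V(G_{t(0)})$.

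Suppose for contradiction that $x^W_1 \in B \sm A$, so $x_1 = x'_1 x^W_1 \in S(A,B)$. A recurring principle in the argument is the following: whenever $x'_1 \in V^{AC}_{o(C)}$, the connected subpath $O^{AC}_{x'_1}$ of $T^{AC}$ contains both $o(C)$ and $t(0)$, and hence also $t(1)$, since by the ordering of representatives fixed in \allref{BagelBag} the node $o(C)$ lies on the $t(1)$-side of $t(0)$ in $T^{AC}$. This gives $x'_1 \in V(G_{t(1)})$, contradicting the disjointness of the two adhesion-sets of the 3-connected torso $H_{t(0)}$ (\cref{TutteBagelObservations}). Applied once, the principle rules out $x'_1 \in A \cap B$: such an $x'_1$ would be a $C$-link vertex (because the vertex-centre $A \cap B \cap C \cap D$ is empty by the \allref{keylem:crossing}) and hence lie in $V^{AC}_{o(C)}$. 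So $x'_1 \in A \sm B$.

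From $x'_1 \in A \cap C$ the two remaining possibilities are $x'_1 \in$ corner $AC$ and $x'_1$ an $A$-link vertex. In the corner-$AC$ case the absence of diagonal edges (\allref{keylem:crossing}) forces $x_1$ to be a $C$-link edge with $x'_1$ as its $AC$-endpoint, so $x'_1 \in V^{AC}_{o(C)}$ and the recurring principle yields a contradiction. In the $A$-link vertex case, $x^W_1 \in B \sm A$ cannot lie in the $B$-link (no jumping edge between the opposite $A$- and $B$-links) nor in the $BC$-corner (else $x_1$ would dangle from the $A$-link through the $C$-link, forcing the $AC$-corner to be potter; but then by \cref{banana2con} the $AC$-banana would be a diamond and $H_{t(0)}$ a triangle, contradicting 3-connectivity).

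The main obstacle is the last subcase: $x'_1$ is an $A$-link vertex and $x_1$ is a $D$-link edge dangling from the $A$-link through the $D$-link, forcing the $AD$-corner to be potter. I would resolve this by exploiting the structure of the resulting $AD$-banana, which by \cref{banana2con} is a diamond. Writing $u_1 u_2 u_3 u_4$ for the path in $G$ supplied by the potter configuration, we have $x'_1 = u_3$ and $x^W_1 = u_4$, while the middle edge $u_2 u_3$ is a $G$-edge of that path. So $x'_1$ has two distinct $G$-neighbours, namely $u_4 = x^W_1 \in B \cap D \sm A \sm C$ (the $BD$-corner endpoint of $x_1$) and $u_2 \in A \cap B \cap D \sm C$ (the $D$-link vertex at the other end of the middle edge). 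Both lie outside $V(G_{t(0)}) \subseteq A \cap C$ and hence in $W' \sm U'$; this contradicts the defining feature of this case of the right-left-shift, namely that $x'_1 \in R$ has at most one neighbour in $W' \sm U'$.
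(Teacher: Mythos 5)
Your proof is correct and arrives at the same core contradiction as the paper's: once $x^W_1\in B\sm A$ is reduced to the configuration where $x_1$ dangles from the $A$-link through the $D$-link, the potter corner $AD$ hands $x'_1$ a second neighbour in $W'\sm U'$ (the $D$-link vertex $u_2$ besides $x^W_1$), contradicting that the right-shift turned $x'_1$ into an edge. The additional case analysis you supply — ruling out $x'_1\in A\cap B$ and $x'_1$ in the corner $AC$ via the disjointness of the two adhesion-sets of the 3-connected torso $H_{t(0)}$, and ruling out $x^W_1$ in the $B$-link or the corner $BC$ via the \allref{keylem:crossing} — is precisely what the paper's single step ``then $x_1$ dangles from the $A$-link through the $D$-link'' leaves implicit. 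One small slip: \cref{linkRepresentedByEqualTorsos} does not force $t(0)$ to be an interior node of $T^{AC}$, since a link consisting of two vertices is represented by an end-node with a \emph{3-connected} torso, which could be $t(0)$ itself; however, the containment $V(G_{t(0)})\subseteq A\cap C$ that you actually use holds in that case as well, so the argument is unaffected.
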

\begin{proof}
    Suppose for a contradiction that $x^W_1\in B\sm A$.
    By the \allref{keylem:crossing}, there are no jumping edges and no diagonal edges.
    Then $x_1$ dangles from the $A$-link through the $D$-link.
    Hence the corner $AD$ is potter by the \allref{keylem:crossing}.
    But then $x'_1$ has a neighbour in $D$-link, so $x'_1$ has $\ge 2$ neighbours in $W'\sm U'$, contradicting that $x_1$ is an edge rather than the vertex $x'_1$.
\end{proof}

\begin{lemma}\label{bagelPotterCrossNotWweird}
    Assume the \allref{BagelBag} with $(U,W)$ a tetra-separation.
    Then $\{x_1,y_1\}$ is not $W$-weird.
\end{lemma}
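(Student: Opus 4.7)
The plan is to derive a contradiction from the assumption that $\{x_1,y_1\}$ is $W$-weird. By the inherent $x\leftrightarrow y$ symmetry of the \allref{BagelBag} (swapping $t(-1)$ with $t(1)$), I would assume WLOG that $x_1$ is an edge and $y_1=y'_1$ is a vertex, so that $y'_1$ has exactly one neighbour in $W\setminus\{x^W_1\}$.

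The first key step is to show that $y'_1$ has at least two real neighbours in $V(G)\setminus V(G_{t(0)})$. Since $y_1$ is a vertex, the right-shift did not move $y'_1$, so $y'_1$ has either $0$ or $\geq 2$ neighbours in $W'\setminus U'$. I would rule out the $0$ case by analysing $H_{t(1)}$: if $H_{t(1)}$ is $3$-connected then $3$-connectivity directly provides $\geq 2$ real neighbours of $y'_1$ in $V(G_{t(1)})\setminus\{y'_1,y'_2\}$; if $H_{t(1)}$ is a $4$-cycle or a triangle with $y'_2$ as tip then $y'_1$ has exactly one real outside-neighbour, forcing it to be moved, a contradiction with $y_1$ being a vertex; and if $H_{t(1)}$ is a triangle with $y'_1$ as tip, \cref{trianglePath2} prevents an indefinite sequence of such triangle-bags, so iterating the analysis into $G_{t(2)}$ (and further if needed) produces the required neighbours.

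Now I split according to whether $x_1$ is an inward or outward edge. If $x_1$ is \emph{inward}, then $x^W_1=x'_1\in V(G_{t(0)})$, so the $\geq 2$ outside-neighbours of $y'_1$ automatically all lie in $W\setminus\{x^W_1\}$, contradicting $W$-weirdness. If $x_1$ is \emph{outward}, then $x^W_1\notin V(G_{t(0)})$; for weirdness to hold, $x^W_1$ must itself be one of those outside-neighbours, and in particular $x^W_1 y'_1\in E(G)$.

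For this last case I would apply the cycle-decomposition structure of $\cO$ to reach a contradiction. The edge $x'_1 x^W_1$ lies in a bag $G_{t(i)}$ with $i\leq -1$ (since $x'_1\in V(G_{t(-1)})\setminus V(G_{t(1)})$, using that $y'_1,y'_2\notin\{x'_1,x'_2\}$), whereas the edge $x^W_1 y'_1$ lies in a bag $G_{t(j)}$ with $j\geq 1$. By the cycle-decomposition property, the bags containing $x^W_1$ form a connected subpath of $O$ through both $t(i)$ and $t(j)$; since $x^W_1\notin V(G_{t(0)})$, this subpath avoids $t(0)$ and therefore goes the long way around $O$, which necessarily passes through all four link-representing nodes $o(A),o(D),o(B),o(C)$. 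Because $G_{o(A)}\subseteq A$, $G_{o(B)}\subseteq B$, $G_{o(C)}\subseteq C$ and $G_{o(D)}\subseteq D$, this forces $x^W_1\in A\cap B\cap C\cap D$, the vertex-centre of the crossing $(A,B),(C,D)$. But by the \allref{keylem:crossing} (case \ref{itm:crossing-2}) this vertex-centre is empty, the desired contradiction. The main obstacle will be the first step, namely the torso-and-iteration analysis establishing $\geq 2$ outside-neighbours of $y'_1$.
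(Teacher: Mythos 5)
Your set-up, your inward-edge case, and the derivation that in the outward case $x^W_1$ must be a neighbour of $y'_1$ are all correct and essentially match the paper's proof (your ``first key step'' is over-engineered, though: $y_1$ being a vertex of $S(U,W)$ means $y'_1$ survived the right-shift, which by definition already gives $\ge 2$ neighbours of $y'_1$ in $W'\sm U'$; no analysis of $H_{t(1)}$ is needed). The gap is in how you finish the outward case. You correctly place the edge $x'_1x^W_1$ in a bag $G_{t(i)}$ with $i\le -1$ and the edge $x^W_1y'_1$ in a bag $G_{t(j)}$ with $j\ge 1$, and correctly conclude that $O_{x^W_1}$ contains the long arc from $t(j)$ to $t(i)$ avoiding $t(0)$. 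But the claim that this arc \emph{necessarily} contains all four link-representatives is not justified and is not true in general: $t(j)$ may already lie beyond $o(C)$ (e.g.\ $y'_1$ can be a $C$-link vertex or an endvertex of a $C$-link edge and hence lie in bags of the $BC$-banana), and symmetrically $t(i)$ may lie beyond $o(A)$, in which case the arc misses $o(C)$ or $o(A)$. Without all four memberships you cannot conclude $x^W_1\in A\cap B\cap C\cap D$; knowing only, say, $x^W_1\in B\cup D$ from a $BD$-banana bag (note the expanded corner for $BD$ is contained in $B\cup D$, not in $B\cap D$) leaves open the possibility that $x^W_1$ is a link vertex, which is not a contradiction. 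There is also a smaller issue that $o(L)$ may be an edge of $O$ rather than a node, so ``passing through $o(L)$'' must mean containing both its endnodes, which the arc need not do at its extremities.

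The paper closes this case differently and more robustly: by \cref{UWedgeEndsInA} (which uses the \allref{keylem:crossing} to exclude $x_1$ dangling through the $D$-link), the outward end $x^W_1$ lies in $A$; since $y'_1\in A\cap C$ as well, the edge $x^W_1y'_1$ would have to live in a common bag of the path-decomposition that the Tutte-bagel induces on $G[A]$, but the bags containing $x^W_1$ ($t(i)$ with $i\le -1$) and those containing $y'_1$ ($t(0),t(1),\dots$) occupy disjoint, non-adjacent stretches of that path, so no such bag exists. If you want to salvage your wrap-around idea, you would need to control where $t(j)$ and $t(i)$ can sit relative to $o(C)$ and $o(A)$ (using \cref{trianglePath} to bound $|O_{y'_1}|$ and $|O_{x'_1}|$), and even then short decomposition-cycles require separate treatment; switching to the paper's $G[A]$-path argument is the cleaner fix.
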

\begin{proof}
    Assume for a contradiction that $\{x_1,y_1\}$ is $W$-weird.
    Then $x_1$ is an edge and $y_1=y'_1$ is a vertex, say.

    Assume first that $x_1$ is an inward edge.
    Then $y'_1$ has $\le 1$ neighbour in $W\sm\{x'_1\}\supseteq V(G-G_{t(0)})$.
    But then, by the definition of $(U,W)$ via right-left-reduction, $y_1$ should be an edge rather than the vertex $y'_1$, a contradiction.

    Assume otherwise that $x_1$ is an outward edge.
    By \cref{UWedgeEndsInA}, $x^W_1\in A$.
    However, since $y'_1$ is contained in $A\cap C$ and the Tutte-bagel induces a path-decomposition of $G[A]$, the vertex $y'_1=y_1$ does not neighbour $x^W_1$, contradicting that $\{x_1,y_1\}$ is $W$-weird.
\end{proof}

\begin{lemma}\label{bagelPotterCross12}
    Assume the \allref{BagelBag} with $(U,W)$ a tetra-separation.
    Let $\pi_1:=\{x_1,y_1\}$ and $\pi_2:=\{x_2,y_2\}$.
    Then $\{\pi_1,\pi_2\}$ is both 1-potter-linked and 2-potter-linked.
\end{lemma}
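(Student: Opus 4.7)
The plan is to combine two earlier results: \cref{bagelPotterCrossNotWweird} (which rules out $W$-weirdness) and the \allref{ThreePathsBagelLemma} (which supplies many independent paths in $G[W]$). Once these are in hand, both 1-potter-linkedness and 2-potter-linkedness will follow essentially by unfolding definitions.

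First I will observe that neither $\pi_1$ nor $\pi_2$ is $W$-weird. The \allref{BagelBag} is symmetric under the simultaneous swap $x'_1\leftrightarrow x'_2$, $y'_1\leftrightarrow y'_2$ (this swap preserves the two adhesion-sets, the pair $(U',W')$, and therefore also its right-left-shift $(U,W)$); under this symmetry $\{x_1,y_1\}$ and $\{x_2,y_2\}$ are interchanged. So applying \cref{bagelPotterCrossNotWweird} both directly and through this symmetry yields that $\pi_1$ and $\pi_2$ are both non-$W$-weird.

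2-potter-linkedness is now immediate: for $(X,Y)=(W,U)$ condition \ref{itm:2-potter-linked-1} holds because $\pi_1$ is not $W$-weird, and for $(X,Y)=(U,W)$ condition \ref{itm:2-potter-linked-2} holds because $\pi_2$ is not $W$-weird. For 1-potter-linkedness, the two cases with $X=W$ are handled by condition \ref{itm:1-potter-linked-1} applied to $\pi_1$ and $\pi_2$ respectively. It remains to check the two cases with $X=U$ (so $Y=W$): that $\pi_1$ is 1-potter-linked to $\pi_2$ around $U$, and symmetrically for $\pi_2$ to $\pi_1$ around $U$.

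For both of these I will invoke the \allref{ThreePathsBagelLemma}, which provides three independent $\{x^W_1,y^W_1\}$--$\{x^W_2,y^W_2\}$ paths inside $G[W]$. By the definition of the $x^W_i, y^W_i$ we have $\hat\pi_i\cap W=\{x^W_i,y^W_i\}$, so these are three independent $\hat\pi_1$--$\hat\pi_2$ paths in $G[W]$, verifying condition \ref{itm:1-potter-linked-3} in both directions at once. The only mild subtlety—and the one point in the proof that deserves explicit verification—is precisely this identification $\hat\pi_i\cap W=\{x^W_i,y^W_i\}$, which holds whether the separator elements are vertices or edges; everything else in the argument is simply a bookkeeping exercise over the four choices of $(X,Y)\in\{(U,W),(W,U)\}$ and the two directions of potter-linkedness.
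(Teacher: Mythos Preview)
Your proof is correct and follows essentially the same approach as the paper: both use \cref{bagelPotterCrossNotWweird} (with the index-swap symmetry) to rule out $W$-weirdness, deduce 2-potter-linkedness directly, and then invoke the \allref{ThreePathsBagelLemma} to obtain three independent $\hat\pi_1$--$\hat\pi_2$ paths in $G[W]$ for condition~\ref{itm:1-potter-linked-3}. Your extra remark that $\hat\pi_i\cap W=\{x^W_i,y^W_i\}$ is a harmless over-justification---only the inclusion $\{x^W_i,y^W_i\}\subseteq\hat\pi_i$ is actually needed.
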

\begin{proof}
    By \cref{bagelPotterCrossNotWweird}, neither $\pi_1$ nor $\pi_2$ is $W$-weird.
    Hence $\{\pi_1,\pi_2\}$ is 2-potter-linked.
    It remains to show that $\pi_1$ is 1-potter-linked to $\pi_2$, say.
    Since $\pi_1$ is not $W$-weird, it remains to consider the case where $\pi_1$ is $U$-weird.
    We find three independent $\hat\pi_1$--$\hat\pi_2$ paths in $G[W]$ with the \allref{ThreePathsBagelLemma}.
    Hence $\pi_1$ is 1-potter-linked to $\pi_2$ via \ref{itm:1-potter-linked-3}.
\end{proof}

\begin{keylemma}
    \label{cor:bagel-totally-nested}
    Assume the~\allref{setting:TutteBagel}.
    Then the following two assertions are equivalent for every node $t$ of~$O$:
    \begin{enumerate}
        \item $(U_t,W_t)$ is a totally-nested tetra-separation;
        \item the torso of $\cO$ at $t$ is good.
    \end{enumerate}
\end{keylemma}
\begin{proof}
    If $H_t$ is bad, then $(U_t,W_t)$ is not even a tetra-separation by \cref{GoodBagTetra}, let alone totally-nested.
    So we may assume now that $H_t$ is good.
    In particular, we may assume the \allref{BagelBag} with $t=t(0)$.

    By \cref{GoodBagTetra}, $(U,W)$ is a tetra-separation.
    It suffices to show that $(U,W)$ is externally 5-connected by \cref{keylem:nestedness-external-connectivity}.
    By \cref{BagelHalfconnected}, $(U,W)$ is half-connected.
    To see that $(U,W)$ is 3-linked, we combine \cref{Bagel3linkedAntisym} with \cref{Bagel3linkedSym}.
    By \cref{Bagel0potter}, $(U,W)$ is 0-potter-linked.
    We get that $(U,W)$ is 1-potter-linked and 2-potter-linked from \cref{bagel1potterAdhVsAdh,bagelPotterCross12,bagel2potterAdhVsAdh}.
\end{proof}

\subsection{Splitting stars and torsos}\label{sec:BagelSplitting}

\begin{lemma}\label{BagelMovingCD}
    Assume the \allref{setting:TutteBagel}.
    Let $H_s$ be a torso of $\cO$.
    Assume that $s$ stems from the $AC$-Tutte-path~$T$.
    Assume that the $A$-link is represented by a triangle-torso, and that $s$ is neither equal nor adjacent to the node $o(A)$ in~$O$.
    Then $G_s\se G[C\sm D]$.
\end{lemma}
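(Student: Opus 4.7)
My plan is to exploit the triangle-torso structure at $o(A)$ together with the tree-decomposition property of~$T$. Since the $A$-link is represented by a triangle-torso, \cref{linkRepresentedByEqualTorsos}~\cref{linkRepresentedByEqualTorsos2} gives $V^{AC}_{o(A)}=\hat\Lambda$ of size three, so the $A$-link $\Lambda$ consists of one edge $e=u_1u_2$ and one vertex~$v$; using the absence of diagonal edges from the \allref{keylem:crossing}, we obtain $u_1\in(A\sm B)\cap(C\sm D)$, $u_2\in(A\sm B)\cap(D\sm C)$, and $v\in(A\sm B)\cap(C\cap D)$. A direct inspection shows that the expanded corner for $AC$ is contained in $C\sm D$ except for $u_2$ and $v$, so it suffices to show $V_t\cap\{u_2,v\}=\emptyset$ for every $t\in V(T)$ that is neither equal nor adjacent to $o(A)$ in~$O$.

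First I would identify the two adhesion-sets of $\cO$ incident to $o(A)$. The $AC$-edge and the $AD$-edge for the $A$-link are $u_1v$ and $u_2v$ respectively, and by \cref{TutteBagelAdhesionSetNote} each spans a unique adhesion-set of $\cO$: the set $\{u_1,v\}$ lies in the $AC$-banana and hence corresponds to the edge $o(A)q_1$ of $O$, where $q_1$ denotes the $T$-neighbor of $o(A)$, while $\{u_2,v\}$ corresponds to the edge from $o(A)$ to the $T^{AD}$-neighbor of $o(A)$. From $V_{o(A)}\cap V_{q_1}=\{u_1,v\}$ and the tree-decomposition property of~$T$ we deduce $u_2\notin V_t$ for every $t\in V(T)\sm\{o(A)\}$. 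Moreover, the two adhesion-sets at $o(A)$ share exactly the vertex~$v$, so $v$ is the tip of the triangle-torso~$H_{o(A)}$.

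The main obstacle is to show $v\notin V_{q_2}$, where $q_2$ is the $T$-neighbor of $q_1$ other than~$o(A)$. I plan to split on the torso~$H_{q_1}$: if $H_{q_1}$ is 3-connected or a 4-cycle, then by \cref{TutteBagelObservations} its two adhesion-sets are disjoint, so $v\notin V_{q_2}$ at once; if $H_{q_1}$ is a triangle-torso, then by \cref{trianglePath}~\cref{trianglePath2} its tip is distinct from~$v$, and since the two adhesion-sets of a triangle-torso share exactly the tip, the adhesion-set at $q_1q_2$ meets $\{u_1,v\}$ only in~$u_1$, so again $v\notin V_{q_2}$. A final application of the tree-decomposition property then yields $v\notin V_t$ for every $t\in V(T)\sm\{o(A),q_1\}$. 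Since the neighbors of $o(A)$ in $O$ are exactly $q_1$ and the $T^{AD}$-neighbor of~$o(A)$, the hypothesis on $s$ is equivalent to $s\in V(T)\sm\{o(A),q_1\}$, so combining both exclusions gives $V(G_s)\se C\sm D$, i.e., $G_s\se G[C\sm D]$.
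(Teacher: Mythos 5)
Your proof is correct and follows essentially the same route as the paper's: reduce to showing that the bags beyond the $T$-neighbour of $o(A)$ avoid the two vertices of $\hat\Lambda_A$ outside $C\sm D$, then split on whether that neighbouring torso is 3-connected/a 4-cycle (disjoint adhesion-sets via \cref{TutteBagelObservations}) or a triangle (distinct tips via \cref{trianglePath}). The only difference is that you spell out the identification of $u_1,u_2,v$ and the exclusion of $u_2$ explicitly, which the paper leaves implicit.
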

\begin{proof}
    Let $r$ be the unique neighbour of $o(A)$ in~$T$.
    If $H_r$ is 3-connected or a 4-cycle, then its adhesion sets are disjoint, and we get $G_s\se G[C\sm D]$.
    Otherwise $H_r$ is a triangle.
    The tips of $H_{o(A)}$ and $H_r$ are the two vertices in the adhesion-set of the edge $o(A)r$.
    It follows that the other adhesion-set of $H_r$ is included in $C\sm D$, so $G_s\se G[C\sm D]$.
\end{proof}

\begin{lemma}\label{BagelMovingTip}
    Assume the \allref{setting:TutteBagel}.
    Let $H_s$ be a triangle-torso with tip~$u$.
    Assume that $s$ stems from the $AC$-Tutte-path.
    Assume that $s\neq o(A)$, and that if the $A$-link is represented by a triangle-torso then $s$ is not adjacent to~$o(A)$.
    Then $u$ has no neighbour in the $D$-link.
\end{lemma}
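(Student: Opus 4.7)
The plan is to argue by contradiction: assume $u$ has a neighbour in the $D$-link and derive a contradiction. The first step is to pin down the bag-range of $u$ in the decomposition-cycle $O$. Since $u$ is the tip of the triangle-torso $H_s$, it lies in both adhesion sets at $s$, and hence in $G_r$, $G_s$ and $G_t$, where $r,t$ are the neighbours of $s$ in $O$. Using \cref{trianglePath} (distinct triangle-torsos have distinct tips) together with \cref{TutteBagelObservations} (disjointness of adhesion sets at 3-connected or $4$-cycle torsos), one checks that $u$ is not in the ``far'' adhesion set of $r$, and analogously not in that of $t$; hence the range of $u$ in $O$ is exactly $\{r,s,t\}$.

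Next I split the contradiction into the two ways $u$ could have a $D$-link neighbour. Suppose first that $u$ is adjacent in $G$ to a $D$-link vertex $v$, so $v \in (A\cap B)\cap(D\sm C)$. By the graph-decomposition axioms, the edge $uv$ appears in some bag $G_h$; since $u\in V(G_h)$ forces $h\in \{r,s,t\}$, we must have $v\in V(G_h)$ for some such $h$, and therefore $v$ lies in the expanded corner for $AC$. However, unwinding the definition, every vertex of that expanded corner lies either in the $AC$-corner, is a vertex of the $A$-link or $C$-link, or is an endvertex of an edge in one of these links; a case-check shows that any such vertex lies in $A\sm B$, in $B\sm A$, or in $C\sm D$. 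Since $v$ satisfies none of these three, we obtain a contradiction.

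Suppose instead that $u$ is an endvertex of an edge in the $D$-link. Then $u$ lies in the $AD$-corner or the $BD$-corner. Since the expanded corner for $AC$ is contained in $A\cup C$ while the $BD$-corner is disjoint from $A\cup C$, the vertex $u$ must lie in the $AD$-corner; hence the $A$-link contains at least one edge, so $o(A)$ is a node of $O$, and $u$ lies in the bag of $o(A)$. Combined with $u \in V(G_s)$, with $s \neq o(A)$, and with the range of $u$ being $\{r,s,t\}$, this forces $s$ to be a neighbour of $o(A)$ in $O$. If the $A$-link contains exactly one edge, then $H_{o(A)}$ is a triangle and this contradicts the hypothesis that $s$ is not adjacent to $o(A)$ in that case; if the $A$-link contains two edges, then $H_{o(A)}$ is a $4$-cycle, whose neighbour $H_s$ would have to be 3-connected by \cref{TutteBagelObservations}, contradicting that $H_s$ is a triangle. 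The main obstacle I anticipate is the bookkeeping in the vertex sub-case: one has to unwind the definitions of corners, links, and of the expanded corner carefully enough to verify that no vertex of $A\cap B\cap D\sm C$ can appear in the expanded corner for $AC$.
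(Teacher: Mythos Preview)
Your first case contains a genuine gap. The claim that every vertex of the expanded corner for $AC$ lies in $A\setminus B$, in $B\setminus A$, or in $C\setminus D$ is false: if an edge of the $A$-link dangles from the $D$-link (equivalently, the corner $AD$ is potter), then its $D\setminus C$-endpoint is a $D$-link vertex in $(A\cap B)\cap(D\setminus C)$ that nonetheless belongs to the expanded corner for~$AC$. So the ``case-check'' you anticipate cannot succeed as stated; it misses precisely the delicate configuration.

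The paper takes a more direct route. It first shows $u\notin D$ (via \cref{BagelMovingCD} when the $A$-link is a triangle-torso; the remaining link-shapes are handled by the fact that the bag at the $A$-end of the $AC$-Tutte-path is then 3-connected, or a $4$-cycle with 3-connected neighbours, so the triangle-torso $H_s$ is far from the only bags carrying $D$-vertices). Then $uv\in S(C,D)$, and since $u$ lies in the corner $AC$ (the alternative, $u$ a $C$-link vertex, would make $uv$ a forbidden jumping edge), the edge $uv$ lies in the $A$-link and dangles from the $D$-link. The \allref{keylem:crossing} forces the corner $AD$ to be potter, so $H_{o(A)}$ is a triangle with $uv$ as free edge; the hypothesis now gives that $s$ is not adjacent to~$o(A)$. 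Finally \cref{trianglePath} pins down a single intermediate triangle-torso $H_r$ between $H_{o(A)}$ and $H_s$, forces $u$ to be its tip, and contradicts \cref{trianglePath}~\ref{trianglePath2} since $u$ is already the tip of~$H_s$.

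Your approach can be repaired along these lines: in the potter case one checks that such a $v$ lies only in $G_{o(A)}$ among the relevant bags, while your (correct) range computation $O_u=\{r,s,t\}$ together with the hypothesis rules out $o(A)\in\{r,s,t\}$. A further wrinkle arises when $s=o(C)$, since then one of $r,t$ sits in the $BC$-Tutte-path rather than the $AC$-one; there the contradiction comes instead from $uv$ being a jumping edge. Also, your second case (where $u$ is an endvertex of a $D$-link edge) is not what ``neighbour in the $D$-link'' means here --- the intended reading is that a vertex-element of the $D$-link is adjacent to~$u$ --- so that case may be dropped.
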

\begin{proof}
    Assume not for a contradiction, and let $v$ be a neighbour of $u$ in the $D$-link.
    Recall that $u\notin D$ by \cref{BagelMovingCD}.
    So the edge $uv$ dangles from the $D$-link through the $A$-link.
    Hence the corner $AD$ is potter by the \allref{keylem:crossing}, and the triangle-torso $H_{o(A)}$ has $uv$ as free edge.
    By assumption, $o(A)$ and $s$ are not adjacent in~$O$.
    By \cref{trianglePath}, there is precisely one torso $H_r$ in between the two triangle-torsos $H_{o(A)}$ and $H_s$, which also contains~$u$.
    But $H_r$ has $u$ as tip since $uv$ is free in $H_{o(A)}$, which by \cref{trianglePath} contradicts that $H_s$ also has $u$ as tip.
\end{proof}

\begin{figure}[ht]
    \centering
    \includegraphics[height=12\baselineskip]{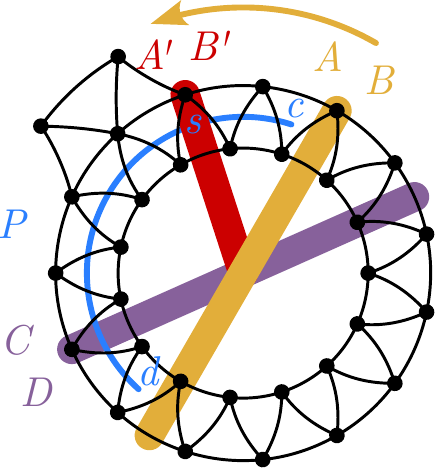}
    \caption{The situation in \cref{BagelMovingLemmaC3C4}.}
    \label{fig:BagelMovingC3C4}
\end{figure}

Let $P$ be a path, and let $u,v$ be vertices on~$P$.
Recall that $uP\mathring v$ denotes the subpath of $P$ that starts at $u$ and ends just before the vertex~$v$.
The statement of the following lemma is supported by \cref{fig:BagelMovingC3C4}.

\begin{lemma}[Link-Moving Lemma for vertices]\label{BagelMovingLemmaC3C4}
    Assume the \allref{setting:TutteBagel}.
    Let $P$ and $Q$ be the two paths that are the two components of $O-o(C)-o(D)$.
    Assume that $o(A)\in P$ and $o(B)\in Q$.
    Let $c$ denote the end of $P$ that is adjacent or incident to $o(C)$ in~$O$.
    Define $d$ similarly.
    Let $s$ be a node in the same component of $P-o(A)$ as $c$ such that $H_s$ is a 4-cycle or a triangle.
    Assume that if the $A$-link is represented by a triangle-torso, then $s$ is not adjacent to $o(A)$ in~$P$.
    Let $P':=dP\mathring{s}$ and $Q':=\mathring{s}Pc\cup Q$.
    Then:
    \begin{enumerate}
        \item\label{BagelMovingLemmaC3C4_1} $A':=\bigcup_{t\in V(P')}V(G_t)$ and $B':=\bigcup_{t\in V(Q')}V(G_t)$ are the two sides of a tetra-separation of~$G$.
        \item\label{BagelMovingLemmaC3C4_2} $S(A',B')=S\cup (D$-link$)$, where $S$ consists of the two free edges of $H_s$ if $H_s$ is a 4-cycle and of the free edge and the tip of $H_s$ if $H_s$ is a triangle.
        \item\label{BagelMovingLemmaC3C4_3} $(A',B')$ separates some two vertices in $B\cup D$.
    \end{enumerate}
\end{lemma}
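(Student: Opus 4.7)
The plan is to verify parts (i)--(iii) by direct analysis of the cycle-decomposition $\cO$, in the same spirit as \cref{lem:double-wheel-moving-link-1} and \cref{lem:double-wheel-moving-link-2} from the block-bagel setting.

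First I would check that $A' \cup B' = V(G)$, so that $(A', B')$ is a mixed-separation. For any $v \in V(G)$ the set $\{t : v \in V(G_t)\}$ is a subpath of $O$, and $V(P') \cup V(Q')$ omits from $V(O)$ only $s$ together with $o(C)$ and $o(D)$ when these happen to be nodes. Hence it suffices that every vertex of $G_s, G_{o(C)}, G_{o(D)}$ lies in a neighbouring bag. For $G_s$ this holds because $H_s$ is a 4-cycle or triangle, whose vertex set is covered by the two adhesion-sets, sitting in $G_{s^-}$ and $G_{s^+}$; for $G_{o(C)}$ and $G_{o(D)}$ this follows from \cref{linkRepresentedByEqualTorsos}. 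Part~(iii) and properness then come from \cref{wheelCornersNonempty}: any vertex in the non-empty $AD$-corner is in $A' \sm B'$ and in $D$, and any vertex in the non-empty $BD$-corner is in $B' \sm A'$ and in $B$, so these two vertices sit in $B \cup D$ on opposite sides of $(A', B')$.

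Next I compute $S(A', B')$. A vertex lies in $A' \cap B'$ iff its subpath of bags meets both $V(P')$ and $V(Q')$, which can only happen by ``bridging'' $P'$ and $Q'$ through $s$ or through $o(D)$. The potential bridge at $o(C)$ is inert: both nodes neighbouring $o(C)$ in $O$ already lie in $Q'$, and disjointness of adhesion-sets at 3-connected torsos (in the spirit of \cref{3conAdhesionNoPathK4}) together with the local structure at $o(C)$ prevents any $C$-link vertex from extending as far as $P'$. The $s$-bridge contributes the tip of $H_s$ in the triangle case (in the 4-cycle case the two adhesion-sets are disjoint by \cref{TutteBagelObservations}) and the two free edges of $H_s$, which lie in $G$. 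The $o(D)$-bridge contributes the $D$-link. To rule out an additional cross-bridge edge in $S(A', B')$ joining the tip of $H_s$ to a $D$-link endpoint, I would invoke \cref{BagelMovingTip}. Assembling, $S(A', B') = S \cup (D\text{-link})$, proving part~(ii).

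Finally, I verify the matching- and degree-conditions to obtain part~(i). Matching: the $S$-edges have both endpoints in $V(G_s)$, and by an adaptation of \cref{BagelMovingCD} that propagates along disjoint adhesion-sets away from $o(A)$ (using the hypothesis that $s$ is not adjacent to $o(A)$ in the triangle $A$-link case), we have $V(G_s) \subseteq A \cap C$; since $V(G_{o(D)})$ sits in $D$, the $S$- and $D$-link edges are vertex-disjoint, and each family is internally a matching (opposite free edges of a 4-cycle; a single free edge for a triangle; matching within the $D$-link is inherited from $(A, B)$). Degrees: the $D$-link inherits its degree-condition from $(A, B)$ because its $A \sm B$- and $B \sm A$-neighbours sit in the $AD$- respectively $BD$-corner (no diagonal edges, by the \allref{keylem:crossing}), and these corners are entirely covered by $P'$- respectively $Q'$-bags. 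The main obstacle is the degree-condition for the $S$-elements: with $H_{s^-}$ and $H_{s^+}$ each ranging over 3-connected, 4-cycle, and triangle torsos, and with chains of triangle-torsos constrained by \cref{trianglePath}, supplying two neighbours on each side for the tip of a triangle $H_s$ and for each endpoint of a free edge will require a careful case analysis combining 4-connectivity of $G$ with the Tutte-bagel structural lemmas.
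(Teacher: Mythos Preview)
Your overall strategy matches the paper's, but there are two concrete problems.

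First, your argument for~(iii) cites \cref{wheelCornersNonempty}, which is a block-bagel lemma (links of size one). In the present Tutte-bagel setting the links have size two, and corners can be potter and therefore empty; so you cannot simply pick a vertex from the $AD$- or $BD$-corner. The paper instead argues directly that $D\sm B$ is non-empty: if it were empty, the $D$-link would consist of two edges, and by the \allref{keylem:crossing} each such edge has an end in the corner $AD$, contradicting $D\sm B=\emptyset$. Any vertex of $D\sm B$ then lies in $A'\sm B'$ (via \cref{BagelMovingCD}), and any vertex of the non-empty $B\sm A$ lies in $B'\sm A'$; together these give~(iii).

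Second, you leave the degree-condition for the tip of a triangle $H_s$ (and, implicitly, for $D$-link vertices on the $A'$-side) as ``a careful case analysis'' without doing it. The paper's argument is short and worth internalising. For the tip $v$ of $H_s$: look at the torso $H_r$ neighbouring $H_s$ towards $o(A)$. If $H_r$ is 3-connected, $v$ has two neighbours in $G_r\subseteq G[A'\sm B']$; if $H_r$ is a triangle, then the adhesion between $H_r$ and $H_s$ is a $K_2$ by \cref{TutteBagelObservations}, again giving two neighbours in $A'\sm B'$ (here the hypothesis $r\neq o(A)$ is used). For a vertex $v$ in the $D$-link: its $\ge 2$ neighbours in $A\sm B$ carry over to $A'\sm B'$ unless one of them lies in $\hat S$; this is excluded by \cref{BagelMovingTip} (no neighbour of $v$ is the tip) and \cref{BagelMovingCD} (the free edge of $H_s$ has both ends in $C\sm D$, hence not incident with~$v$). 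Your use of \cref{BagelMovingTip} only for ruling out an extra separator element misses this second, essential role.

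A minor remark: the disjointness of adhesion-sets at 3-connected torsos is \cref{TutteBagelObservations}, not \cref{3conAdhesionNoPathK4}.
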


\begin{proof}
    \cref{BagelMovingLemmaC3C4_2} holds by construction, and gives that the mixed-separation $(A',B')$ has order four.
    We show \cref{BagelMovingLemmaC3C4_1} next.

    \casen{Matching-condition.}
    Let $ab$ be an edge in the $D$-link with $a\in A\sm B$ and $b\in B\sm A$.
    Let $f$ be an edge in~$S$.
    Assume for a contradiction that $ab$ and $f$ share an endvertex, which can only be~$a$.
    Since $a\in (A\sm B)\cap D$ and $V(G_s)\se A\cap C$, we find that $a$ is in the $A$-link.
    So $ab$ dangles from the $A$-link through the $D$-link.
    Then the corner~$AD$ is potter by the \allref{keylem:crossing}.
    Thus the $A$-link
    is represented by a triangle-torso, so $a\notin D$ by \cref{BagelMovingCD}, a contradiction.

    \casen{Degree-condition.} 
    It is straightforward to check that every vertex in $S(A',B')$ has $\ge 2$ neighbours in $B'\sm A'$.
    Now let $v\in S(A',B')$ be a vertex; we claim that it has $\ge 2$ neighbours in $A'\sm B'$.
    If $v$ is in $S$, then $H_s$ is a triangle with tip $v$.
    Let $H_r$ denote the torso neighbouring $H_s$ in the direction of $o(A)$ on the path~$P$.
    If $H_r$ is 3-connected, then $v$ has two neighbours in $G_r$, both of which lie in $A'\sm B'$ since $G_s\se G[A\cap C]$.
    Otherwise $H_r$ is a triangle, and the adhesion-graph in between $H_r$ and $H_s$ is a $K_2$-subgraph of~$G$ by \cref{TutteBagelObservations}.
    Hence $v$ has two neighbours in $G_r$, both of which lie in $A'\sm B'$ since $r\neq o(A)$ by assumption.
    So we may assume next that $v$ lies in the $D$-link.
    Every neighbour of $v$ in $A\sm B$ also lies in $A'\sm B'$, except possibly when a neighbour of $v$ lies in $S$ or an edge incident to $v$ lies in $S$.
    The former exception is impossible by \cref{BagelMovingTip}, while the latter is excluded by \cref{BagelMovingCD}.
    This conclude the proof of \cref{BagelMovingLemmaC3C4_1}.

    \cref{BagelMovingLemmaC3C4_3}.
    We have to find two vertices in $B\cup D$ that are contained in $A'\sm B'$ and in $B'\sm A'$, respectively.
    Every vertex in $B\sm A$ lies in the intersection of $B\cup D$ with $B'\sm A'$, and $B\sm A$ is non-empty.
    Any vertex in $D\sm B$ lies in $A'\sm B'$ by \cref{BagelMovingCD}, so it remains to show that $D\sm B$ is non-empty.
    If $D\sm B$ is empty, then the $D$-link consists of two edges.
    But both these edges have ends in the corner for~$AD$ by the \allref{keylem:crossing}, a contradiction.
\end{proof}

\begin{lemma}[Link-Moving Lemma for edges]\label{BagelMovingLemma33}
    Assume the \allref{setting:TutteBagel}.
    Let $P$ and $Q$ be the two paths that are the two components of $O-o(C)-o(D)$.
    Assume that $o(A)\in P$ and $o(B)\in Q$.
    Let $c$ denote the end of $P$ that is adjacent or incident to $o(C)$ in~$O$.
    Define $d$ similarly.
    Let $e$ be an edge of $O$ such that the torsos at the ends of $e$ are 3-connected.
    Assume that $e$ stems from the $AC$-Tutte-path.
    Let $P':=dP\mathring{e}$ and $Q':=\mathring{e}Pc\cup Q$.
    Then:
    \begin{enumerate}
        \item\label{BagelMovingLemma33_1} $A':=\bigcup_{t\in V(P')}V(G_t)$ and $B':=\bigcup_{t\in V(Q')}V(G_t)$ are the two sides of a tetra-separation of~$G$.
        \item\label{BagelMovingLemma33_2} $S(A',B')=\pi\cup (D$-link$)$, where $\pi$ is the adhesion-set of $ac$.
        \item\label{BagelMovingLemma33_3} $(A',B')$ separates some two vertices in $B\cup D$.
    \end{enumerate}
\end{lemma}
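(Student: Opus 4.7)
The plan is to mirror the argument of the \allref{BagelMovingLemmaC3C4} (the edge-analogue rather than the node-analogue), exploiting the fact that cutting at the edge $e$ between two 3-connected torsos is cleaner than cutting past a triangle or 4-cycle bag. Concretely, the two ``cuts'' defining $(A', B')$ are at the edge $e$ of~$O$ and at $o(D)$. By \cref{TutteBagelObservations}, the adhesion-sets of a 3-connected torso are disjoint, so the adhesion-set $\pi$ of $e$ consists of exactly two vertices (and no associated edge in~$S(A,B)$). Part~\cref{BagelMovingLemma33_2} is then immediate from the cycle-decomposition structure: $S(A', B') = \pi \cup (D\text{-link})$, with $|\pi| = 2$ and $|D\text{-link}| = 2$ disjoint, so $|S(A', B')| = 4$.

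For \cref{BagelMovingLemma33_1}, the matching-condition is easy: the only edges in $S(A', B')$ lie in the $D$-link, so they form a matching as a subset of $S(A,B)$; and no $D$-link edge can share an endvertex with a $\pi$-vertex, since $\pi \subseteq A \cap C$ (vertices in $\pi$ are interior adhesion-vertices of the $AC$-Tutte-banana, so in particular they lie in~$C$), whereas the endvertices of $D$-link edges lie in the $AD$- or $BD$-corners, hence outside~$C$. For the degree-condition, each $\pi_i$ has torso-degree $\geq 3$ in each of the two incident 3-connected torsos, and at most one of these neighbours is the other $\pi$-vertex (via the potential torso-edge within~$\pi$); the remaining $\geq 2$ neighbours are incident via genuine $G$-edges to interior vertices of the two bags, which lie respectively in $A' \sm B'$ and $B' \sm A'$. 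For a $D$-link vertex~$v$, the $\geq 2$ neighbours of $v$ in $B \sm A$ remain in $B' \sm A' \supseteq B \sm A$; and the $\geq 2$ neighbours in $A \sm B$ all lie in $A' \sm B'$, since any edge from~$v$ into the ``moved'' portion of the $AC$-corner past~$e$ would be a diagonal or dangling edge, both ruled out by the \allref{keylem:crossing}.

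Finally, \cref{BagelMovingLemma33_3} is handled exactly as in the analogous step of the \allref{BagelMovingLemmaC3C4}: any vertex of $B \sm A$ lies in $B' \sm A'$, and $D \sm B$ is nonempty, since otherwise the $D$-link would consist of two edges, both forced by the \allref{keylem:crossing} to have ends in the $AD$- and $BD$-corners, contradicting $D \sm B = \emptyset$.

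The main obstacle is the degree-condition at $D$-link vertices, where one has to rule out neighbours on the far side of~$e$ in the $AC$-banana; this is where the crossing-diagram restrictions from the \allref{keylem:crossing} are essential. Everything else is a direct translation of the earlier link-moving argument, made easier by the fact that $\pi$ contains no edges.
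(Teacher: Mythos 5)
Your overall route is exactly the paper's: the paper's own proof of this lemma is a one-line deferral to the \allref{BagelMovingLemmaC3C4}, noting that everything simplifies because the two 3-connected torsos at the ends of $e$ have disjoint adhesion-sets and all their vertices in $A\cap C$. Your reconstruction of parts \cref{BagelMovingLemma33_2} and \cref{BagelMovingLemma33_3} and of the degree-condition at $\pi$-vertices is sound (modulo the harmless imprecision that the $\ge 2$ genuine neighbours of a $\pi$-vertex inside a bag need not be \emph{interior} vertices — they may lie in the other adhesion-set of that bag, which is still on the correct side because the adhesion-sets of a 3-connected torso are disjoint).

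There is, however, one mis-justified step, and it sits precisely at what you yourself call the main obstacle. You dispose of the degree-condition at $D$-link vertices by asserting that an edge from a $D$-link vertex into the far part of the $AC$-corner ``would be a diagonal or dangling edge, both ruled out by the \allref{keylem:crossing}.'' The Crossing Lemma does \emph{not} rule out dangling edges; it only says that a dangling edge forces the corner in between to be potter. Such an edge genuinely exists whenever the corner $AD$ is potter: the $A$-link then contains an edge $u_1u_2$ with $u_1$ in the corner $AC$ and $u_2$ in the $D$-link, so $u_2$ really does have a neighbour in the corner $AC$. (The same oversight affects your matching-condition remark that endvertices of $D$-link edges always lie in the corners $AD$ or $BD$.) The conclusion survives, but for a different reason: in the potter case the $A$-link is represented in the $AC$-Tutte-banana by a triangle or 4-cycle bag $V_{t}$ with $V_t=\hat\Lambda_A$ (\cref{linkRepresentedByEqualTorsos}), sitting at the $o(A)$-end of the $AC$-Tutte-path, and since both ends of $e$ carry 3-connected torsos with disjoint adhesion-sets, the vertex $u_1\in V_t$ can lie neither in $\pi$ nor in any bag beyond $e$; hence $u_1\in A'\sm B'$ after all. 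This potter analysis is the edge-version analogue of what \cref{BagelMovingCD} and \cref{BagelMovingTip} (and the extra hypothesis about the $A$-link being represented by a triangle-torso) accomplish in the vertex version, and it needs to be said rather than waved away.
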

\begin{proof}
    The proof is analogous to the proof of \cref{BagelMovingLemmaC3C4}, except that everything is straightforward because the two 3-connected torsos at the ends of $e$ both have disjoint adhesion-sets and have all their vertices contained in $A\cap C$.
\end{proof}

\begin{lemma}\label{RightUWchar}
    Assume the \allref{setting:TutteBagel}.
    Let $i:=1$ if $H_{t(1)}$ is 3-connected, and $i:=2$ otherwise.
    Similarly, let $j:=-1$ if $H_{t(-1)}$ is 3-connected, and $j:=-2$ otherwise.
    The right-reduction of $(U'_t,W'_t)$ has sides
    \[
        V(G_{t(0)})\quad\text{and}\quad \bigcup\,\{\,V(G_{t(\ell)}):i\le\ell\le j\text{ in the cyclic ordering of }\Z_{m+1}\,\}.
    \]
\end{lemma}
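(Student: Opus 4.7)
The plan is to unpack what the right-shift does to $(U'_t,W'_t)$ and then match the result to a cyclic range of bags. Since the right-shift never alters the left side, the left side in the conclusion is immediately $U'_t = V(G_{t(0)})$. For the right side, I will determine which of the four adhesion vertices in $S(U'_t, W'_t) = \{x'_1, x'_2, y'_1, y'_2\}$ belong to the removed set $R$; by definition, $v \in R$ iff $v$ has at most one neighbour in $W'_t \setminus U'_t = V(G) \setminus V(G_{t(0)})$. By the left-right symmetry, it suffices to analyse the $y$-side and assert the dual for the $x$-side; the index $i$ is determined by the former and $j$ by the latter.

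I split into three cases according to the type of $H_{t(1)}$. If $H_{t(1)}$ is 3-connected, then \cref{TutteBagelObservations}(1) gives disjoint adhesion sets, so each $y'_j$ lives only in $V(G_{t(0)})$ and $V(G_{t(1)})$; 3-connectivity provides $\ge 3$ torso-neighbours of $y'_j$, and at most one can be the torso-edge $y'_1 y'_2$, so $y'_j$ has $\ge 2$ real neighbours in $V(G_{t(1)}) \setminus V(G_{t(0)})$ and is not removed. This gives $i = 1$ and the right side includes $V(G_{t(1)})$. If $H_{t(1)}$ is a 4-cycle, then \cref{TutteBagelObservations}(1) again makes the adhesion sets $\{y'_1,y'_2\}$ and $\{z_1,z_2\}$ disjoint, and for $H_{t(1)}$ to be $C_4$ rather than $K_4$ they must be adjacent (not opposite) in the cycle. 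Each $y'_j$ then has a unique real neighbour in $\{z_1,z_2\}$ and lives only in $V(G_{t(0)}),V(G_{t(1)})$, so both $y'_j$ belong to $R$; since $V(G_{t(1)}) \setminus \{y'_1,y'_2\} = \{z_1,z_2\} \subseteq V(G_{t(2)})$, the right side collapses to $\bigcup_{\ell=2}^{m} V(G_{t(\ell)})$, consistent with $i = 2$.

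The triangle case is the subtle one. By \cref{TutteBagelObservations}(2), the two adhesion sets of $H_{t(1)}$ share a tip, say $y'_1$, and $y'_2$ lives only in $V(G_{t(0)}), V(G_{t(1)})$ with exactly one outside neighbour $w$ via the free edge; thus $y'_2 \in R$. To show $y'_1 \notin R$, I use \cref{TutteBagelObservations}(3) to rule out $H_{t(2)}$ being a 4-cycle: the remaining options are 3-connected (giving $\ge 2$ real neighbours of $y'_1$ in $V(G_{t(2)}) \setminus V(G_{t(1)})$ via 3-connectivity minus a single torso-edge, using that $V(G_{t(0)}) \cap V(G_{t(2)}) \subseteq \{y'_1\}$) and triangle (where \cref{trianglePath}(i) forces the tip of $H_{t(2)}$ to be $w$ rather than $y'_1$, so the edge $y'_1 w$ is real by the triangle-triangle $K_2$-adhesion rule built into Tutte-bagels and the free edge of $H_{t(2)}$ provides a second real outside neighbour of $y'_1$). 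Thus $y'_1$ survives, and the right side equals $W'_t \setminus \{y'_2\} = \bigcup_{\ell=2}^{m} V(G_{t(\ell)})$ since $V(G_{t(1)}) \setminus \{y'_2\} = \{y'_1, w\} \subseteq V(G_{t(2)})$, again consistent with $i = 2$.

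The $x$-side is entirely symmetric, yielding $j = -1$ if $H_{t(-1)}$ is 3-connected and $j = -2$ otherwise. Combining the two sides recovers the claimed cyclic union. The main obstacle was the triangle subcase, and specifically ensuring that the tip of a triangle-torso is never accidentally removed; this is resolved by leveraging both the forbidden triangle-$C_4$ neighbourship from \cref{TutteBagelObservations}(3) and the triangle-triangle $K_2$-adhesion condition in the definition of Tutte-bagels.
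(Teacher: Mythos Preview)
Your proof is correct and follows essentially the same route as the paper. The paper's own proof is the one-liner ``This is just a different perspective on \cref{BagelBagAdhesionSet}'', and your case analysis on the type of $H_{t(1)}$ (3-connected, 4-cycle, triangle --- with the triangle case splitting further on $H_{t(2)}$) is precisely the content of the proof of \cref{BagelBagAdhesionSet}, reworked to speak directly about the right-shift rather than about $\{y^W_1,y^W_2\}$.

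One small remark: in the triangle/3-connected subcase you invoke $V(G_{t(0)})\cap V(G_{t(2)})\subseteq\{y'_1\}$ without comment. This is fine in the intended context (where $H_{t(0)}$ stems from a fixed banana and one can argue via corners, as the paper does in the proof of \cref{BagelBagAdhesionSet}), but in the bare \allref{setting:TutteBagel} it deserves a word --- e.g.\ via \ref{GraphDec2} any vertex in that intersection either passes through $G_{t(1)}$ (forcing it to be the tip $y'_1$) or lies in every other bag, the latter being ruled out by the disjointness structure of adhesion sets along the cycle. This does not affect the correctness of your argument.
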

\begin{proof}
    This is just a different perspective on \cref{BagelBagAdhesionSet}.
\end{proof}

\begin{definition}[Tetra-star $\sigma(\cO)$]
    Assume the \allref{setting:TutteBagel}.
    The \defn{tetra-star} of $\cO$ is the set of all $(U_t,W_t)$ for good torsos $H_t$ of~$\cO$.
    We denote the tetra-star of $\cO$ by $\defnMath{\sigma(\cO)}$.
\end{definition}

\begin{lemma}\label{BagelInterlaceRightShift}
    Assume the \allref{setting:TutteBagel}.
    Let $(E,F)$ be a totally-nested tetra-separation of $G$ that interlaces $\sigma$.
    Then there is a 3-connected torso $H_t$ of~$\cO$ such that $(E,F)\le (U''_t,W''_t)$ or $(F,E)\le (U''_t,W''_t)$, where $(U''_t,W''_t)$ denotes the right-reduction of $(U'_t,W'_t)$.
\end{lemma}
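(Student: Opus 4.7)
My plan is to closely follow the structure of the proof of \cref{lem:double-wheel-E-in-bag}, substituting the Tutte-bagel link-moving lemmata \cref{BagelMovingLemmaC3C4} and \cref{BagelMovingLemma33} for their block-bagel analogues from the double-wheel section.

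First, since $(E,F)$ is totally-nested, it is nested with both $(A,B)$ and $(C,D)$. After possibly swapping $E \leftrightarrow F$, $A \leftrightarrow B$ and $C \leftrightarrow D$, I may assume $(E,F) \le (A,B)$ and $(E,F) \le (C,D)$, so $E \subseteq A \cap C$. Among all pairs of tetra-separations $(A',B'),(C',D')$ that cross with all links of size two, induce the Tutte-bagel $\cO$, and satisfy $E \subseteq A' \cap C'$, I choose one minimising $|A' \cap C'|$.

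Let $T$ denote the $A'C'$-Tutte-path and $T' := T - o(A') - o(C')$. The main step is to show that after this minimisation every torso $H_s$ with $s \in T'$ is 3-connected and that $T$ has no edge both of whose endpoints are interior nodes of $T$ with 3-connected torsos. For the first claim, suppose instead $H_s$ is a 4-cycle or a triangle for some $s \in T'$. Choosing $s$ appropriately, possibly after swapping the roles of $A'$ and $C'$, so that the hypotheses of \cref{BagelMovingLemmaC3C4} apply, I obtain a tetra-separation $(A'',B'')$ whose separator contains the $D'$-link and which separates two vertices in $B' \cup D' \subseteq F$. By the total-nestedness of $(E,F)$, $(A'',B'')$ cannot cross $(E,F)$, so either $(E,F) \le (A'',B'')$ (which together with $(C',D')$ directly contradicts the choice of $(A',B'),(C',D')$), or $(E,F) \le (B'',A'')$. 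In the latter case, the symmetric application of the lemma to the same bag $G_s$ but within $(C',D')$ produces a tetra-separation $(C'',D'')$ with $(E,F) \le (C'',D'')$, again contradicting minimality. The second claim is proved analogously using \cref{BagelMovingLemma33} applied to an edge $e$ of $T$ with both endpoints in $T'$.

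Together these claims force $|T'| \le 1$, and a brief case analysis using \cref{linkRepresentedByEqualTorsos} and \cref{TutteBananaLinksRepresentedDifferently} for the boundary cases $|T|\in\{1,2\}$ then yields $A' \cap C' = V(G_t)$ for a single node $t$ whose torso $H_t$ is 3-connected. Since the adhesion-sets of a 3-connected torso are disjoint pairs, $(U'_t,W'_t)$ is a genuine 4-separation of $G$, and applying \cref{Righty} to $E \subseteq V(G_t) = U'_t$ produces $(E,F) \le (U''_t,W''_t)$, as required (with $(F,E) \le (U''_t,W''_t)$ in the mirror branch of the initial WLOG). The main obstacle I expect is the boundary case $|T|=2$ with both end-torsos 3-connected: here the unique edge of $T$ joins $o(A')$ to $o(C')$ but lies outside the path $P$ appearing in \cref{BagelMovingLemma33}, so that lemma does not apply directly. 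My plan for this case is to argue, using the 4-connectivity of $G$ and the fact that the shared adhesion-set has size two, that $E$ cannot meet the interiors of both end-bags, forcing $E$ to be contained in one of them.
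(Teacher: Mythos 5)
Your plan transplants the minimisation argument of \cref{lem:double-wheel-E-in-bag} from the block-bagel setting, whereas the paper proves this lemma by a different mechanism: it declares certain elements of the $AC$-Tutte-path \emph{admissible}, lifts every admissible mixed-1-separation of the path to a tetra-separation of $G$ (a \emph{$G$-lift}) via the Link-Moving Lemmas, orients all these lifts simultaneously against $(E,F)$ using total-nestedness, and intersects the resulting sides to isolate a single node $t^*$. The reason the paper does not simply repeat the minimisation is the crux of the gap in your proposal: the minimisation over pairs $(A',B'),(C',D')$ that ``cross with all links of size two and induce $\cO$'' can only recurse if the tetra-separation $(A'',B'')$ produced by a Link-Moving Lemma again crosses $(C',D')$ with all links of size two and again induces the same Tutte-bagel $\cO$. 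In the block-bagel setting these invariants are explicitly part of the lemmas you are imitating (item (iv) of \cref{lem:double-wheel-moving-link-1} and \cref{lem:double-wheel-moving-link-2}, and even there they are only asserted to be ``straightforward to check''). The Tutte-bagel Link-Moving Lemmas \cref{BagelMovingLemmaC3C4} and \cref{BagelMovingLemma33} assert neither invariant, and establishing them is a genuine piece of work: one must show that the Tutte-decompositions of the modified bananas (a prefix of the old $A'C'$-banana, and the old $B'C'$-banana concatenated with the complementary suffix plus a new virtual edge) coincide with the corresponding restrictions and concatenations of the old Tutte-bananas. Without these invariants your minimisation has no smaller pair to pass to, and the argument collapses at its first step.

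Two further points would need attention even if the invariants were supplied. First, your elimination of triangle/4-cycle torsos via \cref{BagelMovingLemmaC3C4} is blocked when such a torso sits at an interior node adjacent to \emph{both} $o(A')$ and $o(C')$ with both links represented by triangle-torsos; then neither orientation of the lemma applies (its hypothesis forbids $s$ adjacent to a triangle-torso link-representative), and ``swapping the roles of $A'$ and $C'$'' does not help. The paper disposes of exactly this configuration separately, by showing it forces $|A\cap C|\le 3$, contradicting $|E|\ge 4$; your write-up needs an analogous escape. Second, the endgame ``$A'\cap C'=V(G_t)$'' is not literally true in general, since $A'\cap C'$ also contains link vertices lying in the end-bags of the Tutte-path; the paper's final paragraphs carry out a careful contradiction argument to show $E\se V(G_{t^*})$, and a comparable argument (rather than an identity of sets) is what you would need before invoking \cref{RightUWchar} and \cref{Righty}.
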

\begin{proof}
    Since $(E,F)$ is totally-nested, we have $(E,F)\le (A,B)$ and $(E,F)\le (C,D)$, say. In particular, $E\se A\cap C$.
    Let $\beta$ denote the $AC$-banana, and let $(T,\cV)$ denote the $AC$-Tutte-banana.
    
    An element $x\in V(T)\cup E(T)$ is \defn{admissible} if
    \begin{itemize}
        \item $x$ is an edge with 3-connected torsos at its end-nodes; or
        \item $x$ is an interior node of $T$ with a triangle or 4-cycle as torso.
    \end{itemize}
    A mixed-1-separation $(M,N)$ of $T$ is \defn{admissible} if the vertex or edge in $S(M,N)$ is admissable.
    The \defn{$\beta$-lift} of an admissible $(M,N)$ is
    \begin{itemize}
        \item $(\bigcup_{t\in M}V_t,\bigcup_{t\in N}V_t)$ if $S(M,N)$ consists of an edge, and
        \item $(\bigcup_{t\in M\sm N}V_t,\bigcup_{t\in N\sm M}V_t)$ if $S(M,N)$ consists of a vertex.
    \end{itemize}
    Note that the $\beta$-lift is a mixed-2-separation of the banana~$\beta$.
    A \defn{$G$-lift} of an admissible $(M,N)$ is a tetra-separation of $G$ that induces the $\beta$-lift of $(M,N)$ on~$\beta$ and separates some two vertices in~$F$. 
    
    We claim that:
    \begin{equation}\label{eq:GliftsExist}
        \textit{Every admissible $(M,N)$ has a $G$-lift.}
    \end{equation}
    Given an admissible $(M,N)$, let $x$ denote the element of its separator.
    If $x$ is an edge, then the \allref{BagelMovingLemma33} produces a $G$-lift.
    So we may assume that $x$ is an interior node of $T$.
    Let $T'$ be obtained from $T$ by removing any endvertex with a triangle or 4-cycle torso that represents a link.
    Any vertex removed in this way is not admissible, and neither is its incident edge in~$T$.
    Hence $x\in T'$.

    Let us show that $x$ is not the only vertex of~$T'$.
    Otherwise the torso $H_x$ is a triangle or 4-cycle.
    By \cref{BagelLinkForm}, $H_x$ is a triangle and both the $A$-link and the $C$-link are represented by triangle-torsos.
    In particular, $A\cap C=3$.
    But $E$, as a side of a tetra-separation, has size $\ge 4$, and $E$ is included in $A\cap C$, a contradiction.
    So $x$ is not the only vertex of~$T'$.
    
    Let $y$ be an endvertex of $T'$ that is distinct from~$x$.
    By symmetry, we may assume that $y$ is adjacent or incident to $o(A)$ when viewed as a node of~$O$.
    Then the \allref{BagelMovingLemmaC3C4} produces a $G$-lift.
    This concludes the proof that \cref{eq:GliftsExist} holds.

    For every admissible $(M,N)$ we use \cref{eq:GliftsExist} to choose a $G$-lift $(\bar M,\bar N)$ so that for $(N,M)$ we choose $(\bar N,\bar M)$.
    We claim that exactly one of $E\se \bar M$ and $E\se\bar N$ holds, for all~$(M,N)$.
    Indeed, otherwise $(\bar M,\bar N)$ separates~$E$, and since it also separates~$F$ by definition, $(\bar M,\bar N)$ crosses $(E,F)$, a contradiction.
    We let
    \begin{align*}
        \omega &{}:=\{\,(M,N):(M,N)\text{ is admissable and }\bar N\supseteq E\,\}\\
        \Omega&{}:=\bigcap\,\{\,N\sm M:(M,N)\in\omega\,\}\cap V(T').
    \end{align*}
    We claim that $\Omega\neq\emptyset$.
    Assume for a contradiction that $\Omega$ is empty.
    
    Suppose first that $\omega$ is empty.
    Then $V(T')$ is empty.
    Hence $T$ is a $K_2$ both whose endvertices have triangle or 4-cycle torsos that represent links.
    By \cref{TutteBagelObservations}, both torsos are triangles.
    Hence the corner $AC$ is potter, so $A\cap C=2$, contradicting that $E\se A\cap C$ where $E$ has size~$\ge 4$.
    
    Suppose otherwise that $\omega$ is non-empty.    
    So $\omega$ has one or two maximal elements.
    Assume first that $\omega$ has a unique maximal element $(M,N)$.
    Since $\Omega$ is empty but $N\sm M$ is not, $N\sm M$ must consist of an end of $T$ that misses in $T'$.
    Hence $N\sm M=\{o(C)\}$, say, and the unique element $x$ of $S(M,N)$ is not an edge.
    So $x$ is a neighbour of $o(C)$, and both $x$ and $o(C)$ have a triangle or 4-cycle as torso.
    Hence $\bar N\cap A$ equals the adhesion-set of the edge $x o(C)$, contradicting that $E$ has size $\ge 4$ and $E\se \bar N\cap A$.
    
    Assume otherwise that $\omega$ has two maximal elements $(M_1,N_1)$ and $(M_2,N_2)$.
    Let $x_i$ denote the unique element of $S(M_i,N_i)$ for both~$i$.
    We claim that both $x_i$ are vertices.
    Otherwise $x_1$ is an edge, say.
    Then both endvertices of $x_1$ have 3-connected torsos.
    Hence $x_2$ cannot be an endvertex of~$x_1$.
    Therefore, an endvertex of $x_1$ is in $\Omega$, a contradiction.
    So both $x_i$ are vertices.
    Since $\Omega$ is empty, the vertices $x_i$ are neighbours.
    Hence $E$ is included in the adhesion-set of $x_1 x_2$, contradicting that $E$ has size~$\ge 4$.
    Thus $\Omega\neq\emptyset$.

    Every node in $\Omega$ has a 3-connected torso in~$\cO$.
    Since $\Omega$ spans a subpath of~$T$ that contains no admissible edges, we have $|\Omega|=1$.
    Hence $\Omega$ consists of exactly one node $t^*$, and its torso $H_{t^*}$ is 3-connected.
    
    To show that $(E,F)\le$ the right-reduction of $(U'_t,W'_t)$, by \cref{RightUWchar} and \cref{Righty} it remains to show $E\se V(G_{t^*})$.
    Assume for a contradiction that there is a vertex $v\in E\sm V(G_{t^*})$.
    Recall that $E\se A\cap C$, so in particular $E\se V(\beta)$.
    Hence there is a node $s$ of $T$ with $v\in V_s$, and $s\neq t^*$ by assumption.
    Let $r$ be the penultimate node on the path $s T t^*$.

    Assume first that the torso $H_r$ is 3-connected.
    Then the edge $rt^*$ is admissible.
    Let $(M,N)\in\omega$ with $S(M,N)=\{rt^*\}$.
    Note that $r\in M\sm N$ and $t^*\in N\sm M$ since $\{t^*\}=\Omega$.
    Let $(\hat M,\hat N)$ be the $\beta$-lift of $(M,N)$.
    Observe that $v\in\hat M\sm\hat N$.
    Therefore, $v\in\bar M\sm \bar N$, contradicting that $E\se\bar N$.

    Assume otherwise that the torso $H_r$ is either a triangle or a 4-cycle.
    Then $r$ is admissible or $r$ is one of $o(A)$ and $o(C)$.
    Assume first that $r$ is admissible.
    Let $(M,N)\in\omega$ with $S(M,N)=\{r\}$.
    Note that $r,s\in M$ and $t^*\in N\sm M$.
    Let $(\hat M,\hat N)$ be the $\beta$-lift of $(M,N)$.
    Then $v\in\hat M\sm\hat N$, so $v\in \bar M\sm \bar N$, contradicting that $E\se\bar N$.
    Assume otherwise that $r=o(A)$, say.
    Then $H_r$ is a triangle or 4-cycle representing the $A$-link.
    Since $v$ does not lie in $H_{t^*}$, we in particular have that $v$ does not lie in the adhesion-set of the edge $rt^*$.
    But then $v$ lies in $D\sm C$, contradicting that $E\se C$.
\end{proof}

\begin{lemma}\label{BagelSplittingStar}
    Assume the \allref{setting:TutteBagel}.
    Every tetra-separation of $G$ that interlaces the tetra-star $\sigma$ of $\cO$ is crossed.
\end{lemma}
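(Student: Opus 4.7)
The plan is to assume for a contradiction that some totally-nested tetra-separation $(E,F)$ interlaces $\sigma := \sigma(\cO)$, and invoke \cref{BagelInterlaceRightShift} to obtain a 3-connected torso $H_t$ of $\cO$ such that, after possibly swapping the roles of $E$ and $F$, $(E,F)\le(U''_t,W''_t)$, where $(U''_t,W''_t)$ denotes the right-shift of $(U'_t,W'_t)$. Since $(U_t,W_t)$ is the left-shift of $(U''_t,W''_t)$, we have $W_t=W''_t$ and $U_t\subseteq U''_t$; moreover \cref{obs:shift-maintains-proper-sides} gives $U''_t\setminus W''_t=U_t\setminus W_t$, and writing $L:=U''_t\setminus U_t\subseteq U''_t\cap W''_t$ for the vertex-set removed by the left-shift, each $v\in L$ has at most one neighbour in $U''_t\setminus W''_t$. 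The proof then splits on whether $H_t$ is good.

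If $H_t$ is good, then $(U_t,W_t)\in\sigma$, so interlacing forces either $(U_t,W_t)<(F,E)$ or $(U_t,W_t)<(E,F)$. In the first subcase, $E\subseteq W_t=W''_t\subseteq F$ contradicts $E\setminus F\neq\emptyset$. In the second subcase, combining with $(E,F)\le(U''_t,W''_t)$ and $W_t=W''_t$ forces $F=W_t$ and $U_t\subsetneq E\subseteq U''_t$. Any $v\in E\setminus U_t\subseteq L$ then lies in $E\cap F\subseteq S(E,F)$, while the identities $U_t\subseteq E$ and $F=W_t$ yield $E\setminus F=U_t\setminus W_t=U''_t\setminus W''_t$, so $v$ has at most one neighbour in $E\setminus F$, violating the degree-condition for $(E,F)$.

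If $H_t$ is bad, the plan is to prove $|U''_t\setminus W''_t|\le 1$, which together with $E\setminus F\subseteq U''_t\setminus W''_t$ and $|E\setminus F|\ge 2$ (from \cref{lem:trivial-tetra-separations}) yields a contradiction. A 3-connected bad torso satisfies $|H_t|\in\{4,5\}$ and has both neighbours in $\cO$ 3-connected, and a triangle-neighbour is possible only when $|H_t|=4$, in which case at most one neighbour is a triangle. By \cref{RightUWchar}, $U''_t=V(G_t)$ and $W''_t$ is an explicit cyclic union of bags whose endpoints on each side depend on whether $H_{t(\pm 1)}$ is 3-connected or a triangle. Using \cref{TutteBagelObservations} to control vertex-sharing with neighbouring bags, each 3-connected neighbour contributes both of its adhesion-vertices with $V(G_t)$ to $W''_t$, while a triangle-neighbour contributes only its tip. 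Together with the fact that $V(G_t)$ has zero interior vertices when $|H_t|=4$ and exactly one when $|H_t|=5$, a direct case check across the bad configurations yields $|U''_t\setminus W''_t|\le 1$.

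The main obstacle will be the case analysis in the bad case: matching the bag-structure from \cref{RightUWchar} against the vertex-sharing rules for neighbouring torsos and verifying that in each bad configuration at most one adhesion-vertex together with at most one interior vertex of $V(G_t)$ is missed by $W''_t$. The good case, by contrast, reduces to a direct manipulation exploiting that $(U_t,W_t)$ is the left-shift of $(U''_t,W''_t)$.
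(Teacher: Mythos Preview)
Your proposal is correct and follows the same approach as the paper: invoke \cref{BagelInterlaceRightShift}, then in the good case derive a degree-condition violation from $(U_t,W_t)<(E,F)\le(U''_t,W''_t)$, and in the bad case show $|U''_t\setminus W''_t|\le 1$ to contradict $|E\setminus F|\ge 2$. One phrasing slip: you assert that a bad 3-connected torso ``has both neighbours in $\cO$ 3-connected'', which is not true in general---a triangle neighbour is permitted when $|H_t|=4$, as you yourself acknowledge in the very next clause and handle correctly in the case check. The paper argues the bad case by directly counting which vertices of $S(U'_t,W'_t)$ have $\ge 2$ neighbours in $W'_t\setminus U'_t$ rather than routing through \cref{RightUWchar}, and cites \cref{leftDegreeStrongerMatching} rather than \cref{lem:trivial-tetra-separations} for $|E\setminus F|\ge 2$, but these are purely presentational differences.
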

\begin{proof}
    Assume for a contradiction that $\sigma$ is interlaced by a totally-nested tetra-separation $(E,F)$ of~$G$.
    By \cref{BagelInterlaceRightShift}, there is a 3-connected torso $H_t$ of~$\cO$ such that $(E,F)\le (U''_t,W''_t)$, say, where $(U''_t,W''_t)$ denotes the right-reduction of $(U'_t,W'_t)$.

    Assume first that $H_t$ is good.
    Then $(U_t,W_t)$ is a totally-nested tetra-separation by \cref{cor:bagel-totally-nested}.
    In particular, $(U_t,W_t)\in\sigma$.
    Since $(E,F)$ interlaces $\sigma$, we have one of $(U_t,W_t)<(E,F)$ and $(U_t,W_t)<(F,E)$.
    We cannot have $(U_t,W_t)<(F,E)$, since otherwise $\emptyset\neq  E\sm F\se (W_t\sm U_t)\cap (U''_t\sm W''_t)$ contradicts the fact that $U''_t\sm W''_t\se U_t\sm W_t$ by \cref{obs:shift-maintains-proper-sides}.
    Hence $(U_t,W_t)<(E,F)\le (U''_t,W''_t)$.
    Since $(U_t,W_t)$ is the left-reduction of $(U''_t,W''_t)$, some vertex in $S(E,F)$ must have $\le 1$ neighbour in $E\sm F$, contradicting the degree-condition.

    Assume otherwise that $G_t$ is bad.
    Hence $G_t$ has four or five vertices.
    If $H_t$ has four vertices, then all but at most one of the four vertices in $S(U'_t,W'_t)$ has $\ge 2$ neighbours in $W'_t\sm U'_t$.
    If $H_t$ has five vertices, then all four vertices in $S(U'_t,W'_t)$ have $\ge 2$ neighbours in $W'_t\sm U'_t$.
    In either case, we have $|U''_t\sm W''_t|\le 1$.
    Since $(E,F)\le (U''_t,W''_t)$, we have $E\sm F\se U''_t\sm W''_t$.
    Therefore, $|E\sm F|\le 1$, which by \cref{leftDegreeStrongerMatching} contradicts that $(E,F)$ is a tetra-separation.
\end{proof}

\begin{keylemma}\label{bagelTorso}
    Let $G$ be a 4-connected graph, and let $N$ denote the set of totally-nested tetra-separations of~$G$.
    Let $\sigma$ be a splitting star of~$N$ that is interlaced by two tetra-separations $(A,B)$ and $(C,D)$ of $G$ that cross with empty vertex-centre.
    Then the torso $\tau$ of $\sigma$ is a cycle of triangle-torsos and 3-connected torsos on $\le 5$ vertices.
\end{keylemma}

\begin{proof}
    By the \allref{keylem:crossing}, all links have size two.
    By \cref{lem:really-is-Tutte-bagel}, $(A,B)$ and $(C,D)$ induce a Tutte-bagel~$\cO$.
    By \cref{cor:bagel-totally-nested}, the tetra-star $\sigma(\cO)$ is a star of totally-nested tetra-separations of~$G$.
    By \cref{BagelSplittingStar}, the tetra-star $\sigma(\cO)$ is a splitting star of~$N$.
    Since both $(A,B)$ and $(C,D)$ interlace both $\sigma$ and $\sigma(\cO)$, we obtain $\sigma=\sigma(\cO)$.
    Hence it remains to compute the torso $\tau$ of $\sigma(\cO)$.
    The torso $\tau$ is 4-connected by \cref{dotTorso4con}.
    We can obtain $\tau$ from $\cO$ by replacing each good torso with a $K_4$, leaving only 3-connected torsos with five or four vertices, and contracting some free edges in torsos that are 4-cycles or triangles.
    Each torso that is a 4-cycle neighbours two 3-connected torsos by \cref{TutteBagelObservations}, and the 4-cycle-torso witnesses that both 3-connected torsos are good.
    Hence the free edges of the 4-cycle-torsos are contained in the separators of the elements of $\sigma(\cO)$ that stem from the good neighbouring torsos, and so they are contracted.
    Hence $\tau$ is a cycle of triangle-torsos and 3-connected torsos on $\le 5$ vertices.
\end{proof}

\subsection{Angry Tutte-bagels}\label{sec:BagelAngry}

Let $G$ be a 4-connected graph with a Tutte-bagel $\cO=(O,\cG)$.
A \defn{triangle-strip} of $\cO$ is a maximal connected subgraph $\Delta$ of $O$ such that every node in $\Delta$ has a triangle-torso with regard to~$\cO$.
Phrased differently, a triangle-strip is a component of 
\[
    O[\,\{t\in V(O):t\text{ has a triangle-torso}\}\,].
\]
Triangle-strips are paths unless all torsos of $O$ are triangles.
An element of $V(O)\cup E(O)$ is \defn{furious} if either
\begin{itemize}
    \item it is an edge with 3-connected torsos at both ends, or
    \item it is a vertex with a 4-cycle as torso.
\end{itemize}
We define
\[
    \defnMath{\alpha(\cO)}:=\#(\text{furious edges and vertices of }\cO)+\sum_\Delta\left\lceil\tfrac{2}{3}|\Delta|\right\rceil
\]
where the sum ranges over all triangle-strips $\Delta$ of $\cO$.

\begin{lemma}\label{CrossedBagelsAngryFactor}
    Assume the \allref{setting:TutteBagel}.
    Then $\alpha(\cO)\ge 4$.
\end{lemma}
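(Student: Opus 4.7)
My plan is to charge each of the four link-representatives $o(L_1), \ldots, o(L_4)$ (for the $A$-, $B$-, $C$-, $D$-links) with at least one unit of $\alpha$. By the construction of $\cO$, each $o(L)$ is either an edge of $O$ joining two $3$-connected end-nodes of adjacent Tutte-bananas (when $L$ consists of two vertices), a node of $O$ with a $4$-cycle torso (when $L$ consists of two edges), or a node of $O$ with a triangle torso (when $L$ consists of one vertex and one edge). The first two cases produce furious edges and furious vertices, respectively, so if $n_\triangle$ denotes the number of links whose representative is a triangle, these already contribute at least $4 - n_\triangle$ to $\alpha(\cO)$.

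It remains to prove $\sum_\Delta \lceil \tfrac{2}{3}|\Delta| \rceil \geq n_\triangle$, with the sum over triangle-strips. I would establish the stronger pointwise bound: for every triangle-strip $\Delta$ containing $k$ triangle link-reps, $\lceil \tfrac{2}{3}|\Delta| \rceil \geq k$. For $k \leq 2$ this follows from $|\Delta| \geq k$. The cases $k = 3$ and $k = 4$ reduce to the following key claim: no two triangle link-representatives are adjacent in $O$.

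To prove the key claim, I would suppose for contradiction that $o(L)$ and $o(L')$ are adjacent in $O$, with $K$ the corner between them. Then the $K$-Tutte-path consists of exactly these two nodes, so the $K$-banana is two triangles sharing an adhesion-edge, hence has exactly four vertices. But the expanded corner $K$, which equals the banana's vertex-set, contains $\hat L \cup \hat L'$. A side-membership analysis via the matching-condition shows $|\hat L \cup \hat L'| \geq 5$: the two link-vertices lie in $C \cap D$ and in $A \cap B$ respectively (disjoint subsets since the centre is empty); the two ``outside'' endvertices of the link-edges lie in the two corners adjacent to $K$, which are disjoint from each other and from both link-vertices; and the two $K$-corner endvertices of the link-edges contribute at least one further vertex. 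This contradicts the banana having only four vertices.

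With the key claim in hand, any triangle-strip $\Delta$ with $k \geq 2$ triangle link-reps contains at least $k - 1$ interior triangle-nodes interleaved with the link-reps, and at least $k$ such interior nodes when $\Delta = O$ (only possible for $k = 4$). Hence $|\Delta| \geq 2k - 1$, respectively $\geq 2k$, and a direct arithmetic check yields $\lceil \tfrac{2}{3}|\Delta| \rceil \geq k$ in every case. Summing over all triangle-strips and combining with the $4 - n_\triangle$ furious contributions from link-reps gives $\alpha(\cO) \geq 4$. I expect the main obstacle to be executing the vertex-distinctness case analysis cleanly, tracking up to six candidate vertices across the four quadrants of the crossing-diagram and verifying they remain distinct via the matching-condition.
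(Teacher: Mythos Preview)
Your key claim—that no two triangle link-representatives are adjacent in $O$—is false. The counterexample is precisely a \emph{potter} corner $K$ (see the definition just before the Crossing Lemma and \cref{bananaEndNumberChar}): then the $K$-banana consists of two triangles sharing an edge, its Tutte-path has exactly two nodes, and those two nodes are the triangle-representatives $o(L)$ and $o(L')$ of the two links adjacent to $K$, which are therefore adjacent in~$O$.

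The specific error in your side-membership count is the assertion that ``the two $K$-corner endvertices of the link-edges contribute at least one further vertex.'' When $K=AC$ is potter, the link-edges are \emph{dangling}: the $(C\setminus D)$-end of the edge in the $A$-link does not lie in the corner $AC$ at all but coincides with the vertex in the $C$-link, and symmetrically for the edge in the $C$-link. Hence $\hat L\cup\hat L'$ has exactly four elements, matching the four vertices of the banana, and no contradiction arises.

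The paper's proof establishes the weaker claim that no \emph{three} consecutive nodes of $O$ are triangle link-representatives: three in a row would force the two corners between them both to be potter, and these corners are adjacent, contradicting \cref{obs:potter-adjacent-corner}. This weaker claim still suffices for your charging scheme, since a triangle-strip containing $k\le 4$ link-representatives with no three consecutive satisfies $\lceil\tfrac{2}{3}|\Delta|\rceil\ge k$ (for $k=3$ one gets $|\Delta|\ge 4$; for $k=4$ one gets $|\Delta|\ge 5$ on a path or $|\Delta|\ge 6$ when $\Delta=O$).
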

\begin{proof}
    The four links injectively correspond to furious edges or vertices or nodes in triangle-strips.
    Hence it suffices to show that there is no path $rst$ in $O$ with all three nodes $r,s,t$ having triangle-torsos representing links.
    Assume otherwise.
    Then two adjacent corners are potter, contradicting \cref{obs:potter-adjacent-corner}.
\end{proof}

\begin{lemma}\label{AngryFactorYieldsCrossed}
    Let $G \neq K_5$ be a 4-connected graph with a Tutte-bagel $\cO=(O,\cG)$ such that all torsos of $\cO$ are bad and $\alpha(\cO)\ge 4$.
    Then $G$ has two tetra-separations that cross with all links of size two.
\end{lemma}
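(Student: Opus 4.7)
My approach is to use the budget $\alpha(\cO)\ge 4$ to construct two tetra-separations that cross with all links of size two, by identifying four \emph{separator-carriers} in cyclic order around $O$. Each carrier contributes two elements to the separator of a prospective tetra-separation, and two pairs of opposite carriers will yield the two crossing tetra-separations. There are three natural types of carriers: a furious edge $e$ of $O$ carries its two adhesion-vertices; a furious vertex $v$ of $O$ (whose torso is a $4$-cycle) carries the two free edges of $H_v$; and inside a triangle-strip $\Delta$, a carrier is a node $t$ of $\Delta$ whose pair is the tip and the free edge of $H_t$.

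Within a triangle-strip, the carriers cannot be chosen freely: the structural fact that $V(H_{t_i})$ is exactly $\{\text{tip}(t_{i-1}),\text{tip}(t_i),\text{tip}(t_{i+1})\}$ forces a spacing condition between successive triangle-cut carriers, so that the tips and free edges interact correctly with neighbouring carriers and the matching- and degree-conditions can be maintained. A counting argument then shows that inside $\Delta$ one can fit exactly $\lceil 2|\Delta|/3 \rceil$ compatible carriers, and combined with the one-per-element contributions from furious edges and furious vertices this matches $\alpha(\cO)$. Using $\alpha(\cO)\ge 4$, I pick four carriers $\xi_1,\xi_2,\xi_3,\xi_4$ appearing in this cyclic order on $O$, and I define $(A,B)$ from the opposite pair $\{\xi_1,\xi_3\}$ in the natural way: each $\xi_i$ places its two carried elements into $S(A,B)$, the bags of $O$ are split into the two arcs of $O-\xi_1-\xi_3$, and the tips at triangle-cuts are placed in $A\cap B$ while the free edges and $4$-cycle-edges form the edge-part of the separator. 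Analogously $(C,D)$ comes from $\{\xi_2,\xi_4\}$.

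The verification that $(A,B)$ and $(C,D)$ are tetra-separations is the bulk of the work. The matching-condition is immediate between carriers from different strips, and inside a common strip it follows from the spacing built into the selection. The degree-condition holds at furious-edge carriers by $3$-connectivity of the adjacent torsos, at $4$-cycle carriers by the cycle-structure together with $4$-connectivity of $G$, and at a triangle-cut tip $v$ by $4$-connectivity combined with \cref{trianglePath}, which locates $v$ in at most three consecutive bags and forces enough neighbours on each side of the cut. Once both are tetra-separations, they cross because the pairs $\{\xi_1,\xi_3\}$ and $\{\xi_2,\xi_4\}$ interleave cyclically, and the \allref{keylem:crossing} then pins all four links to a common size $\ell\in\{0,1,2\}$; since the eight separator-elements arrange naturally into four packets of two, one per carrier, $\ell=2$ follows.

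The main obstacle is the local analysis at triangle-strip boundaries and the degree-condition at a triangle-cut tip when a neighbouring carrier is close, in particular when a strip meets a furious element or is itself short. The exclusion $G\neq K_5$ enters exactly at the unique degenerate configuration where $O$ is a cycle of three triangle-torsos whose tips collapse so that $G=K_5$; in every other configuration compatible with the hypotheses, the construction above produces the desired two crossing tetra-separations with all links of size~$2$.
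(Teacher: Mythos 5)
Your proposal follows essentially the same route as the paper: choose four carriers in cyclic order on $O$ (a furious edge carrying its adhesion-set, a furious vertex carrying the two free edges of its 4-cycle torso, or a triangle-strip node carrying its tip and free edge, with no three consecutive nodes taken from one strip), let the two opposite pairs define $(A,B)$ and $(C,D)$, and verify the matching- and degree-conditions so that the four carried pairs become the four links. One detail is off: the role of $G\neq K_5$ is not to exclude a length-3 cycle of triangle-torsos (that graph is $K_3$ and has $\alpha(\cO)=2$, so it is excluded by the hypotheses anyway); rather, when all torsos are triangles, $\alpha(\cO)\ge4$ only forces $|O|\ge5$, and the length-5 cycle of triangle-bags is exactly $K_5$, so $G\neq K_5$ is what yields $|O|\ge6$ and hence the ability to pick four nodes on the cycle with no three consecutive.
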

\begin{proof}
    We claim that we can select four elements $x(i)$ of $V(O)\cup E(O)$, where $i\in\Z_4$, such that
    \begin{itemize}
        \item every selected element is furious or belongs to some triangle-strip;
        \item no three consecutive nodes are chosen from any triangle-strip.
    \end{itemize}
    If all torsos of~$\cO$ are triangles, then we use~$|O| \geq 6$, which we have since~$G \neq K_5$ and $\alpha(\cO) \geq 4$. Otherwise, we use $\alpha(\cO) \geq 4$ and that the triangle-strips of~$\cO$ are induced paths in~$O$.
    
    We may assume that $O$ traverses the $x(i)$ in the cyclic ordering of $\Z_4$.
    Let $P_A,P_B$ denote the components of $O-x(0)-x(2)$, let $A:=\bigcup_{t\in V(P_A)}V(G_t)$ and define $B$ similarly.
    We use $x(1),x(3)$ to define $P_C,P_D$ and $C,D$ analogously.

    We define $S(i)$ for $i\in\Z_4$ to consist of
    \begin{itemize}
        \item the two vertices in the adhesion-set of $x(i)$ if $x(i)$ is a furious edge;
        \item the two free edges of the 4-cycle-torso at $x(i)$ if $x(i)$ is a furious vertex;
        \item the tip and the free edge of the triangle-torso at $x(i)$ if $x(i)$ stems from a triangle-strip.
    \end{itemize}
    
    We claim that $(A,B)$ and $(C,D)$ are tetra-separations with separators
    $S(A,B)=S(0)\cup S(2)$ and $S(C,D)=S(1)\cup S(3)$.
    By symmetry, it suffices to show that
    \begin{enumerate}
        \item\label{vxTech} every vertex in $S(0)$ has $\ge 2$ neighbours in $A\sm B$, and
        \item\label{edgeTech} every edge in $S(0)$ has an end in $A\sm B$ avoiding $\widehat{S(2)}$.
    \end{enumerate}
    For this, let $t(1),t(2),t(3)$ denote the first three vertices on $P_A$ that come just after $x(0)$ on~$O$.

    \casen{Case: $x(0)$ is a furious vertex.}
    So $H_{x(0)}$ is a 4-cycle and $S(0)$ consists of two edges, and we only have to show \cref{edgeTech}.
    The neighbouring torso $H_{t(1)}$ must be 3-connected by \cref{TutteBagelObservations}, so the adhesion-set of $x(0)t(1)$ is included in $A\sm B$ and avoids $\widehat{S(2)}$.
    Hence \cref{edgeTech} holds.
    
    \casen{Case: $x(0)$ is a furious edge.}
    Then $S(0)$ is the adhesion-set of the edge $x(0)$, and we only have to show \cref{vxTech}.
    Note that $t(1)$ is an end of $x(0)$, so the torso $H_{t(1)}$ is 3-connected.
    Let $U:=V(H_{t(1)}-S(0))$.
    Every vertex in $S(0)$ has two neighbours in $G$ that are contained in $U$.
    To ensure that these neighbours lie in $A\sm B$, it suffices to show that the adhesion-set $\pi$ of $t(1)t(2)$ is included in $A\sm B$.
    Since $t(1) t(2)$ is the first candidate for $x(1)$, with $x(2)$ coming later, $\pi$ can only meet $B$ if $H_{t(2)}$ is a triangle.
    But then $t(2)$ is the first candidate for $x(1)$, so $x(2)$ comes after $t(2)$.
    If $H_{t(3)}$ is not a triangle, we are done, and otherwise the tip of the triangle $H_{t(3)}$ avoids $\pi$ by \cref{trianglePath}.
    Hence $\pi\se A\sm B$ as desired.

    \casen{Case: $x(0)$ is in a triangle-strip.}
    Assume first that $H_{t(1)}$ is 3-connected.
    Since $H_{t(1)}$ is bad and neighboured by the triangle-torso $H_{x(0)}$, its other neighbouring torso $H_{t(2)}$ must be 3-connected.
    Hence the edge $t(1)t(2)$ is the first candidate for $x(1)$, and so the edge $t(2)t(3)$ is the first candidate for $x(2)$.
    Therefore, the entire vertex-set of $H_{t(1)}$ is included in $A\sm B$, which ensures \cref{vxTech} and \cref{edgeTech}.

    Assume otherwise that $H_{t(1)}$ is a triangle.
    Let $\pi$ denote the adhesion-set of the edge $t(1)t(2)$.
    Note that $\pi$ contains an end of the free edge in $S(0)$ and both vertices in $\pi$ are neighbours of the tip of the triangle $H_{x(0)}$.
    Hence to show both \cref{vxTech} and \cref{edgeTech}, it suffices to show that $\pi$ is included in $A\sm B$.
    The node $t(1)$ is the first candidate for $x(1)$, so $t(2)$ is the first candidate for $x(2)$.
    By \cref{TutteBagelObservations}, $H_{t(2)}$ is either a triangle or 3-connected.
    If $H_{t(2)}$ is 3-connected, then $\pi$ is included in $A\sm B$.
    Otherwise $H_{t(2)}$ is a triangle.
    Then $H_{x(0)}$, $H_{t(1)}$ and $H_{t(2)}$ are three consecutive triangles.
    The way we have selected the $x(i)$ then ensures that $x(2)$ comes after $t(2)$.
    Again, $H_{t(3)}$ is either a triangle or 3-connected. If $H_{t(3)}$ is 3-connected, then $\pi$ is included in $A\sm B$. Otherwise, if $H_{t(3)}$ is a triangle, then its tip is distinct from the tip of $H_{t(2)}$ by \cref{trianglePath}, and $t(3)$ is the first candidate for~$x(2)$.
    So $\pi$ is included in $A\sm B$.

    This completes the proof of \cref{vxTech} and \cref{edgeTech}.
    Hence $(A,B)$ and $(C,D)$ are tetra-separations with $S(A,B)=S(0)\cup S(2)$ and $S(C,D)=S(1)\cup S(3)$.
    The tetra-separations $(A,B)$ and $(C,D)$ cross with all links of size two as the links are precisely the four non-empty sets $S(i)$.
\end{proof}

\begin{keylemma}\label{AngryBagel}
    Let $G$ be a 4-angry graph.
    Then the following two assertions are equivalent:
    \begin{enumerate}
        \item\label{AngryBagel1} $G$ has two tetra-separations that cross with all links of size two;
        \item\label{AngryBagel2} $G\neq K_5$ and $G$ has a Tutte-bagel $\cO$ with $\alpha(\cO)\ge 4$ all whose torsos are bad.
    \end{enumerate}
\end{keylemma}
\begin{proof}
    \cref{AngryBagel1} $\Rightarrow$ \cref{AngryBagel2}.
    Clearly,~$G \neq K_5$ since~$K_5$ does not have a tetra-separation.
    By \cref{lem:really-is-Tutte-bagel}, $(A,B)$ and $(C,D)$ induce a Tutte-bagel~$\cO$.
    By \cref{cor:bagel-totally-nested}, the tetra-star $\sigma(\cO)$ is a star of totally-nested tetra-separations of~$G$.
    Since $G$ has no totally-nested tetra-separations by assumption,
    all torsos of $\cO$ are bad.
    We get $\alpha(\cO)\ge 4$ from \cref{CrossedBagelsAngryFactor}.

    \cref{AngryBagel2} $\Rightarrow$ \cref{AngryBagel1} is \cref{AngryFactorYieldsCrossed}.
\end{proof}

\section{Quasi-5-connected torsos}\label{sec:Quasi5con}

This section deals with the quasi-5-connected outcome of \cref{MainDecomp}; see \cref{Quasi5conSummary}.

A mixed-separation\pl\ $(A,B)$ of a graph $G$ \defn{almost interlaces} a star $\sigma$ of mixed-separations\pl\ of $G$ if $(C,D)\le (A,B)$ or $(C,D)\le (B,A)$ for all $(C,D)\in\sigma$.
The difference between `interlaces' and `almost interlaces' is `$<$' versus `$\le$' in the definitions.

\begin{lemma}\label{HyperLiftProperties}
    \textnormal{(Analogue of \cite[Lemma~2.6.7]{Tridecomp})}
    Let $G$ be a 4-connected graph.
    Let $\sigma$ be a star of mixed-4-separations of $G$ satisfying the matching-condition.
    Let $(A,B)$ be a separation of the torso $\tau$ of~$\sigma$.
    Then there exists a mixed-separation $(\hat A,\hat B)$ of~$G$ such that:
    \begin{enumerate}
        \item\label{HyperLiftProperties1} the order of $(\hat A,\hat B)$ is at most the order of $(A,B)$;
        \item\label{HyperLiftProperties2} $(\hat A,\hat B)$ almost interlaces~$\sigma$;
        \item\label{HyperLiftProperties3} $|\hat A\sm\hat B|\ge |A\sm B|$ and $|\hat B\sm\hat A|\ge |B\sm A|$; and
        \item\label{HyperLiftProperties4} for every $(U,W)\in\sigma$ we have $|W\cap (\hat A\sm\hat B)|\ge |A\sm B|$ and $|W\cap (\hat B\sm\hat A)|\ge |B\sm A|$.
    \end{enumerate}
\end{lemma}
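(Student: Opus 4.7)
My plan is to adapt the argument of \cite[Lemma~2.6.7]{Tridecomp} to this mixed-4-separation setting. The idea is to lift $(A,B)$ back through the two-step construction of $\tau$ from $G$: first restrict to the expanded torso (on vertex set $V_\sigma\cup\bigcup_i\lambda(U_i,W_i)$, with clique-edges added on each $\lambda(U_i,W_i)$), then contract the edges contained in the separators $S(U_i,W_i)$.

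Writing $\sigma=\{(U_i,W_i):i\in I\}$, for each vertex $v$ of $\tau$ I will let $\pi(v)\subseteq V(G)$ denote the equivalence class that produced~$v$. By the matching-condition on each $(U_i,W_i)$, this class is either a singleton $\{v\}$ or a pair $\{u,u'\}$ with $uu'\in S(U_i,W_i)$ for a uniquely determined~$i$. I plan to define $(\hat A,\hat B)$ in three steps. First, I would assign $\pi(v)\subseteq\hat A\sm\hat B$ when $v\in A\sm B$, and symmetrically for $B\sm A$. Second, for $v\in A\cap B$ with $|\pi(v)|=1$ I would put $\pi(v)$ into $\hat A\cap\hat B$, while for $v\in A\cap B$ with $\pi(v)=\{u,u'\}$ I would place the two ends on opposite sides of $(\hat A,\hat B)$ so that $uu'$ becomes an edge-element of $S(\hat A,\hat B)$. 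Third, I would extend to the remaining vertices of $V(G)$, namely those in $(U_i\sm W_i)\sm\lambda(U_i,W_i)$: since $(A,B)$ is a vertex-separation of~$\tau$ and $\lambda(U_i,W_i)$ is a clique in the expanded torso, its four representatives in~$\tau$ lie on a common side of $(A,B)$, so every leftover vertex of $U_i\sm W_i$ would be assigned to that side. Consistency with step two is guaranteed because if an edge $uu'\in S(U_k,W_k)$ has $u'\in\lambda(U_i,W_i)\cap(U_i\sm W_i)$ for some $i\neq k$, then $uu'$ also lies in $S(U_i,W_i)$ and the representative of $u'$ in~$\tau$ is exactly the contracted vertex $[uu']$, so both rules agree.

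Properties (i) and (iii) will then be immediate from the construction: each $v\in A\cap B$ contributes exactly one vertex or one edge to $S(\hat A,\hat B)$, while each $v\in A\sm B$ contributes at least one vertex to $\hat A\sm\hat B$, and symmetrically for $B\sm A$. Property (ii) will follow by showing $U_i\subseteq\hat A$ and $\hat B\subseteq W_i$ whenever the image of $\lambda(U_i,W_i)$ in~$\tau$ lies on the $A$-side, and symmetrically otherwise; this reduces to the star property $U_k\subseteq W_i$ for $k\neq i$ combined with the third step of the construction.

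The main obstacle will be property (iv), which I plan to prove via the key claim that for every $(U,W)\in\sigma$ and every vertex $v$ of~$\tau$, the set $\pi(v)$ meets~$W$. For singletons this is straightforward: $v\in V_\sigma$ gives $v\in W$ directly; $v\in\lambda(U_j,W_j)$ with $j\neq i$ gives $v\in U_j\subseteq W_i=W$ by the star property; and a surviving singleton $v\in\lambda(U,W)$ must lie in $U\cap W\subseteq W$, since otherwise $v\in(U\sm W)\cap\lambda(U,W)$ would be an endpoint of some edge of $S(U,W)$ and would not have survived as a singleton after contraction. For pairs $\pi(v)=\{u,u'\}$ produced by contracting an edge in $S(U_k,W_k)$, the star property $U_k\subseteq W_i$ forces $u\in U_k\subseteq W$ when $k\neq i$, while $k=i$ places $u'\in W\sm U\subseteq W$. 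Because $\pi$ partitions $V(G)\cap V(\text{expanded torso})$, picking a distinct element of $W$ from each $\pi(v)$ will then inject $|A\sm B|$ elements into $W\cap(\hat A\sm\hat B)$ and $|B\sm A|$ elements into $W\cap(\hat B\sm\hat A)$, completing the verification.
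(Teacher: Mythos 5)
Your high-level approach -- lifting $(A,B)$ back through the expanded torso and the contraction, using the matching-condition to see that each branch set $\pi(v)$ has size at most two -- is essentially the one the paper adapts from \cite[Lemma~2.6.7]{Tridecomp}. Your key claim in the last paragraph (that $\pi(v)$ meets $W$ for every $(U,W)\in\sigma$ and $v\in V(\tau)$) is correctly argued, and properties (i), (iii), (iv) do follow once the construction is pinned down.

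There is, however, a gap in step 2 that affects property (ii). When $v\in A\cap B$ has $\pi(v)=\{u,u'\}$, you place $u$ and $u'$ on opposite strict sides of $(\hat A,\hat B)$ but never say which goes where, and your consistency remark does not prove a compatible choice exists. Concretely: $u$ lies in $U_k\sm W_k$ for a unique $k$ (since $\sigma$ is a star, $U_m\subseteq W_k$ for $m\neq k$), so $(U_k,W_k)\le(\hat A,\hat B)$ or $\le(\hat B,\hat A)$ forces $u$ onto the strict side on which the clique $\Lambda_k:=[\lambda(U_k,W_k)]$ lies in $(A,B)$. If $u'$ lies in $U_j\sm W_j$ for some $j\neq k$, the same forces $u'$ onto $\Lambda_j$'s side; and if instead $u'\in V_\sigma$, then $u'$ may lie in $U_m\cap W_m\subseteq\lambda(U_m,W_m)$ for several $m$, each forcing $u'$ onto $\Lambda_m$'s side. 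So you need all those cliques $\Lambda_m$ containing $v$ to be orientable so that $\Lambda_k$'s side is opposite to all the others. This is not automatic: since $v\in A\cap B$, two such cliques could both meet $A\sm B$. In the sub-case $u'\in U_j\sm W_j$ one can rescue the argument by noting $N_\tau(v)\subseteq\Lambda_k\cup\Lambda_j$ and invoking 4-connectivity of $\tau$ (so $v$ has neighbours on both strict sides, forcing opposite orientations), but when $u'\in V_\sigma$ this inclusion fails and your assertion ``both rules agree'' is not justified. A clean fix is to place $u'$ in $\hat A\cap\hat B$ rather than strictly whenever $u'\in V_\sigma$: this still contributes exactly one element (the vertex $u'$) to $S(\hat A,\hat B)$, preserves (i), and removes the orientation constraint on $u'$. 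As written, though, the proof does not close this case.
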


\begin{proof}
    The proof is the same as the proof of \cite[Lemma~2.6.7]{Tridecomp}, with one exception: we replace Lemma~1.2.3 with the matching-condition.
\end{proof}

\begin{lemma}\label{tetraShield}
    Let $(A,B)$ be a tetra-separation in a 4-connected graph~$G$.
    Let $(C,D)$ be a mixed-4-separation\pl\ in $G$ with $(A,B)\le (C,D)$.
    Let $(C',D')$ denote either the left-right-reduction or the right-left-reduction of $(C,D)$.
    Then $(A,B)\le (C',D')$.
\end{lemma}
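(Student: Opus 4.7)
The plan is to reduce the lemma to a single-shift claim and then iterate it twice. The single-shift claim I will prove is the following: for every tetra-separation $(A,B)$ and every mixed-separation\pl\ $(E,F)$ with $(A,B)\le (E,F)$, we still have $(A,B)\le (E_1,F_1)$ when $(E_1,F_1)$ is either the left-shift or the right-shift of $(E,F)$. Since the left-right-shift and the right-left-shift of $(C,D)$ are each obtained from $(C,D)$ by two successive shifts, two applications of this claim yield $(A,B)\le (C',D')$ in both compositions stated in the lemma.

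The right-shift case of the single-shift claim is immediate: $(E_1,F_1)=(E,F\sm R)$ satisfies $A\se E=E_1$ and $B\supseteq F\supseteq F\sm R=F_1$, requiring no hypothesis on $(A,B)$. The substantive case is the left-shift. Write $(E_1,F_1)=(E\sm L, F)$ with $L$ the set of vertices in $E\cap F$ having at most one neighbour in $E\sm F$. Since $F_1=F\supseteq B$ already, it remains only to verify $A\cap L=\emptyset$. Suppose toward a contradiction that $v\in A\cap L$; then $v\in A\cap F\se A\cap B$ (using $B\supseteq F$), so $v$ is a vertex in $S(A,B)$. The degree-condition for the tetra-separation $(A,B)$ supplies at least two neighbours of $v$ in $A\sm B$, and the inclusions $A\se E$ together with $B\supseteq F$ combine to give $A\sm B\se E\sm F$. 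Hence $v$ has at least two neighbours in $E\sm F$, contradicting $v\in L$.

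The main point to be careful about is that after one shift the intermediate mixed-separation\pl\ is typically no longer a tetra-separation, so results like \cref{Righty}, which require a genuine 4-separation on the right-hand side, do not directly re-apply. The formulation above sidesteps this obstacle by placing the sole degree-condition on the fixed tetra-separation $(A,B)$ and leaving the partner on the right an arbitrary mixed-separation\pl, so that the claim can be iterated without the intermediate step needing any structural properties beyond the ordering $(A,B)\le(\cdot,\cdot)$.
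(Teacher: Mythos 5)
Your proof is correct and rests on exactly the same observation as the paper's: any vertex of $A$ that the left-shift might delete lies in $A\cap B$, so the degree-condition for $(A,B)$ together with $A\sm B\se C\sm D$ shows it has $\ge 2$ neighbours on the relevant side and survives. Your explicit two-step iteration (and the remark that only the ordering, not tetra-separation status, is needed of the intermediate pair) is a slightly more careful packaging of the paper's one-paragraph argument, but not a different route.
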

\begin{proof}
    We have $B\supseteq D\supseteq D'$.
    Since $A\se C$, it remains to show that every vertex in $S(C,D)\cap A$ also lies in $S(C',D')$.
    Every vertex $v$ in $S(C,D)\cap A$ also lies in $S(A,B)$.
    Hence every such $v$ has $\ge 2$ neighbours in $A\sm B\se C\sm D$.
    Thus $v\in S(C',D')$.
\end{proof}

\begin{lemma}\label{technicalInterlaces}
    Let $G$ be a 4-connected graph.
    Let $(U,W)$ be a tetra-separation of~$G$, and let $(A,B)$ be a mixed-4-separation of~$G$ such that $(U,W)\le (A,B)$.
    Suppose that $|W\cap (A\sm B)|\ge 1$.
    Let $(A',B')$ be either the left-right-sift or the right-left-reduction of $(A,B)$.
    Then $(U,W)<(A',B')$.
\end{lemma}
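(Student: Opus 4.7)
The plan is to split the desired strict inequality $(U,W)<(A',B')$ into the two pieces $(U,W)\le (A',B')$ and $(U,W)\ne (A',B')$, and verify each separately.

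For the first piece I would invoke \cref{tetraShield} directly. Its hypotheses ask for a tetra-separation below a mixed-4-separation, which is exactly our setup with the tetra-separation $(U,W)$ in place of the lemma's $(A,B)$ and our $(A,B)$ in place of its $(C,D)$. The conclusion then gives $(U,W)\le (A',B')$ for either of the two shifts.

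For the strict inequality, the plan is to exploit the hypothesis $|W\cap(A\sm B)|\ge 1$ to show that $W\ne B'$, which already suffices for $(U,W)\ne (A',B')$. Fix any witness $v\in W\cap(A\sm B)$. The key observation is that shifts can only enlarge the two proper sides: \cref{obs:reduction-maintains-proper-sides} records $A\sm B\se A'\sm B'$ for the left-right-shift, and the right-left-shift satisfies the same inclusion by the symmetry announced at the start of \cref{sec:shifts}. Therefore $v\in A'\sm B'$, so in particular $v\notin B'$. Combined with $v\in W$, this shows $W\ne B'$ and hence $(U,W)\ne (A',B')$.

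I do not anticipate a real obstacle. Both ingredients are already in place from the shift-machinery developed in \cref{sec:shifts}, and the argument goes through identically for the left-right and the right-left-shift, so there is no case analysis to perform.
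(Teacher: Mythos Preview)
Your proposal is correct and matches the paper's proof essentially line for line: both invoke \cref{tetraShield} for $(U,W)\le(A',B')$, then use \cref{obs:reduction-maintains-proper-sides} to get $A\sm B\se A'\sm B'$ and conclude that the witness $v\in W\cap(A\sm B)$ lies in $W\sm B'$, making the inequality strict. Your remark about the right-left-shift case following by the symmetry announced in \cref{sec:shifts} is a careful addition that the paper leaves implicit.
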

\begin{proof}
    We have $(U,W)\le (A',B')$ by \cref{tetraShield}.
    Recall that $A\sm B\se A'\sm B'$ by \cref{obs:reduction-maintains-proper-sides}.
    Hence $W\cap (A'\sm B')$ is nonempty, so in particular the inclusion $W\supset B'$ is proper.
\end{proof}

\begin{lemma}\label{big4sepInterlaces}
    Let $G$ be a 4-connected graph.
    Let $\sigma$ be a star of tetra-separations of~$G$.
    Suppose that the torso $\tau$ of $\sigma$ has a 4-separation $(A,B)$ such that both sides have size~$\ge 6$.
    Then $\sigma$ is interlaced by a tetra-separation of~$G$.
\end{lemma}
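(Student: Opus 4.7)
My plan is to use \cref{HyperLiftProperties} to lift $(A,B)$ to a mixed-$4$-separation $(\hat A,\hat B)$ of $G$ that almost interlaces $\sigma$, then apply a shift to obtain a genuine tetra-separation, and finally upgrade almost-interlacing to interlacing via \cref{technicalInterlaces}.

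First, since $(A,B)$ is a $4$-separation of $\tau$ with both sides of size $\ge 6$, I have $|A\sm B|, |B\sm A|\ge 2$. Because $\sigma$ is a star of tetra-separations, it satisfies the matching-condition, so \cref{HyperLiftProperties} applies and yields $(\hat A,\hat B)$ of order $\le 4$ which almost interlaces $\sigma$, satisfies $|\hat A\sm\hat B|, |\hat B\sm\hat A|\ge 2$ by~\cref{HyperLiftProperties3}, and moreover $|W\cap(\hat A\sm\hat B)|, |W\cap(\hat B\sm\hat A)|\ge 2$ for every $(U,W)\in\sigma$ by~\cref{HyperLiftProperties4}. Since $G$ is $4$-connected, $(\hat A,\hat B)$ has order exactly four by \cref{kConMixed}.

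Now let $(\hat A',\hat B')$ be the right-left-shift of $(\hat A,\hat B)$. Since $|\hat A\sm\hat B|, |\hat B\sm\hat A|\ge 2$, the three criteria of \cref{leftrightTetra} (applied to $(\hat B,\hat A)$, whose left-right-shift is $(\hat B',\hat A')$) hold trivially, so $(\hat A',\hat B')$ is a tetra-separation of $G$. To finish, I verify interlacing: given $(U,W)\in\sigma$, almost-interlacing gives $(U,W)\le(\hat A,\hat B)$ or $(U,W)\le(\hat B,\hat A)$, and in either case the relevant intersection $W\cap(\hat A\sm\hat B)$ or $W\cap(\hat B\sm\hat A)$ has size $\ge 2\ge 1$, so \cref{technicalInterlaces} upgrades the inequality to a strict one with respect to $(\hat A',\hat B')$ or $(\hat B',\hat A')$, respectively. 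I do not anticipate a substantial obstacle; all the work has been offloaded to the three supporting lemmas, and the size hypothesis on $(A,B)$ was chosen precisely so that $|A\sm B|, |B\sm A|\ge 2$ matches what those lemmas require.
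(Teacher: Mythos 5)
Your proposal is correct and follows essentially the same route as the paper's proof: lift via \cref{HyperLiftProperties}, force order exactly four with \cref{kConMixed}, shift to a tetra-separation via \cref{leftrightTetra}, and upgrade almost-interlacing to interlacing with \cref{technicalInterlaces}. The only (immaterial) difference is that you take the right-left-shift where the paper takes the left-right-shift.
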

\begin{proof}
    By \cref{HyperLiftProperties}, there is a mixed-$k$-separation\pl\ $(\hat A,\hat B)$ of $G$ such that
    \begin{itemize}
        \item $k\le 4$ by \cref{HyperLiftProperties1};
        \item $(\hat A,\hat B)$ almost interlaces $\sigma$ by \cref{HyperLiftProperties2};
        \item $|\hat A\sm\hat B|\ge 2$ and $|\hat B\sm\hat A|\ge 2$ by \cref{HyperLiftProperties3}; and
        \item for every $(U,W)\in\sigma$ we have $|W\cap (\hat A\sm \hat B)|\ge 2$ and $|W\cap (\hat B\sm\hat A)|\ge 2$ by \cref{HyperLiftProperties4}.
    \end{itemize}
    Since $G$ is 4-connected, $(\hat A,\hat B)$ is a mixed-4-separation by \cref{kConMixed}.
    Let $(\bar A,\bar B)$ denote the left-right-reduction of $(\hat A,\hat B)$.
    Then $(\bar A,\bar B)$ is a tetra-separation by \cref{leftrightTetra}.
    By \cref{technicalInterlaces}, $(\bar A,\bar B)$ interlaces~$\sigma$.
\end{proof}

\begin{keylemma}\label{Quasi5conSummary}
    Let $G$ be a 4-connected graph, and let $N$ denote the set of totally-nested tetra-separations of~$G$.
    Let $\sigma$ be a splitting star of~$N$ that is not interlaced by any tetra-separation of~$G$.
    Then the torso $\tau$ of $\sigma$ is quasi-5-connected.
\end{keylemma}
\begin{proof}
    By \cref{dotTorso4con}, $\tau$ is 4-connected or a $K_4$, and we are done in the latter case.
    By the contrapositive of \cref{big4sepInterlaces}, every 4-separation of $\tau$ has a side of size five.
\end{proof}

\section{An Angry Theorem for 4-connectivity}\label{sec:Angry}

A key step in proving Tutte's decomposition theorem is to characterise the graphs that are \defn{2-angry}: those that are 2-connected and all whose 2-separators are crossed -- hence the name.
Tutte proved the first \emph{angry theorem}: the 2-angry graphs are precisely the 3-connected graphs and the cycles~\cite{TutteCon}.
For a modern proof, see \cite[Theorem 3.3.4]{Tridecomp}.
An angry theorem for 3-connectivity has been proved using tri-separations \cite[Theorem 1.1.5]{Tridecomp}.
However, like here, the angry theorem was not used in the proof of the decomposition theorem.
Still, angry theorems have recently found an application in group theory: 
the local version of the 2-angry theorem \cite{Local2sep} was used as a key ingredient in \cite{StallingsNilpotent}, and versions for higher connectivity are expected to be needed to continue in the direction of \cite{StallingsNilpotent}.
Here we prove an angry theorem for 4-connectivity.
A graph $G$ is \defn{4-angry} if it is 4-connected and every tetra-separation of $G$ is crossed by a tetra-separation.

\begin{theorem}[Angry Theorem for 4-connectivity]\label{4angry}
    A 4-connected graph $G$ with $|G|\ge 8$ is 4-angry if and only if it is one of the following:
    \begin{enumerate}
        \item\label{4angry1} quasi-5-connected;
        \item\label{4angry3} a cycle $\cO$ of triangles, 4-cycles, 3-connected graphs on $\le 5$ vertices, with only bad torsos and $\alpha(\cO)\ge 4$;
        \item\label{4angry2} a double-wheel with rim of length~$\geq 4$ or a double-wheel of triangles with rim of length~$\geq 4$;
        \item\label{4angry4} a sprinkled $K_{4,m}$ with $m\ge 4$.
    \end{enumerate}
\end{theorem}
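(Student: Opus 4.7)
The plan is as follows. First observe that ``$G$ is 4-angry'' is equivalent to ``$G$ is 4-connected and $N(G)=\emptyset$'', where $N(G)$ denotes the set of totally-nested tetra-separations of $G$: a tetra-separation lies in $N(G)$ precisely when it is not crossed by any tetra-separation. The proof then splits into the two implications, and all the real work has already been done in the key lemmas of the preceding sections.

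For the forward implication, assume $G$ is 4-angry with $|G|\ge 8$. If $G$ has no tetra-separation at all, then \cref{quasi5conVtetra}~\ref{quasi5conVtetra1} immediately gives outcome~\ref{4angry1}. Otherwise $G$ has a tetra-separation, which by 4-angriness is crossed by another; the \allref{keylem:crossing} says that the vertex-centre has size $0$, $2$, or $4$, with all links of size $2$, $1$, $0$ respectively. Feeding these three situations into the forward implications of \cref{AngryBagel}, \cref{keylem:double-wheel-angry}, and \cref{keylem:K4m} yields outcomes~\ref{4angry3}, \ref{4angry2}, \ref{4angry4} respectively; the hypothesis $|G|\ge 8$ supplies the side condition $G\ne K_5$ needed by \cref{AngryBagel}.

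For the backward implication I verify $N(G)=\emptyset$ in each case. Case~\ref{4angry1} is immediate from \cref{quasi5conVtetra}~\ref{quasi5conVtetra2}, which says a quasi-5-connected graph with $\ge 8$ vertices has no tetra-separation whatsoever. For cases~\ref{4angry3}, \ref{4angry2}, \ref{4angry4} I first produce a pair of crossing tetra-separations of the appropriate centre-size, using \cref{AngryFactorYieldsCrossed}, \cref{lem:double-wheel-crossing-tetra} together with \cref{lem:double-wheel-of-triangles-crossing-tetra}, and the direct construction from the easy direction of \cref{keylem:K4m}, respectively. These crossings come with a Tutte-bagel $\cO'$, a block-bagel $\cO'$, or a 4-vertex set $Z$ such that every component of $G-Z$ is trivial. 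In each setting the hypothesis forces the associated tetra-star to be empty: all torsos of $\cO'$ are bad (case~\ref{4angry3}), all bags of $\cO'$ are $K_2$'s or all are triangles and hence bad (case~\ref{4angry2}), or every component of $G-Z$ has exactly one vertex (case~\ref{4angry4}). Finally \cref{BagelSplittingStar}, \cref{lem:double-wheel-interlaced-crossed}, and \cref{SprinkledK4mSummary}~\ref{SprinkledK4mSummary1} respectively each say that this empty tetra-star is a splitting star of $N(G)$, and since the empty star can be a splitting star of a nested set only when that set itself is empty, we conclude $N(G)=\emptyset$.

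The main obstacle is case~\ref{4angry3}: the Tutte-bagel $\cO$ provided by the hypothesis is given abstractly, whereas the splitting-star machinery of \cref{BagelSplittingStar} applies to the Tutte-bagel $\cO'$ induced by the specific pair of crossing tetra-separations coming from \cref{AngryFactorYieldsCrossed}. One has to check that $\cO$ and $\cO'$ agree, so that ``all torsos of $\cO$ are bad'' transfers to $\cO'$ and hence $\sigma(\cO')=\emptyset$. This is routine but careful bookkeeping: the four elements $x(0),\ldots,x(3)$ selected inside the proof of \cref{AngryFactorYieldsCrossed} cut the cycle $O$ of $\cO$ into four arcs, and for each arc the corresponding corner-banana is the union of the bags of $\cO$ on that arc together with a link-edge at each end; since these link-edges only sit inside adhesion-sets of $\cO$ that are already size-$2$, they do not alter the 3-connected/4-cycle/triangle classification of the bag-torsos, so the $K$-Tutte-banana reproduces the restriction of $\cO$ and $\cO'$ coincides with $\cO$ along the whole cycle.
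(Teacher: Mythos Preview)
Your forward implication matches the paper's proof exactly. For the backward implication the paper is much terser: it writes only ``We use one of \cref{quasi5conVtetra}, \cref{keylem:K4m}, \cref{keylem:double-wheel-angry} and \cref{AngryBagel}. Here \cref{quasi5conVtetra} needs $|G|\ge 8$.'' and leaves the reader to unpack how those four lemmas yield 4-angriness. Your route through the splitting-star machinery (\cref{SprinkledK4mSummary}~\ref{SprinkledK4mSummary1}, \cref{lem:double-wheel-interlaced-crossed}, \cref{BagelSplittingStar}) is a correct way to make this precise: once the relevant tetra-star is empty, the fact that it is a splitting star of $N(G)$ forces $N(G)=\emptyset$.

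The one place where you do more work than the paper acknowledges is case~\ref{4angry3}, where the hypothesis hands you an abstract Tutte-bagel $\cO$ but \cref{BagelSplittingStar} is stated for the Tutte-bagel $\cO'$ \emph{induced} by the crossing pair produced in \cref{AngryFactorYieldsCrossed}. Your verification that $\cO'=\cO$ is correct: the restriction of $\cO$ to each of the four arcs between the selected $x(i)$ is a path-decomposition of the corresponding banana with adhesion two, torsos that are 3-connected, 4-cycles, or triangles, and with the triangle--triangle adhesion condition inherited from $\cO$; this is precisely the alternative characterisation of the Tutte-decomposition given in \cref{sec:BagelTutteBanana}, and the added link-edges of $G^*$ lie inside adhesion-sets and hence do not alter any torso. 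So the $K$-Tutte-banana reproduces the arc of $\cO$, and gluing recovers $\cO'=\cO$. The paper does not spell this out, so your treatment is in fact more complete here.
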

\begin{proof}
    \casen{Forward implication.}
    Assume that $G$ is 4-angry.
    If $G$ has no tetra-separation, then \cref{4angry1} by \cref{quasi5conVtetra}.
    So $G$ has a tetra-separation $(A,B)$, which is crossed by a tetra-separation $(C,D)$ as $G$ is 4-angry.
    By the \allref{keylem:crossing}, all links have the same size $\ell$, and $\ell\in\{0,1,2\}$.
    \begin{itemize}
        \item If $\ell=0$, then \cref{4angry4} by \cref{keylem:K4m}.
        \item If $\ell=1$, then \cref{4angry2} by \cref{keylem:double-wheel-angry}.
        \item If $\ell=2$, then \cref{4angry3} by \cref{AngryBagel} and since $G \neq K_5$ by $|G| \geq 8$.
    \end{itemize}

    \casen{Backward implication.} We use one of \cref{quasi5conVtetra}, \cref{keylem:K4m}, \cref{keylem:double-wheel-angry} and \cref{AngryBagel}. Here \cref{quasi5conVtetra} needs $|G|\ge 8$.
\end{proof}

\section{A canonical \texorpdfstring{$Y$--$\Delta$}{YDelta} operation}\label{sec:YDelta}

\begin{lemma}\label{GDeltaLifts}
    Let $G$ be a quasi-4-connected graph.
    Let $(A^\Delta,B^\Delta)$ be a $k$-separation of $G^\Delta$.
    Let $U$ be the set of all degree-three vertices of~$G$.
    We obtain $A^s$ from $A^\Delta$ by adding every vertex $u\in U$ with $\Delta(u)\se G^\Delta [A^\Delta]$.
    We obtain $B^s$ from $B^\Delta$ by adding every vertex $u\in U$ with $\Delta(u)\se G^\Delta [B^\Delta]$ but $\Delta(u)\not\se G^\Delta [A^\Delta]$.
    Then $(A^s,B^s)$ is a $k$-separation of $G^s$ with $A^s\cap V(G^\Delta)=A^\Delta$ and $B^s\cap V(G^\Delta)=B^\Delta$ and $A^s\cap B^s=A^\Delta\cap B^\Delta$, see \cite[Lemma~2.6.4]{Tridecomp}.
    Let $A:= A^s\cap V(G)$ and $B:=B^s\cap V(G)$.
    Then $(A,B)$ is a mixed-separation of $G$ of order~$\le k$.\qed
\end{lemma}

\begin{lemma}\label{GDelta4con}
    Let $G$ be a quasi-4-connected graph on $\ge 7$ vertices.
    Then $G^\Delta$ is 4-connected.
\end{lemma}
\begin{proof}
    Assume for a contradiction that $G^\Delta$ is not 4-connected.
    Let $(A^\Delta,B^\Delta)$ be a separation of $G^\Delta$ of least possible order~$k$, so $k\le 3$.
    From $(A^\Delta,B^\Delta)$ we obtain a mixed-separation $(A,B)$ of $G$ of order $\le k$ as in \cref{GDeltaLifts}.

    We claim that $A\sm B$ and $B\sm A$ have size $\ge 2$.
    It suffices to show that $|A\sm B|\ge 2$ (the proof for $|B\sm A|\ge 2$ is a subproof of that for $|A\sm B|\ge 2$).
    Since $(A^\Delta,B^\Delta)$ is proper, there is a vertex $x\in A^\Delta\sm B^\Delta$.
    We distinguish two cases.

    \casen{Assume first that $x$ is a subdividing vertex of an edge $e=uv$ of~$G$.}
    Consider the triangles $\Delta(u),\Delta(v)\se G^\Delta$ that were added around $u$ and~$v$, respectively.
    Both contain the vertex~$x$, so both are included in $G^\Delta[A^\Delta]$ but not in $G^\Delta[B^\Delta]$.
    Hence both $u$ and $v$ lie in $A^s\sm B^s$.
    Since both $u$ and $v$ are in~$G$, they also lie in $A\sm B$.
    Hence $|A\sm B|\ge 2$.

    \casen{Assume otherwise that $x$ is a vertex in~$G^\Delta$ that is also a vertex in~$G$.}
    Then $x$ is not contained in~$U$, so $x$ has degree $\ge 4$ in~$G$.
    Now $x\in A^s\sm B^s$, and $x\in A\sm B$.
    Since $(A,B)$ has order~$\le 3$, at least one of the four neighbours of $x$ in~$G$ also lies in $A\sm B$.
    Hence $|A\sm B|\ge 2$.

    Therefore, both $A\sm B$ and $B\sm A$ have size~$\ge 2$.
    Hence $G$ is not quasi-4-connected by \cref{qkcNoMixedKsep}, a contradiction.
\end{proof}

\begin{example}\label{GDelta7optimal}
    The number 7 in \cref{GDelta4con} is optimal.
    To see this, let $G$ be a cycle of $C_4$-bags of length~3.
    Then $G^\Delta$ is a generalised 4-wheel, which is 3-connected but not quasi-4-connected.
\end{example}

\section{Appendix A: Circular saws}\label{sec:saw}

Here we verify that the circular saws from the introduction have the desired properties.

\begin{lemma}\label{SawPaths}
    Let $G$ be a $k$-regular circular saw on $\Z_n\times\Z_2$ with $n\gg k$.
    Let $aa'$ and $bb'$ be two edges of $G$ such that $\{a,a'\}$ and $\{b,b'\}$ are disjoint and there is no $\{a,a'\}$--$\{b,b'\}$ edge.
    Let $X$ denote the neighbourhood of $\{a,a'\}$, and let $Y$ denote the neighbourhood of $\{b,b'\}$.
    There are $2(k-1)$ disjoint $X$--$Y$ paths in~$G$.
\end{lemma}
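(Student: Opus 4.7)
The plan is to construct $2(k-1)$ pairwise internally-disjoint $X$--$Y$ paths in $G$, exploiting the rotational symmetry of the saw and the hypothesis $n\gg k$. The long cycle $\Z_n$ splits into two long arcs separating $\{a,a'\}$ from $\{b,b'\}$, and I will route $k-1$ paths through each.

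First I normalise by rotation so that $a=(0,0)$, giving $a'=(i,1)$ for some $0\le i\le k-1$, and write $b=(w,0)$, $b'=(w+j,1)$ with $0\le j\le k-1$. The disjointness and no-edge hypotheses force $w+j\notin\{0,\ldots,k-1\}$ and $w\notin\{i-k+1,\ldots,i\}$, so the indices of $\{a,a'\}\cup X$ and of $\{b,b'\}\cup Y$ lie in two disjoint cyclic intervals of length $\le 2k-1$, with two arcs of length $\gg k$ between them. A direct count shows $|X|=|Y|=2(k-1)$, with $X$ consisting of $k-1$ layer-$1$ vertices (indices $\{0,\ldots,k-1\}\setminus\{i\}$) and $k-1$ layer-$0$ vertices (indices $\{i-k+1,\ldots,i\}\setminus\{0\}$); analogously for $Y$.

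Inside each long arc I use "maximum-slope zig-zag" paths: for a given offset $s\in\{0,\ldots,k-2\}$, a zig-zag alternates between edges of slope $k-1$ and slope $0$, advancing by $k-1$ indices every two edges, so that every vertex it visits has an index congruent to a fixed residue $\pmod{k-1}$. The $k-1$ zig-zags obtained by varying $s$ are therefore pairwise vertex-disjoint, and a symmetric family starting on layer~$1$ runs in the opposite direction. Applied to each of the two long arcs, this produces $k-1$ channels per arc, for $2(k-1)$ in total.

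The main obstacle will be the endpoint-matching: each of the $2(k-1)$ channels must be extended by a short segment inside the interval of influence of $\{a,a'\}$ to reach a distinct vertex of $X$ (and symmetrically at $\{b,b'\}$ to reach a distinct vertex of $Y$), so that the final $2(k-1)$ paths are pairwise internally-disjoint. Here I exploit that $\{a,a',b,b'\}\cap(X\cup Y)=\emptyset$ by the disjointness and no-edge hypotheses, so each of $a,a',b,b'$ is available as an internal vertex of exactly one such segment, providing the necessary layer-switching bridges (every layer-$1$ vertex of $X$ is adjacent to $a$, and every layer-$0$ vertex of $X$ is adjacent to $a'$; symmetrically for $Y$ via $b,b'$). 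A case analysis on $i$, $j$, and the slope assignments in each arc then shows that the $2(k-1)$ pairs of endpoints can always be linked up without conflicts, yielding the desired paths.
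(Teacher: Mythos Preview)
Your approach is essentially the same as the paper's: both route $2(k-1)$ maximum-slope zig-zag paths, one per residue class modulo $k-1$ in each of the two arcs. The difference lies in the parametrisation, and the paper's is cleaner.

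The paper parametrises by the starting vertex $x\in X$ rather than by channel: for each $x\in X$ it constructs both a rightward zig-zag $P^+_x$ (step $(\ell,0)\mapsto(\ell+k-1,1)$, $(\ell,1)\mapsto(\ell,0)$) and a leftward one $P^-_x$, then observes that exactly one of the two has an internal vertex in $X$ and selects the other as $P_x$. This selection rule automatically yields the endpoint-matching you defer to a case analysis: since each residue class modulo $k-1$ contains exactly two vertices of $X$, the ``inner'' one is forced to go one way and the ``outer'' one the other, so the $2(k-1)$ paths are pairwise disjoint by residue within each direction and by direction across the two arcs. No rerouting through $a,a',b,b'$ is needed, and no case split on $i,j$ is required.

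Your proposal is correct in outline, but the unresolved case analysis on the endpoint-matching is precisely what the paper's parametrisation eliminates; reparametrising by $x\in X$ would let you drop the final paragraph entirely.
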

\begin{proof}
    For each vertex $x\in X$ we construct two $x$--$Y$ paths $P^+_x,P^-_x$ inductively, as follows.
    See \cref{fig:SawPaths} for an example.
    
    In the first step of the construction of $P^+_x$, we let $P^+_x$ start and end with~$x$.
    Assume now that $P^+_x$ ends with the vertex $(\ell,i)$.
    If $(\ell,i)\in Y$, then we terminate the construction.
    Otherwise $(\ell,i)\notin Y$.
    If $i=0$ we append the vertex $(\ell+k-1,1)$, and if $i=1$ we add the vertex $(\ell,0)$.
    This completes the construction of $P^+_x$.

    In the first step of the construction of $P^-_x$, we let $P^-_x$ start and end with~$x$.
    Assume now that $P^-_x$ ends with the vertex $(\ell,i)$.
    If $(\ell,i)\in Y$, then we terminate the construction.
    Otherwise $(\ell,i)\notin Y$.
    If $i=0$ we append the vertex $(\ell,1)$, and if $i=1$ we add the vertex $(\ell-k+1,0)$.
    This completes the construction of $P^-_x$.

    Exactly one of the paths $P^+_x,P^-_x$ has an internal vertex in~$X$.
    Let $P_x$ denote the other path.
    Then $(P_x:x\in X)$ is a family of disjoint $X$--$Y$ paths.
    There are $|X|=2(k-1)$ of these paths.
\end{proof}

\begin{figure}[ht]
    \centering
    \includegraphics[height=7\baselineskip]{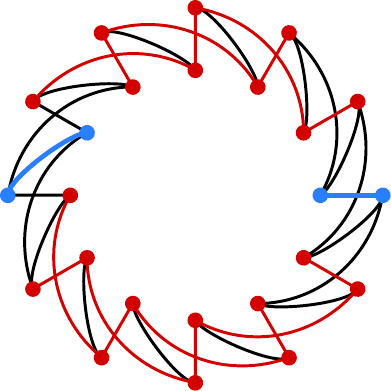}
    \caption{Paths constructed in the proof of \cref{SawPaths}. The edges $aa'$ and $bb'$ are coloured blue, the paths are coloured red.}
    \label{fig:SawPaths}
\end{figure}

\begin{lemma}\label{SawWorks}
    Let $G$ be a $k$-regular circular saw on $\Z_n\times\Z_2$ with $n\gg k\ge 3$.
    Then $G$ is $k$-connected and the neighbourhoods of its vertices are precisely its $k$-separators.
\end{lemma}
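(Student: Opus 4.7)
The plan is to handle both claims simultaneously by analysing an arbitrary separator $S\se V(G)$ with $|S|\le k$ and showing that a smallest component $A$ of $G-S$ must be a single vertex. Then $N_G(A)=N(v)\se S$ forces $|S|=k$ and $S=N(v)$. Combined with the easy observation that each $N(v)$ is a $k$-separator (it isolates $v$, and $|V(G)|-k-1=2n-k-1>0$ leaves room outside since $n\gg k$), this simultaneously yields $\kappa(G)\ge k$ and the classification of $k$-separators as neighbourhoods.

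To prove $|A|=1$ by contradiction, assume $|A|\ge 2$. Since $G$ is bipartite and $G[A]$ is connected, $A$ meets both levels; write $A_0,A_1\se\Z_n$ for the corresponding position-sets. Let
\[
  \cI_A:=\bigcup_{v\in A_0}\{v,v+1,\dots,v+k-1\},\qquad \cJ_A:=\bigcup_{u\in A_1}\{u-k+1,\dots,u\},
\]
the sets of level-$1$ and level-$0$ positions that have a neighbour in $A_0$ and $A_1$ respectively. Connectivity of $G[A]$ gives $A_1\se\cI_A$ and $A_0\se\cJ_A$, so
\[
  |N_G(A)| \;=\; (|\cI_A|-|A_1|) + (|\cJ_A|-|A_0|) \;=\; (|\cI_A|-|A_0|) + (|\cJ_A|-|A_1|),
\]
the second equality being a trivial rearrangement.

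The central estimate is the arc-union bound: if $A_0$ fits in a non-wrapping arc of $\Z_n$, then ordering $A_0=\{a_1<\dots<a_m\}$ and counting the new positions contributed by each successive arc $\{a_i,\dots,a_i+k-1\}$ (at least one, namely $a_i+k-1$), we get $|\cI_A|\ge k+(m-1)=|A_0|+k-1$, and symmetrically $|\cJ_A|\ge|A_1|+k-1$. Plugging these into the identity above yields $|N_G(A)|\ge 2(k-1)$, which for $k\ge 3$ strictly exceeds $k\ge|S|$ and contradicts $N_G(A)\se S$.

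The main obstacle is verifying the non-wrapping hypothesis on $A_0$ and $A_1$, since the arc-union bound degenerates when $|A_i|>n-k+1$. My plan to handle this is to show that this degenerate regime is incompatible with $A$ being a smallest component when $n\gg k$: if $|A_0|>n-k+1$ then $|\Z_n\setminus A_0|<k-1$, so no length-$k$ arc $\{u-k+1,\dots,u\}$ can avoid $A_0$, forcing every level-$1$ vertex to have a neighbour in $A$ and hence to lie in $A_1\cup S$; combined with $|S|\le k$ this gives $|A_1|\ge n-k$, whence $|A|\ge 2n-2k+2$. But then the other components of $G-S$ together contain at most $2n-|A|-|S|\le 2k-2$ vertices, while by minimality any other component has at least $|A|\ge 2n-2k+2$ vertices, forcing $2n-2k+2\le 2k-2$, i.e.\ $n\le 2k-2$, which contradicts $n\gg k$. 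Once this boundary case is dispatched, the arc-union bound applies unconditionally and the proof is complete.
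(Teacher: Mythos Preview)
Your approach is correct and genuinely different from the paper's. The paper proves a separate path-lemma (\cref{SawPaths}) that constructs $2(k-1)$ disjoint $N(\{a,a'\})$--$N(\{b,b'\})$ paths explicitly, and then derives both $k$-connectivity and the classification of $k$-separators from this via Menger-style counting. You instead argue directly that a smallest component $A$ of $G-S$ (with $|S|\le k$) must be a singleton, by showing $|N_G(A)|\ge 2(k-1)>k$ whenever $|A|\ge 2$; this bypasses any path construction. Both routes arrive at the same numerical threshold $2(k-1)$, but yours is more self-contained and arguably more elementary, while the paper's path-lemma is constructive.

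One small technical point deserves tightening. Your justification of the arc-union bound $|\cI_A|\ge|A_0|+k-1$ via ``$A_0$ fits in a non-wrapping arc, so $a_i+k-1$ is always new'' implicitly requires a gap of length $\ge k-1$ in $A_0$ (so that after rotating, $a_m+k-1$ does not wrap into $[a_1,a_1+k-1]$). This is \emph{not} equivalent to $|A_0|\le n-k+1$: one can have $|A_0|\le n-k+1$ with all gaps of size $\le k-2$. The fix is a one-line observation: if every gap in $A_0$ has size $\le k-1$, then no length-$k$ window $\{u-k+1,\dots,u\}$ lies entirely in $\Z_n\setminus A_0$, so $\cI_A=\Z_n$ and the bound $n\ge|A_0|+k-1$ follows directly from $|A_0|\le n-k+1$. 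With this patch, your degenerate-regime analysis covers the remaining case $|A_0|>n-k+1$ exactly as written, and the proof is complete.
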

\begin{proof}
    We show first that $G$ is $k$-connected, and assume for a contradiction that some two vertices $a,b$ of $G$ are separated by a set $S$ of $k-1$ vertices.
    Let $a'$ be a neighbour of $a$ outside $S$, and let $b'$ be a neighbour of $b$ outside~$S$.
    Note that $2(k-1)\ge k+1$ since $k\ge 3$.
    Hence by \cref{SawPaths}, there are $k+1$ disjoint $N(\{a,a'\})$--$N(\{b,b'\})$ paths in~$G$, contradicting that $|S|\le k-1$.
    Hence $G$ is $k$-connected.

    Next, we assume for a contradiction that $G$ has a $k$-separation $(A,B)$ such that $S(A,B)$ fails to be the neighbourhood of some vertex of $G$.
    Let $a\in A\sm B$ and $b\in B\sm A$.
    Then there are vertices $a'\in N(a)\sm S(A,B)$ and $b'\in N(b)\sm S(A,B)$.
    Now \cref{SawPaths} yields $k+1$ disjoint $N(\{a,a'\})$--$N(\{b,b'\})$ paths in $G$, contradicting that $(A,B)$ has order~$k$.
\end{proof}

\section{Appendix B: Comparison of \texorpdfstring{\trisp s}{strict tri-separations} and tri-separations}\label{sec:comparison}

The \trisp -decomposition (\cref{3Decomp}) is different from the tri-separation decomposition \cite[Theorem~1]{Tridecomp} as can be seen in \cref{fig:Trisplit,fig:Triseparation}.
However, the \trisp -decomposition can be viewed as a refinement of the tri-separation decomposition via \cref{trisepGivesTrisplit}.

\begin{figure}[ht]
\centering
\begin{minipage}[b]{.45\textwidth}%
    \centering
    \captionsetup{width=1\textwidth}
    \includegraphics[height=7\baselineskip]{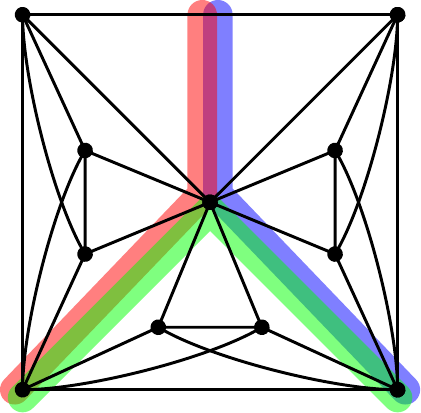}
    \captionof{figure}{The totally-nested \trisp s\\{\,}}
    \label{fig:Trisplit}
\end{minipage}\hfill\begin{minipage}[b]{.45\textwidth}
    \centering
    \captionsetup{width=1\textwidth}
    \includegraphics[height=7\baselineskip]{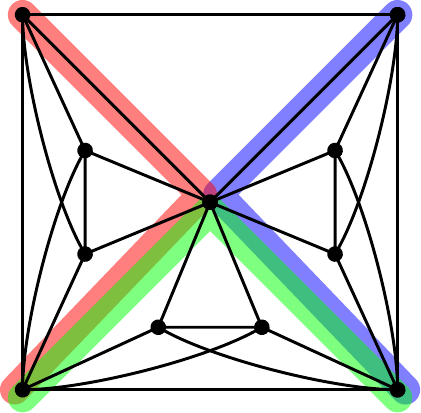}
    \captionof{figure}{The totally-nested non-trivial tri-separations}
    \label{fig:Triseparation}
\end{minipage}
\end{figure}

A tri-separation $(A,B)$ is \defn{negligible} if $|A\sm B|\le 1$ or $|B\sm A|\le 1$.

\begin{lemma}\label{trisepGivesTrisplit}
    Let $G$ be a 3-connected graph.
    Let $(A,B)$ be a totally-nested non-trivial tri-separation of $G$.
    Then the left-right-reduction $(A',B')$ of $(A,B)$ is a totally-nested \trisp , or $(A,B)$ is negligible.
\end{lemma}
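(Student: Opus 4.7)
The plan is to adapt the shift machinery of \cref{sec:shifts}, originally developed for tetra-separations, to the mixed-3-separation setting, and then exploit the fact that every \trisp\ is also a tri-separation. Assume $(A,B)$ is not negligible, so $|A\sm B|\ge 2$ and $|B\sm A|\ge 2$; this is the case where there is something to prove.

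First I would verify that $(A',B')$ is a \trisp. An analogue of \cref{leftrightTetra} for mixed-3-separations can be proved by the same arguments, since \cref{leftMatching,leftDegreeStrongerMatching,leftMatchingMedium} only use $k$-connectivity through \cref{kConMixed}; specialised to mixed-3-separations of a 3-connected graph, the left-right-shift is a \trisp\ precisely when $|A\sm B|\ge 2$, the number of vertices of $S(A,B)$ with at most one neighbour in $A\sm B$ is at least $2-|B\sm A|$, and $|B\sm A|\ge 2$ or $|S(A,B)\cap E(G)|\le 1$. All three hold automatically under non-negligibility. That $(A',B')$ retains order exactly 3 is forced by 3-connectivity: a vertex $v\in A\cap B$ with no neighbour in $A\sm B$ would give the mixed-2-separation $(A\sm\{v\},B)$, still proper because $|A\sm B|\ge 1$, contradicting \cref{kConMixed}; the symmetric argument handles the right-shift.

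Next I would show $(A',B')$ is totally-nested among \trisp s. The key observation is that every \trisp\ is a tri-separation, since the \trisp\ degree-condition (at least two neighbours in each of $A\sm B$ and $B\sm A$) is strictly stronger than the tri-separation degree-condition (degree $\ge 2$ in both $G[A]$ and $G[B]$). Thus any \trisp\ $(C,D)$ is nested with $(A,B)$; after relabelling say $(A,B)\le (C,D)$. To conclude $(A',B')\le (C,D)$ I only need $R\cap D=\emptyset$, where $R$ is the set removed by the right-shift. But $v\in R\cap D$ lies in $A\cap B\cap D\se S(C,D)$, so the \trisp\ degree-condition on $(C,D)$ forces $v$ to have at least two neighbours in $D\sm C\se B\sm A$, contradicting $v\in R$. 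The reverse nestedness $(C,D)\le (A,B)$ is handled symmetrically by showing $L\cap C=\emptyset$, using the \trisp\ condition that $v\in C\cap D$ has at least two neighbours in $C\sm D\se A\sm B$.

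The main obstacle I expect is simply the administrative work of stating and verifying the tri-separation analogues of \cref{leftrightTetra} and its auxiliary lemmas; while the arguments transfer essentially verbatim from the mixed-4-separation case, care is needed to track how the matching-condition and the various edge/vertex counts behave under the lower connectivity assumption.
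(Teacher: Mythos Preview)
Your proof is correct and takes a genuinely different route from the paper's. Both proofs begin by invoking the tri-separation analogue of \cref{leftrightTetra} to see that $(A',B')$ is a \trisp, and both use the observation that every \trisp\ is a tri-separation. After that they diverge.

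The paper argues by contradiction: assuming a \trisp\ $(C,D)$ crosses $(A',B')$, it invokes the crossing-diagram classification for tri-separations from \cite[Lemma~1.3.10]{Tridecomp} and handles two cases. In one it traces the links of the $(A',B')$-diagram back to non-empty links of the $(A,B)$-diagram, forcing $(C,D)$ to cross $(A,B)$; in the other the centre has size three, which forces $(A',B')=(A,B)$ and yields a contradiction to half-connectedness via \cite[Lemma~2.3.4]{Tridecomp}.

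Your route is more direct: starting from a nestedness inequality such as $(A,B)\le (C,D)$, you show that the vertices removed by the shifts cannot lie in the ``wrong'' side of $(C,D)$, because any such vertex would sit in $S(C,D)$ and the strict degree-condition on $(C,D)$ then supplies two neighbours in $D\sm C\subseteq B\sm A$ (or $C\sm D\subseteq A\sm B$), contradicting the shift criterion. This avoids both the crossing-diagram case analysis and the external citations to \cite{Tridecomp}, at the cost of not tying into the crossing framework used elsewhere in the paper. The administrative work you flag---checking that the shift lemmata transfer to order three---is indeed routine, and your argument that the order stays exactly three is fine.
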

\begin{proof}
    Assume that $(A,B)$ is not negligible.
    Then $(A',B')$ is a \trisp\ by \cref{leftrightTetra} (which generalises to \trisp s via an apex-argument as in the proof of \cref{3DecompStrengthening}).
    Assume for a contradiction that there is a \trisp\ $(C,D)$ of $G$ that crosses $(A',B')$ with $C$-link $\gamma'$ and $D$-link $\delta'$.
    We will show that $(C,D)$ crosses $(A,B)$ with non-empty $C$-link $\gamma$ and non-empty $D$-link $\delta$.
    For this, we consider the crossing-diagram of $(C,D)$ and $(A',B')$.
    By \cite[Lemma 1.3.10]{Tridecomp}, there are two cases.

    \casen{Case: the centre consists of one vertex and all links have size one.}
    Then $\gamma'$ and $\delta'$ are non-empty.
    If $\gamma'$ consists of a vertex $v$, then $\gamma$ also consists of~$v$.
    Otherwise $\gamma'$ consists of an edge~$e$.
    The edge $e$ cannot be dangling, since then its endvertex in $S(C,D)$ would violate the degree-condition.
    Hence the endvertices of $e$ are contained in the corners $A'C$ and $B'C$, respectively.
    So $e$ or an endvertex of $e$ is contained in $\gamma$.
    This concludes the proof that $\gamma$ is non-empty, and a similar argumentation shows that $\delta$ is non-empty.
    Hence $(C,D)$ and $(A,B)$ cross with non-empty opposite links $\gamma$ and~$\delta$.

    \casen{Case: the centre consists of three vertices and all links are empty.}
    Then $S(A',B')$ consists of three vertices, so $(A',B')=(A,B)$.
    But all corners are non-empty, so $(A',B')$ is not half-connected.
    However, $(A,B)$ is half-connected because it is totally-nested, by \cite[Lemma 2.3.4]{Tridecomp}, contradicting that $(A',B')=(A,B)$.
\end{proof}

\begin{acknowledgment}
We thank a referee for valuable comments on the introduction.
We thank another referee for bringing~\cite{Isomorphism} to our attention.
We thank Joseph Devine for exciting discussions.
We thank Agelos Georgakopoulos for telling us about a theorem of Godsil and Royle that we use in~\cite{TetraTrans}.
\end{acknowledgment}

\printbibliography

@misc{SmallHittingSet,
    author = {Norin, Sergey and Steiner, Raphael and Thomassé, Stéphan and Paul Wollan},
    title = {Small hitting sets for longest paths and cycles},
    year = {2026},
    doi = {10.1090/proc/17582},
    note = {To appear in Proceedings of the American Mathematical Society}
}

@article{Beineke_Harary_1967,
    title={The connectivity function of a graph},
    volume={14}, 
    DOI={10.1112/S0025579300003806}, 
    number={2}, 
    journal={Mathematika}, 
    author={Beineke, Lowell W. and Harary, Frank},
    year={1967},
    pages={197–202}
}

@article{Mader,
    author = {Mader, W.},
    title = {Über die Maximalzahl kreuzungsfreier $H$-Wege},
    journal = {Arch.\ Math.},
    year = {1978},
    pages = {387--402},
    volume = {31}
}

@article{OnMixedCony,
    author = {Johann, Sebastian S. and Krumke, Sven O. and Streicher, Manuel},
    title = {On the mixed connectivity conjecture of {B}eineke and {H}arary},
    journal = {Annals of Operations Research},
    year = {2024},
    volume = {332},
    pages = {107--124},
    doi = {10.1007/s10479-023-05527-8}
}

@InProceedings{TwoPointFiveCony,
    author="Heinrich, Irene
        and Heller, Till
        and Schmidt, Eva
        and Streicher, Manuel",
    editor="Adler, Isolde
        and M{\"u}ller, Haiko",
    title="2.5-Connectivity: Unique Components, Critical    Graphs, and Applications",
    booktitle="Graph-Theoretic Concepts in Computer     Science",
    year="2020",
    publisher="Springer International Publishing",
    address="Cham",
    pages="352--363",
    doi="10.1007/978-3-030-60440-0_28"
}

@InProceedings{StructureMinimumVertexCuts,
  author =	{Pettie, Seth and Yin, Longhui},
  title =	{{The Structure of Minimum Vertex Cuts}},
  booktitle =	{48th International Colloquium on Automata, Languages, and Programming (ICALP 2021)},
  pages =	{105:1--105:20},
  series =	{Leibniz International Proceedings in Informatics (LIPIcs)},
  ISBN =	{978-3-95977-195-5},
  ISSN =	{1868-8969},
  year =	{2021},
  volume =	{198},
  editor =	{Bansal, Nikhil and Merelli, Emanuela and Worrell, James},
  publisher =	{Schloss Dagstuhl -- Leibniz-Zentrum f{\"u}r Informatik},
  address =	{Dagstuhl, Germany},
  URN =		{urn:nbn:de:0030-drops-141746},
  doi =		{10.4230/LIPIcs.ICALP.2021.105}
}

@inproceedings{ReinventingWheel,
    author = {Cohen, Robert F. and Di Battista, Giuseppe and Kanevsky, Arkady and Tamassia, Roberto},
    title = {Reinventing the wheel: an optimal data structure for connectivity queries},
    year = {1993},
    isbn = {0897915917},
    publisher = {Association for Computing Machinery},
    address = {New York, NY, USA},
    doi = {10.1145/167088.167152},
    booktitle = {Proceedings of the Twenty-Fifth Annual ACM Symposium on Theory of Computing},
    pages = {194--200},
    numpages = {7},
    location = {San Diego, California, USA},
    series = {STOC '93}
}

@misc{TetraTrans,
    author = {Kurkofka, Jan and Planken, Tim},
    title = {A characterisation of all vertex-transitive finite graphs of connectivity < 5},
    year = {2026},
    doi = {10.48550/arXiv.2602.09811}
}

@inproceedings{Isomorphism,
    author = {Lokshtanov, Daniel and Pilipczuk, Marcin and Pilipczuk, Micha\l{} and Saurabh, Saket},
    title = {Fixed-parameter tractability of graph isomorphism in graphs with an excluded minor},
    year = {2022},
    isbn = {9781450392648},
    publisher = {Association for Computing Machinery},
    address = {New York, NY, USA},
    doi = {10.1145/3519935.3520076},
    booktitle = {Proceedings of the 54th Annual ACM SIGACT Symposium on Theory of Computing},
    pages = {914–923},
    location = {Rome, Italy},
    series = {STOC 2022}
}

@article{4connectedMatroids,
      title={What is a 4-connected matroid?}, 
      author={N.~Brettell and S.~Jowett and J.~Oxley and C.~Semple and G.~Whittle},
    journal={Electronic Journal of Combinatorics},
      year={2025},
    volume={32},
    issue={2},
    number={P2.1}
}

@article{TwinWidth,
    title = {Twin-width of graphs with tree-structured decompositions},
    journal = {Discrete Applied Mathematics},
    volume = {363},
    pages = {1--15},
    year = {2025},
    doi = {https://doi.org/10.1016/j.dam.2024.11.029},
    author = {I.~Heinrich and S.~Raßmann}
}

@misc{CopsInvisibleRobber,
    title={Cops and invisible robbers in $k$-connected graphs of treewidth~$k$},
    author = {Tim Planken},
    shorthand = {P25{$^{+}$}},
    note = {In preparation}
}

@article{GMcopsRobber,
    title = {Graph Searching and a Min-Max Theorem for Tree-Width},
    journal = {Journal of Combinatorial Theory, Series~B},
    volume = {58},
    number = {1},
    pages = {22-33},
    year = {1993},
    doi = {10.1006/jctb.1993.1027},
    author = {P.D. Seymour and R. Thomas}
}

@misc{goetze2024,
      title={Recognition Complexity of Subgraphs of k-Connected Planar Cubic Graphs}, 
      author={Miriam Goetze and Paul Jungeblut and Torsten Ueckerdt},
      year={2024},
      eprint={2401.05892},
      archivePrefix={arXiv},
      primaryClass={cs.DM},
      url={https://arxiv.org/abs/2401.05892}, 
}

@article{TriangulationAlgo,
    author = {Biedl, T. and Kant, G. and Kaufmann, M.},
    title = {On Triangulating Planar Graphs under the Four-Connectivity Constraint},
    journal = {Algorithmica},
    year = {1997},
    volume = {19},
    pages = {427--446},
    doi = {10.1007/PL00009182}
}

@inbook{OrthogonalCubic,
author = {Walter Didimo and Giuseppe Liotta and Giacomo Ortali and Maurizio Patrignani},
title = {Optimal Orthogonal Drawings of Planar 3-Graphs in Linear Time},
booktitle = {Proceedings of the 2020 ACM-SIAM Symposium on Discrete Algorithms (SODA)},
pages = {806-825},
doi = {10.1137/1.9781611975994.49},
year={2020},
publisher={SIAM}
}

@article{Orthogonal,
author = {Bl\"{a}sius, Thomas and Rutter, Ignaz and Wagner, Dorothea},
title = {Optimal Orthogonal Graph Drawing with Convex Bend Costs},
year = {2016},
issue_date = {June 2016},
publisher = {Association for Computing Machinery},
address = {New York, NY, USA},
volume = {12},
number = {3},
issn = {1549-6325},
doi = {10.1145/2838736},
journal = {ACM Transactions on Algorithms},
articleno = {33},
numpages = {32},
keywords = {Orthogonal graph drawing, efficient algorithm, planar embedding}
}

@article{ComplexityEmbedding,
    author = {Bienstock, D. and Monma, C. L.},
    title = {On the complexity of embedding planar graphs to minimize certain distance measures},
    journal = {Algorithmica},
    year = {1990},
    volume = {5},
    pages = {93--109},
    doi = {10.1007/BF01840379}
}

@article{MinimumDepth,
    author = {Angelini, P. and Di Battista, G. and Patrignani, M.},
    title = {Finding a Minimum-depth Embedding of a Planar Graph in $O(n^4)$ Time},
    journal = {Algorithmica},
    year = {2011},
    pages = {890--937},
    volume = {60},
    doi = {10.1007/s00453-009-9380-6}
}

@inproceedings{AlphaPlanarityPrelim,
author = {La Poutr\'{e}, Johannes A.},
title = {Alpha-algorithms for incremental planarity testing (preliminary version)},
year = {1994},
isbn = {0897916638},
publisher = {Association for Computing Machinery},
address = {New York, NY, USA},
url = {https://doi.org/10.1145/195058.195439},
doi = {10.1145/195058.195439},
booktitle = {Proceedings of the Twenty-Sixth Annual ACM Symposium on Theory of Computing},
pages = {706–715},
numpages = {10},
location = {Montreal, Quebec, Canada},
series = {STOC '94}
}

@inproceedings{PlanarityPolylog,
author = {Holm, Jacob and Rotenberg, Eva},
title = {Fully-dynamic planarity testing in polylogarithmic time},
year = {2020},
isbn = {9781450369794},
publisher = {Association for Computing Machinery},
address = {New York, NY, USA},
doi = {10.1145/3357713.3384249},
booktitle = {Proceedings of the 52nd Annual ACM SIGACT Symposium on Theory of Computing},
pages = {167--180},
numpages = {14},
location = {Chicago, IL, USA},
series = {STOC 2020}
}

@InProceedings{Cplanarity,
author="Gutwenger, Carsten
and J{\"u}nger, Michael
and Leipert, Sebastian
and Mutzel, Petra
and Percan, Merijam
and Weiskircher, Ren{\'e}",
editor="Goodrich, Michael T.
and Kobourov, Stephen G.",
title="Advances in C-Planarity Testing of Clustered Graphs",
booktitle="Graph Drawing",
year="2002",
publisher="Springer Berlin Heidelberg",
address="Berlin, Heidelberg",
pages="220--236",
doi="10.1007/3-540-36151-0_21"
}

@InProceedings{SPQRupwarPlanarity,
author="Br{\"u}ckner, Guido
and Himmel, Markus
and Rutter, Ignaz",
editor="Archambault, Daniel
and T{\'o}th, Csaba D.",
title="An SPQR-Tree-Like Embedding Representation for Upward Planarity",
booktitle="Graph Drawing and Network Visualization",
year="2019",
publisher="Springer International Publishing",
address="Cham",
pages="517--531",
doi={10.1007/978-3-030-35802-0_39}
}

@article{SPQRBienstock,
    author = {Bienstock, Daniel and Monma, Clyde L.},
    title = {On the Complexity of Covering Vertices by Faces in a Planar Graph},
    journal = {SIAM Journal on Computing},
    volume = {17},
    number = {1},
    pages = {53--76},
    year = {1988},
    doi = {10.1137/0217004}
}

@article{SPQRHopcroftTarjan,
    author = {Hopcroft, J. E. and Tarjan, R. E.},
    title = {Dividing a Graph into Triconnected Components},
    journal = {SIAM Journal on Computing},
    volume = {2},
    number = {3},
    pages = {135--158},
    year = {1973},
    doi = {10.1137/0202012}
}

@INPROCEEDINGS{SPQRincremental,
  author={Di Battista, G. and Tamassia, R.},
  booktitle={30th Annual Symposium on Foundations of Computer Science}, 
  title={Incremental planarity testing}, 
  year={1989},
  volume={},
  number={},
  pages={436--441},
  doi={10.1109/SFCS.1989.63515}}

@article{SPQRonline,
    author = {Di Battista, Giuseppe and Tamassia, Roberto},
    title = {On-Line Planarity Testing},
    journal = {SIAM Journal on Computing},
    volume = {25},
    number = {5},
    pages = {956--997},
    year = {1996},
    doi = {10.1137/S0097539794280736}
}

@article{4tangles,
      title={Characterising 4-tangles through a connectivity property}, 
      author={J.~Carmesin and J.~Kurkofka},
      year={2025},
      volume={32},
    number={3},
    pages={P3.26},
    journal={European Journal of Combinatorics}
}

@article{Seymour2sepMatroid,
    title = {Decomposition of regular matroids},
    journal = {Journal of Combinatorial Theory, Series~B},
    volume = {28},
    number = {3},
    pages = {305--359},
    year = {1980},
    doi = {10.1016/0095-8956(80)90075-1},
    author = {Seymour, P. D.}
}

@article{cunningham_edmonds_1980, 
    title={A Combinatorial Decomposition Theory}, 
    volume={32}, 
    DOI={10.4153/CJM-1980-057-7}, 
    number={3}, 
    journal={Canadian Journal of Mathematics},
    publisher={Cambridge University Press}, 
    author={Cunningham, W. H. and Edmonds, J.},
    year={1980}, 
    pages={734--765}
}

@article{Structure2sepsMatroids,
    title = {The structure of 2-separations of infinite matroids},
    journal = {Journal of Combinatorial Theory, Series~B},
    volume = {116},
    pages = {25--56},
    year = {2016},
    doi = {10.1016/j.jctb.2015.05.011},
    author = {Elad Aigner-Horev and Reinhard Diestel and Luke Postle}
}

@article{Structure4sepsMatroids,
    title = {The structure of the 4-separations in 4-connected matroids},
    journal = {Advances in Applied Mathematics},
    volume = {48},
    number = {1},
    pages = {1--24},
    year = {2012},
    doi = {10.1016/j.aam.2011.06.001},
    author = {Jeremy Aikin and James Oxley}
}

@article{Structure3sepsMatroids,
    title = {The structure of the 3-separations of 3-connected matroids},
    journal = {Journal of Combinatorial Theory, Series~B},
    volume = {92},
    number = {2},
    pages = {257--293},
    year = {2004},
    note = {Special Issue Dedicated to Professor W.T.~Tutte},
    doi = {10.1016/j.jctb.2004.03.006},
    author = {Oxley, J. and Semple, C. and Whittle, G.}
}

@article{kriesell2008number,
  title={On the number of contractible triples in 3-connected graphs},
  author={Kriesell, M.},
  journal={Journal of Combinatorial Theory, Series~B},
  volume={98},
  number={1},
  pages={136--145},
  year={2008},
  doi={10.1016/j.jctb.2007.05.003}
}

@article{KriesellVertexSuppression,
    author = {Kriesell, M.},
    title = {Vertex suppression in 3-connected graphs},
    journal = {Journal of Graph Theory},
    volume = {57},
    number = {1},
    pages = {41--54},
    doi = {10.1002/jgt.20277},
    year = {2008}
}

@article{KriesellTriangleFree,
    title = {A constructive characterization of 3-connected triangle-free graphs},
    journal = {Journal of Combinatorial Theory, Series~B},
    volume = {97},
    number = {3},
    pages = {358--370},
    year = {2007},
    doi = {10.1016/j.jctb.2006.06.002},
    author = {Kriesell, M.}
}

@article{KriesellAlmostAll,
    title = {Almost {A}ll 3-{C}onnected {G}raphs {C}ontain a {C}ontractible {S}et of k {V}ertices},
    journal = {Journal of Combinatorial Theory, Series~B},
    volume = {83},
    number = {2},
    pages = {305--319},
    year = {2001},
    doi = {10.1006/jctb.2001.2060},
    author = {Kriesell, M.}
}

@article{KriesellContractibleSubgraph,
    title = {Contractible {S}ubgraphs in 3-{C}onnected {G}raphs},
    journal = {Journal of Combinatorial Theory, Series~B},
    volume = {80},
    number = {1},
    pages = {32--48},
    year = {2000},
    doi = {10.1006/jctb.2000.1960},
    author = {Kriesell, M.}
}

@article{KriesellNonEdges,
    title = {Contractible {N}on-edges in 3-{C}onnected {G}raphs},
    journal = {Journal of Combinatorial Theory, Series~B},
    volume = {74},
    number = {2},
    pages = {192--201},
    year = {1998},
    doi = {10.1006/jctb.1998.1842},
    author = {Kriesell, M.}
}

@article{ando1987contractible,
  title={Contractible edges in 3-connected graphs},
  author={Ando, K. and Enomoto, H. and Saito, A.},
  journal={Journal of Combinatorial Theory, Series~B},
  volume={42},
  number={1},
  pages={87--93},
  year={1987},
  doi = {10.1016/0095-8956(87)90065-7}
}

@article{Quasi4conStructure,
    title = {The structure of quasi 4-connected graphs},
    journal = {Discrete Mathematics},
    volume = {161},
    number = {1},
    pages = {217--228},
    year = {1996},
    doi = {10.1016/0012-365X(95)00229-P},
    author = {Themistocles Politof and A. Satyanarayana}
}

@article{Number4con,
    title = {On the number of 4-contractible edges in 4-connected graphs},
    journal = {Journal of Combinatorial Theory, Series~B},
    volume = {99},
    number = {1},
    pages = {97--109},
    year = {2009},
    doi = {10.1016/j.jctb.2008.04.003},
    author = {K. Ando and Y. Egawa and K. Kawarabayashi and Matthias Kriesell}
}

@article{MartionvTwo,
    title = {A recursive characterization of the 4-connected graphs},
    journal = {Discrete Mathematics},
    volume = {84},
    number = {1},
    pages = {105--108},
    year = {1990},
    doi = {10.1016/0012-365X(90)90277-O},
    author = {Nicola Martinov}
}

@article{Martinov,
    author = {Martinov, Nicola},
    title = {Uncontractable 4-connected graphs},
    journal = {Journal of Graph Theory},
    volume = {6},
    number = {3},
    pages = {343--344},
    doi = {10.1002/jgt.3190060310},
    year = {1982}
}

@article{Splitting4con,
    title = {Splitting and contractible edges in 4-connected graphs},
    journal = {Journal of Combinatorial Theory, Series~B},
    volume = {88},
    number = {2},
    pages = {227--235},
    year = {2003},
    doi = {10.1016/S0095-8956(02)00043-6},
    author = {Akira Saito}
}

@article{Slater4con,
    title = {A classification of 4-connected graphs},
    journal = {Journal of Combinatorial Theory, Series~B},
    volume = {17},
    number = {3},
    pages = {281--298},
    year = {1974},
    doi = {10.1016/0095-8956(74)90034-3},
    author = {Peter J Slater}
}

@article{typical3con4con,
    title = {Typical Subgraphs of 3- and 4-Connected Graphs},
    journal = {Journal of Combinatorial Theory, Series~B},
    volume = {57},
    number = {2},
    pages = {239--257},
    year = {1993},
    doi = {10.1006/jctb.1993.1019},
    author = {B. Oporowski and J. Oxley and R. Thomas},
}

@article{chain4con,
    title = {A chain theorem for 4-connected graphs},
    journal = {Journal of Combinatorial Theory, Series~B},
    volume = {134},
    pages = {341--349},
    year = {2019},
    doi = {10.1016/j.jctb.2018.07.005},
    author = {Chengfu Qin and Guoli Ding}
}

@article{generatingInternally4con,
    title = {Generating Internally Four-Connected Graphs},
    journal = {Journal of Combinatorial Theory, Series~B},
    volume = {85},
    number = {1},
    pages = {21--58},
    year = {2002},
    doi = {10.1006/jctb.2001.2089},
    author = {Thor Johnson and Robin Thomas}
}

@misc{Ponyhof,
      title={Canonical graph decompositions via local separations}, 
      author={J.~Carmesin and R.~W.~Jacobs and P.~Knappe and J.~Kurkofka},
      year={2025},
      note={arXiv:2501.16170},
      eprint={2501.16170},
      archivePrefix={arXiv}
}

@article{infiniteSPQR,
    title={The {S}tructure of {L}ocally {F}inite {T}wo-{C}onnected {G}raphs},
    author={Droms, C. and Servatius, B. and Servatius, H.},
    journal={The Electronic Journal of Combinatorics},
    volume={2},
    year={1995},
    doi={10.37236/1211},
    number={R17}
}

@article{WollanExcludedImmersion,
    title = {The structure of graphs not admitting a fixed immersion},
    journal = {Journal of Combinatorial Theory, Series~B},
    volume = {110},
    pages = {47--66},
    year = {2015},
    doi = {10.1016/j.jctb.2014.07.003},
    author = {Wollan, P.},
}

@book{TutteCon,
    author = {Tutte, W. T.},
    title = {Connectivity in {G}raphs},
    publisher = {University of Toronto Press},
    year = {1966}
}

@misc{conny_aug,
    title = {Connectivity {A}ugmentation},
    author = {Carmesin, J. and Sridharan, R.},
    shorthand = {CS25{$^{+}$}},
    note = {In preparation}
}

@inbook{conny_aug_soda,
    title={Augmenting to 4-vertex connectivity is fixed-parameter tractable},
    ISBN={9781611978971},
    DOI={10.1137/1.9781611978971.73},
    booktitle={Proceedings of the 2026 Annual ACM-SIAM Symposium on Discrete Algorithms (SODA)},
    publisher={Society for Industrial and Applied Mathematics},
    author={Carmesin, Johannes and Ramanujan, M. S.},
    year={2026}, 
    pages={2015–2042}
}

@article{ThomassenKura,
    author = {Thomassen, C.},
    title = {Kuratowski's theorem},
    journal = {Journal of Graph Theory},
    volume = {5},
    number = {3},
    pages = {225--241},
    doi = {10.1002/jgt.3190050304},
    year = {1981}
}

@article{GMIX,
  title={Graph {M}inors. {IX}. {D}isjoint crossed paths},
  author={Robertson, N. and Seymour, P. D.},
  journal={Journal of Combinatorial Theory, Series~B},
  volume={49},
  number={1},
  pages={40--77},
  year={1990},
  doi={10.1016/0095-8956(90)90063-6}
}

@article{GMXIII,
    title = {Graph {M}inors. {XIII}. {T}he disjoint paths problem},
    journal = {Journal of Combinatorial Theory, Series~B},
    volume = {63},
    number = {1},
    pages = {65--110},
    year = {1995},
    author = {Robertson, N. and Seymour, P. D.},
    doi = {10.1006/jctb.1995.1006}
}

@article{GMXVI,
    title = {Graph {M}inors. {XVI}. {E}xcluding a non-planar graph},
    journal = {Journal of Combinatorial Theory, Series~B},
    volume = {89},
    number = {1},
    pages = {43--76},
    year = {2003},
    author = {Robertson, N. and Seymour, P. D.},
    doi = {10.1016/S0095-8956(03)00042-X}
}

@misc{AlbrechtsenOptimalToT,
  title = {Optimal Trees of Tangles: Refining the Essential Parts},
  author = {Albrechtsen, S.},
  year = {2023},
  eprint = {2304.12078},
  archiveprefix = {arXiv}
}

@misc{AlbrechtsenRefiningTDCdisplayKblocks,
  title = {Refining Tree-Decompositions so That They Display the k-Blocks},
  author = {Albrechtsen, S.},
  year = {2024},
  eprint = {2403.19585},
  archiveprefix = {arXiv}
}

@article{AlbrechtsenRefiningToTinASS,
  title = {Refining Trees of Tangles in Abstract Separation Systems: Inessential Parts},
  author = {Albrechtsen, S.},
  year = {2024},
  journal = {Combinatorial Theory},
  volume = {4},
  number = {1},
  pages = {\# 17},
  doi = {10.5070/C64163854}
}

@book{bibel,
  title = {Graph Theory},
  author = {R.~Diestel},
  year = {2024},
  edition = {6th edition},
  publisher = {Springer}
}

@article{canonicalGraphDec,
  title = {Canonical Graph Decompositions via Coverings},
  author = {R.~Diestel and R.W.~Jacobs and P.~Knappe and J.~Kurkofka},
  year = {2026},
  doi={10.1090/tran/9553},
  journal={to appear in Transactions of the American Mathematical Society},
  howpublished={arXiv:2207.04855}
}

@article{CarmesinToTshort,
  title = {A Short Proof That Every Finite Graph Has a Tree-Decomposition Displaying Its Tangles},
  author = {Carmesin, J.},
  year = {2016},
  journal = {European Journal of Combinatorics},
  volume = {58},
  doi = {10.1016/j.ejc.2016.04.007}
}

@article{CDHH13CanonicalAlg,
  title = {Canonical Tree--200 Decompositions of Finite Graphs {{I}}. {{Existence}} and Algorithms},
  author = {Carmesin, J. and Diestel, R. and Hamann, M. and Hundertmark, F.},
  year = {2016},
  journal = {Journal of Combinatorial Theory, Series B},
  volume = {116},
  pages = {1--24},
  doi = {10.1016/j.jctb.2014.04.001}
}

@article{CDHH13CanonicalParts,
  title = {Canonical Tree-Decompositions of Finite Graphs {{II}}. {{Essential}} Parts},
  author = {Carmesin, J. and Diestel, R. and Hamann, M. and Hundertmark, F.},
  year = {2016},
  journal = {Journal of Combinatorial Theory, Series B},
  volume = {118},
  pages = {268--283},
  doi = {10.1016/j.jctb.2014.12.009}
}

@article{CG14:isolatingblocks,
  title = {Canonical Tree-Decompositions of a Graph That Display Its {{k}}-Blocks},
  author = {Carmesin, J. and Gollin, P.},
  year = {2017},
  journal = {Journal of Combinatorial Theory, Series B},
  volume = {122},
  pages = {1--20},
  doi = {10.1016/j.jctb.2016.05.001}
}

@inproceedings{ComputingWithTangles,
  title = {Computing with {{Tangles}}},
  booktitle = {Proceedings of the {{Forty-Seventh Annual ACM Symposium}} on {{Theory}} of {{Computing}} ({{STOC}})},
  author = {Grohe, M. and Schweitzer, P.},
  year = {2015},
  series = {Stoc '15},
  pages = {683--692},
  publisher = {Association for Computing Machinery},
  address = {New York, NY, USA},
  doi = {10.1145/2746539.2746587},
  addendum = {Also published in: \emph{SIAM Journal on Discrete Mathematics} 30.2 (2016), pp. 1213--1247.}
}

@article{confing,
  title = {Connectivity and Tree Structure in Finite Graphs},
  author = {Carmesin, J. and Diestel, R. and Hundertmark, F. and Stein, M.},
  year = {2014},
  journal = {Combinatorica},
  volume = {34},
  pages = {11--46},
  publisher = {Springer},
  doi = {10.1007/s00493-014-2898-5}
}

@article{elbracht2020tangles,
  shorthand = {EFKKRTL23},
  title = {Clustering with Tangles: {{Algorithmic}} Framework and Theoretical Guarantees},
  author = {Elbracht, Christian and Fioravanti, Diego and Klepper, Solveig and Kneip, Jakob and Rendsburg, Luca and Teegen, Maximilian and {von Luxburg}, Ulrike},
  year = {2023},
  journal = {Journal of Machine Learning Research},
  volume = {24},
  number = {190},
  pages = {1--56}
}

@article{entanglements,
  title = {Entanglements},
  author = {Carmesin, Johannes and Kurkofka, Jan},
  year = {2024},
  journal = {Journal of Combinatorial Theory, Series B},
  volume = {164},
  pages = {17--28},
  publisher = {Elsevier},
  doi = {10.1016/j.jctb.2023.08.007}
}

@article{FiniteSplinters,
  title = {Trees of Tangles in Abstract Separation Systems},
  author = {Elbracht, C. and Kneip, J. and Teegen, M.},
  year = {2021},
  journal = {Journal of Combinatorial Theory, Series A},
  volume = {180},
  pages = {105425},
  doi = {10.1016/j.jcta.2021.105425}
}

@misc{Flo,
  title = {Decomposition of (Infinite) Digraphs along Directed 1-Separations},
  author = {Bowler, N. and Gut, F. and Hatzel, M. and Kawarabayashi, K. and Muzi, I. and Reich, F.},
  year = {2023},
  eprint = {2305.09192},
  archiveprefix = {arXiv}
}

@inproceedings{grohe2016quasi,
  title = {Quasi-4-{{Connected Components}}},
  booktitle = {43rd {{International Colloquium}} on {{Automata}}, {{Languages}}, and {{Programming}} ({{ICALP}} 2016)},
  author = {Grohe, M.},
  year = {2016},
  series = {Leibniz International Proceedings in Informatics (Lipics)},
  volume = {55},
  pages = {8:1--8:13},
  doi = {10.4230/LIPIcs.ICALP.2016.8}
}

@inproceedings{GroheTangles3,
  title = {Tangles and {{Connectivity}} in {{Graphs}}},
  booktitle = {Language and Automata Theory and Applications},
  author = {Grohe, M.},
  year = {2016},
  pages = {24--41},
  publisher = {Springer International Publishing},
  address = {Cham},
  doi = {10.1007/978-3-319-30000-9_2}
}

@article{infinitesplinter,
  title = {Trees of Tangles in Infinite Separation Systems},
  author = {Elbracht, Christian and Kneip, Jakob and Teegen, Maximilian},
  year = {2022},
  journal = {Mathematical Proceedings of the Cambridge Philosophical Society},
  volume = {173},
  number = {2},
  pages = {297--327},
  doi = {10.1017/S0305004121000530}
}

@article{jacobs2023efficiently,
  title = {Efficiently Distinguishing All Tangles in Locally Finite Graphs},
  author = {Jacobs, R. W. and Knappe, P.},
  year = {2024},
  journal = {Journal of Combinatorial Theory, Series B},
  volume = {167},
  pages = {189--214},
  doi = {10.1016/j.jctb.2024.03.004}
}

@article{Kblocks,
  title = {K-{{Blocks}}: {{A Connectivity Invariant}} for {{Graphs}}},
  author = {Carmesin, J. and Diestel, R. and Hamann, M. and Hundertmark, F.},
  year = {2014},
  journal = {SIAM Journal on Discrete Mathematics},
  volume = {28},
  number = {4},
  pages = {1876--1891},
  doi = {10.1137/130923646}
}

@inproceedings{korhonen2024,
    author = {Korhonen, Tuukka},
    title = {Linear-Time Algorithms for k-Edge-Connected Components, k-Lean Tree Decompositions, and More},
    year = {2025},
    publisher = {Association for Computing Machinery},
    address = {New York, NY, USA},
    doi = {10.1145/3717823.3718123},
    booktitle = {Proceedings of the 57th Annual ACM Symposium on Theory of Computing},
    pages = {111--119},
    numpages = {9},
    location = {Prague, Czechia},
    series = {STOC '25}
}

@article{Local2sep,
  title = {Local 2-Separators},
  author = {Carmesin, J.},
  year = {2022},
  journal = {Journal of Combinatorial Theory, Series B},
  volume = {156},
  pages = {101--144},
  doi = {10.1016/j.jctb.2022.04.005}
}

@misc{locallychordal,
  title = {Locally Chordal Graphs},
  author = {Abrishami, Tara and Jacobs, Raphael W. and Knappe, Paul and Kobler, Jonas},
  year = {2025},
  eprint = {2501.17320},
  archiveprefix = {arXiv}
}

@article{ProfilesNew,
  title = {Profiles of Separations: In Graphs, Matroids, and Beyond},
  author = {Diestel, R. and Hundertmark, F. and Lemanczyk, S.},
  year = {2019},
  journal = {Combinatorica},
  volume = {39},
  number = {1},
  pages = {37--75},
  doi = {10.1007/s00493-017-3595-y}
}

@article{RefiningToT,
  title = {Refining a {{Tree-Decomposition}} Which {{Distinguishes Tangles}}},
  author = {Erde, J.},
  year = {2017},
  journal = {SIAM Journal on Discrete Mathematics},
  volume = {31},
  number = {3},
  pages = {1529--1551},
  doi = {10.1137/16M1059539}
}

@misc{Sarah,
  title = {How to Apply Tree Decomposition Ideas in Large Networks?},
  author = {Carmesin, J. and Frenkel, S.},
  year = {2023},
  eprint = {2312.12354},
  archiveprefix = {arXiv}
}

@misc{StallingsNilpotent,
  title = {Towards a {{Stallings-type}} Theorem for Finite Groups},
  author = {Carmesin, J. and Kontogeorgiou, G. and Kurkofka, J. and Turner, W. J.},
  year = {2024},
  eprint = {2403.07776},
  archiveprefix = {arXiv}
}

@article{StructuralASS,
  title = {Structural Submodularity and Tangles in Abstract Separation Systems},
  author = {Diestel, R. and Erde, J. and Wei{\ss}auer, D.},
  year = {2019},
  journal = {Journal of Combinatorial Theory, Series A},
  volume = {167},
  pages = {155--180},
  doi = {10.1016/j.jcta.2019.05.001}
}

@book{TangleBook,
  title = {Tangles: {{A Structural Approach}} to {{Artificial Intelligence}} in the {{Empirical Sciences}}},
  author = {Diestel, R.},
  year = {2024},
  publisher = {Cambridge University Press},
  address = {Cambridge}
}

@inproceedings{Tridecomp,
  title = {Canonical Decompositions of 3-Connected Graphs},
  booktitle = {2023 {{IEEE}} 64th {{Annual Symposium}} on {{Foundations}} of {{Computer Science}} ({{FOCS}})},
  author = {Carmesin, J. and Kurkofka, J.},
  year = {2023},
  pages = {1887--1920},
  doi = {10.1109/FOCS57990.2023.00115}
}

@article{Ugo,
  title = {The Structure of Quasi-Transitive Graphs Avoiding a Minor with Applications to the Domino Problem},
  author = {Esperet, L. and Giocanti, U. and {Legrand-Duchesne}, C.},
  year = {2024},
  journal = {Journal of Combinatorial Theory, Series B},
  volume = {169},
  pages = {561--613},
  doi = {10.1016/j.jctb.2024.08.002}
}
\end{document}